\theoremstyle{plain}
\newcommand{\newreptheorem}[2]{\newtheorem*{rep@#1}{\rep@title}\newenvironment{rep#1}[1]{\def\rep@title{#2 \ref*{##1}}\begin{rep@#1}}{\end{rep@#1}}}
\newtheorem{theorem}{Theorem}[section]
\newtheorem{lemma}[theorem]{Lemma}
\newtheorem{corollary}[theorem]{Corollary}
\newtheorem{proposition}[theorem]{Proposition}
\theoremstyle{definition}
\newtheorem{definition}[theorem]{Definition}
\newtheorem{setup}[theorem]{Setup}
\title{The complete classification of isotopy classes of degree three symplectic curves in $\mathbb{CP}^2$ via a novel algebraic theory of braid monodromy}
\author{Amitesh Datta}
\begin{document}
\maketitle

\abstract{We develop a new algebraic theory of positive braids and conjugacy classes in the braid group $B_3$. We use our theory to establish a complete classification of isotopy classes of degree three symplectic curves in $\mathbb{CP}^2$ with only $A_n$-singularities for $n\geq 1$ (an $A_n$-singularity is locally modelled by the equation $z^2 = w^n$) independent of Gromov's theory of pseudoholomorphic curves. We show that if $C$ and $C'$ are degree three symplectic curves in $\mathbb{CP}^2$ with the same numbers of $A_n$-singularities for each $n\geq 1$, then $C$ is isotopic to $C'$. Furthermore, our theory furnishes a single method of proof that independently establishes and unifies several fundamental classification results on degree three symplectic curves in $\mathbb{CP}^2$. In particular, we prove using our theory: (1) there is a unique isotopy class of degree three smooth symplectic curves in $\mathbb{CP}^2$ (a result due to Sikorav), (2) the number of nodes is a complete invariant of the isotopy class of a degree three nodal symplectic curve in $\mathbb{CP}^2$ (the case of irreducible nodal curves is due to Shevchishin and the case of reducible nodal curves is due to Golla-Starkston), and (3) there is a unique isotopy class of degree three cuspidal symplectic curves in $\mathbb{CP}^2$ (a generalization of a result due to Ohta-Ono). The present work represents the first step toward resolving the symplectic isotopy conjecture using purely algebraic techniques in the theory of braid groups. Finally, we independently establish a complete classification of genus one Lefschetz fibrations over $\mathbb{S}^2$ (a result due to Moishezon-Livne).}

\tableofcontents

\section{Introduction}
In this paper, we develop a new algebraic theory of the braid group $B_3$ in order to establish a complete classification of isotopy classes of degree three symplectic curves in $\mathbb{CP}^2$ with only $A_n$-singularities, and isomorphism classes of genus one Lefschetz fibrations over $\mathbb{S}^2$. An $A_n$-singularity is locally modelled by the equation $z^2 = w^n$ (which includes nodal and cuspidal singularities).

Firstly, the theory unifies and independently establishes results due to the authors Sikorav~\cite{sikoravsmoothdegreethree} (smooth symplectic curves), Shevchishin~\cite{shevchishinnodes} (irreducible nodal symplectic curves), Golla-Starkston~\cite{gollastarkstonreducible} (reducible nodal symplectic curves), Ohta-Ono~\cite{ohtaonocusp} (cuspidal symplectic curves with a single cusp), and Moishezon-Livne~\cite{moishezongenusonelefschetzfibrations} (genus one Lefschetz fibrations). The novelty in our methods is that we avoid Gromov's theory of pseudoholomorphic curves, which was the main tool in the original proofs of these results. Instead, we use our new theory in order to classify braid monodromy factorizations of degree three symplectic curves in $\mathbb{CP}^2$. Moreover, we establish all of these results with a single method of proof.

We also use our theory to establish complete constraints on the possible singularities of a degree three symplectic curve in $\mathbb{CP}^2$ with only $A_n$-singularities. The classical approach to establishing such constraints on singularities of curves in algebraic geometry is based on the classical B{\'e}zout theorem (for algebraic curves). The approach using our theory of braid monodromy is independent of B{\'e}zout's theorem and any generalization to symplectic curves based on Gromov's theory of pseudoholomorphic curves. The present work is a purely algebraic proof of the complete classification of isotopy classes of degree three symplectic curves in $\mathbb{CP}^2$ with only $A_n$-singularities.

\subsection{Summary of the main results}

We will work in the setting of simple Hurwitz curves in $\mathbb{CP}^2$ since every symplectic curve in $\mathbb{CP}^2$ with only $A_n$-singularities is symplectically isotopic to a simple Hurwitz curve. We briefly recall the relevant definitions (of Hurwitz curves, isotopies of Hurwitz curves, and braid monodromy) before summarizing the main results of the paper and elaborating on the details of our algebraic theory of the braid group $B_3$. We refer the reader to Section~\ref{sbackgroundmotivation} for a more detailed discussion of the background. Let $\mathbb{CP}^n$ denote complex projective $n$-space. Let $\pi:\mathbb{CP}^2\setminus \left[0:0:1\right]\to \mathbb{CP}^1$ be the projection defined in homogeneous coordinates by the rule $\pi\left[z_0:z_1:z_2\right] = \left[z_0:z_1\right]$. 

A \textit{Hurwitz curve} in $\mathbb{CP}^2$ is a two-dimensional oriented real submanifold $C\subseteq \mathbb{CP}^2$ with isolated singularities such that (1) the set $S$ of singularities in $C$ and nondegenerate tangencies of $C$ to the fibers of $\pi$ is a finite set, (2) $\left[0:0:1\right]\not\in C$, and (3) $C$ intersects the fibers of $\pi$ positively except at the points of $S$. The \textit{degree of a Hurwitz curve} is the algebraic intersection number $\left[C\right]\cdot \left[L\right]>0$, where $L\subseteq \mathbb{CP}^2$ is a complex projective line. 

An \textit{$A_n$-singularity} of a Hurwitz curve $C\subseteq \mathbb{CP}^2$ for $n\geq 1$ is a singularity of $C$ locally modelled by the equation $z^2 = w^n$ in local complex coordinates $\left(z,w\right)$ near $\left(0,0\right)$, where the level sets of the $z$-coordinates are transverse to the fibers of $\pi$. For example, an $A_1$-singularity of $C$ is a nondegenerate tangency of $C$ to a fiber of $\pi$, an $A_2$-singularity is a \textit{nodal singularity}, an $A_3$-singularity is a \textit{cuspidal singularity}, and an $A_4$-singularity is a \textit{tacnodal singularity}. (We note that our terminology differs slightly from the literature - in the literature, an $A_n$-singularity is locally modelled by the equation $z^2=w^{n+1}$ - but our terminology is convenient for our purposes.)

An \textit{isotopy of Hurwitz curves} is a one-parameter family $\{C_t\}_{t\in \left[0,1\right]}$ of Hurwitz curves such that the singularities of $C_t$ are the same as the singularities of $C_0$ for each $t\in \left[0,1\right]$. A Hurwitz curve $C\subseteq \mathbb{CP}^2$ is a \textit{simple Hurwitz curve} if the singularities of $C$ are all $A_n$-singularities.

The first main result of the paper is the following complete resolution of the isotopy problem for degree three simple Hurwitz curves in $\mathbb{CP}^2$.

\begin{theorem}
\label{tmainHurwitzcurve}
If $C,C'\subseteq \mathbb{CP}^2$ are simple Hurwitz curves of degree three with the same number of $A_n$-singularities for each positive integer $n\geq 1$, then $C$ is isotopic to $C'$.
\end{theorem}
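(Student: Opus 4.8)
The plan is to translate the geometric isotopy problem into an algebraic problem about factorizations in the braid group $B_3$ via braid monodromy, and then to resolve the algebraic problem using the theory of positive braids and conjugacy classes in $B_3$. First I would invoke the correspondence between isotopy classes of degree three Hurwitz curves and Hurwitz-equivalence classes of their braid monodromy factorizations. Restricting $\pi$ to $C$ gives a branched cover of $\mathbb{CP}^1$ of degree three away from the finite set $S$, and tracking the three points of a generic fiber around loops encircling the critical values yields a factorization of the positive full twist $\Delta^2 = (\sigma_1\sigma_2)^3 \in B_3$ into band generators (conjugates of $\sigma_1,\sigma_2$). Two curves are isotopic if and only if their factorizations are equivalent under Hurwitz moves together with global (simultaneous) conjugation.

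The local braid monodromy of an $A_n$-singularity (modelled by $z^2 = w^n$) is $\tau^n$ for a single band generator $\tau$, since the two local sheets simply half-twist about one another $n$ times. Hence the factorization of a simple Hurwitz curve has the form
\[
\Delta^2 = \tau_1^{n_1}\tau_2^{n_2}\cdots\tau_k^{n_k},
\]
where each $\tau_j$ is a band generator and the number of $A_n$-singularities equals $a_n = \#\{j : n_j = n\}$. Comparing exponent sums (the abelianization $B_3\to\mathbb{Z}$ sends each band generator to $1$ and $\Delta^2$ to $6$) forces the constraint $\sum_{n\geq 1} n\,a_n = 6$, which already cuts the problem down to finitely many admissible block-size multisets.

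The theorem now becomes the purely algebraic statement that the multiset $\{n_1,\dots,n_k\}$ of block sizes is a complete invariant of the Hurwitz-equivalence class, up to conjugation, of such a factorization of $\Delta^2$; equivalently, that the Hurwitz action is transitive on band-generator factorizations of $\Delta^2$ with a prescribed block-size multiset. The Hurwitz move on adjacent blocks leaves the multiset of exponents unchanged while replacing a band generator by a conjugate, so the block sizes are automatically invariant and the content is to show they are the \emph{only} invariant. To prove this I would develop a normal form for these factorizations: using the centrality of $\Delta^2$ to cyclically rotate the factorization, using global conjugation to normalize $\tau_1$ to a standard generator, and exploiting the positive (Garside) structure of $B_3$ together with the quotient $B_3/Z(B_3)\cong PSL_2(\mathbb{Z})$ to control the conjugating words and simplify each remaining band generator toward a canonical position. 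This is where the new algebraic theory of conjugacy classes in $B_3$ would do the real work, furnishing an invariant-driven reduction that terminates at a canonical factorization determined solely by the block sizes.

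The main obstacle is precisely this control over the Hurwitz action: the band generators $\tau_j$ are arbitrary conjugates of $\sigma_1,\sigma_2$, and although each elementary Hurwitz move is individually transparent, iterating them can inflate the complexity of the conjugating elements, so a naive induction will not terminate. One therefore needs a genuinely new complexity measure or normal form on positive factorizations of $\Delta^2$ that strictly decreases under a well-chosen sequence of Hurwitz moves while preserving the block structure. I expect the hardest cases to be the configurations with few blocks of large size (a single $A_6$, or $A_5 + A_1$, $A_4 + A_2$), where there is the least room to maneuver and uniqueness is least transparent; a careful analysis of the conjugacy classes of the partial products $\tau_1^{n_1}\cdots\tau_j^{n_j}$ in $B_3$ should pin these down. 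Once transitivity is established for every admissible multiset, the theorem follows by unwinding the braid monodromy correspondence.
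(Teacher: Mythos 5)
Your reduction is exactly the paper's: pass to braid monodromy factorizations of $\Delta^2$ into $n$th powers of positive half-twists in $B_3$, invoke the Kharlamov--Kulikov completeness theorem for Hurwitz equivalence (including global conjugation), use the abelianization to get $\sum_n n\nu_n = 6$, and then try to show the Hurwitz action is transitive on factorizations with a prescribed multiset of exponents by driving each factorization to a canonical form. Up to that point everything you write is correct and is what the paper does.

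The gap is that the transitivity statement is the entire theorem, and your proposal asserts rather than supplies the mechanism. You correctly diagnose that a naive induction fails because Hurwitz moves can inflate the conjugators, and that one needs ``a genuinely new complexity measure\dots that strictly decreases,'' but you do not say what that measure is or why a decreasing move always exists. The paper's answer occupies Sections 3 and 4: the complexity is the sum of the absolute values of the Garside powers of the factors; the Garside normal form of an $e$th power of a positive half-twist is pinned down explicitly as $\Delta^{-\omega(\tau)}\rho\sigma_i^{e}\tau$ with $\rho$ dual to $\tau$ (Theorem~\ref{tGarsidenormalformpositivehalftwist}); and a ``duality'' calculus for products of positive braids yields an exact formula for how a Hurwitz move changes the complexity (Lemma~\ref{lHurwitzmovecomplexitychange}), from which one extracts a complexity-reducing Hurwitz move or global conjugation whenever the complexity exceeds that of the standard factorization. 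None of this is routine, and without it the proposal is a restatement of the problem. A secondary misdirection: you expect the hardest cases to be $A_6$, $A_5+A_1$, and $A_4+A_2$, but the paper proves these multisets are not realized by any factorization of $\Delta^2$ (Lemmas~\ref{lnottwofactors} and~\ref{lnotthreefactors}, i.e.\ Theorem~\ref{tmainsingularities}), so uniqueness there is vacuous; the genuinely hard realizable cases are the smooth one (six half-twists) and the mixed nodal/cuspidal/tacnodal ones, where the interaction between standard and non-standard half-twists forces the long case analyses of Subsections~\ref{ssclassificationfactorizationspositivehalftwists} and~\ref{ssclassificationfactorizationspowerspositivehalftwists}.
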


The second main result is the following complete classification of the possible singularities of degree three simple Hurwitz curves in $\mathbb{CP}^2$.

\begin{theorem}
\label{tmainsingularities}
Let $C\subseteq \mathbb{CP}^2$ be a degree three simple Hurwitz curve. Let $\nu_n$ be the number of $A_n$-singularities in $C$ for each positive integer $n\geq 1$. We have the singularity formula $6 = \sum_{n=1}^{6} n\nu_n$ and the following constraints on the singularities of $C$:
\begin{description}
\item[(i)] $C$ has at most one $A_3$-singularity (cuspidal singularity).
\item[(ii)] $C$ does not simultaneously have an $A_2$-singularity (nodal singularity) and an $A_3$-singularity (cuspidal singularity).
\item[(iii)] $C$ does not simultaneously have an $A_2$-singularity (nodal singularity) and an $A_4$-singularity (tacnodal singularity).
\item[(iv)] $C$ does not have an $A_n$-singularity for $n\geq 5$.
\end{description}
\end{theorem}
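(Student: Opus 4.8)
The plan is to translate the entire problem into the algebra of $B_3$ via braid monodromy and then read off the constraints from the structure of positive factorizations of the central element. First I would fix a generic fiber of $\pi$ away from the line at infinity and record the braid monodromy factorization associated to $C$: travelling around a large loop in the base $\mathbb{CP}^1$ enclosing all points of $S$, the three intersection points of $C$ with the moving fiber trace out the full twist $\Delta^2 = (\sigma_1\sigma_2)^3 \in B_3$, while each point of $S$ contributes one local factor. The local model $z^2 = w^n$ of an $A_n$-singularity shows that its local monodromy is conjugate to $\sigma_i^{\,n}$ (a positive band generator raised to the $n$-th power), so $C$ determines a factorization
\begin{equation*}
\Delta^2 \;=\; \prod_{k} w_k\,\sigma_{i_k}^{\,n_k}\,w_k^{-1},
\end{equation*}
with exactly one factor for each $A_{n_k}$-singularity.

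Taking the image of this identity under the abelianization $B_3 \to \mathbb{Z}$ (exponent sum) immediately yields the singularity formula: the exponent sum of $\Delta^2$ is $6$, and the factor attached to an $A_n$-singularity contributes $n$, so $\sum_{n\ge 1} n\,\nu_n = 6$. In particular $\nu_n = 0$ for $n>6$, which is why the sum terminates at $n=6$. This abelian invariant is, however, the only one available from $H_1(B_3)$, so the finer constraints (i)--(iv) must come from the non-abelian structure of $B_3$ --- precisely the algebraic theory of positive braids and conjugacy classes developed earlier in the paper. I would first extract the coarsest such refinement by projecting to $S_3$: since $\Delta^2$ is central and pure, its image is the identity; an $A_n$-singularity with $n$ even projects to the identity and one with $n$ odd projects to a transposition, so the transpositions coming from the odd-$n$ factors must multiply to the identity in $S_3$.

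The constraints (i)--(iv) then reduce to non-existence statements for positive band factorizations of $\Delta^2$ with a prescribed multiset of exponents $\{n_k\}$. Constraint (iv) for a lone $A_6$ is the cleanest: a single factor would force $\Delta^2 = w\,\sigma_i^{\,6}\,w^{-1}$, and centrality of $\Delta^2$ then gives $\sigma_i^{\,6} = \Delta^2$, which is false already in the pure braid group $P_3 \cong F_2 \times \mathbb{Z}$, where $\Delta^2$ generates the central $\mathbb{Z}$ factor while $\sigma_i^{\,6}$ has nontrivial image in the free factor. For the remaining forbidden multisets --- $A_5{+}A_1$ (iv), two $A_3$'s (i), $A_3{+}A_2{+}A_1$ (ii), and $A_4{+}A_2$ (iii) --- I would normalize one factor by global conjugation and solve for the complementary positive data, reducing each case to whether a specific element such as $\sigma_i^{-5}\Delta^2$ is conjugate to a positive band generator, or whether $\sigma_i^{-4}\Delta^2$ (respectively $\sigma_i^{-3}\Delta^2$) admits the required positive band factorization into the prescribed powers. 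In each of these cases the exponent-sum and $S_3$ obstructions are passed, so one genuinely needs the paper's classification of conjugacy classes of positive words in $B_3$ to certify non-existence.

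The step I expect to be the main obstacle is exactly this last one: ruling out the borderline multisets that survive both the exponent-sum count and the permutation count. This is a purely non-abelian phenomenon, and the crux is to show that the central element $\Delta^2$ cannot be written as a product of the prescribed positive band powers. I would attack it using the Birman--Ko--Lee (band) Garside normal form and the summit-set invariants supplied by the algebraic theory, together with the observation that in each forbidden case the complementary factor forced by the equation fails to lie in the conjugacy class of a positive band of the required exponent. Assembling these case analyses eliminates every partition of $6$ except the six admissible types, which establishes (i)--(iv).
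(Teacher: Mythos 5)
Your reduction to the algebra of $B_3$, the derivation of the singularity formula from the abelianization, and the disposal of the lone-$A_6$ case (where centrality forces $\sigma_i^6=\Delta^2$, which fails) all match the paper and are sound. The structural observation that the remaining constraints are non-existence statements for positive factorizations of $\Delta^2$ with prescribed exponent multisets $\{5{+}1,\;3{+}3,\;4{+}2,\;3{+}2{+}1\}$ is also the right framing. The gap is that these non-existence statements, which are the entire content of (i)--(iv), are not proved: you correctly identify them as "the main obstacle" and then describe a strategy rather than executing it. For the two-factor multisets your strategy is viable: after global conjugation the second factor is \emph{completely determined} as $\sigma_i^{-e_1}\Delta^2$, so one only needs a conjugacy-class test, and the paper's characterization of powers of positive half-twists (Theorem~\ref{tGarsidenormalformpositivehalftwist} together with the numerical criterion $\omega(\xi)=-2k+e-1$ of Proposition~\ref{pomeganumberpowerpositivehalftwist}) certifies failure by a short Garside-normal-form computation. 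You do not carry out these computations, and you invoke the Birman--Ko--Lee band structure and summit sets, which is not the machinery the paper supplies; but this part could be completed along the lines you sketch. Note that the paper instead proves the stronger uniform statement (Lemma~\ref{lnottwofactors}) that \emph{no} two-factor factorization of $\Delta^2$ into powers of positive half-twists exists, via a complexity-reducing conjugation argument, which subsumes $5{+}1$, $3{+}3$, and $4{+}2$ at once.

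The genuine failure point is the multiset $3{+}2{+}1$, i.e.\ constraint (ii). Here your proposed reduction --- "whether $\sigma_i^{-3}\Delta^2$ admits the required positive band factorization into the prescribed powers" --- is not a conjugacy-class membership test. After normalizing the cube to $\sigma_1^3$, the remaining equation is $g_2g_3=\sigma_1^{-3}\Delta^2$ with $g_2$ conjugate to $\sigma_j^2$ and $g_3$ conjugate to $\sigma_k$ by \emph{unconstrained} conjugators, so the complementary data is not forced and no single normal-form computation settles it. Your $S_3$ and exponent-sum obstructions both vanish in this case, and nothing else in your proposal provides the needed mechanism. The paper closes this case with Lemma~\ref{lnotthreefactors}, whose proof rests on the minimal-complexity reduction for three-factor factorizations together with the numerical identities of Lemma~\ref{lthreefactorsconstraint} (in particular $2=\delta_1+\delta_2+\delta_3+\epsilon$ and $\omega\left(\rho_3\right)=2+\omega\left(\rho_1\right)+\omega\left(\rho_2\right)+\delta_3-e_3$), which force $e_3\geq 4$ or a contradiction in every subcase. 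Some substitute for this quantitative bookkeeping --- relating the Garside powers of the three factors to their exponents --- is indispensable, and it is absent from the proposal.
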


A reformulation of Theorem~\ref{tmainsingularities} is that if $C\subseteq \mathbb{CP}^2$ is a degree three simple Hurwitz curve, then either $C$ is smooth, a nodal curve with at most three nodes, a cuspidal curve with a single cusp, or a curve with a single tacnode and no other singularities. The following statement is a consequence of Theorem~\ref{tmainHurwitzcurve} and Theorem~\ref{tmainsingularities}.

\begin{theorem}
\label{tmainHurwitzalgebraic}
If $C\subseteq \mathbb{CP}^2$ is a degree three simple Hurwitz curve, then $C$ is isotopic to a projective algebraic curve.
\end{theorem}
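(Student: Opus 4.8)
The plan is to deduce the statement directly from Theorem~\ref{tmainHurwitzcurve} and Theorem~\ref{tmainsingularities} by a finite case analysis, the point being that the singularity profiles permitted by Theorem~\ref{tmainsingularities} are exactly those realized by projective algebraic cubics. First I would record that, by Theorem~\ref{tmainsingularities} together with the singularity formula $6 = \sum_{n=1}^{6} n\nu_n$ and constraint \textbf{(iv)}, the curve $C$ has $\nu_n = 0$ for all $n\geq 5$, and the multiset of its \emph{genuine} singularities (the $A_n$-singularities with $n\geq 2$) falls into exactly one of the following mutually exclusive cases: (a) none (smooth); (b) exactly $k$ nodes with $1\leq k\leq 3$ and nothing else; (c) a single cusp and nothing else; (d) a single tacnode and nothing else. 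In each case the number $\nu_1$ of simple fibrewise tangencies is then forced by the singularity formula, namely $\nu_1 = 6,\ 6-2k,\ 3,\ 2$ respectively. This yields six possible profiles in total.

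The strategy is then: for each of these six profiles, exhibit a degree three projective algebraic curve $C'$ realizing exactly that profile as a simple Hurwitz curve, and invoke Theorem~\ref{tmainHurwitzcurve} to conclude that $C$ is isotopic to $C'$. For the explicit realizations I would use the classical models: (a) a smooth plane cubic in Weierstrass form; (b) for $k=1$ a nodal cubic such as $\{z_1^2 z_2 = z_0^2(z_0+z_2)\}$, for $k=2$ the union of a smooth conic and a secant line, and for $k=3$ three lines in general position (a triangle, with its three transverse double points); (c) the cuspidal cubic $\{z_1^2 z_2 = z_0^3\}$; and (d) the union of a smooth conic and a line tangent to it, whose point of contact is a tacnode analytically equivalent to $z^2 = w^4$.

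For the general-position step I would apply a generic element of $\mathrm{PGL}_3(\mathbb{C})$ to each classical model; since projective transformations preserve the class of degree three projective algebraic curves, the result is again such a curve, and for a generic choice it avoids the projection centre $\left[0:0:1\right]$ and sits in general position with respect to $\pi$. Thus each transformed model is a projective algebraic curve that is simultaneously a simple Hurwitz curve whose only non-tangential singularities are the prescribed nodes, cusp, or tacnode, each presented in the required $A_n$-local-model form, and whose remaining fibrewise tangencies are all nondegenerate $A_1$-singularities disjoint from the genuine singularities. Writing $C'$ for the model whose genuine singularities match those of $C$, it then remains only to observe that, by the singularity formula (Theorem~\ref{tmainsingularities}, applied to both $C$ and $C'$), matching genuine singularities forces $\nu_1$ to agree as well; hence $C$ and $C'$ have the same number of $A_n$-singularities for every $n\geq 1$, and Theorem~\ref{tmainHurwitzcurve} delivers the desired isotopy.

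The hard part will not be the logical skeleton, which is formal given the two preceding theorems, but the general-position verification: confirming that a generic projective image of each classical model is genuinely a simple Hurwitz curve for $\pi$, i.e.\ that its singular points reduce to precisely the advertised $A_n$-models with the level sets of the local $z$-coordinate transverse to the fibres, and that all its fibrewise tangencies away from these points are simple and mutually distinct. This is a standard transversality argument — a generic linear reprojection makes the branch locus of $\pi|_{C'}$ simple and separates it from the singular locus — but it must be executed compatibly with the $z^2=w^n$ convention for $A_n$-singularities used throughout the paper, so that the local analytic model at each genuine singularity is verified to carry the transversality-to-fibres condition demanded by the definition of a Hurwitz curve.
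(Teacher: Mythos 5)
Your proposal is correct and follows essentially the same route as the paper, which simply declares Theorem~\ref{tmainHurwitzalgebraic} to be a consequence of Theorem~\ref{tmainHurwitzcurve} and Theorem~\ref{tmainsingularities} and then lists the same six classical algebraic models (smooth cubic, nodal cubic, conic plus secant, three general-position lines, cuspidal cubic, conic plus tangent line) that you use. Your additional care about the generic-reprojection step — checking that each model is genuinely a simple Hurwitz curve with the $A_n$-singularities in the paper's $z^2=w^n$ normal form and with distinct fibrewise tangencies — is a reasonable elaboration of a point the paper leaves implicit (cf.\ its remark that a projective algebraic curve is a Hurwitz curve for a generic choice of pole and linear projection).
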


If $C\subseteq \mathbb{CP}^2$ is a degree three simple Hurwitz curve, then Theorem~\ref{tmainsingularities} and Theorem~\ref{tmainHurwitzalgebraic} imply that $C$ is either isotopic to a smooth cubic curve (if $C$ is smooth), a nodal cubic curve (if $C$ has a single node), the transverse union of a line and an irreducible conic (if $C$ has two nodes), the transverse union of three lines without a common intersection (if $C$ has three nodes), a cuspidal cubic curve (if $C$ has a single cusp), or the union of an irreducible conic and a tangent line to the conic (if $C$ has a tacnode).

The symplectic isotopy conjecture posits that every smooth or nodal symplectic curve in $\mathbb{CP}^2$ is symplectically isotopic to a projective algebraic curve in $\mathbb{CP}^2$. Theorem~\ref{tmainHurwitzalgebraic} is equivalent to the following statement.

\begin{theorem}
\label{tmainsymplectic}
If $C\subseteq \mathbb{CP}^2$ is a symplectic curve of degree three, where the singularities of $C$ are all simple (e.g., nodal, cuspidal, or tacnodal singularities), then $C$ is symplectically isotopic to a projective algebraic curve.
\end{theorem}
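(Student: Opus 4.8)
The plan is to exploit the equivalence, asserted in the excerpt, between this statement and Theorem~\ref{tmainHurwitzalgebraic}, which has already been established. The bridge between the symplectic category and the category of simple Hurwitz curves is the theory of symplectic braid monodromy: a symplectic curve in $\mathbb{CP}^2$ can be braided with respect to the projection $\pi$ so as to become a symplectic simple Hurwitz curve, and the resulting braid monodromy factorization — a factorization of the central full twist $\Delta^2\in B_3$ (of exponent sum $6$, matching the singularity formula $6=\sum n\nu_n$ of Theorem~\ref{tmainsingularities}) into positive half-twists indexed by the singular fibers — records the symplectic isotopy type.

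First I would reduce the symplectic curve $C$ to a simple Hurwitz curve. Since $C$ has only $A_n$-singularities and the base point $\left[0:0:1\right]$ of $\pi$ may be taken generic, a symplectic isotopy brings $C$ into braided position with respect to $\pi$: after the isotopy, $C$ meets the fibers of $\pi$ positively away from a finite set consisting of the singular points and the nondegenerate vertical tangencies, so $C$ becomes a degree three simple Hurwitz curve $\widetilde{C}$ carrying the same $A_n$-singularity data. This braiding is the standard passage from symplectic curves to braided surfaces, and it preserves the symplectic isotopy class because every intermediate curve remains symplectic.

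Next I would invoke Theorem~\ref{tmainHurwitzalgebraic} applied to $\widetilde{C}$: the simple Hurwitz curve $\widetilde{C}$ is isotopic, through Hurwitz curves, to a projective algebraic curve $C_0$. Since a projective algebraic curve is symplectic for the Fubini--Study form, $C_0$ is a legitimate target, and by the complete list of possibilities in Theorem~\ref{tmainsingularities} both $\widetilde{C}$ and $C_0$ are symplectic simple Hurwitz curves with identical $A_n$-data.

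The hard part, and the true content of the equivalence, is to promote the isotopy of Hurwitz curves produced by Theorem~\ref{tmainHurwitzalgebraic} to a genuine symplectic isotopy: the former is an \emph{a priori} merely smooth family with fixed singularity type, whereas symplectic isotopy demands that each member be symplectic. I would close this gap by the completeness of braid monodromy as a symplectic invariant. An isotopy of simple Hurwitz curves induces a Hurwitz equivalence of their braid monodromy factorizations in $B_3$, and conversely two symplectic braided curves whose factorizations are Hurwitz equivalent are ambient symplectically isotopic; concretely, one fixes the combinatorial braided model and shows that the space of symplectic forms taming it, together with the compatible braidings realizing a fixed factorization, is connected, so the smooth Hurwitz isotopy lifts to a path of symplectic curves. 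Chaining the three steps, $C$ is symplectically isotopic to $\widetilde{C}$, which is symplectically isotopic to the algebraic curve $C_0$. I expect this final upgrade to be the main obstacle, since it is precisely where the symplectic — as opposed to purely topological — nature of the statement must be exploited.
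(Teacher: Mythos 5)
Your proposal is correct and follows essentially the same route as the paper: the paper treats Theorem~\ref{tmainsymplectic} as equivalent to Theorem~\ref{tmainHurwitzalgebraic} via the standard correspondence between symplectic curves with $A_n$-singularities and simple Hurwitz curves (braiding the curve, applying the Hurwitz-curve classification, and using the completeness of braid monodromy as in Kharlamov--Kulikov to upgrade the Hurwitz isotopy to a symplectic one). The ``hard part'' you flag is exactly the step the paper delegates to that established correspondence rather than reproving.
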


The importance of the symplectic isotopy conjecture stems partially from the work of Auroux~\cite{aurouxsymplecticbranchedcover} and Auroux-Katzarkov~\cite{aurouxsymplecticinvariants}. The upshot of these works is that in principle, the classification of closed symplectic $4$-manifolds reduces to the isotopy problem for Hurwitz curves in $\mathbb{CP}^2$ with only nodal and cuspidal singularities. Furthermore, the classification of which closed symplectic $4$-manifolds are complex projective surfaces reduces to the symplectic isotopy problem (i.e., which symplectic curves with only nodal and cuspidal singularities are symplectically isotopic to a projective algebraic curve?).

We remark that Sikorav established Theorem~\ref{tmainsymplectic} in the special case of smooth curves~\cite{sikoravsmoothdegreethree}, Shevchishin established Theorem~\ref{tmainsymplectic} in the special case of irreducible nodal curves~\cite{shevchishinnodes}, Golla-Starkston established Theorem~\ref{tmainsymplectic} in the special case of reducible nodal curves~\cite{gollastarkstonreducible}, and Ohta-Ono established Theorem~\ref{tmainsymplectic} in the special case of cuspidal curves with a single cusp~\cite{ohtaonocusp}. The methods used by these authors are based on Gromov's theory of pseudoholomorphic curves.

We use the theory of braid monodromy of simple Hurwitz curves in $\mathbb{CP}^2$ in order to rephrase the main results in this paper as algebraic statements in the braid group $B_3$. Indeed, the braid monodromy of a degree $d$ simple Hurwitz curve $C\subseteq \mathbb{CP}^2$ is a factorization of $\Delta^2$ into powers of positive half-twists in the braid group $B_d$, where $\Delta$ is the Garside element in $B_d$. The number of factors in this factorization is equal to the number of singularities in $C$ and nondegenerate tangencies of $C$ to the fibers of $\pi$. The monodromy of $C$ with respect to an $A_n$-singularity in $C$ is the $n$th power of a positive half-twist in $B_d$. The braid monodromy of $C$ is well-defined up to Hurwitz equivalence, which is an equivalence relation on the set of factorizations of $\Delta^2$. Kulikov-Teicher~\cite{kulikovteicherbraidmonodromy} proved that the Hurwitz equivalence class of the braid monodromy of a Hurwitz curve $C$ with only nodal and cuspidal singularities is a complete invariant of the isotopy class of $C$. Kharlamov-Kulikov~\cite{kulikovkharlamovbraidmonodromy} extended this result to all simple Hurwitz curves in $\mathbb{CP}^2$. We refer the reader to Section~\ref{sbackgroundmotivation} for a more detailed discussion of the background.

In this paper, we classify factorizations of $\Delta^2$ into powers of positive half-twists in the braid group $B_3$ using novel algebraic techniques based on a theory of positive braids and conjugacy classes in the braid group $B_3$, which we develop in Section~\ref{sB3background}. Indeed, if \[B_3 = \left\langle \sigma_1,\sigma_2 : \sigma_1\sigma_2\sigma_1 = \sigma_2\sigma_1\sigma_2\right\rangle\] denotes the Artin presentation of the braid group $B_3$, then $\Delta = \sigma_1\sigma_2\sigma_1 = \sigma_2\sigma_1\sigma_2$, and the $n$th power of a positive half-twist is an element of the conjugacy class of $\sigma_1^n$. In Subsection~\ref{ssdualitypositivebraid}, we essentially develop a theory of equations in $B_3$, and in Subsection~\ref{sspowerpositivehalftwist}, we establish an explicit algebraic characterization of powers of positive half-twists in $B_3$. In Section~\ref{smain}, we use these results in order to classify factorizations of $\Delta^2$ into powers of positive half-twists in $B_3$. 

In a sequel (\cite{dattageneralclassificationHurwitzcurves}) to the present paper, we generalize our algebraic theory of the braid group $B_3$ to the braid groups $B_d$ for all positive integers $d\geq 1$. We use this algebraic theory in order to prove that every smooth symplectic curve in $\mathbb{CP}^2$ (without restriction on the degree) is isotopic to a projective algebraic curve, affirmatively establishing the symplectic isotopy conjecture in full generality. The present work represents the first step toward resolving the symplectic isotopy conjecture using the purely algebraic techniques of braid monodromy.

In this paper, we also handle the case of singular degree three simple Hurwitz curves in $\mathbb{CP}^2$. Indeed, an analog of Theorem~\ref{tmainsymplectic} is not true for symplectic curves of every degree, when there are singularities. In fact, Moishezon~\cite{moishezoncusps} has constructed examples of symplectic curves of degree at least eighteen in $\mathbb{CP}^2$ with nodal and cuspidal singularities, but which are not symplectically isotopic to a projective algebraic curve.

Finally, we establish a complete classification of genus one Lefschetz fibrations over the two-sphere $\mathbb{S}^2$. The following statement has already been established by Moishezon-Livne~\cite{moishezongenusonelefschetzfibrations} but we give a new proof in this paper using our theory on the braid group $B_3$.

\begin{theorem}
\label{tmainLefschetz}
The number of singular fibers is a complete invariant of the isomorphism class of a genus one Lefschetz fibration over $\mathbb{S}^2$. Furthermore, the number of singular fibers in a genus one Lefschetz fibration over $\mathbb{S}^2$ is divisible by $12$.
\end{theorem}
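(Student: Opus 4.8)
The plan is to translate the classification of genus one Lefschetz fibrations over $\mathbb{S}^2$ into a factorization problem in $B_3$ and then solve it with the algebraic theory developed in Section~\ref{sB3background}. Recall that a genus one Lefschetz fibration $f\colon X\to \mathbb{S}^2$ with $N$ singular fibers is encoded, up to isomorphism, by a positive factorization of the identity $\tau_{c_1}\cdots\tau_{c_N}=1$ in the mapping class group $\mathrm{MCG}(T^2)\cong SL_2(\mathbb{Z})$, where each $\tau_{c_i}$ is a right-handed Dehn twist about a vanishing cycle, and two fibrations are isomorphic precisely when the corresponding factorizations are related by Hurwitz moves together with global (simultaneous) conjugation. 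Since the number of singular fibers is manifestly the number of factors, it is automatically an isomorphism invariant, so the substantive content of the theorem is that any two such factorizations with the same number of factors are equivalent under Hurwitz moves and global conjugation, together with the divisibility constraint $12\mid N$.

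First I would lift the problem to $B_3$ along the standard surjection $B_3\to SL_2(\mathbb{Z})$ sending $\sigma_1,\sigma_2$ to elementary matrices. Its kernel is the infinite cyclic group $\langle\Delta^4\rangle$: the full twist $\Delta^2=(\sigma_1\sigma_2)^3$ generates the center of $B_3$ and maps to $-I$, so $\Delta^4$ maps to $I$. A right-handed Dehn twist is the image of a positive half-twist (a conjugate of $\sigma_1$), and because the abelianization $B_3^{\mathrm{ab}}\cong\mathbb{Z}$ records the exponent sum---equal to $1$ on every positive half-twist and to $12j$ on $\Delta^{4j}$---each Dehn twist has a \emph{unique} positive half-twist lift. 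Lifting $\tau_{c_1}\cdots\tau_{c_N}=1$ factorwise therefore produces positive half-twists $h_1,\dots,h_N$ with $h_1\cdots h_N\in\langle\Delta^4\rangle$, say $h_1\cdots h_N=\Delta^{4k}$; comparing exponent sums gives $N=12k$, which already proves divisibility by $12$. Moreover Hurwitz moves and global conjugation lift and project bijectively through this extension (a Hurwitz move on $(h_i,h_{i+1})$ projects to one on $(\tau_{c_i},\tau_{c_{i+1}})$, and the uniqueness of lifts lets any downstairs move be realized upstairs), so the classification reduces to the purely algebraic statement that every factorization of $\Delta^{4k}$ into $12k$ positive half-twists in $B_3$ is equivalent, under Hurwitz moves and global conjugation, to $(\sigma_1\sigma_2)^{6k}$.

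To prove this I would argue by induction, building on the classification of factorizations of $\Delta^2$ into powers of positive half-twists established in Section~\ref{smain}. The inductive statement is that for every $m\geq 1$ any factorization of $\Delta^{2m}$ into $6m$ positive half-twists is equivalent to $(\sigma_1\sigma_2)^{3m}$; the theorem is the case $m=2k$, and the base case $m=1$ is exactly the uniqueness of the factorization of $\Delta^2$ into six positive half-twists (the braid-monodromy incarnation of the unique smooth degree three curve, the analog of Sikorav's theorem). For the inductive step I would prove a \emph{splitting lemma}: any factorization of $\Delta^{2m}$ into $6m$ positive half-twists can be transformed by Hurwitz moves and global conjugation so that its first six factors have product the central element $\Delta^2$. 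Once this holds, those six factors form a factorization of $\Delta^2$ governed by the base case, the remaining $6(m-1)$ factors have product $\Delta^{2m}\Delta^{-2}=\Delta^{2(m-1)}$, and the inductive hypothesis closes the argument.

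The splitting lemma is the main obstacle, and it is precisely where the algebraic theory of $B_3$ from Subsections~\ref{ssdualitypositivebraid} and~\ref{sspowerpositivehalftwist} is indispensable. The difficulty is that a priori no consecutive block of factors need multiply to a central element; one must use the theory of equations in $B_3$ and the explicit characterization of powers of positive half-twists to locate, after Hurwitz rearrangement, a length-six sub-word whose product is forced to be $\Delta^2$, exploiting that the total product $\Delta^{2m}$ is central and that positivity severely constrains how positive half-twists can compose to a power of $\Delta$. I expect the centrality of the product to be the decisive leverage, since it makes the Hurwitz action behave essentially like a cyclic action on the factors, so that controlling a single well-chosen block suffices. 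Transporting the equivalences faithfully back and forth across the central extension $\langle\Delta^4\rangle$ is a secondary technical point that the uniqueness of positive half-twist lifts renders routine.
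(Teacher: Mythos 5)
Your overall architecture matches the paper's: lift along the central extension $1\to\langle\Delta^4\rangle\to B_3\to \mathrm{SL}_2(\mathbb{Z})\to 1$, observe that positive Dehn twists lift to positive half-twists, extract divisibility by $12$ from the exponent sum (the abelianization argument you give is correct and is essentially how the paper gets $3k$ factors with $4\mid k$), and transport Hurwitz equivalences through the extension using functoriality (Proposition~\ref{pHurwitzequivalencefunctorial}). Up to that point your proposal is sound.

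The gap is in how you propose to classify factorizations of $\Delta^{4k}$ into positive half-twists. You reduce everything to a ``splitting lemma'' asserting that any factorization of $\Delta^{2m}$ into $6m$ positive half-twists can be rearranged by Hurwitz moves so that its first six factors multiply to $\Delta^2$, and you then explicitly concede that this lemma is ``the main obstacle'' without proving it; the heuristic you offer (centrality makes the Hurwitz action essentially cyclic) does not constitute an argument, and locating a consecutive block of six factors whose product is forced to be central is not obviously easier than the classification itself. This is precisely the hard part of the theorem, so as written the proof is incomplete. The paper avoids the issue entirely: Theorem~\ref{tfactorizationpositivehalftwistsstandard} is stated and proved for factorizations of $\Delta^{k}$ for \emph{every} positive integer $k$, not just $k=2$, because the complexity-reduction machinery (Lemma~\ref{lHurwitzmovecomplexitychange}, Lemma~\ref{lboundedcontraction}, Lemma~\ref{lreorderingfactorssamecomplexity}) applies uniformly to a factorization of $\Delta^{k}$ with $3k$ factors; combined with Lemma~\ref{lstandardfactorizationsHurwitzequivalent} (uniqueness of factorizations into standard generators, which follows from Garside's embedding theorem), this gives the needed classification in one step with no induction on the power of $\Delta$. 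If you want to salvage your route, the honest fix is to check that the proof of the $\Delta^2$ case you are invoking as a base case already works verbatim for $\Delta^{2m}$ — which it does — at which point the splitting lemma and the induction become unnecessary.
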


\subsection{Outline of the paper}
The outline of the paper is as follows. In Section~\ref{sbackgroundmotivation}, we review background on the theory of Hurwitz curves in $\mathbb{CP}^2$, braid monodromy factorizations, the Garside normal form in the braid groups, and positive half-twists in the braid groups. We also set up terminology and notation that we will use in the rest of the paper. 

In Section~\ref{sB3background}, we develop our theory on the algebraic structure of the braid group $B_3$. In Subsection~\ref{ssdualitypositivebraid}, we introduce our theory of duality of positive braids, which is a tool to study factorizations in the braid group $B_3$. In Subsection~\ref{sspowerpositivehalftwist}, we algebraically characterize the Garside normal forms of powers of positive half-twists in the braid group $B_3$.

In Section~\ref{smain}, we use the theory developed in Section~\ref{sB3background} in order to classify factorizations of $\Delta^2$ into powers of positive half-twists in the braid group $B_3$ up to Hurwitz equivalence. In Subsection~\ref{ssclassificationfactorizationspositivehalftwists}, we show that there is a unique Hurwitz equivalence class of factorizations of $\Delta^2$ into positive half-twists in the braid group $B_3$. In Subsection~\ref{ssclassificationfactorizationspowerspositivehalftwists}, we show that the numbers of factors of each type are a complete set of invariants of the Hurwitz equivalence class of a factorization of $\Delta^2$ into powers of positive half-twists in the braid group $B_3$. In Subsection~\ref{sssingularityconstraints}, we constrain the types of factors in a factorization of $\Delta^2$ into powers of positive half-twists in the braid group $B_3$. The combination of these statements in Section~\ref{smain} is equivalent to Theorem~\ref{tmainHurwitzcurve} and Theorem~\ref{tmainsingularities} via braid monodromy.

In Section~\ref{sLefschetzfibrations}, we show that there is a unique Hurwitz equivalence class of factorizations of the identity into positive Dehn twists in the mapping class group $\text{Mod}\left(\mathbb{T}^2\right) = \text{SL}_2\left(\mathbb{Z}\right)$, where $\mathbb{T}^2$ is the two-dimensional torus. The statement is equivalent to Theorem~\ref{tmainLefschetz}.

\subsection{Acknowledgements}
The author would like to express his gratitude to Peter Ozsv{\'a}th and Zolt{\'a}n Szab{\'o} for their interest in this work and helpful feedback on this manuscript. The author would also like to express his gratitude to Denis Auroux for helpful and informative email correspondence.

\section{Background and motivation}
\label{sbackgroundmotivation}
In this section, we review background on the theory of Hurwitz curves in $\mathbb{CP}^2$ and braid monodromy (Subsection~\ref{ssHurwitzcurvesbraidmonodromy}), the theory of factorizations in a group (Subsection~\ref{sstheoryfactorizations}), the Garside normal form in the braid groups (Subsection~\ref{ssGarsidebackground}), and positive half-twists in the braid groups (Subsection~\ref{ssdpositivehalftwist}). We will also set up notation and terminology that we will use in this paper. We refer the reader to Auroux~\cite{aurouxopenquestions} for a more detailed expository overview of the theory of Hurwitz curves and braid monodromy factorizations, and its connections to symplectic geometry. The material in this section is not new, and our contributions begin in Section~\ref{sB3background}.

\subsection{The theory of Hurwitz curves in $\mathbb{CP}^2$ and braid monodromy}
\label{ssHurwitzcurvesbraidmonodromy}

Let $\mathbb{CP}^n$ denote complex projective $n$-space. Let $\pi:\mathbb{CP}^2\setminus \left[0:0:1\right]\to \mathbb{CP}^1$ be the projection defined in homogeneous coordinates by the rule $\pi\left[z_0:z_1:z_2\right] = \left[z_0:z_1\right]$. We recall the definition of one of the main objects of study in this paper.

\begin{definition}
A (singular) closed oriented $2$-dimensional real submanifold $C\subseteq \mathbb{CP}^2$ with isolated singularities is a \textit{Hurwitz curve} if the following conditions on $C$ are satisfied:
\begin{description}
\item[(i)] The point $\left[0:0:1\right]\not\in C$.
\item[(ii)] The intersections of $C$ with the fibers of $\pi$ are transverse and positive, except at a finite set of points, which are precisely the singularities of $C$ and the nondegenerate tangencies of $C$ to the fibers of $\pi$. We refer to this finite set of exceptional points as the set of \textit{singular intersections of $C$}.
\end{description}
If $L\subseteq \mathbb{CP}^2$ is a complex projective line, then the homology class $\left[L\right]\in H_2\left(\mathbb{CP}^2\right)$ is a generator of the second homology group. The \textit{degree of the Hurwitz curve $C$} is the algebraic intersection number $d = \left[C\right]\cdot \left[L\right] > 0$. (The positivity of the algebraic intersection number follows from condition~\textbf{(ii)}.) We can write $\left[C\right] = d\left[L\right]$. 
\end{definition}

We recall the definition of an important class of Hurwitz curves.

\begin{definition}
A \textit{projective algebraic curve} $C\subseteq \mathbb{CP}^2$ is the zero set of a homogeneous polynomial $P\left(z_0,z_1,z_2\right)\in \mathbb{C}\left[z_0,z_1,z_2\right]$. The \textit{degree of the projective algebraic curve $C$} is the degree of the polynomial $P$.
\end{definition}

A projective algebraic curve is a Hurwitz curve with respect to a generic choice of pole $p\in \mathbb{CP}^2$ and linear projection $\mathbb{CP}^2\setminus \{p\}\to \mathbb{CP}^1$. In fact, the Hurwitz curves in $\mathbb{CP}^2$ behave like projective algebraic curves with respect to the projection $\pi:\mathbb{CP}^2\setminus \left[0:0:1\right]\to \mathbb{CP}^1$. In this paper, we will restrict our attention to a special class of Hurwitz curves, where the singularities are sufficiently simple. We recall the definition of these special kinds of singularities.

\begin{definition}
An \textit{$A_n$-singularity of a Hurwitz curve $C\subseteq \mathbb{CP}^2$} is a singularity of $C$ locally modelled by the equation $z^2 = w^n$ in local complex coordinates $\left(z,w\right)$ near $\left(0,0\right)$, where the level sets of the $z$-coordinate are transverse to the fibers of $\pi$. For example, an $A_1$-singularity of $C$ is a nondegenerate tangency of $C$ to a fiber of $\pi$, an $A_2$-singularity of $C$ is a \textit{nodal singularity} (locally modelled on the transverse intersection of two complex lines), an $A_3$-singularity of $C$ is a \textit{cuspidal singularity}, and an $A_4$-singularity of $C$ is a \textit{tacnodal singularity}.

If $C$ has an $A_2$-singularity and no $A_n$-singularities for $n\not\in \{1,2\}$, then $C$ is a \textit{nodal curve}. If $C$ has an $A_3$-singularity and no $A_n$-singularities for $n\not\in \{1,3\}$, then $C$ is a \textit{cuspidal curve}.
\end{definition}

Of course, an $A_1$-singularity is not a singularity of the Hurwitz curve in the usual sense, but we adopt this terminology as a matter of convenience. We now define the restricted class of Hurwitz curves that we will study in this paper.

\begin{definition}
\label{dsimpleHurwitzcurve}
A Hurwitz curve $C\subseteq \mathbb{CP}^2$ is a \textit{simple Hurwitz curve} if the following conditions are satisfied:
\begin{description}
\item[(i)] The singularities of $C$ are all $A_n$-singularities.
\item[(ii)] The singular intersections of $C$ have distinct images under the projection $\pi:\mathbb{CP}^2\setminus \left[0:0:1\right]\to \mathbb{CP}^1$. 
\end{description}
\end{definition}

We remark that condition~\textbf{(ii)} in Definition~\ref{dsimpleHurwitzcurve} is a matter of convenience rather than a constraint. Indeed, a Hurwitz curve $C\subseteq \mathbb{CP}^2$ can always be isotoped (through Hurwitz curves with the same singularities as $C$, in the sense of the following Definition~\ref{disotopyHurwitz}) so that condition~\textbf{(ii)} in Definition~\ref{dsimpleHurwitzcurve} is satisfied.

An important question is to characterize the extent to which the topology of the embedding of a simple Hurwitz curve $C\subseteq \mathbb{CP}^2$ differs from the topology of the embedding of a projective algebraic curve $C\subseteq \mathbb{CP}^2$. Of course, when we refer to ``the topology of an embedding", we are referring to the isotopy class of $C\subseteq \mathbb{CP}^2$. We recall the definition of an isotopy in the present context.

\begin{definition}
\label{disotopyHurwitz}
An \textit{isotopy of Hurwitz curves} between the Hurwitz curves $C_0,C_1\subseteq \mathbb{CP}^2$ is a one-parameter family $\{C_t\}_{t\in \left[0,1\right]}$, with the property that $C_t$ is a Hurwitz curve with the same singularities as $C_0$ (and $C_1$) for each $t\in \left[0,1\right]$. The \textit{isotopy problem for Hurwitz curves} in $\mathbb{CP}^2$ is the problem of classifying Hurwitz curves in $\mathbb{CP}^2$ up to isotopy.
\end{definition}

In this paper, we will establish a complete resolution of the isotopy problem for degree three simple Hurwitz curves in $\mathbb{CP}^2$. The results we establish for Hurwitz curves in $\mathbb{CP}^2$ are also applicable to symplectic curves in $\mathbb{CP}^2$. We recall the basic definitions from the symplectic geometry of $\mathbb{CP}^2$.

\begin{definition}
Let $\left(\mathbb{CP}^2,\omega\right)$ denote the complex projective plane with the Fubini-Study symplectic form $\omega$. A \textit{symplectic curve} in $\mathbb{CP}^2$ is a closed oriented two-dimensional real submanifold $C\subseteq \mathbb{CP}^2$ such that the restriction of $\omega$ to $C$ is the symplectic form on $C$ specified by its orientation. The \textit{degree of a symplectic curve $C$} is $d = \left[C\right]\cdot \left[L\right] > 0$, where $\left[L\right]\in H_2\left(\mathbb{CP}^2\right)$ is the homology class of a complex projective line $L\subseteq \mathbb{CP}^2$. (The positivity of the algebraic intersection number follows from the condition that $C$ is a symplectic curve.) 

A \textit{symplectic isotopy} between the symplectic curves $C_0,C_1\subseteq \mathbb{CP}^2$ is a one-parameter family $\{C_t\}_{t\in \left[0,1\right]}$, with the property that $C_t$ is a symplectic curve for each $t\in \left[0,1\right]$. The \textit{isotopy problem for symplectic curves} is the problem of classifying symplectic curves in $\mathbb{CP}^2$ up to isotopy.  
\end{definition}

If $\Sigma$ is an oriented surface and if $C$ can be written as the image of $\Sigma$ under a smooth map $f:\Sigma\to \mathbb{CP}^2$, then the condition that $C$ is a symplectic curve can be expressed as an inequality in terms of the derivative $df$ of $f$. If $C$ has degree three, then Theorem~\ref{tmainsymplectic} in the Introduction implies that there is an isotopy of smooth maps $\{f_t\}_{t\in \left[0,1\right]}$, where $f_t:\Sigma\to \mathbb{CP}^2$ satisfies the same inequality in terms of the derivative $df_t$, $f_0 = f$, and the image $f_1\left(\Sigma\right)$ satisfies a homogeneous polynomial equation.

In this paper, we will study isotopy classes of simple Hurwitz curves via the theory of braid monodromy, which we now review. Let $C\subseteq \mathbb{CP}^2$ be a simple Hurwitz curve of degree $d$ and let $q_1,q_2,\dots,q_k$ be the singular intersections of $C$, with $p_j = \pi\left(q_j\right)$ for $1\leq j\leq k$. The restriction of the projection $\pi:\mathbb{CP}^2\setminus \left[0:0:1\right]\to \mathbb{CP}^1$ to $C\subseteq \mathbb{CP}^2\setminus \left[0:0:1\right]$ has degree $d$ since the intersections $C\cap \pi^{-1}\left(p\right)$ of $C$ with the fibers of $\pi$ have cardinality $d$, except if $p\in \{p_1,p_2,\dots,p_k\}$. We will define the monodromy of the simple Hurwitz curve $C\subseteq \mathbb{CP}^2$ with respect to a simple closed curve in the base $\mathbb{CP}^1\setminus \{p_1,p_2,\dots,p_k\}$. Firstly, we recall the following interpretation of the braid group.

\begin{definition}
If $d\geq 1$ is a positive integer, then the \textit{unordered configuration space of $d$ points in the complex plane} $\mathbb{C}$ is \[\text{Conf}_d\left(\mathbb{C}\right) = \{\{z_1,z_2,\dots,z_d\}:z_i\in \mathbb{C}\text{ and }z_i\neq z_j\text{ for }i\neq j\}.\] The \textit{braid group $B_d$ on $d$ strands} is the fundamental group $B_d = \pi_1\left(\text{Conf}_d\left(\mathbb{C}\right)\right)$ of the unordered configuration space of $d$ points.
\end{definition}

We now define the monodromy of a simple Hurwitz curve $C\subseteq \mathbb{CP}^2$ with respect to a simple loop in the base $\mathbb{CP}^1$ that does not pass through the projections of the singular intersections of $C$.

\begin{definition}
\label{dmonodromyloop}
Let $C\subseteq \mathbb{CP}^2$ be a simple Hurwitz curve and let $q_1,q_2,\dots,q_k$ be the singular intersections of $C$, with $p_j=\pi\left(q_j\right)$ for $1\leq j\leq k$. Let us assume without loss of generality that $\left[0:1\right]\not\in \{p_1,p_2,\dots,p_k\}$. Let $\left[\alpha\right]\in \pi_1\left(\mathbb{CP}^1\setminus \{\left[0:1\right],p_1,p_2,\dots,p_k\}\right)$ be the homotopy class of a closed curve $\alpha$ in $\mathbb{CP}^1\setminus \{\left[0:1\right],p_1,p_2,\dots,p_k\}$. 

Let us fix a trivialization of the fiber bundle $\pi:\mathbb{CP}^2\setminus \left[0:0:1\right]\to \mathbb{CP}^1$ over the contractible open subset $\mathbb{CP}^1\setminus \left[0:1\right]\subseteq \mathbb{CP}^1$. The closed curve $\alpha$ lifts to a closed curve in $\text{Conf}_d\left(\mathbb{C}\right)$, by using the trivialization of the fiber bundle in order to identify the fibers of $\pi$ over $\mathbb{CP}^1\setminus \left[0:1\right]$ with $\mathbb{C}$. The \textit{monodromy of $C$ with respect to $\left[\alpha\right]$} is the corresponding element of $B_d\cong \pi_1\left(\text{Conf}_d\left(\mathbb{C}\right)\right)$.
\end{definition}

The monodromy of a degree $d$ simple Hurwitz curve $C\subseteq \mathbb{CP}^2$ can be interpreted as a map $\pi_1\left(\mathbb{CP}^1\setminus \{\left[0:1\right],p_1,p_2,\dots,p_k\}\right)\to B_d$, in the context of Definition~\ref{dmonodromyloop}, which is well-defined up to global conjugation in the braid group $B_d$ (corresponding to the choice of a trivialization of the fiber bundle over $\mathbb{CP}^1\setminus \left[0:1\right]$). We can encapsulate this data more concretely by defining the braid monodromy factorization of a simple Hurwitz curve. Firstly, we recall the definition of a factorization in a group and set up notation. 

\begin{definition}
Let $G$ be a group and let $g\in G$. A \textit{factorization} of $g$ is a $k$-tuple $\left(g_1,g_2,\dots,g_k\right)$ such that $g = \prod_{j=1}^{k} g_j$, where $g_j\in G$ for $1\leq j\leq k$. We refer to each $g_j$ as a \textit{factor} in the factorization and we refer to $k$ as the \textit{number of factors} in the factorization. The \textit{type of a factor} $g_j$ is the conjugacy class of $g_j$ in $G$. We use the notation $g\equiv \left(g_1,g_2,\dots,g_k\right)$ to denote that $\left(g_1,g_2,\dots,g_k\right)$ is a factorization of $g$.
\end{definition}

We now define the braid monodromy factorization of a simple Hurwitz curve in $\mathbb{CP}^2$. 

\begin{definition}
\label{dbraidmonodromy}
Let $C\subseteq \mathbb{CP}^2$ be a simple Hurwitz curve of degree $d$ and let $q_1,q_2,\dots,q_k$ be the singular intersections of $C$, with $p_j = \pi\left(q_j\right)$ for $1\leq j\leq k$. Let $\left[\gamma\right]\in \pi_1\left(\mathbb{CP}^1\setminus \{\left[0:1\right],p_1,p_2,\dots,p_k\}\right)$ be the homotopy class of the simple closed curve $\gamma$ in $\mathbb{CP}^1\setminus \{\left[0:1\right],p_1,p_2,\dots,p_k\}$ such that $\left[\gamma\right]$ is trivial in $\pi_1\left(\mathbb{CP}^1\setminus \{p_1,p_2,\dots,p_k\}\right)$. Let $\gamma_1,\gamma_2,\dots,\gamma_k$ be closed curves in $\mathbb{CP}^1\setminus \{\left[0:1\right],p_1,p_2,\dots,p_k\}$ such that $\{\left[\gamma_1\right],\left[\gamma_2\right],\dots,\left[\gamma_k\right]\}$ is a generating set of the free group $\pi_1\left(\mathbb{CP}^1\setminus \{\left[0:1\right],p_1,p_2,\dots,p_k\}\right)$ of rank $k$, and $\left[\gamma\right] = \left[\gamma_1\right]\left[\gamma_2\right]\cdots\left[\gamma_k\right]$. 

Let us fix a trivialization of the fiber bundle $\pi:\mathbb{CP}^2\setminus \left[0:0:1\right]\to \mathbb{CP}^1$ over the contractible open subset $\mathbb{CP}^1\setminus \left[0:1\right]\subseteq \mathbb{CP}^1$. Let $g\in B_d$ be the monodromy of $C$ with respect to the loop $\gamma$ and let $g_j$ be the monodromy of $C$ with respect to the loop $\gamma_j$. The \textit{braid monodromy factorization of the Hurwitz curve $C$ with respect to the generating set $\{\left[\gamma_1\right],\left[\gamma_2\right],\dots,\left[\gamma_k\right]\}$} is the factorization $g\equiv \left(g_1,g_2,\dots g_k\right)$. 
\end{definition}

In fact, there is a distinguished element $\Delta\in B_d$ (the Garside element) such that $\Delta^2$ generates the center of $B_d$. An important observation is that the monodromy of a simple Hurwitz curve $C\subseteq \mathbb{CP}^2$ with respect to $\left[\gamma\right]$ is equal to $\Delta^2$, so that we have $g = \Delta^2$ in Definition~\ref{dbraidmonodromy}. In particular, the braid monodromy factorization of $C$ with respect to a generating set $\{\left[\gamma_1\right],\left[\gamma_2\right],\dots,\left[\gamma_k\right]\}$ of $\pi_1\left(\mathbb{CP}^1\setminus \{\left[0:1\right],p_1,p_2,\dots,p_k\}\right)$ is a factorization $\Delta^2\equiv \left(g_1,g_2,\dots,g_k\right)$ into $k$ elements of $B_d$, where $g_j$ is the monodromy of $C$ with respect to $\left[\gamma_j\right]$.

Of course, the braid monodromy of $C$ with respect to another generating set $\{\left[\gamma_1'\right],\left[\gamma_2'\right],\dots,\left[\gamma_k'\right]\}$ of $\pi_1\left(\mathbb{CP}^1\setminus \{\left[0:1\right],p_1,p_2,\dots,p_k\}\right)$ is another factorization $\Delta^2 = g_1'g_2'\cdots g_k'$ into $k$ elements of $B_d$, where $g_j'$ is the monodromy of $C$ with respect to $\left[\gamma_j'\right]$. We can understand the relationship between braid monodromy factorizations of $C$ with respect to different generating sets via the notion of Hurwitz equivalence, which we now define.

\begin{definition}
Let $G$ be a group. A \textit{Hurwitz move} on a factorization $\left(g_1,g_2,\dots,g_k\right)$ is one of the following modifications of the $j$th and $\left(j+1\right)$st factors for some $1\leq j\leq k-1$: \begin{align*} \left(g_1,\dots,g_j,g_{j+1},\dots,g_k\right) &\sim \left(g_1,\dots,g_jg_{j+1}g_j^{-1},g_j,\dots,g_k\right) \\ \left(g_1,\dots,g_j,g_{j+1},\dots,g_k\right) &\sim \left(g_1,\dots,g_{j+1},g_{j+1}^{-1}g_jg_{j+1},\dots,g_k\right).\end{align*} If $g\in G$ is not central, then a pair of factorizations ${\cal F}$ and ${\cal F}'$ of $g$ are \textit{Hurwitz equivalent} if ${\cal F}'$ can be obtained from ${\cal F}$ by a finite sequence of Hurwitz moves.

The \textit{global conjugation move} by $h\in G$ on a factorization $\left(g_1,g_2,\dots,g_k\right)$ of a central element $g\in G$ is the factorization \[h\left(g_1,g_2,\dots,g_k\right)h^{-1} = \left(hg_1h^{-1},hg_2h^{-1},\dots,hg_kh^{-1}\right)\] of $g$. If $g\in G$ is central, then a pair of factorizations ${\cal F}$ and ${\cal F}'$ of $g$ are \textit{Hurwitz equivalent} if ${\cal F}'$ can be obtained from ${\cal F}$ by a finite sequence of Hurwitz moves and global conjugation moves.
\end{definition}

We observe that the relation of Hurwitz equivalence defines an equivalence relation on factorizations. Indeed, reflexivity and transitivity are evident, and symmetry is a consequence of the fact that the inverse of a Hurwitz move or global conjugation move is also a Hurwitz move or global conjugation move, respectively.

The braid monodromy factorization of a simple Hurwitz curve $C\subseteq \mathbb{CP}^2$ is independent of the choice of trivialization of the fiber bundle $\pi$ over $\mathbb{CP}^1\setminus \left[0:1\right]$ and generating set of $\pi_1\left(\mathbb{CP}^1\setminus \{\left[0:1\right],p_1,p_2,\dots,p_k\}\right)$, up to Hurwitz equivalence. Indeed, the braid group $B_k$ acts as mapping classes of $\mathbb{CP}^1\setminus \{\left[0:1\right],p_1,p_2,\dots,p_k\}$, and this induces a transitive action of $B_k$ on generating sets of $\pi_1\left(\mathbb{CP}^1\setminus \{\left[0:1\right],p_1,p_2,\dots,p_k\}\right)$. The Hurwitz moves applied to braid monodromy factorizations correspond to the actions of a generating set of $B_k$ on $\pi_1\left(\mathbb{CP}^1\setminus \{\left[0:1\right],p_1,p_2,\dots,p_k\}\right)$, and the global conjugation moves applied to factorizations correspond to different choices of a trivialization of the fiber bundle $\pi$ over $\mathbb{CP}^1\setminus \left[0:1\right]$.

\begin{definition}
Let $C\subseteq \mathbb{CP}^2$ be a simple Hurwitz curve of degree $d$ and let $q_1,q_2,\dots,q_k$ be the singular intersections of $C$, with $p_j=\pi\left(q_j\right)$ for $1\leq j\leq k$. The \textit{braid monodromy of $C$} is the Hurwitz equivalence class of the braid monodromy factorization of $C$ with respect to any generating set of $\pi_1\left(\mathbb{CP}^1\setminus \{\left[0:1\right],p_1,p_2,\dots,p_k\}\right)$. 
\end{definition}

Finally, we recall the following characterization of the factors in the braid monodromy of a degree $d$ simple Hurwitz curve $C\subseteq \mathbb{CP}^2$. Let $\gamma_j$ be a small simple closed curve encircling the point $p_j$ in Definition~\ref{dbraidmonodromy}, where $q_j$ is an $A_n$-singularity of $C$ for some positive integer $n\geq 1$. The monodromy $g_j$ of $C$ with respect to $\left[\gamma_j\right]$ is the $n$th power of a positive half-twist in the braid group $B_d$. In particular, the braid monodromy of $C$ is the Hurwitz equivalence class of a factorization of $\Delta^2$ into powers of positive half-twists in the braid group $B_d$.  

The motivation for studying the braid monodromy of a simple Hurwitz curve $C\subseteq \mathbb{CP}^2$ arises from the following result due to Kharlamov-Kulikov~\cite{kulikovkharlamovbraidmonodromy}. 

\begin{theorem}[Kharlamov-Kulikov]
\label{tKK}
If $C,C'\subseteq \mathbb{CP}^2$ are simple Hurwitz curves, then $C$ is isotopic to $C'$ if and only if the braid monodromy of $C$ is equal to the braid monodromy of $C'$.
\end{theorem}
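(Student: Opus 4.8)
The plan is to establish the two implications separately. The forward direction (isotopy implies equality of braid monodromy) is a continuity and invariance argument, while the reverse direction (equality of braid monodromy implies isotopy) is a realization-and-rigidity argument, and essentially all the difficulty lies in the latter.

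For the forward direction, suppose $\{C_t\}_{t\in[0,1]}$ is an isotopy of Hurwitz curves in the sense of Definition~\ref{disotopyHurwitz}. The singular intersections $q_j(t)$ and their projections $p_j(t)=\pi(q_j(t))$ trace out continuous paths in $\mathbb{CP}^2$ and $\mathbb{CP}^1$ respectively, and by hypothesis no singularity is created or destroyed and no type changes along the isotopy. First I would choose the generating set $\{[\gamma_1],\dots,[\gamma_k]\}$ of $\pi_1(\mathbb{CP}^1\setminus\{[0:1],p_1,\dots,p_k\})$ and the trivialization of $\pi$ over $\mathbb{CP}^1\setminus[0:1]$ to vary continuously with $t$ for as long as possible. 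Then each factor $g_j(t)$ varies continuously in $B_d$ while remaining in the fixed conjugacy class prescribed by the type of $q_j$, so in fact $g_j(t)$ is locally constant. The only obstruction to global continuity is that a moving branch point $p_j(t)$ may force a change of geometric basis (when a path $p_j(t)$ crosses the arcs defining the $\gamma_i$) or a change of trivialization; these are precisely recorded by Hurwitz moves and global conjugation moves. Hence the Hurwitz equivalence class, i.e. the braid monodromy, is constant along the isotopy.

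For the reverse direction, I would proceed by realization and then rigidity. The reconstruction step is to show that a factorization $\Delta^2\equiv(g_1,\dots,g_k)$ into powers of positive half-twists is realized as the braid monodromy of a Hurwitz curve that is unique up to isotopy. Away from the branch points, a degree $d$ simple Hurwitz curve is exactly the graph of a braided $d$-valued section of the trivialized bundle $\pi$, i.e. a map from the punctured base into $\text{Conf}_d(\mathbb{C})$, and by Definition~\ref{dmonodromyloop} this map is determined up to homotopy by the monodromy representation $\pi_1(\mathbb{CP}^1\setminus\{[0:1],p_1,\dots,p_k\})\to B_d$. Near each $p_j$ the curve is forced to match the standard local $A_n$-model, which is itself rigid up to local isotopy once the factor type (the conjugacy class of $g_j$) is fixed. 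That the total monodromy equals the central element $\Delta^2$ is precisely the compatibility condition allowing these local pieces and the braided section to be capped off into a closed curve over all of $\mathbb{CP}^1$ while avoiding the pole $[0:0:1]$. It then remains to show that Hurwitz-equivalent factorizations produce isotopic curves, which I would verify move by move: a single Hurwitz move corresponds to changing the geometric generating set by an element of the mapping class group $B_k$ of $\mathbb{CP}^1\setminus\{[0:1],p_1,\dots,p_k\}$, realized by an ambient isotopy of $\mathbb{CP}^1$ braiding the branch points, which lifts to a fiber-preserving ambient isotopy of $\mathbb{CP}^2\setminus[0:0:1]$ carrying the reconstructed curve to the one reconstructed from the moved factorization through Hurwitz curves with unchanged singularities; a global conjugation move corresponds to a fiberwise rotation (change of trivialization) and is likewise realized by an ambient isotopy.

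The main obstacle is the rigidity half of the reverse direction: proving that the braid monodromy is a complete and not merely a partial invariant, i.e. that the braided section together with the local $A_n$-models reconstructs the embedded surface uniquely up to isotopy. The two delicate points are (i) verifying that the standard local model at an $A_n$-singularity is the unique local Hurwitz curve whose monodromy is an $n$th power of a positive half-twist, up to local isotopy, and (ii) controlling the reconstruction near the fiber over $[0:1]$ and near the pole $[0:0:1]$ so that the globally capped-off surface is well-defined up to isotopy. The centrality of $\Delta^2$ is exactly what makes the capping-off in (ii) possible and consistent.
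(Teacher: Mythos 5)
The first thing to note is that the paper does not prove this statement at all: Theorem~\ref{tKK} is imported as a black box from Kharlamov--Kulikov~\cite{kulikovkharlamovbraidmonodromy} (with the special case of curves having only $A_n$-singularities for $n\leq 3$ due to Kulikov--Teicher~\cite{kulikovteicherbraidmonodromy}), and the architecture of the paper rests on quoting it. So there is no internal proof to measure your proposal against; the relevant question is whether your sketch would stand on its own, and it does not yet. Your forward direction is essentially the standard argument, modulo one point you pass over: along an isotopy of Hurwitz curves the projections $p_i(t)$ and $p_j(t)$ of distinct singular intersections are allowed to collide (Definition~\ref{disotopyHurwitz} does not require each $C_t$ to be a \emph{simple} Hurwitz curve), so the braid monodromy factorization is not even defined for all $t$; one must perturb the isotopy so that the strands $p_j(t)$ form a braid over $\left[0,1\right]$ with generic crossings and check that each crossing induces exactly a Hurwitz move.

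The substantive gap is in the reverse direction, where you correctly identify that everything reduces to a realization-and-rigidity statement and then explicitly leave its two load-bearing components --- local rigidity of the $A_n$-model among Hurwitz-curve germs with prescribed monodromy, and the well-definedness of the global capping-off near the fiber over $\left[0:1\right]$ and near the pole $\left[0:0:1\right]$ --- as ``delicate points'' rather than proving them. These are not finishing touches: they are the theorem. Local rigidity in particular is genuinely nontrivial for higher $A_n$-singularities (this is precisely why the Kulikov--Teicher version covered only tangencies, nodes and cusps, and the extension to all simple singularities required the separate work of Kharlamov--Kulikov), and your sketch offers no mechanism for establishing it. As written, the proposal is a reasonable high-level plan with the hard content deferred, so it cannot be accepted as a proof of the statement; for the purposes of this paper the correct move is simply to cite the result, as the author does.
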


(We remark that Kulikov-Teicher~\cite{kulikovteicherbraidmonodromy} proved that the braid monodromy is a complete invariant of the isotopy class of a simple Hurwitz curve with only $A_n$-singularities for $n\leq 3$ (i.e., nondegenerate tangencies, nodes and cusps), and for the most part in this paper, we only need this special case.)

Theorem~\ref{tKK} implies that the isotopy problem for degree $d$ simple Hurwitz curves in $\mathbb{CP}^2$ is equivalent to the problem of classifying factorizations of $\Delta^2$ into powers of positive half-twists in the braid group $B_d$ up to Hurwitz equivalence. In this paper, we will use this connection in order to establish a complete classification of isotopy classes of degree three simple Hurwitz curves in $\mathbb{CP}^2$ (Theorem~\ref{tmainHurwitzcurve} in the Introduction). We turn to a generalization of these results to all smooth curves (of any degree) in $\mathbb{CP}^2$ in the sequel~\cite{dattageneralclassificationHurwitzcurves}.

\subsection{The theory of factorizations and Hurwitz equivalence in a group}
\label{sstheoryfactorizations}

In this subsection, we review some elementary statements concerning the theory of factorizations and Hurwitz equivalence in a group that we will use in this paper.

\begin{proposition}
\label{pHurwitzequivalencefunctorial}
Let $G$ be a group and let ${\cal F} = \left(g_1,g_2,\dots,g_k\right)$ and ${\cal F}' = \left(g_1',g_2',\dots,g_k'\right)$ be factorizations in $G$. If ${\cal F}$ and ${\cal F}'$ are Hurwitz equivalent factorizations in $G$, and if $C\subseteq G$ is a conjugacy class of $G$, then the number of factors of type $C$ in ${\cal F}$ is equal to the number of factors of type $C$ in ${\cal F}'$. 

If $H$ is a group and if $\phi:G\to H$ is a group homomorphism, then define $\phi\left({\cal F}\right) = \left(\phi\left(g_1\right),\phi\left(g_2\right),\dots,\phi\left(g_k\right)\right)$ and $\phi\left({\cal F}'\right) = \left(\phi\left(g_1'\right),\phi\left(g_2'\right),\dots,\phi\left(g_k'\right)\right)$. If ${\cal F}$ and ${\cal F}'$ are Hurwitz equivalent factorizations in $G$, then $\phi\left({\cal F}\right)$ and $\phi\left({\cal F}'\right)$ are Hurwitz equivalent factorizations in $H$. (Hurwitz equivalence is functorial.)
\end{proposition}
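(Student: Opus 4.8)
The plan is to prove both assertions by verifying that each of the two elementary operations generating the Hurwitz equivalence relation — a single Hurwitz move and a single global conjugation move — behaves well, and then appealing to the fact that any Hurwitz equivalence is by definition a finite composite of such operations. Since both claimed conclusions are themselves transitive (equality of counts, and Hurwitz equivalence in $H$), it suffices to treat one move at a time.

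For the first assertion I would track the multiset of conjugacy classes $\{[g_1],\dots,[g_k]\}$ attached to the factors and show it is invariant under each move. A Hurwitz move of the first type replaces the consecutive pair $(g_j,g_{j+1})$ by $(g_jg_{j+1}g_j^{-1},g_j)$; the first new factor is conjugate to $g_{j+1}$ and the second is $g_j$ itself, so the unordered pair of types $\{[g_j],[g_{j+1}]\}$ is unchanged while every other factor is untouched. The second type of Hurwitz move is handled identically, and a global conjugation move sends each $g_i$ to the conjugate $hg_ih^{-1}$, preserving every type. Hence the multiset of types — and therefore the number of factors of any fixed type $C$ — is constant along any finite sequence of moves, which is the first claim.

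For functoriality I would transport each move using only the homomorphism property of $\phi$. Since $\phi(g_jg_{j+1}g_j^{-1})=\phi(g_j)\phi(g_{j+1})\phi(g_j)^{-1}$ and $\phi(g_{j+1}^{-1}g_jg_{j+1})=\phi(g_{j+1})^{-1}\phi(g_j)\phi(g_{j+1})$, applying $\phi$ factorwise carries a Hurwitz move on $\mathcal{F}$ precisely to the corresponding Hurwitz move on $\phi(\mathcal{F})$; similarly $\phi$ carries a global conjugation move by $h$ to the global conjugation move by $\phi(h)$. Composing, a sequence of moves from $\mathcal{F}$ to $\mathcal{F}'$ maps to a sequence of moves from $\phi(\mathcal{F})$ to $\phi(\mathcal{F}')$, giving Hurwitz equivalence in $H$.

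The one point demanding care — and the main, if minor, obstacle — is the global conjugation move, which is defined only for factorizations of a \emph{central} element. When the equivalence in $G$ uses only Hurwitz moves, the argument above is unconditional and $\phi(\mathcal{F})\sim\phi(\mathcal{F}')$ holds irrespective of the position of $\phi(g)$ in $H$. When global conjugation moves occur, the product $g$ is central in $G$, and to legitimately perform the image move by $\phi(h)$ in $H$ one needs $\phi(g)$ to be central in $H$ (note $\phi(g)$ is automatically central in the image subgroup $\phi(G)$, but not necessarily in all of $H$). I would therefore observe that in every situation where the proposition is applied the relevant product is $g=\Delta^2$, whose image in the target groups is central — indeed trivial — so the transported global conjugation move is a genuine move in $H$ and the functoriality conclusion follows.
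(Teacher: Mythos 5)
Your proposal is correct and takes essentially the same approach as the paper, whose proof is just a two-sentence version of the same argument (move-by-move invariance of the multiset of types, and the fact that a homomorphism carries each move to the corresponding move in $H$). Your additional observation about the global conjugation move --- that transporting it to $H$ requires $\phi(g)$ to be central there, which the paper's terse proof glosses over --- is a genuine point of care, and you resolve it correctly by noting that in every application the relevant element maps to a central (indeed trivial) element of the target.
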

\begin{proof}
The first statement is a consequence of the fact that Hurwitz moves and global conjugation moves do not change the number of factors of type $C$ if $C\subseteq G$ is a conjugacy class of $G$. The second statement is a consequence of the fact that group homomorphisms commute with Hurwitz moves.
\end{proof}

We remark that a global conjugation move applied to a factorization ${\cal F}$ may result in a factorization that is not equivalent to ${\cal F}$ by a finite sequence of Hurwitz moves alone. For example, the factorization $\left(g,g^{-1}\right)$ of the identity element $e\in G$ (which is central in $G$) into two factors is only Hurwitz equivalent to itself and the factorization $\left(g^{-1},g\right)$ of $e\in G$. However, there are other factorizations of $e$ that can be obtained from these ones by global conjugation if $\{g,g^{-1}\}$ is not a conjugacy class.

We record the following useful statement.

\begin{proposition}
\label{pglobalconjugationcommutesHurwitz}
A global conjugation move commutes with a Hurwitz move. In particular, a pair of factorizations ${\cal F}$ and ${\cal F}'$ of a central element are Hurwitz equivalent if and only ${\cal F}'$ can be obtained from $h^{-1}{\cal F}h$ by a finite sequence of Hurwitz moves for some $h\in G$. 
\end{proposition}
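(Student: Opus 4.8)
The plan is to first establish the commutation relation by a direct computation, and then to deduce the ``in particular'' clause by a normal-form argument that collects all global conjugation moves at the front of a sequence of moves.

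For the commutation, I would fix a factorization $\mathcal{F} = \left(g_1,\dots,g_k\right)$ of a central element, a Hurwitz move $H$ (say of the first type, at position $j$), and a global conjugation move $G$ by $h\in G$. The point is that $x\mapsto hxh^{-1}$ is a group automorphism, hence respects products and inverses: $h\left(g_jg_{j+1}g_j^{-1}\right)h^{-1} = \left(hg_jh^{-1}\right)\left(hg_{j+1}h^{-1}\right)\left(hg_jh^{-1}\right)^{-1}$. Consequently, applying $H$ and then $G$ produces exactly the same factorization as applying $G$ and then $H$ (with the same type and position), namely $\left(\dots,hg_jg_{j+1}g_j^{-1}h^{-1},hg_jh^{-1},\dots\right)$; the identical verification handles the second type of Hurwitz move. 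This is the same mechanism underlying the functoriality in Proposition~\ref{pHurwitzequivalencefunctorial}, specialized to the inner automorphism determined by $h$. Thus $G\circ H = H\circ G$ as operations on factorizations.

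For the ``in particular'' clause, the ``if'' direction is immediate: if $\mathcal{F}'$ is obtained from $h^{-1}\mathcal{F}h$ by Hurwitz moves, then since $h^{-1}\mathcal{F}h$ is obtained from $\mathcal{F}$ by a single global conjugation move, $\mathcal{F}'$ is Hurwitz equivalent to $\mathcal{F}$. For the ``only if'' direction, suppose $\mathcal{F}$ and $\mathcal{F}'$ are Hurwitz equivalent and fix a finite sequence of moves $M_1,\dots,M_N$ carrying $\mathcal{F}$ to $\mathcal{F}'$, each $M_i$ being a Hurwitz move or a global conjugation move. Using the commutation relation repeatedly to swap each global conjugation move past the Hurwitz moves to its left, I would move all global conjugation moves to the front without altering the Hurwitz moves. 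Composing global conjugation moves yields another global conjugation move (conjugation by $a_1$ followed by conjugation by $a_2$ equals conjugation by $a_2a_1$), so the collected global conjugations amount to a single global conjugation, say by the element $h^{-1}$. The sequence now reads: a single global conjugation move carrying $\mathcal{F}$ to $h^{-1}\mathcal{F}h$, followed by Hurwitz moves carrying $h^{-1}\mathcal{F}h$ to $\mathcal{F}'$, which is precisely the desired conclusion. (If the original sequence contained no global conjugation moves, then $h=e$ and $\mathcal{F}'$ is obtained from $\mathcal{F}$ by Hurwitz moves alone.)

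The argument is essentially routine; the only points requiring care are verifying the commutation for both types of Hurwitz move and tracking the order of composition when the global conjugation moves are combined, so that the combined conjugator is consistent with the paper's convention $h^{-1}\mathcal{F}h$. The main conceptual step—and the one I would state explicitly—is the commutation relation itself, after which the normal-form rearrangement is purely formal.
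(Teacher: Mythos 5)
Your proposal is correct and follows essentially the same route as the paper: the commutation is deduced from the fact that global conjugation is an inner automorphism, and the second claim follows by collecting the global conjugation moves into a single one. You simply spell out the details that the paper's two-sentence proof leaves implicit.
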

\begin{proof}
The first statement is a consequence of the fact that global conjugation is an (inner) automorphism of $G$. The second statement is a consequence of the first statement and the fact that the composition of global conjugation moves is itself a global conjugation move.
\end{proof}

If $G$ is a group, then the problem of classifying factorizations of elements of $G$ up to Hurwitz equivalence is known as the \textit{Hurwitz problem} in the group $G$. In general, the Hurwitz problem in a group is hard, or even undecidable. In fact, in the braid group $B_n$ for $n\geq 5$, it is an undecidable problem to classify factorizations of $\Delta^2$ up to Hurwitz equivalence, if there are no restrictions on the types of factors in the factorization~\cite{liberman2005Hurwitz}. However, in this paper, we are focussed on classifying factorizations of $\Delta^2$ in the braid group $B_3$ up to Hurwitz equivalence, where the factors are powers of positive half-twists. In Section~\ref{smain}, we will show that the numbers of factors of each type are a complete set of invariants of the Hurwitz equivalence class of a factorization of $\Delta^2$ into powers of positive half-twists in the braid group $B_3$, and this statement is equivalent to Theorem~\ref{tmainHurwitzcurve} in the Introduction. 

\subsection{The Garside normal form and the monoid $B_n^{+}$ of positive braids in the braid group $B_n$}
\label{ssGarsidebackground}
In this subsection, we review the Garside normal form and the monoid $B_n^{+}$ of positive braids in $B_n$. We will review notation, terminology and results in this section in the generality of the braid group $B_n$, although the rest of the paper will only concern the braid group $B_3$. We refer the reader to the classical reference~\cite{garside1969braid} for the original presentation of the ideas and proofs of the statements in this subsection. 

\begin{definition}
The \textit{Artin presentation} of the braid group $B_n$ is \[B_n = \left\langle  \sigma_1,\dots,\sigma_{n-1}:\sigma_i\sigma_{i+1}\sigma_i = \sigma_{i+1}\sigma_i\sigma_{i+1}, \sigma_i\sigma_j = \sigma_j\sigma_i\right\rangle,\] where $1\leq i\leq n-2$ in the first set of relations and $\left|i-j\right|>1$ in the second set of relations. If $1\leq i\leq n-1$, then the generator $\sigma_i$ is referred to as an \textit{Artin generator}. A \textit{positive braid} in $B_n$ is a braid in the submonoid of $B_n$ generated by $\sigma_1,\sigma_2,\dots,\sigma_{n-1}$. A \textit{product expansion} of a positive braid is a representation of the positive braid as a product of nonnegative powers of the Artin generators (we do not allow inverses of generators in a product expansion of a positive braid). The \textit{length} of a product expansion of a positive braid is the number of generators in the product expansion.
\end{definition}

The monoid of positive braids plays a fundamental role in the algebraic structure of $B_n$. Indeed, we recall the following result due to Garside~\cite{garside1969braid}.

\begin{theorem}[\cite{garside1969braid}]
\label{tmonoidembedding}
Let $B_n^{+}$ be the monoid with the Artin presentation \[B_n^{+} = \left\langle  \sigma_1,\dots,\sigma_{n-1}:\sigma_i\sigma_{i+1}\sigma_i = \sigma_{i+1}\sigma_i\sigma_{i+1}, \sigma_i\sigma_j = \sigma_j\sigma_i\right\rangle,\] where $1\leq i\leq n-2$ in the first set of relations and $\left|i-j\right|>1$ in the second set of relations. The natural map $B_n^{+}\to B_n$ is an embedding of monoids with image equal to the monoid of positive braids in $B_n$.
\end{theorem}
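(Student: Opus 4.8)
The plan is to prove injectivity of the natural map $B_n^{+}\to B_n$ and then to identify $B_n$ with the group of fractions of $B_n^{+}$; surjectivity onto the positive braids is immediate, since the image is by definition the submonoid generated by the images of $\sigma_1,\dots,\sigma_{n-1}$. The overall strategy follows Garside's classical route: establish that $B_n^{+}$ is a cancellative monoid in which any two elements admit a common right multiple, invoke Ore's theorem to embed $B_n^{+}$ into its group of right fractions, and finally show that this group of fractions is isomorphic to $B_n$. As a preliminary scaffolding I would observe that word length is an invariant of an element of $B_n^{+}$: both defining relations $\sigma_i\sigma_{i+1}\sigma_i = \sigma_{i+1}\sigma_i\sigma_{i+1}$ and $\sigma_i\sigma_j = \sigma_j\sigma_i$ preserve the number of generators, so length descends to a well-defined function $\ell:B_n^{+}\to \mathbb{Z}_{\geq 0}$ that serves as the induction parameter in the arguments below.

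The technical heart is to prove two structural facts about $B_n^{+}$. The first is cancellativity: if $au = av$ or $ua = va$ in $B_n^{+}$, then $u = v$. I would prove this by induction on $\ell\left(a\right)$, reducing to the case where $a$ is a single generator $\sigma_i$ and analyzing how a positive relation can produce a leading $\sigma_i$; the combinatorics of the braid and commutation relations then force the two words to agree after the cancellation. The second fact is the common right multiple property: any two elements $a,b\in B_n^{+}$ have a common right multiple. Here I would use the Garside element $\Delta = \Delta_n$, the positive half-twist lifting the longest permutation, whose key properties are that it is right-divisible by every generator and that $\sigma_i\Delta = \Delta\sigma_{n-i}$. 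These make $\Delta$, and hence each power $\Delta^k$, a two-sided multiple of every generator, so that any pair of elements has a common multiple; combined with cancellativity this upgrades to the existence of \emph{least} common multiples, giving $B_n^{+}$ the structure of a lattice under divisibility.

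Granting these two facts, Ore's theorem applies: a cancellative monoid in which every pair of elements has a common right multiple embeds into its group of right fractions $G$, every element of which has the form $ab^{-1}$ with $a,b\in B_n^{+}$. It then remains to identify $G$ with $B_n$. Since the images of the $\sigma_i$ in $G$ satisfy the braid and commutation relations, there is a surjection $B_n\twoheadrightarrow G$; conversely, because each $\sigma_i$ is invertible in $B_n$, the universal property of the group of fractions yields a homomorphism $G\to B_n$ extending the natural map $B_n^{+}\to B_n$, and the two homomorphisms are mutually inverse on generators. Hence $B_n^{+}\hookrightarrow G\cong B_n$, which is precisely the claimed embedding of monoids with image the positive braids.

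I expect the main obstacle to be the combination of cancellativity and the existence of least common multiples, that is, the lattice structure on $B_n^{+}$. These are the genuinely delicate combinatorial inputs, requiring a careful case analysis of how the braid relation $\sigma_i\sigma_{i+1}\sigma_i = \sigma_{i+1}\sigma_i\sigma_{i+1}$ interacts with prefixes and common divisors, whereas the passage to the group of fractions and the identification of that group with $B_n$ are then purely formal.
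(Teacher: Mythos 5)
The paper does not prove this statement at all: it is quoted directly from Garside's 1969 paper, and the surrounding text only explains how the result will be used (namely, that two equal positive words are connected by positive applications of the Artin relations). So there is no in-paper argument to compare against; what you have written is an outline of the classical proof, and at the level of strategy it is the right one. The chain ``length is a well-defined invariant, $B_n^{+}$ is cancellative, any two elements have a common right multiple because every generator divides $\Delta$ and $\sigma_i\Delta=\Delta\sigma_{n-i}$, Ore's theorem embeds $B_n^{+}$ into its group of right fractions, and the universal properties identify that group with $B_n$'' is exactly how the modern literature organizes Garside's theorem, and each link is correct as stated. (One small remark: the lattice structure and least common multiples are not needed for the embedding itself; plain common right multiples plus cancellativity already satisfy Ore's condition.)

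The caveat is that your proposal locates but does not discharge the one step that carries all of the mathematical content: cancellativity. Saying that you would ``reduce to the case where $a$ is a single generator $\sigma_i$ and analyze how a positive relation can produce a leading $\sigma_i$, whereupon the combinatorics force the two words to agree'' describes the destination, not the route. In Garside's paper this is a genuinely delicate double induction (on word length and on the number of elementary transformations in a chain of positive rewritings), whose essential content is a normalization lemma of roughly the following shape: if $\sigma_i u=\sigma_j v$ in $B_n^{+}$ with $i\neq j$, then $u$ and $v$ admit factorizations through the least common multiple of $\sigma_i$ and $\sigma_j$ (which is $\sigma_i\sigma_j\sigma_i$ or $\sigma_i\sigma_j$ according to whether $|i-j|=1$ or $|i-j|>1$). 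Without proving that lemma, or an equivalent such as the cube condition on the defining relations, the induction you sketch does not close, and this is precisely the point where a careless argument fails. As a plan your proposal is faithful to the classical proof; as a proof it is incomplete at its decisive step, which is presumably why the paper simply cites Garside rather than reproducing the argument.
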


We will henceforth denote the monoid of positive braids in $B_n$ by $B_n^{+}$. Theorem~\ref{tmonoidembedding} states that if a pair of positive braids in $B_n$ are equal, then they are equal by a finite sequence of applications of the Artin relations in the monoid $B_n^{+}$. In particular, in such a finite sequence of applications of the Artin relations, we would not introduce inverses of generators at any stage. Theorem~\ref{tmonoidembedding} is the crucial ingredient in this paper for studying the combinatorics of product expansions in the monoid $B_3^{+}$ of positive braids in $B_3$. We discuss this point further at the beginning of Subsection~\ref{ssdualitypositivebraid}. In Subsection~\ref{ssdualitypositivebraid}, we use Theorem~\ref{tmonoidembedding} extensively in order to establish important technical results on the monoid $B_3^{+}$. 
 
A corollary of Theorem~\ref{tmonoidembedding} is that the length of a positive braid is well-defined, and independent of the choice of product expansion. Indeed, the number of generators match on both sides of each of the relations in the Artin presentation. In particular, the set of product expansions of a positive braid is a finite set, since all have the same length. 

We now recall Garside's fundamental theorem on the braid groups~\cite{garside1969braid} and we sketch its proof from~\cite{garside1969braid}.

\begin{theorem}[\cite{garside1969braid}]
\label{Garside}
The \textit{Garside element} of $B_n$ is \[\Delta = \left(\sigma_1\sigma_2\cdots \sigma_{n-1}\right)\left(\sigma_1\sigma_2\cdots \sigma_{n-2}\right)\cdots \left(\sigma_1\sigma_2\right)\sigma_1.\] If $g\in B_n$, then $g$ can be expressed uniquely as the product $\Delta^{k}\sigma$ where $k\in \mathbb{Z}$ and $\sigma\in B_n^{+}$ is a positive braid indivisible by $\Delta$ (i.e., $\sigma\neq \Delta\tau$ for any positive braid $\tau\in B_n^{+}$). We refer to this expression as the \textit{Garside normal form} of the braid $g$. We refer to $k$ as the \textit{Garside power} of the braid $g$.
\end{theorem}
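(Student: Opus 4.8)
The plan is to establish existence and uniqueness of the normal form separately, with both halves resting on a small package of structural facts about $\Delta$. First I would record the fundamental lemmas on $\Delta$: (a) every Artin generator $\sigma_i$ both left-divides and right-divides $\Delta$ in $B_n^{+}$, so that $\Delta = \sigma_i A_i = B_i \sigma_i$ for positive braids $A_i, B_i \in B_n^{+}$; and (b) conjugation by $\Delta$ implements the ``flip'' automorphism $\sigma_i \mapsto \sigma_{n-i}$, i.e. $\Delta \sigma_i \Delta^{-1} = \sigma_{n-i}$. Property (b) applied twice shows that $\Delta^2$ commutes with every generator and is therefore central, recovering the centrality of $\Delta^2$ asserted earlier. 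Both (a) and (b) I would prove by induction on $n$, exploiting the recursive factorization $\Delta = \left(\sigma_1\sigma_2\cdots\sigma_{n-1}\right)\Delta'$, where $\Delta'$ is the Garside element of $B_{n-1}$ in the generators $\sigma_1,\dots,\sigma_{n-2}$, together with repeated use of the braid relations. This structural step is where essentially all the real content lies.

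For existence, I would start from an arbitrary word representing $g \in B_n$ in the generators and their inverses. Using (a) in the form $\sigma_i^{-1} = A_i \Delta^{-1}$, each inverse generator is rewritten as a positive braid times $\Delta^{-1}$; then, by commuting all the resulting $\Delta^{\pm 1}$ factors to the left via the flip relation in (b) (which lets a $\Delta$ pass a generator at the cost of flipping its index), I can collect the $\Delta$-powers into a single prefix and obtain $g = \Delta^m P$ with $m \in \mathbb{Z}$ and $P \in B_n^{+}$. Finally I would extract the maximal power of $\Delta$ left-dividing $P$: repeatedly write $P = \Delta Q$ while $\Delta$ left-divides $P$. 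Since length is well-defined on $B_n^{+}$ (the corollary to Theorem~\ref{tmonoidembedding}) and each such step lowers the length by the fixed amount $\binom{n}{2}$, the process terminates in $P = \Delta^{j}\sigma$ with $\sigma$ indivisible by $\Delta$, whence $g = \Delta^{m+j}\sigma$ is the desired expression.

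Uniqueness is comparatively soft, since we are working inside a group. Suppose $\Delta^{k}\sigma = \Delta^{k'}\sigma'$ with $\sigma,\sigma' \in B_n^{+}$ both indivisible by $\Delta$, and assume $k \leq k'$ without loss of generality. Cancelling $\Delta^{k}$ gives $\sigma = \Delta^{k'-k}\sigma'$; if $k'-k \geq 1$, then $\sigma = \Delta\cdot\left(\Delta^{k'-k-1}\sigma'\right)$ exhibits $\Delta$ as a left-divisor of $\sigma$ by a positive braid, contradicting indivisibility of $\sigma$. Hence $k = k'$, and cancelling $\Delta^{k}$ once more yields $\sigma = \sigma'$.

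I expect the main obstacle to be the structural step, and within it property (a) --- that every generator divides $\Delta$ on both sides --- since it is precisely the input that makes the rewriting $\sigma_i^{-1} = A_i\Delta^{-1}$ in the existence argument possible. The flip automorphism (b) should then follow cleanly from the symmetry of $\Delta$ under reversal of its product expansion. Everything downstream is bookkeeping once (a) and (b), together with the well-definedness of length supplied by Theorem~\ref{tmonoidembedding}, are in hand.
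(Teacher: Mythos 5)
Your proposal is correct and follows essentially the same route as the paper's proof: both rest on the two facts that each $\sigma_i$ divides $\Delta$ (so inverses of generators can be absorbed into negative powers of $\Delta$) and that conjugation by $\Delta$ flips the generators, both use the well-definedness of length on $B_n^{+}$ from Theorem~\ref{tmonoidembedding} to terminate the extraction of the maximal power of $\Delta$, and the uniqueness arguments are identical. The only difference is one of emphasis --- you spell out the inductive verification of the divisibility and flip properties of $\Delta$, which the paper simply asserts with a citation to Garside --- so no further comment is needed.
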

\begin{proof}
Firstly, we establish the existence of the Garside normal form of $g$. The Garside element $\Delta\in B_n^{+}$ satisfies the fundamental equations $\Delta\sigma_i = \sigma_{n-i}\Delta$ for each $1\leq i\leq n-1$. Furthermore, $\Delta\sigma_i^{-1}\in B_n^{+}$ is a positive braid for each $1\leq i\leq n-1$. In particular, if $g\in B_n$, then there exists a positive integer $k\geq 0$ such that $\Delta^{k}g\in B_n^{+}$ is a positive braid. Therefore, we can write $g = \Delta^{k}\sigma$ for some integer $k\in \mathbb{Z}$ and some positive braid $\sigma\in B_n^{+}$. We can choose $\sigma\in B_n^{+}$ to be indivisible by $\Delta$ after possibly factoring out a power of $\Delta$ from $\sigma$. Indeed, the maximum power of $\Delta$ that divides $\sigma$ is at most the word length of $\sigma$ divided by $\frac{n\left(n-1\right)}{2}$, so this is always possible. 

Secondly, we establish the uniqueness of the Garside normal form of $g$. If $\Delta^{k}\sigma = g = \Delta^{k'}\sigma'$ for some $k,k'\in \mathbb{Z}$ and $\sigma,\sigma'\in B_n^{+}$ such that $\sigma,\sigma'$ are indivisible by $\Delta$, then we can either write $\Delta^{k-k'}\sigma = \sigma'$ if $k\geq k'$ or write $\Delta^{k'-k}\sigma' = \sigma$ if $k'\geq k$. In either case, we must have $k=k'$ since $\sigma,\sigma'$ are indivisible by $\Delta$. Of course, this also implies that $\sigma = \sigma'$. Therefore, the Garside normal form of $g$ is unique.
\end{proof}

An important fact is that the center of the braid group $B_n$ is the infinite cyclic group $\left\langle \Delta^2\right\rangle$, which adds further strength to the statement of Theorem~\ref{Garside}. In fact, in the topological interpretation of $B_n$ as the mapping class group of the $n$-times marked disk relative to its boundary, the square of the Garside element is a Dehn twist with respect to the boundary. In particular, conjugation by $\Delta^2$ is the identity. We can also establish this algebraically by recalling the identity $\Delta^{-1}\sigma_i\Delta = \sigma_{n-i}$ for each $1\leq i\leq n-1$. We introduce the following notation to describe conjugation by powers of the Garside element $\Delta$. We will use this notation extensively in this paper.

\begin{definition}
\label{dconjugationbyDelta}
Let $\sigma\in B_n^{+}$ be a positive braid. We write $\sigma_{\left(l\right)} = \Delta^{-l}\sigma\Delta^{l}$ for $l\in \mathbb{Z}$. We note that $\sigma_{\left(l\right)}$ depends only on the parity of $l$ since $\Delta^2$ is central in $B_n$. In particular, $\sigma_{\left(l\right)} = \sigma$ if $l$ is even and $\sigma_{\left(l\right)} = \sigma_{\left(1\right)}$ if $l$ is odd.
\end{definition}

We conclude this subsection by observing that \[B_3 = \left\langle \sigma_1,\sigma_2 : \sigma_1\sigma_2\sigma_1 = \sigma_2\sigma_1\sigma_2 \right\rangle\] is the Artin presentation of $B_3$, and $\Delta = \sigma_1\sigma_2\sigma_1 = \sigma_2\sigma_1\sigma_2\in B_3^{+}$ is the Garside element of $B_3$. We will work with this presentation of $B_3$ for the remainder of this paper.

\subsection{Positive half-twists in the braid group $B_n$}
\label{ssdpositivehalftwist}
In this subsection, we briefly review the well-known statement that if $e\geq 1$, then the set of $e$th powers of positive half-twists in the braid group $B_n$ constitutes a conjugacy class in $B_n$. We refer the reader to~\cite{farbmargalitprimer} for a more detailed discussion.

\begin{definition}
Let us denote the $n$-times marked disk by $\mathbb{D}_n$, and recall the interpretation of the braid group $B_n$ as the mapping class group of $\mathbb{D}_n$. A \textit{simple arc in the $n$-times marked disk} is an embedding $\gamma:\left[0,1\right]\to \mathbb{D}_n$ such that $\gamma\left(0\right)$ and $\gamma\left(1\right)$ are markings. (In the present context, we do not consider arcs with endpoints on the boundary.) Let $H_{\gamma}$ denote the positive half-twist with respect to $\gamma$. 

Let us fix an isomorphism between $B_n$ and the mapping class group of $\mathbb{D}_n$. A \textit{standard positive half-twist} is $\sigma_i$ for some $1\leq i\leq n-1$. A \textit{non-standard positive half-twist} is a positive half-twist that is not a standard positive half-twist. 
\end{definition}

We recall the following classical and important observation and a sketch of its proof.

\begin{lemma}
\label{lbraidgrouptransitiveaction}
The mapping class group of the $n$-times marked disk acts transitively on simple arcs in the $n$-times marked disk.
\end{lemma}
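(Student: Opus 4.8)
The plan is to prove transitivity via the \emph{change of coordinates principle} (see~\cite{farbmargalitprimer}): I would cut $\mathbb{D}_n$ along the arc, observe that the resulting pieces are topologically standard, and reglue to manufacture the desired homeomorphism. Since lying in a common orbit is an equivalence relation, it suffices to fix one reference arc $\gamma_0$ — say the straight segment joining the first two marked points — and show that an arbitrary simple arc $\gamma$ joining marked points $p$ and $q$ can be carried to $\gamma_0$ by an orientation-preserving homeomorphism of $\mathbb{D}_n$ that fixes $\partial \mathbb{D}_n$ pointwise (and is allowed to permute the marked points). Composing the map for $\gamma$ with the inverse of the map for a second arc $\gamma'$ then produces a mapping class taking $\gamma$ to $\gamma'$, which is exactly transitivity.

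The first step is the neighborhood decomposition. I would take a closed regular neighborhood $R$ of $\gamma$; this is a disk containing $p$ and $q$ in its interior, no other marked points, and with $\gamma$ as a spanning arc. The closure of the complement $A = \overline{\mathbb{D}_n \setminus R}$ is then a disk with an open disk removed, i.e.\ an annulus, whose two boundary circles are $\partial \mathbb{D}_n$ and $\partial R$, and which contains the remaining $n-2$ marked points. The same decomposition applies to $\gamma_0$, producing a standard neighborhood $R_0$ and complementary annulus $A_0$ with $n-2$ marked points. The configuration $(R,\gamma,\{p,q\})$ is homeomorphic to $(R_0,\gamma_0,\{1,2\})$ because a regular neighborhood of an arc joining two interior points is always a band, and by the classification of compact surfaces $A$ is homeomorphic to $A_0$, since both are genus-zero surfaces with two boundary components and $n-2$ marked points.

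The heart of the argument is assembling these two homeomorphisms into one that matches along $\partial R$ and fixes the outer boundary. I would first choose a homeomorphism $\psi : A \to A_0$ carrying marked points to marked points; since any orientation-preserving self-homeomorphism of a circle is isotopic to the identity, and such an isotopy extends through a collar, I may arrange that $\psi$ is the identity on $\partial \mathbb{D}_n$. The map $\psi$ restricts to some homeomorphism $\partial R \to \partial R_0$, and I must realize this boundary behavior by a homeomorphism of the neighborhood that still sends $\gamma$ to $\gamma_0$. Starting from any homeomorphism $g : R \to R_0$ with $g(\gamma) = \gamma_0$ and $g(\{p,q\}) = \{1,2\}$, I would compose with a collar-supported homeomorphism of $R_0$ realizing the discrepancy $(\psi|_{\partial R}) \circ (g|_{\partial R})^{-1}$ — again isotopic to the identity on the circle, hence extendable over a collar away from $\gamma_0$ and the marked points. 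Gluing the resulting map $\phi : R \to R_0$ to $\psi$ along $\partial R$ yields the required orientation-preserving homeomorphism $\Phi$ of $\mathbb{D}_n$ with $\Phi(\gamma) = \gamma_0$ and $\Phi|_{\partial \mathbb{D}_n} = \mathrm{id}$.

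I expect the main obstacle to be precisely this gluing and boundary-bookkeeping: normalizing the complementary annulus map to the identity on $\partial \mathbb{D}_n$ while simultaneously controlling its restriction to $\partial R$, and then matching that restriction on the neighborhood side without disturbing the arc. All the underlying inputs — the classification of surfaces and the fact that orientation-preserving circle homeomorphisms are isotopic to the identity — are classical, so the delicacy is organizational rather than conceptual. A minor point worth flagging is that $\Phi$ sends $\{p,q\}$ to $\{1,2\}$ and so may permute marked points, which is harmless since mapping classes of $\mathbb{D}_n$ need not fix the marked points individually.
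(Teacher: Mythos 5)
Your proof is correct and follows essentially the same route as the paper: cut the marked disk along the arc, invoke the classification of compact oriented surfaces with marked points and boundary to identify the complementary pieces, and reglue to obtain the desired mapping class. The only difference is that you spell out the boundary-matching and collar adjustments in the regluing step, which the paper's proof leaves implicit.
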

\begin{proof}
We recall the classification of oriented surfaces which states that a pair of oriented surfaces with the same number of marked points, the same number of boundary components, and the same Euler characteristic are homeomorphic via an orientation-preserving homeomorphism. We will show that the statement follows from the classification of oriented surfaces. Indeed, if $\gamma$ and $\gamma'$ are simple arcs in the $n$-times marked disk $\mathbb{D}_n$, then we can cut $\mathbb{D}_n$ along $\gamma$ and $\gamma'$ and obtain a pair of oriented surfaces $\Sigma_{\gamma}$ and $\Sigma_{\gamma'}$ with $n-2$ markings, two boundary components, and the same Euler characteristic. Indeed, one of the boundary components of each of these surfaces arises from the boundary of $\mathbb{D}_n$, and the other boundary component arises from the boundary of a tubular neighborhood of the relevant simple arc. We refer to the latter boundary component of $\Sigma_{\gamma}$ as $\partial \gamma$, and the latter boundary component of $\Sigma_{\gamma'}$ as $\partial \gamma'$. The surfaces $\Sigma_{\gamma}$ and $\Sigma_{\gamma'}$ have the same Euler characteristic, since $\gamma$ and $\gamma'$ are simple arcs. The classification of oriented surfaces implies that there is an orientation-preserving homeomorphism from $\Sigma_{\gamma}$ to $\Sigma_{\gamma'}$ that maps $\partial \gamma$ to $\partial \gamma'$. Therefore, there is a orientation-preserving homeomorphism of the $n$-times marked disk that maps $\gamma$ to $\gamma'$.
\end{proof}

We record the following consequence of Lemma~\ref{lbraidgrouptransitiveaction}, which is important for our purposes.

\begin{corollary}
\label{cpositivehalftwistconjugacy}
If $e\geq 1$, then the set of $e$th powers of positive half-twists in $B_n$ is the conjugacy class of the $e$th power $\sigma_1^{e}$ of the standard positive half-twist $\sigma_1$. 
\end{corollary}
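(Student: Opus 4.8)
The plan is to exploit the equivariance of positive half-twists under the action of the mapping class group, together with the transitivity statement of Lemma~\ref{lbraidgrouptransitiveaction}. The key structural fact I would establish first is the naturality property that for any mapping class $\phi\in B_n = \text{Mod}(\mathbb{D}_n)$ and any simple arc $\gamma$ in $\mathbb{D}_n$, we have $\phi H_\gamma \phi^{-1} = H_{\phi(\gamma)}$. Intuitively, conjugating the half-twist supported in a tubular neighborhood of $\gamma$ by $\phi$ produces the half-twist supported in a tubular neighborhood of the arc $\phi(\gamma)$; because $\phi$ is orientation-preserving, the sense of the twist is preserved, so the conjugate is again a \emph{positive} (rather than negative) half-twist.

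Granting this, the forward inclusion is immediate. Fix the standard arc $\gamma_1$ joining the first two markings, so that $H_{\gamma_1} = \sigma_1$. Given any positive half-twist $H_\gamma$, Lemma~\ref{lbraidgrouptransitiveaction} supplies an orientation-preserving mapping class $\phi$ with $\phi(\gamma_1) = \gamma$, and the naturality property then gives $H_\gamma = H_{\phi(\gamma_1)} = \phi\sigma_1\phi^{-1}$. Raising to the $e$th power yields $H_\gamma^{e} = \phi\sigma_1^{e}\phi^{-1}$, so every $e$th power of a positive half-twist lies in the conjugacy class of $\sigma_1^{e}$.

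For the reverse inclusion, let $\phi\in B_n$ be arbitrary and consider the conjugate $\phi\sigma_1^{e}\phi^{-1} = \left(\phi\sigma_1\phi^{-1}\right)^{e} = H_{\phi(\gamma_1)}^{e}$. Since $\phi$ is an orientation-preserving homeomorphism carrying markings to markings, $\phi(\gamma_1)$ is again a simple arc, so $H_{\phi(\gamma_1)}$ is a positive half-twist and $\phi\sigma_1^{e}\phi^{-1}$ is its $e$th power. The two inclusions together give the asserted equality of sets.

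The main obstacle is the naturality property itself, which is the only nontrivial ingredient; the delicate point is the orientation bookkeeping, namely verifying that conjugation by the orientation-preserving $\phi$ carries a positive half-twist to a positive half-twist rather than to its inverse. This is precisely why Lemma~\ref{lbraidgrouptransitiveaction} is formulated in terms of orientation-preserving homeomorphisms, and that hypothesis is exactly what I would lean on to rule out a sign ambiguity.
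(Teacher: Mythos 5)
Your proposal is correct and follows essentially the same route as the paper: both rest on the naturality identity $\phi H_{\gamma}\phi^{-1} = H_{\phi\left(\gamma\right)}$, the transitivity of the mapping class group action on simple arcs (Lemma~\ref{lbraidgrouptransitiveaction}), and the observation that the image of a simple arc under a mapping class is again a simple arc. Your write-up merely makes the two inclusions and the orientation bookkeeping more explicit than the paper does.
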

\begin{proof}
Let $\gamma$ and $\gamma'$ be simple arcs in $\mathbb{D}_n$, and let $H_{\gamma}$ and $H_{\gamma'}$ be positive half-twists with respect to the simple arcs $\gamma$ and $\gamma'$ in $\mathbb{D}_n$, respectively. If $\phi$ is a mapping class of $\mathbb{D}_n$ such that $\phi\left(\gamma\right) = \gamma'$, then we have $H_{\gamma'} = \phi H_{\gamma} \phi^{-1}$. The statement follows from Lemma~\ref{lbraidgrouptransitiveaction} and the fact that the image of a simple arc in $\mathbb{D}_n$ under a mapping class is a simple arc in $\mathbb{D}_n$. 
\end{proof}

\section{The algebraic structure of the braid group $B_3$}
\label{sB3background}
In this section, we develop our theory on the algebraic structure of the braid group $B_3$. In Section~\ref{smain}, we will apply the results in this section in order to establish a complete classification of factorizations of $\Delta^2$ into powers of positive half-twists in the braid group $B_3$, thus establishing the main results in the Introduction. 

The outline of this section is as follows. In Subsection~\ref{ssdualitypositivebraid}, we will develop our theory of duality of positive braids in $B_3$. In Subsection~\ref{sspowerpositivehalftwist}, we will explicitly determine the Garside normal form of the power of a positive half-twist in $B_3$. We will also establish technical statements concerning such Garside normal forms that we will frequently apply in Section~\ref{smain}. 

\subsection{The theory of duality of positive braids in $B_3$}
\label{ssdualitypositivebraid}
Let us describe our theory of duality and outline the role that it plays in the study of the algebraic structure of the braid group $B_3$. We are interested in equations in the braid group $B_3$, specifically factorizations of $\Delta^2$ into powers of positive half-twists in $B_3$. The idea is that the Garside normal form in $B_3$ reduces the study of equations in $B_3$ to the study of equations in the monoid $B_3^{+}$ of positive braids. The goal of the theory of duality is to develop a theory of equations in the monoid $B_3^{+}$ of positive braids.

The first step in the reduction from $B_3$ to $B_3^{+}$ is understanding the Garside normal form of a product in terms of the Garside normal forms of the individual factors. If $g = \Delta^{k}\sigma$ and $h = \Delta^{l}\tau$ are the Garside normal forms of $g$ and $h$, respectively, then $gh = \Delta^{k+l}\sigma_{\left(l\right)}\tau$ is not necessarily the Garside normal form of $gh$, and this observation is the origin of our theory of duality of positive braids (the notation $\sigma_{\left(l\right)}$ is from Definition~\ref{dconjugationbyDelta}). The expression $\Delta^{k+l}\sigma_{\left(l\right)}\tau$ is the Garside normal form of $gh$ if and only if $\sigma_{\left(l\right)}\tau$ is indivisible by $\Delta$. Furthermore, in order to determine the Garside normal form of $gh$, we must determine the maximal power of $\Delta$ that divides the product $\sigma_{\left(l\right)}\tau$. If $\Delta^{\omega}$ is the maximal power of $\Delta$ that divides the product $\sigma_{\left(l\right)}\tau$, then we will see that we can write $\sigma = \sigma'\sigma''$ and $\tau = \tau''\tau'$ for positive braids $\sigma',\sigma'',\tau',\tau''\in B_3^{+}$ such that $\sigma''_{\left(l\right)}\tau'' = \Delta^{\omega}$ (Proposition~\ref{pdualbraidproduct}). In this case, the Garside normal form of $gh$ is $\Delta^{k+l+\omega}\sigma'_{\left(l+\omega\right)}\tau'$ (Proposition~\ref{pGarsidenormalformproduct}). We describe the equation $\sigma''_{\left(l\right)}\tau'' = \Delta^{\omega}$ by writing that $\sigma''_{\left(l\right)}$ is \textit{dual} to $\tau''$ (Definition~\ref{ddual}), and the study of this equation is the \textit{theory of duality}.

Finally, Theorem~\ref{tmonoidembedding} implies that equations in the monoid $B_3^{+}$ of positive braids can be verified by inspection. Let $\sigma,\tau\in B_3^{+}$ be equal positive words in $B_{3}$. Theorem~\ref{tmonoidembedding} states that $\sigma$ and $\tau$ are related by a finite sequence of braid relations, where each application of a braid relation has the property that one side of the braid relation is embedded in the relevant word. On the other hand, a version of this statement is not true for arbitrary braids in $B_3$ that are not necessarily positive braids. For example, we cannot directly apply a braid relation to either side of the equation $\sigma_1^{-1}\sigma_2^{-1}\sigma_1\sigma_2 = \sigma_2\sigma_1^{-1}$, but it is nonetheless correct since $\sigma_1^{-1}\sigma_2^{-1}\sigma_1\sigma_2 = \sigma_1^{-1}\sigma_2^{-1}\Delta\Delta^{-1}\sigma_1\sigma_2 = \sigma_1^{-1}\Delta\sigma_1^{-1}\sigma_2\Delta^{-1}\sigma_2 = \sigma_2\sigma_1^{-1}$. We have introduced a product $\Delta\Delta^{-1}$, and then applied braid relations in order to establish the equation, but for equations in the monoid $B_3^{+}$ of positive braids, the content of Theorem~\ref{tmonoidembedding} is that this is not necessary.

Let us commence the study of positive braids and the theory of duality in the braid group $B_3$. We may view the Garside normal form of a positive braid as a factorization of the positive braid into a nonnegative power of $\Delta$ and a positive braid indivisible by $\Delta$. The first step is to establish a simple characterization of elements in the monoid $B_3^{+}$ that are indivisible by the Garside element $\Delta$. An \textit{isolated generator} in a product expansion of a positive braid is a generator $\sigma_{i}$ that is not immediately preceded or immediately succeeded in the product expansion by another $\sigma_i$.

\begin{proposition}
\label{piDB3}
If $\sigma\in B_3^{+}$ is a positive braid indivisible by $\Delta$, then $\sigma$ has a unique product expansion, and the product expansion does not contain isolated generators except possibly at its beginning or end.
\end{proposition}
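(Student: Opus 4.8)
The plan is to reduce both assertions to a single key observation: no product expansion of a positive braid indivisible by $\Delta$ can contain the length-three subword $\sigma_1\sigma_2\sigma_1$ or $\sigma_2\sigma_1\sigma_2$. Since $\Delta = \sigma_1\sigma_2\sigma_1 = \sigma_2\sigma_1\sigma_2$ in $B_3^{+}$, both of these subwords are literally $\Delta$, and this is what drives everything.

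First I would prove the key observation in contrapositive form: if some product expansion of $\sigma$ contains $\sigma_1\sigma_2\sigma_1$ or $\sigma_2\sigma_1\sigma_2$ as a consecutive subword, then $\Delta$ divides $\sigma$. Write such a product expansion as $w_1\,\Delta\,w_2$, where $w_1$ and $w_2$ are (possibly empty) positive words. Using the fundamental relation $\sigma_i\Delta = \Delta\sigma_{3-i}$ (recorded in the proof of Theorem~\ref{Garside}), I slide the central $\Delta$ leftward past every generator of $w_1$, one at a time, replacing each $\sigma_i$ by $\sigma_{3-i}$; after $\left|w_1\right|$ steps this yields $\sigma = \Delta\,\overline{w_1}\,w_2$, where $\overline{w_1}$ is the word obtained from $w_1$ by exchanging $\sigma_1 \leftrightarrow \sigma_2$. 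Hence $\sigma = \Delta\tau$ with $\tau = \overline{w_1}w_2 \in B_3^{+}$, so $\Delta$ divides $\sigma$. Contrapositively, if $\sigma$ is indivisible by $\Delta$, then no product expansion of $\sigma$ contains either of the two patterns.

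Next I would deduce uniqueness. By Theorem~\ref{tmonoidembedding}, any two product expansions of $\sigma$ are connected by a finite chain of applications of the Artin relation $\sigma_1\sigma_2\sigma_1 = \sigma_2\sigma_1\sigma_2$, every intermediate word again being a positive word equal to $\sigma$ and hence a product expansion of $\sigma$. But each application of this relation requires the current word to contain $\sigma_1\sigma_2\sigma_1$ or $\sigma_2\sigma_1\sigma_2$ as a consecutive subword --- exactly the patterns excluded by the key observation. Therefore no relation can ever be applied, the chain has length zero, and the product expansion is unique. For the second assertion, suppose $\sigma_i$ is an isolated generator occurring neither at the beginning nor the end of the (now unique) product expansion. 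Then it has both a left and a right neighbor, and isolation forces each neighbor to differ from $\sigma_i$; since $B_3$ has only two generators, both neighbors equal $\sigma_{3-i}$. Thus the expansion contains the subword $\sigma_{3-i}\sigma_i\sigma_{3-i}$, which is precisely $\Delta$, again contradicting the key observation. Hence every isolated generator lies at the beginning or the end.

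The argument is essentially self-contained once the key observation is in hand, so I expect the only step requiring genuine care to be the sliding argument showing that a product expansion containing $\Delta$ forces $\Delta$ to divide $\sigma$ --- in particular, verifying that the relation $\sigma_i\Delta = \Delta\sigma_{3-i}$ may be applied repeatedly within $B_3^{+}$ without ever introducing inverses, which is exactly the content licensed by Theorem~\ref{tmonoidembedding}.
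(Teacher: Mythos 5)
Your proof is correct and follows essentially the same route as the paper's: exclude the subwords $\sigma_1\sigma_2\sigma_1$ and $\sigma_2\sigma_1\sigma_2$ from any product expansion using indivisibility by $\Delta$ and the relation $\sigma_i\Delta = \Delta\sigma_{i'}$, then invoke Theorem~\ref{tmonoidembedding} to conclude that no braid relation can be applied and hence the expansion is unique. You simply spell out the sliding argument and the isolated-generator deduction in more detail than the paper does, but the underlying idea is identical.
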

\begin{proof}
If $\sigma\in B_3^{+}$ is a positive braid indivisible by $\Delta$, then consider a product expansion of $\sigma$. The product expansion does not contain either $\sigma_1\sigma_2\sigma_1$ or $\sigma_2\sigma_1\sigma_2$ since $\sigma$ is indivisible by $\Delta$, and $\Delta\sigma_i = \sigma_{i'}\Delta$ if $\{i,i'\} = \{1,2\}$. In particular, we cannot apply a braid relation to modify the product expansion of $\sigma$. Theorem~\ref{tmonoidembedding} now implies that $\sigma$ has a unique product expansion. Therefore, the statement is established.
\end{proof}

The element $\Delta\in B_3^{+}$ is not ``prime" in the usual sense, since it is possible for a product of a pair of positive braids to be divisible by $\Delta$ even if each of the two factors is indivisible by $\Delta$. A simple example is furnished by the product of $\sigma_1\sigma_2$ and $\sigma_1$. The failure of $\Delta$ to be ``prime" accounts for certain technical difficulties, since otherwise it would be straightforward to compute the Garside normal form of a product of braids in terms of the Garside normal forms of the individual factors. The following technical result allows us to precisely study the cases where a product of a pair of positive braids is divisible by $\Delta$ even if neither factor is divisible by $\Delta$. 

\begin{proposition}
\label{pdualbraidproduct}
Let $\rho,\tau\in B_3^{+}$ be positive braids indivisible by $\Delta$. The product $\rho\tau$ is divisible by $\Delta$ if and only if we can write $\rho = \rho'\rho''$ and $\tau = \tau''\tau'$ such that $\rho''\tau'' = \Delta$. Indeed, $\rho\tau$ is divisible by $\Delta$ if and only if either $\rho$ ends with $\sigma_i\sigma_{i'}$ and $\tau$ begins with $\sigma_i$ for some $\{i,i'\} = \{1,2\}$, or $\rho$ ends with $\sigma_i$ and $\tau$ begins with $\sigma_{i'}\sigma_i$ for some $\{i,i'\} = \{1,2\}$.
\end{proposition}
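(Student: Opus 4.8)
The plan is to reduce everything to a statement about \emph{literal substrings} of the unique product expansions of $\rho$ and $\tau$ furnished by Proposition~\ref{piDB3}, and then to read off divisibility from the combinatorics at the junction. Write $w_\rho$ and $w_\tau$ for the unique product expansions of $\rho$ and $\tau$; by Proposition~\ref{piDB3} neither contains $\sigma_1\sigma_2\sigma_1$ nor $\sigma_2\sigma_1\sigma_2$ as a substring. Since $\{i,i'\}=\{1,2\}$ forces $\sigma_i\sigma_{i'}\sigma_i$ to be exactly $\Delta$ (each of the two product expansions of $\Delta$ has this shape), the two asserted endpoint conditions are precisely the two ways a $\Delta$-shaped substring can straddle the junction of the concatenation $w_\rho w_\tau$: either the last two letters of $w_\rho$ together with the first letter of $w_\tau$, or the last letter of $w_\rho$ together with the first two letters of $w_\tau$.

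I would dispatch the easy direction first. If $\rho=\rho'\rho''$ and $\tau=\tau''\tau'$ with $\rho''\tau''=\Delta$, then using $\rho'\Delta=\Delta\rho'_{(1)}$ (Definition~\ref{dconjugationbyDelta}) I obtain $\rho\tau=\rho'\Delta\tau'=\Delta\,\rho'_{(1)}\tau'$, so $\rho\tau$ is divisible by $\Delta$. I would also record that the two formulations (``$\rho''\tau''=\Delta$'' versus the explicit endpoint conditions) are the \emph{same} statement: since $\rho,\tau$ are indivisible by $\Delta$, both $\rho''$ and $\tau''$ are nontrivial and proper, so their lengths are $1$ and $2$ in some order (they sum to $3=\mathrm{len}\,\Delta$), and inspecting the factorizations of $\Delta$ into a positive word of length $2$ times one of length $1$ and vice versa — which are pinned down by Theorem~\ref{tmonoidembedding} — yields exactly $(\sigma_i\sigma_{i'})(\sigma_i)$ and $(\sigma_i)(\sigma_{i'}\sigma_i)$, i.e.\ the two endpoint conditions.

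The substance is the converse: if neither endpoint condition holds, then $\rho\tau$ is indivisible by $\Delta$. I expect this to be the main obstacle, because divisibility by $\Delta$ is an assertion about the \emph{ends} of the braid, whereas the hypothesis only controls the \emph{middle} junction; a priori, braid relations could propagate a hidden $\Delta$ from the junction out to an end. I would remove this worry with the following key lemma, essentially the converse of Proposition~\ref{piDB3}: \emph{if a positive word $w$ contains neither $\sigma_1\sigma_2\sigma_1$ nor $\sigma_2\sigma_1\sigma_2$ as a substring, then the braid it represents is indivisible by $\Delta$.} The point is that in $B_3^{+}$ the only defining relation is $\sigma_1\sigma_2\sigma_1=\sigma_2\sigma_1\sigma_2$, so \emph{no} braid relation can be applied to such a $w$; by Theorem~\ref{tmonoidembedding}, $w$ is then the \emph{unique} product expansion of its braid. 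But if that braid equalled $\Delta\gamma$ for some positive $\gamma$, it would also admit the product expansion $\sigma_1\sigma_2\sigma_1\,w_\gamma$, which contains a $\Delta$-substring and hence differs from $w$ — contradicting uniqueness.

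With the lemma in hand the converse is immediate. Assuming neither endpoint condition holds, the concatenation $w_\rho w_\tau$ contains no $\Delta$-substring: there is none inside $w_\rho$ or inside $w_\tau$ by Proposition~\ref{piDB3}, and a length-$3$ substring straddling the junction must occupy either two letters of $w_\rho$ and one of $w_\tau$, or one of $w_\rho$ and two of $w_\tau$ — i.e.\ it would realize one of the two excluded conditions. The key lemma then forces $\rho\tau$ to be indivisible by $\Delta$, completing the contrapositive and hence the proof. The only routine checks I am deferring are the enumeration of the length-$2$/length-$1$ factorizations of $\Delta$ and the straddling bookkeeping just described.
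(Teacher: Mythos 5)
Your proposal is correct and follows essentially the same route as the paper: both arguments reduce to the unique product expansions guaranteed by Theorem~\ref{tmonoidembedding} and Proposition~\ref{piDB3}, observe that divisibility of $\rho\tau$ by $\Delta$ forces a $\Delta$-shaped substring to straddle the junction of $w_\rho w_\tau$, and enumerate the two straddling configurations. Your explicitly stated ``key lemma'' (a positive word with no $\Delta$-substring represents a braid indivisible by $\Delta$) is exactly the fact the paper uses implicitly in the proof of Proposition~\ref{piDB3}, so making it explicit is a presentational difference rather than a mathematical one.
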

\begin{proof}
Let us assume that $\rho\tau$ is divisible by $\Delta$. Proposition~\ref{piDB3} implies that the index of the last generator in $\rho$ is different from the index of the first generator in $\tau$, since neither $\rho$ nor $\tau$ is divisible by $\Delta$. Furthermore, Proposition~\ref{piDB3} implies that at least one of the last generator in $\rho$ and the first generator in $\tau$ is isolated, again since neither $\rho$ nor $\tau$ is divisible by $\Delta$. We can choose $\rho''$ and $\tau''$ as follows. If the last generator in $\rho$ is isolated, then $\rho''$ is the product of the last two generators in $\rho$ and $\tau''$ is the first generator in $\tau$. If the first generator in $\tau$ is isolated, then $\rho''$ is the last generator in $\rho$ and $\tau''$ is the product of the first two generators in $\tau$. We have thus established the forward implication. The reverse implication is immediate. Therefore, the statement is established.
\end{proof}

If $\rho,\tau\in B_3^{+}$ are positive braids indivisible by $\Delta$, then consider the largest positive integer $k$ such that we can write $\rho = \rho'\rho''$ and $\tau = \tau''\tau'$ with $\rho''\tau'' = \Delta^{k}$. The consideration of such decompositions of $\rho$ and $\tau$ with this value of $k$ is useful for isolating the part of $\rho$ and the part of $\tau$ ``responsible" for the power of $\Delta$ in the Garside normal form of $\rho\tau$. Firstly, we introduce terminology to study the equation $\rho''\tau'' = \Delta^{k}$, which is of fundamental importance in this paper.

\begin{definition}
\label{ddual}
Let $\rho,\tau\in B_3^{+}$ be positive braids indivisible by $\Delta$. We write that $\rho$ is \textit{dual} to $\tau$ if $\rho\tau$ is a (nonnegative) power of $\Delta$. Alternatively, $\rho$ is dual to $\tau$ if and only if $\tau^{-1} = \Delta^{-\omega\left(\tau\right)}\rho$ is the Garside normal form of $\tau^{-1}$, where $\omega\left(\tau\right)$ is a nonnegative integer depending on $\tau$. If $\rho$ is dual to $\tau$, then we write that $\tau$ is \textit{right-dual} to $\rho$ (or $\tau$ is the right-dual of $\rho$) and $\rho$ is \textit{left-dual} to $\tau$ (or $\rho$ is the left-dual of $\tau$). 
\end{definition}

In general, if we write that $\rho$ is dual to $\tau$ for positive braids $\rho,\tau\in B_3^{+}$, then it is implicitly understood that $\rho$ and $\tau$ are indivisible by $\Delta$, but we might still mention this for emphasis if it is relevant to the context. We have the following uniqueness statement for duals. 

\begin{proposition}
\label{puniquenessofduals}
If $\rho,\rho',\tau\in B_3^{+}$ are positive braids indivisible by $\Delta$ such that $\rho$ is dual to $\tau$ and $\rho'$ is dual to $\tau$, then $\rho = \rho'$. In particular, the integer $\omega\left(\tau\right)$ in Definition~\ref{ddual} is uniquely specified by $\tau$.
\end{proposition}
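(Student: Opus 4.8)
The plan is to reduce the statement directly to the uniqueness of the Garside normal form furnished by Theorem~\ref{Garside}. The key observation is that the duality condition in Definition~\ref{ddual} is simply an encoding of the Garside normal form of $\tau^{-1}$, so that the proposition becomes a restatement of the fact that the Garside normal form is unique. Concretely, I would argue that both $\rho$ and $\rho'$ serve as the positive part of the Garside normal form of the \emph{same} element $\tau^{-1}$, and hence coincide.

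First I would unwind the hypotheses. Since $\rho$ is dual to $\tau$, the product $\rho\tau$ is a nonnegative power of $\Delta$, say $\rho\tau = \Delta^{\omega}$ for some integer $\omega\geq 0$. Multiplying on the right by $\tau^{-1}$ gives the identity
\[
\tau^{-1} = \Delta^{-\omega}\rho.
\]
Now I would observe that this is precisely the Garside normal form of $\tau^{-1}$: the Garside power is $-\omega\in\mathbb{Z}$, and the positive part $\rho\in B_3^{+}$ is, by hypothesis, indivisible by $\Delta$. Thus the pair $(-\omega,\rho)$ satisfies exactly the defining conditions of Theorem~\ref{Garside} for the element $\tau^{-1}$.

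Running the identical argument for $\rho'$, I obtain $\tau^{-1} = \Delta^{-\omega'}\rho'$ with $\omega'\geq 0$ and $\rho'$ indivisible by $\Delta$, which is again the Garside normal form of $\tau^{-1}$. Applying the uniqueness clause of Theorem~\ref{Garside} to the two expressions for $\tau^{-1}$ forces both the Garside powers and the positive parts to agree, i.e. $-\omega=-\omega'$ and $\rho=\rho'$. This yields $\rho=\rho'$ as claimed, and simultaneously shows $\omega=\omega'$, so that the integer $\omega\left(\tau\right)$ of Definition~\ref{ddual} is uniquely determined by $\tau$. There is no substantive obstacle here: the only point requiring any attention is the verification that $\Delta^{-\omega}\rho$ genuinely meets the two conditions defining the Garside normal form (positivity and indivisibility of the non-$\Delta$ factor), both of which are supplied verbatim by the hypotheses, after which uniqueness of the normal form does all of the work.
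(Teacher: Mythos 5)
Your argument is correct and is essentially identical to the paper's own proof: both rewrite the duality relation $\rho\tau=\Delta^{\omega}$ as $\tau^{-1}=\Delta^{-\omega}\rho$, observe that this is the Garside normal form of $\tau^{-1}$ because $\rho$ is positive and indivisible by $\Delta$, and then invoke the uniqueness clause of Theorem~\ref{Garside} to conclude $\rho=\rho'$ and $\omega=\omega'$. No gaps.
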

\begin{proof}
The statement is equivalent to the uniqueness of the Garside normal form of $\tau^{-1}$ (a special case of Theorem~\ref{Garside}). Indeed, $\rho$ is dual to $\tau$ if and only if the Garside normal form of $\tau^{-1}$ is $\tau^{-1} = \Delta^{-\omega\left(\tau\right)}\rho$, since $\rho\in B_3^{+}$ is a positive braid indivisible by $\Delta$. In particular, if $\rho'$ is also dual to $\tau$, then $\rho = \rho'$. The integer $\omega\left(\tau\right)$ is also uniquely specified by $\tau$ for the same reason. Therefore, the statement is established.
\end{proof}

The relation of duality is \textit{not} symmetric, that is, left-duality is \textit{not} the same as right-duality. For example, $\sigma_1$ is dual to $\sigma_2\sigma_1$ but $\sigma_2\sigma_1$ is not dual to $\sigma_1$. The left-dual of $\sigma_1$ is $\sigma_1\sigma_2$ and the right-dual of $\sigma_1$ is $\sigma_2\sigma_1$. However, the left-dual and right-dual of a positive braid are related by conjugation by a suitable power of $\Delta$. We recall the notation in Definition~\ref{dconjugationbyDelta} in order to state the following \textit{pseudo-symmetry of duals}.

\begin{proposition}
\label{ppseudosymmetryduality}
Let $\rho,\tau\in B_3^{+}$ be positive braids indivisible by $\Delta$. We have that $\rho$ is dual to $\tau$ (with $\rho\tau = \Delta^{\omega\left(\tau\right)}$) if and only if $\tau$ is dual to $\rho_{\left(\omega\left(\tau\right)\right)}$ (with $\tau\rho_{\left(\omega\left(\tau\right)\right)}=\Delta^{\omega\left(\tau\right)}$). In particular, if $\rho$ is dual to $\tau$, then $\omega\left(\rho\right) = \omega\left(\tau\right)$. 
\end{proposition}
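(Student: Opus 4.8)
The plan is to reduce the biconditional to a single reversible group-theoretic identity, and only afterward to invoke positivity and indivisibility so that the terminology of Definition~\ref{ddual} applies; the claim $\omega(\rho)=\omega(\tau)$ I would then handle by reading off a Garside normal form directly. First I would record the key property of conjugation by $\Delta$: since $\Delta^{-1}\sigma_i\Delta = \sigma_{3-i}$, conjugation by any power of $\Delta$ is an automorphism of $B_3$ that preserves the submonoid $B_3^{+}$ (it merely swaps $\sigma_1\leftrightarrow\sigma_2$ in a product expansion) and fixes $\Delta$. Consequently, for a positive braid $\rho$ the conjugate $\rho_{\left(\omega\right)} = \Delta^{-\omega}\rho\Delta^{\omega}$ is again a positive braid, and $\rho_{\left(\omega\right)}$ is indivisible by $\Delta$ if and only if $\rho$ is (write $\rho = \Delta\mu$ with $\mu\in B_3^{+}$ and conjugate, using that $\mu_{\left(\omega\right)}$ is positive). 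This is what licenses calling a resulting equation $\tau\rho_{\left(\omega\right)} = \Delta^{\omega}$ an instance of duality rather than a mere group identity.

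The heart of the argument is the equivalence of group equations
\[\rho\tau = \Delta^{\omega} \iff \tau\rho_{\left(\omega\right)} = \Delta^{\omega},\]
valid for any integer $\omega$ with no positivity hypotheses. For the forward direction I would solve $\rho\tau = \Delta^{\omega}$ as $\rho = \Delta^{\omega}\tau^{-1}$ and substitute into $\tau\rho_{\left(\omega\right)} = \tau\Delta^{-\omega}\rho\Delta^{\omega}$, where the $\Delta^{-\omega}$ cancels the leading $\Delta^{\omega}$ of $\rho$ to leave $\tau\tau^{-1}\Delta^{\omega} = \Delta^{\omega}$. The reverse direction is the same computation run backwards: from $\tau\Delta^{-\omega}\rho\Delta^{\omega} = \Delta^{\omega}$ cancel $\Delta^{\omega}$ on the right to get $\tau\Delta^{-\omega}\rho = e$, hence $\rho = \Delta^{\omega}\tau^{-1}$ and $\rho\tau = \Delta^{\omega}$. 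Combined with the first paragraph this yields the stated biconditional: if $\rho$ is dual to $\tau$ with $\rho\tau = \Delta^{\omega\left(\tau\right)}$, then $\tau\rho_{\left(\omega\left(\tau\right)\right)} = \Delta^{\omega\left(\tau\right)}$ with $\rho_{\left(\omega\left(\tau\right)\right)}$ a positive braid indivisible by $\Delta$, so $\tau$ is dual to $\rho_{\left(\omega\left(\tau\right)\right)}$; and conversely the same equation read backwards recovers $\rho\tau = \Delta^{\omega\left(\tau\right)}$.

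Finally, for the identity $\omega\left(\rho\right) = \omega\left(\tau\right)$ I would avoid any circularity by exhibiting the Garside normal form of $\rho^{-1}$ explicitly. From $\rho\tau = \Delta^{\omega\left(\tau\right)}$ I obtain $\rho^{-1} = \tau\Delta^{-\omega\left(\tau\right)} = \Delta^{-\omega\left(\tau\right)}\tau_{\left(\omega\left(\tau\right)\right)}$, where the last equality uses $\tau\Delta^{-\omega} = \Delta^{-\omega}\tau_{\left(-\omega\right)}$ together with $\tau_{\left(-\omega\right)} = \tau_{\left(\omega\right)}$ (the parity remark in Definition~\ref{dconjugationbyDelta}, harmless because $\Delta^2$ is central). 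By the first paragraph $\tau_{\left(\omega\left(\tau\right)\right)}$ is a positive braid indivisible by $\Delta$, so this is precisely the Garside normal form of $\rho^{-1}$. Invoking the uniqueness in Theorem~\ref{Garside} (equivalently Proposition~\ref{puniquenessofduals}) and the characterization of $\omega$ in Definition~\ref{ddual} as the negative of the Garside power of the inverse, I read off $\omega\left(\rho\right) = \omega\left(\tau\right)$. I do not expect any serious obstacle: the only steps requiring care are the parity bookkeeping for $\sigma_{\left(l\right)}$ and the verification that the $\Delta$-conjugate of a positive $\Delta$-indivisible braid remains positive and $\Delta$-indivisible, both of which are routine consequences of conjugation by $\Delta$ being a monoid automorphism fixing $\Delta$.
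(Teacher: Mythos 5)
Your proposal is correct and follows essentially the same route as the paper's proof: the paper likewise observes that $\tau\rho_{\left(\omega\left(\tau\right)\right)}=\Delta^{\omega\left(\tau\right)}$ is a rearrangement of $\rho\tau=\Delta^{\omega\left(\tau\right)}$ and that $\rho_{\left(\omega\left(\tau\right)\right)}$ remains a positive braid indivisible by $\Delta$, which is exactly your argument with the cancellations and the parity bookkeeping made explicit. Your explicit reading of $\omega\left(\rho\right)=\omega\left(\tau\right)$ from the Garside normal form of $\rho^{-1}$ is a correct and slightly more detailed account of what the paper leaves implicit.
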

\begin{proof}
We observe that the second equation $\tau\rho_{\left(\omega\left(\tau\right)\right)} = \Delta^{\omega\left(\tau\right)}$ is a rearrangement of the first equation $\rho\tau = \Delta^{\omega\left(\tau\right)}$. Furthermore, $\rho_{\left(\omega\left(\tau\right)\right)}\in B_3^{+}$ is a positive braid indivisible by $\Delta$. Therefore, the statement is established.
\end{proof}

We recall the following standard terminology since it will be convenient in the subsequent discussion.

\begin{definition}
Let $\sigma\in B_3^{+}$ be a positive braid. A positive braid $\sigma'\in B_3^{+}$ is a \textit{left-divisor} of $\sigma$ if we can write $\sigma = \sigma'\sigma''$ for some positive braid $\sigma''\in B_3^{+}$. A positive braid $\sigma''\in B_3^{+}$ is a \textit{right-divisor} of $\sigma$ if we can write $\sigma = \sigma'\sigma''$ for some positive braid $\sigma'\in B_3^{+}$. A \textit{divisor} of $\sigma$ is a positive braid that is simultaneously a left-divisor and a right-divisor of $\sigma$. A \textit{proper} left-divisor/right-divisor/divisor of $\sigma$ is a left-divisor/right-divisor/divisor of $\sigma$, respectively, that is not equal to $\sigma$. 
\end{definition}

We observe that the Garside element $\Delta$ is a divisor of a positive braid $\sigma\in B_3^{+}$ if and only if $\Delta$ is either a left-divisor \textit{or} a right-divisor of $\sigma$. 

A natural question is whether the function $\omega$ defined on the set of positive braids in $B_3$ indivisible by $\Delta$ is additive. More precisely, if $\rho',\rho''\in B_3^{+}$ are positive braids indivisible by $\Delta$ such that $\rho'\rho''$ is also indivisible by $\Delta$, then is it true that $\omega\left(\rho'\rho''\right) = \omega\left(\rho'\right) + \omega\left(\rho''\right)$? In other words, is the ``product of the duals" equal to ``the dual of the product"? The following statement combines the notions of divisors and duality of positive braids.

\begin{proposition}
\label{ppdb}
Let $\rho=\rho'\rho''$ for positive braids $\rho,\rho',\rho''\in B_3^{+}$ indivisible by $\Delta$, where $\rho''$ is dual to $\tau''$ and $\rho'_{\left(\omega\left(\rho''\right)\right)}$ is dual to $\tau'$. If $\tau = \tau''\tau'$, then $\rho\tau = \Delta^{\omega\left(\rho'\right)+\omega\left(\rho''\right)}$. In particular, $\rho$ is dual to $\tau$ if and only if $\tau$ is indivisible by $\Delta$. Furthermore, we have $\omega\left(\rho\right)\leq \omega\left(\rho'\right)+\omega\left(\rho''\right)$ with equality if and only if $\tau$ is indivisible by $\Delta$. 
\end{proposition}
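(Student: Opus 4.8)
The plan is to prove the identity $\rho\tau = \Delta^{\omega(\rho')+\omega(\rho'')}$ by a direct computation that peels off the two duality hypotheses one at a time, and then to deduce the remaining assertions as formal consequences of this identity together with the uniqueness of the Garside normal form. First I would unwind the hypotheses. Since $\rho''$ is dual to $\tau''$, Definition~\ref{ddual} together with the pseudo-symmetry of Proposition~\ref{ppseudosymmetryduality} (which gives $\omega(\rho'')=\omega(\tau'')$) yields $\rho''\tau'' = \Delta^{\omega(\rho'')}$. Substituting into $\rho\tau = \rho'\rho''\tau''\tau'$ gives $\rho\tau = \rho'\Delta^{\omega(\rho'')}\tau'$. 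Next I would commute $\rho'$ past the power of $\Delta$ using the relation $\sigma\Delta^{l} = \Delta^{l}\sigma_{(l)}$ that is immediate from Definition~\ref{dconjugationbyDelta}, obtaining $\rho\tau = \Delta^{\omega(\rho'')}\rho'_{(\omega(\rho''))}\tau'$. The second hypothesis says $\rho'_{(\omega(\rho''))}$ is dual to $\tau'$; since conjugation by $\Delta$ is an automorphism of $B_3$ fixing $\Delta$ and preserving $B_3^{+}$ (hence preserving duality and the value of $\omega$), we have $\omega(\rho'_{(\omega(\rho''))}) = \omega(\rho')$, so that $\rho'_{(\omega(\rho''))}\tau' = \Delta^{\omega(\rho')}$. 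Combining the two substitutions gives $\rho\tau = \Delta^{\omega(\rho'')}\Delta^{\omega(\rho')} = \Delta^{\omega(\rho')+\omega(\rho'')}$, which is the first assertion.

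The ``in particular'' clause would then be immediate from the definition of duality: the identity just proved shows $\rho\tau$ is a nonnegative power of $\Delta$, and $\rho$ is indivisible by $\Delta$ by hypothesis, so by Definition~\ref{ddual} the braid $\rho$ is dual to $\tau$ precisely when $\tau$ is also indivisible by $\Delta$.

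For the inequality I would pass to the Garside normal form $\tau = \Delta^{m}\tau_0$ of $\tau$ (Theorem~\ref{Garside}), where $m\geq 0$ and $\tau_0$ is indivisible by $\Delta$, noting that $\tau$ is divisible by $\Delta$ exactly when $m\geq 1$. Commuting $\rho$ past $\Delta^{m}$ as before gives $\rho\tau = \Delta^{m}\rho_{(m)}\tau_0$; since the left-hand side is the nonnegative power $\Delta^{\omega(\rho')+\omega(\rho'')}$ just computed and $\rho_{(m)},\tau_0$ are both indivisible by $\Delta$, the product $\rho_{(m)}\tau_0$ is itself a nonnegative power of $\Delta$, so $\rho_{(m)}$ is dual to $\tau_0$ and $\rho_{(m)}\tau_0 = \Delta^{\omega(\rho_{(m)})} = \Delta^{\omega(\rho)}$ (again using conjugation-invariance of $\omega$). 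Comparing the two expressions for $\rho\tau$ and using that $\Delta$ has infinite order (the uniqueness of the Garside power in Theorem~\ref{Garside}) forces $\omega(\rho')+\omega(\rho'') = m + \omega(\rho)$, i.e.\ $\omega(\rho) = \omega(\rho')+\omega(\rho'') - m$. Since $m\geq 0$ this gives $\omega(\rho)\leq \omega(\rho')+\omega(\rho'')$, with equality if and only if $m=0$, i.e.\ if and only if $\tau$ is indivisible by $\Delta$.

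I expect no serious obstacle, since every step is mechanical once the earlier machinery is in place. The only points requiring care are bookkeeping ones: keeping the left/right direction of duality straight, invoking pseudo-symmetry to replace $\omega(\tau'')$ and $\omega(\tau')$ by $\omega(\rho'')$ and $\omega(\rho')$, and justifying that conjugation by $\Delta$ leaves the invariant $\omega$ unchanged so that the exponents collapse to the clean form $\omega(\rho')+\omega(\rho'')$.
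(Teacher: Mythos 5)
Your proposal is correct and follows essentially the same route as the paper: the identical chain of equalities $\rho\tau = \rho'\Delta^{\omega(\rho'')}\tau' = \Delta^{\omega(\rho'')}\rho'_{(\omega(\rho''))}\tau' = \Delta^{\omega(\rho')+\omega(\rho'')}$, followed by extracting the Garside normal form $\tau = \Delta^{m}\tau_0$ to get $\omega(\rho)+m = \omega(\rho')+\omega(\rho'')$. The only cosmetic difference is that you derive the ``in particular'' clause directly from Definition~\ref{ddual} where the paper cites the uniqueness of duals (Proposition~\ref{puniquenessofduals}); both are valid.
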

\begin{proof}
We have the equation \[\rho\tau =\rho'\rho''\tau''\tau' = \rho'\Delta^{\omega\left(\rho''\right)}\tau' = \Delta^{\omega\left(\rho''\right)} \rho'_{\left(\omega\left(\rho''\right)\right)}\tau' = \Delta^{\omega\left(\rho'\right)+\omega\left(\rho''\right)}.\] The uniqueness of duals (Proposition~\ref{puniquenessofduals}) implies that $\rho$ is dual to $\tau$ if and only if $\tau$ is indivisible by $\Delta$. In this case, $\omega\left(\rho\right) = \omega\left(\rho'\right) + \omega\left(\rho''\right)$. However, if $\tau$ is divisible by $\Delta$, then the equation $\rho\tau = \Delta^{\omega\left(\rho'\right)+\omega\left(\rho''\right)}$ implies that $\omega\left(\rho\right)<\omega\left(\rho'\right)+\omega\left(\rho''\right)$. Indeed, if $\tau = \Delta^{k}\tau_0$ is the Garside normal form of $\tau$, then we have $\rho\left(\tau_0\right)_{\left(k\right)} = \Delta^{\omega\left(\rho'\right)+\omega\left(\rho''\right)-k}$, where $\left(\tau_0\right)_{\left(k\right)}\in B_3^{+}$ is a positive braid indivisible by $\Delta$. In particular, $\omega\left(\rho\right) + k = \omega\left(\rho'\right) + \omega\left(\rho''\right)$ in this case. Therefore, the statement is established.
\end{proof}

The assumption that $\tau = \tau''\tau'$ is indivisible by $\Delta$ in Proposition~\ref{ppdb} is crucial for it to be true that $\rho$ is dual to $\tau$. Indeed, the right-dual of a right-divisor of a positive braid $\sigma\in B_3^{+}$ is not necessarily a left-divisor of the right-dual of $\sigma$. For example, $\sigma_2$ is a right-divisor of $\sigma_1\sigma_2$ and the right-dual of $\sigma_1\sigma_2$ is $\sigma_1$. On the other hand, the right-dual of $\sigma_2$ is $\sigma_1\sigma_2$, which is not a left-divisor of $\sigma_1$. In this case, the product of the right-dual of $\sigma_2 = \left(\sigma_1\right)_{\left(1\right)}$ and the right-dual of $\sigma_2$ is not the right-dual of $\sigma_1\sigma_2$, since the product is $\sigma_1\sigma_2\sigma_1\sigma_2$, which is divisible by $\Delta$. However, if we factor $\Delta$ from this product (as in the proof of Proposition~\ref{ppdb}), then we obtain $\sigma_1$ which \textit{is} the right-dual of the product $\sigma_1\sigma_2$.

We now wish to characterize those right-divisors $\rho''$ of a positive braid $\rho$, for which the right-dual of $\rho''$ is a left-divisor of the right-dual of $\rho$. The study of such right-divisors of $\rho$ is of fundamental importance in this paper.

\begin{definition}
\label{dcd}
Let $\rho,\rho',\rho''\in B_3^{+}$ be positive braids indivisible by $\Delta$ such that $\rho =\rho'\rho''$. Let $\rho$ be dual to $\tau$, $\rho'_{\left(\omega\left(\rho''\right)\right)}$ be dual to $\tau'$, and $\rho''$ be dual to $\tau''$. If $\tau = \tau''\tau'$, then we write that $\rho'$ is a \textit{cut left-divisor} of $\rho$ and $\rho''$ is a \textit{cut right-divisor} of $\rho$.
\end{definition}

If $\rho = \rho'\rho''$, where $\rho,\rho',\rho''\in B_3^{+}$ are positive braids indivisible by $\Delta$, then Proposition~\ref{ppdb} implies that $\omega\left(\rho\right) = \omega\left(\rho'\right)+\omega\left(\rho''\right)$ if and only if $\rho'$ is a cut left-divisor of $\rho$ and $\rho''$ is a cut right-divisor of $\rho$.

We now establish a simple criterion to determine if a left-divisor or a right-divisor of a positive braid $\rho$ is a cut left-divisor or a cut right-divisor of $\rho$, respectively.

\begin{lemma}
\label{lcutdivisorcharacterization}
If $\rho = \rho'\rho''$ for positive braids $\rho,\rho',\rho''\in B_3^{+}$ indivisible by $\Delta$, then $\rho'$ is a cut left-divisor of $\rho$ and $\rho''$ is a cut right-divisor of $\rho$ if and only if the index of the last generator in $\rho'$ is equal to the index of the first generator in $\rho''$.
\end{lemma}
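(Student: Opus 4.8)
The plan is to reduce the statement to a single divisibility question about a product of two positive braids, and then settle that question with Proposition~\ref{pdualbraidproduct}. By the remark following Definition~\ref{dcd} (a consequence of Proposition~\ref{ppdb}), the braid $\rho'$ is a cut left-divisor and $\rho''$ a cut right-divisor of $\rho$ if and only if $\omega\left(\rho\right) = \omega\left(\rho'\right) + \omega\left(\rho''\right)$. Writing $m = \omega\left(\rho''\right)$ and letting $\tau''$ be dual to $\rho''$ and $\tau'$ be dual to $\rho'_{\left(m\right)}$, Proposition~\ref{ppdb} gives $\rho\left(\tau''\tau'\right) = \Delta^{\omega\left(\rho'\right)+m}$ and tells us this additivity holds exactly when $\tau''\tau'$ is indivisible by $\Delta$. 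So it suffices to prove that $\tau''\tau'$ is indivisible by $\Delta$ if and only if the index $a$ of the last generator of $\rho'$ equals the index $b$ of the first generator of $\rho''$. Since $\tau''$ and $\tau'$ are both indivisible by $\Delta$, I would decide this through the two cases of Proposition~\ref{pdualbraidproduct}, both of which require the index $x$ of the last generator of $\tau''$ to differ from the index $y$ of the first generator of $\tau'$.

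The first key step is the elementary observation that for positive braids $\mu,\nu$ indivisible by $\Delta$ with $\mu\nu = \Delta^{\omega}$ and $\omega \geq 1$, the last generator of $\mu$ and the first generator of $\nu$ have different indices: since $\Delta^{\omega}$ is divisible by $\Delta$, Proposition~\ref{pdualbraidproduct} forces one of its two cases, and in each case the boundary generators have opposite indices. Applying this to $\rho'_{\left(m\right)}\tau' = \Delta^{\omega\left(\rho'\right)}$ shows $y$ is the index opposite to the last generator of $\rho'_{\left(m\right)}$, and applying it to the pseudo-symmetric identity $\tau''\rho''_{\left(m\right)} = \Delta^{m}$ of Proposition~\ref{ppseudosymmetryduality} shows $x$ is the index opposite to the first generator of $\rho''_{\left(m\right)}$. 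Conjugation by $\Delta^{m}$ and passage to the opposite index are both bijections of $\{1,2\}$, so $x$ and $y$ are the images of $b$ and $a$ under one common bijection; hence $x \neq y$ if and only if $a \neq b$, and the parity of $m$ never needs to be tracked explicitly.

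When $a = b$ we then have $x = y$, so neither case of Proposition~\ref{pdualbraidproduct} can apply and $\tau''\tau'$ is indivisible by $\Delta$; this is the cut-divisor case. When $a \neq b$ we have $x \neq y$, and I would verify divisibility through case~(a) by showing that $\tau''$ has length at least two and its last run has length one, so that $\tau''$ ends in $\sigma_{y}\sigma_{x}$ (its penultimate generator necessarily has index $3-x = y$) while $\tau'$ begins in $\sigma_{y}$. Here the indivisibility of the whole braid $\rho$ enters decisively: since $a \neq b$, the word $\sigma_{a}\sigma_{b}\sigma_{a} = \Delta$ would appear at the $\rho'\mid\rho''$ interface if $\rho''$ began with $\sigma_{b}\sigma_{a}$, so Proposition~\ref{piDB3} forces the first run of $\rho''$ to have length at least two unless $\rho'' = \sigma_{b}$. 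If the last run of $\tau''$ had length at least two, then Proposition~\ref{pdualbraidproduct} applied to $\tau''\rho''_{\left(m\right)} = \Delta^{m}$ excludes its case~(a) and forces case~(b), which makes the first run of $\rho''_{\left(m\right)}$, hence of $\rho''$, have length one with at least two generators -- exactly the excluded configuration; the remaining possibility $\rho'' = \sigma_{b}$ makes $\tau'' = \sigma_{3-b}\sigma_{b}$, whose last run again has length one. A similar count shows a single-generator $\tau''$ would force $\rho''$ to be a length-two alternating word, also excluded, so $\tau''$ indeed has length at least two.

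The hard part is precisely this last step: controlling the length of the last run of the dual $\tau''$. The run-lengths of a dual are \emph{not} determined by the boundary data of the original braid alone -- for instance $\sigma_{2}\sigma_{1}^{2}$ is its own dual and ends in a run of length two -- so the argument genuinely needs the indivisibility of the \emph{entire} product $\rho = \rho'\rho''$, which is exactly what forbids, at the interface, the configurations in which the ``doubling'' required by Proposition~\ref{pdualbraidproduct} would fail. Once the run-length is pinned down, case~(a) yields that $\tau''\tau'$ is divisible by $\Delta$, whence $\omega\left(\rho\right) < \omega\left(\rho'\right)+\omega\left(\rho''\right)$ and $\rho',\rho''$ are not cut divisors, completing the equivalence.
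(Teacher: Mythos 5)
Your proof is correct, but it takes a genuinely different route from the paper's. The paper establishes the same reduction (cut divisors $\Leftrightarrow$ $\tau''\tau'$ indivisible by $\Delta$, via Proposition~\ref{ppdb}) and then argues by induction on the length of $\rho$, peeling the first generator off $\rho''$ and using the inductive hypothesis to determine how $\tau''$ ends before applying Proposition~\ref{pdualbraidproduct} at the $\tau''\mid\tau'$ junction. You avoid induction entirely: the pseudo-symmetry identities $\rho'_{\left(m\right)}\tau' = \Delta^{\omega\left(\rho'\right)}$ and $\tau''\rho''_{\left(m\right)} = \Delta^{m}$, combined with the observation that Proposition~\ref{pdualbraidproduct} forces opposite indices at the junction of any two $\Delta$-indivisible factors of a power of $\Delta$, let you read off the boundary generators of $\tau'$ and $\tau''$ directly from those of $\rho'$ and $\rho''$ through one common bijection of $\{1,2\}$. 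That immediately gives the "cut divisor" direction, and for the converse your run-length analysis of $\tau''$ (ruling out a single-generator $\tau''$ and a last run of length $\geq 2$ by bouncing the forbidden configurations back onto $\rho''$ via Proposition~\ref{piDB3} applied to the indivisible braid $\rho$) correctly pins down case~(a) of Proposition~\ref{pdualbraidproduct}; I checked the exhaustiveness of the three run-length cases and each exclusion. The trade-off is that the paper's induction is uniform but drags three subcases through each inductive step, whereas your argument is direct but concentrates the difficulty in the run-length step, which you rightly identify as the crux. One cosmetic point: in the paper's convention "$\tau''$ is dual to $\rho''$" would mean $\tau''\rho'' = \Delta^{k}$; you intend $\rho''\tau'' = \Delta^{m}$ (i.e., $\tau''$ is the \emph{right}-dual of $\rho''$), and all the identities you subsequently write are the correct ones for that intended meaning, so this is only a wording slip.
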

\begin{proof}
Let $\rho'_{\left(\omega\left(\rho''\right)\right)}$ be dual to $\tau'$ and let $\rho''$ be dual to $\tau''$. We claim that $\tau = \tau''\tau'$ is indivisible by $\Delta$ if and only if the index of the last generator in $\rho'$ is equal to the index of the first generator in $\rho''$. The statement is equivalent to this claim since Proposition~\ref{ppdb} implies that $\rho$ is dual to $\tau$ if and only if $\tau$ is indivisible by $\Delta$. We will establish the claim by induction on the length of $\rho$. The claim is trivial if $\rho'' = \rho$, so let us assume that the length of $\rho''$ is strictly less than the length of $\rho$ and appeal to the induction hypothesis. 

We can write $\rho'' = \sigma_i\xi''$ for $i\in \{1,2\}$ and a positive braid $\xi''\in B_3^{+}$. Firstly, we will show that $\tau = \tau''\tau'$ is divisible by $\Delta$ if the index of the last generator in $\rho'$ is different from the index of the first generator in $\rho''$. Indeed, in this case, the index of the first generator in $\xi''$ is the same as $i$ by Proposition~\ref{piDB3} since $\rho$ is indivisible by $\Delta$. Therefore, we can apply the induction hypothesis to deduce that $\xi''$ is a cut right-divisor of $\rho''$. In particular, $\tau''$ ends with a product of distinct generators $\left(\sigma_{i'}\sigma_i\right)_{\left(\omega\left(\xi''\right)\right)}$ for $i\neq i'\in \{1,2\}$, and this product is $\left(\sigma_i\sigma_{i'}\right)_{\left(\omega\left(\rho''\right)\right)}$ by Proposition~\ref{ppdb}. However, $\tau'$ begins with $\left(\sigma_i\right)_{\omega\left(\rho''\right)}$. Indeed, $\rho'_{\left(\omega\left(\rho''\right)\right)}$ is dual to $\tau'$ and the last generator in $\rho'$ is $\sigma_{i'}$ by assumption. In particular, Proposition~\ref{pdualbraidproduct} implies that $\tau = \tau''\tau'$ is divisible by $\Delta$.

Secondly, we will show that $\tau = \tau''\tau'$ is indivisible by $\Delta$ if the index of the last generator in $\rho'$ is the same as the index of the first generator in $\rho''$. We split the argument into cases according to the index of the first generator in $\xi''$. If the index of the first generator in $\xi''$ is the same as $i$, then we have already established that $\tau''$ ends with a product of distinct generators $\left(\sigma_i\sigma_{i'}\right)_{\left(\omega\left(\rho''\right)\right)}$ for $i\neq i'\in \{1,2\}$. However, in this case, $\tau'$ begins with $\left(\sigma_{i'}\right)_{\left(\omega\left(\rho''\right)\right)}$. Indeed, $\left(\rho'\right)_{\left(\omega\left(\rho''\right)\right)}$ is dual to $\tau'$ and the last generator in $\rho'$ is $\sigma_i$ by assumption. In particular, Proposition~\ref{pdualbraidproduct} implies that $\tau = \tau''\tau'$ is indivisible by $\Delta$ in this case. 

Finally, we assume that the index of the first generator in $\xi''$ is different from $i$, and we will show that $\tau''\tau'$ is indivisible by $\Delta$. In this case, we can write $\rho''=\sigma_i\sigma_{i'}\xi''_{0}$ for a positive braid $\xi''_{0}\in B_3^{+}$ (we have $\xi''=\sigma_{i'}\xi''_{0}$). The index of the first generator in $\xi''_{0}$ is the same as $i'$ by Proposition~\ref{piDB3} since $\rho$ is indivisible by $\Delta$. Therefore, we can apply the induction hypothesis to deduce that $\xi''_{0}$ is a cut right-divisor of $\rho''$. In particular, $\tau''$ ends with $\left(\sigma_i\right)_{\left(\omega\left(\xi''_{0}\right)\right)}$, and this is $\left(\sigma_{i'}\right)_{\left(\omega\left(\rho''\right)\right)}$ by Proposition~\ref{ppdb}. However, in this case, $\tau'$ begins with $\left(\sigma_{i'}\right)_{\left(\omega\left(\rho''\right)\right)}$. Indeed, $\left(\rho'\right)_{\left(\omega\left(\rho''\right)\right)}$ is dual to $\tau'$ and the last generator in $\rho'$ is $\sigma_i$ by assumption. In particular, Proposition~\ref{pdualbraidproduct} implies that $\tau=\tau''\tau'$ is indivisible by $\Delta$ in this case. Therefore, the statement is established.
\end{proof}

If $\rho,\tau\in B_3^{+}$ are positive braids and if $\rho$ is dual to $\tau$, then Proposition~\ref{puniquenessofduals} implies that $\tau$ is uniquely determined by $\rho$. We now establish an explicit characterization of $\tau$ in terms of $\rho$ using Lemma~\ref{lcutdivisorcharacterization}. We remark that if $\rho$ is either a generator $\sigma_i$ for some $i\in \{1,2\}$ or a product of distinct generators $\sigma_{i}\sigma_{i'}$ for some $\{i,i'\}= \{1,2\}$, then it is straightforward to determine $\tau$. In the first case, $\tau = \sigma_{i'}\sigma_i$ for $i\neq i'\in \{1,2\}$ and in the second case, $\tau = \sigma_i$. (A generator or a product of distinct generators in the braid group $B_3$ is referred to as a \textit{permutation braid} in $B_3$.)

\begin{lemma}
\label{lcharacterizationdualpositivebraids}
Let $\rho,\tau\in B_3^{+}$ be positive braids such that $\rho$ is dual to $\tau$. Let $\rho = \prod_{j=1}^{k} \rho_j$ be a product decomposition of $\rho$, where $\rho_j$ is either a single generator or a product of two distinct generators for each $1\leq j\leq k$, and the index of the last generator in $\rho_j$ is equal to the index of the first generator in $\rho_{j+1}$ for each $1\leq j\leq k-1$. 

If $\rho_j$ is dual to $\tau_j$ for each $1\leq j\leq k$, then $\tau = \prod_{j=1}^{k} \left(\tau_{k+1-j}\right)_{\left(j+1\right)}$. In particular, if $\rho$ ends with a product of two distinct generators, then $\tau$ begins with a product of two generators with the same index (unless $\tau$ itself is a generator), and vice versa. If $\rho$ begins with a product of two distinct generators, then $\tau$ ends with a product of two generators with the same index (unless $\tau$ itself is a generator), and vice versa. Finally, we have $k=\omega\left(\tau\right)$.
\end{lemma}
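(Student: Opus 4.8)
The plan is to establish the product formula by induction on $k$, and then to read off both the structural ``in particular'' statements and the identity $k=\omega(\tau)$ directly from it. Two elementary facts will be used throughout. First, conjugation by $\Delta$ is compatible with duality: if $\rho$ is dual to $\tau$, then $\rho_{\left(l\right)}$ is dual to $\tau_{\left(l\right)}$ for every $l$, since $\rho_{\left(l\right)}\tau_{\left(l\right)} = \Delta^{-l}\left(\rho\tau\right)\Delta^{l} = \Delta^{\omega\left(\tau\right)}$ and conjugation by $\Delta$ is an automorphism preserving $B_3^{+}$ and indivisibility by $\Delta$. Second, each factor $\rho_j$ is either a single generator $\sigma_i$ (with dual $\sigma_{i'}\sigma_i$) or a product of two distinct generators $\sigma_i\sigma_{i'}$ (with dual $\sigma_i$), and in both cases $\omega\left(\tau_j\right) = 1$.

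For the inductive step I would peel off the rightmost factor. Write $\rho = \tilde\rho\,\rho_k$ with $\tilde\rho = \rho_1\cdots\rho_{k-1}$; as a left-divisor of a braid indivisible by $\Delta$, the braid $\tilde\rho$ is itself indivisible by $\Delta$ and so has a dual $\tilde\tau_0$, to which the induction hypothesis applies. Because the index of the last generator of $\tilde\rho$ (the last generator of $\rho_{k-1}$) equals the index of the first generator of $\rho_k$, Lemma~\ref{lcutdivisorcharacterization} shows that $\tilde\rho$ is a cut left-divisor and $\rho_k$ a cut right-divisor of $\rho$. By Definition~\ref{dcd} and Proposition~\ref{ppdb} (using $\omega\left(\rho_k\right)=1$) this gives $\tau = \tau_k\,\tau'$, where $\tau_k$ is dual to $\rho_k$ and $\tau'$ is dual to $\tilde\rho_{\left(1\right)}$; the first fact above together with uniqueness of duals (Proposition~\ref{puniquenessofduals}) identifies $\tau' = \left(\tilde\tau_0\right)_{\left(1\right)}$.

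The heart of the argument is then substituting the induction hypothesis $\tilde\tau_0 = \prod_{j=1}^{k-1}\left(\tau_{k-j}\right)_{\left(j+1\right)}$ and tracking the conjugation indices. Since conjugation by $\Delta$ distributes over the product and shifts each subscript parity by one, $\tau' = \left(\tilde\tau_0\right)_{\left(1\right)} = \prod_{j=1}^{k-1}\left(\tau_{k-j}\right)_{\left(j+2\right)}$, and prepending $\tau_k = \left(\tau_k\right)_{\left(2\right)}$ (an even subscript, hence trivial) produces exactly $\tau = \prod_{j=1}^{k}\left(\tau_{k+1-j}\right)_{\left(j+1\right)}$ after reindexing by $j\mapsto j-1$. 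The base case $k=1$ is the tautology $\tau = \tau_1 = \left(\tau_1\right)_{\left(2\right)}$. I expect the only real obstacle to be this parity bookkeeping of the $\Delta$-conjugations — ensuring that the single shift introduced by replacing $\tilde\rho$ with $\tilde\rho_{\left(1\right)}$ aligns with the index pattern of the target formula — rather than any conceptual difficulty.

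Finally, the identity $k=\omega\left(\tau\right)$ follows by iterating the additivity in Proposition~\ref{ppdb} in the cut-divisor (equality) case, which gives $\omega\left(\rho\right) = \omega\left(\tilde\rho\right) + \omega\left(\rho_k\right)$ at each step, so that $\omega\left(\tau\right) = \omega\left(\rho\right) = \sum_{j=1}^{k}\omega\left(\rho_j\right) = k$ by pseudo-symmetry (Proposition~\ref{ppseudosymmetryduality}) and the second fact above. The structural consequences are then a direct inspection of the formula: the leading factor of $\tau$ is $\tau_k$ and the trailing factor is $\left(\tau_1\right)_{\left(k+1\right)}$, so whether each of $\rho_k$ and $\rho_1$ is a single generator or a product of two distinct generators determines, after the parity shift, whether the corresponding end of $\tau$ begins (respectively ends) with two generators of equal index or with two distinct generators. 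I would dispatch each of these by enumerating the two possibilities for $\rho_k$ (respectively $\rho_1$) and reading off the first (respectively last) two generators of $\tau$; the degenerate case $k=1$, where $\rho$ is a single product of two distinct generators and $\tau=\tau_1$ is a generator, accounts for the parenthetical exception ``unless $\tau$ itself is a generator.''
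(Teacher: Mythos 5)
Your proof is correct and follows essentially the same route as the paper's: induction on $k$ via Lemma~\ref{lcutdivisorcharacterization} and Proposition~\ref{ppdb} (with $\omega\left(\rho_j\right)=1$ for permutation braids), a case analysis at the ends of $\rho$ for the structural statements, and iterated additivity plus pseudo-symmetry for $k=\omega\left(\tau\right)$. You are somewhat more explicit than the paper about the parity bookkeeping and about identifying the dual of $\tilde\rho_{\left(1\right)}$ via uniqueness of duals, but the argument is the same.
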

\begin{proof}
The statement that $\rho$ is dual to $\tau$ is a consequence of Lemma~\ref{lcutdivisorcharacterization} and induction on $k$. If $\rho$ ends with a product of two distinct generators, then $\rho_k$ is a product of two distinct generators and we can write $\rho_k = \sigma_{i}\sigma_{i'}$ for $\{i,i'\} = \{1,2\}$. In this case, $\tau_k = \sigma_i$ is a single generator. If $\rho\neq \rho_k$, then Proposition~\ref{piDB3} implies that the last generator in $\rho_{k-1}$ is $\sigma_{i}$, since $\rho$ is indivisible by $\Delta$. Furthermore, in this case, the first generator in $\tau_{k-1}$ is $\sigma_{i'}$. In particular, the first generator in $\left(\tau_{k-1}\right)_{\left(3\right)}$ is $\sigma_i$ and the product of the first two generators in $\tau$ is $\sigma_i\sigma_i$. The proofs of the other statements concerning the relationship between the beginning/end of $\rho$ and the end/beginning of $\tau$, respectively, are similar. Finally, Proposition~\ref{ppdb} implies that $k=\omega\left(\tau\right)$ by induction on $k$, since $\omega\left(\rho_j\right) = 1$ for each $1\leq j\leq k$. Therefore, the statement is established.
\end{proof}

We now refine Proposition~\ref{ppdb} to establish that the failure of $\omega$ to be additive is minimal.

\begin{proposition}
\label{pomegaalmostadditive}
If $\rho,\rho',\rho''\in B_3^{+}$ are positive braids indivisible by $\Delta$ such that $\rho = \rho'\rho''$, then $\omega\left(\rho\right) = \omega\left(\rho'\right) + \omega\left(\rho''\right) - \epsilon$ for some $\epsilon\in \{0,1\}$. Furthermore, $\epsilon = 0$ if and only if $\rho'$ is a cut left-divisor of $\rho$ or $\rho''$ is a cut right-divisor of $\rho$. 
\end{proposition}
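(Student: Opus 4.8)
The plan is to reduce the statement to the combinatorial description of $\omega$ provided by Lemma~\ref{lcharacterizationdualpositivebraids}. Together with the pseudo-symmetry of duals (Proposition~\ref{ppseudosymmetryduality}), that lemma shows that for a positive braid $\xi\in B_3^{+}$ indivisible by $\Delta$, the integer $\omega(\xi)$ equals the number of factors in any decomposition $\xi = \xi_1\cdots\xi_k$ in which each $\xi_j$ is a single generator or a product of two distinct generators and the index of the last generator of $\xi_j$ equals the index of the first generator of $\xi_{j+1}$; indeed, the dual $\tau$ of $\xi$ is determined by $\xi$, so $k = \omega(\tau) = \omega(\xi)$ is the same for every such decomposition. (Such a decomposition exists, as one checks from the block structure of the unique product expansion of Proposition~\ref{piDB3}.) I would fix such decompositions $\rho' = A_1\cdots A_p$ and $\rho'' = B_1\cdots B_q$, with $p = \omega(\rho')$ and $q = \omega(\rho'')$, let $i$ be the index of the last generator of $\rho'$ and $j$ the index of the first generator of $\rho''$, and consider the concatenation $A_1\cdots A_p B_1\cdots B_q$. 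This product spells out $\rho$ and satisfies the index-matching condition everywhere except possibly at the single junction between $A_p$ (which ends in $\sigma_i$) and $B_1$ (which begins with $\sigma_j$).

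If $i = j$, the junction already matches, so the concatenation is a decomposition of $\rho$ of the required form with $p + q$ factors; hence $\omega(\rho) = \omega(\rho') + \omega(\rho'')$ and $\epsilon = 0$. By Lemma~\ref{lcutdivisorcharacterization}, the equality $i = j$ is precisely the condition that $\rho'$ be a cut left-divisor of $\rho$ and $\rho''$ a cut right-divisor of $\rho$. Since for the fixed factorization $\rho = \rho'\rho''$ these two statements are, by Definition~\ref{dcd}, one and the same condition, the ``or'' appearing in the proposition coincides with the corresponding ``and'' and is equivalent to $i = j$.

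If $i \neq j$, write $j = i'$ with $\{i,i'\} = \{1,2\}$. Then $A_p \in \{\sigma_i,\ \sigma_{i'}\sigma_i\}$ and $B_1 \in \{\sigma_{i'},\ \sigma_{i'}\sigma_i\}$, since $A_p$ is a permutation braid ending in $\sigma_i$ and $B_1$ is a permutation braid beginning with $\sigma_{i'}$. I would rule out three of the four combinations: for $(A_p,B_1)$ equal to $(\sigma_i,\ \sigma_{i'}\sigma_i)$, $(\sigma_{i'}\sigma_i,\ \sigma_{i'})$, or $(\sigma_{i'}\sigma_i,\ \sigma_{i'}\sigma_i)$, the product $A_pB_1$ contains the consecutive subword $\sigma_i\sigma_{i'}\sigma_i$ or $\sigma_{i'}\sigma_i\sigma_{i'}$, each of which is $\Delta$, so that $\Delta$ would divide $\rho$ and contradict its indivisibility. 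The surviving combination is $(A_p,B_1) = (\sigma_i,\ \sigma_{i'})$, whose product $\sigma_i\sigma_{i'}$ is a single permutation braid. Replacing the pair $A_p,B_1$ by the single factor $\sigma_i\sigma_{i'}$ produces a decomposition of $\rho$ with $p + q - 1$ factors that still satisfies the index-matching condition: the new factor begins with $\sigma_i$ (matching the last generator $\sigma_i$ of $A_{p-1}$) and ends with $\sigma_{i'}$ (matching the first generator $\sigma_{i'}$ of $B_2$), with the evident simplifications if $p = 1$ or $q = 1$. Hence $\omega(\rho) = \omega(\rho') + \omega(\rho'') - 1$ and $\epsilon = 1$.

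Combining the two cases gives $\omega(\rho) = \omega(\rho') + \omega(\rho'') - \epsilon$ with $\epsilon \in \{0,1\}$, and $\epsilon = 0$ exactly when $i = j$, which by the previous paragraph is exactly the cut-divisor condition. The substantive step, and the one I expect to be the main obstacle, is the junction analysis when $i \neq j$: one must verify that the last factor of the decomposition of $\rho'$ and the first factor of the decomposition of $\rho''$ can only take the listed permutation-braid forms, and that the indivisibility of $\rho$ together with $\Delta = \sigma_i\sigma_{i'}\sigma_i$ eliminates every configuration except the mergeable one.
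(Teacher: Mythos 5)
Your proof is correct, but it runs through a different mechanism than the paper's. The paper handles the non-matching case by transferring the single generator $\sigma_{i'}$ across the junction: it shows (via Proposition~\ref{piDB3} and Lemma~\ref{lcutdivisorcharacterization}) that $\rho'\sigma_{i'}$ is a cut left-divisor of $\rho$ with $\omega\left(\rho'\sigma_{i'}\right) = \omega\left(\rho'\right)$ and $\omega\left(\sigma_{i'}^{-1}\rho''\right) = \omega\left(\rho''\right) - 1$, and then invokes the additivity of $\omega$ for cut decompositions (Proposition~\ref{ppdb}) to conclude. You instead work entirely with the factor-count interpretation of $\omega$ supplied by Lemma~\ref{lcharacterizationdualpositivebraids}: concatenate junction-matched permutation-braid decompositions of $\rho'$ and $\rho''$, and at a mismatched junction use indivisibility by $\Delta$ to force $\left(A_p,B_1\right) = \left(\sigma_i,\sigma_{i'}\right)$ and merge them into one factor. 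Both arguments hinge on the same two facts --- Proposition~\ref{piDB3} forbidding $\sigma_i\sigma_{i'}\sigma_i$ in the expansion of $\rho$, and the counting characterization of $\omega$ --- but yours makes the ``defect of at most one'' visibly a statement about losing exactly one factor at a single junction, at the cost of having to justify that every junction-matched decomposition computes the same $\omega$ and that one exists for each of $\rho'$ and $\rho''$ (both true, and your appeals to Proposition~\ref{ppseudosymmetryduality} and the block structure from Proposition~\ref{piDB3} cover this). The paper's version stays inside the duality formalism and reuses Proposition~\ref{ppdb} directly, which is slightly shorter given the surrounding machinery. Your observation that the ``or'' in the statement collapses to an ``and'' for a fixed factorization $\rho = \rho'\rho''$ is correct and worth making explicit; the paper leaves it implicit.
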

\begin{proof}
If $\rho'$ is a cut left-divisor of $\rho$ or if $\rho''$ is a cut right-divisor of $\rho$, then Proposition~\ref{ppdb} implies that $\omega\left(\rho\right) = \omega\left(\rho'\right) + \omega\left(\rho''\right)$. If $\rho'$ is not a cut left-divisor of $\rho$, then Lemma~\ref{lcutdivisorcharacterization} implies that the index of the last generator in $\rho'$ is different from the index of the first generator in $\rho''$. Let $\sigma_i$ be the last generator in $\rho'$ and let $\sigma_{i'}$ be the first generator in $\rho''$ for $i\neq i'\in \{1,2\}$. 

Proposition~\ref{piDB3} implies that the second generator in $\rho''$ is not $\sigma_i$, since $\rho$ is indivisible by $\Delta$. Lemma~\ref{lcutdivisorcharacterization} now implies that $\rho'\sigma_{i'}$ is a cut left-divisor of $\rho$. Furthermore, Lemma~\ref{lcharacterizationdualpositivebraids} implies that $\omega\left(\rho'\sigma_{i'}\right) = \omega\left(\rho'\right)$. Indeed, Proposition~\ref{piDB3} implies that the second last generator in $\rho'$ is not $\sigma_{i'}$, since $\rho$ is indivisible by $\Delta$. Lemma~\ref{lcharacterizationdualpositivebraids} also implies that $\omega\left(\sigma_{i'}^{-1}\rho''\right) = \omega\left(\rho''\right) - 1$, since the second generator in $\rho''$ is not $\sigma_i$. Finally, Proposition~\ref{ppdb} implies that $\omega\left(\rho\right) = \omega\left(\rho'\sigma_{i'}\right) + \omega\left(\sigma_{i'}^{-1}\rho''\right)$. Therefore, the statement is established.
\end{proof}

We establish some useful statements concerning cut divisors that follow in a straightforward manner from the results thus far.

\begin{proposition}
\label{pcutdivisorsubdivisor}
Let $\rho\in B_3^{+}$ be a positive braid indivisible by $\Delta$. Let $\rho'$ and $\rho''$ be left-divisors of $\rho$. If $\rho'$ is a cut left-divisor of $\rho$ and if $\rho'$ is a left-divisor of $\rho''$, then $\rho'$ is a cut left-divisor of $\rho''$. Conversely, if $\rho'$ is a cut left-divisor of $\rho''$ and $\rho'\neq \rho''$, then $\rho'$ is a cut left-divisor of $\rho$.
\end{proposition}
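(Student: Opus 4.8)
The plan is to reduce the whole statement to a single index-matching criterion and then to a one-line observation about first generators. First I would record the clean reformulation of Lemma~\ref{lcutdivisorcharacterization}: for any factorization $\mu = \mu'\mu''$ into two \emph{nonempty} positive braids indivisible by $\Delta$, the braid $\mu'$ is a cut left-divisor of $\mu$ if and only if the index of the last generator of $\mu'$ equals the index of the first generator of $\mu''$. This follows by combining Lemma~\ref{lcutdivisorcharacterization} with Proposition~\ref{pomegaalmostadditive} and the remark following Definition~\ref{dcd}: together these show that the assertions ``$\mu'$ is a cut left-divisor of $\mu$'', ``$\mu''$ is a cut right-divisor of $\mu$'', and the additivity $\omega\left(\mu\right) = \omega\left(\mu'\right) + \omega\left(\mu''\right)$ are all equivalent, while Lemma~\ref{lcutdivisorcharacterization} identifies these with the index-matching condition. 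This converts the two-sided statement literally proved earlier into the one-sided notion the proposition concerns.

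Next I would set up notation for the nested divisors. Since $\rho'$ is a left-divisor of $\rho''$ and $\rho''$ is a left-divisor of $\rho$, I would write $\rho = \rho'\alpha$, $\rho'' = \rho'\beta$, and $\rho = \rho''\gamma$, so that $\alpha = \beta\gamma$. Every factor appearing here is a divisor of $\rho$ and hence indivisible by $\Delta$ (if $\Delta$ divided any of $\rho'$, $\alpha$, $\beta$, then, after absorbing $\Delta$ to the front using $\rho'\Delta = \Delta\rho'_{\left(1\right)}$, it would divide $\rho$, a contradiction), so the reformulated criterion applies to all the relevant factorizations. The crucial observation is that whenever $\beta$ is nonempty, the first generator of $\alpha = \beta\gamma$ coincides with the first generator of $\beta$: by Proposition~\ref{piDB3} the braid $\alpha$ has a unique product expansion, and the concatenation of the product expansions of $\beta$ and $\gamma$ is a product expansion of $\alpha$, hence it \emph{is} the product expansion of $\alpha$.

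With these in hand both implications are immediate. For the forward direction, if $\rho' = \rho''$ the conclusion is the trivial statement that a braid is a cut left-divisor of itself (the complementary right-divisor is empty and $\omega$-additivity holds trivially); otherwise $\beta$ is nonempty, and since $\rho'$ is a cut left-divisor of $\rho = \rho'\alpha$, the index of the last generator of $\rho'$ equals the index of the first generator of $\alpha$, which equals the index of the first generator of $\beta$. The criterion applied to $\rho'' = \rho'\beta$ then yields that $\rho'$ is a cut left-divisor of $\rho''$. For the converse, the hypothesis $\rho'\neq \rho''$ forces $\beta$, and hence $\alpha$, to be nonempty; since $\rho'$ is a cut left-divisor of $\rho'' = \rho'\beta$, the index of the last generator of $\rho'$ matches that of the first generator of $\beta$, hence of $\alpha$, so the criterion applied to $\rho = \rho'\alpha$ gives that $\rho'$ is a cut left-divisor of $\rho$.

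The argument is essentially formal once the index-matching criterion is established, so I expect no serious obstacle. The only genuine care is in the degenerate cases where a complementary divisor is empty: the criterion is stated for nonempty factorizations, so I must treat $\rho' = \rho''$ (in the forward direction) and confirm that a braid is a cut left-divisor of itself separately, and I must use the hypothesis $\rho'\neq\rho''$ (in the converse) precisely to guarantee $\alpha$ and $\beta$ are nonempty before invoking the criterion.
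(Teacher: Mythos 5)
Your proof is correct and takes essentially the same route as the paper, which simply declares the proposition an immediate consequence of Lemma~\ref{lcutdivisorcharacterization}; your contribution is to spell out the one genuinely needed observation (that the first generator of $\alpha=\beta\gamma$ agrees with that of $\beta$, via uniqueness of product expansions) and to handle the degenerate cases explicitly. No issues.
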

\begin{proof}
The statement is an immediate consequence of Lemma~\ref{lcutdivisorcharacterization}.
\end{proof}

We now establish a monotonicity property of the function $\omega$ defined on positive braids indivisible by $\Delta$.

\begin{proposition}
\label{pomeganumbercutdivisor}
Let $\rho\in B_3^{+}$ be a positive braid indivisible by $\Delta$. Let $\rho'$ and $\rho''$ be left-divisors of $\rho$. If $\rho'$ is a left-divisor of $\rho''$, then $\omega\left(\rho'\right)\leq \omega\left(\rho''\right)$. If $\rho'$ and $\rho''$ are cut left-divisors of $\rho$, then $\rho'$ is a left-divisor of $\rho''$ if and only if $\omega\left(\rho'\right)\leq \omega\left(\rho''\right)$, and equality holds if and only if $\rho' = \rho''$.
\end{proposition}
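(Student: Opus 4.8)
The plan is to deduce both assertions from the near-additivity of $\omega$ (Proposition~\ref{pomegaalmostadditive}), together with the observation that the left-divisors of a positive braid indivisible by $\Delta$ are linearly ordered by left-divisibility. I would begin with the first (general) monotonicity claim. Writing $\rho'' = \rho'\eta$ for a positive braid $\eta\in B_3^{+}$, I would first note that $\eta$ is indivisible by $\Delta$: otherwise $\Delta$ would be a right-divisor of $\rho''$ and hence a divisor of $\rho''$, contradicting that $\rho''$ (being a left-divisor of $\rho$) is indivisible by $\Delta$. If $\eta = e$ then $\rho' = \rho''$ and the inequality is an equality; otherwise $\eta$ is a nonempty positive braid, so its left-dual $\xi$ satisfies $\xi\eta = \Delta^{\omega\left(\eta\right)}$ with $\xi\eta\neq e$, forcing $\omega\left(\eta\right)\geq 1$. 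Applying Proposition~\ref{pomegaalmostadditive} to the factorization $\rho'' = \rho'\eta$ gives $\omega\left(\rho''\right) = \omega\left(\rho'\right) + \omega\left(\eta\right) - \epsilon$ with $\epsilon\in\{0,1\}$, whence $\omega\left(\rho''\right)\geq \omega\left(\rho'\right) + \omega\left(\eta\right) - 1\geq \omega\left(\rho'\right)$. This settles the first claim.

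Next I would record two facts needed for the second claim. First, any two left-divisors of $\rho$ are comparable under left-divisibility: by Proposition~\ref{piDB3}, $\rho$ has a unique product expansion, and any left-divisor is an initial segment (prefix) of this expansion, so of two left-divisors the shorter is a prefix of the longer. Second, I would establish a strict monotonicity statement: if $\rho'$ is a cut left-divisor of $\rho''$ with $\rho'\neq \rho''$, then $\omega\left(\rho'\right) < \omega\left(\rho''\right)$. Indeed, writing $\rho'' = \rho'\eta$ with $\eta\neq e$ (hence $\omega\left(\eta\right)\geq 1$ as above), the hypothesis that $\rho'$ is a cut left-divisor of $\rho''$ forces $\epsilon = 0$ in Proposition~\ref{pomegaalmostadditive}, so $\omega\left(\rho''\right) = \omega\left(\rho'\right) + \omega\left(\eta\right) > \omega\left(\rho'\right)$.

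With these in hand, I would prove the second claim. The forward implication ($\rho'$ a left-divisor of $\rho''$ implies $\omega\left(\rho'\right)\leq\omega\left(\rho''\right)$) is the first claim, and the equality case follows from the strict monotonicity statement: if $\rho'\neq\rho''$ is a left-divisor of $\rho''$, then since $\rho'$ is a cut left-divisor of $\rho$ and a left-divisor of $\rho''$, Proposition~\ref{pcutdivisorsubdivisor} makes $\rho'$ a (proper) cut left-divisor of $\rho''$, so $\omega\left(\rho'\right) < \omega\left(\rho''\right)$. For the reverse implication, suppose $\omega\left(\rho'\right)\leq\omega\left(\rho''\right)$. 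By the comparability of left-divisors, either $\rho'$ is a left-divisor of $\rho''$ (as desired) or $\rho''$ is a proper left-divisor of $\rho'$. In the latter case Proposition~\ref{pcutdivisorsubdivisor} shows $\rho''$ is a proper cut left-divisor of $\rho'$, whence the strict monotonicity statement gives $\omega\left(\rho''\right) < \omega\left(\rho'\right)$, contradicting the hypothesis. Hence $\rho'$ is a left-divisor of $\rho''$.

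The routine points requiring care are that $\eta$ is genuinely indivisible by $\Delta$ (so that Proposition~\ref{pomegaalmostadditive} applies) and that $\omega\left(\eta\right)\geq 1$ for any nonempty positive braid; the only genuinely structural input is the prefix description of left-divisors via the uniqueness in Proposition~\ref{piDB3}, which converts the purely algebraic comparison into a statement about initial segments of a single word. I expect the main obstacle to be organizing the case analysis in the reverse implication so that the dichotomy ``$\rho'$ left-divides $\rho''$ or $\rho''$ properly left-divides $\rho'$'' is exhaustive and correctly feeds the strict monotonicity statement; everything else reduces to bookkeeping with Proposition~\ref{pomegaalmostadditive} and Proposition~\ref{pcutdivisorsubdivisor}.
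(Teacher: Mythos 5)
Your proof is correct and follows essentially the same route as the paper's: linear ordering of left-divisors via the unique product expansion (Proposition~\ref{piDB3}), transfer of the cut property via Proposition~\ref{pcutdivisorsubdivisor}, and (near-)additivity of $\omega$ to get monotonicity and the strict inequality for proper cut left-divisors. The only cosmetic difference is that you invoke Proposition~\ref{pomegaalmostadditive} where the paper cites Proposition~\ref{ppdb}, but the former is a direct refinement of the latter, so the arguments are interchangeable here.
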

\begin{proof}
We use the fact that $\omega\left(\sigma\right)>0$ if and only if $\sigma\in B_3^{+}$ is a nonidentity positive braid. If $\rho'$ is a left-divisor of $\rho''$, then Proposition~\ref{pomegaalmostadditive} implies that $\omega\left(\rho'\right)\leq \omega\left(\rho''\right)$. If $\rho'$ and $\rho''$ are left-divisors of $\rho$, then Proposition~\ref{piDB3} implies that either $\rho'$ is a left-divisor of $\rho''$ or $\rho''$ is a left-divisor of $\rho'$. If $\rho'$ and $\rho''$ are cut left-divisors of $\rho$, then Proposition~\ref{pcutdivisorsubdivisor} implies that either $\rho'$ is a cut left-divisor of $\rho''$ or $\rho''$ is a cut left-divisor of $\rho'$. Proposition~\ref{ppdb} now implies that $\rho'$ is a left-divisor of $\rho''$ if and only if $\omega\left(\rho'\right)\leq \omega\left(\rho''\right)$, and equality holds if and only if $\rho'=\rho''$. Therefore, the statement is established.
\end{proof}

Proposition~\ref{piDB3} implies that it is meaningful to refer to ``the maximal left-divisor" or ``the maximal right-divisor" of a positive braid indivisible by $\Delta$, with respect to a property. The following statement is useful for furnishing cut divisors in many situations. 

\begin{lemma}
\label{lmaximaldivisorcutdivisor}
Let $\rho,\sigma\in B_3^{+}$ be positive braids indivisible by $\Delta$ such that $\rho\sigma$ is divisible by $\Delta$. The maximal right-divisor $\rho''$ of $\rho$ that is dual to a left-divisor of $\sigma$ is a cut right-divisor of $\rho$. In particular, if $\rho$ is dual to $\tau$, and if $\rho''$ is dual to the left-divisor $\sigma''$ of $\sigma$, then $\sigma''$ is a left-divisor of $\tau$. 
\end{lemma}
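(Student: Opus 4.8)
The plan is to argue by contradiction against the maximality of $\rho''$, using the index criterion for cut divisors in Lemma~\ref{lcutdivisorcharacterization} together with a short direct computation of the dual of a slightly longer right-divisor. First I would record that the right-divisors of $\rho$ are exactly the suffixes of its (unique, by Proposition~\ref{piDB3}) product expansion, so they are totally ordered by the right-divisor relation and each is itself indivisible by $\Delta$ (otherwise $\Delta$ would be a left-divisor of $\rho$, via $\rho'\Delta = \Delta\rho'_{\left(1\right)}$). Since $\rho\sigma$ is divisible by $\Delta$, Proposition~\ref{pdualbraidproduct} supplies a nontrivial right-divisor of $\rho$ dual to a left-divisor of $\sigma$, so the set of such right-divisors is nonempty and, being a finite totally ordered set, has a maximum $\rho''$. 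This makes $\rho''$ well-defined.

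Write $\rho = \rho'\rho''$ and let $\sigma''$ be the left-divisor of $\sigma$ to which $\rho''$ is dual, so $\rho''\sigma'' = \Delta^{\omega\left(\rho''\right)}$. If $\rho'$ is the identity then $\rho'' = \rho$ is trivially a cut right-divisor of itself, so assume $\rho'$ is nonempty, let $\sigma_i$ be its last generator, and suppose toward a contradiction that $\rho''$ is \emph{not} a cut right-divisor of $\rho$. By Lemma~\ref{lcutdivisorcharacterization} the first generator of $\rho''$ then has index $i' \neq i$, so $i' = 3-i$. The candidate for contradicting maximality is the strictly longer right-divisor $\sigma_i\rho''$ of $\rho$, and the goal is to show it is again dual to a left-divisor of $\sigma$.

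The key computation runs as follows. Applying Proposition~\ref{pomegaalmostadditive} to the factorization $\sigma_i\rho'' = \left(\sigma_i\right)\left(\rho''\right)$, the indices $i \neq i'$ force (via Lemma~\ref{lcutdivisorcharacterization}) the case $\epsilon = 1$, so $\omega\left(\sigma_i\rho''\right) = \omega\left(\sigma_i\right) + \omega\left(\rho''\right) - 1 = \omega\left(\rho''\right)$; write $\omega = \omega\left(\rho''\right)$. Letting $\tau_{\mathrm{new}}$ be the right-dual of $\sigma_i\rho''$, so $\left(\sigma_i\rho''\right)\tau_{\mathrm{new}} = \Delta^{\omega}$, I get $\tau_{\mathrm{new}} = \left(\rho''\right)^{-1}\sigma_i^{-1}\Delta^{\omega}$. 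Pushing $\sigma_i^{-1}$ past $\Delta^{\omega}$ by the identity $\Delta^{-1}\sigma_j\Delta = \sigma_{3-j}$ gives $\sigma_i^{-1}\Delta^{\omega} = \Delta^{\omega}\left(\sigma_i\right)_{\left(\omega\right)}^{-1}$, and since $\left(\rho''\right)^{-1}\Delta^{\omega} = \sigma''$ this yields $\tau_{\mathrm{new}} = \sigma''\,\left(\sigma_i\right)_{\left(\omega\right)}^{-1}$, equivalently $\tau_{\mathrm{new}}\left(\sigma_i\right)_{\left(\omega\right)} = \sigma''$. As $\tau_{\mathrm{new}}$ is a positive braid and $\left(\sigma_i\right)_{\left(\omega\right)}$ is a single generator, this exhibits $\tau_{\mathrm{new}}$ as a left-divisor of $\sigma''$, hence of $\sigma$. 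Thus $\sigma_i\rho''$ is a right-divisor of $\rho$ strictly longer than $\rho''$ and dual to the left-divisor $\tau_{\mathrm{new}}$ of $\sigma$, contradicting the maximality of $\rho''$. Therefore $\rho''$ is a cut right-divisor of $\rho$.

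For the ``in particular'' clause I would simply unwind Definition~\ref{dcd}: writing $\rho = \rho'\rho''$ with $\rho''$ a cut right-divisor, the dual $\tau$ of $\rho$ factors as $\tau = \tau''\tau'$ with $\rho''$ dual to $\tau''$; since $\rho''$ is also dual to $\sigma''$, the uniqueness of duals (Proposition~\ref{puniquenessofduals}) gives $\tau'' = \sigma''$, so $\sigma''$ is a left-divisor of $\tau$. I expect the main obstacle to be the bookkeeping in the key computation: namely establishing that prepending $\sigma_i$ does not change $\omega$ (so that the governing power $\Delta^{\omega}$ is unchanged and the cancellation is legitimate), and then reading off from $\tau_{\mathrm{new}} = \sigma''\left(\sigma_i\right)_{\left(\omega\right)}^{-1}$ — with correct attention to the parity-dependent conjugate $\left(\sigma_i\right)_{\left(\omega\right)}$ — that the new dual is exactly $\sigma''$ shortened by one generator, hence still a left-divisor of $\sigma$.
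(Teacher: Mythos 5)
Your proof is correct, and its overall strategy coincides with the paper's: both argue that if $\rho''$ fails the index criterion of Lemma~\ref{lcutdivisorcharacterization}, then prepending the last generator of $\rho'$ yields a strictly longer right-divisor of $\rho$ that is still dual to a left-divisor of $\sigma$, contradicting maximality; and both settle the ``in particular'' clause by combining Definition~\ref{dcd} with the uniqueness of duals. Where you genuinely diverge is in how you verify that the extended divisor $\sigma_i\rho''$ is dual to a left-divisor of $\sigma$. The paper identifies the new dual structurally: it uses Proposition~\ref{piDB3} to pin down the second generator of $\rho''$ and then Lemma~\ref{lcharacterizationdualpositivebraids} to read off that $\sigma''$ ends in $\left(\sigma_{i'}\sigma_i\right)_{\left(\omega\left(\rho_0''\right)\right)}$, so that dropping the last letter gives the dual of the extension. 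You instead compute the new dual directly in the group: the index mismatch gives $\epsilon=1$ in Proposition~\ref{pomegaalmostadditive}, hence $\omega\left(\sigma_i\rho''\right)=\omega\left(\rho''\right)$, and then the identity $\sigma_i^{-1}\Delta^{\omega}=\Delta^{\omega}\left(\sigma_i\right)_{\left(\omega\right)}^{-1}$ yields $\tau_{\mathrm{new}}=\sigma''\left(\sigma_i\right)_{\left(\omega\right)}^{-1}$, which is a left-divisor of $\sigma''$ because $\tau_{\mathrm{new}}$ is a priori a positive braid (being a right-dual). The two computations produce the same braid --- your $\left(\sigma_i\right)_{\left(\omega\left(\rho''\right)\right)}$ is the paper's $\left(\sigma_i\right)_{\left(\omega\left(\rho_0''\right)\right)}$ after one $\Delta$-conjugation --- but your route avoids the combinatorial case analysis of Lemma~\ref{lcharacterizationdualpositivebraids} at the cost of leaning on the fact that $\omega$ is insensitive to the prepended generator, which you correctly isolate and justify.
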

\begin{proof}
If $\rho''$ is not a cut right-divisor of $\rho$, then Lemma~\ref{lcutdivisorcharacterization} implies that $\rho = \rho'\rho''$ for a positive braid $\rho'\in B_3^{+}$, such that the index of the last generator in $\rho'$ is different from the index of the first generator in $\rho''$. We will show that $\rho''$ is not the maximal right-divisor of $\rho$ that is dual to a left-divisor of $\sigma$ in this case.

Firstly, we write $\sigma = \sigma''\sigma'$ for positive braids $\sigma',\sigma''\in B_3^{+}$ such that $\rho''$ is dual to $\sigma''$. We can write $\rho''=\sigma_i\rho''_{0}$ for $i\in \{1,2\}$ and a positive braid $\rho''_{0}\in B_3^{+}$. In this case, the index of the first generator in $\rho''_{0}$ is the same as $i$ by Proposition~\ref{piDB3} since $\rho$ is indivisible by $\Delta$. Therefore, Lemma~\ref{lcharacterizationdualpositivebraids} implies that $\sigma''$ ends with a product of distinct generators $\left(\sigma_{i'}\sigma_i\right)_{\left(\omega\left(\rho_0''\right)\right)}$ for $i\neq i'\in \{1,2\}$. We can write $\sigma'' = \sigma_0''\left(\sigma_{i'}\sigma_i\right)_{\left(\omega\left(\rho_0''\right)\right)}$ for a positive braid $\sigma_0''\in B_3^{+}$ ($\sigma_0''$ is the right-dual of $\rho_0''$). However, this implies that $\left(\sigma_{i'}\right)\rho''$ is dual to $\sigma_{0}''\left(\sigma_{i'}\right)_{\left(\omega\left(\rho_0''\right)\right)}$, which shows that $\rho''$ is not the maximal-right divisor of $\rho$ that is dual to a left-divisor of $\sigma$.

The definition of a cut right-divisor (Definition~\ref{dcd}) implies that $\sigma''$ is a left-divisor of $\tau$. Therefore, the statement is established.
\end{proof}

We now establish a characterization of the maximal divisors in the statement of Lemma~\ref{lmaximaldivisorcutdivisor}.

\begin{proposition}
\label{pmaximaldivisordualindivisibleDelta}
Let $\sigma,\tau\in B_3^{+}$ be positive braids indivisible by $\Delta$. Let $\sigma''$ be a right-divisor of $\sigma$ that is dual to a left-divisor $\tau''$ of $\tau$. Let us write $\sigma = \sigma'\sigma''$ and $\tau = \tau''\tau'$ for positive braids $\sigma',\tau'\in B_3^{+}$. If $\sigma''$ is the maximal right-divisor of $\sigma$ that is left-dual to a left-divisor of $\tau$, then the product $\left(\sigma'\right)_{\left(\omega\left(\sigma''\right)\right)}\tau'$ is indivisible by $\Delta$. Conversely, if the product $\left(\sigma'\right)_{\left(\omega\left(\sigma''\right)\right)}\tau'$ is indivisible by $\Delta$ and if $\sigma''$ is a cut right-divisor of $\sigma$, then $\sigma''$ is the maximal right-divisor of $\sigma$ that is left-dual to a left-divisor of $\tau$.
\end{proposition}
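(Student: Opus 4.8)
The plan is to prove both implications by contradiction, with the common starting point being the identity
\[
\sigma\tau = \sigma'\sigma''\tau''\tau' = \sigma'\Delta^{\omega\left(\sigma''\right)}\tau' = \Delta^{\omega\left(\sigma''\right)}\left(\sigma'\right)_{\left(\omega\left(\sigma''\right)\right)}\tau',
\]
which exhibits $\left(\sigma'\right)_{\left(\omega\left(\sigma''\right)\right)}\tau'$ as exactly what remains after factoring $\Delta^{\omega\left(\sigma''\right)}$ out of $\sigma\tau$. Before anything else I would record two basic facts. Since $\sigma'$ and $\tau'$ are divisors of $\sigma$ and $\tau$, which are indivisible by $\Delta$, both $\sigma'$ and $\tau'$ (and hence $\left(\sigma'\right)_{\left(\omega\left(\sigma''\right)\right)}$) are themselves indivisible by $\Delta$; and if $\left(\sigma'\right)_{\left(\omega\left(\sigma''\right)\right)}\tau'$ is divisible by $\Delta$, then, since a single factor indivisible by $\Delta$ is not divisible by $\Delta$, both $\left(\sigma'\right)_{\left(\omega\left(\sigma''\right)\right)}$ and $\tau'$ must be nonempty. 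This will let me apply Proposition~\ref{pdualbraidproduct} at the interface of the product.

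For the forward implication, I assume $\sigma''$ is the maximal right-divisor of $\sigma$ left-dual to a left-divisor of $\tau$ and suppose, for contradiction, that $\left(\sigma'\right)_{\left(\omega\left(\sigma''\right)\right)}\tau'$ is divisible by $\Delta$. Applying Proposition~\ref{pdualbraidproduct} to $\left(\sigma'\right)_{\left(\omega\left(\sigma''\right)\right)}\cdot\tau'$ yields a decomposition $\left(\sigma'\right)_{\left(\omega\left(\sigma''\right)\right)}=\alpha\beta$ and $\tau'=\gamma\delta$ with $\beta\gamma=\Delta$ and $\beta$ a nonempty (one- or two-generator) right-divisor. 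Conjugating back, $\sigma'$ ends in $\mu:=\beta_{\left(\omega\left(\sigma''\right)\right)}$, so $\hat\sigma'':=\mu\sigma''$ is a right-divisor of $\sigma$ strictly containing $\sigma''$, while $\hat\tau'':=\tau''\gamma$ is a left-divisor of $\tau$. The key computation is
\[
\hat\sigma''\hat\tau'' = \mu\sigma''\tau''\gamma = \mu\Delta^{\omega\left(\sigma''\right)}\gamma = \Delta^{\omega\left(\sigma''\right)}\beta\gamma = \Delta^{\omega\left(\sigma''\right)+1},
\]
so $\hat\sigma''$ is left-dual to the left-divisor $\hat\tau''$ of $\tau$. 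Since $\hat\sigma''$ strictly contains $\sigma''$ as a right-divisor, this contradicts maximality and finishes the direction (the case of trivial $\sigma''$ is subsumed by taking $\mu=\beta$).

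For the converse, I assume $\left(\sigma'\right)_{\left(\omega\left(\sigma''\right)\right)}\tau'$ is indivisible by $\Delta$ and $\sigma''$ is a cut right-divisor of $\sigma$, and suppose for contradiction that $\sigma''$ is not maximal. Let $\hat\sigma''$ be the maximal right-divisor of $\sigma$ left-dual to a left-divisor $\hat\tau''$ of $\tau$; since right-divisors of $\sigma$ are totally ordered (Proposition~\ref{piDB3}) and $\hat\sigma''$ is strictly larger, I may write $\hat\sigma''=\eta\sigma''$ with $\eta$ nonempty. Lemma~\ref{lmaximaldivisorcutdivisor} shows $\hat\sigma''$ is a cut right-divisor of $\sigma$, and then the right-divisor analog of Proposition~\ref{pcutdivisorsubdivisor} shows $\sigma''$ is a cut right-divisor of $\hat\sigma''$. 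Unwinding Definition~\ref{dcd} for the factorization $\hat\sigma''=\eta\sigma''$ gives both that $\tau''$ is a left-divisor of $\hat\tau''$, say $\hat\tau''=\tau''\zeta$, and that $\eta_{\left(\omega\left(\sigma''\right)\right)}$ is dual to $\zeta$, so $\eta_{\left(\omega\left(\sigma''\right)\right)}\zeta=\Delta^{\omega\left(\eta\right)}$ with $\omega\left(\eta\right)\geq 1$. Writing $\sigma'=\hat\sigma'\eta$ and $\tau'=\zeta\hat\tau'$ and substituting,
\[
\left(\sigma'\right)_{\left(\omega\left(\sigma''\right)\right)}\tau' = \left(\hat\sigma'\right)_{\left(\omega\left(\sigma''\right)\right)}\eta_{\left(\omega\left(\sigma''\right)\right)}\zeta\,\hat\tau' = \left(\hat\sigma'\right)_{\left(\omega\left(\sigma''\right)\right)}\Delta^{\omega\left(\eta\right)}\hat\tau',
\]
which is divisible by $\Delta$, contradicting the hypothesis.

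The main obstacle is the converse, and specifically the fact that the right-dual of a right-divisor of $\sigma$ need not be a left-divisor of the right-dual of $\sigma$ (as the example following Proposition~\ref{ppdb} shows). This is exactly why the cut hypothesis is indispensable: it is what allows me to conclude, via Definition~\ref{dcd}, that $\tau''$ really is a prefix of the larger left-divisor $\hat\tau''$ and that the extra piece $\eta_{\left(\omega\left(\sigma''\right)\right)}$ is genuinely dual to $\zeta$, so that a full factor of $\Delta$ is produced; without it the argument would only yield $\eta_{\left(\omega\left(\sigma''\right)\right)}\zeta=\Delta^{\,\omega\left(\eta\right)-\epsilon}$ with a possible defect $\epsilon\in\{0,1\}$ from Proposition~\ref{pomegaalmostadditive}. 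A secondary bookkeeping point is that Proposition~\ref{pcutdivisorsubdivisor} and Proposition~\ref{pomegaalmostadditive} are phrased for left-divisors, so I would either invoke their evident right-divisor analogs or deduce them from the left-divisor versions using the pseudo-symmetry of duality (Proposition~\ref{ppseudosymmetryduality}).
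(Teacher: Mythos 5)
Your proof is correct and follows essentially the same route as the paper's: both pivot on the identity $\sigma\tau = \Delta^{\omega\left(\sigma''\right)}\left(\sigma'\right)_{\left(\omega\left(\sigma''\right)\right)}\tau'$ together with Proposition~\ref{pdualbraidproduct} and the cut-divisor machinery of Lemma~\ref{lmaximaldivisorcutdivisor}. You supply more detail than the paper does — an explicit enlargement of the dual pair in the forward direction, and a direct extraction of an extra factor of $\Delta$ in the converse where the paper instead invokes the $\omega$-monotonicity of Proposition~\ref{pomeganumbercutdivisor} — but the underlying argument is the same.
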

\begin{proof}
Let $k$ be the maximal positive integer such that $\Delta^{k}$ divides $\sigma\tau$. Proposition~\ref{pdualbraidproduct} implies that $\sigma''\tau'' = \Delta^{k}$ if and only if $\left(\sigma'\right)_{\left(\omega\left(\sigma''\right)\right)}\tau'$ is indivisible by $\Delta$. Of course, if $\sigma''$ is the maximal right-divisor of $\sigma$ that is left-dual to a left-divisor of $\tau$, then $\sigma''\tau'' = \Delta^{k}$. Conversely, if $\sigma''\tau'' = \Delta^{k}$ and if $\sigma''$ is a cut right-divisor of $\sigma$, then Proposition~\ref{pomeganumbercutdivisor} and Lemma~\ref{lmaximaldivisorcutdivisor} imply that $\sigma''$ is the maximal right-divisor of $\sigma$ that is dual to a left-divisor of $\tau$. Therefore, the statement is established. 
\end{proof}

We observe another characterization of the maximal divisors in the statement of Lemma~\ref{lmaximaldivisorcutdivisor}.

\begin{proposition}
\label{pmaximaldivisordualcharacterization}
Let $\sigma,\tau\in B_3^{+}$ be positive braids indivisible by $\Delta$. Let $\sigma''$ be a right-divisor of $\sigma$ that is dual to a left-divisor $\tau''$ of $\tau$. Let us write $\sigma = \sigma'\sigma''$ and $\tau = \tau''\tau'$ for positive braids $\sigma',\tau'\in B_3^{+}$.

The right-divisor $\sigma''$ of $\sigma$ is the maximal right-divisor of $\sigma$ that is dual to a left-divisor of $\tau$ if and only if $\sigma''$ is a cut right-divisor of $\sigma$ and one of the following conditions is satisfied:
\begin{description}
\item[(i)] The right-divisor $\tau'$ of $\tau$ is not a cut right-divisor of $\tau$. In this case, the index of the last generator in $\left(\sigma'\right)_{\left(\omega\left(\sigma''\right)\right)}$ is equal to the index of the first generator in $\tau'$.
\item[(ii)] If the right-divisor $\tau'$ of $\tau$ is a cut right-divisor of $\tau$, then the left-divisor $\sigma'$ of $\sigma$ does not end with a product of two distinct generators and the right-divisor $\tau'$ of $\tau$ does not begin with a product of two distinct generators. In this case, the index of the last generator in $\left(\sigma'\right)_{\left(\omega\left(\sigma''\right)\right)}$ is different from the index of the first generator in $\tau'$. 
\end{description}
\end{proposition}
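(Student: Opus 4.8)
The plan is to reduce the maximality condition to an indivisibility condition and then read off the two cases from the combinatorial description of when a product of $\Delta$-indivisible positive braids is divisible by $\Delta$. First I would apply Proposition~\ref{pmaximaldivisordualindivisibleDelta} together with Lemma~\ref{lmaximaldivisorcutdivisor} to recast the statement: $\sigma''$ is the maximal right-divisor of $\sigma$ that is dual to a left-divisor of $\tau$ if and only if $\sigma''$ is a cut right-divisor of $\sigma$ and the product $\left(\sigma'\right)_{\left(\omega\left(\sigma''\right)\right)}\tau'$ is indivisible by $\Delta$. Here both factors are indivisible by $\Delta$: indeed $\tau'$ is a right-divisor of the $\Delta$-indivisible braid $\tau$, while $\left(\sigma'\right)_{\left(\omega\left(\sigma''\right)\right)}$ is the conjugate by a power of $\Delta$ (Definition~\ref{dconjugationbyDelta}) of the left-divisor $\sigma'$ of the $\Delta$-indivisible braid $\sigma$, and conjugation by $\Delta$ preserves $B_3^{+}$ and $\Delta$-indivisibility. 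Since the clause ``$\sigma''$ is a cut right-divisor of $\sigma$'' already appears in the asserted characterization, the whole content is to show that, under this hypothesis, indivisibility of $\left(\sigma'\right)_{\left(\omega\left(\sigma''\right)\right)}\tau'$ is equivalent to the disjunction of \textbf{(i)} and \textbf{(ii)}, together with the accompanying index assertions.

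Next I would apply Proposition~\ref{pdualbraidproduct} to $\left(\sigma'\right)_{\left(\omega\left(\sigma''\right)\right)}\tau'$. Writing $a$ for the index of the last generator of $\left(\sigma'\right)_{\left(\omega\left(\sigma''\right)\right)}$ and $b$ for the index of the first generator of $\tau'$ (these are well-defined by Proposition~\ref{piDB3}), each of the two alternatives in Proposition~\ref{pdualbraidproduct} forces $a\neq b$, and conversely, once $a\neq b$, each alternative holds precisely when the corresponding junction is distinct. Using that conjugation by $\Delta$ merely interchanges $\sigma_1$ and $\sigma_2$ and hence preserves whether consecutive generators agree, this yields the dichotomy: $\left(\sigma'\right)_{\left(\omega\left(\sigma''\right)\right)}\tau'$ is indivisible by $\Delta$ if and only if either $a=b$, or else $\sigma'$ does not end with a product of two distinct generators and $\tau'$ does not begin with a product of two distinct generators.

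The crux is then the identity $\left[a\neq b\right]\Leftrightarrow\left[\tau'\text{ is a cut right-divisor of }\tau\right]$, where the interplay of conjugation by $\Delta$ with the asymmetry of duality must be tracked carefully; I expect this to be the main obstacle. I would argue as follows. Since $\sigma''$ is a cut right-divisor of $\sigma=\sigma'\sigma''$, Lemma~\ref{lcutdivisorcharacterization} gives that the last index of $\sigma'$ equals the first index of $\sigma''$, so after conjugation $a$ equals the first index of $\left(\sigma''\right)_{\left(\omega\left(\sigma''\right)\right)}$. By the pseudo-symmetry of duality (Proposition~\ref{ppseudosymmetryduality}), ``$\sigma''$ dual to $\tau''$'' is equivalent to ``$\tau''$ dual to $\left(\sigma''\right)_{\left(\omega\left(\sigma''\right)\right)}$''. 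I would record the elementary sub-lemma that in any dual pair $\rho$ dual to $\mu$ of nontrivial $\Delta$-indivisible positive braids the last index of $\rho$ differs from the first index of $\mu$: indeed $\rho\mu$ is a positive power of $\Delta$, hence divisible by $\Delta$, so Proposition~\ref{pdualbraidproduct} applies and both of its alternatives force those two indices to differ. Applying this to the pair $\tau''$ dual to $\left(\sigma''\right)_{\left(\omega\left(\sigma''\right)\right)}$ shows the last index of $\tau''$ is the index opposite to $a$. Finally Lemma~\ref{lcutdivisorcharacterization} applied to $\tau=\tau''\tau'$ says $\tau'$ is a cut right-divisor of $\tau$ exactly when the last index of $\tau''$ equals $b$, i.e. when the index opposite to $a$ equals $b$, i.e. when $a\neq b$.

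With this identity the proposition assembles at once. When $\tau'$ is not a cut right-divisor of $\tau$, we have $a=b$, so the indivisibility dichotomy always holds and $\sigma''$ is maximal; this is case \textbf{(i)}, with the asserted equality $a=b$. When $\tau'$ is a cut right-divisor of $\tau$, we have $a\neq b$, so the alternative ``$a=b$'' fails and indivisibility holds if and only if $\sigma'$ does not end with two distinct generators and $\tau'$ does not begin with two distinct generators; this is case \textbf{(ii)}, with the asserted inequality $a\neq b$. The forward direction draws the cut-right-divisor and indivisibility conclusions from Lemma~\ref{lmaximaldivisorcutdivisor} and Proposition~\ref{pmaximaldivisordualindivisibleDelta}, and the converse invokes the cut-right-divisor hypothesis in the second half of Proposition~\ref{pmaximaldivisordualindivisibleDelta}. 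I would close by dispatching the degenerate cases in which $\sigma''$, $\tau''$, or $\tau'$ is trivial or a single generator directly, since the index sub-lemma presupposes nontrivial factors.
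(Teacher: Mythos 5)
Your proposal is correct and follows essentially the same route as the paper's proof: reduce maximality to the $\Delta$-indivisibility of $\left(\sigma'\right)_{\left(\omega\left(\sigma''\right)\right)}\tau'$ via Lemma~\ref{lmaximaldivisorcutdivisor} and Proposition~\ref{pmaximaldivisordualindivisibleDelta}, then translate that indivisibility into the two cases using Proposition~\ref{pdualbraidproduct} and Lemma~\ref{lcutdivisorcharacterization}. Your explicit tracking of the junction indices through pseudo-symmetry of duals merely fills in details the paper leaves implicit.
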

\begin{proof}
Lemma~\ref{lmaximaldivisorcutdivisor} and Proposition~\ref{pmaximaldivisordualindivisibleDelta} imply that $\sigma''$ is the maximal right-divisor of $\sigma$ that is dual to a left-divisor of $\tau$ if and only if $\sigma''$ is a cut right-divisor of $\sigma$ and the product $\left(\sigma'\right)_{\left(\omega\left(\sigma''\right)\right)}\tau'$ is indivisible by $\Delta$. However, Proposition~\ref{pdualbraidproduct} and Lemma~\ref{lcutdivisorcharacterization} imply that the product $\left(\sigma'\right)_{\left(\omega\left(\sigma''\right)\right)}\tau'$ is indivisible by $\Delta$ if and only if one of the conditions in the statement is satisfied.
\end{proof}

An important application of the theory of duality is to determine the Garside normal form of a product in terms of the Garside normal forms of the individual factors. 

\begin{proposition}
\label{pGarsidenormalformproduct}
Let $g = \Delta^{k}\sigma$ and $h = \Delta^{l}\tau$ be the Garside normal forms of $g,h\in B_3$, respectively. Let $\sigma''_{\left(l\right)}$ be the maximal right-divisor of $\sigma_{\left(l\right)}$ that is dual to a left-divisor of $\tau$. If $\sigma''_{\left(l\right)}$ is dual to $\tau''$, then write $\sigma = \sigma'\sigma''$ and $\tau = \tau''\tau'$ for positive braids $\sigma',\tau'\in B_3^{+}$. In this case, the Garside normal form of $gh$ is $gh = \Delta^{k+l+\omega\left(\sigma''\right)} \sigma'_{\left(l+\omega\left(\sigma''\right)\right)}\tau'$. 
\end{proposition}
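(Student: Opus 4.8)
The plan is to compute the product $gh$ directly, rewrite it in the claimed shape by sliding powers of $\Delta$ past the positive parts and using the duality hypothesis, and then verify that the resulting expression really is the Garside normal form by appealing to the maximality of $\sigma''_{(l)}$. By the uniqueness clause of Theorem~\ref{Garside}, it suffices to exhibit \emph{any} expression $gh = \Delta^{m}\xi$ with $\xi\in B_3^{+}$ indivisible by $\Delta$, so the entire burden reduces to the algebraic rewriting together with a single indivisibility check.

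First I would carry out the rewriting. Starting from the Garside normal forms and using $\sigma\Delta^{l} = \Delta^{l}\sigma_{(l)}$ from Definition~\ref{dconjugationbyDelta}, one gets $gh = \Delta^{k}\sigma\Delta^{l}\tau = \Delta^{k+l}\sigma_{(l)}\tau$. Here $\sigma_{(l)}$ is again a positive braid indivisible by $\Delta$, since conjugation by $\Delta$ only interchanges the two Artin generators (depending on the parity of $l$) and hence preserves indivisibility. Since conjugation by $\Delta$ is an automorphism, $\sigma = \sigma'\sigma''$ gives $\sigma_{(l)} = \sigma'_{(l)}\sigma''_{(l)}$. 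By hypothesis $\sigma''_{(l)}$ is dual to $\tau''$, so $\sigma''_{(l)}\tau'' = \Delta^{\omega(\sigma''_{(l)})}$ by Definition~\ref{ddual}, and moreover $\omega(\sigma''_{(l)}) = \omega(\sigma'')$: conjugating the defining equation of the dual of $\sigma''$ by $\Delta^{-l}$ leaves the central right-hand side unchanged. Substituting $\tau = \tau''\tau'$ then yields $\sigma_{(l)}\tau = \sigma'_{(l)}\Delta^{\omega(\sigma'')}\tau'$, and sliding the central-power factor to the left via $\sigma'_{(l)}\Delta^{\omega(\sigma'')} = \Delta^{\omega(\sigma'')}(\sigma'_{(l)})_{(\omega(\sigma''))} = \Delta^{\omega(\sigma'')}\sigma'_{(l+\omega(\sigma''))}$ produces exactly $gh = \Delta^{k+l+\omega(\sigma'')}\sigma'_{(l+\omega(\sigma''))}\tau'$.

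It remains to check that $\sigma'_{(l+\omega(\sigma''))}\tau'$ is a positive braid indivisible by $\Delta$. Positivity is immediate, since $\sigma'$ and $\tau'$ are positive and conjugation by $\Delta$ preserves positivity. For indivisibility I would invoke Proposition~\ref{pmaximaldivisordualindivisibleDelta} applied to the positive braids $\sigma_{(l)}$ and $\tau$, with $\sigma''_{(l)}$ in the role of the right-divisor: because $\sigma''_{(l)}$ is by hypothesis the \emph{maximal} right-divisor of $\sigma_{(l)}$ that is dual to a left-divisor of $\tau$, that proposition asserts that $(\sigma'_{(l)})_{(\omega(\sigma''_{(l)}))}\tau'$ is indivisible by $\Delta$. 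Using $\omega(\sigma''_{(l)}) = \omega(\sigma'')$ and $(\sigma'_{(l)})_{(\omega(\sigma''))} = \sigma'_{(l+\omega(\sigma''))}$ from the previous step, this is precisely the desired indivisibility, completing the identification of the Garside normal form.

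The main obstacle is not conceptual but a matter of careful bookkeeping with the conjugation operation $(\cdot)_{(\cdot)}$: one must track that the parity shifts compose additively, so that $(\sigma'_{(l)})_{(\omega(\sigma''))} = \sigma'_{(l+\omega(\sigma''))}$, and that $\omega$ is invariant under conjugation by $\Delta$. Getting these identities exactly right is what allows the maximality hypothesis to be translated cleanly, through Proposition~\ref{pmaximaldivisordualindivisibleDelta}, into the indivisibility of the positive part of $gh$.
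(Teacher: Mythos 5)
Your proposal is correct and follows the same route as the paper: reduce the claim, via the uniqueness of the Garside normal form, to the indivisibility of $\sigma'_{\left(l+\omega\left(\sigma''\right)\right)}\tau'$ by $\Delta$, and obtain that indivisibility from the maximality hypothesis through Proposition~\ref{pmaximaldivisordualindivisibleDelta}. The paper's proof is just a terser version of yours; your explicit bookkeeping of the conjugation shifts and the invariance of $\omega$ under conjugation by $\Delta$ is accurate.
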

\begin{proof}
The statement is equivalent to the statement that $\sigma'_{\left(l+\omega\left(\sigma''\right)\right)}\tau'$ is indivisible by $\Delta$. However, this is a consequence of Proposition~\ref{pmaximaldivisordualindivisibleDelta}. Therefore, the statement is established.
\end{proof}

We can view the Garside power of a braid as a measure of the complexity of the braid. Indeed, positive braids in $B_3$ indivisible by $\Delta$ are sufficiently simple to study since each such positive braid has a unique product expansion according to Proposition~\ref{piDB3}. The following consequence of Proposition~\ref{pGarsidenormalformproduct} characterizes how this measure of complexity changes with respect to products. 

\begin{proposition}
\label{preversetriangleinequality}
Let $g = \Delta^{k}\sigma$ and $h = \Delta^{l}\tau$ be the Garside normal forms of $g,h\in B_3$, respectively. If $\sigma''_{\left(l\right)}$ is the maximal right-divisor of $\sigma_{\left(l\right)}$ that is left-dual to a left-divisor of $\tau$, then the Garside power of $gh$ is equal to the sum of the Garside power of $g$, the Garside power of $h$, and $\omega\left(\sigma''_{\left(l\right)}\right)$. In particular, the Garside power of $gh$ is at least the sum of the Garside power of $g$ and the Garside power of $h$, with equality if and only if $\sigma_{\left(l\right)}\tau$ is indivisible by $\Delta$.
\end{proposition}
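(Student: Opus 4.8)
The plan is to read the Garside power of $gh$ directly off the Garside normal form already computed in Proposition~\ref{pGarsidenormalformproduct}. With the notation of that proposition, where $\sigma = \sigma'\sigma''$ and $\sigma''_{(l)}$ is the maximal right-divisor of $\sigma_{(l)}$ dual to a left-divisor of $\tau$, the normal form of $gh$ is $gh = \Delta^{k+l+\omega(\sigma'')}\sigma'_{(l+\omega(\sigma''))}\tau'$. By Theorem~\ref{Garside} the exponent of $\Delta$ in a Garside normal form \emph{is} the Garside power, so the Garside power of $gh$ equals $k + l + \omega(\sigma'')$, where $k$ and $l$ are the Garside powers of $g$ and $h$. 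The only discrepancy between this and the asserted formula is the replacement of $\omega(\sigma'')$ by $\omega(\sigma''_{(l)})$.

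To close this gap I would verify that $\omega$ is invariant under conjugation by $\Delta$. If $\rho$ denotes the left-dual of $\sigma''$, so that $\rho\sigma'' = \Delta^{\omega(\sigma'')}$, then conjugating this identity by $\Delta^{l}$ gives $\rho_{(l)}\sigma''_{(l)} = \Delta^{\omega(\sigma'')}$, since $\Delta^2$ is central and hence $\Delta^{-l}\Delta^{\omega(\sigma'')}\Delta^{l} = \Delta^{\omega(\sigma'')}$. Because conjugation by $\Delta$ is an automorphism of $B_3^{+}$ (it interchanges the Artin generators and fixes $\Delta$), the braid $\rho_{(l)}$ is again a positive braid indivisible by $\Delta$; thus $\rho_{(l)}$ is dual to $\sigma''_{(l)}$, and the uniqueness of the exponent in Proposition~\ref{puniquenessofduals} forces $\omega(\sigma''_{(l)}) = \omega(\sigma'')$. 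This yields the first assertion.

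For the \emph{in particular} clause I would use the elementary fact (already invoked in the proof of Proposition~\ref{pomeganumbercutdivisor}) that $\omega(\xi) \geq 0$, with equality if and only if $\xi$ is the identity positive braid. Nonnegativity gives the stated inequality, and equality of the Garside power of $gh$ with the sum of the Garside powers of $g$ and $h$ holds precisely when $\sigma''_{(l)}$ is trivial. It then remains to identify triviality of $\sigma''_{(l)}$ with the indivisibility of $\sigma_{(l)}\tau$ by $\Delta$: if $\sigma_{(l)}\tau$ is indivisible by $\Delta$, then no nontrivial right-divisor of $\sigma_{(l)}$ can be dual to a left-divisor of $\tau$ (such a pair would exhibit $\Delta$ as a divisor of $\sigma_{(l)}\tau$), so the maximal such right-divisor $\sigma''_{(l)}$ is trivial; conversely, if $\sigma_{(l)}\tau$ is divisible by $\Delta$, then Proposition~\ref{pdualbraidproduct} produces a nontrivial right-divisor of $\sigma_{(l)}$ dual to a left-divisor of $\tau$, whence $\sigma''_{(l)}$ is nontrivial and $\omega(\sigma''_{(l)}) > 0$.

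I do not anticipate a genuine obstacle, since the substantive computation was carried out in Proposition~\ref{pGarsidenormalformproduct} and this statement is essentially a bookkeeping corollary. The two points meriting care are the conjugation-invariance of $\omega$ and the translation between triviality of the maximal cut right-divisor $\sigma''_{(l)}$ and indivisibility of $\sigma_{(l)}\tau$ by $\Delta$; of these, the latter equivalence is the more delicate and is the step I would write out most explicitly, appealing to Proposition~\ref{pdualbraidproduct}.
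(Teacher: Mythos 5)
Your proposal is correct and follows the same route as the paper, which simply reads the Garside power of $gh$ off the normal form computed in Proposition~\ref{pGarsidenormalformproduct} (the paper's own proof is a one-line citation of that proposition). The two details you fill in --- the $\Delta$-conjugation invariance of $\omega$ and the equivalence between triviality of $\sigma''_{\left(l\right)}$ and indivisibility of $\sigma_{\left(l\right)}\tau$ by $\Delta$ via Proposition~\ref{pdualbraidproduct} --- are exactly the bookkeeping the paper leaves implicit, and both are handled correctly.
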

\begin{proof}
The statement is an immediate consequence of Proposition~\ref{pGarsidenormalformproduct}.
\end{proof}

We refer to the inequality in Proposition~\ref{preversetriangleinequality} as the \textit{reverse triangle inequality} for Garside powers. 

We have studied the failure of the Garside element $\Delta$ to be prime in the monoid $B_3^{+}$ of positive braids. In particular, we have seen that a product $\sigma\tau$ of positive braids $\sigma,\tau\in B_3^{+}$ can be divisible by $\Delta$ even if neither $\sigma$ nor $\tau$ is divisible by $\Delta$. We now wish to consider a variation of this statement for triple products. If $\sigma,\sigma',\sigma''\in B_3^{+}$ are positive braids, then it is possible for $\sigma\sigma'$ and $\sigma'\sigma''$ to be indivisible by $\Delta$ even if $\sigma\sigma'\sigma''$ is divisible by $\Delta$. For example, this occurs if $\sigma' = \sigma_i$ is a generator for $i\in \{1,2\}$, and the indices of the last generator in $\sigma$ and the first generator in $\sigma''$ are equal to each other and different from $i$. In this case, a copy of the Garside element $\Delta$ overlaps with all three factors in $\sigma\sigma'\sigma''$, in the sense that one generator of this copy of $\Delta$ is in each of the three factors. However, if we arrange that $\sigma$ does not end with a product of two distinct generators and $\sigma''$ does not begin with a product of two distinct generators, then Proposition~\ref{pdualbraidproduct} implies that $\sigma\sigma'$ and $\sigma'\sigma''$ are indivisible by $\Delta$. We establish that this is the only instance in which this phenomenon occurs.

\begin{lemma}
\label{ltripleproductindivisibleDelta}
Let $\sigma,\sigma',\sigma''\in B_3^{+}$ be positive braids indivisible by $\Delta$ such that $\sigma'$ is not a generator (i.e., $\sigma'$ has word length greater than one). If $\sigma\sigma'$ and $\sigma'\sigma''$ are indivisible by $\Delta$, then $\sigma\sigma'\sigma''$ is indivisible by $\Delta$.
\end{lemma}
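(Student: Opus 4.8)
The plan is to prove the contrapositive by means of the combinatorial divisibility criterion of Proposition~\ref{pdualbraidproduct}, exploiting the hypothesis that $\sigma'$ has length at least two. The guiding observation is that whether a two-fold product of positive braids indivisible by $\Delta$ is divisible by $\Delta$ is detected entirely by the last (at most two) generators of the left factor and the first (at most two) generators of the right factor. So if I group the triple product as $\left(\sigma\sigma'\right)\sigma''$, the question collapses to the interface between $\sigma'$ and $\sigma''$ alone, precisely because $\sigma'$ is long enough to shield $\sigma$ from the right-hand interface.

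First I would apply Proposition~\ref{pdualbraidproduct} to the pair of positive braids $\sigma\sigma'$ and $\sigma''$, both of which are indivisible by $\Delta$ by hypothesis. This shows that $\sigma\sigma'\sigma''$ is divisible by $\Delta$ if and only if either $\sigma\sigma'$ ends with $\sigma_i\sigma_{i'}$ and $\sigma''$ begins with $\sigma_i$, or $\sigma\sigma'$ ends with $\sigma_i$ and $\sigma''$ begins with $\sigma_{i'}\sigma_i$, for some $\{i,i'\} = \{1,2\}$.

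Next I would rewrite these conditions in terms of $\sigma'$ rather than $\sigma\sigma'$, using the length hypothesis on $\sigma'$. Since $\sigma\sigma'$ is indivisible by $\Delta$, Proposition~\ref{piDB3} furnishes it with a \emph{unique} product expansion; concatenating the unique product expansions of $\sigma$ and of $\sigma'$ yields a product expansion of $\sigma\sigma'$, which must therefore be that unique expansion. Because the length of $\sigma'$ is at least two, its last two generators are exactly the last two generators of $\sigma\sigma'$. Hence the predicate ``$\sigma\sigma'$ ends with $\sigma_i\sigma_{i'}$'' is equivalent to ``$\sigma'$ ends with $\sigma_i\sigma_{i'}$'', and likewise the single-generator condition transfers from $\sigma\sigma'$ to $\sigma'$.

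Finally, I would recognize that the resulting conditions on $\sigma'$ and $\sigma''$ are precisely the criterion of Proposition~\ref{pdualbraidproduct} for the product $\sigma'\sigma''$ (of positive braids indivisible by $\Delta$) to be divisible by $\Delta$. Consequently $\sigma\sigma'\sigma''$ is divisible by $\Delta$ if and only if $\sigma'\sigma''$ is, and since $\sigma'\sigma''$ is indivisible by $\Delta$ by hypothesis, so is $\sigma\sigma'\sigma''$. The only step demanding genuine care -- the closest thing to an obstacle -- is the identification of the last two generators of $\sigma\sigma'$ with those of $\sigma'$; this is exactly where the length hypothesis and the uniqueness of product expansions (Proposition~\ref{piDB3}) are indispensable, and it is exactly the point that breaks down when $\sigma'$ is a single generator, as the example preceding the statement illustrates.
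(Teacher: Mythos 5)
Your proof is correct and is essentially the paper's argument: both rest on the observation that divisibility by $\Delta$ of a two-fold product of $\Delta$-indivisible positive braids is witnessed by a length-three copy of $\Delta$ at the interface, which cannot reach across a middle factor of length at least two. The only cosmetic difference is that you route the argument through the explicit criterion of Proposition~\ref{pdualbraidproduct} applied to the grouping $\left(\sigma\sigma'\right)\sigma''$ (together with the uniqueness of product expansions from Proposition~\ref{piDB3} to identify the last two generators of $\sigma\sigma'$ with those of $\sigma'$), whereas the paper argues directly that a copy of $\Delta$ overlapping all three words would force $\sigma'$ to have length one.
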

\begin{proof}
If $\sigma\sigma'\sigma''$ is divisible by $\Delta$, then Proposition~\ref{piDB3} implies that we must have a copy of $\Delta$ in the product of the words representing $\sigma$, $\sigma'$, and $\sigma''$. Furthermore, the copy of $\Delta$ must overlap with each of these words, since otherwise that would contradict the hypothesis that $\sigma\sigma'$ and $\sigma'\sigma''$ are indivisible by $\Delta$. However, this forces $\sigma'$ to be a word of length one, which is also a contradiction. Therefore, the statement is established.
\end{proof}

We have the following generalization of Lemma~\ref{ltripleproductindivisibleDelta}, which characterizes precisely when the product of any number of positive braids is indivisible by $\Delta$. We will apply the statement several times in Section~\ref{smain}.

\begin{lemma}
\label{lproductindivisibleDelta}
Let $\xi_1,\xi_2,\dots,\xi_k\in B_3^{+}$ be positive braids indivisible by $\Delta$. The product $\xi_1\xi_2\cdots \xi_k$ is indivisible by $\Delta$ if and only if the following conditions are satisfied:
\begin{description}
\item[(i)] If $1\leq j\leq k-1$, then the product $\xi_j\xi_{j+1}$ is indivisible by $\Delta$ (for example, this is the case if the index of the last generator in $\xi_j$ is equal to the index of the first generator in $\xi_{j+1}$).
\item[(ii)] If $1\leq j\leq k-2$, and if $\xi_{j+1}$ is a generator, then the triple product $\xi_j\xi_{j+1}\xi_{j+2}$ is indivisible by $\Delta$.
\end{description}
\end{lemma}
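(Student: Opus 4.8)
The plan is to prove both implications, with the forward direction being routine and the reverse direction carrying the content as a direct generalization of the argument in Lemma~\ref{ltripleproductindivisibleDelta}. For the forward direction, I would argue that if any consecutive sub-product $\xi_j\xi_{j+1}\cdots\xi_m$ were divisible by $\Delta$, then the whole product $\xi_1\xi_2\cdots\xi_k$ would be divisible by $\Delta$: writing $\xi_j\cdots\xi_m = \Delta\eta$ for a positive braid $\eta\in B_3^{+}$ and using the central relations $\Delta\sigma_i = \sigma_{i'}\Delta$ to migrate this copy of $\Delta$ past $\xi_1\cdots\xi_{j-1}$ to the front exhibits $\xi_1\cdots\xi_k$ as $\Delta$ times a positive braid. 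Taking the contrapositive, indivisibility of the whole product forces every consecutive sub-product to be indivisible by $\Delta$, and conditions \textbf{(i)} and \textbf{(ii)} are special instances of this.

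For the reverse direction, the key step is the combinatorial reduction already underlying Lemma~\ref{ltripleproductindivisibleDelta}: if $\xi_1\cdots\xi_k$ is divisible by $\Delta$, then the concatenation $W = w_1 w_2\cdots w_k$ of the unique product expansions $w_j$ of the $\xi_j$ (each free of a copy of $\Delta$, by Proposition~\ref{piDB3}) must contain a literal copy of $\Delta$, namely $\sigma_1\sigma_2\sigma_1$ or $\sigma_2\sigma_1\sigma_2$, as a contiguous subword. I would justify this via Theorem~\ref{tmonoidembedding}: were $W$ to contain no such subword, then no braid relation could be applied to $W$, so $W$ would be the unique product expansion of the braid it represents; but a braid divisible by $\Delta$ is left-divisible by $\Delta$ (as noted in the text) and hence admits a product expansion beginning with $\sigma_1\sigma_2\sigma_1$, forcing $W$ to contain a copy of $\Delta$ and yielding a contradiction.

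Granting this, I would analyze how the length-three copy of $\Delta$ sits relative to the factor boundaries. It cannot lie entirely within a single $w_j$, since $\xi_j$ is indivisible by $\Delta$. If it straddles exactly one boundary, between $\xi_j$ and $\xi_{j+1}$, then it lies inside $w_j w_{j+1}$, so $\xi_j\xi_{j+1}$ is divisible by $\Delta$, contradicting \textbf{(i)}. If it straddles two boundaries, meeting $\xi_j$, $\xi_{j+1}$, and $\xi_{j+2}$, then a counting argument (the copy has length three and must include at least one generator from each of the three consecutive words) forces $w_{j+1}$ to have length one; hence $\xi_{j+1}$ is a generator and $\xi_j\xi_{j+1}\xi_{j+2}$ is divisible by $\Delta$, contradicting \textbf{(ii)}. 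Finally, the copy cannot straddle three or more boundaries, as meeting four consecutive words would require at least four generators. Every case being contradictory, $\xi_1\cdots\xi_k$ is indivisible by $\Delta$.

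The main obstacle I anticipate is rigorously justifying the ``literal copy of $\Delta$'' reduction, i.e.\ that divisibility by $\Delta$ is detectable in the concatenated word $W$ without first applying braid relations, since this is precisely the leverage that Theorem~\ref{tmonoidembedding} and Proposition~\ref{piDB3} are designed to supply and must be invoked with care. A secondary subtlety is the multiplicity bookkeeping in the two-boundary case: I must confirm that the middle factor contributes exactly one generator to the copy of $\Delta$ (so that it is a single generator rather than a longer word), which relies on the copy being contiguous and meeting all three words. I would also note that we take the $\xi_j$ to be nontrivial, so that the degenerate case of an identity factor collapsing two boundaries into one does not arise.
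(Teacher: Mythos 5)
Your proposal is correct and follows essentially the same route as the paper: the paper's proof is a one-line reference to Proposition~\ref{piDB3} and the argument of Lemma~\ref{ltripleproductindivisibleDelta}, which is precisely your reduction to a literal contiguous copy of $\Delta$ in the concatenated product expansions followed by the case analysis on how that copy straddles the factor boundaries. Your explicit treatment of the forward direction, the length-one middle factor, and the nontriviality of the $\xi_j$ merely spells out details the paper leaves implicit.
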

\begin{proof}
The statement is a consequence of Proposition~\ref{piDB3} and the proof is similar to that of Lemma~\ref{ltripleproductindivisibleDelta}.
\end{proof}

The following notation and terminology will be convenient in Subsection~\ref{sspowerpositivehalftwist}, when we study the Garside normal form of the power of a positive half-twist in the braid group $B_3$.

\begin{definition}
\label{dcutclosure}
Let $\sigma\in B_3^{+}$ be a positive braid indivisible by $\Delta$ and let us write $\sigma = \sigma'\sigma''$, where $\sigma',\sigma''\in B_3^{+}$ are positive braids. The \textit{cut closure} of $\sigma'$ is the minimal cut left-divisor $\overline{\sigma'}$ of $\sigma$ of which $\sigma'$ is a left-divisor.

Similarly, the \textit{cut closure} of $\sigma''$ is the minimal cut right-divisor $\overline{\sigma''}$ of $\sigma$ of which $\sigma''$ is a right-divisor. 

Finally, we define the \textit{right cut closure} of $\sigma$ to be the maximal positive braid $\overline{\sigma}$ (indivisible by $\Delta$) such that $\sigma$ is a left-divisor of $\overline{\sigma}$ and the cut closure of $\sigma$ is $\overline{\sigma}$. We define the \textit{left cut closure} similarly with the words ``left" and ``right" interchanged in the previous sentence. 
\end{definition}

The notation for the left cut closure and the right cut closure is the same but which of these we are referring to will be clear from the context. Similarly, when we refer to the cut closure of a positive braid, it will be clear from the context whether we are referring to its cut closure as a divisor of another positive braid, or its cut closure in isolation. More precisely, if there is clearly an underlying positive braid $\sigma$ such that $\sigma'$ is a left-divisor of $\sigma$, then $\overline{\sigma'}$ will always denote the cut closure of $\sigma'$ as a left-divisor of $\sigma$. We will use the following observation frequently.

\begin{proposition}
\label{pomegacutclosure}
If $\sigma\in B_3^{+}$ is a positive braid indivisible by $\Delta$, and if $\sigma'$ is either a left-divisor of $\sigma$ or a right-divisor of $\sigma$, then $\omega\left(\overline{\sigma'}\right) = \omega\left(\sigma'\right)$.
\end{proposition}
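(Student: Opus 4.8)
The plan is to establish the statement for a left-divisor $\sigma'$ of $\sigma$ in detail and then to obtain the right-divisor case by the mirror argument. Write $\sigma = \sigma'\sigma''$ with $\sigma',\sigma'' \in B_3^{+}$; since $\sigma$ is indivisible by $\Delta$, so are both $\sigma'$ and $\sigma''$. I would first dispose of the trivial case: if $\sigma'$ is already a cut left-divisor of $\sigma$, then the minimality in Definition~\ref{dcutclosure} gives $\overline{\sigma'} = \sigma'$ and there is nothing to prove. (In general, since $\sigma'$ is a left-divisor of $\overline{\sigma'}$, the monotonicity Proposition~\ref{pomeganumbercutdivisor} already yields the inequality $\omega\left(\sigma'\right) \leq \omega\left(\overline{\sigma'}\right)$, so the real content is the reverse inequality.)

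Assume then that $\sigma'$ is not a cut left-divisor of $\sigma$. By Lemma~\ref{lcutdivisorcharacterization}, the index $i$ of the last generator of $\sigma'$ differs from the index $i'$ of the first generator $\sigma_{i'}$ of $\sigma''$. The key structural step is to pin down $\overline{\sigma'}$ explicitly: exactly as in the proof of Proposition~\ref{pomegaalmostadditive} (which uses Proposition~\ref{piDB3} and the indivisibility of $\sigma$ by $\Delta$ to rule out a copy of $\Delta$ straddling the junction), appending the single generator $\sigma_{i'}$ already produces a cut left-divisor, so $\sigma'\sigma_{i'}$ is a cut left-divisor of $\sigma$. As $\sigma'\sigma_{i'}$ is the immediate successor of $\sigma'$ among the left-divisors of $\sigma$ of which $\sigma'$ is a left-divisor, minimality forces $\overline{\sigma'} = \sigma'\sigma_{i'}$.

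It then remains to compute $\omega\left(\overline{\sigma'}\right)$. I would apply Proposition~\ref{pomegaalmostadditive} to the factorization $\overline{\sigma'} = \sigma'\cdot \sigma_{i'}$, obtaining $\omega\left(\overline{\sigma'}\right) = \omega\left(\sigma'\right) + \omega\left(\sigma_{i'}\right) - \epsilon = \omega\left(\sigma'\right) + 1 - \epsilon$ for some $\epsilon \in \{0,1\}$, where $\omega\left(\sigma_{i'}\right) = 1$ because $\sigma_{i'}$ is a single generator. To conclude I must show $\epsilon = 1$. By the criterion in Proposition~\ref{pomegaalmostadditive}, $\epsilon = 0$ would force $\sigma'$ to be a cut left-divisor of $\overline{\sigma'}$ or $\sigma_{i'}$ to be a cut right-divisor of $\overline{\sigma'}$; but Lemma~\ref{lcutdivisorcharacterization} excludes both, since each would require the last index $i$ of $\sigma'$ to equal the unique index $i'$ of $\sigma_{i'}$, contradicting $i \neq i'$. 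Hence $\epsilon = 1$ and $\omega\left(\overline{\sigma'}\right) = \omega\left(\sigma'\right)$.

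The right-divisor case is entirely symmetric: writing $\sigma = \sigma'\sigma''$ and prepending the last generator $\sigma_i$ of $\sigma'$ to $\sigma''$, the same two cited results give $\overline{\sigma''} = \sigma_i\sigma''$ and $\omega\left(\overline{\sigma''}\right) = \omega\left(\sigma''\right)$ (one may alternatively deduce this case from the left-divisor case via the word-reversal anti-automorphism of $B_3^{+}$, which fixes $\Delta$, swaps left- and right-divisors, and preserves $\omega$ by Proposition~\ref{ppseudosymmetryduality}). The main obstacle is not any computation but the structural claim that a single generator already closes $\sigma'$ up to a cut divisor and contributes exactly the unit of $\omega$ that the pseudo-additivity defect $\epsilon$ cancels; both facts are already secured by the proof of Proposition~\ref{pomegaalmostadditive}, so no new case analysis is required.
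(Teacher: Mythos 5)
Your proof is correct. The paper disposes of this in one line by citing Lemma~\ref{lcharacterizationdualpositivebraids}: there $\omega$ is identified with the number of blocks in the canonical decomposition of a positive braid indivisible by $\Delta$ into permutation braids with matching junction indices, and passing to the cut closure merely completes the final block (extending an isolated $\sigma_i$ to $\sigma_i\sigma_{i'}$) without creating a new one, so the block count, hence $\omega$, is unchanged. You instead pin down $\overline{\sigma'}=\sigma'\sigma_{i'}$ explicitly (correctly importing the junction argument from the proof of Proposition~\ref{pomegaalmostadditive}) and then compute $\omega\left(\overline{\sigma'}\right)$ via the defect formula of Proposition~\ref{pomegaalmostadditive}, showing the defect $\epsilon$ equals $1$ precisely because the junction indices disagree, which cancels the $+1$ contributed by the appended generator. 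Both arguments rest on the same combinatorial fact; yours is more explicit and self-contained relative to the stated propositions, at the cost of a short case analysis, while the paper's is shorter but asks the reader to unwind the block-counting content of Lemma~\ref{lcharacterizationdualpositivebraids} themselves. Your observation that the inequality $\omega\left(\sigma'\right)\leq\omega\left(\overline{\sigma'}\right)$ is automatic from Proposition~\ref{pomeganumbercutdivisor}, so that only the reverse inequality carries content, is a nice touch not made explicit in the paper.
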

\begin{proof}
The statement is a consequence of Lemma~\ref{lcharacterizationdualpositivebraids}.
\end{proof}

We conclude this subsection with an important technical result that we will use in Subsection~\ref{sspowerpositivehalftwist}. The statement of the result may seem unmotivated at present, but we include it here because its proof is based on the theory of duality developed in this subsection. We will use this result in Subsection~\ref{sspowerpositivehalftwist} in order to compute the Garside normal form of the conjugate $g_1g_2g_1^{-1}$ of a power of a positive half-twist $g_2$ by a power of a positive half-twist $g_1$, in terms of the Garside normal forms of $g_1$ and $g_2$. If we reduce this computation to a computation in the monoid $B_3^{+}$ of positive braids based on our theory of duality, then we will see in Subsection~\ref{sspowerpositivehalftwist} that we effectively reduce to the following statement.

\begin{proposition}
\label{ptechnicaldualityconjugate}
Let $\rho,\tau\in B_3^{+}$ and $\xi,\chi\in B_3^{+}$ be positive braids such that $\rho$ is dual to $\tau$ and $\xi$ is dual to $\chi$. Let $\xi''$ be the maximal right-divisor of $\xi$ that is left-dual to a left-divisor of the cut closure $\left(\overline{\rho}\right)_{\left(\omega\left(\rho\right)\right)}$. Let us denote the relevant left-divisor of $\left(\overline{\rho}\right)_{\left(\omega\left(\rho\right)\right)}$ by $\left(\rho''\right)_{\left(\omega\left(\rho\right)\right)}$. Let $\chi''$ be the cut left-divisor of $\chi$ that is right-dual to $\xi''$ and let $\tau''$ be the right-divisor of $\tau$ that is right-dual to $\left(\overline{\rho''}\right)_{\left(\omega\left(\rho\right)-\omega\left(\rho''\right)\right)}$. Let us write $\chi = \chi''\chi'$ and $\tau = \tau'\tau''$ for positive braids $\chi',\tau'\in B_3^{+}$.
\begin{description}
\item[(i)] Let us assume that $\rho''$ is a proper cut left-divisor of $\rho$. In this case, $\tau'$ ends with a product of two distinct generators $\alpha\in B_3^{+}$, and the positive braid $\alpha\tau''$ is the maximal right-divisor of $\tau$ that is left-dual to a left-divisor of $\chi$. The relevant left-divisor of $\chi$ is $\chi''\beta$, where $\beta\in B_3^{+}$ is a single generator at the beginning of $\chi'$.
\item[(ii)] Let us assume that $\overline{\rho''} = \overline{\rho}$. In this case, $\tau''$ is left-dual to $\chi''$, and $\tau$ itself is the maximal right-divisor of $\tau$ that is left-dual to a left-divisor of $\chi$. If $\rho'' = \rho$, then the relevant left-divisor of $\chi$ is $\chi''$. If $\rho'' = \overline{\rho}\neq \rho$, then $\tau'$ is a single generator and the index of the generator $\left(\tau'\right)_{\left(\omega\left(\tau''\right)\right)}$ is equal to the index of the first generator in $\chi'$.
\item[(iii)] Let us assume that $\rho''$ is a left-divisor of $\rho$ but that $\rho''$ is not a cut left-divisor of $\rho$. In this case, the first generator $\alpha$ in $\tau''$ is a cut left-divisor of $\tau''$, and the last generator $\beta$ in $\chi''$ is a cut right-divisor of $\chi''$. Furthermore, the positive braid $\alpha^{-1}\tau''$ is the maximal right-divisor of $\tau$ that is left-dual to a left-divisor of $\chi$. The relevant left-divisor of $\chi$ is $\chi''\beta^{-1}$.
\end{description}
\end{proposition}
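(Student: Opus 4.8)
The plan is to reduce the entire statement to bookkeeping of the indices of the first and last generators of the positive braids involved, and then to invoke the characterization of maximal dual right-divisors in Proposition~\ref{pmaximaldivisordualcharacterization}. The quantity we must compute in every case is the maximal right-divisor of $\tau$ that is left-dual to a left-divisor of $\chi$; Proposition~\ref{pmaximaldivisordualcharacterization} reduces this to producing a candidate cut right-divisor of $\tau$ and checking a single index-matching condition at the junction. So the work in each case splits into (a) proposing the correct candidate right-divisor of $\tau$ together with the corresponding left-divisor of $\chi$, and (b) verifying the cut-divisor criterion of Lemma~\ref{lcutdivisorcharacterization} together with the index condition of Proposition~\ref{pmaximaldivisordualcharacterization}.

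First I would set up a dictionary translating the given dualities into explicit data about $\chi''$, $\chi'$, $\tau''$, and $\tau'$. Since $\xi''$ is dual to $\left(\rho''\right)_{\left(\omega\left(\rho\right)\right)}$ while $\chi''$ is the right-dual of $\xi''$, the uniqueness of duals (Proposition~\ref{puniquenessofduals}) forces $\chi'' = \left(\rho''\right)_{\left(\omega\left(\rho\right)\right)}$; this identification of $\chi''$ as a literal $\Delta$-conjugate of the left-divisor $\rho''$ of $\overline{\rho}$ is the crucial reduction. Similarly, pseudo-symmetry (Proposition~\ref{ppseudosymmetryduality}) and Proposition~\ref{puniquenessofduals} pin $\tau''$ down as the right-dual of $\left(\overline{\rho''}\right)_{\left(\omega\left(\rho\right)-\omega\left(\rho''\right)\right)}$, with Proposition~\ref{pomegacutclosure} ensuring the relevant $\omega$-values agree. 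With these in hand, Lemma~\ref{lcharacterizationdualpositivebraids} reads off the first generator of $\tau''$, the last generator of $\tau'$, the last generator of $\chi''$, and the first generator of $\chi'$ directly from the product expansion of $\rho$ near the cut point $\rho''$, since that lemma converts ``ends with two distinct generators'' on one side into ``begins with two equal-index generators'' on the dual side. Throughout, Proposition~\ref{piDB3} (no interior isolated generators) controls the second and second-to-last generators, which is exactly what makes these index computations deterministic.

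The three cases then form a trichotomy for the position of the left-divisor $\rho''$ of $\overline{\rho}$ relative to the runs of equal generators in $\rho$. In case \textbf{(i)}, $\rho''$ is a proper cut left-divisor, so it ends precisely where a run ends; tracing this through the dictionary shows that $\tau'$ acquires a trailing product $\alpha$ of two distinct generators, and Proposition~\ref{pmaximaldivisordualcharacterization} then forces the maximal right-divisor to absorb $\alpha$, yielding $\alpha\tau''$ with relevant left-divisor $\chi''\beta$. In case \textbf{(ii)}, $\overline{\rho''} = \overline{\rho}$, so $\rho''$ reaches the cut closure and the index-matching at the junction is automatic; here $\tau''$ is itself left-dual to $\chi''$ and $\tau$ is maximal, the degenerate sub-cases $\rho'' = \rho$ and $\rho'' = \overline{\rho}\neq\rho$ being distinguished by whether the residual $\tau'$ is trivial or a single generator (with Proposition~\ref{pomegaalmostadditive} controlling the $\omega$-defect $\epsilon\in\{0,1\}$). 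In case \textbf{(iii)}, $\rho''$ stops in the middle of a run, so it is a left-divisor but not a cut left-divisor of $\rho$; now the boundary generator must instead be stripped, giving $\alpha^{-1}\tau''$ and relevant left-divisor $\chi''\beta^{-1}$, which I would justify by the same characterization together with Lemma~\ref{lcutdivisorcharacterization} applied to the single-generator (cut) divisors $\alpha$ and $\beta$.

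The main obstacle will be the simultaneous bookkeeping of the $\Delta$-conjugation operations $\left(\cdot\right)_{\left(l\right)}$ — which depend only on the parity of $l$ (Definition~\ref{dconjugationbyDelta}) but which swap $\sigma_1\leftrightarrow\sigma_2$ and so must be tracked consistently along the chain $\rho\to\rho''\to\xi''\to\chi''\to\tau''$ — together with the verification that each claimed maximal right-divisor is a genuine cut right-divisor, so that Proposition~\ref{pmaximaldivisordualcharacterization} actually applies. The degenerate boundary sub-cases in \textbf{(ii)} and the single-generator cut divisors in \textbf{(i)} and \textbf{(iii)} are where the argument is most delicate: there one must invoke Proposition~\ref{piDB3} carefully (exactly as in the proofs of Lemma~\ref{lcutdivisorcharacterization} and Proposition~\ref{pomegaalmostadditive}) to rule out a spurious copy of $\Delta$ straddling the junction, and it is this that ultimately decides whether the boundary generator $\alpha$ or $\beta$ is absorbed into, or stripped from, the maximal right-divisor.
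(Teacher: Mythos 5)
Your proposal is correct and follows essentially the same route as the paper's proof: both pin down $\tau''$, $\xi''$, $\chi''$ via the uniqueness and pseudo-symmetry of duals (your identification $\chi'' = \left(\rho''\right)_{\left(\omega\left(\rho\right)\right)}$ and the paper's $\tau'' = \xi''$ are equivalent consequences of Proposition~\ref{puniquenessofduals}), read off the boundary generators with Lemma~\ref{lcharacterizationdualpositivebraids} and Proposition~\ref{piDB3}, and settle maximality at the junction via Proposition~\ref{pdualbraidproduct} together with Proposition~\ref{pmaximaldivisordualindivisibleDelta}/Proposition~\ref{pmaximaldivisordualcharacterization}. The case analysis and the points you flag as delicate (parity bookkeeping, the cut-right-divisor verification, and the degenerate sub-cases of \textbf{(ii)}) are exactly where the paper's argument spends its effort.
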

\begin{proof}
The pseudo-symmetry of duals (Proposition~\ref{ppseudosymmetryduality}) implies that the $\omega$-numbers of a pair of dual positive braids are equal. For example, $\omega\left(\rho\right) = \omega\left(\tau\right)$, $\omega\left(\xi\right) = \omega\left(\chi\right)$, $\omega\left(\xi''\right) = \omega\left(\rho''\right)$, $\omega\left(\rho''\right) = \omega\left(\tau''\right)$, and $\omega\left(\xi''\right) = \omega\left(\chi''\right)$. We will use the equal values of the $\omega$-numbers of such pairs interchangeably in this proof. 
\begin{description}
\item[(i)] Let us write $\rho = \rho''\rho'$ for a positive braid $\rho'\in B_3^{+}$. The hypothesis is that $\left(\rho''\right)_{\left(\omega\left(\rho\right)-\omega\left(\rho''\right)\right)}$ is left-dual to $\tau''$, and the pseudo-symmetry of duals (Proposition~\ref{ppseudosymmetryduality}) implies that $\tau''$ is left-dual to $\left(\rho''\right)_{\left(\omega\left(\rho\right)\right)}$. The uniqueness of duals (Proposition~\ref{puniquenessofduals}) now implies that $\tau'' = \xi''$, and thus $\tau''$ is left-dual to $\chi''$. 

If $\rho''$ is a proper cut left-divisor of $\rho$, then Proposition~\ref{pmaximaldivisordualcharacterization} implies that $\xi'$ does not end with a product of two distinct generators and $\rho'$ does not begin with a product of two distinct generators. Furthermore, Proposition~\ref{pmaximaldivisordualcharacterization} implies that if the last generator in $\xi'$ is $\left(\sigma_i\right)_{\left(\omega\left(\rho\right)+\omega\left(\rho''\right)\right)}$ where $i\in \{1,2\}$, then the first generator in $\rho'$ is $\sigma_{i'}$, where $i\neq i'\in \{1,2\}$.

The hypothesis that $\rho''$ is a cut left-divisor of $\rho$ and the fact that $\xi''$ is a cut right-divisor of $\xi$ (Lemma~\ref{lmaximaldivisorcutdivisor}) imply that $\rho'$ is left-dual to $\tau'$, and $\left(\xi'\right)_{\left(\omega\left(\rho''\right)\right)}$ is left-dual to $\chi'$. Lemma~\ref{lcharacterizationdualpositivebraids} now implies that $\tau'$ ends with $\left(\sigma_{i}\sigma_{i'}\right)_{\left(\omega\left(\rho'\right)-1\right)}$ and $\chi'$ begins with $\left(\sigma_{i'}\sigma_i\right)_{\left(\omega\left(\rho\right)\right)}$. In particular, $\left(\tau'\right)_{\left(\omega\left(\tau''\right)\right)}$ ends with $\left(\sigma_{i'}\sigma_i\right)_{\left(\omega\left(\rho\right)\right)}$ (Proposition~\ref{ppdb} implies that $\omega\left(\rho\right) = \omega\left(\rho'\right) + \omega\left(\rho''\right)$ since $\rho''$ is a cut left-divisor of $\rho$). We conclude that $\left(\tau'\right)_{\left(\omega\left(\tau''\right)\right)}\chi'$ is divisible by $\Delta$, but that $\left(\tau'\right)_{\left(\omega\left(\tau''\right)\right)}\chi'$ is indivisible by $\Delta^2$. Indeed, Proposition~\ref{piDB3} implies that the $\left(\sigma_{i'}\sigma_i\right)_{\left(\omega\left(\rho\right)\right)}$ at the end of $\left(\tau'\right)_{\left(\omega\left(\tau''\right)\right)}$ is not preceded by $\left(\sigma_{i}\right)_{\left(\omega\left(\rho\right)\right)}$. We can apply Proposition~\ref{pdualbraidproduct} to show that $\left(\tau'\right)_{\left(\omega\left(\tau''\right)\right)}\chi'$ is indivisible by $\Delta^2$. Therefore, Proposition~\ref{pmaximaldivisordualindivisibleDelta} implies the statement with $\alpha = \left(\sigma_{i}\sigma_{i'}\right)_{\left(\omega\left(\rho'\right)-1\right)}$ and $\beta = \left(\sigma_{i'}\right)_{\left(\omega\left(\rho\right)\right)}$.
\item[(ii)] If $\overline{\rho''} = \overline{\rho}$, then either $\rho'' = \rho$ or $\rho'' = \overline{\rho}\neq \rho$. If $\rho'' = \rho$, then $\rho''$ is dual to $\tau = \tau''$. The pseudo-symmetry of duals (Proposition~\ref{ppseudosymmetryduality}) implies that $\tau$ is left-dual to $\left(\rho''\right)_{\left(\omega\left(\rho\right)\right)}$. The uniqueness of duals (Proposition~\ref{puniquenessofduals}) implies that $\tau= \xi''$, and thus $\tau$ is left-dual to $\chi''$. Therefore, the statement is established if $\rho'' = \rho$.

If $\rho'' = \overline{\rho}\neq \rho$, then the cut closure $\overline{\rho}$ ends with a product of distinct generators $\sigma_i\sigma_{i'}$, where $\{i,i'\} = \{1,2\}$, the last generator in $\rho$ is $\sigma_i$, and $\sigma_i$ is non-isolated in $\rho$. Lemma~\ref{lcharacterizationdualpositivebraids} implies that $\tau$ begins with a product of distinct generators $\sigma_{i'}\sigma_i$, since $\rho$ is dual to $\tau$. In this case, $\tau' = \sigma_{i'}$ and $\overline{\rho}$ is left-dual to $\tau'' = \left(\tau'\right)^{-1}\tau$. Furthermore, Lemma~\ref{lcharacterizationdualpositivebraids} implies that $\xi''$ begins with $\left(\sigma_{i'}\right)_{\left(\omega\left(\rho\right) + \omega\left(\rho\right) - 1\right)} = \sigma_{i}$, and $\chi''$ ends with a product of distinct generators $\left(\sigma_{i'}\sigma_{i}\right)_{\left(\omega\left(\rho\right)-1\right)}$. In particular, Proposition~\ref{piDB3} implies that $\chi'$ begins with $\left(\sigma_{i'}\right)_{\left(\omega\left(\rho\right)\right)}$, since $\chi$ is indivisible by $\Delta$. Of course, $\left(\tau'\right)_{\left(\omega\left(\tau''\right)\right)}=\left(\sigma_{i'}\right)_{\left(\omega\left(\rho\right)\right)}$. Proposition~\ref{pdualbraidproduct} now implies that $\left(\tau'\right)_{\left(\omega\left(\tau''\right)\right)}\chi'$ is indivisible by $\Delta$. Therefore, Proposition~\ref{pmaximaldivisordualindivisibleDelta} implies the statement. 
\item[(iii)] If $\rho''$ is a left-divisor of $\rho$ but not a cut left-divisor of $\rho$, then $\rho''$ is a proper left-divisor of its cut closure $\overline{\rho''}$. Moreover, the cut closure $\overline{\rho''}$ ends with a product $\sigma_i\sigma_{i'}$ of distinct generators, where $\{i,i'\} = \{1,2\}$ and $\sigma_i$ is the last generator of $\rho''$. Let us write $\rho = \overline{\rho''}\rho'$ for a positive braid $\rho'\in B_3^{+}$.

Lemma~\ref{lcharacterizationdualpositivebraids} implies that $\xi''$ begins with a product of distinct generators $\left(\sigma_i\sigma_{i'}\right)_{\left(\omega\left(\rho\right)+\omega\left(\rho''\right)-1\right)}$, since $\xi''$ is left-dual to $\left(\rho''\right)_{\left(\omega\left(\rho\right)\right)}$, and $\tau''$ begins with $\left(\sigma_i\right)_{\left(\omega\left(\rho'\right)\right)}$, since $\left(\overline{\rho''}\right)_{\left(\omega\left(\rho'\right)\right)}$ is left-dual to $\tau''$ (Proposition~\ref{ppdb} and Proposition~\ref{pomegacutclosure} imply that $\omega\left(\rho'\right)+\omega\left(\rho''\right) = \omega\left(\rho\right)$). Similarly, Lemma~\ref{lcharacterizationdualpositivebraids} implies that $\chi''$ ends with $\left(\sigma_i\right)_{\left(\omega\left(\rho\right)\right)}$. Furthermore, Lemma~\ref{lcharacterizationdualpositivebraids} implies that the generator at the beginning of $\tau''$ is a cut left-divisor of $\tau$, and the generator at the end of $\chi''$ is a cut right-divisor of $\chi''$.

The following statements (in this paragraph) are all consequences of Lemma~\ref{lcharacterizationdualpositivebraids}. If $\alpha' = \sigma_i$ and $\alpha = \left(\sigma_i\right)_{\left(\omega\left(\rho'\right)\right)}$, then the positive braid $\left(\rho''\left(\alpha'\right)^{-1}\right)_{\left(\omega\left(\rho'\right) + 1\right)}$ is left-dual to $\alpha^{-1}\tau''$.  If $\beta' = \left(\sigma_i\sigma_{i'}\right)_{\left(\omega\left(\rho\right)+\omega\left(\rho''\right)-1\right)}$ and $\beta = \left(\sigma_i\right)_{\left(\omega\left(\rho\right)\right)}$, then $\left(\beta'\right)^{-1}\xi''$ is left-dual to $\chi''\beta^{-1}$. Finally, $\left(\beta'\right)^{-1}\xi''$ is left-dual to $\left(\rho''\left(\alpha'\right)^{-1}\right)_{\left(\omega\left(\rho\right)\right)}$.

The pseudo-symmetry of duals (Proposition~\ref{ppseudosymmetryduality}) implies that $\alpha^{-1}\tau''$ is left-dual to $\left(\rho''\left(\alpha'\right)^{-1}\right)_{\left(\omega\left(\rho\right)\right)}$. The uniqueness of duals (Proposition~\ref{puniquenessofduals}) implies that $\alpha^{-1}\tau'' = \left(\beta'\right)^{-1}\xi''$, and thus $\alpha^{-1}\tau''$ is left-dual to $\chi''\beta^{-1}$. However, $\left(\alpha\right)_{\left(\omega\left(\tau''\right)\right)} = \left(\sigma_i\right)_{\left(\omega\left(\rho\right)\right)} = \beta$. Proposition~\ref{pdualbraidproduct} now implies that $\left(\tau'\alpha\right)_{\left(\omega\left(\tau''\right)\right)}\beta\chi'$ is indivisible by $\Delta$. Therefore, Proposition~\ref{pmaximaldivisordualindivisibleDelta} implies the statement.
\end{description}
Therefore, the cases in the statement have all been established and the proof is complete.
\end{proof}

\subsection{The Garside normal form of the power of a positive half-twist}
\label{sspowerpositivehalftwist}
In this subsection, we will explicitly characterize the Garside normal form of the power of a positive half-twist in the braid group $B_3$, or equivalently, the monodromy of a degree three simple Hurwitz curve in $\mathbb{CP}^2$ with respect to an $A_n$-singularity. We establish this new characterization using our theory of duality of positive braids in Subsection~\ref{ssdualitypositivebraid}. Firstly, we describe the role that our theory of duality plays in the study of the conjugacy classes in $B_3$.

We recall that conjugation by the Garside element in $B_3$ has a simple description (Definition~\ref{dconjugationbyDelta}). In particular, using the Garside normal form in $B_3$, we observe that in order to study conjugation in $B_3$, it suffices to study conjugation of positive braids by positive braids. Furthermore, in order to study the conjugation of a positive braid by a positive braid, it is necessary to study the inverse of a positive braid. If $\rho,\tau\in B_3^{+}$ are positive braids indivisible by $\Delta$, then we recall that $\rho$ is dual to $\tau$ if and only if the Garside normal form of $\tau^{-1}$ is $\Delta^{\omega\left(\tau\right)}\rho$ for some nonnegative integer $\omega\left(\tau\right)\geq 0$ (Definition~\ref{ddual}). 

We will use the properties of duality established in Subsection~\ref{ssdualitypositivebraid} in order to establish an explicit characterization of the conjugacy class of $\sigma_1^n$ in $B_3$ for each positive integer $n\geq 1$ (Corollary~\ref{cpositivehalftwistconjugacy} implies that this is the set of $n$th powers of positive half-twists in $B_3$).

We will also establish an important technical result on the effect of a Hurwitz move on a factorization into powers of positive half-twists in the braid group $B_3$. Indeed, we recall that the local effect of a Hurwitz move on a factorization is $\left(\dots,g_j,g_{j+1},\dots\right)\to \left(\dots,g_jg_{j+1}g_j^{-1},g_j,\dots\right)$. If $g_j$ and $g_{j+1}$ are powers of positive half-twists, then in order to compute the Hurwitz move, we need to compute the conjugate of a power of a positive half-twist by the power of a positive half-twist. We use our theory of duality (specifically, Proposition~\ref{ptechnicaldualityconjugate}) to compute the Garside normal form of this conjugate (Lemma~\ref{lHurwitzmoveGarsidenormalform}).

In Section~\ref{smain}, we will define the complexity of a factorization to be the sum of the absolute values of the Garside powers of the factors. We conclude this subsection with a technical result on the change in complexity of a factorization after the application of a Hurwitz move. We will use this technical result extensively in the classification of factorizations of $\Delta^2$ into powers of positive half-twists in Section~\ref{smain}. 

Indeed, a key step in the classification is to establish the existence of complexity-reducing Hurwitz moves for factorizations of positive complexity (in the case of factorizations into positive half-twists), factorizations of complexity greater than one (in the case of factorizations into powers of positive half-twists where there is not the fourth power of a positive half-twist), or factorizations of complexity greater than two (in the case of factorizations into powers of positive half-twists where there is the fourth power of a positive half-twist). Ultimately, we will use this step to reduce the classification of factorizations of $\Delta^2$ into powers of positive half-twists to the classification of such factorizations of complexity either $0$ or $1$ (or $2$ if there is the fourth power of a positive half-twist). The latter step is much more straightforward.

Let us now state and prove the explicit characterization of the Garside normal form of a power of a positive half-twist.

\begin{theorem}
\label{tGarsidenormalformpositivehalftwist}
Let $g\in B_3$ be the $e$th power of a positive half-twist for a positive integer $e\geq 1$. The Garside normal form of $g$ is \[g = \Delta^{-\omega\left(\tau\right)}\rho\sigma_i^{e}\tau,\] where $\rho,\tau\in B_3^{+}$ are positive braids such that $\rho$ is dual to $\tau$, and $i\in \{1,2\}$.
\end{theorem}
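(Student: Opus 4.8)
The plan is to reduce the statement to a computation with positive braids and then descend on the length of a conjugating braid. By Corollary~\ref{cpositivehalftwistconjugacy}, $g$ lies in the conjugacy class of $\sigma_1^{e}$, so I write $g = h\sigma_1^{e}h^{-1}$ and put $h^{-1}$ in Garside normal form $h^{-1} = \Delta^{m}\tau$ with $\tau\in B_3^{+}$ indivisible by $\Delta$. Since $\Delta^{-m}\sigma_1^{e}\Delta^{m} = \sigma_i^{e}$ for an index $i\in\{1,2\}$ determined by the parity of $m$ (Definition~\ref{dconjugationbyDelta}), this gives $g = \tau^{-1}\sigma_i^{e}\tau$. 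Writing the Garside normal form $\tau^{-1} = \Delta^{-\omega(\tau)}\rho$, where $\rho$ is the left-dual of $\tau$ (Definition~\ref{ddual}), I obtain the candidate $g = \Delta^{-\omega(\tau)}\rho\sigma_i^{e}\tau$. By uniqueness of the Garside normal form (Theorem~\ref{Garside}), it suffices to exhibit, for the fixed $g$, some such expression with a dual pair $(\rho,\tau)$ and an index $i$ for which $\rho\sigma_i^{e}\tau$ is indivisible by $\Delta$. I produce it by descent on $\ell(\tau)$, where each reduction keeps $g$ fixed and merely replaces the conjugator.

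For the reduction I examine the two seams of $\rho\sigma_i^{e}\tau$ using the block structure of $\Delta$-indivisible braids (Proposition~\ref{piDB3}) and the explicit end/start correspondence for dual pairs (Lemma~\ref{lcharacterizationdualpositivebraids}); in particular the last generator of $\rho$ and the first generator of $\tau$ carry different indices. If $\tau$ begins with $\sigma_i$, then $\tau = \sigma_i\tau_0$ and the regrouping $g = \tau_0^{-1}\sigma_i^{e}\tau_0$ strictly shortens the conjugator, so I may assume $\tau$ begins with $\sigma_{i'}$ for $i'\neq i$; duality then forces $\rho$ to end with $\sigma_i$. If moreover $\tau$ begins with the distinct pair $\sigma_{i'}\sigma_i$, I peel two generators: writing $\tau = \sigma_{i'}\sigma_i\tau_1$ and using the braid identity $\sigma_{i'}^{-1}\sigma_i\sigma_{i'} = \sigma_i\sigma_{i'}\sigma_i^{-1}$, a short computation yields $(\sigma_{i'}\sigma_i)^{-1}\sigma_i^{e}(\sigma_{i'}\sigma_i) = \sigma_{i'}^{e}$, hence $g = \tau_1^{-1}\sigma_{i'}^{e}\tau_1$ with $\ell(\tau_1) = \ell(\tau)-2$ and new index $i'$. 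In each case $\tau_0,\tau_1$ are right-divisors of $\tau$, hence indivisible by $\Delta$, so the hypotheses persist and the descent continues on a shorter conjugator.

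The descent terminates exactly when $\tau$ begins with $\sigma_{i'}$ but not with $\sigma_{i'}\sigma_i$, that is when $\tau = \sigma_{i'}$ or $\tau$ begins with the block $\sigma_{i'}\sigma_{i'}$ (together with the trivial case $\tau = 1$, where $g = \sigma_i^{e}$ already has the asserted form). In these terminal cases $\rho$ ends with a block $\sigma_i^{m}$ and $\tau$ begins with a block $\sigma_{i'}^{n}$, so in $\rho\sigma_i^{e}\tau$ the middle factor absorbs into $\sigma_i^{m+e}$ with $m+e\geq 2$; one then checks directly that the product expansion has no isolated generator except possibly at its two ends, whence $\rho\sigma_i^{e}\tau$ is indivisible by $\Delta$ (the converse of Proposition~\ref{piDB3}, which follows from Theorem~\ref{tmonoidembedding} since no braid relation applies to such an expansion). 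This is then the Garside normal form $\Delta^{-\omega(\tau)}\rho\sigma_i^{e}\tau$, of the required shape.

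The main obstacle is the middle case of the descent: the naive expression $g = \tau^{-1}\sigma_i^{e}\tau$ is typically not already in Garside normal form, and when the last generator of $\rho$ coincides with $i$ while $\tau$ starts $\sigma_{i'}\sigma_i$, neither seam can be repaired by conjugation on one side alone — one must push a full copy of $\Delta$ through the $\sigma_i^{e}$ block, which is precisely why the power reappears as $\sigma_{i'}^{e}$ after the double peel. Controlling this bookkeeping so that the exponent $e$ is preserved at every step, and confirming that each reduction strictly decreases $\ell(\tau)$ so that the process terminates at an indivisible product, is the crux of the argument; the rest is an application of the duality results of Subsection~\ref{ssdualitypositivebraid}.
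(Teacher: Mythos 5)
Your proof is correct and follows essentially the same route as the paper's: conjugate $\sigma_1^{e}$ by a $\Delta$-indivisible positive braid $\tau$, rewrite via the dual $\rho$ of $\tau$, and repeatedly shrink the conjugator using exactly the two moves the paper uses (peel $\sigma_i$ off the front of $\tau$ when it commutes through the middle block, or peel $\sigma_{i'}\sigma_i$ and flip the index via $\left(\sigma_{i'}\sigma_i\right)^{-1}\sigma_i^{e}\left(\sigma_{i'}\sigma_i\right)=\sigma_{i'}^{e}$). The only difference is organizational — you run an explicit descent on $\ell\left(\tau\right)$ and verify indivisibility at termination directly from the block structure of Proposition~\ref{piDB3}, whereas the paper takes $\omega\left(\tau\right)$ minimal and derives a contradiction from divisibility — so this is a correct proof by the same method.
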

\begin{proof}
Corollary~\ref{cpositivehalftwistconjugacy} implies that $g$ is conjugate to $\sigma_1^{e}$. Theorem~\ref{Garside} (Garside's theorem) implies that $g=\left(\Delta^{l}\tau\right)^{-1}\sigma_1^{e}\left(\Delta^{l}\tau\right)$, for a positive braid $\tau\in B_3^{+}$ indivisible by $\Delta$, and an integer $l\in \mathbb{Z}$. However, we can write \[g = \left(\Delta^{l}\tau\right)^{-1}\sigma_1^{e}\left(\Delta^{l}\tau\right) = \tau^{-1}\left(\sigma_1\right)_{\left(l\right)}^{e}\tau = \Delta^{-\omega\left(\tau\right)}\rho\left(\sigma_1\right)_{\left(l\right)}^{e}\tau,\] where $\rho$ is dual to $\tau$. The expression is the Garside normal form of $g$ if and only if $\rho\left(\sigma_1\right)_{\left(l\right)}^{e}\tau$ is indivisible by $\Delta$. We claim that if $\omega\left(\tau\right)$ is minimal among all such expressions of $g$, then this expression is the Garside normal form of $g$. 

Indeed, let $i\in \{1,2\}$ be such that $\sigma_i = \left(\sigma_1\right)_{\left(l\right)}$. Lemma~\ref{lcharacterizationdualpositivebraids} implies that the index of the last generator in $\rho$ is different from the index of the first generator in $\tau$. Proposition~\ref{piDB3} implies that $\rho\sigma_i^{e}\tau$ is divisible by $\Delta$ if and only if either the product of the last two generators in $\rho$ is $\sigma_{i}\sigma_{i'}$ or the product of the first two generators in $\tau$ is $\sigma_{i'}\sigma_i$ for $i\neq i'\in \{1,2\}$. Lemma~\ref{lcharacterizationdualpositivebraids} implies that the first generator in $\tau$ is $\sigma_i$ in the former case, and the last generator in $\rho$ is $\sigma_i$ in the latter case. In the former case, we set $\rho' = \left(\rho\left(\sigma_i\sigma_{i'}\right)^{-1}\right)_{\left(1\right)}$ and $\tau' = \left(\sigma_i\right)^{-1}\tau$. In the latter case, we set $\rho' = \left(\rho\sigma_i^{-1}\right)_{\left(1\right)}$ and $\tau' = \left(\sigma_{i'}\sigma_i\right)^{-1}\tau$. In either case, $\rho',\tau'\in B_3^{+}$ are positive braids indivisible by $\Delta$ such that $\rho'$ is dual to $\tau'$, and Proposition~\ref{ppdb} implies that $\omega\left(\tau'\right) +1 = \omega\left(\tau\right)$. We can now write \[g = \Delta^{-\omega\left(\tau'\right)}\rho'\sigma_{i}^{e}\tau'\] in the former case and \[g= \Delta^{-\omega\left(\tau'\right)}\rho'\sigma_{i'}^{e}\tau'\] in the latter case. Of course, this contradicts the minimality of $\omega\left(\tau\right)$. Therefore, the statement is established. 
\end{proof}

We now characterize the powers of standard positive half-twists ($\sigma_i^{e}$ for some $i\in \{1,2\}$ and positive integer $e\geq 1$) among the powers of all positive half-twists.

\begin{corollary}
\label{cGarsidepowerpositivehalftwist}
If $g\in B_3$ is the power of a positive half-twist, then $g$ is the power of a standard positive half-twist if and only if the Garside power of $g$ is zero.
\end{corollary}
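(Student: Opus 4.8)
The plan is to read off the Garside power of $g$ directly from the explicit Garside normal form supplied by Theorem~\ref{tGarsidenormalformpositivehalftwist}. That result writes $g = \Delta^{-\omega\left(\tau\right)}\rho\sigma_i^{e}\tau$ with $\rho$ dual to $\tau$, and this expression is already the Garside normal form, so the Garside power of $g$ equals $-\omega\left(\tau\right)$. Hence the corollary is equivalent to the assertion that $\omega\left(\tau\right) = 0$ holds precisely when $g$ is the power of a standard positive half-twist, and I would prove the two implications separately.

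For the forward implication, I would observe that if $g = \sigma_i^{e}$ for some $i\in\{1,2\}$ and $e\geq 1$, then $g$ is itself a positive braid whose unique product expansion involves only the generator $\sigma_i$. Since the Garside element $\Delta = \sigma_1\sigma_2\sigma_1$ requires both generators, Proposition~\ref{piDB3} shows that $\sigma_i^{e}$ is indivisible by $\Delta$. Therefore $\Delta^{0}\sigma_i^{e}$ is already the Garside normal form of $g$, and by the uniqueness in Theorem~\ref{Garside} the Garside power of $g$ is zero.

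For the reverse implication, I would start from the hypothesis that the Garside power of $g$ is zero, which by the above forces $\omega\left(\tau\right) = 0$. Since $\rho$ is dual to $\tau$, Definition~\ref{ddual} gives $\rho\tau = \Delta^{\omega\left(\tau\right)} = \Delta^{0}$, that is, $\rho\tau$ equals the identity. The key step is then that a product of two positive braids can equal the identity only when both factors are trivial: this follows from the fact that the length of a positive braid is well-defined and additive under multiplication (a consequence of Theorem~\ref{tmonoidembedding}), so that $\mathrm{length}\left(\rho\right)+\mathrm{length}\left(\tau\right)=0$ forces $\rho=\tau=1$. Substituting back into the normal form yields $g = \sigma_i^{e}$, a power of a standard positive half-twist.

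I do not anticipate a serious obstacle here, since Theorem~\ref{tGarsidenormalformpositivehalftwist} does essentially all of the heavy lifting; the only points requiring care are the two elementary facts that $\sigma_i^{e}$ is indivisible by $\Delta$ and that $\rho\tau = 1$ forces $\rho = \tau = 1$, both of which rest on the monoid structure of $B_3^{+}$ established in Theorem~\ref{tmonoidembedding}.
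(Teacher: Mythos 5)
Your proposal is correct and follows essentially the same route as the paper: both read the Garside power off the normal form $g = \Delta^{-\omega\left(\tau\right)}\rho\sigma_i^{e}\tau$ from Theorem~\ref{tGarsidenormalformpositivehalftwist} and reduce the corollary to the observation that $\omega\left(\tau\right)=0$ exactly when $\rho$ and $\tau$ are both trivial. Your justification of that last point via $\rho\tau=\Delta^{0}$ and length additivity in $B_3^{+}$ is just a slightly more explicit version of the paper's one-line remark, so there is nothing further to add.
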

\begin{proof}
Theorem~\ref{tGarsidenormalformpositivehalftwist} states that the Garside normal form of the positive half-twist $g$ can be expressed as \[g = \Delta^{-\omega\left(\tau\right)}\rho\sigma_i^{e}\tau,\] where $\rho,\tau\in B_3^{+}$ are positive braids such that $\rho$ is dual to $\tau$, $i\in \{1,2\}$, and $e\geq 1$ is a positive integer. If $\tau\in B_3^{+}$ is a positive braid, then $\omega\left(\tau\right)\geq 0$ with equality if and only if $\tau$ is the identity. Furthermore, $\rho$ is the identity if and only if $\tau$ is the identity. Therefore, the statement is established.
\end{proof}

We observed special properties of the Garside normal form of the power of a positive half-twist in the proof of Theorem~\ref{tGarsidenormalformpositivehalftwist}. The properties will be important in the sequel and we isolate them as separate statements.

\begin{proposition}
\label{pcutclosureconjugate}
Let $g = \Delta^{-\omega\left(\tau\right)}\rho\sigma_{i}^e\tau$ be the Garside normal form of the $e$th power of a non-standard positive half-twist, where $\rho,\tau\in B_3^{+}$ are positive braids such that $\rho$ is dual to $\tau$, $i\in \{1,2\}$, and $e\geq 1$ is a positive integer (Theorem~\ref{tGarsidenormalformpositivehalftwist}). Let us write $\xi = \rho\sigma_i^{e}\tau$ and observe that $\xi\in B_3^{+}$ is a positive braid indivisible by $\Delta$, of which $\rho$ is a left-divisor and $\tau$ is a right-divisor. 

In this case, either $\rho$ ends with an isolated generator or $\tau$ begins with an isolated generator, but not both. In the former case, the cut closures $\overline{\rho} = \rho$ and $\overline{\tau} = \sigma_i\tau$. In the latter case, the cut closures $\overline{\rho} = \rho\sigma_i$ and $\overline{\tau} = \tau$. In general, $\xi = \overline{\rho}\sigma_i^{e-1}\overline{\tau}$.
\end{proposition}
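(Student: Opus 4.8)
The plan is to read the required structure off the construction of the Garside normal form in Theorem~\ref{tGarsidenormalformpositivehalftwist}, and then compute the two cut closures mechanically using the combinatorial criterion of Lemma~\ref{lcutdivisorcharacterization}. First I would record the three facts that the proof of Theorem~\ref{tGarsidenormalformpositivehalftwist} already supplies for $\xi = \rho\sigma_i^e\tau$: (1) $\rho$ is dual to $\tau$, so by Lemma~\ref{lcharacterizationdualpositivebraids} the index of the last generator of $\rho$ differs from the index of the first generator of $\tau$; and (2) since $\xi$ is indivisible by $\Delta$, Proposition~\ref{piDB3} forbids $\rho$ from ending with $\sigma_i\sigma_{i'}$ and forbids $\tau$ from beginning with $\sigma_{i'}\sigma_i$ (for $i\neq i'$), exactly as established in that proof. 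Writing $a$ for the index of the last generator of $\rho$ and $b$ for that of the first generator of $\tau$, fact (1) gives $\{a,b\}=\{1,2\}$, so $i$ coincides with exactly one of $a,b$. This is the origin of the dichotomy, and ``not both'' is immediate because $a\neq b$.

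Next I would match the two cases $i=a$ and $i=b$ to the isolated-generator statement. If $i=a$, then fact (2) at the right seam says $\tau$ cannot begin with $\sigma_b\sigma_a$; since its first generator is $\sigma_b$, this forces $\tau$ to begin with the repeated generator $\sigma_b\sigma_b$ or to equal the single generator $\sigma_b$. Feeding this through the duality dictionary of Lemma~\ref{lcharacterizationdualpositivebraids} (a dual pair has $\rho$ ending in two distinct generators exactly when $\tau$ begins with two equal generators, with the corresponding single-generator boundary cases matched under duality) shows that $\rho$ must end with two distinct generators, i.e. $\rho$ ends with an isolated generator and in particular has length at least two (the alternative $\rho=\sigma_a$ is excluded, since it would force $\tau=\sigma_b\sigma_a$, contradicting fact (2)). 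The case $i=b$ is the mirror image and places the isolated generator at the start of $\tau$. Here I read ``ends/begins with an isolated generator'' as ``the last/first two generators have distinct indices''; the one delicate point is that a one-letter piece $\tau=\sigma_b$ can genuinely occur in the former case (e.g. for $\sigma_2\sigma_1\sigma_2^{-1}=\Delta^{-1}\sigma_1\sigma_2^2\sigma_1$), so the two alternatives must be phrased so that a single-generator boundary is not counted as isolated — this is the spot where the bookkeeping needs care.

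Finally I would compute the cut closures from Lemma~\ref{lcutdivisorcharacterization}, which declares a left-divisor cut precisely when its last generator shares the index of the first generator of the complementary right-divisor. In the former case $i=a$: the last generator of $\rho$ is $\sigma_i$, matching the first generator of the complement $\sigma_i^e\tau$, so $\rho$ is already cut and $\overline{\rho}=\rho$; whereas the first generator $\sigma_b$ of $\tau$ does not match the last generator $\sigma_i=\sigma_a$ of the complement $\rho\sigma_i^e$, so $\tau$ is not cut, but $\sigma_i\tau$ is (its first generator $\sigma_i$ matches the last generator of $\rho\sigma_i^{e-1}$, which is $\sigma_i$ for $e\geq 2$ and is the last generator $\sigma_a=\sigma_i$ of $\rho$ for $e=1$), giving $\overline{\tau}=\sigma_i\tau$ by minimality. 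The latter case $i=b$ is symmetric and yields $\overline{\rho}=\rho\sigma_i$, $\overline{\tau}=\tau$. The identity $\xi=\overline{\rho}\,\sigma_i^{e-1}\,\overline{\tau}$ is then a one-line substitution in each case, since $\rho\cdot\sigma_i^{e-1}\cdot\sigma_i\tau$ and $\rho\sigma_i\cdot\sigma_i^{e-1}\cdot\tau$ both equal $\rho\sigma_i^e\tau$. I expect the main obstacle to be the careful handling of the two seams together with the single-generator boundary cases in the dichotomy; everything else is a direct application of Lemma~\ref{lcutdivisorcharacterization} and Lemma~\ref{lcharacterizationdualpositivebraids}.
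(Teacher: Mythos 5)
Your argument is correct and follows essentially the same route as the paper, whose proof is a one-line citation of the definition of cut closure together with Proposition~\ref{piDB3} and Lemma~\ref{lcharacterizationdualpositivebraids}; you have simply carried out in full the seam analysis and the cut-divisor check via Lemma~\ref{lcutdivisorcharacterization} that those citations compress. Your observation about the single-generator boundary (e.g. $\tau=\sigma_b$ in the former case, where ``isolated'' must be read so as to exclude a one-letter word) is a genuine and correctly diagnosed imprecision in the statement that the paper's proof does not address.
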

\begin{proof}
The statement is a consequence of Definition~\ref{dcutclosure} (the definition of a cut closure) and the combination of Proposition~\ref{piDB3} and Lemma~\ref{lcharacterizationdualpositivebraids}. 
\end{proof}

We consider another property of the Garside normal form of the power of a positive half-twist. 

\begin{proposition}
\label{pdivisorconjugatedual}
Let $g = \Delta^{-\omega\left(\tau\right)}\rho\sigma_{i}^e\tau$ be the Garside normal form of the $e$th power of a positive half-twist, where $\rho,\tau\in B_3^{+}$ are positive braids such that $\rho$ is dual to $\tau$, $i\in \{1,2\}$, and $e\geq 1$ is a positive integer (Theorem~\ref{tGarsidenormalformpositivehalftwist}). Let us write $\xi = \rho\sigma_i^{e}\tau$ and observe that $\xi\in B_3^{+}$ is a positive braid indivisible by $\Delta$, of which $\rho$ is a left-divisor and $\tau$ is a right-divisor. 

If $\rho'$ is a cut left-divisor of the cut closure $\overline{\rho}$, then $\left(\rho'\right)_{\left(\omega\left(\tau\right)-\omega\left(\rho'\right)\right)}$ is left-dual to a right-divisor of $\tau$. If $\tau'$ is a cut right-divisor of the cut closure $\overline{\tau}$, then $\left(\tau'\right)_{\left(\omega\left(\tau\right)-\omega\left(\tau'\right)\right)}$ is right-dual to a left-divisor of $\rho$.
\end{proposition}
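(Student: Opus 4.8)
The plan is to reduce the first assertion to the defining property of cut divisors (Definition~\ref{dcd}), using the explicit description of the cut closures supplied by Proposition~\ref{pcutclosureconjugate}, and then to obtain the second assertion by a pseudo-symmetric argument. Throughout I would use that $\omega\left(\rho\right) = \omega\left(\tau\right) = \omega\left(\overline{\rho}\right) = \omega\left(\overline{\tau}\right)$, which follows from the pseudo-symmetry of duals (Proposition~\ref{ppseudosymmetryduality}) together with the invariance of the $\omega$-number under passing to the cut closure (Proposition~\ref{pomegacutclosure}).

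For the first assertion, fix a cut left-divisor $\rho'$ of $\overline{\rho}$. The generic case is that $\rho'$ is in fact a cut left-divisor of $\rho$ itself. This is automatic when $\overline{\rho} = \rho$; and when $\overline{\rho} = \rho\sigma_i \neq \rho$ with $\rho' \neq \overline{\rho}$, two applications of Proposition~\ref{pcutdivisorsubdivisor} (with ambient braid $\xi$) show that $\rho'$ is a cut left-divisor of $\xi$, and since such a $\rho'$ is then a proper left-divisor of $\rho\sigma_i$ and hence a left-divisor of $\rho$, it is a cut left-divisor of $\rho$. Writing $\rho = \rho'\rho''$, Definition~\ref{dcd} then hands me the splitting $\tau = \tau''\tau'$ in which $\rho'_{\left(\omega\left(\rho''\right)\right)}$ is dual to the right-divisor $\tau'$ of $\tau$, and Proposition~\ref{pomegaalmostadditive} gives $\omega\left(\rho''\right) = \omega\left(\rho\right) - \omega\left(\rho'\right) = \omega\left(\tau\right) - \omega\left(\rho'\right)$, which is exactly the conjugation index in the claim.

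The one residual subcase—and the main obstacle—is $\rho' = \overline{\rho} = \rho\sigma_i$ (so that $\overline{\tau} = \tau$), since here Definition~\ref{dcd} does not apply verbatim. In this subcase $\omega\left(\rho'\right) = \omega\left(\tau\right)$, so the asserted conjugation is trivial and I must directly produce a right-divisor of $\tau$ that is dual to $\rho\sigma_i$. The crux is that $\tau$ begins with $\sigma_i$: because $\overline{\tau} = \tau$, the braid $\tau$ is a cut right-divisor of $\xi = \rho\sigma_i^{e}\tau$, so by Lemma~\ref{lcutdivisorcharacterization} its first generator equals the last generator of $\rho\sigma_i^{e}$, namely $\sigma_i$. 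Then $\sigma_i^{-1}\tau$ is a positive right-divisor of $\tau$ (indivisible by $\Delta$, being a right-divisor of such a braid), and the identity $\left(\rho\sigma_i\right)\left(\sigma_i^{-1}\tau\right) = \rho\tau = \Delta^{\omega\left(\tau\right)}$ exhibits the desired duality.

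The second assertion then follows by the mirror argument. Starting from a cut right-divisor $\tau'$ of $\overline{\tau}$, the same reduction shows that $\tau'$ is either a cut right-divisor of $\tau$ or equal to $\overline{\tau}$. In the former case Definition~\ref{dcd} supplies a cut left-divisor $\rho'$ of $\rho$ (with $\rho = \rho'\rho''$) such that $\rho'_{\left(\omega\left(\rho''\right)\right)}$ is dual to $\tau'$; conjugating this identity by a suitable power of $\Delta$ (legitimate since $\Delta^2$ is central) rewrites it as $\rho'\,\left(\tau'\right)_{\left(\omega\left(\rho''\right)\right)} = \Delta^{\omega\left(\tau'\right)}$, so that $\left(\tau'\right)_{\left(\omega\left(\rho''\right)\right)}$ is right-dual to the left-divisor $\rho'$ of $\rho$, while the latter case is handled by the mirror of the boundary computation above. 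The remaining bookkeeping is to check that the conjugation index matches the claim, i.e.\ that $\omega\left(\rho''\right) = \omega\left(\tau\right) - \omega\left(\tau'\right)$; this holds because $\omega\left(\tau'\right) = \omega\left(\rho'\right)$ (pseudo-symmetry of duals together with invariance of $\omega$ under conjugation by $\Delta$) and $\omega\left(\tau\right) = \omega\left(\rho'\right) + \omega\left(\rho''\right)$.
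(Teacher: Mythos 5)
Your proof is correct and follows essentially the same route as the paper's: the generic case is reduced to Definition~\ref{dcd} after promoting $\rho'$ to a cut left-divisor of $\rho$ via Proposition~\ref{pcutdivisorsubdivisor}, and the boundary case $\rho' = \overline{\rho}\neq\rho$ is treated separately. The only cosmetic difference is that in the boundary case you verify the duality directly from $\rho\tau = \Delta^{\omega\left(\tau\right)}$ after extracting the leading $\sigma_i$ of $\tau$ via Lemma~\ref{lcutdivisorcharacterization}, whereas the paper reaches the same conclusion through two applications of Lemma~\ref{lcharacterizationdualpositivebraids}; both are sound.
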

\begin{proof}
We prove the first statement since the proof of the second statement is analogous. If $\rho'$ is a left-divisor of $\rho$, then the statement follows from Proposition~\ref{ppseudosymmetryduality} and Proposition~\ref{ppdb}. Indeed, Proposition~\ref{pcutdivisorsubdivisor} implies that $\rho'$ is a cut left-divisor of $\rho$ in this case. 

If $\rho'$ is not a left-divisor of $\rho$, then $\rho' = \overline{\rho}$ and $\rho$ is a proper left-divisor of $\overline{\rho}$. If $i\neq i'\in \{1,2\}$, then Proposition~\ref{pcutclosureconjugate} implies that $\overline{\rho}$ ends with $\sigma_{i'}\sigma_i$ and $\rho$ ends with $\sigma_{i'}$. Proposition~\ref{piDB3} implies that the last $\sigma_{i'}$ in $\rho$ is non-isolated, since $\xi$ is indivisible by $\Delta$. Lemma~\ref{lcharacterizationdualpositivebraids} implies that $\tau$ begins with a product of distinct generators $\sigma_i\sigma_{i'}$, since $\rho$ is dual to $\tau$. A further application of Lemma~\ref{lcharacterizationdualpositivebraids} now implies that $\overline{\rho}$ is dual to the right divisor $\sigma_{i}^{-1}\tau\in B_3^{+}$ of $\tau$. Therefore, the statement is established.
\end{proof}

If $g\in B_3$ is the power of a positive half-twist with Garside normal form $g = \Delta^{k}\xi$, then there is a special relationship between $k$ and $\omega\left(\xi\right)$. 

\begin{proposition}
\label{pomeganumberpowerpositivehalftwist}
Let $g\in B_3$ be the $e$th power of a non-standard positive half-twist, where $e\geq 1$ is a positive integer. If $g = \Delta^{k}\xi$ is the Garside normal form of $g$, then $\omega\left(\xi\right) = -2k + e - 1$.
\end{proposition}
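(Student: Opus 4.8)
The plan is to start from the explicit Garside normal form supplied by Theorem~\ref{tGarsidenormalformpositivehalftwist}. Writing $g = \Delta^{-\omega\left(\tau\right)}\rho\sigma_i^{e}\tau$ with $\rho$ dual to $\tau$ and $i\in\{1,2\}$, and comparing with the Garside normal form $g = \Delta^{k}\xi$, we read off $k = -\omega\left(\tau\right)$ and $\xi = \rho\sigma_i^{e}\tau$. By the pseudo-symmetry of duals (Proposition~\ref{ppseudosymmetryduality}) we have $\omega\left(\rho\right) = \omega\left(\tau\right) = -k$, so the claimed identity $\omega\left(\xi\right) = -2k + e - 1$ is equivalent to $\omega\left(\xi\right) = 2\omega\left(\tau\right) + e - 1$. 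It therefore suffices to evaluate $\omega\left(\xi\right)$ in terms of $\omega\left(\rho\right)$, $\omega\left(\tau\right)$, and $e$.

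The strategy is to compute $\omega\left(\xi\right)$ by applying the almost-additivity of $\omega$ (Proposition~\ref{pomegaalmostadditive}) to the factorization $\xi = \rho\cdot\sigma_i^{e}\cdot\tau$. First I would record that $\omega\left(\sigma_i^{e}\right) = e$: the decomposition $\sigma_i^{e} = \prod_{j=1}^{e}\sigma_i$ into single generators meets the index-matching hypothesis of Lemma~\ref{lcharacterizationdualpositivebraids}, so the number of pieces, namely $e$, equals $\omega\left(\sigma_i^{e}\right)$. Next I would note that every factor and sub-product in question---$\rho$, $\sigma_i^{e}$, $\tau$, $\rho\sigma_i^{e}$, and $\xi$ itself---is indivisible by $\Delta$, since each is a contiguous factor of the $\Delta$-indivisible positive braid $\xi$; this legitimizes two successive applications of Proposition~\ref{pomegaalmostadditive}, yielding
\[
\omega\left(\rho\sigma_i^{e}\right) = \omega\left(\rho\right) + e - \epsilon_1, \qquad \omega\left(\xi\right) = \omega\left(\rho\sigma_i^{e}\right) + \omega\left(\tau\right) - \epsilon_2,
\]
for some $\epsilon_1,\epsilon_2\in\{0,1\}$. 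Combining these gives $\omega\left(\xi\right) = 2\omega\left(\tau\right) + e - \left(\epsilon_1 + \epsilon_2\right)$, so the entire proposition reduces to showing that $\epsilon_1 + \epsilon_2 = 1$.

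The key step---and the main point requiring care---is the bookkeeping of the two defects $\epsilon_1,\epsilon_2$ via Lemma~\ref{lcutdivisorcharacterization}. By that lemma, $\epsilon_1 = 0$ exactly when the index of the last generator of $\rho$ equals $i$ (the index of the first generator of $\sigma_i^{e}$), and $\epsilon_2 = 0$ exactly when the index of the last generator of $\rho\sigma_i^{e}$, which is $i$, equals the index of the first generator of $\tau$; that is, when the first generator of $\tau$ has index $i$. Since $g$ is a non-standard positive half-twist, Corollary~\ref{cGarsidepowerpositivehalftwist} forces $k\neq 0$, hence $\omega\left(\tau\right)\neq 0$ and both $\rho$ and $\tau$ are nontrivial, so these boundary generators are well-defined. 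Because $\rho$ is dual to $\tau$, the index of the last generator of $\rho$ differs from the index of the first generator of $\tau$ (Lemma~\ref{lcharacterizationdualpositivebraids}, as used in the proof of Theorem~\ref{tGarsidenormalformpositivehalftwist}), so these two indices are precisely $1$ and $2$ in some order. As $i\in\{1,2\}$, it agrees with exactly one of them, whence exactly one of $\epsilon_1,\epsilon_2$ vanishes and $\epsilon_1 + \epsilon_2 = 1$.

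Substituting back gives $\omega\left(\xi\right) = 2\omega\left(\tau\right) + e - 1 = -2k + e - 1$, as desired. I expect the only real obstacle to be the careful verification of the defect computation: confirming that all intermediate products remain indivisible by $\Delta$ so that Proposition~\ref{pomegaalmostadditive} applies, and tracking which junction index matches $i$. It is worth emphasizing that the non-standard hypothesis is used precisely to guarantee that $\rho$ and $\tau$ are nonempty (for a standard half-twist one has $\xi = \sigma_i^{e}$ with $\omega\left(\xi\right) = e$, the junction analysis degenerating), which is exactly the discrepancy the formula $-2k + e - 1$ reflects.
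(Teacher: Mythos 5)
Your proof is correct, but it routes through slightly different machinery than the paper does, and the comparison is instructive. The paper first invokes Proposition~\ref{pcutclosureconjugate} to rewrite $\xi = \overline{\rho}\,\sigma_i^{e-1}\,\overline{\tau}$, absorbing one copy of $\sigma_i$ into whichever of the two cut closures is strictly larger; since all three pieces are then cut divisors, Proposition~\ref{ppdb} gives \emph{exact} additivity $\omega\left(\xi\right) = \omega\left(\overline{\rho}\right) + \left(e-1\right) + \omega\left(\overline{\tau}\right)$, and Lemma~\ref{lcharacterizationdualpositivebraids} identifies $\omega\left(\overline{\rho}\right) = \omega\left(\rho\right)$ and $\omega\left(\overline{\tau}\right) = \omega\left(\tau\right)$. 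You instead work with the raw decomposition $\rho\cdot\sigma_i^{e}\cdot\tau$ and account for the loss explicitly via two applications of the almost-additivity statement (Proposition~\ref{pomegaalmostadditive}), reducing everything to showing $\epsilon_1 + \epsilon_2 = 1$; the junction analysis via Lemma~\ref{lcutdivisorcharacterization}, combined with the fact that duality forces the last generator of $\rho$ and the first generator of $\tau$ to have distinct indices, is exactly the right argument, and your observation that all intermediate products are $\Delta$-indivisible (as contiguous subwords of the $\Delta$-indivisible $\xi$) correctly legitimizes the two applications. The two proofs encode the same combinatorial fact --- precisely one copy of $\sigma_i^{e}$ is ``shared'' with a neighbouring dual factor --- but the paper's cut-closure formulation packages the defect into the rewriting step and then computes additively, whereas yours keeps the naive decomposition and pays for it with an explicit defect count; your version is arguably more self-contained (it does not need Proposition~\ref{pcutclosureconjugate}), while the paper's version sets up notation ($\overline{\rho}$, $\overline{\tau}$) that is reused heavily in later arguments. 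Your closing remark about why the non-standard hypothesis is needed, and how the standard case $\omega\left(\sigma_i^{e}\right) = e$ breaks the formula, matches the paper's own remark following the proposition.
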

\begin{proof}
Theorem~\ref{tGarsidenormalformpositivehalftwist} implies that $k=-\omega\left(\tau\right)$ and $\xi = \rho\sigma_i^{e}\tau$, where $\rho,\tau\in B_3^{+}$ are positive braids such that $\rho$ is dual to $\tau$, and $i\in \{1,2\}$. Proposition~\ref{pcutclosureconjugate} implies that $\xi = \overline{\rho}\sigma_i^{e-1}\overline{\tau}$, where $\overline{\rho}$ is a cut left-divisor of $\overline{\rho}\sigma_i^{e-1}$ and $\overline{\tau}$ is a cut right-divisor of $\xi$. Furthermore, Lemma~\ref{lcharacterizationdualpositivebraids} implies that $\omega\left(\overline{\rho}\right) = \omega\left(\rho\right)$, $\omega\left(\overline{\tau}\right) = \omega\left(\tau\right)$, and $\omega\left(\sigma_i^{e-1}\right) = e-1$. Of course, $\omega\left(\rho\right) = \omega\left(\tau\right)$ according to the pseudo-symmetry of duals (Proposition~\ref{ppseudosymmetryduality}). Finally, Proposition~\ref{ppdb} implies that \[\omega\left(\xi\right) = \omega\left(\overline{\rho}\right) + \omega\left(\sigma_i^{e-1}\right) + \omega\left(\overline{\tau}\right) = 2\omega\left(\tau\right) + e - 1.\] Therefore, the statement is established.
\end{proof}

We remark that the statement in Proposition~\ref{pomeganumberpowerpositivehalftwist} is not true if $g$ is the $e$th power of a standard positive half-twist. Indeed, $\omega\left(\sigma_i^{e}\right) = e$ if $i\in \{1,2\}$ and $e\geq 1$ is a positive integer.

Finally, we determine the Garside normal form and, in particular, the Garside power, of the inverse of a non-standard positive half-twist. (If $i\in \{1,2\}$, then the Garside normal form of the inverse $\sigma_i^{-1}$ of the standard positive half-twist $\sigma_i$ is $\Delta^{-1}\sigma_{i}\sigma_{i'}$, where $i\neq i'\in \{1,2\}$.) We will not use this statement in the paper, and the reader may safely skip it. However, we include it for future reference since negative powers of positive half-twists appear naturally in the theory of braid monodromy of Hurwitz curves. For example, the braid monodromy with respect to a negative node of a Hurwitz curve is the square of the inverse of a positive half-twist.

The method of the proofs of the classification results of factorizations of $\Delta^2$ into products of positive powers of positive half-twists in Section~\ref{smain}, should generalize to the case where the factors are potentially negative powers of positive half-twists. The generalization would be based on the following statement.

\begin{theorem}
\label{tGarsidenormalforminversepositivehalftwist}
Let $g$ be a non-standard positive half-twist and write \[g = \Delta^{-\omega\left(\tau\right)}\rho\sigma_i\tau\] for the Garside normal form of $g$, where $\rho,\tau\in B_3^{+}$ are positive braids such that $\rho$ is dual to $\tau$, and $i\in \{1,2\}$ (Theorem~\ref{tGarsidenormalformpositivehalftwist}). Let $\rho'\in B_3^{+}$ be the cut left-divisor of $\rho$ and let $\tau'\in B_3^{+}$ be the cut right-divisor of $\tau$ such that $\omega\left(\rho'\right) = \omega\left(\rho\right) - 1 = \omega\left(\tau\right) - 1 = \omega\left(\tau'\right)$. The Garside normal form of the negative half-twist $g^{-1}$ is \[g^{-1} = \Delta^{-\omega\left(\tau\right)}\rho'\sigma_{i'}^2\tau',\] where $\rho',\tau'\in B_3^{+}$ are positive braids such that $\rho'$ is dual to $\left(\tau'\right)_{\left(1\right)}$, and $i\neq i'\in \{1,2\}$. 
\end{theorem}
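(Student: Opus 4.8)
The plan is to reduce the computation of $g^{-1}$ to a purely positive-braid manipulation and then identify the resulting positive braid using the cut-divisor machinery of Subsection~\ref{ssdualitypositivebraid}.

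First I would record the elementary identity for $g^{-1}$. Since $\rho$ is dual to $\tau$ we have $\rho\tau = \Delta^{\omega(\tau)}$, hence $\tau^{-1} = \Delta^{-\omega(\tau)}\rho$ and $\rho^{-1} = \tau\Delta^{-\omega(\tau)}$. Substituting these into $g^{-1} = \tau^{-1}\sigma_i^{-1}\rho^{-1}\Delta^{\omega(\tau)}$, the trailing powers of $\Delta$ cancel and I obtain $g^{-1} = \Delta^{-\omega(\tau)}\rho\sigma_i^{-1}\tau$. The key observation is that $\rho\sigma_i^{-1}\tau$ is already a positive braid: by Lemma~\ref{lcharacterizationdualpositivebraids} the last generator of $\rho$ and the first generator of $\tau$ have different indices, so exactly one of them is $\sigma_i$, and $\sigma_i^{-1}$ cancels that generator — either $\rho = \rho^*\sigma_i$, giving $\rho^*\tau$, or $\tau = \sigma_i\tau^*$, giving $\rho\tau^*$.

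Next I would set up the cut-divisor decomposition. Write $\rho = \rho'\rho''$ and $\tau = \tau''\tau'$, where $\rho'$ is the cut left-divisor of $\rho$ and $\tau'$ the cut right-divisor of $\tau$ with $\omega(\rho')=\omega(\tau')=\omega(\tau)-1$; then $\omega(\rho'')=\omega(\tau'')=1$ by Proposition~\ref{ppdb}, so each of $\rho''$, $\tau''$ is a single generator or a product of two distinct generators. By the cut-divisor duality of Definition~\ref{dcd} together with Proposition~\ref{ppdb}, $\rho''$ is dual to $\tau''$ and $\rho'_{(1)}$ is dual to $\tau'$. The latter relation already yields the duality claimed in the conclusion: conjugating $\rho'_{(1)}\tau' = \Delta^{\omega(\tau')}$ by $\Delta$ gives $\rho'(\tau')_{(1)} = \Delta^{\omega(\tau')}$, that is, $\rho'$ is dual to $(\tau')_{(1)}$.

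The heart of the argument is to compute $\rho''\sigma_i^{-1}\tau''$, after which $\rho\sigma_i^{-1}\tau = \rho'(\rho''\sigma_i^{-1}\tau'')\tau'$. Since $\rho''$ ends with the last generator of $\rho$ and $\tau''$ begins with the first generator of $\tau$, I split into the two cases above. In each case the equation $\rho''\tau'' = \Delta$ constrains $(\rho'',\tau'')$ to one of two possibilities, and the indivisibility of $\xi=\rho\sigma_i\tau$ by $\Delta$ — part of its being a Garside normal form, established in Theorem~\ref{tGarsidenormalformpositivehalftwist}, which forbids $\tau$ from beginning with $\sigma_{i'}\sigma_i$ and $\rho$ from ending with $\sigma_i\sigma_{i'}$ — rules out one of them. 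What survives is $(\rho'',\tau'') = (\sigma_{i'}\sigma_i,\sigma_{i'})$ in the first case and $(\sigma_{i'},\sigma_i\sigma_{i'})$ in the second, and in both $\rho''\sigma_i^{-1}\tau'' = \sigma_{i'}^2$. Hence $\rho\sigma_i^{-1}\tau = \rho'\sigma_{i'}^2\tau'$. I would conclude by checking this is indivisible by $\Delta$: the cut-divisor property forces $\rho'$ to end in $\sigma_{i'}$ and $\tau'$ to begin in $\sigma_{i'}$, so $\rho'\sigma_{i'}^2$ and $\sigma_{i'}^2\tau'$ are indivisible by Proposition~\ref{pdualbraidproduct}, and Lemma~\ref{lproductindivisibleDelta} then gives indivisibility of the full expression, so $\Delta^{-\omega(\tau)}\rho'\sigma_{i'}^2\tau'$ is indeed the Garside normal form.

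The main obstacle I expect is the bookkeeping in the case analysis: correctly pinning down $(\rho'',\tau'')$ from the duality relation $\rho''\tau'' = \Delta$ together with the indivisibility constraint, and keeping track of the conjugation shift that converts ``$\rho'_{(1)}$ dual to $\tau'$'' into ``$\rho'$ dual to $(\tau')_{(1)}$''. Everything else is a direct application of the duality theory already developed in Subsection~\ref{ssdualitypositivebraid}.
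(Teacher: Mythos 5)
Your proof is correct, but it takes a genuinely different route from the paper's. The paper never writes down your key intermediate identity $g^{-1} = \Delta^{-\omega(\tau)}\rho\sigma_i^{-1}\tau$; instead it stays entirely inside the duality calculus: it writes $\xi = \rho\sigma_i\tau = \overline{\rho}\,\overline{\tau}$ via Proposition~\ref{pcutclosureconjugate}, shows via Lemma~\ref{lcharacterizationdualpositivebraids} that $\rho'\sigma_{i'}$ is dual to $\overline{\tau}$ and that $\overline{\rho}$ is dual to $\sigma_{i'}\tau'$, and then assembles the left-dual of $\xi_{(\omega(\tau))}$ as $\rho'\sigma_{i'}^2\tau'$ using pseudo-symmetry (Proposition~\ref{ppseudosymmetryduality}) and Proposition~\ref{ppdb}, after checking indivisibility with Proposition~\ref{pdualbraidproduct}; the Garside normal form of $g^{-1}$ then drops out of Definition~\ref{ddual}. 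You replace this ``dual of a product from duals of the pieces'' assembly with the elementary cancellation $\rho\sigma_i^{-1}\tau = \rho'\left(\rho''\sigma_i^{-1}\tau''\right)\tau'$ and a two-case computation showing $\rho''\sigma_i^{-1}\tau'' = \sigma_{i'}^2$, with the indivisibility of $\xi$ pinning $(\rho'',\tau'')$ down to $(\sigma_{i'}\sigma_i,\sigma_{i'})$ or $(\sigma_{i'},\sigma_i\sigma_{i'})$. Your route is more transparent: the near-cancellation makes visible why $g^{-1}$ has the same Garside power as $g$ and why the answer so closely resembles the normal form of a square of a positive half-twist (the point the paper remarks on after the theorem). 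The paper's route has the advantage of exhibiting $\rho'\sigma_{i'}^2\tau'$ directly as the dual of $\xi_{(\omega(\tau))}$, which is exactly the packaging of inverses used in Setup~\ref{setfactorization}. One small point you should make explicit: the decomposition $\tau = \tau''\tau'$ you pair with $\rho = \rho'\rho''$ must be the one induced by duality from $\rho''$, and you need to observe that it coincides with the cut right-divisor decomposition named in the statement; this follows from additivity of $\omega$ on cut divisors (Proposition~\ref{ppdb}) together with uniqueness of the cut right-divisor with a prescribed $\omega$-value (Proposition~\ref{pomeganumbercutdivisor}), and without it the relation $\rho''\tau'' = \Delta$ is not immediate.
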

\begin{proof}
Firstly, Lemma~\ref{lcharacterizationdualpositivebraids} implies that $\rho'$ is dual to $\left(\tau'\right)_{\left(1\right)}$. Proposition~\ref{pcutclosureconjugate} and Lemma~\ref{lcharacterizationdualpositivebraids} imply that $\rho'\sigma_{i'}$ is dual to $\overline{\tau}$ and $\overline{\rho}$ is dual to $\sigma_{i'}\tau'$. The pseudo-symmetry of duals (Proposition~\ref{ppseudosymmetryduality}) implies that $\sigma_{i'}\tau'$ is dual to $\left(\overline{\rho}\right)_{\left(\omega\left(\tau\right)\right)}$. Proposition~\ref{pdualbraidproduct} implies that $\rho'\sigma_{i'}\sigma_{i'}\tau'$ is indivisible by $\Delta$ and Proposition~\ref{pcutclosureconjugate} implies that $\rho\sigma_{i}\tau = \overline{\rho}\overline{\tau}$. Finally, Proposition~\ref{ppdb} implies that $\rho'\sigma_{i'}^2\tau'$ is dual to $\left(\overline{\rho}\overline{\tau}\right)_{\left(\omega\left(\tau\right)\right)}$, and $\omega\left(\overline{\rho}\overline{\tau}\right) = 2\omega\left(\tau\right)$. Therefore, the statement is established.
\end{proof}

We observe that there is a deceptive similarity between the Garside normal form of a non-standard negative half-twist (the inverse of a non-standard positive half-twist) in the statement of Theorem~\ref{tGarsidenormalforminversepositivehalftwist}, and the Garside normal form of the square of a positive half-twist in Theorem~\ref{tGarsidenormalformpositivehalftwist}. However, in the statement of Theorem~\ref{tGarsidenormalforminversepositivehalftwist}, $g^{-1}$ is not a square of a positive half-twist, since $\omega\left(\tau\right) = \omega\left(\tau'\right) + 1 > \omega\left(\tau'\right)$ and $\rho'$ is not dual to $\tau'$. On the other hand, if $\omega\left(\tau'\right)$ appeared in the exponent of $\Delta$ instead of $\omega\left(\tau\right)$, and if $\rho'$ were dual to $\tau'$, then the expression would be the Garside normal form of the square of a positive half-twist.

We have earlier mentioned that we can view the Garside power of a braid as a measure of the complexity of the braid. The following statement is very useful because inverses of braids naturally appear in conjugation.

\begin{proposition}
\label{pGarsidepowerinversepowerpositivehalftwist}
Let $g$ be the $e$th power of a non-standard positive half-twist for a positive integer $e\geq 1$. If the Garside power of $g$ is $k$, then the Garside power of $g^{-1}$ is $k-e+1$. In particular, if $g$ is a non-standard positive half-twist, then the Garside power of $g^{-1}$ is equal to the Garside power of $g$.
\end{proposition}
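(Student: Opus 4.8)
The plan is to compute the Garside normal form of $g^{-1}$ directly from that of $g$ and to read off the Garside power, invoking Proposition~\ref{pomeganumberpowerpositivehalftwist} only at the very end to evaluate the $\omega$-number of the positive part. The substantive content has already been absorbed into that proposition, so the present statement should follow as a short corollary.

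First I would apply Theorem~\ref{tGarsidenormalformpositivehalftwist} to write the Garside normal form $g = \Delta^{k}\xi$, where $\xi\in B_3^{+}$ is a positive braid indivisible by $\Delta$ and $k$ is the Garside power of $g$. Inverting gives $g^{-1} = \xi^{-1}\Delta^{-k}$. By the very definition of duality (Definition~\ref{ddual}), if $\rho_0\in B_3^{+}$ denotes the left-dual of $\xi$ (which exists and is a positive braid indivisible by $\Delta$, being the positive part of the Garside normal form of $\xi^{-1}$), then $\xi^{-1} = \Delta^{-\omega\left(\xi\right)}\rho_0$ is the Garside normal form of $\xi^{-1}$.

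Next I would move the factor $\Delta^{-k}$ to the left using the conjugation notation of Definition~\ref{dconjugationbyDelta}: since $\rho_0\Delta^{-k} = \Delta^{-k}\left(\rho_0\right)_{\left(-k\right)}$, we obtain $g^{-1} = \Delta^{-\omega\left(\xi\right)-k}\left(\rho_0\right)_{\left(-k\right)}$. Because conjugation by $\Delta$ carries $\sigma_1,\sigma_2$ to $\sigma_2,\sigma_1$ and fixes $\Delta$, the braid $\left(\rho_0\right)_{\left(-k\right)}$ is again a positive braid indivisible by $\Delta$; hence this expression is the Garside normal form of $g^{-1}$, and the Garside power of $g^{-1}$ equals $-\omega\left(\xi\right)-k$.

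Finally I would substitute the value $\omega\left(\xi\right) = -2k+e-1$ furnished by Proposition~\ref{pomeganumberpowerpositivehalftwist}, which applies precisely because $g$ is the $e$th power of a \emph{non-standard} positive half-twist; this yields Garside power $-\left(-2k+e-1\right)-k = k-e+1$, as claimed. Setting $e=1$ gives the final assertion that the Garside power of $g^{-1}$ equals that of $g$ for a non-standard positive half-twist. The only point requiring care is confirming that the displayed expression really is in Garside normal form, i.e.\ that $\left(\rho_0\right)_{\left(-k\right)}$ is indivisible by $\Delta$; this is immediate from the invariance of $B_3^{+}$ and of $\Delta$-indivisibility under conjugation by $\Delta$, so I do not anticipate any genuine obstacle beyond bookkeeping of the exponent of $\Delta$.
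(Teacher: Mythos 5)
Your proposal is correct and follows essentially the same route as the paper's proof: write $g=\Delta^{k}\xi$, invert, use the left-dual of $\xi$ to get the Garside normal form $g^{-1}=\Delta^{-k-\omega\left(\xi\right)}\chi_{\left(k\right)}$, and substitute $\omega\left(\xi\right)=-2k+e-1$ from Proposition~\ref{pomeganumberpowerpositivehalftwist}. The only difference is the order in which you invoke that proposition, which is immaterial.
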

\begin{proof}
If $g = \Delta^{k}\xi$ is the Garside normal form of $g$, then Proposition~\ref{pomeganumberpowerpositivehalftwist} implies that $\omega\left(\xi\right) = -2k + e - 1$. If $\chi$ is dual to $\xi$, in which case the Garside normal form of $\xi^{-1}$ is $\xi^{-1} = \Delta^{-\omega\left(\xi\right)}\chi$, then the Garside normal form of $g^{-1}$ is $g^{-1} = \Delta^{-k-\omega\left(\xi\right)}\chi_{\left(k\right)}$. We conclude that the Garside power of $g^{-1}$ is $-k-\omega\left(\xi\right) = k - e + 1$. Therefore, the statement is established.
\end{proof}

The following setup will be extremely prominent in Section~\ref{smain}. Indeed, we will often begin statements in Section~\ref{smain} with ``We adopt Setup~\ref{setfactorization}..." in order to avoid repetition in the statements. The setup fixes notation whenever we consider a factorization into powers of positive half-twists in the braid group $B_3$.

\begin{setup}
\label{setfactorization}
Let $\{g_1,g_2,\dots,g_k\}$ be a set of powers of positive half-twists in $B_3$, where $g_j$ is the $e_j$th power of a positive half-twist and $e_j\geq 1$ is a positive integer for each $1\leq j\leq k$. We denote ${\cal F} = \left(g_1,g_2,\dots,g_k\right)$ for the corresponding factorization of $g_1g_2\cdots g_k$. Theorem~\ref{tGarsidenormalformpositivehalftwist} implies that we can write the Garside normal form of $g_j$ as \[g_j = \Delta^{-\omega\left(\tau_j\right)}\left(\rho_j\sigma_{i_j}^{e_j}\tau_j\right)_{\left(\omega_j\right)},\] where $\rho_j,\tau_j\in B_3^{+}$ are positive braids such that $\rho_j$ is dual to $\tau_j$, $i_j\in \{1,2\}$, and $\omega_j = \sum_{j'=1}^{j} \omega\left(\tau_{j'}\right)$. Let us write $\xi_j = \rho_j\sigma_{i_j}^{e_j}\tau_j$. We observe that $\xi_j\in B_3^{+}$ is a positive braid indivisible by $\Delta$ since $g_j = \Delta^{-\omega\left(\tau_j\right)}\left(\xi_j\right)_{\left(\omega_j\right)}$ is the Garside normal form of $g_j$. Furthermore, $g_1g_2\cdots g_k = \xi_1\xi_2\cdots \xi_k \Delta^{-\omega_k}$. 

Let $g_j^{-1} = \Delta^{-\omega\left(\tau_j\right)-e_j+1}\left(\chi_j\right)_{\left(\omega_j+\omega\left(\tau_j\right)+e_j-1\right)}$ be the Garside normal form of $g_j^{-1}$, where $\chi_j\in B_3^{+}$ is the positive braid right-dual to $\xi_j$. (The explicit value of the Garside power of $g_j^{-1}$ is established in Proposition~\ref{pGarsidepowerinversepowerpositivehalftwist}).
\end{setup}

The motivation underlying Setup~\ref{setfactorization} is that statements concerning the factorization $\left(g_1,g_2,\dots,g_k\right)$ of the product $g_1g_2\cdots g_k$ in the braid group $B_3$ can be converted to statements concerning the factorization $\left(\xi_1,\xi_2,\cdots,\xi_k\right)$ of the product $\xi_1\xi_2\cdots \xi_k$ in the monoid $B_3^{+}$ of positive braids in $B_3$. Indeed, Setup~\ref{setfactorization} achieves this using the Garside normal form. We will state and prove the results in Section~\ref{smain} concerning factorizations of $\Delta^2$ into powers of positive half-twists by converting them into statements concerning factorizations of powers of $\Delta$ in the monoid $B_3^{+}$. Finally, we will characterize such factorizations in the monoid $B_3^{+}$ by applying our theory of duality of positive braids in Subsection~\ref{ssdualitypositivebraid}. Indeed, the results in Subsection~\ref{ssdualitypositivebraid} are well-equipped to study equations of the form $\Delta^{l} = \xi_1\xi_2\cdots \xi_k$ for positive braids $\xi_1,\xi_2,\dots,\xi_k\in B_3^{+}$ indivisible by $\Delta$ and some positive integer $l\geq 1$. If $k=2$, then this is simply the equation of duality.

We conclude this subsection with a technical result that will be important in the analysis of Hurwitz moves applied to factorizations of $\Delta^2$ into powers of positive half-twists. The result is based on our theory of duality of positive braids, specifically Proposition~\ref{ptechnicaldualityconjugate} in Subsection~\ref{ssdualitypositivebraid}.

\begin{lemma}
\label{lHurwitzmoveGarsidenormalform}
We adopt Setup~\ref{setfactorization} with $k=2$. Let $\xi_1''\in B_3^{+}$ be the maximal right-divisor of $\xi_1$ left-dual to a left-divisor of $\xi_2$. Let us assume that the relevant left-divisor $\rho_2''$ of $\xi_2$ is a left-divisor of the the cut closure $\overline{\rho_2}$. Let $\chi_1''$ be the cut left-divisor of $\chi_1$ that is right-dual to $\xi_1''$ and let $\tau_2''$ be the right-divisor of $\tau_2$ that is right-dual to $\left(\overline{\rho_2''}\right)_{\left(\omega\left(\rho_2\right)-\omega\left(\rho_2''\right)\right)}$. Let us write $\xi_1 = \xi_1'\xi_1''$, $\rho_2 = \rho_2''\rho_2'$, $\chi_1=\chi_1''\chi_1'$, and $\tau_2 = \tau_2'\tau_2''$ for positive braids $\xi_1',\rho_2',\chi_1',\tau_2'\in B_3^{+}$.
\begin{description}
\item[(i)] Let us assume that $\rho_2''$ is a cut left-divisor of $\rho_2$. In this case, $\tau_2'$ ends with a product of two distinct generators $\alpha\in B_3^{+}$. Let $\beta\in B_3^{+}$ be the single generator at the beginning of $\chi_1'$. The Garside normal form of $g_1g_2g_1^{-1}$ is \[\xi_1'\left(\rho_2'\sigma_{i_2}^{e_2}\left(\tau_2'\alpha^{-1}\right)\right)_{\left(\omega\left(\xi_1''\right)\right)}\left(\beta^{-1}\chi_1'\right)_{\left(\omega\left(\tau_2\right)+1\right)}\Delta^{-\omega\left(\tau_2\right) - e_1+1 +2\left(\omega\left(\xi_1''\right)-\omega\left(\tau_1\right)\right)+1}.\]
\item[(ii)] If $\rho_2'' = \overline{\rho_2}\neq \rho_2$, then the Garside normal form of $g_1g_2g_1^{-1}$ is \[\xi_1'\left(\sigma_{i_2}^{e_2}\right)_{\left(\omega\left(\xi_1''\right)\right)}\left(\chi_1'\right)_{\left(\omega\left(\tau_2\right)\right)}\Delta^{-\omega\left(\tau_2\right) - e_1+1 +2\left(\omega\left(\xi_1''\right)-\omega\left(\tau_1\right)\right)}.\] If $\rho_2'' = \rho_2$, then the Garside normal form of $g_1g_2g_1^{-1}$ is either \[\xi_1'\left(\sigma_{i_2}^{e_2}\right)_{\left(\omega\left(\xi_1''\right)\right)}\left(\chi_1'\right)_{\left(\omega\left(\tau_2\right)\right)}\Delta^{-\omega\left(\tau_2\right)-e_1+1+2\left(\omega\left(\xi_1''\right)-\omega\left(\tau_1\right)\right)},\] or the Garside power of $g_1g_2g_1^{-1}$ is greater than $-\omega\left(\tau_2\right)-e_1+1+2\left(\omega\left(\xi_1''\right)-\omega\left(\tau_1\right)\right)$. 
\item[(iii)] Let us assume that $\rho_2''$ is a left-divisor of $\rho_2$ but that $\rho_2''$ is not a cut left-divisor of $\rho_2$. Let $\alpha$ be the first generator in $\tau_2''$ and let $\beta$ be the last generator in $\chi_2''$. The Garside normal form of $g_1g_2g_1^{-1}$ is \[\xi_1'\left(\rho_2'\sigma_{i_2}^{e_2}\left(\tau_2'\alpha\right)\right)_{\left(\omega\left(\xi_1''\right)\right)}\left(\beta\chi_1'\right)_{\left(\omega\left(\tau_2\right)+1\right)}\Delta^{-\omega\left(\tau_2\right) - e_1+1 +2\left(\omega\left(\xi_1''\right)-\omega\left(\tau_1\right)\right)-1}.\] 
\end{description}
\end{lemma}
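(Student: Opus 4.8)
The plan is to collect all powers of $\Delta$ to the right, reduce to the Garside normal form of a single positive braid, and then feed the duality bookkeeping of Proposition~\ref{ptechnicaldualityconjugate} into Proposition~\ref{pGarsidenormalformproduct}. First I would use Setup~\ref{setfactorization}: since $g_1 = \xi_1\Delta^{-\omega(\tau_1)}$ and $g_1g_2 = \xi_1\xi_2\Delta^{-\omega(\tau_1)-\omega(\tau_2)}$, while $g_1^{-1} = \Delta^{\omega(\tau_1)}\chi_1\Delta^{-\omega(\xi_1)}$ with $\omega(\xi_1) = 2\omega(\tau_1)+e_1-1$ by Proposition~\ref{pomeganumberpowerpositivehalftwist} (here $\chi_1$ is the right-dual of $\xi_1$, so $\xi_1\chi_1 = \Delta^{\omega(\xi_1)}$), a direct manipulation using the centrality of $\Delta^2$ gives $g_1g_2g_1^{-1} = P\,\Delta^{N}$, where $P = \xi_1\xi_2(\chi_1)_{(\omega(\tau_2))}$ and $N = -\omega(\tau_2)-2\omega(\tau_1)-e_1+1$. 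Everything then reduces to determining the maximal power of $\Delta$ dividing the positive braid $P$ together with its indivisible part.

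The braid $P$ is a product of three positive braids with two seams, one between $\xi_1$ and $\xi_2$ and one between $\xi_2$ and $(\chi_1)_{(\omega(\tau_2))}$; these are coupled precisely because $\chi_1$ is the right-dual of $\xi_1$ and $\rho_2$ is the left-dual of $\tau_2$, and resolving this coupling is exactly what Proposition~\ref{ptechnicaldualityconjugate} accomplishes. The first seam is governed by the hypothesis that $\xi_1''$ is the maximal right-divisor of $\xi_1$ left-dual to the left-divisor $\rho_2''$ of $\xi_2$, so $\xi_1''\rho_2'' = \Delta^{\omega(\xi_1'')}$ and Proposition~\ref{pGarsidenormalformproduct} yields $\xi_1\xi_2 = \Delta^{\omega(\xi_1'')}\mu$ with $\mu = (\xi_1')_{(\omega(\xi_1''))}(\rho_2'')^{-1}\xi_2$ indivisible. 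I would then apply Proposition~\ref{ptechnicaldualityconjugate} with the substitution $\rho = (\rho_2)_{(\omega(\rho_2))}$, $\tau = (\tau_2)_{(\omega(\tau_2))}$, $\xi = \xi_1$, $\chi = \chi_1$. The identity $(\overline{\rho})_{(\omega(\rho))} = \overline{\rho_2}$ shows this proposition's $\xi''$ coincides with $\xi_1''$, and its conclusion identifies, in each of the three cases, the maximal right-divisor of $\tau_2$ dual to a left-divisor of $(\chi_1)_{(\omega(\tau_2))}$ — exactly the second-seam data needed to apply Proposition~\ref{pGarsidenormalformproduct} a second time to $\mu\,(\chi_1)_{(\omega(\tau_2))}$.

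With both seams resolved, the three cases of the lemma match the three cases of Proposition~\ref{ptechnicaldualityconjugate} verbatim. When $\rho_2''$ is a proper cut left-divisor of $\rho_2$ (case (i)) the second seam absorbs $\Delta^{\omega(\xi_1'')+1}$ via the permutation braid $\alpha$ and the generator $\beta$; when $\overline{\rho_2''} = \overline{\rho_2}$ (case (ii)) it absorbs $\Delta^{\omega(\xi_1'')}$, and uniqueness of duals (Proposition~\ref{puniquenessofduals}) forces $\tau_2 = (\xi_1'')_{(\omega(\xi_1''))}$ in the subcase $\rho_2''=\rho_2$, so that both $\rho_2$ and $\tau_2$ are swallowed and only $\xi_1'$, $(\sigma_{i_2}^{e_2})_{(\omega(\xi_1''))}$, and $(\chi_1')_{(\omega(\tau_2))}$ survive; and when $\rho_2''$ is a non-cut left-divisor (case (iii)) it absorbs $\Delta^{\omega(\xi_1'')-1}$. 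Adding the first-seam contribution $\Delta^{\omega(\xi_1'')}$ and restoring $\Delta^{N}$ produces the stated Garside powers $N + 2\omega(\xi_1'')+1$, $N + 2\omega(\xi_1'')$, and $N + 2\omega(\xi_1'')-1$, after rewriting $N$ through $\omega(\xi_1) = 2\omega(\tau_1)+e_1-1$. In each case I would certify that the surviving middle braid is indivisible by $\Delta$ using Proposition~\ref{pmaximaldivisordualindivisibleDelta} (the maximality built into Proposition~\ref{ptechnicaldualityconjugate}) together with Lemma~\ref{lproductindivisibleDelta} to exclude a copy of $\Delta$ overlapping all of $\xi_1'$, the central $\sigma_{i_2}$-block, and $\chi_1'$.

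The hard part will be the $\Delta$-conjugation bookkeeping: Proposition~\ref{ptechnicaldualityconjugate} is stated for un-conjugated seams, whereas the relevant seam in $P$ is $\tau_2 \mid (\chi_1)_{(\omega(\tau_2))}$, so one must consistently pass between $T(C)_{(\omega(\tau_2))} = \Delta^m$ and $(T)_{(\omega(\tau_2))}C = \Delta^m$ and track the parity of every subscript $(l)$ across the two applications of Proposition~\ref{pGarsidenormalformproduct}. A secondary point is that the formula $\omega(\xi_1) = 2\omega(\tau_1)+e_1-1$ requires $g_1$ non-standard; the standard case $\xi_1 = \sigma_{i_1}^{e_1}$ (with $\rho_1,\tau_1$ trivial) is degenerate and is handled separately, but is simpler.
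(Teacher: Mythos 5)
Your proposal is correct and follows essentially the same route as the paper's own proof: both express $g_1g_2g_1^{-1}$ as $\xi_1\xi_2\left(\chi_1\right)_{\left(\omega\left(\tau_2\right)\right)}\Delta^{-\omega\left(\tau_2\right)-e_1+1-2\omega\left(\tau_1\right)}$, resolve the seam at $\xi_1\mid\xi_2$ with Proposition~\ref{pGarsidenormalformproduct}, resolve the coupled seam at $\tau_2\mid\left(\chi_1\right)_{\left(\omega\left(\tau_2\right)\right)}$ via Proposition~\ref{ptechnicaldualityconjugate}, and certify indivisibility of the assembled middle word with the multi-factor indivisibility lemmas. Your case-by-case accounting of the absorbed powers $\Delta^{2\omega\left(\xi_1''\right)+1}$, $\Delta^{2\omega\left(\xi_1''\right)}$, $\Delta^{2\omega\left(\xi_1''\right)-1}$ reproduces the stated exponents exactly, and your flag about the degenerate standard-half-twist case is a reasonable caveat rather than a gap.
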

\begin{proof}
In the notation of Setup~\ref{setfactorization}, the Garside normal form of $g_1g_2g_1^{-1}$ is 
\begin{align*}
g_1g_2g_1^{-1} &= \left(\Delta^{-\omega\left(\tau_1\right)}\left(\xi_1\right)_{\left(\omega_1\right)}\right)\left(\Delta^{-\omega\left(\tau_2\right)}\left(\xi_2\right)_{\left(\omega_2\right)}\right)\left(\left(\chi_1\right)_{\left(\omega_1\right)}\Delta^{-\omega\left(\tau_1\right)-e_1+1}\right) \\ &= \xi_1\xi_2\left(\chi_1\right)_{\left(\omega\left(\tau_2\right)\right)}\Delta^{-\omega\left(\tau_2\right) - e_1+1 -2\omega\left(\tau_1\right)}.
\end{align*} Of course, Proposition~\ref{pcutclosureconjugate} implies that $\xi_2 = \overline{\rho_2}\sigma_{i_2}^{e_2-1}\overline{\tau_2}$. We can use Proposition~\ref{pGarsidenormalformproduct} in order to compute the Garside normal form of $\xi_1\overline{\rho_2}$. We can use Proposition~\ref{ptechnicaldualityconjugate} and Proposition~\ref{pGarsidenormalformproduct} in order to compute the Garside normal form of $\overline{\tau_2}\left(\chi_1\right)_{\left(\omega\left(\tau_2\right)\right)}$. Finally, we can use Lemma~\ref{ltripleproductindivisibleDelta} in order to compute the Garside normal form of the product $\xi_1\overline{\rho_2}\sigma_{i_2}^{e_2-1}\overline{\tau_2}\left(\chi_1\right)_{\left(\omega\left(\tau_2\right)\right)}$, and the statement follows from this computation. 
\end{proof}

In Section~\ref{smain}, we will define the complexity of a factorization to be the sum of the absolute values of the Garside powers of the individual factors. We will use the following statement frequently in order to determine the change in complexity of a factorization into powers of positive half-twists, after the application of a Hurwitz move.

\begin{lemma}
\label{lHurwitzmoveGarsidepower}
We adopt the setting of Lemma~\ref{lHurwitzmoveGarsidenormalform}. The Garside power of $g_1g_2g_1^{-1}$ is $-\omega\left(\tau_2\right)-e_1+1+2\left(\omega\left(\xi_1''\right)-\omega\left(\tau_1\right)\right) -\epsilon$, where $\epsilon = -1$ in case \textbf{(i)}, $\epsilon\leq 0$ in case \textbf{(ii)}, and $\epsilon = 1$ in case \textbf{(iii)}, in Lemma~\ref{lHurwitzmoveGarsidenormalform}. In particular, if $k$ is the Garside power of $g_2$, then the Garside power of $g_1g_2g_1^{-1}$ is $k-e_1+1+2\left(\omega\left(\xi_1''\right)-\omega\left(\tau_1\right)\right)-\epsilon$.
\end{lemma}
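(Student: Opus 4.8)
The plan is to read the Garside power directly off the Garside normal form of $g_1g_2g_1^{-1}$ supplied by Lemma~\ref{lHurwitzmoveGarsidenormalform}, since that lemma already performs all of the computational work. The one preliminary point I would settle first is a reorientation remark: the normal forms in Lemma~\ref{lHurwitzmoveGarsidenormalform} are written in the right-handed form $P\Delta^{m}$ with $P\in B_3^{+}$ a positive braid indivisible by $\Delta$, whereas the Garside power is defined via the left-handed form $\Delta^{m}\sigma$. By Definition~\ref{dconjugationbyDelta} we have $P\Delta^{m}=\Delta^{m}P_{(m)}$, and $P_{(m)}$ is again a positive braid indivisible by $\Delta$, since conjugation by a power of $\Delta$ permutes the Artin generators and hence preserves both positivity and indivisibility by $\Delta$. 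Therefore $\Delta^{m}P_{(m)}$ is the genuine Garside normal form, and the Garside power of $P\Delta^{m}$ equals the exponent $m$ appearing in Lemma~\ref{lHurwitzmoveGarsidenormalform}.

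With this observation, cases \textbf{(i)} and \textbf{(iii)} are immediate. In case \textbf{(i)}, Lemma~\ref{lHurwitzmoveGarsidenormalform} exhibits the normal form with $\Delta$-exponent $-\omega(\tau_2)-e_1+1+2(\omega(\xi_1'')-\omega(\tau_1))+1$, so the Garside power is this value, matching the claim with $\epsilon=-1$. In case \textbf{(iii)}, the exhibited $\Delta$-exponent is $-\omega(\tau_2)-e_1+1+2(\omega(\xi_1'')-\omega(\tau_1))-1$, which matches the claim with $\epsilon=1$.

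Case \textbf{(ii)} is where I expect the only genuine subtlety, and I would treat its two alternatives separately. When the normal form is given exactly (either $\rho_2''=\overline{\rho_2}\neq\rho_2$, or the first alternative when $\rho_2''=\rho_2$), the $\Delta$-exponent is $-\omega(\tau_2)-e_1+1+2(\omega(\xi_1'')-\omega(\tau_1))$, giving $\epsilon=0$. In the remaining alternative, Lemma~\ref{lHurwitzmoveGarsidenormalform} asserts only that the Garside power is strictly greater than this value; this happens precisely when the positive part is divisible by $\Delta$, so that additional copies of $\Delta$ must be factored out, which can only raise the exponent. Thus $-\epsilon>0$, i.e.\ $\epsilon<0$, and combining the two alternatives yields $\epsilon\leq 0$ in case \textbf{(ii)}, as claimed.

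Finally, for the ``in particular'' statement I would invoke Setup~\ref{setfactorization}: the Garside normal form of $g_2$ is $\Delta^{-\omega(\tau_2)}(\xi_2)_{(\omega_2)}$, so its Garside power is $k=-\omega(\tau_2)$. Substituting $-\omega(\tau_2)=k$ into the formula obtained in the three cases rewrites the Garside power of $g_1g_2g_1^{-1}$ as $k-e_1+1+2(\omega(\xi_1'')-\omega(\tau_1))-\epsilon$, completing the argument. The main obstacle is essentially not computational here, since the heavy lifting is already inside Lemma~\ref{lHurwitzmoveGarsidenormalform}; the only care required is the reorientation from $P\Delta^{m}$ to $\Delta^{m}P_{(m)}$ to legitimately extract the Garside power, and the correct handling of the strict inequality in case \textbf{(ii)} so as to conclude the bound $\epsilon\leq 0$ rather than a single fixed value.
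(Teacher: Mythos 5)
Your proposal is correct and takes essentially the same route as the paper, which simply reads the Garside power off the normal forms exhibited in Lemma~\ref{lHurwitzmoveGarsidenormalform}; your extra care about rewriting $P\Delta^{m}$ as $\Delta^{m}P_{\left(m\right)}$ and about the strict-inequality alternative in case \textbf{(ii)} is sound but only makes explicit what the paper treats as immediate.
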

\begin{proof}
The statement is an immediate consequence of Lemma~\ref{lHurwitzmoveGarsidenormalform}.
\end{proof}

\section{The classification of degree three simple Huwritz curves up to isotopy and braid monodromy factorizations up to Hurwitz equivalence}
\label{smain}

In this section, we establish the main results on the classification of isotopy classes of degree three simple Hurwitz curves in $\mathbb{CP}^2$. We use our theory developed in Section~\ref{sB3background} in order to establish the classification. Indeed, the results are equivalent to the classification of factorizations of $\Delta^2$ into powers of positive half-twists up to Hurwitz equivalence in the braid group $B_3$. The outline of this section is as follows.

In Subsection~\ref{ssstandardfactorizations}, we will define the \textit{standard factorizations} of $\Delta^2$ into powers of positive half-twists (Definition~\ref{dstandardfactorization}). In Subsections~\ref{ssclassificationfactorizationspositivehalftwists} -~\ref{sssingularityconstraints}, we will establish that every factorization of $\Delta^2$ into powers of positive half-twists is Hurwitz equivalent to one of the standard factorizations in Definition~\ref{dstandardfactorization}.

Firstly, in Subsection~\ref{ssclassificationfactorizationspositivehalftwists}, we will establish that every factorization of $\Delta^2$ into positive half-twists is Hurwitz equivalent to the standard factorization $\Delta^2\equiv \left(\sigma_1,\sigma_2,\sigma_1,\sigma_1,\sigma_2,\sigma_1\right)$ (Theorem~\ref{tmainfactorizationpositivehalftwists}). Equivalently, every pair of smooth degree three Hurwitz curves in $\mathbb{CP}^2$ are isotopic (a special case of Theorem~\ref{tmainHurwitzcurve}). We will also establish technical statements on factorizations that we will use in Subsection~\ref{ssclassificationfactorizationspowerspositivehalftwists}.

The \textit{type of a factor} in a factorization of $\Delta^2$ into powers of positive half-twists is the exponent $n$ such that the factor is the $n$th power of a positive half-twist. In Section~\ref{ssclassificationfactorizationspowerspositivehalftwists}, we will show that the number of factors of each type in a factorization of $\Delta^2$ into powers of positive half-twists is a complete invariant of the Hurwitz equivalence class of the factorization. The statement is equivalent to Theorem~\ref{tmainHurwitzcurve} in the Introduction.

Finally, in Subsection~\ref{sssingularityconstraints}, we will establish Theorem~\ref{tmainsingularities} in the Introduction, which constrains the possible singularities of a degree three simple Hurwitz curve in $\mathbb{CP}^2$. Indeed, we will establish a complete set of constraints on the types of factors in a factorization of $\Delta^2$ into powers of positive half-twists.
 
The strategy of the proofs in this section is concisely as follows. We will assign a notion of complexity to factorizations of $\Delta^2$ into powers of positive half-twists. We will use Lemma~\ref{lHurwitzmovecomplexitychange} (a reformulation of Lemma~\ref{lHurwitzmoveGarsidepower}) to apply complexity reducing Hurwitz moves to such factorizations, and reduce to the problem of classifying low complexity (zero or one) factorizations of $\Delta^2$.

\subsection{The standard factorizations of $\Delta^2$ into powers of positive half-twists}
\label{ssstandardfactorizations}
The goal of this subsection is to establish simple statements on factorizations of $\Delta^2$ into powers of positive half-twists in the braid group $B_3$. We will also highlight the connections between these statements and the theory of simple Hurwitz curves of degree three in $\mathbb{CP}^2$.

The following terminology concerning factorizations in the braid group $B_3$ will be convenient in the sequel.

\begin{definition}
Let ${\cal F} = \left(g_1,g_2,\dots,g_k\right)$ be a factorization into a product of powers of positive half-twists. The \textit{type of a factor} $g_j$ in ${\cal F}$ is the positive integer $e_j$ such that $g_j$ is the $e_j$th power of a positive half-twist. 
\end{definition}

Proposition~\ref{pHurwitzequivalencefunctorial} implies that the numbers of factors of each type in a factorization ${\cal F}$ into powers of positive half-twists is an invariant of the Hurwitz equivalence class of ${\cal F}$. The first step is to use a class invariant on the braid group $B_3$ in order to constrain the numbers of factors of each type in a factorization into powers of positive half-twists. If $G$ is a group, then a \textit{class invariant} on $G$ is a function on $G$ that is constant on conjugacy classes in $G$. A simple and interesting class invariant on $G$ is the abelianization homomorphism $G\to H_1\left(G\right) = G/\left[G,G\right]$, where $\left[G,G\right]$ denotes the commutator subgroup of $G$. We note that $H_1\left(B_3\right)\cong \mathbb{Z}$ and we introduce notation to refer to this class invariant on $B_3$.

\begin{definition}
Let $\epsilon:B_3\to \mathbb{Z}$ denote the abelianization homomorphism, which is uniquely defined by the rule $\epsilon\left(\sigma_1\right) = 1 = \epsilon\left(\sigma_2\right)$. The map $\epsilon$ is a class invariant on $B_3$.
\end{definition}

We use the abelianization homomorphism $\epsilon$ in order to establish the following preliminary constraint on factorizations of a braid into powers of positive half-twists.

\begin{proposition}
\label{pnumberoffactorsinfactorization}
Let ${\cal F} = \left(g_1,g_2,\dots,g_k\right)$ be a factorization of $g\in B_3$ into powers of positive half-twists. If $\nu_n$ is the number of factors in ${\cal F}$ that are the $n$th power of a positive half-twist (i.e., the number of factors in ${\cal F}$ of type $n$), then $\epsilon\left(g\right) = \sum_{n=1}^{\epsilon\left(g\right)} n\nu_n$. 

In particular, if $g = \Delta^2$, then $6 = \sum_{n=1}^{6} n\nu_n$. Furthermore, if $g = \Delta^2$ and $g_j$ is a positive half-twist for each $1\leq j\leq k$, then $k = 6$. 
\end{proposition}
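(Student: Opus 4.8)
The plan is to exploit the fact that $\epsilon$ is simultaneously a group homomorphism and a class invariant on $B_3$. First I would compute $\epsilon$ on a single power of a positive half-twist. By Corollary~\ref{cpositivehalftwistconjugacy}, the $n$th power of a positive half-twist lies in the conjugacy class of $\sigma_1^n$; since $\epsilon$ is a class invariant and $\epsilon\left(\sigma_1\right) = 1$, we obtain $\epsilon\left(g_j\right) = \epsilon\left(\sigma_1^{e_j}\right) = e_j$ whenever $g_j$ is the $e_j$th power of a positive half-twist.

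Next I would apply $\epsilon$ to the product $g = \prod_{j=1}^k g_j$. Since $\epsilon$ is a homomorphism, $\epsilon\left(g\right) = \sum_{j=1}^k \epsilon\left(g_j\right) = \sum_{j=1}^k e_j$. Grouping the factors according to their type, so that exactly $\nu_n$ of the exponents $e_j$ are equal to $n$, this sum becomes $\sum_n n\nu_n$, where the sum ranges over the positive integers $n$ with $\nu_n > 0$. Since each $e_j \geq 1$, every factor contributes a strictly positive amount to the sum $\sum_j e_j = \epsilon\left(g\right)$; in particular any type $n$ that actually occurs satisfies $n \leq \epsilon\left(g\right)$, whence $\nu_n = 0$ for $n > \epsilon\left(g\right)$. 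This justifies truncating the sum at $\epsilon\left(g\right)$ and yields the identity $\epsilon\left(g\right) = \sum_{n=1}^{\epsilon\left(g\right)} n\nu_n$.

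Finally I would specialize. Since $\Delta = \sigma_1\sigma_2\sigma_1$, we have $\epsilon\left(\Delta\right) = 3$ and hence $\epsilon\left(\Delta^2\right) = 6$; substituting $g = \Delta^2$ into the identity gives $6 = \sum_{n=1}^{6} n\nu_n$. For the last assertion, if every $g_j$ is a positive half-twist then every $e_j = 1$, so $\nu_1 = k$ and $\nu_n = 0$ for $n \geq 2$; the identity then reads $6 = 1\cdot \nu_1 = k$, so $k = 6$.

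The computation presents no serious obstacle; the only point requiring genuine care is the double role played by $\epsilon$. As a homomorphism it linearizes the product, converting the factorization into a sum of the $\epsilon$-values of the factors, while as a class invariant it allows each of those $\epsilon$-values to be read off directly from the conjugacy-class description of the factor furnished by Corollary~\ref{cpositivehalftwistconjugacy}. The positivity of the exponents $e_j$ is what controls the range of summation and is therefore the ingredient that makes the finite sum $\sum_{n=1}^{\epsilon\left(g\right)} n\nu_n$ meaningful.
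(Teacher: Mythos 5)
Your proposal is correct and follows essentially the same route as the paper's proof: compute $\epsilon$ on each factor via the class-invariant property and Corollary~\ref{cpositivehalftwistconjugacy}, linearize the product using the homomorphism property, and specialize at $g = \Delta^2$ where $\epsilon\left(\Delta^2\right) = 6$. Your added remark that positivity of the exponents forces $\nu_n = 0$ for $n > \epsilon\left(g\right)$ is a small but welcome justification of the truncation that the paper leaves implicit.
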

\begin{proof}
If $g_j$ is the $n$th power of a positive half-twist, then we have $\epsilon\left(g_j\right) = n$, since $\epsilon:B_3\to \mathbb{Z}$ is a class invariant on $B_3$ and $\epsilon\left(\sigma_1^{n}\right) = n$. If ${\cal F} = \left(g_1,g_2,\dots,g_k\right)$ is a factorization of $g$, then $\epsilon\left(g\right) = \epsilon\left(g_1g_2\cdots g_k\right)$. The first statement follows since $\epsilon:B_3\to \mathbb{Z}$ is a homomorphism. The second and third statements follow from the first statement since $\epsilon\left(\Delta^2\right) = 6$.
\end{proof}

We recall that if $C\subseteq \mathbb{CP}^2$ is a simple Hurwitz curve, then the Hurwitz equivalence class of a braid monodromy factorization of $C$ is an invariant of the isotopy class of $C$. Furthermore, a factor of type $n$ in a braid monodromy factorization of $C$ corresponds to the braid monodromy with respect to an $A_n$-singularity in $C$. In particular, the number of factors of type $n$ in a braid monodromy factorization of $C$ is equal to the number of $A_n$-singularities in $C$. 

The geometric interpretation of Proposition~\ref{pnumberoffactorsinfactorization} is the following formula concerning the numbers of $A_n$-singularities of a degree three simple Hurwitz curve $C\subseteq \mathbb{CP}^2$ for $n\geq 1$ a positive integer. 

\begin{lemma}
\label{lsingularityformula}
Let $C\subseteq \mathbb{CP}^2$ be a degree three simple Hurwitz curve. If $n\geq 1$ is a positive integer, then let $\nu_n$ be the number of $A_n$-singularities in $C$. We have the \textit{singularity formula}: \[\sum_{n=1}^{6} n\nu_n = 6.\]
\end{lemma}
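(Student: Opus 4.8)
The plan is to reduce this geometric statement entirely to its algebraic counterpart, Proposition~\ref{pnumberoffactorsinfactorization}, by passing through braid monodromy. First I would fix a braid monodromy factorization of $C$. Recall from the discussion preceding Theorem~\ref{tKK} that, since $C\subseteq \mathbb{CP}^2$ is a degree three simple Hurwitz curve, its braid monodromy is the Hurwitz equivalence class of a factorization $\Delta^2\equiv \left(g_1,g_2,\dots,g_k\right)$ of $\Delta^2$ into powers of positive half-twists in the braid group $B_3$. Moreover, as noted in the paragraph immediately preceding the lemma, the factors are in bijective correspondence with the singular intersections of $C$, and a factor of type $n$ (the $n$th power of a positive half-twist) corresponds precisely to an $A_n$-singularity of $C$. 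Consequently the number of factors of type $n$ in this factorization equals $\nu_n$, the number of $A_n$-singularities of $C$.

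Having established this dictionary, the formula follows directly by applying Proposition~\ref{pnumberoffactorsinfactorization} to the factorization $\Delta^2\equiv \left(g_1,g_2,\dots,g_k\right)$ with $g = \Delta^2$. That proposition gives the identity $\epsilon\left(\Delta^2\right) = \sum_{n\geq 1} n\nu_n$, where $\epsilon:B_3\to \mathbb{Z}$ is the abelianization homomorphism sending each Artin generator to $1$; since $\epsilon\left(\Delta\right) = 3$ we have $\epsilon\left(\Delta^2\right) = 6$, yielding $6 = \sum_{n\geq 1} n\nu_n$. Because every term $n\nu_n$ is a nonnegative integer and the sum is $6$, no $A_n$-singularity with $n > 6$ can occur (otherwise a single term would already exceed $6$), so the sum may be truncated at $n = 6$, giving exactly $\sum_{n=1}^{6} n\nu_n = 6$.

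There is essentially no computational obstacle here; the entire content of the lemma is the translation between the geometry of singularities and the algebra of the monodromy factorization. The only point requiring a modicum of care is the invariance of the $\nu_n$ under the ambiguity in the braid monodromy: different choices of trivialization and generating set yield Hurwitz equivalent factorizations, and by the first part of Proposition~\ref{pHurwitzequivalencefunctorial} the number of factors of each type is preserved under Hurwitz moves and global conjugation. Hence $\nu_n$ is well-defined and the formula is independent of all choices.
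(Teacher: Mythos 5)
Your proof is correct and takes the same route as the paper: translate to a braid monodromy factorization of $\Delta^2$ in $B_3$, identify the number of type-$n$ factors with $\nu_n$, and apply Proposition~\ref{pnumberoffactorsinfactorization} together with $\epsilon\left(\Delta^2\right) = 6$. The additional remarks on truncating the sum at $n=6$ and on the Hurwitz-invariance of the $\nu_n$ are fine but not needed beyond what the paper already records.
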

\begin{proof}
A braid monodromy factorization of $C$ is a factorization ${\cal F}$ of $\Delta^2$ into powers of positive half-twists. The number of factors in ${\cal F}$ that are the $n$th power of a positive half-twist is $\nu_n$ by definition of the braid monodromy of $C$. Therefore, the statement is a consequence of Proposition~\ref{pnumberoffactorsinfactorization}. 
\end{proof}

We refer to the formula in the statement of Lemma~\ref{lsingularityformula} as the \textit{singularity formula}. 

The goal of this section is to establish that the numbers of factors of each type in a factorization of $\Delta^2$ into powers of positive half-twists constitute a \textit{complete} set of invariants of the Hurwitz equivalence class of the factorization. Equivalently, if $C\subseteq \mathbb{CP}^2$ is a degree three simple Hurwitz curve, then the numbers of $A_n$-singularities in $C$ for each positive integer $n\geq 1$ is a complete set of invariants of the isotopy class of $C$ (Theorem~\ref{tmainHurwitzcurve} in the Introduction).

Let $\nu_n$ denote the number of $A_n$-singularities in a simple Hurwitz curve $C$. In Subsection~\ref{sssingularityconstraints}, we will show that the singularity formula (Lemma~\ref{lsingularityformula}) is not the only constraint on the singularities of a simple Hurwitz curve $C\subseteq \mathbb{CP}^2$. Indeed, it is not the case that every preassigned tuple $\left(\nu_n\right)_{n=1}^{\infty}$ of nonnegative integers such that $\sum_{n=1}^{6} n\nu_n = 6$ corresponds to a simple Hurwitz curve $C\subseteq \mathbb{CP}^2$. We now define the standard factorizations of $\Delta^2$ into powers of positive half-twists corresponding to those preassigned tuples which are possible on the algebraic level. (The statement that these are all the possible tuples is the content of Theorem~\ref{tmainsingularities} in the Introduction. We will establish this explicitly in Theorem~\ref{tfinal}.)

\begin{definition}
\label{dstandardfactorization}
Let $\left(\nu_n\right)_{n=1}^{\infty}$ be one of the following tuples of nonnegative integers. We define the \textit{standard factorizations} of $\Delta^2$ as follows:
\begin{description}
\item[(i)] If $\nu_1 = 2$, $\nu_4 = 1$ and $\nu_j = 0$ for $j\neq 1,4$, then the standard factorization of $\Delta^2$ into two positive half-twists and one fourth power of a positive half-twist is \[\Delta^2\equiv \left(\sigma_1^4,\sigma_1^{-1}\sigma_2\sigma_1,\sigma_2^{-1}\sigma_1\sigma_2\right).\]
\item[(ii)] If $\nu_2 = 3$ and $\nu_j = 0$ for $j\neq 2$, then the standard factorization of $\Delta^2$ into three squares of positive half-twists is \[\Delta^2\equiv \left(\sigma_1^2,\sigma_1^{-1}\sigma_2^2\sigma_1,\sigma_2^2\right).\]
\item[(iii)] If $\nu_1 = 3$, $\nu_3 = 1$ and $\nu_j = 0$ for $j\neq 1,3$, then the standard factorization of $\Delta^2$ into three positive half-twists and one cube of a positive half-twist is \[\Delta^2\equiv \left(\sigma_1^3,\sigma_1^{-1}\sigma_2\sigma_1,\sigma_1,\sigma_2\right).\]
\item[(iv)] If $\nu_1 = 2$, $\nu_2 = 2$ and $\nu_j = 0$ for $j\neq 1,2$, then the standard factorization of $\Delta^2$ into two positive half-twists and two squares of positive half-twists is \[\Delta^2\equiv \left(\sigma_1^2,\sigma_2,\sigma_1^2,\sigma_2\right).\]
\item[(v)] If $\nu_1 = 4$, $\nu_2 = 1$ and $\nu_j=0$ for $j\neq 1,2$, then the standard factorization of $\Delta^2$ into four positive half-twists and one square of a positive half-twist is \[\Delta^2\equiv \left(\sigma_1^2,\sigma_2,\sigma_1,\sigma_1,\sigma_2\right).\]
\item[(vi)] If $\nu_1 = 6$ and $\nu_j=0$ for $j\neq 1$, then the \textit{standard factorization of $\Delta^2$} into six positive half-twists is \[\Delta^2 \equiv \left(\sigma_1,\sigma_2,\sigma_1,\sigma_1,\sigma_2,\sigma_1\right).\]
\end{description}
\end{definition}

In Subsection~\ref{sssingularityconstraints}, we will establish that there are no other tuples $\left(\nu_n\right)_{n=1}^{\infty}$ corresponding to a factorization of $\Delta^2$ into powers of positive half-twists. The proof will be based on our theory of duality of positive braids and the algebraic characterization of powers of positive half-twists in $B_3$ in Section~\ref{sB3background}. However, in the following Subsection~\ref{ssclassificationfactorizationspositivehalftwists} and Subsection~\ref{ssclassificationfactorizationspowerspositivehalftwists}, we will show that every factorization ${\cal F}$ of $\Delta^2$ into powers of positive half-twists is Hurwitz equivalent to the unique standard factorization in Definition~\ref{dstandardfactorization} with the same numbers of factors of each type as the factorization ${\cal F}$.

The strategy is to assign a notion of complexity to a factorization and establish the statement by induction on the complexity. If $g\in B_n$, then we recall that the \textit{Garside power} of $g$ is the exponent of the Garside element $\Delta$ appearing in the Garside normal form of $g$.

\begin{definition}
\label{dcomplexityfactorization}
If ${\cal F}=\left(g_1,g_2,\dots,g_k\right)$ is a factorization of $g_1g_2\cdots g_k$, then the \textit{complexity of the factorization} ${\cal F}$ is the sum of the absolute values of the Garside powers of the factors $g_1,g_2,\dots,g_k$. We denote the complexity of ${\cal F}$ by $c\left({\cal F}\right)$.
\end{definition}

The following statement is a useful observation in our approach to classifying factorizations into powers of positive half-twists by induction on the complexity. 

\begin{proposition}
\label{pcomplexityzeroiffstandard}
A factorization ${\cal F}$ into powers of positive half-twists has complexity $c\left({\cal F}\right) = 0$ if and only if ${\cal F}$ is a factorization into powers of standard positive half-twists. 
\end{proposition}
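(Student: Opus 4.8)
The plan is to reduce the statement immediately to Corollary~\ref{cGarsidepowerpositivehalftwist}, which characterizes powers of \emph{standard} positive half-twists among all powers of positive half-twists by the vanishing of their Garside power. First I would observe that, by Definition~\ref{dcomplexityfactorization}, the complexity $c\left({\cal F}\right)$ of a factorization ${\cal F} = \left(g_1,g_2,\dots,g_k\right)$ into powers of positive half-twists is a sum of nonnegative terms, namely the absolute values of the Garside powers of the individual factors $g_1,g_2,\dots,g_k$. Hence $c\left({\cal F}\right) = 0$ if and only if each summand vanishes, that is, if and only if the Garside power of $g_j$ is zero for every $1\leq j\leq k$.

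The key step is then to interpret the vanishing of each Garside power. Since each factor $g_j$ is by hypothesis the $e_j$th power of a positive half-twist, Corollary~\ref{cGarsidepowerpositivehalftwist} applies to each $g_j$ separately: the Garside power of $g_j$ is zero if and only if $g_j$ is a power of a \emph{standard} positive half-twist (equivalently, $g_j = \sigma_{i_j}^{e_j}$ for some $i_j\in \{1,2\}$). Combining this with the previous reduction, $c\left({\cal F}\right) = 0$ if and only if every factor $g_j$ is a power of a standard positive half-twist, which is exactly the assertion.

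There is essentially no obstacle here: the content of the proposition is entirely carried by Corollary~\ref{cGarsidepowerpositivehalftwist}, and the only additional ingredient is the elementary observation that a finite sum of nonnegative integers vanishes precisely when each summand does. I would therefore present the argument as a short proof rather than a multi-step construction; the only care needed is to apply Corollary~\ref{cGarsidepowerpositivehalftwist} factor-by-factor rather than to the product $g_1g_2\cdots g_k$, since the Garside power is not additive across products (indeed the reverse triangle inequality of Proposition~\ref{preversetriangleinequality} shows the Garside power of a product can exceed the sum of the Garside powers of the factors).
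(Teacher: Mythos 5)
Your proof is correct and follows exactly the same route as the paper: both reduce the statement to Corollary~\ref{cGarsidepowerpositivehalftwist} applied factor-by-factor, together with the observation that a sum of nonnegative integers vanishes precisely when each summand does. Your version merely spells out these two ingredients more explicitly than the paper's one-sentence proof.
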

\begin{proof}
The statement follows from Corollary~\ref{cGarsidepowerpositivehalftwist}, which states that the absolute value of the Garside power of the power of a positive half-twist is zero if and only if the relevant positive half-twist is a standard positive half-twist.
\end{proof}

In Definition~\ref{dstandardfactorization}, the standard factorization in \textbf{(i)} has complexity two, the standard factorizations in \textbf{(ii)} - \textbf{(iii)} have complexity one, and the standard factorizations in \textbf{(iv)} - \textbf{(vi)} have complexity zero. In Subsection~\ref{ssclassificationfactorizationspositivehalftwists}, we will show that if ${\cal F}$ is a factorization of $\Delta^2$ into positive half-twists with $c\left({\cal F}\right)>0$, then we can always apply a complexity reducing Hurwitz move to ${\cal F}$. In Subsection~\ref{ssclassificationfactorizationspowerspositivehalftwists}, we will show that if ${\cal F}$ is a factorization of $\Delta^2$ into at least one positive half-twist and squares of positive half-twists with $c\left({\cal F}\right)>0$, then we can always apply a complexity reducing Hurwitz move to ${\cal F}$. In Subsection~\ref{ssclassificationfactorizationspowerspositivehalftwists}, we will also show that if ${\cal F}$ is a factorization of $\Delta^2$ into powers of positive half-twists with $c\left({\cal F}\right)>1$ such that there is no fourth power of a positive half-twist, then we can always apply a complexity reducing Hurwitz move to ${\cal F}$. Finally, in Subsection~\ref{ssclassificationfactorizationspowerspositivehalftwists}, we will show that if ${\cal F}$ is a factorization of $\Delta^2$ into powers of positive half-twists with $c\left({\cal F}\right)>2$ such that there is the fourth power of a positive half-twist, then we can always apply a complexity reducing Hurwitz move to ${\cal F}$.

The main ingredient in our proofs is the following computation of the effect of a Hurwitz move on the complexity of a factorization. We recall Setup~\ref{setfactorization}, which we will use prominently in this section in order to avoid repetition in the statements.

\begin{lemma}
\label{lHurwitzmovecomplexitychange}
We adopt Setup~\ref{setfactorization} with $k=2$. Let $\xi_1''\in B_3^{+}$ be the maximal right-divisor of $\xi_1$ that is left-dual to a left-divisor of $\xi_2$. Let us assume that the relevant left-divisor $\rho_2''$ of $\xi_2$ is a left-divisor of the the cut closure $\overline{\rho_2}$.

If ${\cal F}' = \left(g_1g_2g_1^{-1},g_1\right)$ is the result of a Hurwitz move applied to the factorization ${\cal F} = \left(g_1,g_2\right)$, then we have the following formula relating the complexity of ${\cal F}'$ to the complexity of ${\cal F}$: \[c\left({\cal F}'\right) = c\left({\cal F}\right) + e_1 - 1 - 2\left[\omega\left(\xi_1''\right)-\omega\left(\tau_1\right)\right] + \epsilon,\] where $\epsilon\leq 1$. Furthermore, $\epsilon = 1$ if and only if $\rho_2''$ is a left-divisor of $\rho_2$ but $\rho_2''$ is not a cut left-divisor of $\rho_2$.
\end{lemma}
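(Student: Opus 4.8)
The plan is to treat this statement as a direct bookkeeping reformulation of Lemma~\ref{lHurwitzmoveGarsidepower}, converting the computation of the Garside power of $g_1g_2g_1^{-1}$ into a statement about complexity. The key preliminary observation I would record is that \emph{every} power of a positive half-twist has nonpositive Garside power: by Theorem~\ref{tGarsidenormalformpositivehalftwist} the Garside normal form of an $e$th power of a positive half-twist is $\Delta^{-\omega(\tau)}\rho\sigma_i^{e}\tau$ with $\omega(\tau)\geq 0$, so its Garside power is $-\omega(\tau)\leq 0$. In particular the Garside power of $g_2$ is $-\omega(\tau_2)$ as recorded in Setup~\ref{setfactorization}, and since $g_1g_2g_1^{-1}$ lies in the same conjugacy class as $g_2$ (Corollary~\ref{cpositivehalftwistconjugacy}), it is again the $e_2$th power of a positive half-twist and hence also has nonpositive Garside power.

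First I would compute the change in complexity. Since $g_1$ occurs as a factor in both ${\cal F}=(g_1,g_2)$ and ${\cal F}'=(g_1g_2g_1^{-1},g_1)$, its contribution to the complexity is unchanged, so $c({\cal F}')-c({\cal F})$ equals the absolute value of the Garside power of $g_1g_2g_1^{-1}$ minus the absolute value of the Garside power of $g_2$. By the preliminary observation both of these Garside powers are nonpositive, so each absolute value is the negative of the corresponding Garside power; therefore $c({\cal F}')-c({\cal F})$ is precisely the Garside power of $g_2$ minus the Garside power of $g_1g_2g_1^{-1}$.

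Next I would substitute the explicit values. Writing $k=-\omega(\tau_2)$ for the Garside power of $g_2$, Lemma~\ref{lHurwitzmoveGarsidepower} gives the Garside power of $g_1g_2g_1^{-1}$ as $k-e_1+1+2(\omega(\xi_1'')-\omega(\tau_1))-\epsilon'$, where $\epsilon'$ denotes the quantity appearing in that lemma. Subtracting, the $k$ cancels and I obtain $c({\cal F}')-c({\cal F})=e_1-1-2[\omega(\xi_1'')-\omega(\tau_1)]+\epsilon'$, which is exactly the claimed formula with $\epsilon=\epsilon'$.

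Finally I would read off the properties of $\epsilon$ from the trichotomy of Lemma~\ref{lHurwitzmoveGarsidenormalform}: there $\epsilon'=-1$ in case \textbf{(i)}, $\epsilon'\leq 0$ in case \textbf{(ii)}, and $\epsilon'=1$ in case \textbf{(iii)}, so in every case $\epsilon=\epsilon'\leq 1$. For the equality condition I note that case \textbf{(iii)} is literally the hypothesis that $\rho_2''$ is a left-divisor of $\rho_2$ that is \emph{not} a cut left-divisor of $\rho_2$; in case \textbf{(i)} the divisor $\rho_2''$ is a cut left-divisor of $\rho_2$, and in case \textbf{(ii)} the divisor $\rho_2''$ is either $\rho_2$ itself (a cut left-divisor) or the cut closure $\overline{\rho_2}\neq\rho_2$ (which is not a left-divisor of $\rho_2$ at all). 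Hence the defining condition for $\epsilon=1$ holds precisely in case \textbf{(iii)}, yielding the stated ``if and only if''. I do not expect a genuine obstacle, since all the analytic content is already carried by Lemma~\ref{lHurwitzmoveGarsidepower}; the only point demanding care is the sign tracking through the absolute values, which hinges crucially on the nonpositivity of the Garside powers of powers of positive half-twists established in the first step.
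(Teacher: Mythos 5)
Your proposal is correct and follows essentially the same route as the paper, which deduces the statement directly from Lemma~\ref{lHurwitzmoveGarsidepower} with the remark that the complexity is the absolute value of the Garside power. Your explicit justification that these absolute values equal the negatives of the Garside powers --- because every power of a positive half-twist has Garside power $-\omega(\tau)\leq 0$ by Theorem~\ref{tGarsidenormalformpositivehalftwist}, and $g_1g_2g_1^{-1}$ is again such a power --- is precisely the sign-tracking point the paper leaves implicit, and your case analysis for $\epsilon=1$ matches the trichotomy of Lemma~\ref{lHurwitzmoveGarsidenormalform}.
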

\begin{proof}
The statement is an immediate consequence of Lemma~\ref{lHurwitzmoveGarsidepower}, which is a formula that relates the Garside power of $g_1g_2g_1^{-1}$ to the Garside power of $g_2$, based on Lemma~\ref{lHurwitzmoveGarsidenormalform}. (Note that we take the absolute value of the Garside power to determine the complexity.)
\end{proof}

An important observation concerning the statement of Lemma~\ref{lHurwitzmovecomplexitychange} is that the larger the right-divisor $\xi_1''$ of $\xi_1$ compared to the values of $\omega\left(\tau_1\right)$ and $e_1$, the smaller the complexity $c\left({\cal F}'\right)$. We will use this observation to strongly constrain factorizations of $\Delta^2$ into powers of positive half-twists of minimal complexity in their Hurwitz equivalence class. Ultimately, we will show that all such factorizations have complexity zero, one, or two (depending on the numbers of factors of each type in the factorization). 

We also remark that there is a symmetric version of Lemma~\ref{lHurwitzmovecomplexitychange} for the Hurwitz move $\left(g_1,g_2\right)\to \left(g_2,g_2^{-1}g_1g_2\right)$ instead of the Hurwitz move $\left(g_1,g_2\right)\to \left(g_1g_2g_1^{-1},g_1\right)$. In this paper, we will not repeat the symmetric statement and a reference to Lemma~\ref{lHurwitzmovecomplexitychange} will be a reference to either Lemma~\ref{lHurwitzmovecomplexitychange} as stated or its symmetric version.

\subsection{The classification of factorizations of $\Delta^2$ into positive half-twists up to Hurwitz equivalence}
\label{ssclassificationfactorizationspositivehalftwists}
In this subsection, we establish that every factorization ${\cal F}$ of $\Delta^2$ into positive half-twists is Hurwitz equivalent to the standard factorization $\Delta^2\equiv \left(\sigma_1,\sigma_2,\sigma_1,\sigma_1,\sigma_2,\sigma_1\right)$ (Definition~\ref{dstandardfactorization}). Equivalently, if $C,C'\subseteq \mathbb{CP}^2$ are degree three smooth Hurwitz curves, then $C$ is isotopic to $C'$, which is a special case of Theorem~\ref{tmainHurwitzcurve}. In particular, every degree three smooth Hurwitz curve in $\mathbb{CP}^2$ is isotopic to a projective algebraic curve, which is a special case of Theorem~\ref{tmainHurwitzalgebraic}.

The strategy of the proof is by induction on the complexity of a factorization of $\Delta^2$ into positive half-twists. Indeed, Proposition~\ref{pcomplexityzeroiffstandard} implies that a factorization ${\cal F}$ of $\Delta^2$ into positive half-twists is a factorization of $\Delta^2$ into standard positive half-twists if and only if the complexity $c\left({\cal F}\right) = 0$. If ${\cal F}$ is a factorization of $\Delta^2$ into positive half-twists with complexity $c\left({\cal F}\right)>0$, then we will prove that we can apply a finite sequence of Hurwitz moves to ${\cal F}$ to obtain a factorization ${\cal F}'$ of $\Delta^2$ into positive half-twists with complexity $c\left({\cal F}'\right)<c\left({\cal F}\right)$. 

Finally, we will show that every factorization of $\Delta^2$ into standard positive half-twists is Hurwitz equivalent to the standard factorization $\Delta^2\equiv \left(\sigma_1,\sigma_2,\sigma_1,\sigma_1,\sigma_2,\sigma_1\right)$. In fact, this final step is straightforward (based on Garside's theorem (Theorem~\ref{tmonoidembedding})) and we will also prove that every pair of factorizations of the same braid in $B_3$ into standard positive half-twists is Hurwitz equivalent.

Let us briefly describe how we will find a finite sequence of Hurwitz moves that reduces the complexity of a factorization ${\cal F}$ of $\Delta^2$ into positive half-twists, if at least one of the factors in ${\cal F}$ is a non-standard positive half-twist. In fact, if every factor in ${\cal F}$ is a non-standard positive half-twist, then we will show that there is a single complexity reducing Hurwitz move. The proof of this statement will be based on the combinatorics in our theory of duality of positive braids in Subsection~\ref{ssdualitypositivebraid}. Indeed, in a factorization $\left(g_1,g_2,\dots,g_k\right)$ of $\Delta^2$ into positive half-twists, there are strong constraints on how adjacent factors $g_j$ and $g_{j+1}$ combine for at least one $1\leq j\leq k-1$. We will state these constraints by firstly using our theory of duality of positive braids in order to study the factorization in the monoid $B_3^{+}$ of positive braids. Indeed, if we adopt Setup~\ref{setfactorization}, then the statement that $\left(g_1,g_2,\dots,g_k\right)$ is a factorization of $\Delta^2$ is equivalent to the equation $\Delta^{2+\omega_k} = \xi_1\xi_2\cdots \xi_k$. 

We will use our theory of duality in order to establish a lower bound on either (1) the size of the maximal right-divisor of $\xi_j$ that is left-dual to a left-divisor of $\xi_{j+1}$ or (2) the size of the maximal left-divisor of $\xi_{j}$ that is right-dual to a right-divisor of $\xi_{j-1}$, for at least one $1\leq j\leq k$ (if none of the factors are standard positive half-twists). Indeed, we will show that the equation $\Delta^{2+\omega_k} = \xi_1\xi_2\cdots\xi_k$ is invalid if the sizes of these divisors of $\xi_j$ are less than half the size of $\xi_j$ for every $1\leq j\leq k$. In particular, we will show that the size of one of these divisors of $\xi_j$ is strictly greater than half the size of $\xi_j$ for at least one $1\leq j\leq k$. 

Finally, we will use this constraint to establish the existence of a complexity reducing Hurwitz move locally applied to either the pair $\left(g_j,g_{j+1}\right)$ or the pair $\left(g_{j-1},g_j\right)$, in this case. The final step will be based on Lemma~\ref{lHurwitzmovecomplexitychange}, which computes the change in complexity of a factorization after the application of a Hurwitz move (see also the discussion following Lemma~\ref{lHurwitzmovecomplexitychange}).

If there are standard positive half-twists in the factorization, then it is still true that there is a finite sequence of Hurwitz moves that reduces the complexity of the factorization (as long as there are non-standard positive half-twists in the factorization).  The argument is quite similar to the case described in the previous paragraph, where none of the factors are standard positive half-twists. However, it requires multiple Hurwitz moves since standard positive half-twists have length one. In particular, a triple product $\xi_j\xi_{j+1}\xi_{j+2}$ might be divisible by $\Delta$, where $\xi_{j+1}$ is a positive half-twist, even if neither $\xi_j\xi_{j+1}$ nor $\xi_{j+1}\xi_{j+2}$ is divisible by $\Delta$ (see also the discussion preceding Lemma~\ref{ltripleproductindivisibleDelta}). If $\xi_j$ is also a standard positive half-twist, then we will show that we can apply a single Hurwitz move that does not change the complexity to the pair $\left(g_{j+1},g_{j+2}\right)$. We then reduce to a situation where a complexity reducing Hurwitz move can be applied (subsequently, to the pair $\left(g_j,g_{j+1}g_{j+2}g_{j+1}^{-1}\right)$).
 
Let us begin the proof. Firstly, we establish that local applications of the braid relations in a factorization can be achieved through Hurwitz moves. We will use this statement to show that if ${\cal F}$ and ${\cal F}'$ are factorizations of $\Delta^2$ into standard positive half-twists, then ${\cal F}$ is Hurwitz equivalent to ${\cal F}'$. We prove the statement in the generality of the braid group $B_n$ since it is no extra effort to do so. 

\begin{proposition}
\label{pbraidrelationHurwitzmove}
Let $g\in B_n$. Let ${\cal F}$ be a factorization of $g$ that is locally a triple of the form $\left(\dots,\sigma_i,\sigma_{i+1},\sigma_i,\dots\right)$, for some $1\leq i\leq n-1$. Let ${\cal F}'$ be the factorization of $g$ that is the same as the factorization ${\cal F}$ of $g$, except that the aforementioned local triple in ${\cal F}$ is modified to $\left(\dots,\sigma_{i+1},\sigma_i,\sigma_{i+1},\dots\right)$. In this case, the factorization ${\cal F}$ of $g$ is Hurwitz equivalent to the factorization ${\cal F}'$ of $g$ by a finite sequence of Hurwitz moves that does not include global conjugation. 
\end{proposition}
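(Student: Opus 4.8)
The plan is to exhibit an explicit finite sequence of Hurwitz moves---using only the two move types in the definition and no global conjugation---that transforms the local triple into its braid-relation image. Write $a = \sigma_i$ and $b = \sigma_{i+1}$, so the relevant local triple is $\left(a,b,a\right)$ and the target is $\left(b,a,b\right)$; since $\lvert i-(i+1)\rvert = 1$, the Artin braid relation $aba = bab$ is available, and this is the only relation I will need. Because each Hurwitz move modifies only two consecutive factors, every move below acts within the three consecutive slots occupied by the triple and leaves all other factors of ${\cal F}$ untouched, so it suffices to analyze the triple in isolation.

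First I would apply the first Hurwitz move to the first two entries of the triple, obtaining $\left(aba^{-1},a,a\right)$. Applying the first Hurwitz move again to the first two entries gives $\left(\,(aba^{-1})\,a\,(aba^{-1})^{-1},\,aba^{-1},\,a\,\right)$; here the new leading factor simplifies to $b$, since $(aba^{-1})a(aba^{-1})^{-1} = abab^{-1}a^{-1}$ and the braid relation $aba = bab$ yields $abab^{-1} = (aba)b^{-1} = (bab)b^{-1} = ba$, whence $abab^{-1}a^{-1}=b$. Thus the triple becomes $\left(b,aba^{-1},a\right)$. Finally I would apply the second Hurwitz move to the last two entries, which sends $\left(aba^{-1},a\right)$ to $\left(a,\,a^{-1}(aba^{-1})a\right)=\left(a,b\right)$, producing the desired triple $\left(b,a,b\right)$.

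The product of the three factors is automatically preserved by Hurwitz moves, so there is nothing to check on that score, and each intermediate tuple is therefore again a factorization of $g$. The only real content is choosing the correct sequence of moves and confirming the two conjugation simplifications via $aba=bab$; I expect this bookkeeping---tracking which entries are conjugated, and in which direction, at each step---to be the main (and essentially only) source of potential error, rather than any conceptual obstacle. It is worth noting that the middle factor $aba^{-1}$ is a non-standard half-twist, so the intermediate factorizations need not consist of standard positive half-twists even though both endpoints do; this is harmless, as the statement only constrains the initial and final triples.
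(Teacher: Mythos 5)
Your proof is correct and takes essentially the same approach as the paper: an explicit short sequence of Hurwitz moves on the triple, verified using the braid relation $\sigma_i\sigma_{i+1}\sigma_i=\sigma_{i+1}\sigma_i\sigma_{i+1}$. The only cosmetic differences are that the paper first reduces to $B_3$ via functoriality of Hurwitz equivalence and then uses two moves, $\left(\sigma_1,\sigma_2,\sigma_1\right)\sim\left(\sigma_2,\sigma_2^{-1}\sigma_1\sigma_2,\sigma_1\right)\sim\left(\sigma_2,\sigma_1,\sigma_2\right)$, whereas you work directly in $B_n$ with three moves; both are valid.
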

\begin{proof}
Firstly, it suffices to show that the factorization $\left(\sigma_1,\sigma_2,\sigma_1\right)$ of $\Delta$ is Hurwitz equivalent to the factorization $\left(\sigma_2,\sigma_1,\sigma_2\right)$ of $\Delta$ by a finite sequence of Hurwitz moves that does not include global conjugation, in the braid group $B_3$. Indeed, Hurwitz equivalence is functorial (Proposition~\ref{pHurwitzequivalencefunctorial}) and we have a map $B_3\to B_n$ defined by the rule $\sigma_1\to \sigma_i$ and $\sigma_2\to \sigma_{i+1}$. 

We have the following two Hurwitz moves \[\left(\sigma_1,\sigma_2,\sigma_1\right)\sim \left(\sigma_2,\sigma_2^{-1}\sigma_1\sigma_2,\sigma_1\right)\sim \left(\sigma_2,\sigma_1,\sigma_2\right).\] Indeed, $\sigma_1^{-1}\left(\sigma_2^{-1}\sigma_1\sigma_2\right)\sigma_1 = \sigma_1^{-1}\sigma_2^{-1}\sigma_2\sigma_1\sigma_2 = \sigma_2$, which justifies the equality in the second Hurwitz move. Therefore, the statement is established.
\end{proof}

Of course, it is clear that the local application of a commutativity relation in a factorization can be achieved through a Hurwitz move. Indeed, if $g_j$ and $g_{j+1}$ commute, then a Hurwitz move applied to a factorization $\left(\cdots,g_j,g_{j+1},\cdots\right)$ at the $j$th position results in the factorization $\left(\cdots,g_{j+1},g_j,\cdots\right)$. A positive word in the Artin generators representing a positive braid $\sigma\in B_n^{+}$ defines a factorization of $\sigma$ into standard positive half-twists. The application of Artin relations to such a positive word gives rise to other such positive words, and consequently defines other factorizations of the positive braid $\sigma$ into standard positive half-twists. In particular, Proposition~\ref{pbraidrelationHurwitzmove} implies that the uniqueness of factorizations of a positive braid into standard positive half-twists up to Hurwitz equivalence does not already fail at the level of the Artin relations in $B_n^{+}$. We summarize the discussion in the following consequence of Proposition~\ref{pbraidrelationHurwitzmove}.

\begin{lemma}
\label{lstandardfactorizationsHurwitzequivalent}
If ${\cal F}$ and ${\cal F}'$ are factorizations of a positive braid $\sigma\in B_n^{+}$ into standard positive half-twists, then ${\cal F}$ is Hurwitz equivalent to ${\cal F}'$ by a finite sequence of Hurwitz moves that does not include global conjugation.
\end{lemma}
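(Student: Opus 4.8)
The plan is to invoke Garside's embedding theorem (Theorem~\ref{tmonoidembedding}) to reduce the statement to the fact that each of the two kinds of defining relations in the Artin presentation of $B_n^{+}$—the braid relations and the commutation relations—can be realized by Hurwitz moves. First I would regard each of ${\cal F}$ and ${\cal F}'$ as a product expansion of $\sigma$ in the monoid $B_n^{+}$, i.e.\ as a word in the Artin generators involving no inverses. Since ${\cal F}$ and ${\cal F}'$ represent the same element $\sigma\in B_n^{+}$, Theorem~\ref{tmonoidembedding} guarantees that one can be transformed into the other by a finite sequence of applications of the Artin relations, each applied to a contiguous subword. Crucially, no inverses of generators are introduced at any stage, so every intermediate word is again a factorization of $\sigma$ into standard positive half-twists.

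Next I would show that a single application of an Artin relation to a contiguous subword is a composition of Hurwitz moves, splitting into two cases. If the relation applied is a braid relation, then the relevant contiguous subword is $\left(\sigma_i,\sigma_{i+1},\sigma_i\right)$ (or the reverse), occupying three consecutive positions of the factorization, and it is replaced by $\left(\sigma_{i+1},\sigma_i,\sigma_{i+1}\right)$. This is exactly the situation treated in Proposition~\ref{pbraidrelationHurwitzmove}, which supplies a finite sequence of Hurwitz moves (without global conjugation) effecting this change; since Hurwitz equivalence is symmetric, the reverse replacement is also realized by Hurwitz moves. If instead the relation applied is a commutation relation $\sigma_i\sigma_j=\sigma_j\sigma_i$ with $\left|i-j\right|>1$, then the relevant subword $\left(\sigma_i,\sigma_j\right)$ occupies two consecutive positions and is replaced by $\left(\sigma_j,\sigma_i\right)$. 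Since $\sigma_i$ and $\sigma_j$ commute, the Hurwitz move $\left(\dots,\sigma_i,\sigma_j,\dots\right)\sim \left(\dots,\sigma_i\sigma_j\sigma_i^{-1},\sigma_i,\dots\right) = \left(\dots,\sigma_j,\sigma_i,\dots\right)$ achieves this in a single step, as observed in the discussion preceding the lemma.

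Finally I would concatenate: the finite sequence of Artin-relation applications furnished by Theorem~\ref{tmonoidembedding} translates, case by case, into a finite sequence of Hurwitz moves carrying ${\cal F}$ to ${\cal F}'$, none of which is a global conjugation move. This establishes the lemma.

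I do not expect a genuine obstacle here, since the substance is entirely contained in the embedding theorem (which furnishes the rewriting by Artin relations on positive words, with no inverses ever appearing) together with Proposition~\ref{pbraidrelationHurwitzmove}. The only point requiring a moment's care is to confirm that each Artin relation acts on factors that are \emph{consecutive} in the factorization—which is precisely the sense in which Theorem~\ref{tmonoidembedding} asserts that the relations are applied to contiguous subwords—so that Proposition~\ref{pbraidrelationHurwitzmove} and the commutation Hurwitz move may be applied locally at those positions.
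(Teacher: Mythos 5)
Your proposal is correct and follows essentially the same route as the paper: reduce to Garside's embedding theorem (Theorem~\ref{tmonoidembedding}) to rewrite one positive product expansion into the other by Artin relations within $B_n^{+}$, then realize each braid relation by Proposition~\ref{pbraidrelationHurwitzmove} and each commutation relation by a single Hurwitz move. No gaps.
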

\begin{proof}
A factorization of a positive braid $\sigma\in B_n^{+}$ into standard positive half-twists is equivalent to a product expansion of $\sigma$ in terms of the Artin generators $\sigma_1,\sigma_2,\dots,\sigma_{n-1}$. Garside's theorem (Theorem~\ref{tmonoidembedding}) implies that every pair of product expansions of a positive braid $\sigma\in B_n^{+}$ (in terms of the Artin generators) is related by a finite sequence of Artin relations within the monoid $B_n^{+}$ of positive braids. Proposition~\ref{pbraidrelationHurwitzmove} and the straightforward fact that the local application of a commutativity relation in a factorization can be achieved through a Hurwitz move, imply that each Artin relation within the monoid $B_n^{+}$ can be achieved by a finite sequence of Hurwitz moves that does not include global conjugation. Therefore, the statement is established.
\end{proof}

In fact, if ${\cal F}'$ is a factorization of $\Delta^2$ into positive half-twists that is Hurwitz equivalent to the standard factorization ${\cal F} = \left(\sigma_1,\sigma_2,\sigma_1,\sigma_1,\sigma_2,\sigma_1\right)$ of $\Delta^2$ (Definition~\ref{dstandardfactorization}), then the global conjugation move is redundant in any Hurwitz equivalence between ${\cal F}$ and ${\cal F}'$. Although we will not need the following statement in the sequel and the reader may skip the proof, we include it as motivation. Indeed, in this subsection, we will not use the global conjugation move. On the other hand, the global conjugation move is not redundant for Hurwitz equivalences between factorizations of $\Delta^2$ into powers of positive half-twists. We will use the global conjugation move in Subsection~\ref{ssclassificationfactorizationspowerspositivehalftwists} and Subsection~\ref{sssingularityconstraints} when we classify such factorizations.

\begin{lemma}
\label{lnoglobalconjugation}
If a factorization ${\cal F}'$ of $\Delta^2$ into positive half-twists is Hurwitz equivalent to the standard factorization ${\cal F}$ of $\Delta^2$, then ${\cal F}'$ can be obtained from ${\cal F}$ by a finite sequence of Hurwitz moves that does not include global conjugation. 
\end{lemma}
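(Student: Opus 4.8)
The plan is to package the redundancy of the global conjugation move through Proposition~\ref{pglobalconjugationcommutesHurwitz} and then to import the substantive content from the classification theorem of this subsection. First I would apply Proposition~\ref{pglobalconjugationcommutesHurwitz}: since ${\cal F}'$ and the standard factorization ${\cal F}$ are Hurwitz equivalent factorizations of the central element $\Delta^2$, there exists some $h\in B_3$ such that ${\cal F}'$ is obtained from the single global conjugate $h^{-1}{\cal F}h$ by a finite sequence of Hurwitz moves that does \emph{not} include global conjugation. It therefore suffices to prove that $h^{-1}{\cal F}h$ is itself obtained from ${\cal F}$ by Hurwitz moves alone; concatenating this sequence with the one above (Hurwitz moves being invertible) exhibits ${\cal F}'$ as obtained from ${\cal F}$ without any global conjugation.

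To organize the remaining step I would introduce the set $S=\{h\in B_3 : h^{-1}{\cal F}h \text{ is obtained from }{\cal F}\text{ by Hurwitz moves alone}\}$, so that the claim becomes $S=B_3$. Using that conjugation by $h$ is an automorphism of $B_3$ which commutes with every Hurwitz move (Proposition~\ref{pglobalconjugationcommutesHurwitz}), I would check that $S$ is a subgroup: if $h_1,h_2\in S$, then applying the Hurwitz sequence realizing the conjugate of ${\cal F}$ by $h_2$ to the factorization $h_1^{-1}{\cal F}h_1$ yields, by the commutation property, the factorization $(h_2h_1)^{-1}{\cal F}(h_2h_1)$, and invertibility of Hurwitz moves gives closure under inverses. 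As a first nontrivial element I would note $\Delta\in S$: since $\Delta^{-1}\sigma_i\Delta=\sigma_{3-i}$, the conjugate $\Delta^{-1}{\cal F}\Delta=(\sigma_2,\sigma_1,\sigma_2,\sigma_2,\sigma_1,\sigma_2)$ is again a factorization of the positive braid $\Delta^2$ into standard positive half-twists, hence Hurwitz equivalent to ${\cal F}$ without global conjugation by Lemma~\ref{lstandardfactorizationsHurwitzequivalent}.

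The essential input is that $S$ is in fact all of $B_3$, and here I would invoke the main classification result of this subsection, Theorem~\ref{tmainfactorizationpositivehalftwists}. For any $h\in B_3$, centrality of $\Delta^2$ gives that $h^{-1}{\cal F}h$ is again a factorization of $\Delta^2$, and each of its factors is a conjugate of a positive half-twist, hence is a positive half-twist by Corollary~\ref{cpositivehalftwistconjugacy}. The proof of Theorem~\ref{tmainfactorizationpositivehalftwists} reduces \emph{any} factorization of $\Delta^2$ into positive half-twists to the standard factorization ${\cal F}$ using only Hurwitz moves --- this is exactly the point emphasized in this subsection, that the complexity-reduction argument of Lemma~\ref{lHurwitzmovecomplexitychange} together with the complexity-zero case handled by Lemma~\ref{lstandardfactorizationsHurwitzequivalent} never invokes global conjugation. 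Applying this reduction to $h^{-1}{\cal F}h$ shows $h\in S$, whence $S=B_3$ and the lemma follows.

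The main obstacle is precisely the equality $S=B_3$, which is where the genuine mathematics enters. The formal subgroup argument supplies for free only that $\Delta\in S$, and one cannot upgrade this by an elementary rotation argument: the full ``rotation'' of a factorization realizable by Hurwitz moves conjugates the remaining factors and differs from a global conjugate of ${\cal F}$ by a pure cyclic shift, which is generally \emph{not} a composition of Hurwitz moves, so global conjugation by a single factor such as $\sigma_1$ cannot be absorbed directly. Thus the classification theorem is doing the real work, and I would take care to record that Theorem~\ref{tmainfactorizationpositivehalftwists} is established independently of the present lemma (which, as noted, is not used in the sequel), so that no circularity arises.
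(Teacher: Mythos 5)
Your proposal is correct, but it takes a genuinely different route from the paper's. Both arguments begin identically, using Proposition~\ref{pglobalconjugationcommutesHurwitz} to reduce to showing that a single global conjugate $h^{-1}{\cal F}h$ of the standard factorization is reachable from ${\cal F}$ by Hurwitz moves alone. From there the paper stays elementary: it decomposes $h$ into Artin generators, so that it suffices to treat global conjugation by $\sigma_1$ (and by $\sigma_2$, or equivalently by $\Delta$), and then checks by direct computation that $\left(\sigma_1,\sigma_1^{-1}\sigma_2\sigma_1,\sigma_1,\sigma_1,\sigma_1^{-1}\sigma_2\sigma_1,\sigma_1\right)$ collapses after two Hurwitz moves to $\left(\sigma_2,\sigma_1,\sigma_1,\sigma_2,\sigma_1,\sigma_1\right)$, a factorization into standard positive half-twists, whence Lemma~\ref{lstandardfactorizationsHurwitzequivalent} finishes. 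This contradicts your closing claim that global conjugation by a single generator such as $\sigma_1$ ``cannot be absorbed directly'': the special shape of the standard factorization --- each conjugated $\sigma_2$ sits next to a $\sigma_1$ that undoes the conjugation in one Hurwitz move --- is exactly what makes the direct absorption work, and no rotation argument is involved. Your route instead quotes the classification machinery of the whole subsection (Theorem~\ref{tmainfactorizationpositivehalftwists} via Theorem~\ref{tfactorizationpositivehalftwistsstandard}); this is sound and non-circular, since the lemma is not used downstream, but be aware that you are invoking a property of the \emph{proof} rather than of the stated theorem: the theorem's conclusion is Hurwitz equivalence, which for a central element permits global conjugation by definition, so you must verify (as the paper asserts, and as a reading of the complexity-reduction lemmas confirms) that every complexity-reducing step there is realized by Hurwitz moves alone. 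The paper's version is shorter, self-contained, and available before the classification theorem is proved; yours buys a more conceptual statement --- the lemma becomes automatic for any class of factorizations whose classification proof avoids global conjugation --- at the cost of a heavier dependency. Your subgroup argument for $S$ and the verification $\Delta\in S$ are correct but become redundant once the classification theorem is invoked, since that already gives $S=B_3$ outright.
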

\begin{proof}
Proposition~\ref{pglobalconjugationcommutesHurwitz} states that global conjugation commutes with Hurwitz moves. Thus, it suffices to establish the statement if ${\cal F}'$ is obtained from the standard factorization ${\cal F} = \left(\sigma_1,\sigma_2,\sigma_1,\sigma_1,\sigma_2,\sigma_1\right)$ of $\Delta^2$ by a global conjugation move. 

A global conjugation move by an element $g\in B_3$ is the composition of a finite sequence of \textit{elementary} global conjugation moves, where each elementary global conjugation move is global conjugation by an Artin generator $\sigma_i$ for some $i\in \{1,2\}$. In particular, it suffices to establish the statement if ${\cal F}'$ is obtained from the standard factorization ${\cal F}$ of $\Delta^2$ by an elementary global conjugation move. The global conjugation of the standard factorization by $\sigma_1$ is \[\left(\sigma_1,\sigma_1^{-1}\sigma_2\sigma_1,\sigma_1,\sigma_1,\sigma_1^{-1}\sigma_2\sigma_1,\sigma_1\right)\sim \left(\sigma_2,\sigma_1,\sigma_1,\sigma_2,\sigma_1,\sigma_1\right),\] where the Hurwitz equivalence $\sim$ is realized by two Hurwitz moves. However, Lemma~\ref{lstandardfactorizationsHurwitzequivalent} implies that this latter factorization of $\Delta^2$ into standard positive half-twists is Hurwitz equivalent to the standard factorization ${\cal F}$ of $\Delta^2$ by a finite sequence of Hurwitz moves that does not include global conjugation. 

Similarly, by a symmetric argument, the global conjugation of ${\cal F}$ by $\sigma_2$ is Hurwitz equivalent to ${\cal F}$ by a finite sequence of Hurwitz moves that does not include global conjugation. (Alternatively, we can observe that conjugation of ${\cal F}$ by $\Delta$ is a factorization of $\Delta^2$ into standard positive half-twists, and $B_3 = \left\langle \sigma_1, \Delta \right\rangle$.) Therefore, the statement is established. 
\end{proof}

If ${\cal F}$ is a factorization of $\Delta^2$ into positive half-twists of minimal complexity in its Hurwitz equivalence class, then the strategy is to show that $c\left({\cal F}\right) = 0$. Ultimately, the following reformulation of Lemma~\ref{lstandardfactorizationsHurwitzequivalent} will complete the proof.

\begin{lemma}
\label{lstandardfactorizationspositivehalftwistsHurwitzequivalent}
If ${\cal F}$ is a factorization of $\Delta^2$ into positive half-twists and if $c\left({\cal F}\right) = 0$, then ${\cal F}$ is Hurwitz equivalent to the standard factorization $\Delta^2\equiv \left(\sigma_1,\sigma_2,\sigma_1,\sigma_1,\sigma_2,\sigma_1\right)$.
\end{lemma}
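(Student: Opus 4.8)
The plan is to reduce the statement directly to Lemma~\ref{lstandardfactorizationsHurwitzequivalent}. The essential observation is that the hypothesis $c\left({\cal F}\right) = 0$, combined with the hypothesis that every factor is a positive half-twist, forces each factor to be a \emph{standard} positive half-twist. Once this is known, ${\cal F}$ is literally a factorization of the positive braid $\Delta^2 \in B_3^{+}$ into standard positive half-twists (equivalently, a product expansion of $\Delta^2$ in the Artin generators), and the same is true of the target standard factorization, so Lemma~\ref{lstandardfactorizationsHurwitzequivalent} closes the argument at once.

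First I would spell out the reduction to standard positive half-twists. Write ${\cal F} = \left(g_1,g_2,\dots,g_k\right)$ with each $g_j$ a positive half-twist. Since $c\left({\cal F}\right)$ is the sum of the absolute values of the Garside powers of the $g_j$, the hypothesis $c\left({\cal F}\right) = 0$ forces every $g_j$ to have Garside power zero. Corollary~\ref{cGarsidepowerpositivehalftwist} states that a power of a positive half-twist has Garside power zero if and only if it is a power of a standard positive half-twist; as each $g_j$ is a first power, we conclude $g_j = \sigma_{i_j}$ for some $i_j \in \{1,2\}$. Proposition~\ref{pnumberoffactorsinfactorization}, applied to $g = \Delta^2$ with all factors positive half-twists, gives $k = 6$, so ${\cal F}$ is a six-term product expansion of $\Delta^2$ in the Artin generators $\sigma_1,\sigma_2$.

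Next I would observe that the standard factorization is of the same kind. Since $\Delta = \sigma_1\sigma_2\sigma_1$, we have $\Delta^2 = \sigma_1\sigma_2\sigma_1\sigma_1\sigma_2\sigma_1 \in B_3^{+}$, so $\left(\sigma_1,\sigma_2,\sigma_1,\sigma_1,\sigma_2,\sigma_1\right)$ from Definition~\ref{dstandardfactorization} is itself a factorization of the positive braid $\Delta^2$ into standard positive half-twists. Thus both ${\cal F}$ and the standard factorization are factorizations of the single positive braid $\Delta^2$ into standard positive half-twists, and Lemma~\ref{lstandardfactorizationsHurwitzequivalent} yields that they are Hurwitz equivalent by a finite sequence of Hurwitz moves not involving global conjugation. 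This completes the proof.

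There is essentially no genuine obstacle here: all the substantive content has been packaged into the earlier results, and in particular into Garside's embedding theorem (Theorem~\ref{tmonoidembedding}) underlying Lemma~\ref{lstandardfactorizationsHurwitzequivalent}. The only points requiring care are the two bookkeeping steps above, namely confirming via Proposition~\ref{pnumberoffactorsinfactorization} that the number of factors is exactly six and confirming that the phrase ``positive half-twist'' is being read as the first power so that Corollary~\ref{cGarsidepowerpositivehalftwist} applies to identify each factor with some $\sigma_{i_j}$.
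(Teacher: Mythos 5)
Your proof is correct and follows essentially the same route as the paper: the paper invokes Proposition~\ref{pcomplexityzeroiffstandard} (itself a restatement of Corollary~\ref{cGarsidepowerpositivehalftwist}) to identify the complexity-zero hypothesis with every factor being a standard positive half-twist, and then appeals to Lemma~\ref{lstandardfactorizationsHurwitzequivalent} exactly as you do. The extra bookkeeping you include (the factor count via Proposition~\ref{pnumberoffactorsinfactorization}) is harmless but not needed.
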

\begin{proof}
The statement is equivalent to Lemma~\ref{lstandardfactorizationsHurwitzequivalent}. Indeed, Proposition~\ref{pcomplexityzeroiffstandard} implies that $c\left({\cal F}\right) = 0$ if and only if ${\cal F}$ is a factorization of $\Delta^2$ into standard positive half-twists.
\end{proof}

In the rest of the proof, we will establish constraints on factorizations of $\Delta^2$ into positive half-twists that have minimal complexity in their Hurwitz equivalence class. Ultimately, we will use these constraints to establish that every such factorization has complexity zero. The following terminology will be convenient in the sequel.

\begin{definition}
We refer to a factorization as a \textit{minimal complexity factorization} if its complexity is minimal in its Hurwitz equivalence class. We also write that a factorization has \textit{minimal complexity} if it is a minimal complexity factorization.
\end{definition}

In general, we will establish strong constraints on minimal complexity factorizations into positive half-twists (even factorizations of elements other than $\Delta^2$). The following terminology is necessary for the subsequent statement. 

\begin{definition}
If ${\cal F} = \left(g_1,g_2,\dots,g_k\right)$ is a factorization of $g_1g_2\cdots g_k$, then a \textit{subfactorization} of ${\cal F}$ is a factorization ${\cal F}'=\left(g_{j'},g_{j'+1},\dots,g_j\right)$ of $g_{j'}g_{j'+1}\cdots g_j$ for some $1\leq j'\leq j\leq k$. 
\end{definition}

The following observation is the main motivation for the previous definition. 

\begin{proposition}
If ${\cal F}$ is a minimal complexity factorization, then every subfactorization of ${\cal F}$ is also a minimal complexity factorization. 
\end{proposition}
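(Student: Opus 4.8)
The plan is to prove the contrapositive: if some subfactorization ${\cal F}' = \left(g_{j'}, g_{j'+1}, \dots, g_j\right)$ of ${\cal F} = \left(g_1, g_2, \dots, g_k\right)$ fails to have minimal complexity in its own Hurwitz equivalence class, then ${\cal F}$ also fails to have minimal complexity, contradicting the hypothesis. The structural observation driving everything is that complexity is additive over the factors (Definition~\ref{dcomplexityfactorization}): if $c_{\mathrm{out}}$ denotes the sum of the absolute values of the Garside powers of the factors lying outside the block (the factors $g_l$ with $l < j'$ or $l > j$, which I will call the \emph{tails}), then $c\left({\cal F}\right) = c_{\mathrm{out}} + c\left({\cal F}'\right)$. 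Since the tails are literally unchanged whenever we modify only the block, any reduction of the block's complexity transfers verbatim to a reduction of $c\left({\cal F}\right)$.

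First I would treat the case where the product $h = g_{j'}g_{j'+1}\cdots g_j$ is not central, where the Hurwitz equivalence class of ${\cal F}'$ is generated by Hurwitz moves alone. If ${\cal F}'$ is not minimal, choose ${\cal F}''$ Hurwitz equivalent to ${\cal F}'$ with $c\left({\cal F}''\right) < c\left({\cal F}'\right)$, realized by a finite sequence of Hurwitz moves. Each such move acts on a pair of adjacent factors whose indices lie in $\{j', \dots, j\}$, so the identical sequence of Hurwitz moves, applied at the corresponding positions of ${\cal F}$, is a legitimate sequence of Hurwitz moves on ${\cal F}$; it replaces the block by ${\cal F}''$ and fixes the tails, since the block retains the product $h$ at every stage. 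The resulting factorization ${\cal G}$ is Hurwitz equivalent to ${\cal F}$ and satisfies $c\left({\cal G}\right) = c_{\mathrm{out}} + c\left({\cal F}''\right) < c\left({\cal F}\right)$, contradicting the minimality of ${\cal F}$.

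The remaining case, where $h$ is central, is the delicate point, because the equivalence class of ${\cal F}'$ now also contains global conjugation moves, and a block conjugation $h_0{\cal F}'h_0^{-1}$ cannot be lifted naively: conjugating the tails by $h_0$ may change their Garside powers. The tool I would use is that a block with central product commutes freely past any adjacent factor. Indeed, sliding a neighbouring factor $a$ across the block by repeatedly applying the Hurwitz move $\left(a,b\right)\sim\left(b,b^{-1}ab\right)$ leaves every $b_i$ untouched and replaces $a$, at the far side, by $h^{-1}ah = a$. Hence, using only Hurwitz moves, I can relocate the central block to the front of ${\cal F}$ without altering any factor (in particular preserving complexity), after which the tails assemble into a single complementary block whose product is the central element $\Delta^{2-2m}$ when $h = \Delta^{2m}$. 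I would then combine this relocation with an ambient global conjugation of all of ${\cal F}$ (permissible because $\Delta^2$ is central) together with Proposition~\ref{pglobalconjugationcommutesHurwitz}, which commutes global conjugation past Hurwitz moves, to carry out the block's global conjugation. The main obstacle is precisely this last reconciliation: ensuring that realizing a global conjugation move of the subfactorization does not inflate the complexity contributed by the tails, so that the net change to $c\left({\cal F}\right)$ is still a strict decrease. I expect this central-product bookkeeping to be the crux of the argument, with the non-central case and the Hurwitz-move lifting being entirely routine.
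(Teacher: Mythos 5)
Your main argument is exactly the paper's proof, which is a one-liner: a Hurwitz move applied at positions inside the block is a Hurwitz move on the ambient factorization, it preserves the block's product and leaves the tails untouched, and complexity is additive over factors, so any complexity reduction of the subfactorization transfers to the whole. That part of your proposal is complete and correct.

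The central-product case you single out is a genuine subtlety that the paper's proof silently ignores (it speaks only of Hurwitz moves), so you deserve credit for noticing it; but your proposed resolution is not completed, and as you yourself suspect, it cannot be completed along the lines you sketch: an ambient global conjugation by $h_0$ conjugates the tails as well, and nothing prevents their Garside powers from increasing, so there is no way in general to realize a global conjugation of the block while controlling $c_{\mathrm{out}}$. The correct way to dispose of this case is to observe that it is vacuous in every situation where the proposition is invoked. The abelianization $\epsilon:B_3\to \mathbb{Z}$ sends the $e$th power of a positive half-twist to $e\geq 1$ and sends every central element $\Delta^{2m}$ to $6m$; a proper nonempty subfactorization of a factorization of $\Delta^2$ into powers of positive half-twists has product with abelianization strictly between $0$ and $6$, hence non-central, and the adjacent pairs and triples to which the proposition is applied in the proof of Theorem~\ref{tfactorizationpositivehalftwistsstandard} (for $\Delta^k$) have abelianization at most $3$, again non-central. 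So you should replace your attempted central-case bookkeeping with this observation (or restrict the statement to subfactorizations with non-central product); as written, that portion of your proof is an acknowledged gap, even though it does not affect any application in the paper.
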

\begin{proof}
If ${\cal F}'$ is a subfactorization of ${\cal F}$, then a finite sequence of Hurwitz moves applied to ${\cal F}'$ corresponds to a finite sequence of Hurwitz moves applied to ${\cal F}$. Therefore, the statement is established. 
\end{proof} 

In particular, the condition that the complexity of ${\cal F}$ is minimized in its Hurwitz equivalence class also implies the set of local conditions that the complexity of each subfactorization ${\cal F}'$ of ${\cal F}$ is minimized in its Hurwitz equivalence class. We use this simple observation to restrict our attention to establishing constraints on minimal complexity factorizations with small numbers of factors.

We recall Lemma~\ref{lHurwitzmovecomplexitychange} which computes the change in complexity of a factorization into powers of positive half-twists after the application of a Hurwitz move. The following consequence of Lemma~\ref{lHurwitzmovecomplexitychange} is an important technical step for characterizing minimal complexity factorizations into positive half-twists.

\begin{lemma}
\label{lminimalcomplexitypositivehalftwists}
We adopt Setup~\ref{setfactorization} with $k=2$ and where $g_1$ and $g_2$ are powers of non-standard positive half-twists. If ${\cal F} = \left(g_1,g_2\right)$ is a minimal complexity factorization of $g_1g_2$, then the following conditions are satisfied:
\begin{description}
\item[(i)] If $g_1$ is a non-standard positive half-twist, then the maximal right-divisor $\xi_1''$ of $\xi_1$ that is left-dual to a left-divisor of $\overline{\rho_2}$ is a right-divisor of $\overline{\tau_1}$.
\item[(ii)] If $g_2$ is a non-standard positive half-twist, then the maximal left-divisor $\xi_2''$ of $\xi_2$ that is right-dual to a right-divisor of $\overline{\tau_1}$ is a left-divisor of $\overline{\rho_2}$.
\end{description}
\end{lemma}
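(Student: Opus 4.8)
The plan is to extract both statements from the complexity-change formula in Lemma~\ref{lHurwitzmovecomplexitychange} by invoking the minimality of ${\cal F}$. For part \textbf{(i)} I would apply the Hurwitz move $\left(g_1,g_2\right)\to \left(g_1g_2g_1^{-1},g_1\right)$ to obtain a factorization ${\cal F}'$ of $g_1g_2$, and for part \textbf{(ii)} the symmetric move $\left(g_1,g_2\right)\to \left(g_2,g_2^{-1}g_1g_2\right)$ together with the symmetric version of Lemma~\ref{lHurwitzmovecomplexitychange}. Since ${\cal F}$ is a minimal complexity factorization, we must have $c\left({\cal F}'\right)\geq c\left({\cal F}\right)$, and this single inequality is the engine driving the entire argument.

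For part \textbf{(i)}, Lemma~\ref{lHurwitzmovecomplexitychange} gives $c\left({\cal F}'\right) = c\left({\cal F}\right) + e_1 - 1 - 2\left[\omega\left(\xi_1''\right)-\omega\left(\tau_1\right)\right] + \epsilon$ with $\epsilon\leq 1$. The hypothesis that $g_1$ is a non-standard positive half-twist forces $e_1 = 1$, so the inequality $c\left({\cal F}'\right)\geq c\left({\cal F}\right)$ collapses to $2\left[\omega\left(\xi_1''\right)-\omega\left(\tau_1\right)\right]\leq \epsilon\leq 1$. Since the left-hand side is an even integer, I would conclude $\omega\left(\xi_1''\right)\leq \omega\left(\tau_1\right)$. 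The task then reduces to upgrading this inequality of $\omega$-values into the divisibility assertion that $\xi_1''$ is a right-divisor of $\overline{\tau_1}$.

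To do this I would first record that $\overline{\tau_1}$ is a cut right-divisor of $\xi_1$ with $\omega\left(\overline{\tau_1}\right) = \omega\left(\tau_1\right)$ by Proposition~\ref{pomegacutclosure}. The heart of the conversion is showing that $\xi_1''$ is itself a cut right-divisor of $\xi_1$: when $\xi_1''$ is nontrivial it is left-dual to a left-divisor $\rho_2''$ of $\overline{\rho_2}$, so $\xi_1''\overline{\rho_2}$ (hence $\xi_1\overline{\rho_2}$) is divisible by $\Delta$, and Lemma~\ref{lmaximaldivisorcutdivisor} then identifies $\xi_1''$ as a cut right-divisor of $\xi_1$ (the trivial case being immediate). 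With $\xi_1''$ and $\overline{\tau_1}$ both cut right-divisors of $\xi_1$ and $\omega\left(\xi_1''\right)\leq \omega\left(\overline{\tau_1}\right)$, the right-handed analogue of the monotonicity statement Proposition~\ref{pomeganumbercutdivisor} yields that $\xi_1''$ is a right-divisor of $\overline{\tau_1}$, as desired. Part \textbf{(ii)} is entirely symmetric, replacing $e_1,\xi_1'',\tau_1,\overline{\tau_1}$ by $e_2,\xi_2'',\rho_2,\overline{\rho_2}$ and interchanging left-divisors with right-divisors throughout.

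The main obstacle I anticipate is the bookkeeping around the precise divisor $\xi_1''$: Lemma~\ref{lHurwitzmovecomplexitychange} is phrased for the maximal right-divisor of $\xi_1$ left-dual to a left-divisor of $\xi_2$, under the standing assumption that the relevant left-divisor lies inside $\overline{\rho_2}$, whereas the present statement defines $\xi_1''$ directly through $\overline{\rho_2}$. I would need to check that these two divisors coincide — equivalently that the standing assumption of Lemma~\ref{lHurwitzmovecomplexitychange} is met — using Proposition~\ref{pcutclosureconjugate} to see that in $\xi_2 = \overline{\rho_2}\sigma_{i_2}^{e_2-1}\overline{\tau_2}$ it is exactly $\overline{\rho_2}$ that is available for dualization against a right-divisor of $\xi_1$. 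A secondary and essentially routine point is establishing the right-divisor version of Proposition~\ref{pomeganumbercutdivisor}, which follows from the left-right symmetry already used implicitly via Proposition~\ref{piDB3}.
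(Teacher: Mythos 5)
Your proposal is correct and follows essentially the same route as the paper: apply the Hurwitz move $\left(g_1,g_2\right)\to\left(g_1g_2g_1^{-1},g_1\right)$, invoke Lemma~\ref{lHurwitzmovecomplexitychange} with $e_1=1$ and minimality to get $\omega\left(\xi_1''\right)\leq\omega\left(\tau_1\right)$, identify $\xi_1''$ as a cut right-divisor via Lemma~\ref{lmaximaldivisorcutdivisor}, and convert the $\omega$-inequality into the divisibility claim via (the right-divisor version of) Proposition~\ref{pomeganumbercutdivisor}. The paper phrases the complexity step as a direct contradiction rather than your parity argument, but the content is identical, and your flagged bookkeeping points are handled implicitly in the paper at the same level of detail.
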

\begin{proof}
We prove \textbf{(i)} since the proof of \textbf{(ii)} is similar. Firstly, Lemma~\ref{lmaximaldivisorcutdivisor} implies that $\xi_1''$ is a cut right-divisor of $\xi_1$. Furthermore, Proposition~\ref{pomeganumbercutdivisor} implies that $\xi_1''$ is a right-divisor of $\overline{\tau_1}$ if and only if $\omega\left(\xi_1''\right)\leq \omega\left(\overline{\tau_1}\right)$. Let us consider the factorization $\left(g_1g_2g_1^{-1},g_1\right)$ of $g_1g_2$ obtained from a Hurwitz move applied to the factorization ${\cal F}=\left(g_1,g_2\right)$ of $g_1g_2$. If $\omega\left(\xi_1''\right)>\omega\left(\tau_1\right)$, then Lemma~\ref{lHurwitzmovecomplexitychange} with $e_1 = 1$ implies that the complexity of the factorization $\left(g_1g_2g_1^{-1},g_1\right)$ is strictly less than the complexity of ${\cal F}$, since $\epsilon\leq 1$. Of course, this would contradict the assumption that ${\cal F}$ is a minimal complexity factorization. Therefore, the statement is established. 
\end{proof}

The following technical statement concerns how products of adjacent factors combine from the perspective of our theory of duality in a minimal complexity factorization ${\cal F}$ into positive half-twists. The statement is an improvement on Lemma~\ref{lminimalcomplexitypositivehalftwists}.

\begin{lemma}
\label{lboundedcontraction}
We adopt Setup~\ref{setfactorization} with $k = 2$ and where $g_1$ and $g_2$ are non-standard positive half-twists. Let us assume that ${\cal F} = \left(g_1,g_2\right)$ is a minimal complexity factorization of $g_1g_2$. 

In this case, the maximal right-divisor $\xi_1''$ of $\xi_1$ that is left-dual to a left-divisor of $\xi_2$ is a right-divisor of the cut closure $\overline{\tau_1}$. Furthermore, the maximal left-divisor $\xi_2''$ of $\xi_2$ that is right-dual to a right-divisor of $\xi_1$ is a left-divisor of the cut closure $\overline{\rho_2}$. 

Finally, $\xi_1''$ is not left-dual to $\overline{\rho_2}$ and $\xi_2''$ is not right-dual to $\overline{\tau_1}$. 
\end{lemma}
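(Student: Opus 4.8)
The plan is to specialize Setup~\ref{setfactorization} to the present situation, where $e_1 = e_2 = 1$. Then Proposition~\ref{pcutclosureconjugate} gives $\xi_j = \overline{\rho_j}\,\overline{\tau_j}$ for $j \in \{1,2\}$, with $\overline{\rho_j}$ a cut left-divisor and $\overline{\tau_j}$ a cut right-divisor of $\xi_j$, while the pseudo-symmetry of duals (Proposition~\ref{ppseudosymmetryduality}) and Proposition~\ref{pomegacutclosure} give $\omega(\overline{\rho_j}) = \omega(\overline{\tau_j}) = \omega(\tau_j) =: m_j$. The first thing I would observe is that the two divisors named in the statement are the two halves of a single dual pair: by Proposition~\ref{pmaximaldivisordualindivisibleDelta} and Proposition~\ref{pdualbraidproduct} both $\xi_1''$ and $\xi_2''$ are governed by the maximal power $\Delta^{k}$ dividing $\xi_1\xi_2$, so by the uniqueness of duals (Proposition~\ref{puniquenessofduals}) the braid $\xi_1''$ is dual to $\xi_2''$ and $k = \omega(\xi_1'') = \omega(\xi_2'')$. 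Since $\xi_1''$ and $\overline{\tau_1}$ are both cut right-divisors of $\xi_1$ (Lemma~\ref{lmaximaldivisorcutdivisor} and Proposition~\ref{pcutclosureconjugate}), the right-handed analogue of Proposition~\ref{pomeganumbercutdivisor} collapses all four assertions of the lemma to the single pair of \emph{strict} inequalities $k < m_1$ and $k < m_2$: the first two assertions say $k \le m_1$ and $k \le m_2$, and the last two say $\xi_2'' \neq \overline{\rho_2}$ and $\xi_1'' \neq \overline{\tau_1}$, i.e.\ $k \neq m_2$ and $k \neq m_1$.

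For the non-strict bounds I would argue by minimality. In the regime where the overlap stays within the cut closures, so that $\rho_2'' = \xi_2''$ is a left-divisor of $\overline{\rho_2}$ and the hypothesis of Lemma~\ref{lHurwitzmovecomplexitychange} is met, the move $(g_1, g_2) \to (g_1 g_2 g_1^{-1}, g_1)$ changes the complexity by $e_1 - 1 - 2(k - m_1) + \epsilon = -2(k - m_1) + \epsilon$ with $\epsilon \le 1$; were $k > m_1$ this would be strictly negative, contradicting minimality, and the symmetric move gives $k \le m_2$. The one regime needing care is an overlap reaching past $\overline{\rho_2}$ into $\overline{\tau_2}$ (equivalently $k > m_2$), where Lemma~\ref{lHurwitzmovecomplexitychange} does not apply verbatim; here I would invoke the principle recorded after Lemma~\ref{lHurwitzmovecomplexitychange} that a larger overlap only decreases the complexity further, making this case excluded a fortiori, and if a fully rigorous treatment is wanted I would redo the junction computation with Proposition~\ref{pGarsidenormalformproduct}, Proposition~\ref{ptechnicaldualityconjugate} and Lemma~\ref{ltripleproductindivisibleDelta} exactly as in the derivation of Lemma~\ref{lHurwitzmoveGarsidenormalform}. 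A clean simplification throughout is that, since $\xi_2''$ is a cut left-divisor of $\xi_2$, Proposition~\ref{pcutdivisorsubdivisor} forces it to be a cut left-divisor of $\rho_2$ whenever it is a left-divisor of $\rho_2$; this excludes case~\textbf{(iii)} of Lemma~\ref{lHurwitzmoveGarsidenormalform} and pins $\epsilon = -1$ when $\rho_2'' \subsetneq \overline{\rho_2}$ and $\epsilon \le 0$ when $\rho_2'' = \overline{\rho_2}$.

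With the bounds $k \le m_1, m_2$ in hand, the strict inequalities follow by a bootstrap. If $k < m_2$, then $\rho_2''$ is a proper cut left-divisor of $\overline{\rho_2}$, so the move is in case~\textbf{(i)} with $\epsilon = -1$, and minimality of $-2(k - m_1) - 1 \ge 0$ forces $k < m_1$; symmetrically $k < m_1$ forces $k < m_2$. Hence either both inequalities are strict, as desired, or else $k = m_1 = m_2$, in which case $\xi_1'' = \overline{\tau_1}$ is dual to $\xi_2'' = \overline{\rho_2}$ and the overlap exactly exhausts both cut closures. This boundary configuration is the main obstacle: here both the move and its symmetric version merely \emph{preserve} complexity (one computes $\epsilon = 0$), so no single Hurwitz move yields a contradiction. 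I would resolve it by passing to the complexity-preserving conjugate $(g_1 g_2 g_1^{-1}, g_1)$, where $\xi_1\xi_2 = \overline{\rho_1}\,\Delta^{m}\,\overline{\tau_2}$, and using the internal duality of the half-twist powers (Proposition~\ref{pdivisorconjugatedual}) to locate, in the conjugated factorization, an overlap that is a \emph{proper} cut divisor and hence supports a genuinely complexity-reducing move, contradicting minimality of the original factorization. Carrying out this final step — determining which of cases~\textbf{(i)}--\textbf{(iii)} of Lemma~\ref{lHurwitzmoveGarsidenormalform} governs the conjugated pair and checking the sign and magnitude of the resulting Garside power — is where essentially all of the bookkeeping lies, and is the step I expect to be genuinely delicate.
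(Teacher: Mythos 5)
Your reduction of the four assertions to the pair of strict inequalities $\omega(\xi_1'') < \omega(\tau_1)$ and $\omega(\xi_2'') < \omega(\tau_2)$ is a reasonable reorganization, and your use of Lemma~\ref{lHurwitzmovecomplexitychange} with $e_1=1$ in the ``safe regime'' is exactly the paper's Lemma~\ref{lminimalcomplexitypositivehalftwists}. But there is a genuine gap at the step you dismiss a fortiori. Lemma~\ref{lHurwitzmovecomplexitychange} (and the underlying Lemma~\ref{lHurwitzmoveGarsidenormalform}, via Proposition~\ref{ptechnicaldualityconjugate}) carries the standing hypothesis that the relevant left-divisor $\rho_2''$ of $\xi_2$ lies inside the cut closure $\overline{\rho_2}$; when the overlap reaches past $\overline{\rho_2}$ into $\overline{\tau_2}$, no complexity formula has been established, and the ``principle recorded after Lemma~\ref{lHurwitzmovecomplexitychange}'' that you invoke is only an informal gloss on the formula \emph{within} its domain of validity. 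Since proving $k\le m_1$ by your route requires $k\le m_2$ (to be in the safe regime) and vice versa, the argument is circular precisely in the case $k>m_1$ and $k>m_2$, and ``redoing the junction computation exactly as in Lemma~\ref{lHurwitzmoveGarsidenormalform}'' would hit the same hypothesis. The paper escapes this by a two-stage argument you do not reproduce: it first proves the restricted statement (Lemma~\ref{lminimalcomplexitypositivehalftwists}) for the maximal right-divisor of $\xi_1$ dual to a left-divisor of $\overline{\rho_2}$ \emph{only} — which is always in the safe regime — and then bootstraps to divisors of all of $\xi_2$ by a purely combinatorial argument: assuming $\overline{\tau_1}$ is a proper right-divisor of $\xi_1''$, it pins down $\omega(\rho_2'')=\omega(\overline{\rho_2})$ with $\rho_2''\ne\overline{\rho_2}$ (using that $\overline{\tau_1}$ begins with two distinct generators, so its right-dual $\rho_2''$ cannot end with two distinct generators while $\overline{\rho_2}$ must), concludes that $\rho_2''$ is not a cut left-divisor of $\xi_2$, and then Proposition~\ref{pmaximaldivisordualcharacterization}~\textbf{(i)} forces $\xi_1''=\overline{\tau_1}$, a contradiction.

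Your treatment of the ``boundary case'' $k=m_1=m_2$ is also off target. You identify it as the main obstacle and propose an unexecuted plan (pass to the conjugated factorization and hunt for a complexity-reducing move there), but the paper disposes of it in one sentence with no appeal to minimality: if $\xi_1''$ were left-dual to $\overline{\rho_2}$, then since $\overline{\rho_2}$ is a cut left-divisor of $\xi_2$ and $\overline{\tau_2}$ begins with a product of two distinct generators (Proposition~\ref{pcutclosureconjugate}, using $e_2=1$), both alternatives of Proposition~\ref{pmaximaldivisordualcharacterization} fail, so $\xi_1''$ could not be the \emph{maximal} right-divisor dual to a left-divisor of $\xi_2$ — the configuration is combinatorially impossible. (This is exactly the point where $e_2=1$ matters: the remark after Lemma~\ref{lsquareboundedcontraction} notes that for squares of positive half-twists the analogous conclusion genuinely fails.) So the final clause of the lemma needs no Hurwitz move at all, and the delicate bookkeeping you anticipate there is not where the difficulty of this lemma lives; it lives in the circularity described above.
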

\begin{proof}
We will establish the first statement that $\xi_1''$ is a right-divisor of the cut closure $\overline{\tau_1}$. The proof that $\xi_2''$ is a left-divisor of the cut closure $\overline{\rho_2}$ is analogous.

Firstly, Lemma~\ref{lmaximaldivisorcutdivisor} implies that $\xi_1''$ is a cut right-divisor of $\xi_1$. Let us assume, for a contradiction, that $\xi_1''$ is not a right-divisor of the cut closure $\overline{\tau_1}$, or equivalently, $\overline{\tau_1}$ is a proper right-divisor of $\xi_1''$. In this case, Proposition~\ref{pcutdivisorsubdivisor} implies that $\overline{\tau_1}$ is a proper cut right-divisor of $\xi_1''$. 

We observe that the left-divisor $\chi_1''$ of $\xi_2$ that is right-dual to $\xi_1''$ cannot be a left-divisor of $\overline{\rho_2}$. Indeed, if $\chi_1''$ were a left-divisor of $\overline{\rho_2}$, then this would contradict the assumption that ${\cal F}$ is a minimal complexity factorization, according to Lemma~\ref{lminimalcomplexitypositivehalftwists}. We deduce that $\overline{\rho_2}$ is a proper left-divisor of $\chi_1''$.

If $\rho_2''$ is the left-divisor of $\chi_1''$ that is right-dual to $\overline{\tau_1}$, then Lemma~\ref{lminimalcomplexitypositivehalftwists} implies that $\rho_2''$ is a left-divisor of $\overline{\rho_2}$. However, we have assumed that $\overline{\tau_1}$ is a proper right-divisor of $\xi_1''$. In particular, we deduce that $\omega\left(\rho_2''\right) = \omega\left(\overline{\rho_2}\right)$. Indeed, if $\omega\left(\rho_2''\right)<\omega\left(\overline{\rho_2}\right)$, then $\overline{\rho_2}$ is right-dual to a right-divisor of $\xi_1''$ for which $\overline{\tau_1}$ is a proper right-divisor. However, this contradicts the assumption that ${\cal F}$ is a minimal complexity factorization, according to Lemma~\ref{lminimalcomplexitypositivehalftwists}. 

We use the constraint $\omega\left(\rho_2''\right) = \omega\left(\overline{\rho_2}\right)$ to obtain a contradiction and complete the proof of the first statement. Proposition~\ref{pcutclosureconjugate} implies that $\overline{\tau_1}$ begins with a product of two distinct generators. Lemma~\ref{lcharacterizationdualpositivebraids} implies that $\rho_2''$ does not end with a product of two distinct generators, since $\overline{\tau_1}$ is left-dual to $\rho_2''$. In particular, $\rho_2''\neq \overline{\rho_2}$, since Proposition~\ref{pcutclosureconjugate} implies that $\overline{\rho_2}$ ends with a product of two distinct generators.

However, if $\rho_2''\neq \overline{\rho_2}$, then $\rho_2''$ is not a cut left-divisor of $\xi_2$. Proposition~\ref{pmaximaldivisordualcharacterization}~\textbf{(i)} now implies that $\overline{\tau_1}$ is the maximal right-divisor of $\xi_1$ that is left-dual to a left-divisor of $\xi_2$. Of course, this contradicts our initial assumption that $\overline{\tau_1}$ is a proper right-divisor of $\xi_1''$.

The final statement is a consequence of Proposition~\ref{pmaximaldivisordualcharacterization}~\textbf{(ii)} since $\overline{\rho_2}$ is a cut left-divisor of $\xi_2$ that is succeeded by a product of two distinct generators, according to Proposition~\ref{pcutclosureconjugate}. Therefore, the statements are established.
\end{proof}

We now turn our attention to standard positive half-twists in a minimal complexity factorization ${\cal F}$ into powers of positive half-twists. We will find it convenient to rearrange the factors in ${\cal F}$ by a finite sequence of Hurwitz moves in order to strategically place the factors that are standard positive half-twists. However, the finite sequence of Hurwitz moves must not change the complexity of ${\cal F}$. 

Firstly, we consider a particular case where a standard positive half-twist is adjacent to the power of a non-standard positive half-twist. (We will establish statements in the greater generality of factorizations into powers of positive half-twists rather than simply factorizations into positive half-twists if it is no extra effort to do so. We will use the more general statements in Subsection~\ref{ssclassificationfactorizationspowerspositivehalftwists} and Subsection~\ref{sssingularityconstraints}.)

\begin{proposition}
\label{ptripledivisible}
We adopt Setup~\ref{setfactorization} with $k=3$. Let us assume that $\xi_1\xi_2\xi_3$ is divisible by $\Delta$, but neither $\xi_1\xi_2$ nor $\xi_2\xi_3$ is divisible by $\Delta$. In this case, $\xi_2$ is an Artin generator. 

If $g_1$ is the power of a non-standard positive half-twist, then the Hurwitz move $\left(g_1,g_2,g_3\right)\to \left(g_2,g_2^{-1}g_1g_2,g_3\right)$ does not change the complexity of the factorization. If $g_3$ is the power of a non-standard positive half-twist, then the Hurwitz move $\left(g_1,g_2,g_3\right)\to \left(g_1,g_2g_3g_2^{-1},g_2\right)$ does not change the complexity of the factorization.
\end{proposition}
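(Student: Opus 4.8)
The statement that $\xi_2$ is an Artin generator is the contrapositive of Lemma~\ref{ltripleproductindivisibleDelta}: since $\xi_1\xi_2\xi_3$ is divisible by $\Delta$ while neither $\xi_1\xi_2$ nor $\xi_2\xi_3$ is, the middle factor $\xi_2$ cannot have word length greater than one. In the notation of Setup~\ref{setfactorization} this forces $e_2=1$ and $\rho_2,\tau_2$ trivial, so that $g_2=(\sigma_{i_2})_{(\omega_2)}$ is a standard positive half-twist of Garside power $0$ (Corollary~\ref{cGarsidepowerpositivehalftwist}); write $i_2'\neq i_2$ for the other index. First I would record the local picture of the copy of $\Delta$ dividing $\xi_1\xi_2\xi_3$: exactly as in the proof of Lemma~\ref{ltripleproductindivisibleDelta} this copy must meet all three factors, and because $\xi_2=\sigma_{i_2}$ has length one, the three overlapping letters are the last generator of $\xi_1$, the letter $\sigma_{i_2}$, and the first generator of $\xi_3$. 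Hence they read $\sigma_{i_2'}\sigma_{i_2}\sigma_{i_2'}=\Delta$, so the last generator of $\xi_1$ and the first generator of $\xi_3$ are both $\sigma_{i_2'}$ (Proposition~\ref{pdualbraidproduct} rules out the competing local configurations, using that $\xi_1\xi_2$ and $\xi_2\xi_3$ are indivisible by $\Delta$).

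For the two complexity claims the plan is to compute the Garside power of the conjugate directly, using the twists built into Setup~\ref{setfactorization} to make the relevant product telescope. Consider the first move, with $g_1$ a power of a non-standard half-twist. Since $\omega(\tau_2)=0$ one has $g_1g_2=\xi_1\xi_2\Delta^{-\omega_2}$ with $\omega_2=\omega_1=\omega(\tau_1)$, whence $g_2^{-1}g_1g_2=\Delta^{-\omega(\tau_1)}\sigma_{i_2}^{-1}(\eta)_{(\omega_1)}$, where $\eta=\xi_1\xi_2$ is indivisible by $\Delta$. The crux is that $\sigma_{i_2}^{-1}$ cancels the first generator of $(\eta)_{(\omega_1)}$: the first generator of $\eta$ is the first generator of $\xi_1$, which by Lemma~\ref{lcharacterizationdualpositivebraids} (relating the first generator of $\rho_1$ to the last generator of its dual $\tau_1$, the latter being the last letter $\sigma_{i_2'}$ of $\xi_1$) equals $(\sigma_{i_2'})_{(\omega(\tau_1)+1)}=(\sigma_{i_2})_{(\omega_1)}$, so that $(\eta)_{(\omega_1)}$ begins with $\sigma_{i_2}$. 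After cancellation one is left with $\Delta^{-\omega(\tau_1)}$ times the positive braid obtained from $\eta$ by deleting its first letter, and this braid is still indivisible by $\Delta$ (were it divisible, so would be $\eta$). Thus the Garside power of $g_2^{-1}g_1g_2$ equals that of $g_1$, and as $g_2$ has Garside power $0$ the complexity is unchanged.

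The second move I would treat by the mirror-image computation: from $g_2g_3=\Delta^{-\omega(\tau_3)}(\xi_2\xi_3)_{(\omega_3)}$ one gets $g_2g_3g_2^{-1}=\Delta^{-\omega(\tau_3)}(\xi_2\xi_3)_{(\omega_3)}(\sigma_{i_2})_{(\omega_2)}^{-1}$, and here $(\sigma_{i_2})_{(\omega_2)}$ cancels the last generator of $(\xi_2\xi_3)_{(\omega_3)}$, indivisibility being preserved just as before, so the Garside power of $g_2g_3g_2^{-1}$ equals that of $g_3$. The hard part will be precisely the parity bookkeeping in these cancellations. One must check that the last generator of $\xi_3$, which is $(\sigma_{i_2'})_{(\omega(\tau_3)+1)}$ by Lemma~\ref{lcharacterizationdualpositivebraids}, becomes exactly $(\sigma_{i_2})_{(\omega_2)}$ after the $(\omega_3)$-conjugation of the Setup; this identity rests on $\omega_3=\omega_2+\omega(\tau_3)$ together with $\omega(\tau_2)=0$, so that the two independent sources of index-swapping (the Garside twist encoded in Setup~\ref{setfactorization} and the duality parity governing $\xi_3$) cancel modulo $2$. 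Getting these twist parities to line up, rather than the underlying algebra, is the delicate step; once each cancellation is in place the remaining indivisibility and Garside-power verifications are routine, and both moves are seen to preserve complexity.
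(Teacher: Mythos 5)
Your proof is correct and follows essentially the same route as the paper: Lemma~\ref{ltripleproductindivisibleDelta} forces $\xi_2$ to be a generator, Proposition~\ref{pdualbraidproduct} pins down the adjacent letters of $\xi_1$ and $\xi_3$, and Lemma~\ref{lcharacterizationdualpositivebraids} shows the first letter of $\xi_1$ (resp.\ last letter of $\xi_3$) matches $g_2$ up to the $\Delta$-conjugations of Setup~\ref{setfactorization}, so conjugation by $g_2$ is a cyclic rotation preserving the Garside power. You actually spell out the cancellation, the parity bookkeeping, and the preservation of $\Delta$-indivisibility more explicitly than the paper does, and your verifications are all accurate.
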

\begin{proof}
Firstly, Lemma~\ref{ltripleproductindivisibleDelta} implies that $\xi_2$ is an Artin generator. Furthermore, if we write $\xi_2 = \sigma_k$ for some $k\in \{1,2\}$, then Proposition~\ref{pdualbraidproduct} implies that $\xi_1$ ends with $\sigma_{k'}$ and $\xi_3$ begins with $\sigma_{k'}$, where $k\neq k'\in \{1,2\}$.

Let us consider the case where $g_1$ is the power of a non-standard positive half-twist, since the case where $g_3$ is the power of a non-standard positive half-twist is similar. In this case, we can write $\xi_1 = \rho_1\sigma_{i_1}^{e_1}\tau_1$, where $\rho_1,\tau_1\in B_3^{+}$ are non-identity positive braids such that $\rho_1$ is dual to $\tau_1$. Lemma~\ref{lcharacterizationdualpositivebraids} implies that $\xi_1$ begins with $\left(\sigma_k\right)_{\left(\omega\left(\tau_1\right)\right)}$. In particular, the Garside power of $g_2^{-1}g_1g_2$ is equal to the Garside power of $g_1$, since $g_1 = \Delta^{-\omega\left(\tau_1\right)}\left(\xi_1\right)_{\left(\omega\left(\tau_1\right)\right)}$ and $g_2 = \left(\xi_2\right)_{\left(\omega\left(\tau_1\right)\right)}$, as in Setup~\ref{setfactorization}. Therefore, the complexity of the factorization $\left(g_2,g_2^{-1}g_1g_2,g_3\right)$ is equal to the complexity of the factorization $\left(g_1,g_2,g_3\right)$.
\end{proof}

Let ${\cal F}$ be a factorization, where we have a triple of adjacent factors in ${\cal F}$ that are standard positive half-twists with product equal to $\Delta$. In this case, we can move the triple to the beginning of ${\cal F}$ by a finite sequence of Hurwitz moves that does not change the complexity. Indeed, we have the following more general statement.

\begin{proposition}
\label{pfactorizationmoveGarsideelement}
We adopt Setup~\ref{setfactorization}. If $g_2g_3\cdots g_k = \Delta$, then the composition of $k-1$ Hurwitz moves \[\left(g_1,g_2,\dots,g_k\right)\to \left(g_2,g_3,\dots,g_k,\left(g_k^{-1}g_{k-1}^{-1}\cdots g_2^{-1}\right)g_1\left(g_2g_3\cdots g_k\right)\right)\] does not change the complexity of the factorization.
\end{proposition}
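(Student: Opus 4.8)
The plan is to track the specified sequence of $k-1$ Hurwitz moves explicitly and then isolate the single factor whose Garside power can possibly change. First I would verify by a short induction that the stated composition is precisely the operation that slides the first factor to the end while conjugating it by the running product. Applying the Hurwitz move $\left(\dots,g_j,g_{j+1},\dots\right)\sim\left(\dots,g_{j+1},g_{j+1}^{-1}g_jg_{j+1},\dots\right)$ at position $1$ produces $\left(g_2,g_2^{-1}g_1g_2,g_3,\dots,g_k\right)$; applying it again at position $2$ yields $\left(g_2,g_3,g_3^{-1}g_2^{-1}g_1g_2g_3,g_4,\dots,g_k\right)$, and after $k-1$ such moves the traveling factor reaches position $k$ as $\left(g_2g_3\cdots g_k\right)^{-1}g_1\left(g_2g_3\cdots g_k\right)$, while $g_2,g_3,\dots,g_k$ occupy positions $1,\dots,k-1$ in their original order. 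This matches the target factorization exactly.

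Next I would observe that the complexity (Definition~\ref{dcomplexityfactorization}) is a sum over all factors of the absolute values of their Garside powers and is insensitive to the position of a factor within the tuple. Since $g_2,g_3,\dots,g_k$ are carried over unchanged, the entire change in complexity is governed by the single replacement of the factor $g_1$ by $\left(g_2g_3\cdots g_k\right)^{-1}g_1\left(g_2g_3\cdots g_k\right)=\Delta^{-1}g_1\Delta$, where I use the hypothesis $g_2g_3\cdots g_k=\Delta$. Thus the proposition reduces to the claim that conjugation by $\Delta$ preserves the Garside power of $g_1$.

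To establish this claim I would appeal to Garside's theorem (Theorem~\ref{Garside}): writing $g_1=\Delta^{m}\sigma$ in Garside normal form, with $\sigma\in B_3^{+}$ indivisible by $\Delta$, I get $\Delta^{-1}g_1\Delta=\Delta^{m}\left(\Delta^{-1}\sigma\Delta\right)=\Delta^{m}\sigma_{\left(1\right)}$ in the notation of Definition~\ref{dconjugationbyDelta}. It then remains only to check that $\Delta^{m}\sigma_{\left(1\right)}$ is itself in Garside normal form, i.e.\ that $\sigma_{\left(1\right)}$ is a positive braid indivisible by $\Delta$. This holds because conjugation by $\Delta$ acts on $B_3$ as the index-swapping automorphism $\sigma_i\mapsto\sigma_{i'}$ with $\{i,i'\}=\{1,2\}$, which restricts to a bijection of $B_3^{+}$ onto itself and fixes $\Delta$; hence $\sigma_{\left(1\right)}\in B_3^{+}$, and any factorization $\sigma_{\left(1\right)}=\Delta\eta$ with $\eta\in B_3^{+}$ would pull back under this automorphism to a factorization of $\sigma$ by $\Delta$, contradicting indivisibility. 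Consequently the Garside power of $\Delta^{-1}g_1\Delta$ equals $m$, the Garside power of $g_1$, and the complexity is unchanged.

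I do not anticipate a genuine obstacle here: the content is entirely the position-independence of complexity together with the elementary fact that conjugation by $\Delta$ preserves Garside normal forms up to the index swap recorded by $\sigma_{\left(1\right)}$. The only point requiring a moment's care is confirming that $\sigma_{\left(1\right)}$ remains indivisible by $\Delta$, which is exactly the automorphism argument just described. In particular, no appeal to the duality machinery of Subsection~\ref{ssdualitypositivebraid} or to the delicate complexity formula of Lemma~\ref{lHurwitzmovecomplexitychange} is needed, precisely because the factors $g_2,\dots,g_k$ are transported unchanged and the single conjugated factor interacts with nothing else.
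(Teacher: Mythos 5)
Your proposal is correct and follows the same route as the paper: the composition of Hurwitz moves transports $g_2,\dots,g_k$ unchanged and replaces $g_1$ by $\Delta^{-1}g_1\Delta$, which has the same Garside power, so the complexity is preserved. The paper's proof is a one-line version of exactly this argument; your additional verification that $\sigma_{\left(1\right)}$ remains indivisible by $\Delta$ is a correct elaboration of the detail the paper leaves implicit.
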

\begin{proof}
If $g_2g_3\cdots g_k= \Delta$, then $\left(g_k^{-1}g_{k-1}^{-1}\cdots g_2^{-1}\right)g_1\left(g_2g_3\cdots g_k\right) = \Delta^{-1}g_1\Delta$, which has the same Garside power as $g_1$. Therefore, the statement is established.
\end{proof}

We summarize Proposition~\ref{ptripledivisible} and Proposition~\ref{pfactorizationmoveGarsideelement} to find a canonical rearrangement of every factorization into powers of positive half-twists, where the standard positive half-twists are placed strategically.

\begin{lemma}
\label{lreorderingfactorssamecomplexity}
Let ${\cal F}' = \left(g_1',g_2',\dots,g_k'\right)$ be a factorization into powers of positive half-twists. The factorization ${\cal F}'$ is Hurwitz equivalent to a factorization ${\cal F}=\left(g_1,g_2,\dots,g_k\right)$ with the following properties, where we adopt the notation in Setup~\ref{setfactorization} for ${\cal F}$.
\begin{description}
\item[(i)] The complexity $c\left({\cal F}\right) = c\left({\cal F}'\right)$.
\item[(ii)] If $\xi_{j-1}\xi_j\xi_{j+1}$ is divisible by $\Delta$ and neither $g_{j-1}$ nor $g_{j+1}$ is the power of a standard positive half-twist, then either $\xi_{j-1}\xi_j$ or $\xi_j\xi_{j+1}$ is divisible by $\Delta$, for each $2\leq j\leq k-1$.
\item[(iii)] We have a (possibly empty) totally ordered subset $S = \{1,2,\dots,3m\}$ such that $g_j$ is a standard positive half-twist for each $j\in S$, the product $g_{3l+1}g_{3l+2}g_{3l+3} = \Delta$ for each $0\leq l\leq m-1$, and no triple $\{j,j+1,j+2\}\subseteq \{1,\dots,k\}\setminus S$ such that $g_j,g_{j+1},g_{j+2}$ are standard positive half-twists satisfies $g_jg_{j+1}g_{j+2} = \Delta$.
\end{description}
\end{lemma}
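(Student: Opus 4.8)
The plan is to realize the passage from ${\cal F}'$ to ${\cal F}$ through a two-phase algorithm in which every step is one of the complexity-preserving Hurwitz moves furnished by Proposition~\ref{ptripledivisible} and Proposition~\ref{pfactorizationmoveGarsideelement}. Since each such move preserves the Garside power of every factor (either because it merely conjugates a factor by $\Delta^{\pm 1}$, or because Proposition~\ref{ptripledivisible} explicitly guarantees it), condition \textbf{(i)} is automatic, and the real content is the termination of the two phases together with the verification that the second phase does not undo the first. Throughout, I would invoke the locality of Hurwitz moves on subfactorizations, so that the two propositions, stated for short factorizations, may be applied inside ${\cal F}$.

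The first phase establishes \textbf{(ii)}. I would call a triple $(g_{j-1},g_j,g_{j+1})$ \emph{bad} if $\xi_{j-1}\xi_j\xi_{j+1}$ is divisible by $\Delta$ while neither $\xi_{j-1}\xi_j$ nor $\xi_j\xi_{j+1}$ is, and neither $g_{j-1}$ nor $g_{j+1}$ is a standard positive half-twist; condition \textbf{(ii)} is precisely the absence of bad triples. Whenever a bad triple occurs, Lemma~\ref{ltripleproductindivisibleDelta} (via Proposition~\ref{ptripledivisible}) forces $\xi_j$ to be an Artin generator, so $g_j$ is a standard positive half-twist, and since $g_{j-1}$ is a power of a non-standard positive half-twist I would apply the move $(g_{j-1},g_j,g_{j+1})\to (g_j,g_j^{-1}g_{j-1}g_j,g_{j+1})$ of Proposition~\ref{ptripledivisible}. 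This slides the standard factor $g_j$ one place to the left; the new factor $g_j^{-1}g_{j-1}g_j$ is a power of a positive half-twist of the same type as $g_{j-1}$, and by Corollary~\ref{cGarsidepowerpositivehalftwist} it remains non-standard since its Garside power is unchanged. Hence the number of standard factors is conserved and the integer $\sum_{j:\ g_j\text{ standard}} j$ (the sum of the positions of the standard factors) strictly decreases with each move; being a nonnegative integer, it can decrease only finitely often, so phase one terminates in a factorization with no bad triples, i.e.\ satisfying \textbf{(ii)}.

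The second phase establishes \textbf{(iii)} while preserving \textbf{(ii)}. The key elementary fact is that the only product expansions of $\Delta$ are $\sigma_1\sigma_2\sigma_1$ and $\sigma_2\sigma_1\sigma_2$ (Theorem~\ref{tmonoidembedding}), so a triple of consecutive standard half-twists with product $\Delta$ is always one of these. I would maintain a front prefix $S=\{1,\dots,3m\}$ of complete $\Delta$-triples of standard half-twists, initially empty, and repeat: while there is a triple $\{j,j+1,j+2\}$ of consecutive standard half-twists lying entirely outside $S$ with $g_jg_{j+1}g_{j+2}=\Delta$, apply Proposition~\ref{pfactorizationmoveGarsideelement} repeatedly---each time to the four-term subfactorization formed by the factor immediately preceding the triple together with the triple itself---to slide this $\Delta$-triple to the front. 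Each slide conjugates the factor it passes by $\Delta$, so by Definition~\ref{dconjugationbyDelta} it replaces $\sigma_i$ by $\sigma_{i'}$ on standard factors and preserves both standard-ness and divisibility by $\Delta$ of any product; in particular the old prefix triples remain $\Delta$-triples and slide right intact, so the number $m$ of complete front $\Delta$-triples increases by one. Since $m$ is bounded by one third of the (conserved) number of standard factors, phase two terminates exactly when no $\Delta$-triple of standard half-twists remains entirely outside $S$, which is \textbf{(iii)}.

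The main obstacle I anticipate is checking that the second phase does not reintroduce bad triples and thereby spoil \textbf{(ii)}. This reduces to the observation that conjugation by $\Delta$ is an automorphism under which both the property ``$g_j$ is a standard positive half-twist'' and the property ``a product $\xi_{j-1}\xi_j\cdots$ is (in)divisible by $\Delta$'' are invariant, so the bad-triple condition is invariant within any chunk that phase two conjugates; at the boundary between the accumulated prefix and the remainder, a would-be bad triple would have a standard half-twist as a neighbor and is therefore excluded by the hypothesis of \textbf{(ii)}. The one point requiring genuine care is to carry out this invariance check directly in the bookkeeping of Setup~\ref{setfactorization}, where each $g_j$ is recorded as $\Delta^{-\omega(\tau_j)}(\xi_j)_{(\omega_j)}$ with the cumulative twist $\omega_j$, so that the cumulative conjugations introduced by the sliding moves are tracked correctly and the divisibility statements phrased in terms of the $\xi_j$ retain their meaning after each slide.
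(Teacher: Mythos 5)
Your overall strategy coincides with the paper's, which disposes of this lemma in one sentence as ``repeated applications of Proposition~\ref{ptripledivisible} and Proposition~\ref{pfactorizationmoveGarsideelement}''; your phase-one argument and the two separate termination arguments are fine. However, there is a genuine gap in the claim that phase two preserves condition \textbf{(ii)}. When you extract a $\Delta$-triple of standard half-twists from positions $j,j+1,j+2$ and slide it to the front, the factors that flanked it, namely $\Delta^{-1}g_{j-1}\Delta$ and $g_{j+3}$, become adjacent for the first time. Your two invariance observations do not cover the consecutive triples spanning this new junction: they are not contained in a single chunk that is uniformly conjugated by $\Delta$, and they sit at the extraction site in the middle of the remainder, not at the prefix boundary. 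If, say, $g_{j-1}$ is itself a standard positive half-twist whose $\xi$ ends a configuration in which $\xi_{j-2}$ ends with $\sigma_i$ and $\xi_{j+3}$ begins with $\sigma_i$, then by Proposition~\ref{pdualbraidproduct} the new triple $\left(\Delta^{-1}g_{j-2}\Delta,\Delta^{-1}g_{j-1}\Delta,g_{j+3}\right)$ can be exactly a bad triple (the copy of $\Delta$ overlaps all three new $\xi$'s while neither pairwise product is divisible by $\Delta$). Nothing in the hypotheses after phase one forbids a standard half-twist from sitting next to an out-of-prefix $\Delta$-triple, so \textbf{(ii)} can be destroyed by a phase-two slide.

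The consequence is that the two phases cannot be run sequentially as you propose: phase two may force a return to phase one, and phase one in turn may assemble a new triple of consecutive standard half-twists with product $\Delta$ outside the prefix, violating \textbf{(iii)} and forcing a return to phase two. What is missing is a joint termination argument for the interleaved process. This is fixable: a phase-one move strictly decreases the sum of the positions of the standard factors, while a phase-two slide changes that sum by $-3\left(j-1\right)+3s$ where $s\leq j-1$ is the number of standard factors it passes, so it never increases the sum and, when the sum is unchanged, it strictly increases the number $m$ of completed prefix triples. Hence the pair $\left(\text{sum of positions of standard factors},\,-m\right)$ decreases lexicographically under every move of either kind, and the interleaved process terminates in a factorization satisfying \textbf{(i)}--\textbf{(iii)}. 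You should either restructure the proof around this interleaving, or strengthen the phase-two analysis to rule out new bad triples at the extraction site; as written, the assertion that phase two ``establishes \textbf{(iii)} while preserving \textbf{(ii)}'' is unsupported.
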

\begin{proof}
The statement is a consequence of repeated applications of Proposition~\ref{ptripledivisible} and Proposition~\ref{pfactorizationmoveGarsideelement}.
\end{proof}

We now wish to understand the role played by standard positive half-twists in a minimal complexity factorization. We will split the process into understanding the role played by a single standard positive half-twist and the role played by two adjacent standard positive half-twists, separately. Firstly, we consider factorizations where one of the factors is a standard positive half-twist. 

\begin{lemma}
\label{ladjacentstandardpositivehalftwist}
We adopt Setup~\ref{setfactorization} with $k=2$, where $g_1$ is the power of a non-standard positive half-twist and $g_2$ is a standard positive half-twist. If ${\cal F}=\left(g_1,g_2\right)$ is a minimal complexity factorization of $g_1g_2$, then $\xi_1\xi_2$ is indivisible by $\Delta$.  
\end{lemma}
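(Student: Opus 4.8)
The plan is to argue by contradiction. Since $g_2$ is a standard positive half-twist we have $\xi_2=\sigma_{i_2}$ and the Garside power of $g_2$ is zero, so that $c({\cal F})=\omega(\tau_1)=:m$, and $m\geq 1$ because $g_1$ is non-standard (Corollary~\ref{cGarsidepowerpositivehalftwist}). Assuming $\xi_1\xi_2$ is divisible by $\Delta$, I will produce a single Hurwitz move $(g_1,g_2)\to(g_2,g_2^{-1}g_1g_2)$ whose output has complexity $m-1<m$, contradicting minimality. I note first that Lemma~\ref{lHurwitzmoveGarsidepower} and Lemma~\ref{lHurwitzmovecomplexitychange} cannot be invoked here: for standard $g_2$ the cut closure $\overline{\rho_2}$ is trivial, whereas the relevant left-divisor of $\xi_2$ that is dual to $\xi_1''$ is all of $\sigma_{i_2}$, so the standing hypothesis that $\rho_2''$ is a left-divisor of $\overline{\rho_2}$ fails. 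A direct computation in $B_3^{+}$ is therefore required.

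First I would record the shape of $\xi_1$. Because $\xi_1\xi_2=\xi_1\sigma_{i_2}$ is divisible by $\Delta$ while $\xi_2$ is a single generator, the second alternative of Proposition~\ref{pdualbraidproduct} is impossible, so $\xi_1$ must end with the product of two distinct generators $\sigma_{i_2}\sigma_{i_2'}$; write $\xi_1=\xi_1'\sigma_{i_2}\sigma_{i_2'}$, whence $\xi_1\sigma_{i_2}=\xi_1'\Delta$. In particular the last generator of $\xi_1$, equivalently of $\tau_1$ (which is nonempty), is $\sigma_{i_2'}$.

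The crux is to identify the \emph{first} generator of $\xi_1$, i.e.\ the first generator of $\rho_1$. Using that $\rho_1$ is dual to $\tau_1$ with $\omega(\tau_1)=m$, I would apply the explicit duality formula of Lemma~\ref{lcharacterizationdualpositivebraids} to the first cut-factor of $\rho_1$ and the last factor of $\tau_1$, together with the pseudo-symmetry of duals (Proposition~\ref{ppseudosymmetryduality}). Tracking the $\Delta$-conjugation exponents through the formula, the datum that $\tau_1$ ends in $\sigma_{i_2'}$ forces $\rho_1$, and hence $\xi_1'$, to begin with $(\sigma_{i_2})_{(m)}=:\sigma_{j_2}$ — which is precisely the Artin generator that $g_2$ equals in Setup~\ref{setfactorization}. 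I expect this structural claim, with its parity bookkeeping for the operation $(\cdot)_{(l)}$, to be the main obstacle; everything else is formal. Its payoff is that $\eta:=\sigma_{j_2}^{-1}\xi_1'$ is a genuine positive braid (the prefix $\sigma_{j_2}$ cancels), and $\eta$, being a right-divisor of the $\Delta$-indivisible left-divisor $\xi_1'$ of $\xi_1$, is itself indivisible by $\Delta$.

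Finally I would compute the conjugate. Writing $g_1=\Delta^{-m}(\xi_1)_{(m)}$ and $g_2=\sigma_{j_2}=(\sigma_{i_2})_{(m)}$ and commuting inverse generators past powers of $\Delta$ via $\Delta\sigma_a=\sigma_{3-a}\Delta$, the two cancellations $\xi_1\sigma_{i_2}=\xi_1'\Delta$ and $\sigma_{j_2}^{-1}\xi_1'=\eta$ collapse $g_2^{-1}g_1g_2$ to $\Delta^{1-m}\eta_{(m+1)}$ with $\eta_{(m+1)}$ positive and indivisible by $\Delta$; this is its Garside normal form, so its Garside power is $1-m$. Hence the factorization $(g_2,g_2^{-1}g_1g_2)$ has complexity $0+|1-m|=m-1<m=c({\cal F})$, contradicting the minimality of ${\cal F}$. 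Therefore $\xi_1\xi_2$ is indivisible by $\Delta$.
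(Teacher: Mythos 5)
Your proposal is correct and follows essentially the same route as the paper's proof: assume $\xi_1\sigma_{i_2}$ is divisible by $\Delta$, deduce from Proposition~\ref{pdualbraidproduct} that $\xi_1$ ends with $\sigma_{i_2}\sigma_{i_2'}$ and from Lemma~\ref{lcharacterizationdualpositivebraids} (via pseudo-symmetry of duals) that $\xi_1$ begins with $\left(\sigma_{i_2}\right)_{\left(\omega\left(\tau_1\right)\right)}$, and then observe that the Hurwitz move $\left(g_1,g_2\right)\to \left(g_2,g_2^{-1}g_1g_2\right)$ drops the Garside power of the conjugated factor by exactly one, contradicting minimality. The only difference is that you carry out the conjugation computation (arriving at the correct normal form $\Delta^{1-m}\eta_{\left(m+1\right)}$) explicitly where the paper merely asserts the drop in Garside power.
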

\begin{proof}
Let us assume, for a contradiction, that $\xi_1\xi_2$ is divisible by $\Delta$. If $\xi_2 = \sigma_k$ for some $k\in \{1,2\}$, then Proposition~\ref{pdualbraidproduct} implies that $\xi_1$ ends with a product $\sigma_{k}\sigma_{k'}$ of distinct generators, where $k\neq k'\in \{1,2\}$. We can write $\xi_1 = \rho_1\sigma_{i_1}^{e_1}\tau_1$, where $\rho_1,\tau_1\in B_3^{+}$ are non-identity positive braids such that $\rho_1$ is dual to $\tau_1$. Lemma~\ref{lcharacterizationdualpositivebraids} implies that $\xi_1$ begins with $\left(\sigma_{k}\right)_{\left(\omega\left(\tau_1\right)\right)}$. We deduce that the Garside power of $g_2^{-1}g_1g_2$ is one less than the Garside power of $g_1$, since $g_1 = \Delta^{-\omega\left(\tau_1\right)}\left(\xi_1\right)_{\left(\omega\left(\tau_1\right)\right)}$ and $g_2 = \left(\xi_2\right)_{\left(\omega\left(\tau_1\right)\right)}$, as in Setup~\ref{setfactorization}. In particular, the complexity of the factorization $\left(g_2,g_2^{-1}g_1g_2\right)$ is one less than the complexity of ${\cal F} = \left(g_1,g_2\right)$, which contradicts the assumption that ${\cal F}$ is a minimal complexity factorization. Therefore, the statement is established.
\end{proof}

We now consider factorizations where two adjacent factors are standard positive half-twists.

\begin{lemma}
\label{ltwoadjacentstandardpositivehalftwists}
We adopt Setup~\ref{setfactorization} with $k=3$, where $g_1$ is the power of a non-standard positive half-twist, and $g_2$ and $g_3$ are standard positive half-twists. If ${\cal F}=\left(g_1,g_2,g_3\right)$ is a minimal complexity factorization of $g_1g_2g_3$, then $\xi_1\xi_2\xi_3$ is indivisible by $\Delta$. 
\end{lemma}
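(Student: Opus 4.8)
The plan is to argue by contradiction, reducing to the already-established two-factor statement (Lemma~\ref{ladjacentstandardpositivehalftwist}) applied to a conjugated subfactorization. Assume, for a contradiction, that $\xi_1\xi_2\xi_3$ is divisible by $\Delta$. Since $g_2$ and $g_3$ are standard positive half-twists, Corollary~\ref{cGarsidepowerpositivehalftwist} gives that $\xi_2 = \sigma_{i_2}$ and $\xi_3 = \sigma_{i_3}$ are single Artin generators. The subfactorization $(g_1,g_2)$ is a minimal complexity factorization (every subfactorization of a minimal complexity factorization is minimal complexity), so Lemma~\ref{ladjacentstandardpositivehalftwist} shows that $\xi_1\xi_2$ is indivisible by $\Delta$; moreover $\xi_2\xi_3$ is a positive word of length two and hence is indivisible by $\Delta$ as well.

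These facts place us in the hypotheses of Proposition~\ref{ptripledivisible}. I would invoke it to conclude that $\xi_2 = \sigma_k$ is a generator and, as in its proof (via Proposition~\ref{pdualbraidproduct}), that $\xi_1$ ends with $\sigma_{k'}$ and $\xi_3 = \sigma_{k'}$ for $\{k,k'\} = \{1,2\}$, so that the terminal product $\sigma_{k'}\sigma_k\sigma_{k'} = \Delta$ accounts for the divisibility. Since $g_1$ is the power of a non-standard positive half-twist, Proposition~\ref{ptripledivisible} also furnishes the complexity-preserving Hurwitz move $(g_1,g_2,g_3)\to (g_2, g_2^{-1}g_1g_2, g_3)$.

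The resulting factorization is then again a minimal complexity factorization, and hence so is its subfactorization $(g_2^{-1}g_1g_2, g_3)$. Here $g_2^{-1}g_1g_2$ is the power of a non-standard positive half-twist: it is conjugate to $g_1$, and its Garside power equals that of $g_1$ (Proposition~\ref{ptripledivisible}), which is nonzero by Corollary~\ref{cGarsidepowerpositivehalftwist}; and $g_3$ is standard. I would then compute the positive braid underlying $g_2^{-1}g_1g_2$. Conjugating $g_1$ by the generator $g_2$ appends $\sigma_k$ to the positive tail of $\xi_1$, and because $\xi_1\xi_2 = \xi_1\sigma_k$ is indivisible by $\Delta$ there is no cancellation at that end; consequently this positive braid ends with the product of distinct generators $\sigma_{k'}\sigma_k$. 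Multiplying by $\xi_3 = \sigma_{k'}$ then produces a terminal $\sigma_{k'}\sigma_k\sigma_{k'} = \Delta$, so the positive-braid product associated to the pair $(g_2^{-1}g_1g_2, g_3)$ is divisible by $\Delta$. This contradicts Lemma~\ref{ladjacentstandardpositivehalftwist} for the minimal complexity pair $(g_2^{-1}g_1g_2, g_3)$, and the contradiction forces $\xi_1\xi_2\xi_3$ to be indivisible by $\Delta$.

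The step I expect to be the main obstacle is the last computation: verifying rigorously that the positive-braid part of the Garside normal form of $g_2^{-1}g_1g_2$ ends with $\sigma_{k'}\sigma_k$. This requires carefully tracking the $\Delta$-conjugation frames of Setup~\ref{setfactorization} and checking that left-multiplication by $\sigma_k^{-1}$ --- which only lowers the Garside power and alters the head of the word --- does not disturb the tail, whose form is controlled by Lemma~\ref{lcharacterizationdualpositivebraids} together with the indivisibility of $\xi_1\xi_2$ by $\Delta$. Alternatively, one can read this off directly from the explicit conjugation formula of Lemma~\ref{lHurwitzmoveGarsidenormalform}.
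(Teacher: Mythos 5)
Your proof is correct and follows essentially the same route as the paper: both argue by contradiction, use Lemma~\ref{ladjacentstandardpositivehalftwist} together with Proposition~\ref{pdualbraidproduct} to pin down $\xi_2$, $\xi_3$ and the tail of $\xi_1$, and then reduce complexity by pushing $g_1$ past $g_2$ and $g_3$. The only organizational difference is that the paper computes the composite conjugation $g_3^{-1}g_2^{-1}g_1g_2g_3$ in one step and reads off the drop in Garside power directly, whereas you perform the complexity-preserving move of Proposition~\ref{ptripledivisible} first and then delegate the final contradiction to a second application of Lemma~\ref{ladjacentstandardpositivehalftwist} to the pair $\left(g_2^{-1}g_1g_2,g_3\right)$ --- which is legitimate, since that pair is again a minimal complexity factorization whose positive parts (in the frames of Setup~\ref{setfactorization}) end with $\sigma_{k'}\sigma_k$ and equal $\sigma_{k'}$ respectively, hence multiply to something divisible by $\Delta$.
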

\begin{proof}
Let us assume, for a contradiction, that $\xi_1\xi_2\xi_3$ is divisible by $\Delta$. Lemma~\ref{ladjacentstandardpositivehalftwist} implies that $\xi_1\xi_2$ is indivisible by $\Delta$. In particular, if the last generator in $\xi_1$ is $\sigma_k$ for some $k\in \{1,2\}$, then Proposition~\ref{pdualbraidproduct} implies that $\xi_2 = \sigma_{k'}$ and $\xi_3 = \sigma_k$, where $k\neq k'\in \{1,2\}$. Furthermore, the last two generators in $\xi_1$ are both $\sigma_k$. We can write $\xi_1 = \rho_1\sigma_{i_1}^{e_1}\tau_1$, where $\rho_1,\tau_1\in B_3^{+}$ are non-identity positive braids such that $\rho_1$ is dual to $\tau_1$. Lemma~\ref{lcharacterizationdualpositivebraids} implies that $\xi_1$ begins with a product of distinct generators $\left(\sigma_{k'}\sigma_{k}\right)_{\left(\omega\left(\tau_1\right)\right)}$. 

Let us consider the composition of Hurwitz moves ${\cal F} = \left(g_1,g_2,g_3\right)\to  \left(g_2,g_3,g_3^{-1}g_2^{-1}g_1g_2g_3\right)$. We observe that the Garside power of $g_3^{-1}g_2^{-1}g_1g_2g_3$ is one less than the Garside power of $g_1$, since $g_1 = \Delta^{-\omega\left(\tau_1\right)}\left(\xi_1\right)_{\left(\omega\left(\tau_1\right)\right)}$, $g_2 = \left(\xi_2\right)_{\left(\omega\left(\tau_1\right)\right)}$ and $g_3 = \left(\xi_3\right)_{\left(\omega\left(\tau_1\right)\right)}$, as in Setup~\ref{setfactorization}. We deduce that the complexity of the factorization $\left(g_2,g_3,g_3^{-1}g_2^{-1}g_1g_2g_3\right)$ is one less than the complexity of ${\cal F}=\left(g_1,g_2,g_3\right)$, which contradicts the assumption that ${\cal F}$ is a minimal complexity factorization. Therefore, the statement is established.
\end{proof}

We are now prepared to establish the main result of this subsection.

\begin{theorem}
\label{tfactorizationpositivehalftwistsstandard}
If ${\cal F}$ is a factorization of $\Delta^k$ into positive half-twists for some positive integer $k$, then ${\cal F}$ is Hurwitz equivalent to a factorization of $\Delta^k$ into standard positive half-twists.
\end{theorem}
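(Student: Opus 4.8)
The plan is to argue by induction on the complexity $c(\mathcal{F})$ (Definition~\ref{dcomplexityfactorization}), reducing everything to the observation of Proposition~\ref{pcomplexityzeroiffstandard} that a factorization of complexity zero is already one into standard positive half-twists. Concretely, I would fix a representative $\mathcal{F}$ of \emph{minimal} complexity in the Hurwitz equivalence class of the given factorization of $\Delta^k$ (such a representative exists because complexity is a non-negative integer) and prove that $c(\mathcal{F}) = 0$; since $\mathcal{F}$ is Hurwitz equivalent to the original factorization, this establishes the theorem. Here I exploit that $\Delta^k$ is central, so that the cyclic rotation $(g_1,\dots,g_n)\mapsto (g_2,\dots,g_n,g_1)$ is realized by Hurwitz moves — the trailing conjugation $(g_2\cdots g_n)^{-1}g_1(g_2\cdots g_n)$ collapses to $g_1$ because $g_2\cdots g_n = \Delta^k g_1^{-1}$ and $\Delta^k$ is central. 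This lets me position a non-standard factor wherever convenient.

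Suppose for contradiction that $c(\mathcal{F})>0$, so $\mathcal{F}$ has at least one non-standard factor. I would first apply Lemma~\ref{lreorderingfactorssamecomplexity} to put $\mathcal{F}$ into the canonical complexity-preserving form in which the standard positive half-twists are collected into $\Delta$-triples at the front (each multiplying to $\Delta$, hence contributing a trivial remainder) while no stray triple of standard half-twists in the interior multiplies to $\Delta$. Passing to Setup~\ref{setfactorization}, the statement that $\mathcal{F}$ is a factorization of $\Delta^k$ becomes the positive-braid equation $\Delta^{M} = \xi_1\xi_2\cdots\xi_n$ with $M = k+\omega_n$, each $\xi_j$ indivisible by $\Delta$, and $\omega(\xi_j) = 2\omega(\tau_j)$ for each non-standard factor (Proposition~\ref{pomeganumberpowerpositivehalftwist}, using $e_j=1$) while $\omega(\xi_j)=1$ for each standard factor.

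The heart of the argument is a global $\omega$-counting estimate, run by processing the product from the left. Writing $\xi_1\cdots\xi_j = \Delta^{A_j}\eta_j$ in Garside normal form, the remainder $\eta_j$ is indivisible by $\Delta$, and $\eta_n$ is the identity because $\Delta^M$ has trivial indivisible part, whence $\omega(\eta_n)=0$. The overlap created at the $(j{+}1)$-st step, $a_{j+1} := A_{j+1}-A_j$, equals $\omega(\xi_{j+1}^\flat)$, where $\xi_{j+1}^\flat$ is the left-divisor of $\xi_{j+1}$ dual to a right-divisor of $\eta_j$ (Proposition~\ref{pdualbraidproduct}). The crucial point is that the right end of $\eta_j$ is itself a right-divisor of the \emph{original} neighbor $\xi_j$ (the unconsumed part $\xi_j^\sharp$), so $\xi_{j+1}^\flat$ is right-dual to a right-divisor of $\xi_j$; minimality then forces, via Lemma~\ref{lboundedcontraction}, the \emph{strict} bound $a_{j+1}\le \omega(\tau_{j+1})-1$ whenever both $\xi_j,\xi_{j+1}$ are non-standard, while Lemma~\ref{ladjacentstandardpositivehalftwist} and Lemma~\ref{ltwoadjacentstandardpositivehalftwists} give $a_{j+1}=0$ when a standard factor is involved. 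Feeding $\omega(\xi_{j+1})=2\omega(\tau_{j+1})$ and this bound into the almost-additivity of $\omega$ (Proposition~\ref{pomegaalmostadditive}) yields $\omega(\eta_{j+1})\ge \omega(\eta_j)+1$ across each non-standard step and $\omega(\eta_{j+1})\ge\omega(\eta_j)$ otherwise. After discarding the front $\Delta$-triples (which leave the identity as remainder) and cyclically positioning a non-standard factor to begin the remaining block, $\omega(\eta)$ starts strictly positive and is non-decreasing, so $\omega(\eta_n)>0$, contradicting $\omega(\eta_n)=0$. Hence $c(\mathcal{F})=0$, and Proposition~\ref{pcomplexityzeroiffstandard} completes the proof.

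The step I expect to be the main obstacle is exactly this global estimate. Because $\omega$ is only almost additive — the defect $\epsilon\in\{0,1\}$ in Proposition~\ref{pomegaalmostadditive} could a priori produce a unit \emph{decrease} of $\omega(\eta_j)$ at a step — obtaining a genuine strict increase hinges on the strict half-size bound in Lemma~\ref{lboundedcontraction} (its final clause that $\xi_1''$ is not left-dual to $\overline{\rho_2}$), together with a careful accounting of which left- and right-divisors of the original factors $\xi_j$ actually survive into the accumulated remainder $\eta_j$. The secondary technical nuisance is the presence of standard positive half-twists: their length-one factors can create a copy of $\Delta$ only through the triple-product phenomenon (the discussion preceding Lemma~\ref{ltripleproductindivisibleDelta}), so a separate analysis is needed to ensure they contribute no stray overlap. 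It is precisely the canonical reordering of Lemma~\ref{lreorderingfactorssamecomplexity}, buttressed by Lemma~\ref{ladjacentstandardpositivehalftwist} and Lemma~\ref{ltwoadjacentstandardpositivehalftwists}, that confines such triple products to the front $\Delta$-triples and thereby preserves the monotonicity of $\omega(\eta_j)$ on the remaining block.
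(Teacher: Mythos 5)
Your proposal follows the paper's proof in all essentials: a minimal-complexity representative, the canonical reordering of Lemma~\ref{lreorderingfactorssamecomplexity}, the passage to the positive-braid equation of Setup~\ref{setfactorization}, and the bounds of Lemma~\ref{lboundedcontraction}, Lemma~\ref{ladjacentstandardpositivehalftwist} and Lemma~\ref{ltwoadjacentstandardpositivehalftwists} to show that $\xi_1\cdots\xi_n$ cannot be a pure power of $\Delta$. The only difference is presentational: where you run a left-to-right sweep tracking $\omega$ of the running Garside remainder, the paper writes each $\xi_j=\chi_{j-1}''\zeta_j\xi_j''$ with $\zeta_j$ nontrivial and verifies via Lemma~\ref{lproductindivisibleDelta} and Proposition~\ref{pmaximaldivisordualcharacterization} that $\prod_j\left(\zeta_j\right)_{\left(\omega_j''\right)}$ is indivisible by $\Delta$ --- which is precisely the local-to-global control your sweep still needs in order to justify that each overlap is confined to the residue of the immediately preceding factor rather than cascading into earlier ones.
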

\begin{proof}
Let ${\cal F}$ be a minimal complexity factorization of $\Delta^k$ into positive half-twists. We will show that the complexity of ${\cal F}$ is zero. Let us assume, for a contradiction, that the complexity $c\left({\cal F}\right)>0$. We will apply the technical results in this subsection on minimal complexity factorizations in order to constrain the factorization ${\cal F}$, and obtain a contradiction. We can assume that the factorization ${\cal F}$ satisfies the conditions in Lemma~\ref{lreorderingfactorssamecomplexity}, after possibly replacing ${\cal F}$ with another minimal complexity factorization.

We adopt Setup~\ref{setfactorization}. Proposition~\ref{pnumberoffactorsinfactorization} implies that there are $3k$ factors in the factorization since $\epsilon\left(\Delta^{k}\right) = 3k$. We have $\Delta^{\omega_{3k} + k} = \xi_1\xi_2\cdots \xi_{3k}$. Let $J\subsetneq \{1,2,\dots,3k\}$ be the proper subset of all $j\in \{1,2,\dots,3k\}$ such that $g_j$ is a standard positive half-twist, and let $J' = \{1,2,\dots,3k\}\setminus J$ be the complement of $J$. We can assume that the subset $S$ in condition~\textbf{(iii)} of Lemma~\ref{lreorderingfactorssamecomplexity} is empty after possibly replacing the factorization $\left(g_1,g_2,\dots,g_{3k}\right)$ of $\Delta^{k}$ with the factorization $\left(g_{3k'+1},g_{3k'+2},\dots,g_{3k}\right)$ of $\Delta^{k-k'}$, if $S = \{1,2,\dots,3k'\}$. In this case, there is no adjacent triple of standard positive half-twists in the factorization with product equal to $\Delta$. 

If $j\in \{1,2,\dots,3k\}$, then define $\xi_j''$ to be the maximal right-divisor of $\xi_j$ that is left-dual to a left-divisor of $\xi_{j+1}$. Let us denote the relevant left-divisor of $\xi_{j+1}$ by $\chi_j''$. Lemma~\ref{lboundedcontraction} implies that $\xi_j''$ is a right-divisor of $\overline{\tau_j}$ for each $j\in J'$ such that $j+1\in J'$. Lemma~\ref{lboundedcontraction} also implies that $\chi_{j}''$ is a proper left-divisor of $\overline{\rho_{j+1}}$ for each $j+1\in J'$ such that $j\in J'$. If either $j\in J$ or $j+1\in J$, then Lemma~\ref{ladjacentstandardpositivehalftwist} implies that $\xi_j''$ is the identity braid. Similarly, if either $j\in J$ or $j+1\in J$, then Lemma~\ref{ladjacentstandardpositivehalftwist} implies that $\chi_j''$ is the identity braid. 

We define $\zeta_j\in B_3^{+}$ to be the positive braid such that $\xi_j=\chi_{j-1}''\zeta_j\xi_j''$. If $j\in J'$, then Proposition~\ref{pcutclosureconjugate} and Lemma~\ref{lboundedcontraction} imply that $\chi_{j-1}''\xi_j''\neq \xi_j$. If $j\in J$, then $\chi_{j-1}''$ and $\xi_j''$ are the identity braid. We conclude that  $\zeta_j$ is not the identity braid for each $j\in \{1,2,\dots,3k\}$. Let $\omega_j'' = \sum_{j'=1}^{j-1}\omega\left(\xi_{j'}''\right)$. We claim that the Garside normal form of $\xi_1\xi_2\cdots \xi_{3k}$ is \[\xi_1\xi_2\cdots\xi_{3k} = \left[\prod_{j=1}^{3k}\left(\zeta_j\right)_{\left(\omega_j''\right)}\right]\Delta^{\omega_{3k}''},\] where the positive braid in brackets is indivisible by $\Delta$. Of course, this would contradict the assumption that $\Delta^{\omega_{3k}+k} = \xi_1\xi_2\cdots \xi_{3k}$, since the positive braid indivisible by $\Delta$ in brackets has length at least $3k$. 

Let us prove the claim. Firstly, the equation is a consequence of the definition of $\zeta_j$ and the fact that $\xi_j''$ is left-dual to $\chi_{j}''$. Thus, we only need to show that $\prod_{j=1}^{3k} \left(\zeta_j\right)_{\left(\omega_j''\right)}$ is indivisible by $\Delta$. Of course, each factor $\left(\zeta_j\right)_{\left(\omega_j''\right)}$ is indivisible by $\Delta$, and we will use Lemma~\ref{lproductindivisibleDelta} in order to establish that the product $\prod_{j=1}^{3k} \left(\zeta_j\right)_{\left(\omega_j''\right)}$ is indivisible by $\Delta$.

We can write $J$ as a union $J_1\cup J_2\cup\cdots \cup J_s$ and the complement $J' = \{1,2,\dots,3k\}\setminus J$ as a union $J_1'\cup J_2'\cup\cdots \cup J_s'$. We can arrange that $J_r$ is a maximal totally ordered subset of $J$ and $J_r'$ is a maximal totally ordered subset of $J'$ for each $1\leq r\leq s$, such that $\sup{J_r}<\inf{J_r'}$ and $\sup{J_r'}<\inf{J_{r+1}}$, for each $1\leq r\leq s$ (where possibly either (or both) $J_1=\emptyset$ or $J_s'=\emptyset$). 

If $j\in J_r'$ is not the maximal element of $J_r'$, then Proposition~\ref{pmaximaldivisordualindivisibleDelta} and the maximality of $\xi_j''$ imply that $\left(\zeta_j\right)_{\left(\omega_j''\right)}\left(\zeta_{j+1}\right)_{\left(\omega_{j+1}''\right)}$ is indivisible by $\Delta$. If $\zeta_j$ is an Artin generator (a positive braid with word length one), then Proposition~\ref{pcutclosureconjugate} and Lemma~\ref{lboundedcontraction} imply that $\left(\chi_{j-1}''\right)^{-1}\overline{\rho_j} = \zeta_j$ is an Artin generator and $\xi_j'' = \overline{\tau_j}$. In particular, if $j\in J_r'$, then $j$ is neither the maximal element nor the minimal element of $J_r'$. Furthermore, Proposition~\ref{pcutclosureconjugate} implies that $\overline{\rho_j}$ ends with a product of two distinct generators, and thus Lemma~\ref{lcutdivisorcharacterization} implies that $\chi_{j-1}''$ is not a cut left-divisor of $\overline{\rho_j}$. Proposition~\ref{pmaximaldivisordualcharacterization}~\textbf{(i)} implies that the index of the last generator in $\left(\zeta_{j-1}\right)_{\left(\omega_{j-1}''\right)}$ is the same as the index of the first generator in $\left(\zeta_j\right)_{\left(\omega_j''\right)}$. We conclude by using Lemma~\ref{lproductindivisibleDelta} that the product $\prod_{j\in J_r'} \left(\zeta_j\right)_{\left(\omega_j''\right)}$ is indivisible by $\Delta$ for each $1\leq r\leq s$.

If $J_r = \{j_0\}$ for some $1\leq r\leq s$, then the product \[\left(\prod_{j\in J_{r-1}'} \left(\zeta_j\right)_{\left(\omega_j''\right)}\right)\zeta_{j_0}\left(\prod_{j\in J_r'} \left(\zeta_j\right)_{\left(\omega_j''\right)}\right)\] is indivisible by $\Delta$. Indeed, if not, then either $\left(\prod_{j\in J_{r-1}'} \left(\zeta_j\right)_{\left(\omega_j''\right)}\right)\zeta_{j_0}$ or $\zeta_{j_0}\left(\prod_{j\in J_r'} \left(\zeta_j\right)_{\left(\omega_j''\right)}\right)$ is divisible by $\Delta$, since the factorization ${\cal F}$ satisfies condition~\textbf{(ii)} in Lemma~\ref{lreorderingfactorssamecomplexity}. However, this would contradict the assumption that ${\cal F}$ is a minimal complexity factorization, according to Lemma~\ref{ladjacentstandardpositivehalftwist}. 

The product $\prod_{j\in J_r} \left(\zeta_j\right)_{\left(\omega_j''\right)}$ is indivisible by $\Delta$ for each $1\leq r\leq s$, since the factorization ${\cal F}$ satisfies condition~\textbf{(iii)} in Lemma~\ref{lreorderingfactorssamecomplexity} and we have reduced to the case where $S=\emptyset$. Furthermore, if $1\leq r\leq s$ and if the cardinality of $J_r$ is at least two, then the products $\left(\prod_{j\in J_r} \left(\zeta_j\right)_{\left(\omega_j''\right)}\right)\left(\prod_{j\in J_r'} \left(\zeta_{j}\right)_{\left(\omega_j''\right)}\right)$ and $\left(\prod_{j\in J_{r-1}'} \left(\zeta_j\right)_{\left(\omega_j''\right)}\right)\left(\prod_{j\in J_r} \left(\zeta_{j}\right)_{\left(\omega_j''\right)}\right)$ are indivisible by $\Delta$. Indeed, this is a consequence of Lemma~\ref{ladjacentstandardpositivehalftwist} and Lemma~\ref{ltwoadjacentstandardpositivehalftwists}, since ${\cal F}$ is a minimal complexity factorization. Finally, Lemma~\ref{lproductindivisibleDelta} implies that $\prod_{j=1}^{3k}\left(\zeta_j\right)_{\left(\omega_j''\right)}$ is indivisible by $\Delta$. We conclude that the Garside normal form of $\xi_1\xi_2\cdots\xi_{3k}$ is \[\xi_1\xi_2\cdots \xi_{3k} =  \left[\prod_{j=1}^{3k}\left(\zeta_j\right)_{\left(\omega_j''\right)}\right]\Delta^{\omega_{3k}''}.\] Therefore, the statement is established. 
\end{proof}

Let us summarize the results in this subsection.

\begin{theorem}
\label{tmainfactorizationpositivehalftwists}
If ${\cal F}$ is a factorization of $\Delta^2$ into positive half-twists, then ${\cal F}$ is Hurwitz equivalent to the standard factorization \[\Delta^2\equiv \left(\sigma_1,\sigma_2,\sigma_1,\sigma_1,\sigma_2,\sigma_1\right).\]
\end{theorem}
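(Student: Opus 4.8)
The plan is to recognize that Theorem~\ref{tmainfactorizationpositivehalftwists} is a packaging of the two deeper results already established in this subsection, namely Theorem~\ref{tfactorizationpositivehalftwistsstandard} and Lemma~\ref{lstandardfactorizationspositivehalftwistsHurwitzequivalent}, so the proof should consist of stringing them together by transitivity of Hurwitz equivalence. First I would apply Theorem~\ref{tfactorizationpositivehalftwistsstandard} with $k=2$: since ${\cal F}$ is a factorization of $\Delta^2 = \Delta^k$ into positive half-twists, it is Hurwitz equivalent to some factorization ${\cal F}'$ of $\Delta^2$ into \emph{standard} positive half-twists. By Proposition~\ref{pcomplexityzeroiffstandard} this is exactly the statement that ${\cal F}'$ has complexity $c({\cal F}') = 0$.

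The second step is to invoke Lemma~\ref{lstandardfactorizationspositivehalftwistsHurwitzequivalent}, the complexity-zero case specialized to $\Delta^2$, which states directly that any complexity-zero factorization of $\Delta^2$ into positive half-twists is Hurwitz equivalent to the standard factorization $\left(\sigma_1,\sigma_2,\sigma_1,\sigma_1,\sigma_2,\sigma_1\right)$. Composing ${\cal F}\sim {\cal F}'$ with ${\cal F}'\sim \left(\sigma_1,\sigma_2,\sigma_1,\sigma_1,\sigma_2,\sigma_1\right)$ and using transitivity yields the claim. For completeness I would note that the target really is a factorization of $\Delta^2$: since $\Delta = \sigma_1\sigma_2\sigma_1$ we have $\Delta^2 = \sigma_1\sigma_2\sigma_1\sigma_1\sigma_2\sigma_1$, so the $6$-tuple has the correct product.

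Since all the substantive content is already in hand, there is no genuine obstacle remaining in this statement itself. The conceptual weight sits entirely in Theorem~\ref{tfactorizationpositivehalftwistsstandard}, whose complexity-reduction argument shows that every minimal complexity factorization of $\Delta^k$ into positive half-twists has complexity zero, and in Lemma~\ref{lstandardfactorizationsHurwitzequivalent}, which reduces the uniqueness of standard factorizations up to Hurwitz equivalence to Garside's embedding theorem (Theorem~\ref{tmonoidembedding}). The only care needed here is bookkeeping: confirming that Lemma~\ref{lstandardfactorizationspositivehalftwistsHurwitzequivalent} is phrased for $\Delta^2$ specifically, so that no separate reduction of the general $\Delta^k$ uniqueness is required, and that the chain of equivalences is composed in the correct direction.
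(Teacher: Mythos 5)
Your proposal is correct and is essentially identical to the paper's own proof: the paper likewise applies Theorem~\ref{tfactorizationpositivehalftwistsstandard} to reduce to a factorization into standard positive half-twists and then invokes Lemma~\ref{lstandardfactorizationspositivehalftwistsHurwitzequivalent} to reach the standard factorization. Your added bookkeeping (the role of Proposition~\ref{pcomplexityzeroiffstandard} and the check that $\Delta^2 = \sigma_1\sigma_2\sigma_1\sigma_1\sigma_2\sigma_1$) is consistent with, and implicit in, the paper's argument.
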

\begin{proof}
Theorem~\ref{tfactorizationpositivehalftwistsstandard} implies that ${\cal F}$ is Hurwitz equivalent to a factorization of $\Delta^2$ into standard positive half-twists. Lemma~\ref{lstandardfactorizationspositivehalftwistsHurwitzequivalent} implies that a factorization of $\Delta^2$ into standard positive half-twists is Hurwitz equivalent to the standard factorization $\Delta^2\equiv \left(\sigma_1,\sigma_2,\sigma_1,\sigma_1,\sigma_2,\sigma_1\right)$. Therefore, the statement is established.
\end{proof}

We state the geometric reformulation of Theorem~\ref{tmainfactorizationpositivehalftwists}.

\begin{theorem}
\label{tuniqueisotopyclasssmooth}
If $C,C'\subseteq \mathbb{CP}^2$ are degree three smooth Hurwitz curves, then $C$ is isotopic to $C'$.
\end{theorem}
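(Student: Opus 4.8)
The plan is to reduce this geometric statement to the algebraic classification already carried out in Theorem~\ref{tmainfactorizationpositivehalftwists}, passing through the braid monodromy correspondence supplied by Theorem~\ref{tKK} (Kharlamov-Kulikov). The conceptual content of the theorem is entirely contained in the algebraic classification; what remains here is purely a translation, so I expect no serious obstacle at this stage.

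First I would identify the shape of the braid monodromy factorization of a degree three smooth Hurwitz curve $C$. Since $C$ is smooth, it has no singularities in the usual sense, so all of its singular intersections with the fibers of $\pi$ are nondegenerate tangencies, that is, $A_1$-singularities. Writing $\nu_n$ for the number of $A_n$-singularities, the singularity formula (Lemma~\ref{lsingularityformula}) gives $\sum_{n=1}^{6} n\nu_n = 6$, and smoothness forces $\nu_n = 0$ for every $n\geq 2$. Hence $\nu_1 = 6$, and the braid monodromy of $C$ is represented by a factorization of $\Delta^2$ into exactly six positive half-twists, each being a first power of a positive half-twist. The identical reasoning applies verbatim to $C'$.

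Next I would invoke Theorem~\ref{tmainfactorizationpositivehalftwists}, which asserts that \emph{every} factorization of $\Delta^2$ into positive half-twists is Hurwitz equivalent to the standard factorization $\left(\sigma_1,\sigma_2,\sigma_1,\sigma_1,\sigma_2,\sigma_1\right)$. Applying this to the braid monodromy factorizations of $C$ and of $C'$ produced in the previous step, each is Hurwitz equivalent to this single common standard factorization, and therefore the two are Hurwitz equivalent to one another. In other words, $C$ and $C'$ have the same braid monodromy.

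Finally I would close the argument with Theorem~\ref{tKK}, which states that two simple Hurwitz curves are isotopic if and only if their braid monodromies agree. Since the braid monodromy of $C$ equals that of $C'$, we conclude that $C$ is isotopic to $C'$. The only genuinely nontrivial ingredient is Theorem~\ref{tmainfactorizationpositivehalftwists}, whose proof rests on the minimal-complexity reduction developed in this subsection; the present theorem is simply its geometric reformulation, so the proof proper amounts to assembling these three cited facts in order.
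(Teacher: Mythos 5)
Your proof is correct and follows essentially the same route as the paper's: both reduce the statement to Theorem~\ref{tmainfactorizationpositivehalftwists} via the braid monodromy correspondence of Theorem~\ref{tKK}, observing that a smooth degree three Hurwitz curve has braid monodromy given by a factorization of $\Delta^2$ into positive half-twists. The extra detail you supply (using the singularity formula to pin down exactly six factors of type $1$) is harmless but not needed for the argument.
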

\begin{proof}
The braid monodromy factorization of a degree three smooth Hurwitz curve is a factorization of $\Delta^2$ into positive half-twists. Furthermore, the Hurwitz equivalence class of the braid monodromy factorization of the curve completely determines the isotopy class of the curve. 

Theorem~\ref{tmainfactorizationpositivehalftwists} implies that the braid monodromy factorization of $C$ is Hurwitz equivalent to the braid monodromy factorization of $C'$. Therefore, the statement is established.
\end{proof}

In fact, Theorem~\ref{tmainfactorizationpositivehalftwists} is equivalent to Theorem~\ref{tmainHurwitzcurve}, Theorem~\ref{tmainHurwitzalgebraic} and Theorem~\ref{tmainsymplectic} (in the Introduction) in the special case of smooth curves. In the following subsection, we establish these results in full generality (for curves with singularities).

\subsection{The classification of factorizations of $\Delta^2$ into powers of positive half-twists}
\label{ssclassificationfactorizationspowerspositivehalftwists}
In this subsection, we will extend the classification of factorizations of $\Delta^2$ into positive half-twists in Subsection~\ref{ssclassificationfactorizationspositivehalftwists} to a classification of factorizations of $\Delta^2$ into powers of positive half-twists. Let ${\cal F}$ be a factorization of $\Delta^2$ into powers of positive half-twists. Let us assume that ${\cal F}$ has the same number of factors of each type as a standard factorization ${\cal F}'$ (in Definition~\ref{dstandardfactorization}). We will show that ${\cal F}$ is Hurwitz equivalent to ${\cal F}'$. In particular, we will show that if ${\cal F}$ is a factorization of $\Delta^2$ into positive half-twists and squares of positive half-twists, then the number of squares of positive half-twists in ${\cal F}$ is a complete invariant of the Hurwitz equivalence class of ${\cal F}$. The corresponding geometric statement is that the number of nodes in a degree three nodal Hurwitz curve $C\subseteq \mathbb{CP}^2$ is a complete invariant of the isotopy class of $C\subseteq \mathbb{CP}^2$. (Proposition~\ref{pnumberoffactorsinfactorization} implies that the number of nodes in a degree three nodal Hurwitz curve $C\subseteq \mathbb{CP}^2$ is an element of $\{1,2,3\}$.). 

We will also show that there is a unique factorization of $\Delta^2$ into the cube of a positive half-twist and three positive half-twists up to Hurwitz equivalence. The corresponding geometric statement is that there is a unique isotopy class of degree three simple Hurwitz curves with a single cuspidal singularity. Finally, we will show that there is a unique factorization of $\Delta^2$ into the fourth power of a positive half-twist and two positive half-twists up to Hurwitz equivalence. The corresponding geometric statement is that there is a unique isotopy class of degree three simple Hurwitz curves with a single tacnodal singularity. In Subsection~\ref{sssingularityconstraints}, we will show that no other configurations of singularities are possible for degree three simple Hurwitz curves in $\mathbb{CP}^2$, partially using technical statements established in this subsection. 

The proofs in this subsection are very similar in style to those in Subsection~\ref{ssclassificationfactorizationspositivehalftwists}. For example, we will need a statement analogous to Lemma~\ref{ladjacentstandardpositivehalftwist} (on standard positive half-twists in a factorization) concerning squares of standard positive half-twists in a factorization. We will also need statements analogous to Lemma~\ref{lboundedcontraction} (on adjacent factors that are positive half-twists in a minimal complexity factorization) on adjacent factors that are powers of positive half-twists in a minimal complexity factorization. 

However, we will need to consider global conjugation moves applied to factorizations of $\Delta^2$ into powers of positive half-twists, although global conjugation moves did not play a role in Subsection~\ref{ssclassificationfactorizationspositivehalftwists}. Indeed, an analogue of Lemma~\ref{lnoglobalconjugation} is not necessarily true for factorizations of $\Delta^2$ into powers of positive half-twists. In other words, a pair of factorizations of $\Delta^2$ into powers of positive half-twists that are equivalent by a finite sequence of Hurwitz moves and global conjugation moves, may not necessarily be equivalent by a finite sequence of Hurwitz moves alone.

Let us commence the classification of factorizations of $\Delta^2$ into powers of positive half-twists. The first goal is to establish a criterion for the existence of complexity reducing global conjugation moves applied to certain factorizations of $\Delta^2$ into powers of positive half-twists. We recall that a \textit{permutation braid} in $B_3$ is either an Artin generator or the product of two distinct Artin generators (this convention is slightly different from the literature, where $\Delta$ is also often considered to be a permutation braid). Let $g\in B_3$ be the power of a non-standard positive half-twist. We begin by establishing a technical result concerning the reduction of the absolute value of the Garside power of $g$ by conjugation in $B_3$. We consider conjugation by permutation braids in $B_3$ since every conjugation operation in $B_3$ can be decomposed as a composition of conjugation operations by permutation braids.

\begin{proposition}
\label{pGarsidepowerreduction}
Let $g\in B_3$ be the power of a non-standard positive half-twist and let us write the Garside normal form of $g$ as \[g = \Delta^{-\omega\left(\tau\right)}\rho\sigma_i^{e}\tau,\] where $\rho,\tau\in B_3^{+}$ are positive braids such that $\rho$ is dual to $\tau$, $e\geq 1$ is a positive integer, and $i\in \{1,2\}$ (Theorem~\ref{tGarsidenormalformpositivehalftwist}). Let $\tau'\in B_3^{+}$ be a permutation braid that is a right-divisor of the cut closure $\overline{\tau}$. If $\tau'$ is a cut right-divisor of $\overline{\tau}$, then the absolute value of the Garside power of $\tau'g\left(\tau'\right)^{-1}$ is strictly less than the absolute value of the Garside power of $g$. 
\end{proposition}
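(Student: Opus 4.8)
The plan is to show that conjugation by $\tau'$ strictly increases the Garside power of $g$ (equivalently, strictly decreases its absolute value), by explicitly extracting the Garside normal form of the conjugate through the theory of duality. First I would record the two facts that drive the reduction. Since $g$ is the $e$th power of a positive half-twist, so is its conjugate $h := \tau' g (\tau')^{-1}$ (Corollary~\ref{cpositivehalftwistconjugacy}); hence by Theorem~\ref{tGarsidenormalformpositivehalftwist} both $g$ and $h$ have Garside power $\le 0$, with the absolute value of the Garside power equal to the $\omega$-number of the dual braid appearing in the normal form. In particular the Garside power of $g$ equals $-\omega(\tau)$, and since $g$ is non-standard, $\tau$ is a nonidentity positive braid, so $\omega(\tau)\ge 1$. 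Consequently it suffices to prove that the Garside power of $h$ is strictly greater than $-\omega(\tau)$: combined with the bound that the Garside power of $h$ is $\le 0$, this forces $\lvert\,\text{Garside power of }h\,\rvert \le \omega(\tau)-1 < \omega(\tau)$.

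Next I would rewrite $h$ in a form ready for duality. Using Proposition~\ref{pcutclosureconjugate}, the positive braid $\xi = \rho\sigma_i^e\tau$ factors as $\xi = \overline{\rho}\,\sigma_i^{e-1}\,\overline{\tau}$. Since $\tau'$ is a right-divisor of $\overline{\tau}$, write $\overline{\tau} = \mu\tau'$ with $\mu\in B_3^{+}$, so that $\xi(\tau')^{-1} = \overline{\rho}\,\sigma_i^{e-1}\,\mu\in B_3^{+}$. Moving $\tau'$ past $\Delta^{-\omega(\tau)}$ via the identity $\tau'\Delta^{-\omega(\tau)} = \Delta^{-\omega(\tau)}(\tau')_{(\omega(\tau))}$ (Definition~\ref{dconjugationbyDelta}) gives
\[
h = \Delta^{-\omega(\tau)}\,(\tau')_{(\omega(\tau))}\,\overline{\rho}\,\sigma_i^{e-1}\,\mu .
\]

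The key step is the cancellation produced on the left. Because $\tau'$ is a cut right-divisor of $\overline{\tau}$, Proposition~\ref{pdivisorconjugatedual} produces a left-divisor $\rho_0$ of $\rho$ that is right-dual to $(\tau')_{(\omega(\tau)-\omega(\tau'))}$. Applying the pseudo-symmetry of duals (Proposition~\ref{ppseudosymmetryduality}) and then conjugating the resulting identity by $\Delta^{\omega(\tau')}$, I would deduce the clean relation $(\tau')_{(\omega(\tau))}\,\rho_0 = \Delta^{\omega(\tau')}$. Since $\rho_0$ is a left-divisor of $\rho$, hence of $\overline{\rho}$, writing $\overline{\rho} = \rho_0\nu$ yields
\[
h = \Delta^{-\omega(\tau)+\omega(\tau')}\,\nu\,\sigma_i^{e-1}\,\mu .
\]
A permutation braid $\tau'$ satisfies $\omega(\tau')=1$ (its dual is a permutation braid with $\tau'\cdot(\text{dual}) = \Delta$), so $\omega(\tau') = 1 > 0$; thus the Garside power of $h$ is at least $-\omega(\tau)+1$ (any further divisibility of $\nu\,\sigma_i^{e-1}\,\mu$ by $\Delta$ only raises it), which is strictly greater than $-\omega(\tau)$, completing the argument.

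The main obstacle I anticipate is the bookkeeping in the third step: passing correctly from the statement of Proposition~\ref{pdivisorconjugatedual} (where the conjugation subscript is $\omega(\tau)-\omega(\tau')$ and the relation is phrased via right-duality to a left-divisor of $\rho$) to the assertion that the prepended factor $(\tau')_{(\omega(\tau))}$ genuinely absorbs $\rho_0$ and releases exactly $\Delta^{\omega(\tau')}$. This requires one application of pseudo-symmetry together with a conjugation by $\Delta^{\omega(\tau')}$ and a careful parity check on the $(\cdot)_{(l)}$ subscripts. I expect the boundary case $\tau' = \overline{\tau}$ (so that $\mu$ is trivial) to be handled by the identical computation, and I note that no separate treatment of whether $\overline{\tau}=\tau$ or $\overline{\tau}=\sigma_i\tau$ is required, since the entire argument is routed through the cut closures.
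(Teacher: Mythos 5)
Your proposal is correct and follows essentially the same route as the paper: the key step in both is to use Proposition~\ref{pdivisorconjugatedual} together with the pseudo-symmetry of duals to show that $\left(\tau'\right)_{\left(\omega\left(\tau\right)\right)}$ is left-dual to a left-divisor of $\rho$, so that prepending it releases a factor of $\Delta$, while the hypothesis that $\tau'$ right-divides $\overline{\tau}$ keeps the right-hand end positive. The paper packages the conclusion via the reverse triangle inequality for Garside powers applied to the decomposition $\left(\tau'\Delta^{-\omega\left(\tau\right)}\rho\right)\left(\sigma_i^{e}\tau\left(\tau'\right)^{-1}\right)$, whereas you exhibit the positive word $\nu\sigma_i^{e-1}\mu$ explicitly and (usefully) make explicit the implicit point that the conjugate is again a power of a positive half-twist and hence has nonpositive Garside power.
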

\begin{proof}
If $\tau'\in B_3^{+}$ is a permutation braid that is a cut right-divisor of $\overline{\tau}$, then we claim that $\left(\tau'\right)_{\left(\omega\left(\tau\right)\right)}\rho$ is divisible by $\Delta$. Of course, this claim implies that the Garside power of $\tau'\Delta^{-\omega\left(\tau\right)}\rho$ is strictly greater than $-\omega\left(\tau\right)$. Furthermore, the reverse triangle inequality for Garside powers (Proposition~\ref{preversetriangleinequality}) implies that the absolute value of the Garside power of $\tau'g\left(\tau'\right)^{-1} = \left(\tau'\Delta^{-\omega\left(\tau\right)}\rho\right)\left(\sigma_i^{e}\tau\left(\tau'\right)^{-1}\right)$ is strictly less than the absolute value of the Garside power of $g$, since the Garside power of $\left(\sigma_i^{e}\tau\left(\tau'\right)^{-1}\right)$ is $0$. In other words, the claim implies the statement.

Let us establish the claim. If $\tau'$ is a cut right-divisor of $\overline{\tau}$, then Proposition~\ref{pdivisorconjugatedual} implies that there is a left-divisor $\rho'$ of $\rho$ such that $\left(\rho'\right)_{\left(\omega\left(\tau\right)-1\right)}$ is left-dual to $\tau'$. The pseudo-symmetry of duals (Proposition~\ref{ppseudosymmetryduality}) implies that $\left(\tau'\right)_{\left(1\right)}$ is left-dual to $\left(\rho'\right)_{\left(\omega\left(\tau\right)-1\right)}$. We deduce that $\left(\tau'\right)_{\left(\omega\left(\tau\right)\right)}$ is left-dual to $\rho'$. In particular, $\left(\tau'\right)_{\left(\omega\left(\tau\right)\right)}\rho$ is divisible by $\Delta$ and the claim is true. Therefore, the statement is established.
\end{proof}

Let $g\in B_3$ be the power of a non-standard positive half-twist. We now characterize the cases where conjugation of $g$ by a permutation braid does not increase the absolute value of the Garside power.

\begin{proposition}
\label{pGarsidepowerconjugateconditions}
Let $\tau'\in B_3^{+}$ be a permutation braid. Let $g\in B_3$ be the power of a non-standard positive half-twist and let us write the Garside normal form of $g$ as \[g = \Delta^{-\omega\left(\tau\right)}\rho\sigma_i^{e}\tau,\] where $\rho,\tau\in B_3^{+}$ are positive braids such that $\rho$ is dual to $\tau$, $e\geq 1$ is a positive integer, and $i\in \{1,2\}$ (Theorem~\ref{tGarsidenormalformpositivehalftwist}). The absolute value of the Garside power of $\tau'g\left(\tau'\right)^{-1}$ is at most the absolute value of the Garside power of $g$ if and only if either $\left(\tau'\right)_{\left(\omega\left(\tau\right)\right)}\overline{\rho}$ is divisible by $\Delta$ or $\tau'$ is a right-divisor of $\overline{\tau}$.
\end{proposition}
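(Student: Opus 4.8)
The plan is to compute the Garside power of $\tau'g(\tau')^{-1}$ exactly and read off the stated dichotomy. I would write the Garside normal form of $g$ as $g = \Delta^{-\omega(\tau)}\xi$ with $\xi = \rho\sigma_i^{e}\tau$, and recall from Proposition~\ref{pcutclosureconjugate} that $\xi = \overline{\rho}\,\sigma_i^{e-1}\,\overline{\tau}$, where $\overline{\rho}$ is a left-divisor and $\overline{\tau}$ a right-divisor of $\xi$. Since $\tau'$ is a permutation braid, $\omega(\tau')=1$ and the Garside normal form of its inverse is $(\tau')^{-1} = \Delta^{-1}\delta$, where $\delta$ is dual to $\tau'$ (Definition~\ref{ddual}), i.e.\ $\delta\tau' = \Delta$. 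Moving the powers of $\Delta$ to the left using Definition~\ref{dconjugationbyDelta} gives
\[
\tau'g(\tau')^{-1} = \Delta^{-\omega(\tau)-1}\bigl((\tau')_{(\omega(\tau))}\xi\bigr)_{(1)}\delta,
\]
so that, writing $\mu = (\tau')_{(\omega(\tau))}$ and using that conjugation by $\Delta$ preserves Garside powers, the Garside power of $\tau'g(\tau')^{-1}$ equals $-\omega(\tau)-1+r$, where $r$ is the Garside power of the positive braid $\mu\xi\delta_{(1)}$.

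Next I would bound $r$. Since $\mu$ and $\delta_{(1)}$ are permutation braids, each is a divisor of $\Delta$; combined with the fact that $\xi$ is indivisible by $\Delta$, a short computation (pushing a factored copy of $\Delta$ back through $\mu$ or $\delta_{(1)}$, exactly as in the existence half of Theorem~\ref{Garside}) shows that $\Delta^{3}$ cannot left-divide $\mu\xi\delta_{(1)}$, so $r\le 2$. Because $g$ is the power of a \emph{non-standard} half-twist we have $\omega(\tau)\ge 1$, whence $\lvert{-\omega(\tau)-1+r}\rvert \le \omega(\tau)$ holds precisely when $r\ge 1$ (the upper bound $r\le 2\le 2\omega(\tau)+1$ being automatic). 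Thus the proposition reduces to showing that $\mu\xi\delta_{(1)}$ is divisible by $\Delta$ if and only if one of the two stated conditions holds. As $\xi$ has length at least three it is not a generator, so Lemma~\ref{ltripleproductindivisibleDelta} applies and $\mu\xi\delta_{(1)}$ is divisible by $\Delta$ if and only if $\mu\xi$ or $\xi\delta_{(1)}$ is divisible by $\Delta$.

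It remains to identify these two divisibilities with the two conditions, and here I would first record the two easy implications. Since $\overline{\rho}$ is a left-divisor of $\xi$, divisibility of $\mu\overline{\rho} = (\tau')_{(\omega(\tau))}\overline{\rho}$ by $\Delta$ (the first condition) forces divisibility of $\mu\xi$; and since $\overline{\tau}(\tau')^{-1} = \Delta^{-1}(\overline{\tau})_{(1)}\delta$, the condition that $\tau'$ be a right-divisor of $\overline{\tau}$ is equivalent to $\overline{\tau}\delta_{(1)}$ being divisible by $\Delta$, which (as $\overline{\tau}$ is a right-divisor of $\xi$) forces divisibility of $\xi\delta_{(1)}$. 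For the converse I would compare the first two generators of $\xi$ with those of $\overline{\rho}$, and the last two with those of $\overline{\tau}$, invoking Proposition~\ref{pdualbraidproduct}: when $\overline{\rho}$ (resp.\ $\overline{\tau}$) has length at least two, these prefixes (resp.\ suffixes) coincide, giving $\mu\xi$ divisible by $\Delta$ if and only if $\mu\overline{\rho}$ is, and $\xi\delta_{(1)}$ divisible by $\Delta$ if and only if $\overline{\tau}\delta_{(1)}$ is.

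The hard part will be the converse in the degenerate situations where a cut closure $\overline{\rho}$ or $\overline{\tau}$ is a single generator (these occur exactly in the cases of Proposition~\ref{pcutclosureconjugate}). There a copy of $\Delta$ inside $\mu\xi$ can straddle $\mu$, the single generator of $\overline{\rho}$, and a following $\sigma_i$, so that $\mu\xi$ is divisible by $\Delta$ even though $\mu\overline{\rho}$ is not --- precisely the triple-overlap phenomenon isolated in the discussion preceding Lemma~\ref{ltripleproductindivisibleDelta}. The point I would establish is that in exactly these degenerate configurations the \emph{other} condition is forced: enumerating the permutation-braid right-divisors of $\overline{\tau}$ (whose shape is pinned down by Proposition~\ref{pcutclosureconjugate}) shows that whenever such a straddling $\Delta$ occurs and the first condition fails, $\tau'$ must be a right-divisor of $\overline{\tau}$, and symmetrically with the two ends interchanged. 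Hence the disjunction ``$\mu\xi$ or $\xi\delta_{(1)}$ divisible by $\Delta$'' coincides with the disjunction of the two stated conditions, which by the reduction above is equivalent to $\lvert$Garside power of $\tau'g(\tau')^{-1}\rvert \le \omega(\tau)$, completing the proof.
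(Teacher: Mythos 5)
Your argument is correct, and it reaches the same dichotomy as the paper but by a different mechanism. The paper also begins by writing $\tau'g(\tau')^{-1}=\Delta^{-\omega(\tau)}\bigl[(\tau')_{(\omega(\tau))}\rho\sigma_i^{e}\tau(\tau')^{-1}\bigr]$, but it then splits the bracket as the product of the two braids $(\tau')_{(\omega(\tau))}\overline{\rho}\sigma_i^{e-1}$ and $\overline{\tau}(\tau')^{-1}$, asserts that the Garside power of the bracket is the \emph{sum} of the Garside powers of these two pieces (via Proposition~\ref{preversetriangleinequality}), and reads off the two conditions one from each piece: the left piece has Garside power $1$ iff $(\tau')_{(\omega(\tau))}\overline{\rho}$ is divisible by $\Delta$, the right piece has Garside power $0$ iff $\tau'$ right-divides $\overline{\tau}$. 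You instead keep everything inside $B_3^{+}$ as the single word $\mu\xi\delta_{(1)}$, bound its Garside power by $2$, reduce the whole statement to divisibility of that word by one copy of $\Delta$, and localize that copy at one of the two junctions via Lemma~\ref{ltripleproductindivisibleDelta}. The payoff of your route is precisely the degenerate case you isolate at the end: when $\overline{\rho}$ (say) is a single generator and $e\geq 2$, a copy of $\Delta$ can straddle $\mu$, $\overline{\rho}$, and the first $\sigma_i$, so the paper's intermediate claim that the Garside power of $(\tau')_{(\omega(\tau))}\overline{\rho}\sigma_i^{e-1}$ equals $1$ \emph{only if} $(\tau')_{(\omega(\tau))}\overline{\rho}$ is divisible by $\Delta$ is not literally correct there (e.g.\ $\overline{\rho}=\sigma_j$, $\mu=\sigma_{j'}$ gives $\sigma_{j'}\sigma_j\sigma_{j'}^{e-1}$ divisible by $\Delta$ while $\sigma_{j'}\sigma_j$ is not); the proposition survives because in exactly that configuration $\tau'$ is forced to be a right-divisor of $\overline{\tau}$, which is the finite check you defer and which does go through. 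So your proof is not only valid but patches an imprecision in the paper's own argument; the paper's version is shorter where the cut closures have length at least two, since there the prefix/suffix comparison is immediate and no triple-overlap analysis is needed.
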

\begin{proof}
We have that \[\tau'g\left(\tau'\right)^{-1} = \Delta^{-\omega\left(\tau\right)}\left[\left(\tau'\right)_{\left(\omega\left(\tau\right)\right)}\rho\sigma_i^{e}\tau\left(\tau'\right)^{-1}\right].\] In particular, the absolute value of the Garside power of $\tau'g\left(\tau'\right)^{-1}$ is equal to the sum of the absolute value of the Garside power of $g$ (which is $\omega\left(\tau\right)$) and $-\epsilon$, where $\epsilon$ is the Garside power of the braid in brackets. Furthermore, we have that $\epsilon$ is equal to the sum of the Garside power of $\left(\tau'\right)_{\left(\omega\left(\tau\right)\right)}\overline{\rho}\sigma_i^{e-1}$ and the Garside power of $\overline{\tau}\left(\tau'\right)^{-1}$. Indeed, this is a consequence of Proposition~\ref{pcutclosureconjugate}, which implies that $\rho\sigma_i^{e}\tau = \overline{\rho}\sigma_i^{e-1}\overline{\tau}$, and Proposition~\ref{preversetriangleinequality}, since $\tau'$ is a permutation braid.

The Garside power of $\left(\tau'\right)_{\left(\omega\left(\tau\right)\right)}\overline{\rho}\sigma_i^{e-1}$ is an element of $\{0,1\}$, and the Garside power of $\overline{\tau}\left(\tau'\right)^{-1}$ is an element of $\{-1,0\}$, also since $\tau'$ is a permutation braid. Furthermore, the Garside power of $\left(\tau'\right)_{\left(\omega\left(\tau\right)\right)}\overline{\rho}\sigma_i^{e-1}$ is $1$ if and only if $\left(\tau'\right)_{\left(\omega\left(\tau\right)\right)}\overline{\rho}$ is divisible by $\Delta$, and the Garside power of $\overline{\tau}\left(\tau'\right)^{-1}$ is $0$ if and only if $\tau'$ is a right-divisor of $\overline{\tau}$. Therefore, the statement is established.
\end{proof}

We will use Proposition~\ref{pGarsidepowerreduction} to find complexity reducing global conjugation moves for certain factorizations of $\Delta^2$ into powers of positive half-twists. However, we will need to show that global conjugation reduces the (overall) complexity of the factorization and not simply the absolute value of the Garside power of an individual factor. We can achieve this by combining Proposition~\ref{pGarsidepowerreduction} with the following technical result. 

\begin{proposition}
\label{pGarsidepowerreductionconsistency}
We adopt Setup~\ref{setfactorization} with $k=2$, where $g_1$ and $g_2$ are powers of non-standard positive half-twists. Let us assume that $\xi_1\xi_2$ is divisible by $\Delta$. Let $\tau_1'\in B_3^{+}$ be a permutation braid. If the absolute value of the Garside power of $\tau_1'g_1\left(\tau_1'\right)^{-1}$ is at most the absolute value of the Garside power of $g_1$, then the absolute value of the Garside power of $\tau_1'g_2\left(\tau_1'\right)^{-1}$ is at most the absolute value of the Garside power of $g_2$. 
\end{proposition}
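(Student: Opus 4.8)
The plan is to reduce both the hypothesis and the conclusion to the combinatorial divisibility criteria furnished by Proposition~\ref{pGarsidepowerconjugateconditions}, and then to transport the relevant criterion across the junction of $\xi_1$ and $\xi_2$ using the duality developed in Subsection~\ref{ssdualitypositivebraid}. Concretely, applying Proposition~\ref{pGarsidepowerconjugateconditions} to $g_1$ (whose Garside normal form is $\Delta^{-\omega(\tau_1)}(\xi_1)_{(\omega_1)}$ in the notation of Setup~\ref{setfactorization}), the hypothesis that conjugation by $\tau_1'$ does not increase the absolute value of the Garside power is equivalent to the disjunction: either $(\tau_1')_{(\omega_1)}\overline{\rho_1}$ is divisible by $\Delta$, or $\tau_1'$ is a right-divisor of $(\overline{\tau_1})_{(\omega_1)}$. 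I would read off from the proof of Proposition~\ref{pGarsidepowerconjugateconditions} that the change in Garside power splits as a sum of a ``front'' contribution $p_1\in\{0,1\}$ (governed by the first condition, at the start of $\xi_1$) and a ``back'' contribution $q_1\in\{-1,0\}$ (governed by the second, at the end of $\xi_1$), so that the hypothesis reads precisely $p_1+q_1\geq 0$. Applying the same proposition to $g_2$ gives analogous contributions $p_2,q_2$, and the goal becomes the implication $p_1+q_1\geq 0\Rightarrow p_2+q_2\geq 0$.

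The second step is to record the structure of the junction. Since $\xi_1\xi_2$ is divisible by $\Delta$ while $\xi_1$ and $\xi_2$ are not, Proposition~\ref{pdualbraidproduct} produces a factorization of the interface into a permutation braid that is a right-divisor of $\overline{\tau_1}$ dual to a permutation braid that is a left-divisor of $\overline{\rho_2}$; in particular the last generator $\sigma_a$ of $\xi_1$ and the first generator $\sigma_b$ of $\xi_2$ satisfy $a\neq b$. After rewriting the ``back'' condition for $g_1$ and the ``front'' condition for $g_2$ in terms of the single decorated permutation braid $\theta=(\tau_1')_{(\omega_1)}$ (so that the $\Delta$-conjugation decorations of Setup~\ref{setfactorization} cancel consistently, which is a subtle but essential bookkeeping point), the two conditions become statements about how $\theta$ meets the complementary generators $\sigma_a$ and $\sigma_b$. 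The heart of the argument is then the implication $q_1=0\Rightarrow p_2=1$: if $\theta$ right-divides $\overline{\tau_1}$, then because the end of $\xi_1$ is dual to the start of $\xi_2$, the product $\theta\,\overline{\rho_2}$ should be forced to be divisible by $\Delta$. Granting this, the case $q_1=0$ is immediate since $p_2=1$ yields $p_2+q_2\geq 0$ regardless of $q_2$; and the case $q_1=-1$ forces $p_1=1$ by the hypothesis, which is handled by the symmetric version of the same transport.

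I expect the main obstacle to be the boundary case of the implication $q_1=0\Rightarrow p_2=1$ in which $\theta$ is a single generator matching only the last generator of $\overline{\tau_1}$ while $\overline{\rho_2}$ begins with a repeated generator $\sigma_b^2$, for then $\theta\,\overline{\rho_2}$ need not be divisible by $\Delta$ and one has $p_2=0$. To close this case I would exploit the fact that $g_1$ and $g_2$ are powers of positive half-twists rather than arbitrary positive braids: Proposition~\ref{pcutclosureconjugate} rigidly constrains the cut-closure structure of $\xi_1$ and $\xi_2$ (exactly one of the two ends is ``isolated''), and Proposition~\ref{pdualbraidproduct} applied to the interface shows that $\overline{\rho_2}$ beginning with $\sigma_b^2$ forces $\xi_1$ to end with the two-generator block $\sigma_{a'}\sigma_a$. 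The task is then to show, using these structural constraints together with the duality relations (the pseudo-symmetry of duals, Proposition~\ref{ppseudosymmetryduality}, and the uniqueness of duals, Proposition~\ref{puniquenessofduals}), that this configuration either cannot occur under the hypothesis or else forces $q_2=0$, and hence $p_2+q_2\geq 0$; this rigidity is precisely where the half-twist hypothesis, as opposed to a general positive factorization, is indispensable. Should the case analysis prove unwieldy, an alternative organizing principle is to bound the two conjugations jointly via the reverse triangle inequality for Garside powers (Proposition~\ref{preversetriangleinequality}) applied to the product $\tau_1'(g_1g_2)(\tau_1')^{-1}$, converting the two individual estimates into a single estimate localized at the junction.
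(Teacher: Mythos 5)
Your proposal follows the paper's proof essentially step for step: both reduce the hypothesis and the conclusion to the disjunctive criterion of Proposition~\ref{pGarsidepowerconjugateconditions}, case-split on which disjunct holds for $g_1$, transport the ``back'' condition across the junction via Proposition~\ref{pdualbraidproduct} and Lemma~\ref{lcharacterizationdualpositivebraids}, and resolve the boundary case (where $\left(\tau_1'\right)_{\left(\omega\left(\tau_1\right)\right)}$ right-divides $\overline{\tau_1}$ but is not a cut right-divisor, so that its product with $\overline{\rho_2}$ need not be divisible by $\Delta$) by showing that $\left(\tau_1'\right)_{\left(\omega\left(\tau_1\right)+\omega\left(\tau_2\right)\right)}$ is then a right-divisor of $\overline{\tau_2}$, exactly as you predict. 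The one step you leave vague --- the ``front'' case where $\tau_1'\overline{\rho_1}$ is divisible by $\Delta$ --- is handled in the paper by precisely the tools you name: Proposition~\ref{pdivisorconjugatedual} and the pseudo-symmetry of duals when the relevant left-divisor of $\overline{\rho_1}$ is a cut left-divisor (reducing to the back condition), and a short direct computation showing $\left(\tau_1'\right)_{\left(\omega\left(\tau_1\right)\right)}\xi_2$ is divisible by $\Delta$ when it is not, so no new idea is missing.
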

\begin{proof}
If the absolute value of the Garside power of $\tau_1'g_1\left(\tau_1'\right)^{-1}$ is at most the absolute value of the Garside power of $g_1$, then Proposition~\ref{pGarsidepowerconjugateconditions} implies that either $\tau_1'\overline{\rho_1}$ is divisible by $\Delta$ or $\left(\tau_1'\right)_{\left(\omega\left(\tau_1\right)\right)}$ is a right-divisor of $\overline{\tau_1}$ (in the notation of Setup~\ref{setfactorization}, which differs slightly from the notation in the statement of Proposition~\ref{pGarsidepowerconjugateconditions}). We will consider each of these two cases separately. We will show that either $\left(\tau_1'\right)_{\left(\omega\left(\tau_1\right)\right)}\xi_2$ is divisible by $\Delta$ or $\left(\tau_1'\right)_{\omega\left(\tau_1\right)+\omega\left(\tau_2\right)}$ is a right-divisor of $\overline{\tau_2}$. Indeed, in the notation of Setup~\ref{setfactorization}, we have $g_2 = \Delta^{-\omega\left(\tau_2\right)}\left(\xi_2\right)_{\left(\omega\left(\tau_1\right)+\omega\left(\tau_2\right)\right)}$, and thus either of these conclusions would imply that the absolute value of the Garside power of $\tau_1'g_2\left(\tau_1'\right)^{-1}$ is at most the absolute value of the Garside power of $g_2$.

Firstly, if $\left(\tau_1'\right)_{\left(\omega\left(\tau_1\right)\right)}$ is a right-divisor of $\overline{\tau_1}$, then we consider the cases according to whether or not $\left(\tau_1'\right)_{\left(\omega\left(\tau_1\right)\right)}$ is a cut right-divisor of $\overline{\tau_1}$ separately. If $\left(\tau_1'\right)_{\left(\omega\left(\tau_1\right)\right)}$ is a cut right-divisor of $\overline{\tau_1}$, then the hypothesis that $\xi_1\xi_2$ is divisible by $\Delta$ implies that $\left(\tau_1'\right)_{\left(\omega\left(\tau_1\right)\right)}\xi_2$ is divisible by $\Delta$. Indeed, $\tau_1'$ is a permutation braid and we can apply Proposition~\ref{pdualbraidproduct} and Lemma~\ref{lcutdivisorcharacterization}. In particular, the absolute value of the Garside power of $\tau_1'g_2\left(\tau_1'\right)^{-1}$ is at most the absolute value of the Garside power of $g_2$ in this case.

If $\left(\tau_1'\right)_{\left(\omega\left(\tau_1\right)\right)}$ is not a cut right-divisor of $\overline{\tau_1}$, then Lemma~\ref{lcutdivisorcharacterization} implies that $\overline{\tau_1}$ ends with a product $\sigma_k\sigma_{k'}$ of distinct generators, where $\{k,k'\} = \{1,2\}$ and $\tau_1' = \left(\sigma_{k'}\right)_{\left(\omega\left(\tau_1\right)\right)}$. The hypothesis that $\xi_1\xi_2$ is divisible by $\Delta$ and Proposition~\ref{pdualbraidproduct} imply that $\rho_2$ begins with $\sigma_k$. Lemma~\ref{lcharacterizationdualpositivebraids} implies that $\tau_2$ ends with $\left(\sigma_{k'}\right)_{\left(\omega\left(\tau_2\right)\right)}$. We deduce that $\left(\tau_1'\right)_{\left(\omega\left(\tau_1\right)+\omega\left(\tau_2\right)\right)}$ is a right-divisor of $\overline{\tau_2}$. In particular, the absolute value of the Garside power of $\tau_1'g_2\left(\tau_1'\right)^{-1}$ is at most the absolute value of the Garside power of $g_2$ in this case.

Secondly, if $\tau_1'\overline{\rho_1}$ is divisible by $\Delta$, then $\tau_1'$ is left-dual to a left-divisor $\rho_1'$ of $\overline{\rho_1}$, since $\tau_1'$ is a permutation braid. We consider the cases according to whether or not $\rho_1'$ is a cut left-divisor of $\overline{\rho_1}$ separately. If $\rho_1'$ is a cut left-divisor of $\overline{\rho_1}$, then Proposition~\ref{pdivisorconjugatedual} implies that $\left(\rho_1'\right)_{\left(\omega\left(\tau_1\right)-1\right)}$ is left-dual to a right-divisor of $\tau_1$. However, the pseudo-symmetry of duals (Proposition~\ref{ppseudosymmetryduality}) implies that $\left(\rho_1'\right)_{\left(1\right)}$ is left-dual to $\tau_1'$. We deduce that $\left(\tau_1'\right)_{\left(\omega\left(\tau_1\right)\right)}$ is a right-divisor of $\tau_1$. However, we have already shown that the absolute value of the Garside power of $\tau_1'g_2\left(\tau_1'\right)^{-1}$ is at most the absolute value of the Garside power of $g_2$ in this case.

If $\rho_1'$ is not a cut left-divisor of $\overline{\rho_1}$, then Lemma~\ref{lcutdivisorcharacterization} implies that $\overline{\rho_1}$ begins with a product $\sigma_k\sigma_{k'}$ of distinct generators, where $\{k,k'\}=\{1,2\}$ and $\rho_1' = \sigma_k$. In this case, $\tau_1' = \sigma_k\sigma_{k'}$ since $\tau_1'$ is left-dual to $\rho_1'$. Lemma~\ref{lcharacterizationdualpositivebraids} implies that $\tau_1$ ends with $\left(\sigma_k\right)_{\left(\omega\left(\tau_1\right)-1\right)}$. The hypothesis that $\xi_1\xi_2$ is divisible by $\Delta$ and Proposition~\ref{pdualbraidproduct} imply that $\rho_2$ begins with $\left(\sigma_{k'}\right)_{\left(\omega\left(\tau_1\right)-1\right)}$. However, the product $\left(\tau_1'\right)_{\left(\omega\left(\tau_1\right)\right)}\left(\sigma_{k'}\right)_{\left(\omega\left(\tau_1\right)-1\right)}=\Delta$. We conclude that $\left(\tau_1'\right)_{\left(\omega\left(\tau_1\right)\right)}\xi_2$ is divisible by $\Delta$. In particular, the absolute value of the Garside power of $\tau_1'g_2\left(\tau_1'\right)^{-1}$ is at most the absolute value of the Garside power of $g_2$ in this case.

Therefore, the statement is established in all cases.
\end{proof}

We conclude that a complexity reducing global conjugation move can be applied to certain factorizations of $\Delta^2$ into powers of non-standard positive half-twists.

\begin{lemma}
\label{lcomplexityreducingglobalconjugation}
We adopt Setup~\ref{setfactorization}, where $g_j$ is the power of a non-standard positive half-twist for each $1\leq j\leq k$. If $\xi_j\xi_{j+1}$ is divisible by $\Delta$ for each $1\leq j\leq k-1$, then we can apply a complexity reducing global conjugation move to the factorization ${\cal F}$. 
\end{lemma}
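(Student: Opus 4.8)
The plan is to produce a single positive braid by which global conjugation strictly lowers $c({\cal F})$. Recall first that since each $g_j$ is the power of a \emph{non-standard} positive half-twist, Corollary~\ref{cGarsidepowerpositivehalftwist} forces its Garside power $-\omega(\tau_j)$ to be nonzero, so every $\tau_j$ is a nonidentity positive braid and $c({\cal F}) = \sum_{j=1}^{k}\omega(\tau_j) > 0$. The global conjugation move by a braid $h$ replaces each $g_j$ by $h g_j h^{-1}$, so it suffices to find a permutation braid $h$ that strictly decreases the absolute value of the Garside power of at least one factor while not increasing that of any other.

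First I would build $h$. Write the Garside normal form of $g_1$ as $g_1 = \Delta^{-\omega(\tau)}\rho\sigma_i^{e}\tau$ with $\rho$ dual to $\tau$, as in Theorem~\ref{tGarsidenormalformpositivehalftwist}; since $g_1$ is non-standard, $\tau$ is nonidentity and its cut closure $\overline{\tau}$ is a nonidentity positive braid indivisible by $\Delta$. By Proposition~\ref{piDB3} the unique product expansion of $\overline{\tau}$ has no isolated generators except possibly at its ends, so inspecting its last one or two generators and invoking Lemma~\ref{lcutdivisorcharacterization} shows that either the final generator of $\overline{\tau}$ or the product of its final two generators is a permutation braid that is a cut right-divisor of $\overline{\tau}$. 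Taking $h := \tau_1'$ to be this permutation braid, Proposition~\ref{pGarsidepowerreduction} shows that conjugation by $\tau_1'$ strictly decreases the absolute value of the Garside power of $g_1$.

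Next I would propagate the non-increase to every factor. I claim that conjugation by $\tau_1'$ does not increase the absolute value of the Garside power of any $g_j$, and I would prove this by induction on $j$. The base case $j=1$ is the strict inequality just obtained. For the inductive step, the hypothesis of the lemma guarantees that $\xi_j\xi_{j+1}$ is divisible by $\Delta$, so Proposition~\ref{pGarsidepowerreductionconsistency} applies to the consecutive pair $(g_j, g_{j+1})$: its input condition, that conjugation by $\tau_1'$ does not increase the absolute value of the Garside power of $g_j$, is the inductive hypothesis, and its conclusion is exactly the same statement for $g_{j+1}$. Summing absolute values of Garside powers, the global conjugation move by $\tau_1'$ yields a factorization whose complexity is strictly smaller than $c({\cal F})$ (the $j=1$ contribution strictly drops and no other contribution rises), which is the assertion.

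The main obstacle is this simultaneous control: one fixed element $\tau_1'$ must behave well against every factor at once, and this is precisely what the chain of divisibilities $\xi_j\xi_{j+1}\in \Delta B_3^{+}$ buys us through the iteration of Proposition~\ref{pGarsidepowerreductionconsistency}. The one bookkeeping point to check is that Proposition~\ref{pGarsidepowerreductionconsistency} is stated for the first two factors of Setup~\ref{setfactorization}; to apply it to a later pair $(g_j, g_{j+1})$ one conjugates the pair by the power of $\Delta$ that aligns its frame with a fresh instance of the Setup, which merely replaces $\tau_1'$ by a $\Delta$-conjugate (still a permutation braid) and preserves every divisibility by $\Delta$ and every Garside power, so the conclusion transfers unchanged.
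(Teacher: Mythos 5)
Your proof is correct and follows essentially the same route as the paper: produce a permutation braid that is a cut right-divisor of $\overline{\tau_1}$, apply Proposition~\ref{pGarsidepowerreduction} to get a strict drop in the Garside power of $g_1$, and then chain Proposition~\ref{pGarsidepowerreductionconsistency} along the consecutive pairs (using the hypothesis that each $\xi_j\xi_{j+1}$ is divisible by $\Delta$) to see that no other factor's contribution increases. The only cosmetic difference is the choice of the conjugating element: the paper takes the maximal permutation-braid right-divisor of $\xi_1$ dual to a left-divisor of $\xi_2$, whereas you read a permutation-braid cut right-divisor directly off the tail of $\overline{\tau_1}$ via Proposition~\ref{piDB3} and Lemma~\ref{lcutdivisorcharacterization}; both satisfy the hypotheses of the two propositions, so the arguments are interchangeable.
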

\begin{proof}
The statement follows from Proposition~\ref{pGarsidepowerreduction} and Proposition~\ref{pGarsidepowerreductionconsistency}. Indeed, if $\tau_1'$ is the maximal permutation braid right-divisor of $\xi_1$ that is left-dual to a left-divisor of $\xi_2$, then Proposition~\ref{pdualbraidproduct} and Lemma~\ref{lcutdivisorcharacterization} imply that $\tau_1'$ is a cut right-divisor of $\xi_1$. The hypothesis that $g_1$ is the power of a non-standard positive half-twist implies that $\tau_1'$ is a cut right-divisor of $\overline{\tau_1}$. Proposition~\ref{pGarsidepowerreduction} implies that the absolute value of the Garside power of $\tau_1'g_1\left(\tau_1'\right)^{-1}$ is strictly less than the absolute value of the Garside power of $g_1$. Furthermore, repeated applications of Proposition~\ref{pGarsidepowerreductionconsistency} imply that the absolute value of the Garside power of $\tau_1'g_j\left(\tau_1'\right)^{-1}$ is at most the absolute value of the Garside power of $g_j$ for each $1\leq j\leq k$. Therefore, we have the desired inequality on the complexity $c\left(\tau_1'{\cal F}\left(\tau_1'\right)^{-1}\right) < c\left({\cal F}\right)$ and the statement is established.
\end{proof}

The goal is to establish that every factorization of $\Delta^2$ into powers of positive half-twists is Hurwitz equivalent to a (unique) standard factorization (in Definition~\ref{dstandardfactorization}). Firstly, we establish that every complexity zero factorization of $\Delta^2$ into powers of positive half-twists is Hurwitz equivalent to a (unique) standard factorization. The statement is an analogue of Lemma~\ref{lstandardfactorizationsHurwitzequivalent} on complexity zero factorizations of $\Delta^2$ into positive half-twists. We establish the statement in several steps.

\begin{lemma}
\label{loneortwosquarespositivehalftwistsstandard}
Let ${\cal F}$ be a factorization of $\Delta^2$ into positive half-twists and squares of positive half-twists. Let us also assume that there is at least one and at most two squares of positive half-twists in ${\cal F}$. If the complexity $c\left({\cal F}\right) = 0$, then ${\cal F}$ is Hurwitz equivalent to the unique standard factorization in Definition~\ref{dstandardfactorization} with the same number of factors of each type as ${\cal F}$. 
\end{lemma}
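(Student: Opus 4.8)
The plan is to use the hypothesis $c(\mathcal F)=0$ to reduce to a positive-word equation in $B_3^{+}$, and then to normalise $\mathcal F$ almost for free by cyclically rotating a square factor to the front. First I would record the consequences of complexity zero: by Proposition~\ref{pcomplexityzeroiffstandard}, $c(\mathcal F)=0$ forces every factor of $\mathcal F=(g_1,\dots,g_k)$ to be a power of a \emph{standard} positive half-twist, so, since $\mathcal F$ consists of positive half-twists and their squares, each $g_j$ is either $\sigma_{i}$ or $\sigma_{i}^2$ with $i\in\{1,2\}$. In particular $g_1g_2\cdots g_k=\Delta^2$ is an identity in the positive monoid $B_3^{+}$, and by Proposition~\ref{pnumberoffactorsinfactorization} the underlying positive word has length $6$. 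Write $\nu_2\in\{1,2\}$ for the number of square factors.

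Next I would introduce the only nontrivial move. For a factorization of the central element $\Delta^2$, the sequence of Hurwitz moves that pushes $g_1$ to the right-hand end produces $(g_2,\dots,g_k,(g_2\cdots g_k)^{-1}g_1(g_2\cdots g_k))$; because $g_2\cdots g_k=g_1^{-1}\Delta^2$ and $\Delta^2$ is central, the final entry equals $\Delta^{-2}g_1\Delta^2=g_1$. Hence cyclic rotation $(g_1,\dots,g_k)\mapsto(g_2,\dots,g_k,g_1)$ is realised by Hurwitz moves and leaves every factor unchanged. Since $\nu_2\ge 1$, I rotate a square factor into the first position, and then apply the global conjugation move by $\Delta^{-1}$ (legitimate, as $\Delta^2$ is central), which sends $\sigma_i\mapsto\sigma_{i'}$ and $\sigma_i^2\mapsto\sigma_{i'}^2$ by the identity $\Delta^{-1}\sigma_i\Delta=\sigma_{i'}$, to arrange that the leading factor is $\sigma_1^2$.

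Finally I would read off the remaining factors. After these moves $\mathcal F$ is Hurwitz equivalent to $(\sigma_1^2,g_2,\dots,g_k)$ with product still $\Delta^2=\sigma_1^2\sigma_2\sigma_1^2\sigma_2$, so $g_2\cdots g_k=\sigma_2\sigma_1^2\sigma_2$. This positive braid is indivisible by $\Delta$, so Proposition~\ref{piDB3} gives it the unique product expansion $\sigma_2\sigma_1\sigma_1\sigma_2$. The remaining factors are $\nu_2-1\in\{0,1\}$ squares together with single generators, and each square must be an adjacent pair of equal letters; since the only adjacent equal pair in $\sigma_2\sigma_1\sigma_1\sigma_2$ is the central $\sigma_1\sigma_1$, the grouping into the remaining factors is forced, giving $(\sigma_2,\sigma_1,\sigma_1,\sigma_2)$ when $\nu_2=1$ and $(\sigma_2,\sigma_1^2,\sigma_2)$ when $\nu_2=2$. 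Thus $\mathcal F$ is Hurwitz equivalent to standard factorization \textbf{(v)}, respectively \textbf{(iv)}, of Definition~\ref{dstandardfactorization}.

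I expect the only genuinely delicate points to be the two facts that make the argument collapse: that cyclic rotation is a Hurwitz-equivalence move returning the tuple unchanged (which is exactly where centrality of $\Delta^2$ enters), and that $\sigma_2\sigma_1^2\sigma_2$ is indivisible by $\Delta$, so that Proposition~\ref{piDB3} applies and the expansion of the tail — hence the whole factorization — is uniquely determined. It is also worth remarking that the hypothesis $\nu_2\le 2$ is sharp for this argument, since the length-four tail $\sigma_2\sigma_1^2\sigma_2$ cannot contain two disjoint squares; this is precisely why the case $\nu_2=3$ has positive complexity and must be treated separately.
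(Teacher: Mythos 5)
Your proof is correct, and it takes a genuinely different route from the paper's. The paper proves the one-square case by listing the five rotations ${\cal F}_1,\dots,{\cal F}_5$ of $\left(\sigma_1^2,\sigma_2,\sigma_1,\sigma_1,\sigma_2\right)$ and exhibiting explicit chains of Hurwitz moves and global conjugations connecting ${\cal F}_1$ to ${\cal F}_3$ and ${\cal F}_4$, with ``symmetrical versions of the argument'' covering the rest; the two-square case is handled by a similar explicit computation connecting $\left(\sigma_1^2,\sigma_2,\sigma_1^2,\sigma_2\right)$ to $\left(\sigma_2,\sigma_1^2,\sigma_2,\sigma_1^2\right)$. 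You instead observe that for a factorization of the central element $\Delta^2$, cyclic rotation is realized by Hurwitz moves and returns the factors unchanged (your computation $\left(g_2\cdots g_k\right)^{-1}g_1\left(g_2\cdots g_k\right)=\Delta^{-2}g_1\Delta^2=g_1$ is right, and is where centrality enters), so you can normalize a square to the front for free, conjugate by $\Delta^{\pm 1}$ to make it $\sigma_1^2$, and then invoke the rigidity of positive words: $\sigma_2\sigma_1^2\sigma_2$ is indivisible by $\Delta$, so Proposition~\ref{piDB3} forces the tail letter-by-letter, and the unique adjacent equal pair pins down where any remaining square sits. This buys a uniform treatment of $\nu_2\in\{1,2\}$ with no case-by-case Hurwitz computations, and it also makes visible why the hypothesis $\nu_2\leq 2$ is exactly what the argument needs; what you give up is only the explicit list of moves, which the paper reuses nowhere else. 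The one step worth stating carefully if you write this up is the justification that the concatenation of the factors' letters must coincide with the unique product expansion of $\sigma_2\sigma_1^2\sigma_2$ --- this is immediate from Proposition~\ref{piDB3} together with the fact that each factor is itself a positive word, but it is the load-bearing appeal to Garside's theorem and should be cited as such.
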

\begin{proof}
Firstly, we consider the case where there is one square of a positive half-twist in the factorization ${\cal F}$. We can assume after global conjugation of ${\cal F}$ by a power of $\Delta$ that the square of a positive half-twist in ${\cal F}$ is $\sigma_1^2$. If $1\leq j\leq 5$, then define ${\cal F}_j$ to be the factorization of $\Delta^2$ with one square $\sigma_1^2$ of a positive half-twist in the $j$th position such that if we arrange the factors of ${\cal F}_j$ in a circle, then $\sigma_1^2$ is followed by the factors $\sigma_{2},\sigma_1,\sigma_1,\sigma_{2}$ (in either the clockwise or the counterclockwise direction). 

For example, ${\cal F}_1 = \left(\sigma_1^2,\sigma_2,\sigma_1,\sigma_1,\sigma_2\right)$ (the standard factorization in Definition~\ref{dstandardfactorization}) and ${\cal F}_2 = \left(\sigma_2,\sigma_1^2,\sigma_2,\sigma_1,\sigma_1\right)$. The hypotheses that $c\left({\cal F}\right) = 0$ and that ${\cal F}$ is a factorization of $\Delta^2$ imply that ${\cal F} = {\cal F}_j$ for some $1\leq j\leq 5$. We will show that ${\cal F}_{j}$ is Hurwitz equivalent to ${\cal F}_{j'}$ for every pair $j,j'\in \{1,2,\dots,5\}$. 

Indeed, we will show that ${\cal F}_1$ is Hurwitz equivalent to ${\cal F}_3$ and ${\cal F}_4$ and symmetrical versions of the argument will handle the other cases. Firstly, we have the following composition of two Hurwitz moves \[{\cal F}_1\sim \left(\sigma_2,\sigma_1,\left(\sigma_2\sigma_1\right)^{-1}\sigma_1^2\left(\sigma_2\sigma_1\right),\sigma_1,\sigma_2\right) = \left(\sigma_2,\sigma_1,\sigma_2^2,\sigma_1,\sigma_2\right),\] and ${\cal F}_3 = \left(\sigma_1,\sigma_2,\sigma_1^2,\sigma_2,\sigma_1\right)$ is the result of global conjugation of the latter factorization by $\Delta$. We deduce that ${\cal F}_1$ is Hurwitz equivalent to ${\cal F}_3$. Secondly, we have the following composition of Hurwitz moves \begin{align*}
{\cal F}_1 &\sim \left(\sigma_2,\sigma_2^{-1}\sigma_1^2\sigma_2,\sigma_1,\sigma_2,\sigma_2^{-1}\sigma_1\sigma_2\right) \\ &\sim \left(\sigma_2,\sigma_1,\sigma_2,\left(\sigma_1\sigma_2\right)^{-1}\sigma_2^{-1}\sigma_1^2\sigma_2\left(\sigma_1\sigma_2\right),\sigma_2^{-1}\sigma_1\sigma_2\right) \\ &\sim \left(\sigma_2,\sigma_2,\sigma_2^{-1}\sigma_1\sigma_2,\sigma_2^2,\sigma_2^{-1}\sigma_1\sigma_2\right) \end{align*} and ${\cal F}_4 = \left(\sigma_1,\sigma_1,\sigma_2,\sigma_1^2,\sigma_2\right)$ is the result of global conjugation of the latter factorization by $\Delta\sigma_2$. We deduce that ${\cal F}_1$ is Hurwitz equivalent to ${\cal F}_4$.

We now consider the case where there are two squares of positive half-twists in the factorization ${\cal F}$. The hypotheses that $c\left({\cal F}\right) = 0$ and that ${\cal F}$ is a factorization of $\Delta^2$ imply that there is no adjacent pair of factors in ${\cal F}$ that are squares of positive half-twists. We deduce that either ${\cal F} = \left(\sigma_{i}^2,\sigma_{i'},\sigma_{i}^2,\sigma_{i'}\right)$ or ${\cal F} = \left(\sigma_{i'},\sigma_{i}^2,\sigma_{i'},\sigma_{i}^2\right)$ for some $\{i,i'\} = \{1,2\}$. We can assume after global conjugation of ${\cal F}$ by a power of $\Delta$ that either ${\cal F} = \left(\sigma_1^2,\sigma_2,\sigma_1^2,\sigma_2\right)$ or ${\cal F} = \left(\sigma_2,\sigma_1^2,\sigma_2,\sigma_1^2\right)$. We will show that $\left(\sigma_1^2,\sigma_2,\sigma_1^2,\sigma_2\right)$ is Hurwitz equivalent to $\left(\sigma_2,\sigma_1^2,\sigma_2,\sigma_1^2\right)$. Indeed, $\left(\sigma_1^2,\sigma_2,\sigma_1^2,\sigma_2\right)$ is Hurwitz equivalent to $\left(\sigma_2,\sigma_2^{-1}\sigma_1^2\sigma_2,\sigma_2,\sigma_2^{-1}\sigma_1^2\sigma_2\right)$ after the application of two Hurwitz moves. Finally, global conjugation of the latter factorization by $\sigma_2$ is $\left(\sigma_2,\sigma_1^2,\sigma_2,\sigma_1^2\right)$.

Therefore, the statement is established.
\end{proof} 

We now establish basic constraints on complexity zero factorizations of $\Delta^2$ into powers of positive half-twists. 

\begin{lemma}
\label{lcomplexityzerofactorization}
Let ${\cal F}$ be a factorization of $\Delta^2$ into a product of powers of standard positive half-twists. The $n$th power of a standard positive half-twist cannot be a factor of ${\cal F}$ if $n\geq 3$. Furthermore, the factorization ${\cal F}$ cannot consist of three factors which are all squares of standard positive half-twists.
\end{lemma}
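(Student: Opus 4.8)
The plan is to reduce both statements to the single fact that the Garside power of $\Delta^2$ equals $2$, while the positive braids that could arise have Garside power $0$ or negative, and to exploit the characterization of positive braids indivisible by $\Delta$ from Proposition~\ref{piDB3}. The organizing principle is that $\Delta^2$ is central, so any factor of a factorization of $\Delta^2$ can be cyclically rotated to the front: if $(g_1,\dots,g_k)$ is a factorization of $\Delta^2$ and $g_j = \sigma_i^n$, then $g_j g_{j+1}\cdots g_k g_1\cdots g_{j-1} = (g_1\cdots g_{j-1})^{-1}\Delta^2(g_1\cdots g_{j-1}) = \Delta^2$, so $\Delta^2 = \sigma_i^n h$ with $h\in B_3^{+}$; that is, $\sigma_i^n$ is a left-divisor of $\Delta^2$. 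Thus the first statement follows once I show $\sigma_1^3$ is not a left-divisor of $\Delta^2$: since $\sigma_1^3$ left-divides $\sigma_1^n$ for $n\geq 3$, and conjugation by $\Delta$ interchanges $\sigma_1,\sigma_2$ while fixing the central element $\Delta^2$ (Definition~\ref{dconjugationbyDelta}), ruling out $\sigma_1^3$ rules out every $\sigma_i^n$ with $n\geq 3$.

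For the key computation I would pass to Garside normal form. Using the identity $\Delta^2 = \sigma_1^2\sigma_2\sigma_1^2\sigma_2$ and the relation $\sigma_1^{-1} = \Delta^{-1}\sigma_1\sigma_2$, I compute $\sigma_1^{-3}\Delta^2 = \sigma_1^{-1}\sigma_2\sigma_1^2\sigma_2 = \Delta^{-1}\sigma_1\sigma_2^2\sigma_1^2\sigma_2$. The positive braid $\sigma_1\sigma_2^2\sigma_1^2\sigma_2$ has a product expansion with no isolated generators except at its two ends (its interior runs $\sigma_2^2$ and $\sigma_1^2$ both have length two), hence contains no substring $\sigma_1\sigma_2\sigma_1$ or $\sigma_2\sigma_1\sigma_2$; by Proposition~\ref{piDB3} together with Theorem~\ref{tmonoidembedding} it is indivisible by $\Delta$. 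Therefore $\Delta^{-1}\sigma_1\sigma_2^2\sigma_1^2\sigma_2$ is already the Garside normal form of $\sigma_1^{-3}\Delta^2$, whose Garside power is $-1<0$; in particular $\sigma_1^{-3}\Delta^2\notin B_3^{+}$, so $\sigma_1^3$ does not left-divide $\Delta^2$, as required.

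For the second statement I would argue directly that no product $\sigma_{i_1}^2\sigma_{i_2}^2\sigma_{i_3}^2$ equals $\Delta^2$. This word is a concatenation of runs each of even length at least two; merging adjacent runs with equal index only lengthens runs, so the reduced product expansion has all runs of length at least two and hence no isolated generators anywhere. Consequently it contains no substring $\sigma_1\sigma_2\sigma_1$ or $\sigma_2\sigma_1\sigma_2$, no braid relation can be applied, and by Theorem~\ref{tmonoidembedding} it is the unique product expansion of the positive braid it represents. Were this braid divisible by $\Delta$, it would admit an expansion beginning with $\sigma_1\sigma_2\sigma_1$ (or $\sigma_2\sigma_1\sigma_2$), contradicting uniqueness; so it is indivisible by $\Delta$ and has Garside power $0$. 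Since $\Delta^2$ has Garside power $2$, the two cannot coincide, which is the desired contradiction.

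The main obstacle is the careful handling of the converse direction to Proposition~\ref{piDB3}: I must convert ``no applicable braid relation'' into genuine indivisibility by $\Delta$, and the cleanest route is the uniqueness of product expansions furnished by Theorem~\ref{tmonoidembedding} together with the observation that any factorization through $\Delta$ would expose an alternating length-three substring. The only other point requiring care is verifying the normal form identity $\sigma_1^{-3}\Delta^2 = \Delta^{-1}\sigma_1\sigma_2^2\sigma_1^2\sigma_2$ by repeated use of $\Delta\sigma_i = \sigma_{i'}\Delta$ for $\{i,i'\}=\{1,2\}$; once these two facts are in place, both assertions follow immediately from the Garside-power comparison.
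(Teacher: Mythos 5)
Your proof is correct and follows essentially the same route as the paper: both parts reduce to showing that $\sigma_i^{n}$ for $n\geq 3$ cannot divide $\Delta^2$ in $B_3^{+}$ and that a product of squares of generators is indivisible by $\Delta$ via the no-isolated-generators criterion of Proposition~\ref{piDB3}. The only difference is cosmetic: where the paper cites $\omega\left(\sigma_i^{n}\right)=n\geq 3$ from the duality theory, you verify the same divisibility fact by an explicit (and correct) Garside normal form computation of $\sigma_1^{-3}\Delta^2$.
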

\begin{proof}
If ${\cal F}$ is a factorization of $\Delta^2$ into a product of powers of standard positive half-twists, then each factor of ${\cal F}$ is of the form $\sigma_i^{n}$ for some $i\in \{1,2\}$ and some positive integer $n\geq 1$. However, if $n\geq 3$, then the positive braid $\sigma_i^{n}$ is not a divisor of $\Delta^2$ in the monoid $B_3^{+}$ of positive braids. Indeed, in this case we have $\omega\left(\sigma_i^{n}\right) = n\geq 3$, where $\Delta^{\omega\left(\sigma\right)}$ is the minimal power of $\Delta$ of which $\sigma$ is a divisor in $B_3^{+}$, by definition of duality (Definition~\ref{ddual}). We conclude that the $n$th power of a standard positive half-twist cannot be a factor in ${\cal F}$ if $n\geq 3$.

Finally, ${\cal F}$ cannot consist of three factors which are squares of standard positive half-twists since a product of such squares cannot equal $\Delta^2$. In fact, Proposition~\ref{piDB3} implies that a product of such squares is indivisible by $\Delta$. Therefore, the statement is established.
\end{proof}

We have a complete classification of factorizations of $\Delta^2$ into powers of standard positive half-twists up to Hurwitz equivalence. 

\begin{lemma}
\label{lstandardfactorizationspowersHurwitzequivalent}
If ${\cal F}$ and ${\cal F}'$ are factorizations of $\Delta^2$ into powers of standard positive half-twists, then ${\cal F}$ is Hurwitz equivalent to ${\cal F}'$ if and only if ${\cal F}$ and ${\cal F}'$ have the same number of factors of each type. Furthermore, a factorization of $\Delta^2$ into powers of standard positive half-twists has at most two factors that are squares of standard positive half-twists and no factor that is the $n$th power of a standard positive half-twist for any $n\geq 3$. 
\end{lemma}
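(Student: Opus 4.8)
The plan is to synthesize the earlier classification lemmas, since all of the genuine combinatorial work has already been carried out; the statement is essentially a packaging result. I would organize the argument into three pieces: the constraint on the types of factors (the second sentence), the ``only if'' direction, and the ``if'' direction.

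First I would establish the constraint. Lemma~\ref{lcomplexityzerofactorization} directly gives that no factor of ${\cal F}$ is the $n$th power of a standard positive half-twist for $n\geq 3$, so the only possible types are $1$ and $2$. Writing $\nu_1,\nu_2$ for the numbers of factors of these two types, Proposition~\ref{pnumberoffactorsinfactorization} applied to $\Delta^2$ forces $\nu_1+2\nu_2=6$, whence $(\nu_1,\nu_2)\in\{(6,0),(4,1),(2,2),(0,3)\}$. The profile $(0,3)$ is precisely a factorization consisting of three squares of standard positive half-twists, which Lemma~\ref{lcomplexityzerofactorization} rules out. Hence $\nu_2\leq 2$, giving the ``at most two squares'' claim and completing the second sentence.

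Next, the ``only if'' direction is immediate from Proposition~\ref{pHurwitzequivalencefunctorial}, since Hurwitz moves and global conjugation moves preserve the number of factors of each conjugacy type. For the ``if'' direction I would argue by cases over the three surviving profiles $(6,0)$, $(4,1)$, $(2,2)$, using that every factorization into powers of standard positive half-twists has complexity zero by Proposition~\ref{pcomplexityzeroiffstandard}. In the profile $(6,0)$ every factor is a standard positive half-twist, so Lemma~\ref{lstandardfactorizationspositivehalftwistsHurwitzequivalent} shows ${\cal F}$ is Hurwitz equivalent to the standard factorization $\Delta^2\equiv(\sigma_1,\sigma_2,\sigma_1,\sigma_1,\sigma_2,\sigma_1)$. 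In the profiles $(4,1)$ and $(2,2)$ there are respectively one and two squares of positive half-twists and complexity zero, so Lemma~\ref{loneortwosquarespositivehalftwistsstandard} shows that ${\cal F}$ is Hurwitz equivalent to the unique standard factorization with the same number of factors of each type. Since two factorizations sharing a profile are each Hurwitz equivalent to the same standard factorization, they are Hurwitz equivalent to each other by transitivity.

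I do not anticipate a genuine obstacle, as the result is a repackaging of Lemma~\ref{lcomplexityzerofactorization}, Lemma~\ref{lstandardfactorizationspositivehalftwistsHurwitzequivalent}, and Lemma~\ref{loneortwosquarespositivehalftwistsstandard}. The only point requiring care is the passage to the hypotheses of Lemma~\ref{loneortwosquarespositivehalftwistsstandard}: one must invoke Proposition~\ref{pcomplexityzeroiffstandard} to recognize that a factorization into powers of \emph{standard} positive half-twists automatically has complexity zero, so that it falls within the scope of that lemma, and one must confirm that the enumeration of profiles from the singularity formula matches exactly the cases those earlier lemmas cover.
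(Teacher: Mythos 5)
Your proposal is correct and follows essentially the same route as the paper, which simply cites Lemma~\ref{loneortwosquarespositivehalftwistsstandard} and Lemma~\ref{lcomplexityzerofactorization} as giving the result immediately. Your write-up is in fact slightly more careful than the paper's one-line proof, since you explicitly handle the all-half-twists profile via Lemma~\ref{lstandardfactorizationspositivehalftwistsHurwitzequivalent} (which falls outside the hypotheses of Lemma~\ref{loneortwosquarespositivehalftwistsstandard}) and you record the appeal to Proposition~\ref{pcomplexityzeroiffstandard} needed to enter that lemma.
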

\begin{proof}
The statement is an immediate consequence of Lemma~\ref{loneortwosquarespositivehalftwistsstandard} and Lemma~\ref{lcomplexityzerofactorization}.
\end{proof}

We now focus on classifying Hurwitz equivalence classes of factorizations of $\Delta^2$ into positive half-twists and squares of positive half-twists. Afterwards, we will classify Hurwitz equivalence classes of factorizations of $\Delta^2$ into powers of positive half-twists where factors of type $n$ with $n\geq 3$ occur. The strategy of the proof is very similar to that of the classification of Hurwitz equivalence classes of factorizations of $\Delta^2$ into positive half-twists in Subsection~\ref{ssclassificationfactorizationspositivehalftwists}. Indeed, we will establish analogous technical statements in the course of the proof. Firstly, we consider the case of the square of a standard positive half-twist in a minimal complexity factorization. (In Lemma~\ref{ladjacentstandardpositivehalftwist} and Lemma~\ref{ltwoadjacentstandardpositivehalftwists}, we considered standard positive half-twists in a minimal complexity factorization.)

\begin{proposition}
\label{padjacentsquarestandardpositivehalftwist}
We adopt Setup~\ref{setfactorization} with $k=2$, where $g_2$ is the square of a standard positive half-twist. If ${\cal F}$ is a minimal complexity factorization of $g_1g_2$, then either $\xi_1\xi_2$ is indivisible by $\Delta$ or ${\cal F} = \left(\sigma_{i_2'}\sigma_{i_1}^{e_1}\sigma_{i_2'}^{-1},\sigma_{i_2'}^2\right)$, where $i_2\neq i_2'\in \{1,2\}$.
\end{proposition}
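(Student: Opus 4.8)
The plan is to assume, toward the dichotomy, that $\xi_1\xi_2$ is divisible by $\Delta$, and then to show that minimality of the complexity forces ${\cal F}$ into the exceptional form. First I would record the basic reductions. Since $g_2$ is the square of a standard positive half-twist, $\xi_2 = \sigma_{i_2}^2$ begins with the \emph{repeated} generator $\sigma_{i_2}\sigma_{i_2}$, so the second alternative in Proposition~\ref{pdualbraidproduct} is unavailable and $\xi_1\xi_2$ is divisible by $\Delta$ precisely when $\xi_1$ ends with a product of \emph{distinct} generators $\sigma_{i_2}\sigma_{i_2'}$ with $i_2\neq i_2'$. In particular $\xi_1$ must end in two distinct generators, which is impossible when $g_1$ is the power of a standard positive half-twist (there $\xi_1 = \sigma_{i_1}^{e_1}$); hence we may assume $g_1$ is the power of a non-standard positive half-twist, so that $\omega\left(\tau_1\right)\geq 1$ and $c\left({\cal F}\right) = \omega\left(\tau_1\right)$ by Corollary~\ref{cGarsidepowerpositivehalftwist}.

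Next I would exploit minimality through the conjugation move $\left(g_1,g_2\right)\to \left(g_2, g_2^{-1}g_1g_2\right)$: this is the relevant complexity-constraining Hurwitz move, since conjugating $g_2$ by $g_1$ only weakly increases the complexity (it leaves $|k_1|$ fixed and replaces the zero Garside power of $g_2$ by a nonnegative quantity). Applying the symmetric version of Lemma~\ref{lHurwitzmovecomplexitychange}, the maximal left-divisor $\xi_2''$ of $\xi_2 = \sigma_{i_2}^2$ that is right-dual to a right-divisor of $\xi_1$ governs the change in complexity. Because $\xi_1$ ends in $\sigma_{i_2}\sigma_{i_2'}$, the left-divisor $\sigma_{i_2}$ of $\xi_2$ is always right-dual to the right-divisor $\sigma_{i_2}\sigma_{i_2'}$ of $\xi_1$, so $\omega\left(\xi_2''\right)\in\{1,2\}$, with the value $2$ occurring exactly when $\xi_1$ ends in $\sigma_{i_2'}\sigma_{i_2}^2\sigma_{i_2'}$. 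Minimality of $c\left({\cal F}\right)$ forces the absolute Garside power of $g_2^{-1}g_1g_2$ to be at least $\omega\left(\tau_1\right)$, and I expect the formula to show that conjugation by the square strictly decreases this quantity whenever $\omega\left(\tau_1\right)\geq 2$, thereby forcing $\omega\left(\tau_1\right) = 1$.

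Once $\omega\left(\tau_1\right) = 1$ is established, $\tau_1$ is a permutation braid with $\omega\left(\tau_1\right) = 1$, and the requirement that $\xi_1$ end in the distinct product $\sigma_{i_2}\sigma_{i_2'}$, together with the prohibition of an interior isolated generator in a braid indivisible by $\Delta$ (Proposition~\ref{piDB3}), pins down $\tau_1 = \sigma_{i_2'}$, $\rho_1 = \sigma_{i_2'}\sigma_{i_2}$, and $i_1 = i_2$, so that $\xi_1 = \sigma_{i_2'}\sigma_{i_2}^{e_1+1}\sigma_{i_2'}$. Substituting into the Garside normal form of Setup~\ref{setfactorization} (with $\omega_1 = \omega\left(\tau_1\right)=1$ odd) then yields $g_1 = \sigma_{i_2'}\sigma_{i_1}^{e_1}\sigma_{i_2'}^{-1}$ and $g_2 = \left(\sigma_{i_2}^2\right)_{\left(1\right)} = \sigma_{i_2'}^2$, which is exactly the asserted factorization. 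The main obstacle will be the Garside-power computation for the conjugation by the square $g_2 = \sigma_{i_2}^2$: this conjugation is not monotone when carried out one generator at a time — a first conjugation by $\sigma_{i_2}$ can raise the absolute Garside power before a second lowers it — so I would apply the duality machinery of Proposition~\ref{ptechnicaldualityconjugate} (packaged in Lemma~\ref{lHurwitzmoveGarsidenormalform}) to the full square at once, taking particular care with the boundary regime (case~\textbf{(ii)} of Proposition~\ref{ptechnicaldualityconjugate}) in which the relevant right-divisor of $\xi_1$ exhausts the cut closure $\overline{\tau_1}$; this boundary is precisely the exceptional configuration in which the complexity is preserved rather than strictly reduced.
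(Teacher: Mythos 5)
Your overall skeleton matches the paper's: assume $\xi_1\xi_2$ is divisible by $\Delta$, use Proposition~\ref{pdualbraidproduct} to force $\xi_1$ to end in $\sigma_{i_2}\sigma_{i_2'}$ and hence $g_1$ to be non-standard, argue that minimality forces $\omega\left(\tau_1\right)=1$, and then read off the exceptional factorization. The endgame is also right. The gap is in the middle step, which you leave to ``I expect the formula to show.'' The formula you want to invoke --- the symmetric version of Lemma~\ref{lHurwitzmovecomplexitychange}, packaged from Lemma~\ref{lHurwitzmoveGarsidenormalform} and Proposition~\ref{ptechnicaldualityconjugate} --- is derived only for conjugation by a power of a \emph{non-standard} positive half-twist: its proof writes the conjugator's inverse in the Garside normal form of Setup~\ref{setfactorization}, which rests on Proposition~\ref{pGarsidepowerinversepowerpositivehalftwist}, and that proposition (as the paper itself notes after Proposition~\ref{pomeganumberpowerpositivehalftwist}) fails for standard powers. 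Here the conjugator is $g_2=\left(\sigma_{i_2}^2\right)_{\left(\omega\left(\tau_1\right)\right)}$, a standard square, so the machinery is outside its domain. This is not a formality: taking $g_1=\sigma_2\sigma_1^2\sigma_2^{-1}$ (so $\xi_1=\sigma_2\sigma_1^3\sigma_2$, $\omega\left(\tau_1\right)=1$) and $g_2=\sigma_2^2$, one computes $g_2^{-1}g_1g_2=\Delta^{-1}\sigma_2\sigma_1^3\sigma_2$ with Garside power $-1$, while the formula with $\omega\left(\xi_2''\right)=1$, $e_2=2$, $\omega\left(\tau_2\right)=0$ predicts Garside power $0-\epsilon$ with $\epsilon\leq 0$ in the relevant case of Proposition~\ref{ptechnicaldualityconjugate}~\textbf{(ii)}; the formula is off by exactly the unit by which the standard inverse's Garside power differs from the non-standard formula. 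Notice that everywhere else the paper applies Lemma~\ref{lHurwitzmovecomplexitychange}, the conjugating factor is explicitly non-standard, and standard conjugators are always handled by hand.

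The paper's proof avoids this entirely with a short direct computation: Proposition~\ref{piDB3} upgrades the tail of $\xi_1$ to $\sigma_{i_2}\sigma_{i_2}\sigma_{i_2'}$, and then Lemma~\ref{lcharacterizationdualpositivebraids} shows that if $\omega\left(\tau_1\right)>1$ then $\tau_1$ itself ends with $\sigma_{i_2}\sigma_{i_2}\sigma_{i_2'}$ and, crucially, $\rho_1$ \emph{begins} with $\left(\sigma_{i_2}\sigma_{i_2}\right)_{\left(\omega\left(\tau_1\right)\right)}$ --- which is precisely the conjugator. Conjugation by $g_2$ then visibly cancels the two leading letters of $\xi_1$ while the two appended letters at the tail create a copy of $\Delta$ against $\ldots\sigma_{i_2}\sigma_{i_2'}\sigma_{i_2}\sigma_{i_2}$, so the complexity strictly drops; when $\omega\left(\tau_1\right)=1$ the leading block of $\rho_1=\sigma_{i_2'}$ is too short for the cancellation and the complexity is preserved, which is exactly the exceptional case. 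To repair your argument you would either need to reprove the conjugation formula for standard conjugators (essentially redoing Proposition~\ref{ptechnicaldualityconjugate} in this degenerate case), or replace the appeal to Lemma~\ref{lHurwitzmovecomplexitychange} by this explicit cancellation. A secondary issue: your dichotomy $\omega\left(\xi_2''\right)\in\{1,2\}$ does not align with the dichotomy $\omega\left(\tau_1\right)=1$ versus $\omega\left(\tau_1\right)\geq 2$ (for instance $\omega\left(\tau_1\right)=1$, $e_1=1$ gives $\omega\left(\xi_2''\right)=2$, while $\omega\left(\tau_1\right)=1$, $e_1\geq 2$ gives $\omega\left(\xi_2''\right)=1$), so even with a corrected formula the case analysis would need to be organized around $\omega\left(\tau_1\right)$ rather than around $\omega\left(\xi_2''\right)$.
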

\begin{proof}
Let us assume that $\xi_1\xi_2$ is divisible by $\Delta$. In this case, $g_1$ must be the power of a non-standard positive half-twist, since $g_2$ is the square of a standard positive half-twist. We will show that ${\cal F} = \left(\sigma_{i_2'}\sigma_{i_1}^{e_1}\sigma_{i_2'}^{-1},\sigma_{i_2'}^2\right)$.

In Setup~\ref{setfactorization}, we have $\xi_2 = \sigma_{i_2}^2$, and Proposition~\ref{pdualbraidproduct} implies that $\xi_1$ ends with a product $\sigma_{i_2}\sigma_{i_2'}$ of distinct generators, where $i_2\neq i_2'\in \{1,2\}$. In fact, Proposition~\ref{piDB3} implies that $\xi_1$ ends with $\sigma_{i_2}\sigma_{i_2}\sigma_{i_2'}$, since $\xi_1$ is indivisible by $\Delta$. In Setup~\ref{setfactorization}, we also have $\xi_1 = \rho_1\sigma_{i_1}^{e_1}\tau_1$, where $\rho_1,\tau_1\in B_3^{+}$ are non-identity positive braids such that $\rho_1$ is dual to $\tau_1$. 

If $\omega\left(\tau_1\right)>1$, then $\tau_1$ also ends with $\sigma_{i_2}\sigma_{i_2}\sigma_{i_2'}$. Furthermore, Lemma~\ref{lcharacterizationdualpositivebraids} implies that $\rho_1$ begins with $\left(\sigma_{i_2}\sigma_{i_2}\right)_{\left(\omega\left(\tau_1\right)\right)}$. If $\omega\left(\tau_1\right)>1$, then we deduce that the Garside power of $g_2^{-1}g_1g_2$ is at least one less than the Garside power of $g_1$, since $g_1 = \Delta^{-\omega\left(\tau_1\right)}\left(\xi_1\right)_{\left(\omega\left(\tau_1\right)\right)}$ and $g_2 = \left(\sigma_{i_2}^{2}\right)_{\left(\omega\left(\tau_1\right)\right)}$, as in Setup~\ref{setfactorization}. In particular, if $\omega\left(\tau_1\right)>1$, then the complexity of the factorization $\left(g_2,g_2^{-1}g_1g_2\right)$ is at least one less than the complexity of ${\cal F} = \left(g_1,g_2\right)$. We conclude that $\omega\left(\tau_1\right) = 1$ and $\tau_1 = \sigma_{i_2}\sigma_{i_2'}$. We now have $\rho_1 = \sigma_{i_2'}$ and $\xi_1 = \sigma_{i_2'}\sigma_{i_1}^{e_1}\sigma_{i_2}\sigma_{i_2'}$. Therefore, $g_1 = \sigma_{i_2'}\sigma_{i_1}^{e_1}\sigma_{i_2'}^{-1}$ and $g_2 = \sigma_{i_2'}^2$.
\end{proof}

The following strengthening of Lemma~\ref{lminimalcomplexitypositivehalftwists} for squares of non-standard positive half-twists rather than simply non-standard positive half-twists is an important technical step for characterizing minimal complexity factorizations into positive half-twists and squares of positive half-twists.

\begin{lemma}
\label{lminimalcomplexitysquarespositivehalftwists}
We adopt Setup~\ref{setfactorization} with $k=2$ and where $g_1$ and $g_2$ are powers of non-standard positive half-twists. If ${\cal F} = \left(g_1,g_2\right)$ is a minimal complexity factorization of $g_1g_2$, then the following conditions are satisfied:
\begin{description}
\item[(i)] If $g_1$ is either a non-standard positive half-twist or the square of a non-standard positive half-twist, then the maximal right-divisor $\xi_1''$ of $\xi_1$ that is left-dual to a left-divisor of $\overline{\rho_2}$ is a right-divisor of $\overline{\tau_1}$.
\item[(ii)] If $g_2$ is either a non-standard positive half-twist or the square of a non-standard positive half-twist, then the maximal left-divisor $\xi_2''$ of $\xi_2$ that is right-dual to a right-divisor of $\overline{\tau_1}$ is a left-divisor of $\overline{\rho_2}$.
\end{description}
\end{lemma}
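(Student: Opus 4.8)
The plan is to follow the proof of Lemma~\ref{lminimalcomplexitypositivehalftwists} almost verbatim, the only genuinely new feature being that $e_1$ (resp.\ $e_2$) may now equal $2$ rather than being forced to be $1$. I prove \textbf{(i)}; statement \textbf{(ii)} follows by the symmetric argument applied to the Hurwitz move $\left(g_1,g_2\right)\to\left(g_2,g_2^{-1}g_1g_2\right)$. By Lemma~\ref{lmaximaldivisorcutdivisor}, $\xi_1''$ is a cut right-divisor of $\xi_1$, and by Proposition~\ref{pomeganumbercutdivisor} the assertion that $\xi_1''$ is a right-divisor of $\overline{\tau_1}$ is equivalent to the numerical inequality $\omega\left(\xi_1''\right)\leq \omega\left(\overline{\tau_1}\right) = \omega\left(\tau_1\right)$. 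So I argue by contradiction, assuming $\omega\left(\xi_1''\right)\geq \omega\left(\tau_1\right)+1$, and I aim to exhibit a strictly smaller-complexity factorization in the Hurwitz equivalence class of ${\cal F}$.

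First I apply the single Hurwitz move ${\cal F}\to {\cal F}'=\left(g_1g_2g_1^{-1},g_1\right)$ and invoke Lemma~\ref{lHurwitzmovecomplexitychange}, which gives $c\left({\cal F}'\right) = c\left({\cal F}\right) + e_1 - 1 - 2\left[\omega\left(\xi_1''\right)-\omega\left(\tau_1\right)\right] + \epsilon$ with $\epsilon\leq 1$. Substituting $e_1\leq 2$ and $\omega\left(\xi_1''\right)-\omega\left(\tau_1\right)\geq 1$ yields $c\left({\cal F}'\right)\leq c\left({\cal F}\right)$, with strict inequality unless we are in the borderline configuration $e_1 = 2$, $\omega\left(\xi_1''\right)-\omega\left(\tau_1\right) = 1$, and $\epsilon = 1$. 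In every other case the strict inequality contradicts the minimality of ${\cal F}$, exactly as in Lemma~\ref{lminimalcomplexitypositivehalftwists} (where only $e_1 = 1$ arose, so the $e_1-1$ term vanished and $\epsilon\le 1$ already forced a strict drop). This already disposes of the case in which $g_1$ is merely a non-standard positive half-twist.

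The main obstacle is therefore the borderline case, where a single move leaves the complexity unchanged. Here $\epsilon = 1$ means, by Lemma~\ref{lHurwitzmovecomplexitychange}, that the relevant left-divisor $\rho_2''$ is a left-divisor of $\rho_2$ that is not a cut left-divisor of $\rho_2$; via the subdivisor relation (Proposition~\ref{pcutdivisorsubdivisor}) this places us in case \textbf{(i)} of Proposition~\ref{pmaximaldivisordualcharacterization}, equivalently in case \textbf{(iii)} of Proposition~\ref{ptechnicaldualityconjugate} and Lemma~\ref{lHurwitzmoveGarsidenormalform}. My plan is to exploit that ${\cal F}'$ is now itself a minimal complexity factorization whose second factor $g_1$ is still the square of a non-standard positive half-twist, while its first factor $g_1g_2g_1^{-1}$ has the explicit Garside normal form recorded in Lemma~\ref{lHurwitzmoveGarsidenormalform}~\textbf{(iii)} (with Garside power $-\omega\left(\tau_2\right)$, so that $c\left({\cal F}'\right)=\omega\left(\tau_1\right)+\omega\left(\tau_2\right)=c\left({\cal F}\right)$ as computed via Lemma~\ref{lHurwitzmoveGarsidepower}). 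Applying the symmetric form of Lemma~\ref{lHurwitzmovecomplexitychange} to the pair $\left(g_1g_2g_1^{-1},g_1\right)$ and reading off the single generators $\alpha,\beta$ appended in case \textbf{(iii)}, these generators enlarge the maximal left-dual divisor governing the second move, raising the effective value of $\omega\left(\xi_1''\right)-\omega\left(\tau_1\right)$ while forcing $\epsilon=0$; the complexity formula then returns a strict decrease, contradicting the minimality of ${\cal F}'$ and hence of ${\cal F}$.

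I expect this borderline step to be the delicate part, since it requires tracking the generators $\alpha$ and $\beta$ produced in case \textbf{(iii)} through the Garside normal form of $g_1g_2g_1^{-1}$ and verifying—using Proposition~\ref{pcutclosureconjugate}, Proposition~\ref{pmaximaldivisordualindivisibleDelta}, and the uniqueness and pseudo-symmetry of duals (Proposition~\ref{puniquenessofduals}, Proposition~\ref{ppseudosymmetryduality})—that they genuinely enlarge the overlap in the subsequent move rather than being reabsorbed. The remainder of the argument is a routine transcription of Lemma~\ref{lminimalcomplexitypositivehalftwists} with $e_1\in\{1,2\}$ in place of $e_1=1$, together with the translation (Lemma~\ref{lmaximaldivisorcutdivisor}, Proposition~\ref{pomeganumbercutdivisor}) of the divisibility conclusion into the inequality $\omega\left(\xi_1''\right)\leq\omega\left(\tau_1\right)$.
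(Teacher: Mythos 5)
Your setup is correct and matches the paper's: reduce to the inequality $\omega\left(\xi_1''\right)\leq \omega\left(\tau_1\right)$ via Lemma~\ref{lmaximaldivisorcutdivisor} and Proposition~\ref{pomeganumbercutdivisor}, then argue by contradiction using Lemma~\ref{lHurwitzmovecomplexitychange} with $e_1=2$. You also correctly isolate the only problematic configuration: $e_1=2$, $\omega\left(\xi_1''\right)-\omega\left(\tau_1\right)=1$, $\epsilon=1$. But your treatment of that borderline case is a genuine gap. You propose to escape it with a second Hurwitz move applied to $\left(g_1g_2g_1^{-1},g_1\right)$, asserting that the generators $\alpha,\beta$ from case \textbf{(iii)} of Lemma~\ref{lHurwitzmoveGarsidenormalform} "enlarge the overlap" and force a strict decrease on the next move — and you yourself flag that this is the delicate part and do not verify it. As written, the argument is incomplete precisely at the one step that is not a transcription of Lemma~\ref{lminimalcomplexitypositivehalftwists}.

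The paper resolves the borderline case differently, and more simply: it shows the configuration $\epsilon=1$ cannot occur there at all. Since $g_1$ is the square of a non-standard positive half-twist, Proposition~\ref{pcutclosureconjugate} gives $\xi_1=\overline{\rho_1}\sigma_{i_1}\overline{\tau_1}$ with $\overline{\rho_1}$ ending in $\sigma_{i_1'}\sigma_{i_1}$, so the unique cut right-divisor of $\xi_1$ with $\omega$-value $\omega\left(\tau_1\right)+1$ is $\xi_1''=\sigma_{i_1}\overline{\tau_1}$, which begins with two generators of the same index. Lemma~\ref{lcharacterizationdualpositivebraids} then forces its right-dual $\rho_2''$ to end with a product of two distinct generators, and Lemma~\ref{lcutdivisorcharacterization} makes $\rho_2''$ a cut left-divisor of $\rho_2$ — hence $\epsilon\leq 0$ in Lemma~\ref{lHurwitzmovecomplexitychange}, and the single Hurwitz move already strictly reduces complexity. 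So the case you planned a two-move argument for is vacuous; to repair your proof you should replace that paragraph with this structural observation about $\xi_1''$ rather than attempting to track $\alpha$ and $\beta$ through a second move.
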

\begin{proof}
We prove \textbf{(i)} since the proof of \textbf{(ii)} is similar. If $g_1$ is a non-standard positive half-twist, then the statement is equivalent to Lemma~\ref{lminimalcomplexitypositivehalftwists}~\textbf{(i)}, and thus we assume that $g_1$ is the square of a non-standard positive half-twist. Firstly, Lemma~\ref{lmaximaldivisorcutdivisor} implies that $\xi_1''$ is a cut right-divisor of $\xi_1$. Furthermore, Proposition~\ref{pomeganumbercutdivisor} implies that $\xi_1''$ is a right-divisor of $\overline{\tau_1}$ if and only if $\omega\left(\xi_1''\right)\leq \omega\left(\overline{\tau_1}\right)$. 

Let us assume, for a contradiction, that $\omega\left(\xi_1''\right)>\omega\left(\tau_1\right)$. We will apply Lemma~\ref{lHurwitzmovecomplexitychange} with $e_1 = 2$, using the assumption that ${\cal F}$ is a minimal complexity factorization. Let $\rho_2''$ be the left-divisor of $\overline{\rho_2}$ that is right-dual to $\xi_1''$. If $\omega\left(\xi_1''\right) - \omega\left(\tau_1\right)\geq 2$, then we obtain a contradiction as an immediate consequence of Lemma~\ref{lHurwitzmovecomplexitychange} with $e_1 = 2$. If $\omega\left(\xi_1''\right)-\omega\left(\tau_1\right) = 1$, then we claim that $\rho_2''$ is a cut left-divisor of $\overline{\rho_2}$, which implies that $\epsilon\leq 0$ in the statement of Lemma~\ref{lHurwitzmovecomplexitychange}. Of course, in this case, we also obtain a contradiction as a consequence of Lemma~\ref{lHurwitzmovecomplexitychange} with $e_1 = 2$.

Let us prove the claim that $\rho_2''$ is a cut left-divisor of $\overline{\rho_2}$. Firstly, Proposition~\ref{pcutclosureconjugate} implies that $\xi_1 = \overline{\rho_1}\sigma_{i_1}\overline{\tau_1}$ and $\overline{\rho_1}$ ends with $\sigma_{i_1'}\sigma_{i_1}$, where $i_1\neq i_1'\in \{1,2\}$. In particular, $\xi_1'' = \sigma_{i_1}\overline{\tau_1}$ and $\xi_1''$ begins with a product of two generators with the same index. Lemma~\ref{lcharacterizationdualpositivebraids} now implies that $\rho_2''$ ends with a product of two distinct generators, since $\xi_2''$ is left-dual to $\rho_2''$. Finally, Lemma~\ref{lcutdivisorcharacterization} implies that $\rho_2''$ is a cut left-divisor of $\overline{\rho_2}$.

We conclude that $\omega\left(\xi_1''\right)\leq \omega\left(\overline{\tau_1}\right)$. Therefore, the statement is established. 
\end{proof}

We now establish a strengthening of Lemma~\ref{lboundedcontraction}, where one or both factors are squares of non-standard positive half-twists rather than simply non-standard positive half-twists. The proof is exactly the same as that of Lemma~\ref{lboundedcontraction} but it is based on Lemma~\ref{lminimalcomplexitysquarespositivehalftwists} rather than Lemma~\ref{lminimalcomplexitypositivehalftwists}.

\begin{lemma}
\label{lsquareboundedcontraction}
We adopt Setup~\ref{setfactorization} with $k = 2$ and where $g_j$ is either a non-standard positive half-twist or the square of a non-standard positive half-twist for each $j\in \{1,2\}$. Let us assume that ${\cal F}$ is a minimal complexity factorization of $g_1g_2$.

In this case, the maximal right-divisor $\xi_1''$ of $\xi_1$ that is left-dual to a left-divisor of $\xi_2$ is a right-divisor of the cut closure $\overline{\tau_1}$. Furthermore, the maximal left-divisor $\xi_2''$ of $\xi_2$ that is right-dual to a right-divisor of $\xi_1$ is a left-divisor of the cut closure $\overline{\rho_2}$. 

Finally, if $g_2$ is a positive half-twist, then $\xi_1''$ is not left-dual to $\overline{\rho_2}$, and if $g_1$ is a positive half-twist, then $\xi_2''$ is not right-dual to $\overline{\tau_1}$. 
\end{lemma}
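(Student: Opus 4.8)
The plan is to reproduce the proof of Lemma~\ref{lboundedcontraction} essentially verbatim, replacing every appeal to Lemma~\ref{lminimalcomplexitypositivehalftwists} with an appeal to its strengthening Lemma~\ref{lminimalcomplexitysquarespositivehalftwists}. This substitution is legitimate precisely because the hypotheses of Lemma~\ref{lminimalcomplexitysquarespositivehalftwists} cover the present case: each factor $g_j$ is assumed to be either a non-standard positive half-twist or the square of a non-standard positive half-twist, which is exactly the class of factors for which parts \textbf{(i)} and \textbf{(ii)} of Lemma~\ref{lminimalcomplexitysquarespositivehalftwists} apply. So the conclusions controlling how the maximal right-divisor $\xi_1''$ and the maximal left-divisor $\xi_2''$ sit inside $\overline{\tau_1}$ and $\overline{\rho_2}$ remain at my disposal, and the combinatorial skeleton of the argument is unchanged.

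To prove the first statement, I would argue by contradiction. First I would invoke Lemma~\ref{lmaximaldivisorcutdivisor} to see that $\xi_1''$ is a cut right-divisor of $\xi_1$, and then assume that $\overline{\tau_1}$ is a \emph{proper} right-divisor of $\xi_1''$, so that Proposition~\ref{pcutdivisorsubdivisor} makes $\overline{\tau_1}$ a proper cut right-divisor of $\xi_1''$. Letting $\chi_1''$ be the left-divisor of $\xi_2$ right-dual to $\xi_1''$, Lemma~\ref{lminimalcomplexitysquarespositivehalftwists}~\textbf{(i)} (applicable since $g_1$ is a non-standard positive half-twist or the square of one) shows $\chi_1''$ is not a left-divisor of $\overline{\rho_2}$, so $\overline{\rho_2}$ is a proper left-divisor of $\chi_1''$. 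Taking $\rho_2''$ to be the left-divisor of $\chi_1''$ right-dual to $\overline{\tau_1}$, a further application of Lemma~\ref{lminimalcomplexitysquarespositivehalftwists} forces $\rho_2''$ to be a left-divisor of $\overline{\rho_2}$, and in fact $\omega\left(\rho_2''\right) = \omega\left(\overline{\rho_2}\right)$, since $\omega\left(\rho_2''\right) < \omega\left(\overline{\rho_2}\right)$ would again violate minimality through Lemma~\ref{lminimalcomplexitysquarespositivehalftwists}. The contradiction is then extracted exactly as before: Proposition~\ref{pcutclosureconjugate} shows $\overline{\tau_1}$ begins with a product of two distinct generators, so Lemma~\ref{lcharacterizationdualpositivebraids} forces $\rho_2''$ to not end with such a product, whence $\rho_2'' \neq \overline{\rho_2}$ (the latter ending with a product of two distinct generators by Proposition~\ref{pcutclosureconjugate}); but then $\rho_2''$ is not a cut left-divisor of $\xi_2$, and Proposition~\ref{pmaximaldivisordualcharacterization}~\textbf{(i)} makes $\overline{\tau_1}$ the maximal such right-divisor, contradicting its properness. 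The second statement follows by the symmetric argument with the roles of $\xi_1,\overline{\tau_1}$ and $\xi_2,\overline{\rho_2}$ interchanged.

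The hard part, and the genuine point of departure from Lemma~\ref{lboundedcontraction}, is the final statement and its asymmetric hypothesis that $g_2$ (respectively $g_1$) be a positive half-twist rather than the square of one. Here I would use Proposition~\ref{pmaximaldivisordualcharacterization}~\textbf{(ii)}, which requires $\overline{\rho_2}$ to be immediately succeeded by a product of two distinct generators inside $\xi_2$. By Proposition~\ref{pcutclosureconjugate} we have $\xi_2 = \overline{\rho_2}\,\sigma_{i_2}^{e_2-1}\,\overline{\tau_2}$, so $\overline{\rho_2}$ is succeeded by a two-generator permutation braid exactly when $e_2 = 1$; when $e_2 = 2$ it is succeeded merely by the single generator $\sigma_{i_2}$, and the hypothesis of Proposition~\ref{pmaximaldivisordualcharacterization}~\textbf{(ii)} fails. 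Thus the conclusion that $\xi_1''$ is not left-dual to $\overline{\rho_2}$ survives precisely in the half-twist case $e_2 = 1$, which is why the final clause is stated only under that restriction; the corresponding claim for $\xi_2''$ and $\overline{\tau_1}$ holds symmetrically when $g_1$ is a positive half-twist. I expect the only real care needed in writing this up is bookkeeping: confirming at each invocation that the factor playing the role of $g_1$ or $g_2$ lies in the admissible class so that Lemma~\ref{lminimalcomplexitysquarespositivehalftwists} may be applied, and isolating the $e_2 = 1$ hypothesis at exactly the step where Proposition~\ref{pmaximaldivisordualcharacterization}~\textbf{(ii)} is used.
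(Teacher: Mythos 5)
Your proposal is correct and matches the paper's own proof, which states precisely that the argument of Lemma~\ref{lboundedcontraction} carries over with every application of Lemma~\ref{lminimalcomplexitypositivehalftwists} replaced by Lemma~\ref{lminimalcomplexitysquarespositivehalftwists}. Your analysis of why the final clause requires $e_2=1$ (respectively $e_1=1$) — namely that Proposition~\ref{pmaximaldivisordualcharacterization}~\textbf{(ii)} only rules out $\xi_1''$ being left-dual to $\overline{\rho_2}$ when $\overline{\rho_2}$ is succeeded by a product of two distinct generators — also agrees with the remark the paper makes immediately after the lemma.
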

\begin{proof}
The proof of Lemma~\ref{lboundedcontraction} follows through in the case where either (or both) $g_1$ or $g_2$ is the square of a non-standard positive half-twist. The only modification is that every application of Lemma~\ref{lminimalcomplexitypositivehalftwists} is replaced with the application of Lemma~\ref{lminimalcomplexitysquarespositivehalftwists}. 
\end{proof}

We remark that in the statement of Lemma~\ref{lsquareboundedcontraction}, it is possible for $\xi_1''$ to be left-dual to $\overline{\rho_2}$ if $g_2$ is the square of a positive half-twist, and similarly, it is possible for $\xi_2''$ to be right-dual to $\overline{\tau_1}$ if $g_1$ is the square of a positive half-twist. Indeed, if $g_2$ is the square of a positive half-twist, then Proposition~\ref{pcutclosureconjugate} implies that $\overline{\rho_2}$ is not followed by a product of two distinct generators in $\xi_2$, and Proposition~\ref{pmaximaldivisordualcharacterization} does not rule out the possibility that $\xi_1''$ is left-dual to $\overline{\rho_2}$. On the other hand, if $g_2$ is a positive half-twist, then $\overline{\rho_2}$ is followed by a product of two distinct generators, and Proposition~\ref{pmaximaldivisordualcharacterization} implies that $\xi_1''$ is not left-dual to $\overline{\rho_2}$ (as in the proof of Lemma~\ref{lboundedcontraction}).

In fact, this is notable. Otherwise, the proof of the classification of factorizations of $\Delta^2$ into positive half-twists and squares of positive half-twists would be very similar to the proof of the classification of factorizations of $\Delta^2$ into positive half-twists (Theorem~\ref{tfactorizationpositivehalftwistsstandard}). The main differences would be that every application of Lemma~\ref{lboundedcontraction} in the proof of Theorem~\ref{tfactorizationpositivehalftwistsstandard} would be replaced by the application of Lemma~\ref{lsquareboundedcontraction}, and we would have to appeal to Proposition~\ref{padjacentsquarestandardpositivehalftwist} to handle squares of standard positive half-twists in the factorization.

We will use the following technical statement to handle the failure of the more general conclusion in the statement of Lemma~\ref{lsquareboundedcontraction}, and accordingly adapt the proof of Theorem~\ref{tfactorizationpositivehalftwistsstandard} to the present context.

\begin{lemma}
\label{ltwoHurwitzmovessquarepositivehalftwist}
We adopt Setup~\ref{setfactorization} with $k = 3$. Let us assume that $g_j$ is either a non-standard positive half-twist or the square of a non-standard positive half-twist for each $j\in \{1,3\}$, and $g_2$ is the square of a non-standard positive half-twist. If $j\in \{1,2\}$, then let $\xi_j''$ be the maximal right-divisor of $\xi_j$ that is left-dual to a left-divisor of $\xi_{j+1}$. Let us denote the relevant left-divisor of $\xi_{j+1}$ by $\chi_j''$. Let us write $\xi_1 = \xi_1'\xi_1''$ and $\xi_3 = \chi_2''\xi_3'$ for positive braids $\xi_1',\xi_3'\in B_3^{+}$.

Let us assume that $\xi_1''$ is a right-divisor of $\overline{\tau_1}$, $\chi_1''=\overline{\rho_2}$, $\xi_2'' = \overline{\tau_2}$, and $\chi_2''$ is a left-divisor of $\overline{\rho_3}$. Let $\xi_1'''$ be the maximal right-divisor of $\xi_1'$ such that $\xi_1'''$ is left-dual to a left-divisor of $\left(\sigma_{i_2}\right)_{\left(\omega\left(\tau_2\right)\right)}\xi_3'$. Let us denote the relevant left-divisor of $\left(\sigma_{i_2}\right)_{\left(\omega\left(\tau_2\right)\right)}\xi_3'$ by $\left(\sigma_{i_2}\right)_{\left(\omega\left(\tau_2\right)\right)}\xi_3'''$. 

If either the right-divisor $\xi_1'''\xi_1''$ of $\xi_1$ is not a right-divisor of $\overline{\tau_1}$ or the left-divisor $\chi_2''\xi_3'''$ is not a left-divisor of $\overline{\rho_3}$, then the following statements are true:
\begin{description}
\item[(i)] The composition of the Hurwitz move $\left(g_1,g_2,g_3\right)\to \left(g_2,g_2^{-1}g_1g_2,g_3\right)$ followed by a Hurwitz move applied to the second and third factors, does not increase the complexity of the factorization. 
\item[(ii)] The composition of the Hurwitz move $\left(g_1,g_2,g_3\right)\to \left(g_1,g_2g_3g_2^{-1},g_2\right)$ followed by a Hurwitz move applied to the first and second factors, does not increase the complexity of the factorization. 
\end{description}
\end{lemma}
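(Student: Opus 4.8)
The plan is to prove \textbf{(i)} in full and obtain \textbf{(ii)} from the left--right mirror symmetry of $B_3$ (the anti-automorphism that reverses words and swaps $\sigma_1\leftrightarrow\sigma_2$), under which left-divisors and right-divisors, the braids $\rho$ and $\tau$, and the two compositions in \textbf{(i)} and \textbf{(ii)} are interchanged. I work throughout in Setup~\ref{setfactorization} and use the pseudo-symmetry of duals (Proposition~\ref{ppseudosymmetryduality}) to identify $\omega$-values of dual pairs. The first step is to pin down the combinatorial picture forced by the hypotheses: since $g_2$ is the square of a non-standard positive half-twist, Proposition~\ref{pcutclosureconjugate} gives $\xi_2=\overline{\rho_2}\,\sigma_{i_2}\,\overline{\tau_2}$, where $\overline{\rho_2}$ ends and $\overline{\tau_2}$ begins with a product of two distinct generators, and the assumptions $\chi_1''=\overline{\rho_2}$ and $\xi_2''=\overline{\tau_2}$ say that $\xi_1''$ is left-dual to the \emph{entire} left cut closure while the \emph{entire} right cut closure contracts into $\chi_2''$. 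This is precisely the two-sided maximal contraction regime in which Lemma~\ref{lsquareboundedcontraction} yields no single reducing move, which is why a two-move rearrangement is required.

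Next I would compute the first Hurwitz move $(g_1,g_2,g_3)\to(g_2,g_2^{-1}g_1g_2,g_3)$ using the symmetric version of Lemma~\ref{lHurwitzmoveGarsidenormalform}. Because $\chi_1''=\overline{\rho_2}$ is a cut left-divisor (indeed the cut closure), the computation lands in case \textbf{(ii)} of that lemma, so the Garside normal form, and hence by Lemma~\ref{lHurwitzmoveGarsidepower} the Garside power, of $g_2^{-1}g_1g_2$ is explicit. Since $\omega(\overline{\rho_2})=\omega(\rho_2)$ by Proposition~\ref{pomegacutclosure}, the contraction term in the complexity formula vanishes and the move changes the complexity by at most $e_2-1=1$; concretely, $g_2^{-1}g_1g_2$ is $g_1$ conjugated past the leading generator of the square, so it remains a power of a non-standard positive half-twist of controlled Garside power.

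I would then analyse the pair $(g_2^{-1}g_1g_2,g_3)$. The key identity is that, after factoring out the two powers of $\Delta$ produced by the maximal contractions at $g_2$, the combined positive braid equals (up to conjugation by a power of $\Delta$) $\xi_1'\,(\sigma_{i_2})\,\xi_3'$, whose internal contraction is governed exactly by $\xi_1'''$ and $\xi_3'''$ as defined in the statement. Applying Lemma~\ref{lHurwitzmoveGarsidepower} to the appropriate direction of the second move (chosen so that this contraction is realised), the composition increases the complexity only in the degenerate situation where the contraction in $\xi_1'\sigma_{i_2}\xi_3'$ stays within the cut closures on both sides, i.e. where $\xi_1'''\xi_1''$ is a right-divisor of $\overline{\tau_1}$ \emph{and} $\chi_2''\xi_3'''$ is a left-divisor of $\overline{\rho_3}$. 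The disjunctive hypothesis excludes exactly this case, so the contraction overruns a cut closure on at least one side, the decrease from the second move compensates the at-most-unit increase from the first, and both compositions \textbf{(i)} and \textbf{(ii)} are non-increasing. Along the way I would verify the needed indivisibility-by-$\Delta$ assertions for the triple products with Lemma~\ref{lproductindivisibleDelta} and Lemma~\ref{ltripleproductindivisibleDelta}, the relevant middle factor being the single generator $\sigma_{i_2}$.

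The main obstacle I anticipate is the bookkeeping around this leftover generator $\sigma_{i_2}$ of the square. Unlike the single-half-twist situation of Lemma~\ref{lboundedcontraction}, each of the two maximal contractions at $g_2$ can consume a whole cut closure while leaving $\sigma_{i_2}$ uncancelled, and one must check that this residual generator produces no unexpected copy of $\Delta$ that would corrupt the Garside-power count (the triple-product phenomenon discussed before Lemma~\ref{ltripleproductindivisibleDelta}). Correctly matching the two cut-closure conditions in the hypothesis to the precise value of $\epsilon$ in Lemma~\ref{lHurwitzmoveGarsidepower} for each move, reconciling the off-by-one contributed by $\sigma_{i_2}$, and confirming that the conjugated factors stay of the required type so that the computational lemmas apply, is where the delicate part of the argument is concentrated.
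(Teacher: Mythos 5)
Your proposal follows essentially the same route as the paper: the first Hurwitz move is controlled by (the symmetric version of) Lemma~\ref{lHurwitzmoveGarsidenormalform}/Lemma~\ref{lHurwitzmoveGarsidepower} and costs exactly $e_2-1=1$ because $\chi_1''=\overline{\rho_2}$ kills the contraction term, and the disjunctive hypothesis on $\xi_1'''\xi_1''$ and $\chi_2''\xi_3'''$ then forces the new adjacent pair to violate the conclusion of Lemma~\ref{lsquareboundedcontraction}, so a compensating complexity-reducing move exists — which is precisely the paper's contrapositive argument. The residual-generator bookkeeping you flag is exactly where the paper also defers to the explicit normal form of $g_2^{-1}g_1g_2$, so the two proofs coincide in substance.
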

\begin{proof}
We will assume that the right-divisor $\xi_1'''\xi_1''$ of $\xi_1$ is not a right-divisor of $\overline{\tau_1}$ and prove \textbf{(i)}. The proofs of the other cases are similar. (A symmetric version of) Lemma~\ref{lHurwitzmovecomplexitychange} with $e_2 = 2$ implies that the Hurwitz move $\left(g_1,g_2,g_3\right)\to \left(g_2,g_2^{-1}g_1g_2,g_3\right)$ increases the complexity of the factorization by one, since $\xi_1''$ is a cut right-divisor of $\overline{\tau_1}$ and $\chi_1''=\overline{\rho_2}$. However, the assumption that $\xi_1'''\xi_1''$ is not a right-divisor of $\overline{\tau_1}$ implies that the conclusion in the first statement of Lemma~\ref{lsquareboundedcontraction} is not satisfied for the second and third factors in $\left(g_2,g_2^{-1}g_1g_2,g_3\right)$. Indeed, this is a consequence of (a symmetric version of) Lemma~\ref{lHurwitzmoveGarsidenormalform}, which determines the Garside normal form of $g_2^{-1}g_1g_2$. The contrapositive of Lemma~\ref{lsquareboundedcontraction} implies that we can apply a complexity reducing Hurwitz move to the factorization $\left(g_2^{-1}g_1g_2,g_3\right)$. Therefore, the statement is established.
\end{proof}

We will adapt the proof of Theorem~\ref{tfactorizationpositivehalftwistsstandard} to the present context. We will use the following preparatory statement on minimal complexity factorizations of $\Delta^2$ into positive half-twists and squares of positive half-twists, in order to constrain the factors that are squares of positive half-twists.

\begin{theorem}
\label{texistencesquarestandardpositivehalftwist}
We adopt Setup~\ref{setfactorization}. Let us assume that ${\cal F}$ is a minimal complexity factorization of $\Delta^2$ into positive half-twists and squares of positive half-twists, and at least one factor in ${\cal F}$ is the square of a positive half-twist. In this case, for some $1\leq j\leq k$, the factor $g_j$ is the square of a standard positive half-twist and either $\xi_{j-1}\xi_j$ or $\xi_j\xi_{j+1}$ is divisible by $\Delta$.
\end{theorem}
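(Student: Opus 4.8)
The plan is to adapt the contraction argument from the proof of Theorem~\ref{tfactorizationpositivehalftwistsstandard} to the present setting, using the square-specific bounds of Lemma~\ref{lsquareboundedcontraction} and Proposition~\ref{padjacentsquarestandardpositivehalftwist} in place of Lemma~\ref{lboundedcontraction}. First I would adopt Setup~\ref{setfactorization} and pass to the monoid $B_3^{+}$, so that the hypothesis $g_1g_2\cdots g_k = \Delta^2$ becomes the equation $\xi_1\xi_2\cdots\xi_k = \Delta^{2+\omega_k}$. After replacing ${\cal F}$ by a Hurwitz equivalent factorization of the same (minimal) complexity satisfying the conditions of Lemma~\ref{lreorderingfactorssamecomplexity}, I would define, for each adjacent pair, the maximal right-divisor $\xi_j''$ of $\xi_j$ left-dual to the left-divisor $\chi_j''$ of $\xi_{j+1}$, together with the residual positive braids $\zeta_j$ determined by $\xi_j = \chi_{j-1}''\zeta_j\xi_j''$, exactly as in the proof of Theorem~\ref{tfactorizationpositivehalftwistsstandard} (writing $\omega_j'' = \sum_{j'=1}^{j-1}\omega\left(\xi_{j'}''\right)$ as in that proof).

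The first key step is to observe that the residual product must still be divisible by $\Delta$. By Proposition~\ref{pcutclosureconjugate}, every factor that is a square satisfies $\xi_j = \overline{\rho_j}\sigma_{i_j}\overline{\tau_j}$, and since Lemma~\ref{lsquareboundedcontraction} forces $\chi_{j-1}''$ to be a left-divisor of $\overline{\rho_j}$ and $\xi_j''$ to be a right-divisor of $\overline{\tau_j}$, the corresponding residual $\zeta_j$ always retains the central generator $\sigma_{i_j}$ and is in particular non-identity. Hence $\prod_{j}\left(\zeta_j\right)_{\left(\omega_j''\right)}$ is a non-identity positive braid; writing $\xi_1\cdots\xi_k = \left[\prod_{j}\left(\zeta_j\right)_{\left(\omega_j''\right)}\right]\Delta^{\omega_k''}$ and comparing with $\Delta^{2+\omega_k}$, I conclude by uniqueness of the Garside normal form (Theorem~\ref{Garside}) that $\prod_{j}\left(\zeta_j\right)_{\left(\omega_j''\right)}$ cannot be indivisible by $\Delta$, since otherwise it would be a nontrivial power of $\Delta$ that is simultaneously indivisible by $\Delta$. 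Applying Lemma~\ref{lproductindivisibleDelta}, one of its two conditions must therefore fail.

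Next I would eliminate the possibilities one by one. A failure of condition~\textbf{(i)} of Lemma~\ref{lproductindivisibleDelta}, i.e. an adjacent residual product divisible by $\Delta$, is ruled out by the maximality of the contractions $\xi_j''$ via Proposition~\ref{pmaximaldivisordualindivisibleDelta} and Proposition~\ref{pmaximaldivisordualcharacterization}, exactly as in Theorem~\ref{tfactorizationpositivehalftwistsstandard}. Thus condition~\textbf{(ii)} fails: some residual $\zeta_{j+1}$ is a single Artin generator sitting inside a triple $\zeta_j\zeta_{j+1}\zeta_{j+2}$ divisible by $\Delta$. By the length bound coming from Lemma~\ref{lsquareboundedcontraction} (the final statement forces both contractions adjacent to a non-standard positive half-twist to be proper, so such a factor leaves a residual of length at least two), $g_{j+1}$ must be either a standard positive half-twist, a standard square, or a square of a non-standard positive half-twist whose contractions are full on both sides. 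The standard positive half-twist case is excluded by Lemma~\ref{ladjacentstandardpositivehalftwist} and Lemma~\ref{ltwoadjacentstandardpositivehalftwists} together with the reordering in Lemma~\ref{lreorderingfactorssamecomplexity}, precisely as in the proof of Theorem~\ref{tfactorizationpositivehalftwistsstandard}. In the standard square case $\xi_{j+1} = \sigma_{i_{j+1}}^2$, the single-generator residual forces exactly one unit of contraction on one side, which means precisely that $\xi_j\xi_{j+1}$ or $\xi_{j+1}\xi_{j+2}$ is divisible by $\Delta$; this is the desired conclusion with this index.

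The remaining case — a square of a non-standard positive half-twist with full contractions on both sides — is the main obstacle, and I would dispatch it by a complexity-reduction argument. Since a full one-sided contraction produces a nontrivial overlap, Lemma~\ref{ladjacentstandardpositivehalftwist} forces both neighbors $g_j$ and $g_{j+2}$ to be non-standard, so the triple $\left(g_j, g_{j+1}, g_{j+2}\right)$ meets the hypotheses of Proposition~\ref{ltwoHurwitzmovessquarepositivehalftwist}. When the genericity condition in that proposition holds, the indicated composition of Hurwitz moves followed by a complexity-reducing move contradicts the minimality of $c\left({\cal F}\right)$. The delicate residual sub-case is when that condition fails, i.e. when even the secondary contractions remain inside the cut closures $\overline{\tau_j}$ and $\overline{\rho_{j+2}}$; here I would iterate the move of Proposition~\ref{ltwoHurwitzmovessquarepositivehalftwist} to push the non-standard square along the factorization, using that the total Garside power $2+\omega_k$ and the number of factors are finite (indeed $k\leq 6$ by Proposition~\ref{pnumberoffactorsinfactorization}) to argue that the process must terminate either at a strict complexity reduction or at a standard square overlapping a neighbor. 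Either outcome excludes the non-standard-square-only configuration and completes the proof; verifying the termination of this iteration is the step I expect to require the most care.
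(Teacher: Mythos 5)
Your proposal follows the same overall strategy as the paper: pass to the residual factorization $\xi_1\cdots\xi_k = \bigl[\prod_j(\zeta_j)_{(\omega_j'')}\bigr]\Delta^{\omega_k''}$, rule out failures of condition \textbf{(i)} of Lemma~\ref{lproductindivisibleDelta} by maximality, and reduce to the case where some residual $\zeta_{j+1}$ is a single Artin generator. Your observation that a single-generator residual at a \emph{standard} square directly yields the conclusion is correct and matches the paper in spirit. However, the final case --- a non-standard square with full contractions --- is exactly where the paper's proof does its real work, and your treatment of it has two genuine gaps.

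First, the termination of your iteration is not just ``the step requiring the most care''; it is the missing idea. The paper does not iterate blindly: in the one-square case it fixes an extremal (leftmost) position for the square among all Hurwitz-equivalent factorizations of the same complexity, and then \emph{proves} that the hypothesis of Lemma~\ref{ltwoHurwitzmovessquarepositivehalftwist} is always satisfied in this configuration, using the global duality constraint that $\bigl(\prod_{j'<j}(\zeta_{j'})_{(\omega_{j'}'')}\bigr)(\zeta_j)_{(\omega_j'')}$ is dual to $\prod_{j'>j}(\zeta_{j'})_{(\omega_{j'}'')}$ together with the index-matching consequences of Proposition~\ref{pmaximaldivisordualcharacterization}. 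So your ``delicate residual sub-case'' where the genericity condition fails does not actually occur, and the contradiction comes from the extremal choice, not from a complexity reduction (the composed moves only preserve complexity). You also misstate the mechanism when the condition holds: Lemma~\ref{ltwoHurwitzmovessquarepositivehalftwist} gives a non-increasing move, not a decreasing one.

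Second, you omit the case analysis by the number of squares ($1$, $2$, or $3$ by Proposition~\ref{pnumberoffactorsinfactorization}), which the paper cannot avoid. When there are two or three squares and every adjacent product $\xi_j\xi_{j+1}$ is divisible by $\Delta$, there is no room to ``push the square along,'' and the paper instead applies a complexity-reducing \emph{global conjugation} move (Lemma~\ref{lcomplexityreducingglobalconjugation}) to contradict minimality; for three squares with one product indivisible it concludes by a direct Garside normal form computation. Global conjugation never appears in your proposal, yet it is indispensable here. A smaller inaccuracy: your parenthetical claim that a non-standard positive half-twist always leaves a residual of length at least two conflicts with the paper's own treatment, which explicitly allows $\zeta_j$ to be an Artin generator with $\xi_j''=\overline{\tau_j}$ for such a factor and excludes the resulting triple-product divisibility by index matching rather than by a length bound; without that argument your case list for where condition \textbf{(ii)} can fail is incomplete.
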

\begin{proof}
Let us assume that the conclusion is false. We will derive a contradiction and the proof is very similar to that of Theorem~\ref{tfactorizationpositivehalftwistsstandard}, with adaptations that we will explain. We can assume that the factorization ${\cal F}$ satisfies the conditions in Lemma~\ref{lreorderingfactorssamecomplexity}, after possibly replacing ${\cal F}$ with another minimal complexity factorization (as in the proof of Theorem~\ref{tfactorizationpositivehalftwistsstandard}). We adopt Setup~\ref{setfactorization} and we have the equation $\Delta^{\omega_k+2} = \xi_1\xi_2\cdots \xi_{k}$. Furthermore, if $1\leq j\leq k$, then we adopt the notation in the proof of Theorem~\ref{tfactorizationpositivehalftwistsstandard} for $\xi_j''$, $\chi_j''$, $\omega_j''$, and $\zeta_j$. However, we define $J\subseteq \{1,2,\dots,k\}$ to be the set of all $1\leq j\leq k$ such that $g_j$ is either a standard positive half-twist or the square of a standard positive half-twist, rather than simply the set of all $1\leq j\leq k$ such that $g_j$ is a standard positive half-twist (the latter is the definition of $J$ in the proof of Theorem~\ref{tfactorizationpositivehalftwistsstandard}). 

We also apply Lemma~\ref{lsquareboundedcontraction} rather than Lemma~\ref{lboundedcontraction} (as in the proof of Theorem~\ref{tfactorizationpositivehalftwistsstandard}) in order to conclude that $\zeta_j$ is not the identity braid for each $1\leq j\leq k$. Let us briefly elaborate on this point. If $1\leq j\leq k$, then Lemma~\ref{lsquareboundedcontraction} implies that $\chi_{j-1}''$ is a left-divisor of $\overline{\rho_j}$ and $\xi_j''$ is a right-divisor of $\overline{\tau_j}$, but we cannot necessarily conclude that $\chi_{j-1}''$ is a proper left-divisor of $\overline{\rho_j}$ in the case $g_j$ is the square of a non-standard positive half-twist. However, if $g_j$ is the square of a non-standard positive half-twist, then Proposition~\ref{pcutclosureconjugate} implies that $\xi_j = \overline{\rho_j}\sigma_{i_j}\overline{\tau_j}$. On the other hand, if $g_j$ is a non-standard positive half-twist, in which case $\xi_j = \overline{\rho_j}\overline{\tau_j}$, then Lemma~\ref{lboundedcontraction} (as in the proof of Theorem~\ref{tfactorizationpositivehalftwistsstandard}) implies that $\chi_{j-1}''$ is a proper left-divisor of $\overline{\rho_j}$. We now conclude that $\zeta_j$ is not the identity braid for every $1\leq j\leq k$, based on the equation $\xi_j = \chi_{j-1}''\zeta_j\xi_j''$ defining $\zeta_j$.

We also have the same equation \[\xi_1\xi_2\cdots\xi_{k} = \left[\prod_{j=1}^{k}\left(\zeta_j\right)_{\left(\omega_j''\right)}\right]\Delta^{\omega_{k}''},\] as in the proof of Theorem~\ref{tfactorizationpositivehalftwistsstandard}. If $\zeta_j$ is not an Artin generator when $g_j$ is the square of a non-standard positive half-twist, then the same proof as that of Theorem~\ref{tfactorizationpositivehalftwistsstandard} shows that this expression is the Garside normal form of $\xi_1\xi_2\cdots \xi_{k}$. 

Indeed, in this case, the assumption that there is no $1\leq j\leq k$ such that $g_j$ is the square of a standard positive half-twist with either $\xi_{j-1}\xi_j$ or $\xi_j\xi_{j+1}$ divisible by $\Delta$, shows that $\prod_{j=1}^{k}\left(\zeta_j\right)_{\left(\omega_j''\right)}$ is indivisible by $\Delta$, using Lemma~\ref{lproductindivisibleDelta}. An important point in this argument is that the index of the last generator in $\left(\zeta_{j-1}\right)_{\left(\omega_{j-1}''\right)}$ is equal to the index of the first generator in $\left(\zeta_j\right)_{\left(\omega_j''\right)}$ if $g_j$ is a non-standard positive half-twist and $\zeta_j$ is an Artin generator.

On the other hand, if $\zeta_j$ is an Artin generator when $g_j$ is the square of a non-standard positive half-twist, then it is possible that the index of $\left(\zeta_j\right)_{\left(\omega_j''\right)}$ is different from the indices of the last generator in $\left(\zeta_{j-1}\right)_{\left(\omega_{j-1}''\right)}$ and the first generator in $\left(\zeta_{j+1}\right)_{\left(\omega_{j+1}''\right)}$. If this is the case, then the product $\left(\zeta_{j-1}\right)_{\left(\omega_{j-1}''\right)}\left(\zeta_j\right)_{\left(\omega_j''\right)}\left(\zeta_{j+1}\right)_{\left(\omega_{j+1}''\right)}$ is divisible by $\Delta$, and thus the product $\prod_{j=1}^{k} \left(\zeta_j\right)_{\left(\omega_j''\right)}$ is also divisible by $\Delta$. We now adjust the proof of Theorem~\ref{tfactorizationpositivehalftwistsstandard} accordingly to handle this case. Indeed, we claim that $\prod_{j=1}^{k} \left(\zeta_j\right)_{\left(\omega_j''\right)}$ is not a power of $\Delta$ in any case, which contradicts the equation $\Delta^{\omega_k+2} = \xi_1\xi_2\cdots \xi_k$. We split the argument according to the number of squares of positive half-twists in the factorization ${\cal F} = \left(g_1,g_2,\dots,g_k\right)$. We have already established the claim (in fact, the stronger statement that $\prod_{j=1}^{k} \left(\zeta_j\right)_{\left(\omega_j''\right)}$ is indivisible by $\Delta$) if $\zeta_j$ is not an Artin generator when $g_j$ is the square of a non-standard positive half-twist. In particular, we will assume that $\zeta_j$ is an Artin generator when $g_j$ is the square of a non-standard positive half-twist, for some $1\leq j\leq k$.

Firstly, we consider the case where there is precisely one square of a positive half-twist in ${\cal F}$, and let $g_j$ be the square of a positive half-twist. The same proof as that of Theorem~\ref{tfactorizationpositivehalftwistsstandard} shows that $\prod_{j'=1}^{j-1} \left(\zeta_{j'}\right)_{\left(\omega_{j'}''\right)}$ and $\prod_{j'=j+1}^{k} \left(\zeta_{j'}\right)_{\left(\omega_{j'}''\right)}$ are indivisible by $\Delta$. Furthermore, as in the proof of Theorem~\ref{tfactorizationpositivehalftwistsstandard}, Proposition~\ref{pmaximaldivisordualcharacterization}~\textbf{(i)} implies that the index of the last generator of $\left(\zeta_{j-2}\right)_{\left(\omega_{j-2}''\right)}$ is the same as the index of the first generator of $\left(\zeta_{j-1}\right)_{\left(\omega_{j-1}''\right)}$, if $\chi_{j-2}'' = \rho_{j-1}\neq \overline{\rho_{j-1}}$. We have assumed that $\zeta_j$ is an Artin generator, and in particular, $\left(\prod_{j'=1}^{j-1} \left(\zeta_{j'}\right)_{\left(\omega_{j'}''\right)}\right)\left(\zeta_j\right)_{\left(\omega_j''\right)}$ is dual to $\prod_{j'=j+1}^{k} \left(\zeta_{j'}\right)_{\left(\omega_{j'}''\right)}$ (i.e., $\left(\prod_{j'=1}^{j-1} \left(\zeta_{j'}\right)_{\left(\omega_{j'}''\right)}\right)\left(\zeta_j\right)_{\left(\omega_j''\right)}$ is indivisible by $\Delta$).  Of course, $\zeta_j$ is not an Artin generator if $j=1$, since $\xi_1 = \zeta_1\xi_1''$ and $\xi_1''$ is a right-divisor of $\overline{\tau_1}$. Let us assume that the square of a positive half-twist $g_j$ is leftmost (i.e., $j$ is minimal) among all factorizations Hurwitz equivalent to ${\cal F}$ with the same complexity as ${\cal F}$. We have $j>1$. If $\zeta_j$ is an Artin generator, then Lemma~\ref{lsquareboundedcontraction} implies that $\chi_{j-1}'' = \overline{\rho_j}$ and $\xi_j'' = \overline{\tau_j}$. 

In particular, the fact that the index of the last generator in $\left(\zeta_{j-2}\right)_{\left(\omega_{j-2}''\right)}$ is equal to the index of the first generator in $\left(\zeta_{j-1}\right)_{\left(\omega_{j-1}''\right)}$ if $\chi_{j-2}''=\rho_{j-1}\neq \overline{\rho_{j-1}}$, the fact that the index of the last generator in $\left(\zeta_{j+1}\right)_{\left(\omega_{j+1}''\right)}$ is equal to the index of the first generator in $\left(\zeta_{j+2}\right)_{\left(\omega_{j+2}''\right)}$ if $\xi_{j+1}'' = \overline{\tau_{j+1}}$ (also a consequence of Proposition~\ref{pmaximaldivisordualcharacterization}), and the fact that $\left(\prod_{j'=1}^{j-1} \left(\zeta_{j'}\right)_{\left(\omega_{j'}''\right)}\right)\left(\zeta_j\right)_{\left(\omega_j''\right)}$ is dual to $\prod_{j'=j+1}^{k} \left(\zeta_{j'}\right)_{\left(\omega_{j'}''\right)}$, imply that the hypothesis of Lemma~\ref{ltwoHurwitzmovessquarepositivehalftwist} is satisfied for the triple $\left(g_{j-1},g_j,g_{j+1}\right)$. 

We deduce that there is a composition of Hurwitz moves that transfers the square of a positive half-twist $g_j$ in the factorization to the $\left(j-1\right)$th position and does not change the complexity of the factorization. Of course, this contradicts the assumption that the square of a positive half-twist $g_j$ is left-most, and the statement is established in the case where there is precisely one square of a positive half-twist in ${\cal F}$.

Secondly, we consider the case where there are two squares of positive half-twists in ${\cal F}$. Proposition~\ref{pnumberoffactorsinfactorization} implies that $k=4$. Furthermore, in this case, neither $\zeta_1$ nor $\zeta_4$ is an Artin generator, unless either $g_1$ or $g_4$ is a standard positive half-twist, respectively (since we are assuming for a contradiction that the conclusion is false). However, we have assumed that $\zeta_j$ is an Artin generator when $g_j$ is the square of a non-standard positive half-twist, for some $1\leq j\leq k$. In particular, the same argument as the previous case shows that if at least one of the squares of a positive half-twist is at an end of ${\cal F}$, then we can transfer the other square of a positive half-twist $g_j$ to the other end of ${\cal F}$ by a composition of Hurwitz moves that does not change the complexity of the factorization (using the fact that $\zeta_j$ is an Artin generator, by our assumption). However, in the case where both squares of positive half-twists in ${\cal F}$ are at the ends of ${\cal F}$, we have a contradiction since neither $\zeta_1$ nor $\zeta_4$ can be an Artin generator. 

Let us now assume that both squares of positive half-twists in ${\cal F}$ are in the middle of ${\cal F}$. If $g_j$ is the power of a non-standard positive half-twist for each $j\in \{1,2,3,4\}$ and if $\xi_j\xi_{j+1}$ is divisible by $\Delta$ for each $j\in \{1,2,3\}$, then Lemma~\ref{lcomplexityreducingglobalconjugation} implies that we can apply a complexity reducing global conjugation move to ${\cal F}$, which contradicts the assumption that ${\cal F}$ is a minimal complexity factorization. However, in all other cases, the proof of Theorem~\ref{tfactorizationpositivehalftwistsstandard} follows through, using our assumption at the beginning of the proof that the conclusion is false. Indeed, if $\xi_2\xi_3$ is indivisible by $\Delta$, then $\zeta_j$ is not an Artin generator for every $1\leq j\leq 4$, which is a contradiction. Furthermore, if $g_1$ is a standard positive half-twist, then Lemma~\ref{ladjacentstandardpositivehalftwist} implies that $\xi_1\xi_2$ is indivisible by $\Delta$, and if $g_4$ is a standard positive half-twist, then Lemma~\ref{ladjacentstandardpositivehalftwist} implies that $\xi_3\xi_4$ is indivisible by $\Delta$. Thus, using our assumption that the conclusion is false, we deduce that both $g_2$ and $g_3$ are squares of non-standard positive half-twists, and either $\xi_1\xi_2$ or $\xi_3\xi_4$ is indivisible by $\Delta$. 

If $\xi_1\xi_2$ is indivisible by $\Delta$, then $\zeta_2$ is not an Artin generator, and we deduce that $\zeta_3$ is an Artin generator. In this case, Lemma~\ref{ltwoHurwitzmovessquarepositivehalftwist} implies that we can transfer $g_2$ to the right end of ${\cal F}$ by a composition of two Hurwitz moves without changing the complexity of ${\cal F}$. However, we have already addressed the case where one of the squares of a positive half-twist is at the end of the factorization, and obtained a contradiction. A similar argument applies if $\xi_3\xi_4$ is indivisible by $\Delta$, and the statement is established in the case where there are precisely two squares of positive half-twists in ${\cal F}$.

Finally, we consider the case where there are three squares of positive half-twists in ${\cal F}$. In this case, Proposition~\ref{pnumberoffactorsinfactorization} implies that $k = 3$. Firstly, we will show that either $\xi_1\xi_2$ or $\xi_2\xi_3$ is indivisible by $\Delta$. Indeed, we assume for a contradiction that both $\xi_1\xi_2$ and $\xi_2\xi_3$ are divisible by $\Delta$. In this case, our assumption that the conclusion is false at the beginning of the proof implies that $g_j$ is the square of a non-standard positive half-twist for each $j\in \{1,2,3\}$. However, if $g_j$ is the square of a non-standard positive half-twist for each $j\in \{1,2,3\}$, then Lemma~\ref{lcomplexityreducingglobalconjugation} implies that we can apply a complexity reducing global conjugation move to ${\cal F}$, which contradicts the assumption that ${\cal F}$ is a minimal complexity factorization. We deduce that either $\xi_1\xi_2$ or $\xi_2\xi_3$ is indivisible by $\Delta$. If $\xi_1\xi_2$ is indivisible by $\Delta$, then $\xi_1\xi_2\xi_3 = \xi_1\zeta_2\left(\zeta_3\right)_{\left(\omega_3''\right)}\Delta^{\omega\left(\xi_2''\right)}$ is the Garside normal form of $\xi_1\xi_2\xi_3$, which contradicts the fact that $\xi_1\xi_2\xi_3$ is a power of $\Delta$. A similar argument applies if $\xi_2\xi_3$ is indivisible by $\Delta$, and the statement is established in the case where there are precisely three squares of positive half-twists in ${\cal F}$.

Of course, Proposition~\ref{pnumberoffactorsinfactorization} implies that the number of squares of positive half-twists in ${\cal F}$ is an element of $\{1,2,3\}$. Therefore, we have arrived at a contradiction in all cases, and the statement is established.
\end{proof}

We now establish that the conclusion in Theorem~\ref{texistencesquarestandardpositivehalftwist} is sufficient in order to uniquely constrain the Hurwitz equivalence class of a factorization of $\Delta^2$ into positive half-twists and squares of positive half-twists.

\begin{theorem}
\label{texistencesquarestandardpositivehalftwistimpliesstandard}
We adopt Setup~\ref{setfactorization} and assume that ${\cal F}$ is a factorization of $\Delta^2$ into positive half-twists and squares of positive half-twists. Furthermore, let us assume that for some $1\leq j\leq k$, the factor $g_j$ is the square of a standard positive half-twist and either $\xi_{j-1}\xi_j$ or $\xi_j\xi_{j+1}$ is divisible by $\Delta$. In this case, ${\cal F}$ is Hurwitz equivalent to the standard factorization (in Definition~\ref{dstandardfactorization}) with the same number of factors of each type as ${\cal F}$.
\end{theorem}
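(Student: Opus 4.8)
The plan is to run an induction on the complexity $c({\cal F})$ that parallels the proof of Theorem~\ref{tfactorizationpositivehalftwistsstandard}, using the supplied standard square as the seat of a controlled Hurwitz move. First I would fix the target: by Proposition~\ref{pnumberoffactorsinfactorization} the type counts satisfy $\nu_1 + 2\nu_2 = 6$, and since the hypothesis supplies a square we have $\nu_2 \geq 1$, so $(\nu_1,\nu_2) \in \{(4,1),(2,2),(0,3)\}$; each case singles out a unique candidate among the standard factorizations of Definition~\ref{dstandardfactorization}. Because type counts are Hurwitz invariants (Proposition~\ref{pHurwitzequivalencefunctorial}) and Hurwitz equivalence is transitive, it suffices to prove the claim for a minimal complexity representative of the Hurwitz class of ${\cal F}$, which still carries a square. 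The base case is $c({\cal F}) = 0$: then every factor is a power of a standard half-twist, so Lemma~\ref{lcomplexityzerofactorization} forces $\nu_2 \leq 2$, and Lemma~\ref{loneortwosquarespositivehalftwistsstandard} together with Lemma~\ref{lstandardfactorizationspowersHurwitzequivalent} identify ${\cal F}$ with the standard factorization of the correct type.

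For the inductive step I would take $c({\cal F}) > 0$ and invoke the hypothesis (guaranteed in minimal complexity by Theorem~\ref{texistencesquarestandardpositivehalftwist}) that there is a standard square $g_j$ with $\xi_{j-1}\xi_j$ or $\xi_j\xi_{j+1}$ divisible by $\Delta$; the two cases are symmetric, so assume $\xi_{j-1}\xi_j$ is divisible by $\Delta$. Since every subfactorization of a minimal complexity factorization is again of minimal complexity, Proposition~\ref{padjacentsquarestandardpositivehalftwist} pins $(g_{j-1},g_j)$ down to the form $(\sigma_{i'}\sigma_{i_{j-1}}^{e_{j-1}}\sigma_{i'}^{-1},\ \sigma_{i}^{2})$, where $g_j = \sigma_i^2$ is the standard square and $i \neq i'$. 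A suitable Hurwitz move on this pair then exhibits the standard square as a factor in standard position and replaces $g_{j-1}$ by a single-generator conjugate of $\sigma_{i_{j-1}}^{e_{j-1}}$, concentrating the non-standard content; Lemma~\ref{lHurwitzmovecomplexitychange} governs the resulting complexity change at this and each subsequent step.

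From here the argument would follow the template of the proof of Theorem~\ref{tfactorizationpositivehalftwistsstandard}, replacing every appeal to Lemma~\ref{lboundedcontraction} by Lemma~\ref{lsquareboundedcontraction} and appealing to Proposition~\ref{padjacentsquarestandardpositivehalftwist} whenever a standard square meets a $\Delta$-divisible partner. For $\nu_2 \in \{1,2\}$ I would show that positive complexity always admits a complexity-reducing move -- either a Hurwitz move supplied by the contrapositive of Lemma~\ref{lsquareboundedcontraction} (amplified by Lemma~\ref{ltwoHurwitzmovessquarepositivehalftwist} when the partner is itself a square) or a global conjugation move supplied by Lemma~\ref{lcomplexityreducingglobalconjugation} when the non-standard factors cluster so that consecutive products are divisible by $\Delta$ -- contradicting minimality; hence $c({\cal F}) = 0$ and the base case applies. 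For $\nu_2 = 3$ (so $k=3$), Lemma~\ref{lcomplexityzerofactorization} forbids $c({\cal F}) = 0$, so the minimal complexity is exactly $1$; here the pinned pair together with the constraint $g_1g_2g_3 = \Delta^2$ determines the third factor, and a direct computation (up to a global conjugation by a power of $\Delta$) matches ${\cal F}$ with the standard factorization of Definition~\ref{dstandardfactorization} having $\nu_2 = 3$.

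The main obstacle I anticipate is the three-square case. Unlike the smooth case of Subsection~\ref{ssclassificationfactorizationspositivehalftwists}, the complexity cannot be driven to zero (Lemma~\ref{lcomplexityzerofactorization}), so one must carry out a genuine classification at complexity one and verify the explicit match; moreover global conjugation moves are indispensable here, since no analogue of Lemma~\ref{lnoglobalconjugation} holds for factorizations into powers of positive half-twists. A secondary difficulty, flagged by the remark following Lemma~\ref{lsquareboundedcontraction}, is that when the partner $g_{j\pm 1}$ of the standard square is itself a square the standardizing Hurwitz move need not decrease complexity; this is precisely the situation handled by Lemma~\ref{ltwoHurwitzmovessquarepositivehalftwist}, and keeping track of the error term $\epsilon$ in Lemma~\ref{lHurwitzmovecomplexitychange} throughout the reduction is where the bookkeeping is most delicate.
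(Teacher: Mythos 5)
Your proposal correctly identifies the base case, the role of Proposition~\ref{padjacentsquarestandardpositivehalftwist} in pinning down the pair adjacent to the standard square, and the fact that the three-square case lives at complexity one rather than zero. However, the core of your inductive step --- the claim that for $\nu_2\in\{1,2\}$ a factorization of positive complexity satisfying the hypothesis always admits a complexity-reducing move --- has a genuine gap, and it fails on exactly the configurations the theorem is about. Consider ${\cal F}=\left(\sigma_1^2,\sigma_1^{-1}\sigma_2\sigma_1,\sigma_1,\sigma_2,\sigma_1\right)$, which satisfies the hypothesis with $j=1$ and has complexity $1$, while its standard representative has complexity $0$. No single Hurwitz move or global conjugation move reduces its complexity: the moves on the pinned pair $\left(\sigma_1^2,\sigma_1^{-1}\sigma_2\sigma_1\right)$ produce factors such as $\sigma_1\sigma_2\sigma_1^{-1}$ or $\Delta^{-1}\sigma_2^4\sigma_1$ of the same complexity, moves on the remaining pairs increase complexity, and the only complexity-preserving global conjugation (by $\sigma_1$) merely relocates the non-standard factor. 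The actual path to the standard factorization passes \emph{upward} through complexity $2$ (a Hurwitz move on the last two factors followed by a global conjugation by $\sigma_1$), so an induction that descends on complexity cannot close. None of the tools you cite rescues this: Lemma~\ref{lcomplexityreducingglobalconjugation} requires every factor to be non-standard, Lemma~\ref{lsquareboundedcontraction} constrains pairs of non-standard factors only, and Proposition~\ref{padjacentsquarestandardpositivehalftwist} explicitly certifies that the pinned pair is already of minimal complexity. Indeed, the whole point of splitting the argument into Theorem~\ref{texistencesquarestandardpositivehalftwist} and this theorem is that the complexity-reduction machinery of Theorem~\ref{tfactorizationpositivehalftwistsstandard} breaks down precisely when a standard square sits next to a $\Delta$-divisible product.

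The paper's proof takes a different route at exactly this point: it uses the explicit value of the pinned product. When the partner of the standard square $\sigma_1^2$ is a non-standard half-twist, Proposition~\ref{padjacentsquarestandardpositivehalftwist} forces $g_jg_{j+1}=\Delta$, so Proposition~\ref{pfactorizationmoveGarsideelement} moves the pair to the front without changing complexity and the remaining factors form a factorization of $\Delta$ with strictly fewer factors, to which Theorem~\ref{tfactorizationpositivehalftwistsstandard} (three half-twists) or Proposition~\ref{padjacentsquarestandardpositivehalftwist} again (one square, one half-twist) applies; explicit Hurwitz and global conjugation moves then match the result with the standard factorization. When the partner is a non-standard square, the product is $\sigma_1\sigma_2^2\sigma_1$ and the paper carries out a separate duality analysis of the $k=4$ case (squares at an end versus in the middle) that your proposal does not address --- Lemma~\ref{ltwoHurwitzmovessquarepositivehalftwist} is used in the proof of Theorem~\ref{texistencesquarestandardpositivehalftwist}, not here. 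To repair your argument you would need to replace the descent on complexity with this structural peeling-off of the pinned pair, or else prove a much stronger statement about the existence of complexity-non-increasing paths to the standard form.
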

\begin{proof}
If no factor in ${\cal F}$ is the square of a positive half-twist, then the statement is a consequence of Theorem~\ref{tfactorizationpositivehalftwistsstandard}. Let us assume that there is at least one factor in ${\cal F}$ that is the square of a positive half-twist. If precisely one factor in ${\cal F}$ is the square of a positive half-twist, then we will show that ${\cal F}$ is Hurwitz equivalent to $\left(\sigma_1^2,\sigma_2,\sigma_1,\sigma_1,\sigma_2\right)$. If precisely two factors in ${\cal F}$ are the squares of positive half-twists, then we will show that ${\cal F}$ is Hurwitz equivalent to $\left(\sigma_1^2,\sigma_2,\sigma_1^2,\sigma_2\right)$. If precisely three factors in ${\cal F}$ are the squares of positive half-twists, then we will show that ${\cal F}$ is Hurwitz equivalent to $\left(\sigma_1^2,\sigma_1^{-1}\sigma_2^2\sigma_1,\sigma_2^2\right)$. Of course, Proposition~\ref{pnumberoffactorsinfactorization} implies that these are all the possibilities for the number of squares of positive half-twists in the factorization ${\cal F}$.

Let us assume, for a contradiction, that ${\cal F}$ is a factorization of minimal complexity satisfying the hypothesis but is not Hurwitz equivalent to a standard factorization. Lemma~\ref{lstandardfactorizationspowersHurwitzequivalent} implies that the complexity $c\left({\cal F}\right)>0$. We consider the factor $g_j$ in ${\cal F}$ that is the square of a standard positive half-twist, and where either $\xi_{j-1}\xi_j$ or $\xi_j\xi_{j+1}$ is divisible by $\Delta$. We can write $g_j = \sigma_1^2$ after applying a global conjugation move by a power of $\Delta$ to ${\cal F}$. Let us assume, without loss of generality, that $\xi_j\xi_{j+1}$ is divisible by $\Delta$, since the proof in the other case is analogous. In this case, $g_{j+1}$ is the power of a non-standard positive half-twist. We consider the cases where $g_{j+1}$ is a non-standard positive half-twist and where $g_{j+1}$ is the square of a non-standard positive half-twist separately.

Firstly, we consider the case where $g_{j+1}$ is a non-standard positive half-twist. In this case, Proposition~\ref{padjacentsquarestandardpositivehalftwist} implies that $g_{j+1}=\sigma_{1}^{-1}\sigma_{2}\sigma_1$ and the product $g_jg_{j+1} = \Delta$. In particular, Proposition~\ref{pfactorizationmoveGarsideelement} implies that we can move this pair $\left(g_j,g_{j+1}\right)$ of adjacent factors to the beginning of the factorization ${\cal F}$, by a finite sequence of Hurwitz moves that does not change the complexity of the factorization. Thus, we can assume that $j=1$ and ${\cal F} = \left(\sigma_1^2,\sigma_1^{-1}\sigma_2\sigma_1,\dots\right)$. We now obtain a factorization ${\cal F}'$ of $\Delta$ into positive half-twists and squares of positive half-twists by considering all but the first two factors in ${\cal F}$. We consider cases according to the numbers of factors of each type in ${\cal F}'$.

If there are three positive half-twists in ${\cal F}'$, then ${\cal F}' = \left(g_3,g_4,g_5\right)$ and Theorem~\ref{tfactorizationpositivehalftwistsstandard} implies that ${\cal F}'$ is Hurwitz equivalent to the standard factorization $\left(\sigma_1,\sigma_2,\sigma_1\right)$ of $\Delta$. We deduce that ${\cal F}$ is Hurwitz equivalent to the factorization $\left(\sigma_1^2,\sigma_1^{-1}\sigma_2\sigma_1,\sigma_1,\sigma_2,\sigma_1\right)$ of $\Delta^2$. The application of another Hurwitz move to the fourth and fifth factors in this latter factorization shows that ${\cal F}$ is Hurwitz equivalent to $\left(\sigma_1^2,\sigma_1^{-1}\sigma_2\sigma_1,\sigma_1,\sigma_1,\sigma_1^{-1}\sigma_2\sigma_1\right)$. Finally, the application of a global conjugation move by $\sigma_1$ shows that ${\cal F}$ is Hurwitz equivalent to the standard factorization $\left(\sigma_1^2,\sigma_2,\sigma_1,\sigma_1,\sigma_2\right)$, which is a contradiction.

If there is one square of a positive half-twist in ${\cal F}'$, then ${\cal F}' = \left(g_3,g_4\right)$ is a factorization of $\Delta$ into a square of a positive half-twist and a positive half-twist, according to Proposition~\ref{pnumberoffactorsinfactorization}. Furthermore, ${\cal F}'$ is a minimal complexity factorization of $\Delta$. Proposition~\ref{padjacentsquarestandardpositivehalftwist} now implies that ${\cal F}' = \left(\sigma_i^2,\sigma_i^{-1}\sigma_{i'}\sigma_i\right)$, where $i\neq i'\in \{1,2\}$, after possibly applying a Hurwitz move to transpose the factors in ${\cal F}'$ (depending on whether $g_3$ or $g_4$ is the square of a standard positive half-twist). In particular, ${\cal F} = \left(\sigma_1^2,\sigma_1^{-1}\sigma_2\sigma_1,\sigma_i^2,\sigma_i^{-1}\sigma_{i'}\sigma_i\right)$.

If $i=1$, then global conjugation of ${\cal F}$ by $\sigma_1$ shows that ${\cal F}$ is Hurwitz equivalent to the standard factorization $\left(\sigma_1^2,\sigma_2,\sigma_1^2,\sigma_2\right)$, which is a contradiction. If $i = 2$, then ${\cal F} = \left(\sigma_1^2,\sigma_1^{-1}\sigma_2\sigma_1,\sigma_2^2,\sigma_2^{-1}\sigma_1\sigma_2\right)$. We have the following finite sequence of Hurwitz moves applied to the factorization ${\cal F}$: \begin{align*} {\cal F} &\sim \left(\sigma_1^2,\sigma_1^{-1}\sigma_2\sigma_1,\sigma_2\sigma_1\sigma_2^{-1},\sigma_2^2\right) \\ &\sim \left(\sigma_1^2,\sigma_1^{-1}\sigma_2\sigma_1,\sigma_1^{-1}\sigma_2\sigma_1,\sigma_2^2\right) \\ &\sim \left(\sigma_1^2,\sigma_1^{-1}\sigma_2\sigma_1,\sigma_1^2,\sigma_1^{-1}\sigma_2\sigma_1\right). \end{align*} The first Hurwitz equivalence is by the application of a Hurwitz move to the third and fourth factors in ${\cal F}$. The second Hurwitz equivalence is by the application of a Hurwitz move to the second and third factors in the second factorization. The third Hurwitz equivalence is by the application of a Hurwitz move to the third and fourth factors in the third factorization. Finally, the application of a global conjugation move by $\sigma_1$ to the fourth factorization shows that ${\cal F}$ is Hurwitz equivalent to $\left(\sigma_1^2,\sigma_2,\sigma_1^2,\sigma_2\right)$, which is a contradiction. We have considered all possibilities for ${\cal F}$ if $g_{j+1}$ is a non-standard positive half-twist.

Secondly, we consider the case where $g_{j+1}$ is the square of a non-standard positive half-twist. In this case, Proposition~\ref{padjacentsquarestandardpositivehalftwist} implies that $g_{j+1} = \sigma_1^{-1}\sigma_2^2\sigma_1$. Of course, Proposition~\ref{pnumberoffactorsinfactorization} implies that there are either three squares of positive half-twists in ${\cal F}$, or two squares of positive half-twists in ${\cal F}$ ($g_j$ and $g_{j+1}$) and two positive half-twists in ${\cal F}$. We consider these subcases separately. If there are three squares of positive half-twists in ${\cal F}$, then the equation $g_jg_{j+1} = \sigma_1\sigma_2^2\sigma_1$ uniquely constrains the third factor since $\Delta^2=g_1g_2g_3$. Indeed, if $j=1$, then the third factor $g_3 = \sigma_2^2$ and ${\cal F} = \left(\sigma_1^2,\sigma_1^{-1}\sigma_2^2\sigma_1,\sigma_2^2\right)$ is a standard factorization. If $j=2$, then the third factor $g_1 = \sigma_2^2$ and ${\cal F} = \left(\sigma_2^2,\sigma_1^2,\sigma_1^{-1}\sigma_2^2\sigma_1\right)$. However, in this case, a Hurwitz move applied to the first and second factors in ${\cal F}$ shows that ${\cal F}$ is Hurwitz equivalent to $\left(\sigma_1^2,\sigma_1^{-2}\sigma_2^2\sigma_1^{2},\sigma_1^{-1}\sigma_2^2\sigma_1\right)$. If we apply a global conjugation move by $\sigma_1$ to the latter factorization, then we observe that ${\cal F}$ is Hurwitz equivalent to the standard factorization $\left(\sigma_1^2,\sigma_1^{-1}\sigma_2^2\sigma_1,\sigma_2^2\right)$, which is a contradiction.

On the other hand, if there are two squares of positive half-twists in ${\cal F}$, then ${\cal F}$ has four factors, where the two squares of positive half-twists are $g_j$ and $g_{j+1}$. We consider the subcases according to whether the two squares of positive half-twists in ${\cal F}$ are at an end of ${\cal F}$ or in the middle of ${\cal F}$, separately. If the two squares of positive half-twists are at an end of ${\cal F}$, then we can assume without loss of generality that $j=3$, in which case $g_3 = \sigma_1^2$ and $g_4 = \sigma_1^{-1}\sigma_2^2\sigma_1$ (the proof is similar if $j=1$). If $j=3$, and if $g_1$ and $g_2$ are standard positive half-twists, then $\xi_1 = \sigma_2 = \xi_2$ since $g_3g_4 = \sigma_1\sigma_2^2\sigma_1$ and $g_1g_2g_3g_4 = \Delta^2$. In this case, ${\cal F} = \left(\sigma_2,\sigma_2,\sigma_1^2,\sigma_1^{-1}\sigma_2^2\sigma_1\right)$ is Hurwitz equivalent to $\left(\sigma_2,\sigma_2\sigma_1^2\sigma_2^{-1},\sigma_2,\sigma_1^{-1}\sigma_2^2\sigma_1\right)$ by a Hurwitz move applied to the second and third factors. However, a global conjugation move by $\sigma_2^{-1}$ applied to the latter factorization shows that ${\cal F}$ is Hurwitz equivalent to $\left(\sigma_2,\sigma_1^2,\sigma_2,\sigma_1^2\right)$ since $\sigma_2^{-1}\sigma_1^{-1}\sigma_2^2\sigma_1\sigma_2 = \sigma_2^{-1}\sigma_1^{-1}\Delta\sigma_1 = \sigma_1^2$. Of course, this is a contradiction, and we conclude that at least one of $g_1$ or $g_2$ is a non-standard positive half-twist. In particular, $g_2\neq \sigma_2$, since otherwise the equation $g_1g_2g_3g_4 = \Delta^2$ would imply that $g_1 = \sigma_2$.

If $\xi_2\xi_3$ is divisible by $\Delta$, then we are in a situation we have already considered. Indeed, in this case $g_2$ and $g_3$ is an adjacent pair of factors where one factor is a non-standard positive half-twist ($g_2$) and the other factor is the square of a standard positive half-twist ($g_3$). In particular, we can assume that $\xi_2\xi_3$ is indivisible by $\Delta$. We have $\xi_3\xi_4 = \left(\sigma_1\sigma_2^2\sigma_1\right)_{\left(\omega\left(\tau_1\right)+\omega\left(\tau_2\right)\right)}\Delta$ (in the notation of Setup~\ref{setfactorization}). We claim that $\xi_2\left(\sigma_1\sigma_2^2\sigma_1\right)_{\omega\left(\tau_1\right)+\omega\left(\tau_2\right)}$ is indivisible by $\Delta$. If not, then $\xi_2$ ends with $\left(\sigma_2\right)_{\left(\omega\left(\tau_1\right)+\omega\left(\tau_2\right)\right)}$. Of course, $\xi_2\neq \left(\sigma_2\right)_{\left(\omega\left(\tau_1\right)+\omega\left(\tau_2\right)\right)}$, since otherwise $g_2 = \sigma_2$ (in the notation of Setup~\ref{setfactorization}), which we have ruled out. Also, $\xi_2$ does not end with $\left(\sigma_1\sigma_2\right)_{\left(\omega\left(\tau_1\right)+\omega\left(\tau_2\right)\right)}$ since $\xi_2\xi_3$ is indivisible by $\Delta$ (and $\xi_3 = \left(\sigma_1^2\right)_{\left(\omega\left(\tau_1\right)+\omega\left(\tau_2\right)\right)}$ in the notation of Setup~\ref{setfactorization}). We conclude that $\xi_2$ ends with $\left(\sigma_2\sigma_2\right)_{\left(\omega\left(\tau_1\right)+\omega\left(\tau_2\right)\right)}$, and Proposition~\ref{pcutclosureconjugate} implies that $\overline{\tau_2}$ also ends with $\left(\sigma_2\sigma_2\right)_{\left(\omega\left(\tau_1\right)+\omega\left(\tau_2\right)\right)}$. In particular, Lemma~\ref{lcutdivisorcharacterization} implies that the $\left(\sigma_2\right)_{\left(\omega\left(\tau_1\right)+\omega\left(\tau_2\right)\right)}$ at the end of $\overline{\tau_2}$ is a cut right-divisor of $\overline{\tau_2}$, and Proposition~\ref{pdivisorconjugatedual} implies that $\overline{\rho_2}$ begins with $\left(\sigma_1\sigma_2\right)_{\left(\omega\left(\tau_1\right)\right)}$. We deduce that the absolute value of the Garside power of $\left(\sigma_1\sigma_2^2\sigma_1\right)^{-1}g_2\left(\sigma_1\sigma_2^2\sigma_1\right)$ is strictly less than the absolute value of the Garside power of $g_2$, since $g_2 = \Delta^{-\omega\left(\tau_2\right)}\left(\xi_2\right)_{\left(\omega\left(\tau_1\right)+\omega\left(\tau_2\right)\right)}$ and $\omega\left(\sigma_1\sigma_2^2\sigma_1\right) = 2$. 

Of course, $g_3g_4 = \sigma_1\sigma_2^2\sigma_1$. We conclude that the composition of two Hurwitz moves $\left(g_1,g_2,g_3,g_4\right)\to \left(g_1,g_3,g_4,g_4^{-1}g_3^{-1}g_2g_3g_4\right)$ reduces the complexity of the factorization, which contradicts the hypothesis that ${\cal F}$ is a minimal complexity factorization. Thus, our claim is true and $\xi_2\left(\sigma_1\sigma_2^2\sigma_1\right)_{\left(\omega\left(\tau_1\right)+\omega\left(\tau_2\right)\right)}$ is indivisible by $\Delta$. 

Let $\xi_1''$ be the maximal right-divisor of $\xi_1$ that is left-dual to a left-divisor of $\xi_2$. Let $\chi_1''$ be the relevant left-divisor of $\xi_2$ and write $\xi_1 = \xi_1'\xi_1''$ and $\xi_2 = \chi_1''\chi_1'$. Lemma~\ref{lboundedcontraction} implies that $\xi_1''$ is a right-divisor of $\overline{\tau_1}$ and $\chi_1''$ is a left-divisor of $\overline{\rho_2}$. We conclude that the Garside normal form of $\xi_1\xi_2\xi_3\xi_4$ is $\xi_1\xi_2\xi_3\xi_4 = \xi_1'\left(\chi_1'\right)_{\left(\omega\left(\xi_1''\right)\right)}\left(\sigma_1\sigma_2^2\sigma_1\right)_{\left(\omega\left(\tau_1\right)+\omega\left(\tau_2\right)+\omega\left(\xi_1''\right)\right)}\Delta^{\omega\left(\xi_1''\right)+1}$, which contradicts the hypothesis that $\xi_1\xi_2\xi_3\xi_4$ is a power of $\Delta$. Thus, the two squares of positive half-twists are not at an end of ${\cal F}$. 

However, if the two squares of positive half-twists are in the middle of ${\cal F}$, then $j=2$, in which case $g_2 = \sigma_1^2$ and $g_3 = \sigma_1^{-1}\sigma_2^2\sigma_1$. We will arrive at a contradiction in this subcase by an argument that is only slightly different from the subcase $j=3$ that we have just considered. 

If $\xi_1\xi_2$ is divisible by $\Delta$, then we are in a situation that we have already considered. Indeed, in this case $g_1$ and $g_2$ is an adjacent pair of factors where one factor is a non-standard positive half-twist ($g_1$) and the other factor is the square of a standard positive half-twist ($g_2$). In particular, we can assume that $\xi_1\xi_2$ is indivisible by $\Delta$. In this case, $\xi_1\xi_2\xi_3 = \xi_1\left(\sigma_1\sigma_2^2\sigma_1\right)_{\left(\omega\left(\tau_1\right)\right)}\Delta$. Furthermore, $\xi_1\left(\sigma_1\sigma_2^2\sigma_1\right)_{\left(\omega\left(\tau_1\right)\right)}\in B_3^{+}$ is a positive braid indivisible by $\Delta$ by the same argument as in the subcase $j=3$ that we have just considered.

If $\xi_3\xi_4$ is indivisible by $\Delta$, then the Garside normal form of $\xi_1\xi_2\xi_3\xi_4$ is $\xi_1\left(\sigma_1\sigma_2^2\sigma_1\right)_{\left(\omega\left(\tau_1\right)\right)}\left(\xi_4\right)_{\left(1\right)}\Delta$, which contradicts the hypothesis that $\xi_1\xi_2\xi_3\xi_4$ is a power of $\Delta$. In particular, we can assume that $\xi_3\xi_4$ is divisible by $\Delta$, and Lemma~\ref{ladjacentstandardpositivehalftwist} implies that $g_4$ is not a standard positive half-twist, since ${\cal F}$ is a minimal complexity factorization. Let $\xi_3''$ be the maximal right-divisor of $\xi_3$ that is left-dual to a left-divisor of $\xi_4$. Let $\chi_3''$ be the relevant left-divisor of $\xi_4$ and write $\xi_3 = \xi_3'\xi_3''$ and $\xi_4 = \chi_3''\chi_3'$. Lemma~\ref{lsquareboundedcontraction} implies that $\xi_3''$ is a right-divisor of $\overline{\tau_3}$ and $\chi_3''$ is a left-divisor of $\overline{\rho_4}$. Lemma~\ref{lmaximaldivisorcutdivisor} implies that $\xi_3''$ is a cut right-divisor of $\xi_3$, and since $\xi_3''$ is not the identity braid, we deduce that $\xi_3'' = \left(\sigma_2\sigma_1\right)_{\left(\omega\left(\tau_1\right)+1\right)}$ (in the notation of Setup~\ref{setfactorization}). Furthermore, in this case, the Garside normal form of $\xi_1\xi_2\xi_3\xi_4$ is $\xi_1\left(\sigma_1\sigma_2\right)_{\left(\omega\left(\tau_1\right)\right)}\chi_4'\Delta^2$, which also contradicts the hypothesis that $\xi_1\xi_2\xi_3\xi_4$ is a power of $\Delta$. Thus, the two squares of positive half-twists are not in the middle of ${\cal F}$.

Therefore, we have arrived at a contradiction in all cases, and the statement is established.
\end{proof}

Let us summarize the results in this subsection. 

\begin{theorem}
\label{tmainclassificationfactorizationsquarespositivehalftwists}
If ${\cal F}$ is a factorization of $\Delta^2$ into positive half-twists and squares of positive half-twists, then ${\cal F}$ is Hurwitz equivalent to the standard factorization (in Definition~\ref{dstandardfactorization}) with the same number of factors of each type as ${\cal F}$.
\end{theorem}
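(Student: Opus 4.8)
The plan is to deduce this theorem directly from the two preceding results, Theorem~\ref{texistencesquarestandardpositivehalftwist} and Theorem~\ref{texistencesquarestandardpositivehalftwistimpliesstandard}, by passing to a minimal complexity representative of the Hurwitz equivalence class of ${\cal F}$. The only real content is bookkeeping: I must ensure that the reduction to minimal complexity preserves both the hypothesis needed to invoke Theorem~\ref{texistencesquarestandardpositivehalftwist} (namely the presence of at least one square of a positive half-twist) and the invariant that the conclusion records (the numbers of factors of each type). Both of these are guaranteed by Proposition~\ref{pHurwitzequivalencefunctorial}, which states that Hurwitz equivalence preserves the number of factors of each conjugacy type.

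First I would dispose of the degenerate case in which no factor of ${\cal F}$ is the square of a positive half-twist. In this case ${\cal F}$ is a factorization of $\Delta^2$ into positive half-twists, and Theorem~\ref{tmainfactorizationpositivehalftwists} already shows that ${\cal F}$ is Hurwitz equivalent to the standard factorization $\left(\sigma_1,\sigma_2,\sigma_1,\sigma_1,\sigma_2,\sigma_1\right)$ of Definition~\ref{dstandardfactorization}~\textbf{(vi)}, which is the unique standard factorization with $\nu_1 = 6$ and hence has the same numbers of factors of each type as ${\cal F}$.

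For the main case, suppose at least one factor of ${\cal F}$ is the square of a positive half-twist. I would choose a factorization ${\cal F}_0$ of minimal complexity in the Hurwitz equivalence class of ${\cal F}$; by definition ${\cal F}_0$ is a minimal complexity factorization, and it is Hurwitz equivalent to ${\cal F}$. By Proposition~\ref{pHurwitzequivalencefunctorial}, ${\cal F}_0$ has the same numbers of factors of each type as ${\cal F}$, so ${\cal F}_0$ is again a factorization of $\Delta^2$ into positive half-twists and squares of positive half-twists containing at least one square of a positive half-twist. Theorem~\ref{texistencesquarestandardpositivehalftwist} then applies to ${\cal F}_0$ and furnishes an index $j$ for which the factor $g_j$ is the square of a standard positive half-twist and either $\xi_{j-1}\xi_j$ or $\xi_j\xi_{j+1}$ is divisible by $\Delta$. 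This is exactly the hypothesis of Theorem~\ref{texistencesquarestandardpositivehalftwistimpliesstandard}, which I would invoke to conclude that ${\cal F}_0$ is Hurwitz equivalent to the standard factorization with the same numbers of factors of each type as ${\cal F}_0$. Since Hurwitz equivalence is transitive and ${\cal F}_0$ and ${\cal F}$ share the same numbers of factors of each type, ${\cal F}$ is Hurwitz equivalent to the standard factorization with the same numbers of factors of each type as ${\cal F}$.

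The step I expect to be the real obstacle lies entirely in the two inputs, not in the present assembly: the substantive work was establishing the existence statement of Theorem~\ref{texistencesquarestandardpositivehalftwist} (that a minimal complexity factorization always exhibits a square of a standard positive half-twist adjacent to a factor with which it shares a copy of $\Delta$) and the rigidity statement of Theorem~\ref{texistencesquarestandardpositivehalftwistimpliesstandard}. Given those, the present theorem is a short formal consequence, and the only point requiring care is confirming that ``the standard factorization with the same numbers of factors of each type'' is well-defined and unchanged under the passage from ${\cal F}$ to ${\cal F}_0$, which is immediate from Proposition~\ref{pHurwitzequivalencefunctorial} together with the observation in Definition~\ref{dstandardfactorization} that the listed tuples $\left(\nu_n\right)$ are pairwise distinct.
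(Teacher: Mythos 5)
Your proposal is correct and follows exactly the paper's route: the paper's own proof is a one-line statement that the theorem is an immediate consequence of Theorem~\ref{texistencesquarestandardpositivehalftwist} and Theorem~\ref{texistencesquarestandardpositivehalftwistimpliesstandard}, and your write-up simply makes explicit the passage to a minimal complexity representative, the preservation of factor types under Hurwitz equivalence, and the degenerate case with no squares. All of that bookkeeping is sound, so nothing further is needed.
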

\begin{proof}
The statement is an immediate consequence of Theorem~\ref{texistencesquarestandardpositivehalftwist} and Theorem~\ref{texistencesquarestandardpositivehalftwistimpliesstandard}.
\end{proof} 

We state the geometric reformulation of Theorem~\ref{tmainclassificationfactorizationsquarespositivehalftwists}.

\begin{theorem}
\label{tuniqueisotopyclassnumberofnodes}
If $C,C'\subseteq \mathbb{CP}^2$ are degree three nodal Hurwitz curves, then $C$ is isotopic to $C'$ if and only if the number of nodes in $C$ is equal to the number of nodes in $C'$.
\end{theorem}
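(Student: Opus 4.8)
The plan is to deduce this from the algebraic classification in Theorem~\ref{tmainclassificationfactorizationsquarespositivehalftwists} via the braid monodromy dictionary supplied by Theorem~\ref{tKK}. First I would record the translation between geometry and algebra. A degree three nodal Hurwitz curve $C$ has, by definition, only $A_1$-singularities (nondegenerate tangencies) and $A_2$-singularities (nodes). Consequently the monodromy of $C$ with respect to each singular intersection is either a positive half-twist (at an $A_1$-singularity) or the square of a positive half-twist (at an $A_2$-singularity), so the braid monodromy of $C$ is the Hurwitz equivalence class of a factorization of $\Delta^2$ into positive half-twists and squares of positive half-twists. Under this dictionary, the number of nodes of $C$ equals the number of factors of type $2$ in any braid monodromy factorization of $C$.

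For the forward implication, if $C$ is isotopic to $C'$ then Definition~\ref{disotopyHurwitz} already guarantees that $C$ and $C'$ have the same singularities, and in particular the same number of $A_2$-singularities. Alternatively, and more in the spirit of the algebraic approach, Theorem~\ref{tKK} gives that $C$ and $C'$ have equal braid monodromy, and Proposition~\ref{pHurwitzequivalencefunctorial} shows that the number of factors of type $2$ is an invariant of the Hurwitz equivalence class; either route yields that the numbers of nodes agree.

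For the backward implication, suppose $C$ and $C'$ have the same number of nodes, say $\nu_2$. The singularity formula (Lemma~\ref{lsingularityformula}) forces $\nu_1 + 2\nu_2 = 6$ for each curve, so $C$ and $C'$ also have the same number $\nu_1 = 6 - 2\nu_2$ of tangencies; thus their braid monodromy factorizations are factorizations of $\Delta^2$ into positive half-twists and squares of positive half-twists with the same number of factors of each type. Theorem~\ref{tmainclassificationfactorizationsquarespositivehalftwists} then shows that each of these factorizations is Hurwitz equivalent to the unique standard factorization (Definition~\ref{dstandardfactorization}) with those numbers of factors, so the braid monodromy of $C$ equals the braid monodromy of $C'$. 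Applying Theorem~\ref{tKK} once more gives that $C$ is isotopic to $C'$, completing the argument.

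I expect no serious obstacle here, since the substantive algebra is already contained in Theorem~\ref{tmainclassificationfactorizationsquarespositivehalftwists} and this statement is a purely geometric repackaging of it. The only point demanding care is the monodromy dictionary itself, namely the verification that an $A_2$-singularity contributes exactly a factor that is the square of a positive half-twist and that the count of such factors equals the node count; this is precisely the characterization of the factors recalled immediately after the definition of braid monodromy, so it should require only a careful citation rather than new work.
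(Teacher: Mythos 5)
Your proposal is correct and follows essentially the same route as the paper's own proof: translate via braid monodromy (Theorem~\ref{tKK}), observe that nodal curves give factorizations of $\Delta^2$ into positive half-twists and squares of positive half-twists with the node count equal to the number of type-$2$ factors, and invoke Theorem~\ref{tmainclassificationfactorizationsquarespositivehalftwists} together with the invariance of factor types under Hurwitz equivalence. The extra remark that the singularity formula pins down $\nu_1 = 6 - 2\nu_2$ is a nice explicit touch the paper leaves implicit, but the argument is the same.
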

\begin{proof}
The braid monodromy factorization of a degree three nodal Hurwitz curve is a factorization of $\Delta^2$ into positive half-twists and squares of positive half-twists. Furthermore, the Hurwitz equivalence class of the braid monodromy factorization of the curve completely determines the isotopy class of the curve. 

The number of nodes in a nodal Hurwitz curve is equal to the number of factors in the braid monodromy factorization of the curve that are squares of positive half-twists. In particular, Theorem~\ref{tmainclassificationfactorizationsquarespositivehalftwists} implies that the braid monodromy factorization of $C$ is Hurwitz equivalent to the braid monodromy factorization of $C'$ if and only if the number of nodes in $C$ is equal to the number of nodes in $C'$. Therefore, the statement is established.
\end{proof}

The next step in this subsection is to show that there is a unique isotopy class of degree three simple Hurwitz curves in $\mathbb{CP}^2$ with precisely one cusp and no other singularities. In Subsection~\ref{sssingularityconstraints}, we will show that a degree three simple Hurwitz curve with at least one cusp has precisely one cusp and no other singularities. In particular, this will complete the classification of isotopy classes of degree three cuspidal Hurwitz curves in $\mathbb{CP}^2$.

We will prove that there is a unique Hurwitz equivalence class of factorizations of $\Delta^2$ into one cube of a positive half-twist and three positive half-twists. Firstly, we establish a characterization of cubes of standard positive half-twists in a minimal complexity factorization.

\begin{proposition}
\label{padjacentcubestandardpositivehalftwist}
We adopt Setup~\ref{setfactorization} with $k=2$, where $g_1$ is a non-standard positive half-twist and $g_2$ is the cube of a standard positive half-twist. If ${\cal F}$ is a minimal complexity factorization of $g_1g_2$, then precisely one of the following conditions is satisfied:
\begin{description}
\item[(i)] If $\xi_1''$ is the maximal right-divisor of $\xi_1$ that is left-dual to a left-divisor of $\xi_2$, then $\xi_1''$ is a right-divisor of $\overline{\tau_1}$ and $\omega\left(\xi_1''\right)\leq 1$. 
\item[(ii)] The factorization ${\cal F} = \left(\sigma_{i_2'}\sigma_{i_1}\sigma_{i_2'}^{-1},\sigma_{i_2'}^3\right)$, where $i_2\neq i_2'\in \{1,2\}$, and $g_1g_2 = \sigma_{i_2}\Delta$.
\end{description}
\end{proposition}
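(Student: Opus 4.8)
The plan is to reduce the whole statement to the behaviour of a single integer invariant, namely $m=\omega(\xi_1'')$, and to distinguish the two conclusions by whether $\xi_1''$ stays inside $\overline{\tau_1}$ or overruns it. Since $g_2$ is the cube of a \emph{standard} half-twist, Setup~\ref{setfactorization} gives $\xi_2=\sigma_{i_2}^3$, whose left-divisors are exactly $\sigma_{i_2}^m$ for $0\le m\le 3$. By Lemma~\ref{lmaximaldivisorcutdivisor}, $\xi_1''$ is a cut right-divisor of $\xi_1$, and if $\sigma_{i_2}^m$ is the left-divisor of $\xi_2$ to which $\xi_1''$ is dual then $\omega(\xi_1'')=\omega(\sigma_{i_2}^m)=m$ by Proposition~\ref{ppseudosymmetryduality}. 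Because $g_1$ is non-standard, Corollary~\ref{cGarsidepowerpositivehalftwist} gives $\omega(\tau_1)\ge 1$, and Proposition~\ref{pomeganumberpowerpositivehalftwist} (with $e_1=1$) gives $\omega(\xi_1)=2\omega(\tau_1)$, so $m\le 2\omega(\tau_1)$. As $\overline{\tau_1}$ is itself a cut right-divisor of $\xi_1$ (Proposition~\ref{pcutclosureconjugate}), the right-divisor form of Proposition~\ref{pomeganumbercutdivisor} shows that $\xi_1''$ is a right-divisor of $\overline{\tau_1}$ if and only if $m\le\omega(\tau_1)$. I split on this dichotomy.

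In the first branch, $m\le\omega(\tau_1)$, so $\xi_1''$ is a right-divisor of $\overline{\tau_1}$ and the hypothesis of the symmetric version of Lemma~\ref{lHurwitzmovecomplexitychange} is met for the move $(g_1,g_2)\to(g_2,g_2^{-1}g_1g_2)$. With $e_2=3$, $\omega(\rho_2)=0$ and $\omega(\xi_2'')=m$, the resulting complexity change is $2-2m+\epsilon$ with $\epsilon\le 1$; minimality forces $2-2m+\epsilon\ge 0$, hence $m\le 1$. Thus $\xi_1''$ is a right-divisor of $\overline{\tau_1}$ with $\omega(\xi_1'')\le 1$, which is exactly condition~\textbf{(i)}.

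In the second branch, $m>\omega(\tau_1)\ge 1$, so $m\ge 2$, and $m\le\min(3,2\omega(\tau_1))$ forces $(\omega(\tau_1),m)\in\{(1,2),(2,3)\}$. In both cases $\xi_1''$ is the left-dual of $\sigma_{i_2}^m$, and Proposition~\ref{piDB3} together with Lemma~\ref{lcharacterizationdualpositivebraids} rigidly determines $\xi_1$: it is $\sigma_{i_2'}\sigma_{i_2}^2\sigma_{i_2'}$ when $m=2$ and $\sigma_{i_2}^2\sigma_{i_2'}^2\sigma_{i_2}^2\sigma_{i_2'}$ when $m=3$, and in each case $\xi_1\xi_2=\Delta^m\sigma_{i_2}^{3-m}$, whence $g_1g_2=\xi_1\xi_2\Delta^{-\omega(\tau_1)}=\sigma_{i_2}\Delta$. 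The delicate point is to eliminate $(\omega(\tau_1),m)=(2,3)$: here $\omega_2=\omega(\tau_1)=2$ is even, so the actual factor is $g_2=\sigma_{i_2}^3$, and the move $(g_1,g_2)\to(g_2,g_2^{-1}g_1g_2)$ gives $g_2^{-1}g_1g_2=\sigma_{i_2}^{-3}(g_1g_2)=\sigma_{i_2}^{-2}\Delta=\Delta^{-1}\sigma_{i_2}\sigma_{i_2'}^2\sigma_{i_2}$, of Garside power $-1$; the new complexity is $0+1=1<2$, contradicting minimality. This leaves $(\omega(\tau_1),m)=(1,2)$; now $\omega_2=1$ is odd, so $g_2=\sigma_{i_2'}^3$, and $g_1=\Delta^{-1}(\xi_1)_{(1)}=\Delta^{-1}\sigma_{i_2}\sigma_{i_2'}^2\sigma_{i_2}=\sigma_{i_2}^{-1}\sigma_{i_2'}\sigma_{i_2}=\sigma_{i_2'}\sigma_{i_2}\sigma_{i_2'}^{-1}$ (using the braid identity $\sigma_{i_2}^{-1}\sigma_{i_2'}\sigma_{i_2}=\sigma_{i_2'}\sigma_{i_2}\sigma_{i_2'}^{-1}$), with $g_1g_2=\sigma_{i_2}\Delta$. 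This is exactly condition~\textbf{(ii)}.

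The main obstacle is the $(\omega(\tau_1),m)=(2,3)$ sub-case, precisely because once $\xi_1''$ overruns $\overline{\tau_1}$ the clean formula of Lemma~\ref{lHurwitzmovecomplexitychange} no longer applies; the reduction must instead be read off a direct Garside-normal-form computation of $g_2^{-1}g_1g_2$, and it hinges on the subtle fact that the parity of $\omega_2$ decides whether $g_2=\sigma_{i_2}^3$ or $g_2=\sigma_{i_2'}^3$ — the identical move that collapses the $(2,3)$ configuration to complexity $1$ leaves the genuine case~\textbf{(ii)} configuration at complexity $1$ unchanged. Finally, conditions~\textbf{(i)} and~\textbf{(ii)} are mutually exclusive, since $\omega(\xi_1'')\le 1$ in~\textbf{(i)} while $\omega(\xi_1'')=2$ in~\textbf{(ii)}, which yields the asserted ``precisely one''.
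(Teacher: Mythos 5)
Your proof is correct and follows essentially the same route as the paper: the same dichotomy on whether $\xi_1''$ is a right-divisor of $\overline{\tau_1}$, a complexity-reducing Hurwitz move (via Lemma~\ref{lHurwitzmovecomplexitychange}) forcing $\omega\left(\xi_1''\right)\leq 1$ in the first branch, elimination of the $\left(\omega\left(\tau_1\right),\omega\left(\xi_1''\right)\right)=\left(2,3\right)$ configuration by a complexity-reducing move, and the explicit $\left(1,2\right)$ configuration yielding condition~\textbf{(ii)}. One small slip: for $m=3$ the displayed identity $\xi_1\xi_2=\Delta^m\sigma_{i_2}^{3-m}$ is wrong -- since $\xi_1=\sigma_{i_2}\xi_1''$ one gets $\xi_1\xi_2=\sigma_{i_2}\Delta^3=\Delta^3\sigma_{i_2'}$, not $\Delta^3$ (as is also forced by $\epsilon\left(g_1g_2\right)=4$) -- but the value $g_1g_2=\sigma_{i_2}\Delta$ that you actually feed into the computation of $g_2^{-1}g_1g_2=\sigma_{i_2}^{-2}\Delta$ is the correct one, so the argument is unaffected.
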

\begin{proof}
Of course, Lemma~\ref{lmaximaldivisorcutdivisor} implies that $\xi_1''$ is a cut right-divisor of $\xi_1$. Let us assume that $\xi_1''$ is not a right-divisor of $\overline{\tau_1}$. In this case, Proposition~\ref{pomeganumbercutdivisor} implies that $\omega\left(\xi_1''\right)> \omega\left(\overline{\tau_1}\right)$. We have $\xi_2 = \sigma_{i_2}^{3}$ (in the notation of Setup~\ref{setfactorization}), and in particular, $\omega\left(\xi_1''\right)\leq \omega\left(\xi_2\right) = 3$. Proposition~\ref{pcutclosureconjugate} implies that $\xi_1 = \overline{\rho_1}\overline{\tau_1}$, where $\rho_1,\tau_1\in B_3^{+}$ are nonidentity positive braids such that $\rho_1$ is left-dual to $\tau_1$, since $g_1$ is a non-standard positive half-twist. We deduce that $\omega\left(\xi_1''\right)>1$ since $\omega\left(\overline{\tau_1}\right)\geq 1$. Furthermore, if $\omega\left(\overline{\tau_1}\right) = 1$, then $\overline{\tau_1} = \sigma_{i_2}\sigma_{i_2'}$, which implies that $\xi_1 = \sigma_{i_2'}\sigma_{i_2}\sigma_{i_2}\sigma_{i_2'}=\xi_1''$ and $\omega\left(\xi_1''\right) = 2$.

In particular, if $\omega\left(\xi_1''\right) = 3$, then $\omega\left(\overline{\tau_1}\right) = 2$. However, this implies that $\overline{\tau_1} = \sigma_{i_2'}\sigma_{i_2}\sigma_{i_2}\sigma_{i_2'}$, since $\xi_1''$ is left-dual to $\sigma_{i_2}^{3}$ in this case. Lemma~\ref{lcharacterizationdualpositivebraids} and Proposition~\ref{pdivisorconjugatedual} imply that $\rho_1$ begins with $\sigma_{i_2}^2$. However, we have $g_1 = \Delta^{-\omega\left(\tau_1\right)}\left(\rho_1\sigma_{i_1}\tau_1\right)_{\left(\omega\left(\tau_1\right)\right)} = \Delta^{-2}\rho_1\sigma_{i_1}\tau_1$ and $g_2 = \left(\sigma_{i_2}\right)_{\left(\omega\left(\tau_1\right)\right)}^{3} = \sigma_{i_2}^3$ (in the notation of Setup~\ref{setfactorization}). In particular, the absolute value of the Garside power of $g_2^{-1}g_1g_2$ is strictly less than the absolute value of the Garside power of $g_1$. We conclude that the Hurwitz move $\left(g_1,g_2\right)\to \left(g_2,g_2^{-1}g_1g_2\right)$ reduces the complexity of the factorization, which contradicts the hypothesis that ${\cal F}$ is a minimal complexity factorization.

The only possibility is $\omega\left(\xi_1''\right) = 2$. Furthermore, if $\omega\left(\xi_1''\right) = 2$, then $\omega\left(\overline{\tau_1}\right) = 1$, which implies that $\overline{\tau_1} = \sigma_{i_2}\sigma_{i_2'}$. However, in this case, $\xi_1 = \sigma_{i_2'}\sigma_{i_2}\sigma_{i_2}\sigma_{i_2'} = \xi_1''$, which implies that $g_1 = \Delta^{-1}\left(\xi_1\right)_{\left(1\right)} = \sigma_{i_2'}\sigma_{i_2}\sigma_{i_2'}^{-1}$ and $g_2 = \sigma_{i_2'}^{3}$. We conclude that either $\xi_1''$ is a right-divisor of $\overline{\tau_1}$ or \textbf{(ii)} holds. 

Finally, if $\xi_1''$ is a right-divisor of $\overline{\tau_1}$, and if $\omega\left(\xi_1''\right)>1$, then $\sigma_{i_2'}\sigma_{i_2}\sigma_{i_2}\sigma_{i_2'}$ is a right-divisor of $\overline{\tau_1}$. A similar argument to the second paragraph of this proof shows that the Hurwitz move $\left(g_1,g_2\right)\to \left(g_2,g_2^{-1}g_1g_2\right)$ reduces the complexity of the factorization, which contradicts the hypothesis that ${\cal F}$ is a minimal complexity factorization. Therefore, the statement is established.
\end{proof}

Proposition~\ref{padjacentcubestandardpositivehalftwist} locally constrains a minimal complexity factorization of $\Delta^2$ into one cube of a standard positive half-twist and three positive half-twists. The following technical statement will allow us to use this local constraint in order to globally constrain such a factorization.

\begin{proposition}
\label{pcubestandardpositivehalftwistHurwitzequivalenceclass}
We adopt Setup~\ref{setfactorization} with $k=4$. Let us assume that $g_j$ is a non-standard positive half-twist and $g_{j+1}$ is the cube of a standard positive half-twist, such that the product $g_jg_{j+1} = \left(\sigma_{i_{j+1}'}\right)_{\left(\omega_{j+1}\right)}\Delta$, where $i_{j+1}\neq i_{j+1}'\in \{1,2\}$ (i.e., Proposition~\ref{padjacentcubestandardpositivehalftwist}~\textbf{(ii)} is satisfied for the factorization $\left(g_j,g_{j+1}\right)$ of $g_jg_{j+1}$). If ${\cal F}$ is a minimal complexity factorization of $\Delta^2$, then ${\cal F}$ is Hurwitz equivalent to the standard factorization $\left(\sigma_1^3,\sigma_1^{-1}\sigma_2\sigma_1,\sigma_1,\sigma_2\right)$ (in Definition~\ref{dstandardfactorization}). 
\end{proposition}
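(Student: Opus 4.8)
The plan is to bring the distinguished pair into a normal form, pin down the two remaining factors, and then finish with an explicit computation. I will use repeatedly that any cyclic rotation $(h_1,\dots,h_m)\to(h_2,\dots,h_m,h_1)$ of a factorization of the central element $\Delta^2$ is realised by $m-1$ Hurwitz moves: moving $h_1$ rightward past each $h_i$ returns it at the end as $(h_2\cdots h_m)^{-1}h_1(h_2\cdots h_m)=h_1$, since $h_2\cdots h_m=h_1^{-1}\Delta^2$ and $\Delta^2$ is central. Thus cyclic rotation preserves the Hurwitz class, preserves the multiset of Garside powers and hence the complexity, and in particular preserves the property of being a minimal complexity factorization.

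First I would reduce to a normal form. By cyclic rotation I may assume $j=1$, so the distinguished pair occupies positions $1,2$. The hypothesis is precisely Proposition~\ref{padjacentcubestandardpositivehalftwist}~\textbf{(ii)} up to the global $\Delta$-shift recorded by $\omega_{j+1}$, so after a global conjugation by a suitable power of $\Delta$ I may take $g_1=\sigma_1\sigma_2\sigma_1^{-1}$ and $g_2=\sigma_1^3$; here $g_1g_2=\sigma_1\sigma_2\sigma_1^2=\Delta\sigma_1=\sigma_2\Delta$. Writing the remaining two factors as $g_3,g_4$ (both positive half-twists, a type preserved by Hurwitz and global conjugation moves), centrality of $\Delta^2$ gives
\[
g_3g_4=(g_1g_2)^{-1}\Delta^2=\Delta^{-1}\sigma_2^{-1}\Delta^2=\sigma_1^{-1}\Delta=\sigma_2\sigma_1,
\]
a permutation braid of length two.

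The crux is to show $(g_3,g_4)=(\sigma_2,\sigma_1)$. Since $(g_3,g_4)$ is a subfactorization of the minimal complexity factorization ${\cal F}$, it is itself a minimal complexity factorization, of $\sigma_2\sigma_1$. I claim both factors are standard. If both were non-standard, with Garside powers $-\omega_3,-\omega_4$ where $\omega_3,\omega_4\geq 1$, then the reverse triangle inequality (Proposition~\ref{preversetriangleinequality}) applied to $g_3g_4$, whose Garside power is $0$, would give $\omega(\xi_3'')=\omega_3+\omega_4$ for the maximal right-divisor $\xi_3''$ of $\xi_3$ left-dual to a left-divisor of $\xi_4$; but Lemma~\ref{lboundedcontraction} forces $\xi_3''$ to be a right-divisor of $\overline{\tau_3}$, whence $\omega(\xi_3'')\leq\omega(\tau_3)=\omega_3$, contradicting $\omega_4\geq 1$. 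If exactly one factor were non-standard, then because $B_3$ has only the generators $\sigma_1,\sigma_2$, the equation $g_3g_4=\sigma_2\sigma_1$ leaves only the two possibilities $(\sigma_2\sigma_1\sigma_2^{-1},\sigma_2)$ and $(\sigma_1,\sigma_1^{-1}\sigma_2\sigma_1)$, and each is carried to $(\sigma_2,\sigma_1)$ by a single Hurwitz move, so each has complexity strictly larger than the minimum $0$ in its Hurwitz class, contradicting minimality. Hence both $g_3,g_4$ are standard (Corollary~\ref{cGarsidepowerpositivehalftwist}), and the uniqueness of the product expansion of $\sigma_2\sigma_1$ (Proposition~\ref{piDB3}) yields $(g_3,g_4)=(\sigma_2,\sigma_1)$.

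At this stage ${\cal F}\sim(\sigma_1\sigma_2\sigma_1^{-1},\sigma_1^3,\sigma_2,\sigma_1)$. Global conjugation by $\sigma_1^{-1}$ sends this to $(\sigma_2,\sigma_1^3,\sigma_1^{-1}\sigma_2\sigma_1,\sigma_1)$, and one further cyclic rotation produces $(\sigma_1^3,\sigma_1^{-1}\sigma_2\sigma_1,\sigma_1,\sigma_2)$, the desired standard factorization. I expect the main obstacle to be the middle step: establishing that a minimal complexity factorization of the length-two permutation braid $\sigma_2\sigma_1$ into two positive half-twists must consist of two standard half-twists. This is where the full strength of the duality machinery is needed, via the combination of the reverse triangle inequality (Proposition~\ref{preversetriangleinequality}) and the bounded-contraction statement (Lemma~\ref{lboundedcontraction}) to exclude two non-standard factors, the normalisation and the final matching being routine once that is in hand.
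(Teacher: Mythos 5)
Your proof is correct, and it reaches the same endpoint as the paper by a genuinely shorter route at two points. First, the paper treats the positions $j=1,2,3$ of the distinguished pair as separate cases, with $j=2$ in particular occupying a substantial auxiliary argument (indivisibility of $\xi_1\left(\sigma_2\right)_{\left(\omega\left(\tau_1\right)\right)}$ and $\left(\sigma_2\right)_{\left(\omega\left(\tau_1\right)\right)}\xi_4$, followed by the contrapositive of Lemma~\ref{lboundedcontraction} to reduce to $j=1$); your observation that a cyclic rotation of a factorization of the central element $\Delta^2$ is a complexity-preserving Hurwitz equivalence collapses all three cases into $j=1$ at the outset. Second, for the remaining pair the paper works with the Garside normal form of the full product $\xi_1\xi_2\xi_3\xi_4$ (first showing $\sigma_2\xi_3$ is indivisible by $\Delta$, then splitting on whether $g_3$ or $g_4$ is standard), whereas you exploit that $g_3g_4=\sigma_2\sigma_1$ is an explicit permutation braid of Garside power zero: Proposition~\ref{preversetriangleinequality} then forces $\omega\left(\xi_3''\right)=\omega\left(\tau_3\right)+\omega\left(\tau_4\right)$ while Lemma~\ref{lboundedcontraction} caps it at $\omega\left(\tau_3\right)$, killing the both-non-standard case in one line, and the mixed case is a two-element enumeration each of which is one Hurwitz move from the complexity-zero factorization $\left(\sigma_2,\sigma_1\right)$. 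The closing computations are essentially the same in both proofs. Two small points to tighten: the claim that $g_3$ and $g_4$ are positive half-twists should be justified by the abelianization count of Proposition~\ref{pnumberoffactorsinfactorization} (namely $e_3+e_4=6-1-3=2$ forces $e_3=e_4=1$), not by the parenthetical remark about types being preserved, since the proposition as stated does not assume the types of the remaining factors; and the monotonicity $\omega\left(\xi_3''\right)\leq\omega\left(\overline{\tau_3}\right)$ for a right-divisor is the right-handed analogue of Proposition~\ref{pomeganumbercutdivisor} and deserves an explicit citation via Proposition~\ref{pomegaalmostadditive}.
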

\begin{proof}
We adopt Setup~\ref{setfactorization}. The hypothesis that $g_jg_{j+1} = \left(\sigma_{i_{j+1}'}\right)_{\left(\omega_{j+1}\right)}\Delta$ is equivalent to the statement that $g_j = \left(\sigma_{i_{j+1}}\sigma_{i_j}\sigma_{i_{j+1}}^{-1}\right)_{\left(\omega_{j+1}\right)}$ and $g_{j+1} = \left(\sigma_{i_{j+1}}^3\right)_{\left(\omega_{j+1}\right)}$ (as in Proposition~\ref{padjacentcubestandardpositivehalftwist}~\textbf{(ii)}). If we apply a global conjugation move to the factorization by a power of $\Delta$, then we can assume that $g_{j+1} = \sigma_1^3$, in which case $g_j = \sigma_1\sigma_2\sigma_1^{-1}$. If $j = 1$, then we will show that $g_3 = \sigma_2$ and $g_4 = \sigma_1$ are standard positive half-twists, and ${\cal F} = \left(\sigma_1\sigma_2\sigma_1^{-1},\sigma_1^3,\sigma_2,\sigma_1\right)$. 

Indeed, if $j=1$, then $g_1g_2 = \Delta\sigma_1$, and firstly we will show that $\sigma_2\xi_3$ is indivisible by $\Delta$. We have $g_3 = \left(\xi_3\right)_{\left(1\right)}$ in the notation of Setup~\ref{setfactorization}. If $\sigma_2\xi_3$ is divisible by $\Delta$, then $g_3$ is a non-standard positive half-twist, and the absolute value of the Garside power of $\left(g_1g_2\right)g_3\left(g_1g_2\right)^{-1}$ is one less than the absolute value of the Garside power of $g_3$. In particular, the composition of two Hurwitz moves $\left(g_1,g_2,g_3,g_4\right)\to \left(\left(g_1g_2\right)g_3\left(g_1g_2\right)^{-1},g_1,g_2,g_4\right)$ reduces the complexity of the factorization, which contradicts the hypothesis that ${\cal F}$ is a minimal complexity factorization. We deduce that $\sigma_2\xi_3$ is indivisible by $\Delta$.

We recall the equation $\xi_1\xi_2\xi_3\xi_4 = \Delta^{\omega_4+2}$ in the notation of Setup~\ref{setfactorization}, since $g_1g_2g_3g_4 = \Delta^2$. We also have $\xi_1\xi_2 = \Delta^2\sigma_2$ in the notation of Setup~\ref{setfactorization}. If either $g_3$ or $g_4$ is a standard positive half-twist, then Lemma~\ref{ladjacentstandardpositivehalftwist} implies that $\xi_3\xi_4$ is indivisible by $\Delta$, since ${\cal F}$ is a minimal complexity factorization. If $g_3=\sigma_2$, then $g_4 = \sigma_1$ is uniquely determined by the equation $g_1g_2g_3g_4 = \Delta^2$, and ${\cal F} = \left(\sigma_1\sigma_2\sigma_1^{-1},\sigma_1^3,\sigma_2,\sigma_1\right)$. If $g_3=\sigma_1$, then $\xi_3 = \sigma_2$ and $\xi_3\xi_4$ is indivisible by $\Delta$. We have $\xi_1\xi_2\xi_3\xi_4 = \Delta^2\sigma_2\sigma_2\xi_4$, and Lemma~\ref{ltripleproductindivisibleDelta} implies that $\sigma_2\sigma_2\xi_4$ is indivisible by $\Delta$. Of course, this contradicts the assumption that $\xi_1\xi_2\xi_3\xi_4$ is a power of $\Delta$. We conclude that $g_3$ is a non-standard positive half-twist. Similarly, if $g_4$ is a standard positive half-twist, then $\xi_1\xi_2\xi_3\xi_4 = \Delta^2\sigma_2\xi_3\xi_4$, and Lemma~\ref{ltripleproductindivisibleDelta} implies that $\sigma_2\xi_3\xi_4$ is indivisible by $\Delta$, unless ${\cal F} = \left(\sigma_1\sigma_2\sigma_1^{-1},\sigma_1^3,\sigma_2,\sigma_1\right)$. Of course, this contradicts the assumption that $\xi_1\xi_2\xi_3\xi_4$ is a power of $\Delta$. We conclude that $g_3$ and $g_4$ are both non-standard positive half-twists.

Let us define $\xi_3''$ to be the maximal right-divisor of $\xi_3$ that is left-dual to a left-divisor of $\xi_4$. Let $\chi_3''$ be the relevant left-divisor of $\xi_4$. We can write $\xi_3 = \xi_3'\xi_3''$ and $\xi_4 = \chi_3''\chi_3'$ for positive braids $\xi_3',\chi_3'\in B_3^{+}$. Lemma~\ref{lboundedcontraction} implies that $\xi_3''$ is a right-divisor of $\overline{\tau_3}$ and $\chi_3''$ is a left-divisor of $\overline{\rho_4}$, since $g_3$ and $g_4$ are non-standard positive half-twists. In this case, the Garside normal form of $\xi_1\xi_2\xi_3\xi_4$ is $\sigma_2\xi_3'\left(\chi_4'\right)_{\left(\omega\left(\xi_3''\right)\right)}\Delta^{2+\omega\left(\xi_3''\right)}$, since Proposition~\ref{pmaximaldivisordualindivisibleDelta} and Lemma~\ref{ltripleproductindivisibleDelta} imply that $\sigma_2\xi_3'\left(\chi_4'\right)_{\left(\omega\left(\xi_3''\right)\right)}$ is indivisible by $\Delta$. Of course, this contradicts the assumption that $\xi_1\xi_2\xi_3\xi_4$ is a power of $\Delta$.

We conclude that if $j = 1$, then ${\cal F} = \left(\sigma_1\sigma_2\sigma_1^{-1},\sigma_1^3,\sigma_2,\sigma_1\right)$. A similar argument shows that if $j = 3$, then ${\cal F} = \left(\sigma_2,\sigma_1,\sigma_1\sigma_2\sigma_1^{-1},\sigma_1^{3}\right)$. However, both factorizations are Hurwitz equivalent to the standard factorization $\left(\sigma_1^3,\sigma_1^{-1}\sigma_2\sigma_1,\sigma_1,\sigma_2\right)$. Indeed, the factorization $\left(\sigma_1\sigma_2\sigma_1^{-1},\sigma_1^3,\sigma_2,\sigma_1\right)$ is Hurwitz equivalent to the factorization $\left(\sigma_1^3,\sigma_1^{-2}\sigma_2\sigma_1^2,\sigma_1,\sigma_1^{-1}\sigma_2\sigma_1\right)$ by the composition of a Hurwitz move applied to the first two factors and a Hurwitz move applied to the second two factors. A global conjugation move by $\sigma_1$ applied to the latter factorization shows that if $j=1$, then ${\cal F}$ is Hurwitz equivalent to the standard factorization $\left(\sigma_1^3,\sigma_1^{-1}\sigma_2\sigma_1,\sigma_1,\sigma_2\right)$. 

A similar argument shows that if $j=3$, then ${\cal F} = \left(\sigma_2,\sigma_1,\sigma_1\sigma_2\sigma_1^{-1},\sigma_1^3\right)$ is Hurwitz equivalent to $\left(\sigma_1,\sigma_2,\sigma_1^3,\sigma_1^{-1}\sigma_2\sigma_1\right)$. The composition of two Hurwitz moves $\left(\sigma_1,\sigma_2,\sigma_1^3,\sigma_1^{-1}\sigma_2\sigma_1\right)\to \left(\left(\sigma_1\sigma_2\right)\sigma_1^3\left(\sigma_1\sigma_2\right)^{-1},\sigma_1,\sigma_2,\sigma_1^{-1}\sigma_2\sigma_1\right)$ followed by the composition of two Hurwitz moves $\left(\sigma_2^3,\sigma_1,\sigma_2,\sigma_1^{-1}\sigma_2\sigma_1\right)\to \left(\sigma_2^3,\left(\sigma_1\sigma_2\right)\sigma_1^{-1}\sigma_2\sigma_1\left(\sigma_1\sigma_2\right)^{-1},\sigma_1,\sigma_2\right)$, shows that ${\cal F}$ is Hurwitz equivalent to $\left(\sigma_2^3,\left(\sigma_1\sigma_2\right)\sigma_1^{-1}\sigma_2\sigma_1\left(\sigma_1\sigma_2\right)^{-1},\sigma_1,\sigma_2\right)$. If we apply a Hurwitz move to the last two factors in the latter factorization, then we deduce that ${\cal F}$ is Hurwitz equivalent to $\left(\sigma_2^3,\left(\sigma_1\sigma_2\right)\sigma_1^{-1}\sigma_2\sigma_1\left(\sigma_1\sigma_2\right)^{-1},\sigma_2,\sigma_2^{-1}\sigma_1\sigma_2\right)$. Subsequently, if we apply a global conjugation move by $\left(\sigma_1\sigma_2\right)^{-1}$ to the latter factorization, then we conclude that ${\cal F}$ is Hurwitz equivalent to the standard factorization $\left(\sigma_1^3,\sigma_1^{-1}\sigma_2\sigma_1,\sigma_1,\sigma_2\right)$. 

Finally, we address the case $j=2$. We recall that $g_1 = \Delta^{-\omega\left(\tau_1\right)}\left(\xi_1\right)_{\left(\omega\left(\tau_1\right)\right)}$ in the notation of Setup~\ref{setfactorization}. If $\xi_1\left(\sigma_2\right)_{\left(\omega\left(\tau_1\right)\right)}$ is divisible by $\Delta$, then the composition of two Hurwitz moves $\left(g_1,g_2,g_3,g_4\right)\to \left(g_2,g_3,\left(g_2g_3\right)^{-1}g_1\left(g_2g_3\right),g_4\right)$ reduces the complexity of the factorization since $g_2g_3 = \sigma_2\Delta$. Of course, this contradicts the hypothesis that ${\cal F}$ is a minimal complexity factorization. In particular, $\xi_1\left(\sigma_2\right)_{\left(\omega\left(\tau_1\right)\right)}$ is indivisible by $\Delta$, and a similar argument shows that $\left(\sigma_2\right)_{\left(\omega\left(\tau_1\right)\right)}\xi_4$ is indivisible by $\Delta$, since $g_2g_3 = \Delta\sigma_1$, and $g_4 = \Delta^{-\omega\left(\tau_4\right)}\left(\xi_4\right)_{\left(\omega\left(\tau_1\right)+1+\omega\left(\tau_4\right)\right)}$ in the notation of Setup~\ref{setfactorization}.

If $g_1 = \sigma_1$, then the equation $g_1g_2g_3g_4 = \Delta^2$ uniquely determines $g_4 = \sigma_2$ and ${\cal F} = \left(\sigma_1,\sigma_1\sigma_2\sigma_1^{-1},\sigma_1^{3},\sigma_2\right)$. In this case, if we apply a global conjugation move by $\sigma_1^{-1}$ to ${\cal F}$, then we deduce that ${\cal F}$ is Hurwitz equivalent to $\left(\sigma_1,\sigma_2,\sigma_1^3,\sigma_1^{-1}\sigma_2\sigma_1\right)$, and we have already shown that this latter factorization is Hurwitz equivalent to the standard factorization $\left(\sigma_1^3,\sigma_1^{-1}\sigma_2\sigma_1,\sigma_1,\sigma_2\right)$ earlier in the proof. We have $\xi_2\xi_3 = \Delta^2\left(\sigma_2\right)_{\left(\omega\left(\tau_1\right)\right)}$ in the notation of Setup~\ref{setfactorization}. If $g_1 = \sigma_2$, then $\xi_1\xi_2\xi_3\xi_4 = \Delta^2\left(\sigma_2\sigma_2\right)_{\left(\omega\left(\tau_1\right)\right)}\xi_4$. In particular, Lemma~\ref{ltripleproductindivisibleDelta} implies that $\left(\sigma_2\right)_{\left(\omega\left(\tau_1\right)\right)}\xi_4$ is divisible by $\Delta$, since $\xi_1\xi_2\xi_3\xi_4$ is a power of $\Delta$, and this is a contradiction. Thus, we can assume that $g_1$ is not a standard positive half-twist, and a similar argument shows that $g_4$ is not a standard positive half-twist.

We observe that the composition of two Hurwitz moves $\left(g_1,g_2,g_3,g_4\right)\to \left(g_2,g_3,\left(g_2g_3\right)^{-1}g_1\left(g_2g_3\right),g_4\right)$ increases the complexity of the factorization by at most one, since $g_2g_3 = \sigma_2\Delta$. However, the assumption that $\xi_1\xi_2\xi_3\xi_4$ is a power of $\Delta$ implies that $\xi_1\left(\sigma_2\right)_{\left(\omega\left(\tau_1\right)\right)}$ is left-dual to $\xi_4$. Of course, $\xi_1$ is not a right-divisor of $\overline{\tau_1}$, and this implies that that the conclusion in the first statement of Lemma~\ref{lboundedcontraction} is not satisfied for the third and fourth factors in $\left(g_2,g_3,\left(g_2g_3\right)^{-1}g_1\left(g_2g_3\right),g_4\right)$. The contrapositive of Lemma~\ref{lboundedcontraction} implies that we can apply a complexity reducing Hurwitz move to the factorization $\left(\left(g_2g_3\right)^{-1}g_1\left(g_2g_3\right),g_4\right)$. We conclude that ${\cal F}$ is Hurwitz equivalent to a minimal complexity factorization that satisfies the hypothesis with $j=1$. However, we have already handled this case, and we deduce that if $j=2$, then ${\cal F}$ is Hurwitz equivalent to the standard factorization $\left(\sigma_1^3,\sigma_1^{-1}\sigma_2\sigma_1,\sigma_1,\sigma_2\right)$.

We have considered all possibilities for $j\in \{1,2,3\}$. Therefore, the statement is established. 
\end{proof}

The following statement is an analogue of Lemma~\ref{lminimalcomplexitypositivehalftwists} for a pair of adjacent factors in a minimal complexity factorization, where one factor is the cube of a non-standard positive half-twist and the other factor is a non-standard positive half-twist. 

\begin{lemma}
\label{lminimalcomplexitycubepositivehalftwists}
We adopt Setup~\ref{setfactorization} with $k=2$ and where $g_1$ is the cube of a non-standard positive half-twist and $g_2$ is a non-standard positive half-twist. If ${\cal F} = \left(g_1,g_2\right)$ is a minimal complexity factorization of $g_1g_2$, then the following conditions are satisfied:
\begin{description}
\item[(i)] The maximal right-divisor $\xi_1''$ of $\xi_1$ that is left-dual to a left-divisor of $\overline{\rho_2}$ is a right-divisor of $\sigma_{i_1}\overline{\tau_1}$. If $\xi_1''=\sigma_{i_1}\overline{\tau_1}$, then the Hurwitz move $\left(g_1,g_2\right)\to \left(g_1g_2g_1^{-1},g_1\right)$ does not increase the complexity of the factorization.
\item[(ii)] The maximal left-divisor $\xi_2''$ of $\xi_2$ that is right-dual to a right-divisor of $\overline{\tau_1}$ is a left-divisor of $\overline{\rho_2}$.
\end{description}
\end{lemma}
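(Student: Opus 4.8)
The plan is to run the same minimality-versus-complexity argument used in Lemma~\ref{lminimalcomplexitypositivehalftwists} and Lemma~\ref{lminimalcomplexitysquarespositivehalftwists}, feeding $e_1 = 3$ into the Hurwitz-move complexity formula of Lemma~\ref{lHurwitzmovecomplexitychange}. Statement \textbf{(ii)} should require no new work: since $g_2$ is a non-standard positive half-twist and $g_1,g_2$ are both powers of non-standard positive half-twists, \textbf{(ii)} is word-for-word the conclusion of Lemma~\ref{lminimalcomplexitypositivehalftwists}~\textbf{(ii)}. The point is that the symmetric Hurwitz move $\left(g_1,g_2\right)\to\left(g_2,g_2^{-1}g_1g_2\right)$ conjugates by $g_2$, whose exponent is $e_2=1$, so the cube structure of $g_1$ never enters the relevant complexity computation. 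I would therefore dispose of \textbf{(ii)} by citation and concentrate entirely on \textbf{(i)}.

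For \textbf{(i)} I would first record the structural facts. By Lemma~\ref{lmaximaldivisorcutdivisor}, $\xi_1''$ is a cut right-divisor of $\xi_1$, and since $g_1$ is the cube of a non-standard positive half-twist, Proposition~\ref{pcutclosureconjugate} gives $\xi_1 = \overline{\rho_1}\sigma_{i_1}^{2}\overline{\tau_1}$. I would then check that $\sigma_{i_1}\overline{\tau_1}$ is itself a cut right-divisor of $\xi_1$: writing $\xi_1 = \left(\overline{\rho_1}\sigma_{i_1}\right)\left(\sigma_{i_1}\overline{\tau_1}\right)$, the last generator of the left factor and the first generator of the right factor are both $\sigma_{i_1}$, so Lemma~\ref{lcutdivisorcharacterization} applies. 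Because $\overline{\tau_1}$ is a cut right-divisor of $\xi_1$, its first generator is the generator preceding it, namely $\sigma_{i_1}$; hence the splitting $\sigma_{i_1}\cdot\overline{\tau_1}$ is a cut decomposition and Proposition~\ref{pomegaalmostadditive} yields $\omega\left(\sigma_{i_1}\overline{\tau_1}\right) = \omega\left(\overline{\tau_1}\right)+1 = \omega\left(\tau_1\right)+1$. By the right-divisor form of Proposition~\ref{pomeganumbercutdivisor}, it then suffices to prove the inequality $\omega\left(\xi_1''\right)\leq \omega\left(\tau_1\right)+1$.

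To establish this inequality I would argue from minimality exactly as in Lemma~\ref{lminimalcomplexitysquarespositivehalftwists}, but now with $e_1=3$. Supposing $\omega\left(\xi_1''\right)-\omega\left(\tau_1\right)\geq 2$, Lemma~\ref{lHurwitzmovecomplexitychange} applied to $\left(g_1,g_2\right)\to\left(g_1g_2g_1^{-1},g_1\right)$ gives $c\left({\cal F}'\right) = c\left({\cal F}\right)+2-2\left[\omega\left(\xi_1''\right)-\omega\left(\tau_1\right)\right]+\epsilon$ with $\epsilon\leq 1$, so $c\left({\cal F}'\right)\leq c\left({\cal F}\right)-1 < c\left({\cal F}\right)$, contradicting minimality; this proves the first assertion of \textbf{(i)}. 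For the second assertion I would take the boundary case $\xi_1''=\sigma_{i_1}\overline{\tau_1}$, where $\omega\left(\xi_1''\right)-\omega\left(\tau_1\right)=1$ and the formula reads $c\left({\cal F}'\right)=c\left({\cal F}\right)+\epsilon$, so it is enough to show $\epsilon\leq 0$. Here $\xi_1''=\sigma_{i_1}\overline{\tau_1}$ begins with $\sigma_{i_1}\sigma_{i_1}$ (two generators of the same index, since $\overline{\tau_1}$ begins with $\sigma_{i_1}$), so Lemma~\ref{lcharacterizationdualpositivebraids} forces its dual $\rho_2''$ to end with a product of two distinct generators, and Lemma~\ref{lcutdivisorcharacterization} makes $\rho_2''$ a cut left-divisor of $\overline{\rho_2}$. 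If $\rho_2''$ is moreover a left-divisor of $\rho_2$, then Proposition~\ref{pcutdivisorsubdivisor} upgrades it to a cut left-divisor of $\rho_2$; if it is not a left-divisor of $\rho_2$ at all, the $\epsilon=1$ criterion of Lemma~\ref{lHurwitzmovecomplexitychange} fails outright. Either way $\epsilon\leq 0$, completing \textbf{(i)}.

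The main obstacle, and the very reason this lemma is separated from the half-twist and square cases, is the loss of strict decrease at the boundary value $\omega\left(\xi_1''\right)=\omega\left(\tau_1\right)+1$: because $e_1-1=2$ for a cube, the complexity formula only yields $c\left({\cal F}'\right)\leq c\left({\cal F}\right)$ there rather than a strict drop, so minimality cannot exclude this case. This is precisely what forces the weaker target $\sigma_{i_1}\overline{\tau_1}$ (instead of $\overline{\tau_1}$) in the first assertion and makes the separate non-increasing-Hurwitz-move claim of the second assertion necessary. Consequently the two delicate points I expect to carry the proof are the exact identity $\omega\left(\sigma_{i_1}\overline{\tau_1}\right)=\omega\left(\tau_1\right)+1$ together with the cut-right-divisor verification for $\sigma_{i_1}\overline{\tau_1}$, and the $\epsilon\leq 0$ check in the boundary case, where the same-index opening of $\xi_1''=\sigma_{i_1}\overline{\tau_1}$ is exactly the structural feature that makes its dual a cut left-divisor.
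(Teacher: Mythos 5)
Your proposal is correct and follows essentially the same route as the paper's proof: dispose of \textbf{(ii)} as a special case of Lemma~\ref{lminimalcomplexitypositivehalftwists}~\textbf{(ii)}, verify via Proposition~\ref{pcutclosureconjugate} and Lemma~\ref{lcutdivisorcharacterization} that $\sigma_{i_1}\overline{\tau_1}$ is a cut right-divisor with $\omega\left(\sigma_{i_1}\overline{\tau_1}\right)=\omega\left(\tau_1\right)+1$, apply Lemma~\ref{lHurwitzmovecomplexitychange} with $e_1=3$ to rule out $\omega\left(\xi_1''\right)>\omega\left(\tau_1\right)+1$ by minimality, and in the boundary case use the same-index opening of $\sigma_{i_1}\overline{\tau_1}$ together with Lemma~\ref{lcharacterizationdualpositivebraids} and Lemma~\ref{lcutdivisorcharacterization} to force $\epsilon\leq 0$. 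Your explicit case split on whether $\rho_2''$ is a left-divisor of $\rho_2$ when checking the $\epsilon=1$ criterion is a slightly more careful rendering of the same step the paper performs.
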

\begin{proof}
The statement \textbf{(ii)} is a special case of Lemma~\ref{lminimalcomplexitypositivehalftwists}~\textbf{(ii)}. We prove \textbf{(i)} using Lemma~\ref{lHurwitzmovecomplexitychange}. Firstly, Lemma~\ref{lmaximaldivisorcutdivisor} implies that $\xi_1''$ is a cut right-divisor of $\xi_1$. Proposition~\ref{pcutclosureconjugate} implies that $\xi_1 = \overline{\rho_1}\sigma_{i_1}^2\overline{\tau_1}$, and Lemma~\ref{lcutdivisorcharacterization} implies that $\sigma_{i_1}\overline{\tau_1}$ is also a cut right-divisor of $\xi_1$. Furthermore, Proposition~\ref{ppdb} implies that $\omega\left(\sigma_{i_1}\overline{\tau_1}\right) = \omega\left(\tau_1\right) + 1$, since $\overline{\tau_1}$ is a cut right-divisor of $\sigma_{i_1}\overline{\tau_1}$ according to Proposition~\ref{pcutdivisorsubdivisor}.

Proposition~\ref{pomeganumbercutdivisor} now implies that $\xi_1''$ is a right-divisor of $\sigma_{i_1}\overline{\tau_1}$ if and only if $\omega\left(\xi_1''\right)\leq \omega\left(\sigma_{i_1}\overline{\tau_1}\right) = \omega\left(\tau_1\right) + 1$. However, if $\omega\left(\xi_1''\right)>\omega\left(\tau_1\right)+1$, then Lemma~\ref{lHurwitzmovecomplexitychange} with $e_1 = 3$ implies that the Hurwitz move $\left(g_1,g_2\right)\to \left(g_1g_2g_1^{-1},g_1\right)$ reduces the complexity of the factorization. Of course, this would contradict the hypothesis that ${\cal F}$ is a minimal complexity factorization. We conclude that $\xi_1''$ is a right-divisor of $\sigma_{i_1}\overline{\tau_1}$. 

Finally, if $\xi_1'' = \sigma_{i_1}\overline{\tau_1}$, and if $\xi_1''$ is left-dual to the left-divisor $\rho_2''$ of $\overline{\rho_2}$, then $\rho_2''$ is a cut left-divisor of $\overline{\rho_2}$. Indeed, Lemma~\ref{lcharacterizationdualpositivebraids} implies that $\rho_2''$ ends with a product of distinct generators, and Lemma~\ref{lcutdivisorcharacterization} implies that $\rho_2''$ is a cut left-divisor of $\overline{\rho_2}$. In particular, $\epsilon\leq 0$ in the context of Lemma~\ref{lHurwitzmovecomplexitychange}. Lemma~\ref{lHurwitzmovecomplexitychange} with $e_1 = 3$ now implies that the Hurwitz move $\left(g_1,g_2\right)\to \left(g_1g_2g_1^{-1},g_1\right)$ does not increase the complexity of the factorization.

Therefore, the statement is established. 
\end{proof}

We now establish an analogue of Lemma~\ref{lboundedcontraction}, where one factor is the cube of a non-standard positive half-twist and the other factor is a non-standard positive half-twist.

\begin{lemma}
\label{lcubeboundedcontraction}
We adopt Setup~\ref{setfactorization} with $k = 2$. Let us assume that $g_1$ is the cube of a non-standard positive half-twist, $g_2$ is a non-standard positive half-twist, and ${\cal F} = \left(g_1,g_2\right)$ is a minimal complexity factorization of $g_1g_2$. 

In this case, the maximal right-divisor $\xi_1''$ of $\xi_1$ left-dual to a left-divisor of $\xi_2$ is a right-divisor of $\sigma_{i_1}\overline{\tau_1}$. Furthermore, the maximal left-divisor $\xi_2''$ of $\xi_2$ right-dual to a right-divisor of $\xi_1$ is a left-divisor of the cut closure $\overline{\rho_2}$. However, $\sigma_{i_1}\overline{\tau_1}$ is not left-dual to $\overline{\rho_2}$.

Finally, if $\xi_1'' = \sigma_{i_1}\overline{\tau_1}$, then the Hurwitz move $\left(g_1,g_2\right)\to \left(g_1g_2g_1^{-1},g_1\right)$ does not increase the complexity of the factorization. 
\end{lemma}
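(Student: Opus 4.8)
The plan is to run the proof of Lemma~\ref{lboundedcontraction} with two systematic substitutions: every appeal to Lemma~\ref{lminimalcomplexitypositivehalftwists} is replaced by the corresponding clause of Lemma~\ref{lminimalcomplexitycubepositivehalftwists}, and the cut closure $\overline{\tau_1}$ is replaced throughout by $\sigma_{i_1}\overline{\tau_1}$. The second substitution is dictated by Proposition~\ref{pcutclosureconjugate}, which gives $\xi_1 = \overline{\rho_1}\sigma_{i_1}^2\overline{\tau_1}$ for a cube, so that the maximal cut right-divisor of $\xi_1$ carrying the ``$\tau_1$-side'' is now $\sigma_{i_1}\overline{\tau_1}$, a cut right-divisor with $\omega\left(\sigma_{i_1}\overline{\tau_1}\right) = \omega\left(\tau_1\right)+1$ as recorded in the proof of Lemma~\ref{lminimalcomplexitycubepositivehalftwists}. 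The statement that $\xi_2''$ is a left-divisor of $\overline{\rho_2}$ is the half-twist side, and since $g_2$ is a non-standard positive half-twist it follows word-for-word from the $\xi_2$-half of the proof of Lemma~\ref{lboundedcontraction} together with Lemma~\ref{lminimalcomplexitycubepositivehalftwists}(ii). The final clause is nothing new: it is literally the second sentence of Lemma~\ref{lminimalcomplexitycubepositivehalftwists}(i), since when $\xi_1'' = \sigma_{i_1}\overline{\tau_1}$ the relevant left-divisor of $\xi_2$ is a cut left-divisor of $\overline{\rho_2}$, forcing $\epsilon\leq 0$ in Lemma~\ref{lHurwitzmovecomplexitychange} with $e_1 = 3$ and $\omega\left(\xi_1''\right)-\omega\left(\tau_1\right)=1$.

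For the $\xi_1$-side bound I would argue by contradiction, supposing $\sigma_{i_1}\overline{\tau_1}$ is a proper right-divisor of $\xi_1''$. As in Lemma~\ref{lboundedcontraction}, Lemma~\ref{lmaximaldivisorcutdivisor} makes $\xi_1''$ a cut right-divisor of $\xi_1$, Proposition~\ref{pcutdivisorsubdivisor} makes $\sigma_{i_1}\overline{\tau_1}$ a proper cut right-divisor of $\xi_1''$, and Lemma~\ref{lminimalcomplexitycubepositivehalftwists}(i) forces the relevant left-divisor $\chi_1''$ of $\xi_2$ (right-dual to $\xi_1''$) to have $\overline{\rho_2}$ as a proper left-divisor. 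Letting $\rho_2''$ be the left-divisor of $\chi_1''$ right-dual to $\sigma_{i_1}\overline{\tau_1}$, minimality gives $\omega\left(\rho_2''\right) = \omega\left(\overline{\rho_2}\right)$, and I would split on whether $\rho_2''$ is a cut left-divisor of $\xi_2$. If it is not, its cut closure is $\overline{\rho_2}$ by Proposition~\ref{pomeganumbercutdivisor}, and Proposition~\ref{pmaximaldivisordualcharacterization}(i) then identifies $\sigma_{i_1}\overline{\tau_1}$ itself as the maximal right-divisor of $\xi_1$ left-dual to a left-divisor of $\xi_2$, contradicting properness. If it is a cut left-divisor, Proposition~\ref{pomeganumbercutdivisor} forces $\rho_2'' = \overline{\rho_2}$, so that $\sigma_{i_1}\overline{\tau_1}$ is left-dual to $\overline{\rho_2}$ — exactly the configuration that the clause ``$\sigma_{i_1}\overline{\tau_1}$ is not left-dual to $\overline{\rho_2}$'' excludes. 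That clause I would establish via Proposition~\ref{pmaximaldivisordualcharacterization}(ii), using that $g_2$ is a half-twist, so $\overline{\rho_2}$ is succeeded in $\xi_2$ by the cut right-divisor $\overline{\tau_2}$, which begins with a product of two distinct generators. I would also note the asymmetry with Lemma~\ref{lboundedcontraction}: there is no companion statement for $\xi_2''$, precisely because $\sigma_{i_1}\overline{\tau_1}$ is preceded in $\xi_1$ by the block $\sigma_{i_1}^2$ rather than by a product of two distinct generators, as explained in the remark following Lemma~\ref{lsquareboundedcontraction}.

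The genuinely new difficulty, and where the cube departs from Lemma~\ref{lboundedcontraction}, is that the extra generator lets $\sigma_{i_1}\overline{\tau_1}$ begin with the \emph{repeated} generator $\sigma_{i_1}\sigma_{i_1}$ (namely in the former case of Proposition~\ref{pcutclosureconjugate}, when $\overline{\tau_1}$ begins with $\sigma_{i_1}$). In the half-twist case $\overline{\tau_1}$ always begins with two distinct generators, so its right-dual automatically ends with two generators of the same index and can never equal $\overline{\rho_2}$; the ``$\rho_2'' = \overline{\rho_2}$'' branch simply does not arise and the ``not left-dual'' clause is vacuous. For the cube this branch is live, and moreover the relevant left-divisor $\chi_1''$ can reach past $\overline{\rho_2}$, which is exactly the situation the conditional hypothesis of Lemma~\ref{lHurwitzmoveGarsidenormalform} and Lemma~\ref{lHurwitzmovecomplexitychange} (that the relevant left-divisor lie in $\overline{\rho_2}$) does not cover. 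The key device for closing this is to prove the two divisibility statements together and to exploit the mutual constraint: $\chi_1''$ is a left-divisor of $\xi_2$ right-dual to a right-divisor of $\xi_1$, hence a left-divisor of $\xi_2''$ (left-divisors of a braid indivisible by $\Delta$ are totally ordered by Proposition~\ref{piDB3}); so once $\xi_2''$ is known to be a left-divisor of $\overline{\rho_2}$, we get that $\chi_1''$ is a left-divisor of $\overline{\rho_2}$, contradicting that $\overline{\rho_2}$ is a proper left-divisor of $\chi_1''$ and returning us to the regime where the machinery applies.

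I expect the main obstacle to be closing the simultaneous induction when \emph{both} bounds fail at once. Tracking the duals as above, this forces $\xi_1''$ and $\xi_2''$ to be mutually dual and each maximal, with $\sigma_{i_1}\overline{\tau_1}$ a proper right-divisor of $\xi_1''$ and $\overline{\rho_2}$ a proper left-divisor of $\xi_2''$; here the $\omega$-accounting of Proposition~\ref{pomeganumberpowerpositivehalftwist} (which pins $\omega\left(\overline{\rho_2}\right) = \omega\left(\tau_2\right)$ at half of $\omega\left(\xi_2\right)$, and $\omega\left(\xi_1\right) = 2\omega\left(\tau_1\right)+2$) and the observation that prepending $\sigma_{i_1}$ to the block $\sigma_{i_1}^2\overline{\tau_1}$ does not increase $\omega$ (so $\sigma_{i_1}^m\overline{\tau_1}$ is never a cut right-divisor for $m\geq 2$) must be combined to locate the segment of $\xi_1''$ lying beyond $\sigma_{i_1}\overline{\tau_1}$ inside $\overline{\rho_1}\sigma_{i_1}$, and to exhibit a complexity-reducing Hurwitz move off of it. This last complexity computation, possibly requiring a two-step move in the style of Lemma~\ref{ltwoHurwitzmovessquarepositivehalftwist} and Proposition~\ref{ptripledivisible}, is the delicate part of the write-up; the remainder is a faithful transcription of Lemma~\ref{lboundedcontraction}.
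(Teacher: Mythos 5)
Your skeleton for the first two clauses does track the paper's proof: the paper likewise runs the argument of Lemma~\ref{lboundedcontraction} with Lemma~\ref{lminimalcomplexitycubepositivehalftwists} in place of Lemma~\ref{lminimalcomplexitypositivehalftwists} and with $\sigma_{i_1}\overline{\tau_1}$ in place of $\overline{\tau_1}$, and the final clause is indeed just Lemma~\ref{lminimalcomplexitycubepositivehalftwists}~\textbf{(i)}. But there is a genuine gap at exactly the point you flag as ``the delicate part of the write-up'': the exclusion of the configuration in which $\sigma_{i_1}\overline{\tau_1}$ is left-dual to $\overline{\rho_2}$. Your proposed tool for this, Proposition~\ref{pmaximaldivisordualcharacterization}~\textbf{(ii)}, cannot do the job. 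That proposition characterizes when a divisor is the \emph{maximal} right-divisor dual to a left-divisor of $\xi_2$; applied here it only shows that if $\sigma_{i_1}\overline{\tau_1}$ were left-dual to $\overline{\rho_2}$ then $\sigma_{i_1}\overline{\tau_1}$ could not be maximal, i.e.\ $\xi_1''$ would properly contain it --- which is precisely the hypothesis of your contradiction argument for the first clause. So clause one is proved by appeal to clause three, and clause three (as you set it up) only reduces to the negation of clause one: the argument is circular. This is not an artifact of presentation; in the half-twist case of Lemma~\ref{lboundedcontraction} the ``not left-dual'' clause really is a maximality statement about $\xi_1''$ itself, whereas here $\sigma_{i_1}\overline{\tau_1}$ need not equal $\xi_1''$, so a different mechanism is required.

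The paper breaks the circle with a direct complexity computation that your proposal does not supply. If $\sigma_{i_1}\overline{\tau_1}$ is left-dual to $\overline{\rho_2}$, then writing $\overline{\rho_1}=\rho_1'\sigma_{i_1}$ and $\overline{\rho_2}=\rho_2'\sigma_{i_2'}\sigma_{i_2}$, Lemma~\ref{lcharacterizationdualpositivebraids} forces $\overline{\rho_2}$ to end with $\sigma_{i_2'}\sigma_{i_2'}\sigma_{i_2}$ and $\overline{\tau_2}$ to begin with $\sigma_{i_2}\sigma_{i_2'}\sigma_{i_2'}\sigma_{i_2}$, and uniqueness of duals pins $\left(\rho_2'\right)_{\left(\omega\left(\tau_1\right)\right)}=\rho_1'$. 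Consequently $\xi_2$ has the left-divisor $\overline{\rho_2}\sigma_{i_2}\sigma_{i_2'}\sigma_{i_2'}\sigma_{i_2}$, right-dual to the right-divisor $\sigma_{i_1}^{3}\overline{\tau_1}$ of $\xi_1$ with $\omega$-value $\omega\left(\rho_2\right)+2$; combined with Proposition~\ref{pGarsidepowerinversepowerpositivehalftwist} (the Garside power of $g_2^{-1}$ equals that of $g_2$ since $e_2=1$), this shows the single reverse Hurwitz move $\left(g_1,g_2\right)\to\left(g_2,g_2^{-1}g_1g_2\right)$ strictly decreases the complexity, contradicting minimality. No two-step move in the style of Lemma~\ref{ltwoHurwitzmovessquarepositivehalftwist} is needed. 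Until you carry out an argument of this kind, the ``not left-dual'' clause --- and with it the first clause --- remains unproved.
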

\begin{proof}
We will establish the first statement that $\xi_1''$ is a right-divisor of $\sigma_{i_1}\overline{\tau_1}$, based on Lemma~\ref{lminimalcomplexitycubepositivehalftwists}. The proof that $\xi_2''$ is a left-divisor of the cut closure $\overline{\rho_2}$ is analogous, also based on Lemma~\ref{lminimalcomplexitycubepositivehalftwists}. We use a similar argument to the proof of Lemma~\ref{lboundedcontraction} (which is based on Lemma~\ref{lminimalcomplexitypositivehalftwists} rather than Lemma~\ref{lminimalcomplexitycubepositivehalftwists}), but with an extra step to show that $\sigma_{i_1}\overline{\tau_1}$ is not left-dual to $\overline{\rho_2}$. 

Firstly, Lemma~\ref{lmaximaldivisorcutdivisor} implies that $\xi_1''$ is a cut right-divisor of $\xi_1$. Lemma~\ref{lcutdivisorcharacterization} implies that $\sigma_{i_1}\overline{\tau_1}$ is a cut right-divisor of $\xi_1$, since Proposition~\ref{pcutclosureconjugate} implies that $\xi_1 = \overline{\rho_1}\sigma_{i_1}\sigma_{i_1}\overline{\tau_1}$. Let us assume, for a contradiction, that $\xi_1''$ is not a right-divisor of $\sigma_{i_1}\overline{\tau_1}$, or equivalently, $\sigma_{i_1}\overline{\tau_1}$ is a proper right-divisor of $\xi_1''$. In this case, Proposition~\ref{pcutdivisorsubdivisor} implies that $\sigma_{i_1}\overline{\tau_1}$ is a proper cut right-divisor of $\xi_1''$.

We observe that the left-divisor $\chi_1''$ of $\xi_2$ that is right-dual to $\xi_1''$ cannot be a left-divisor of $\overline{\rho_2}$. Indeed, if $\chi_1''$ were a left-divisor of $\overline{\rho_2}$, then this would contradict the assumption that ${\cal F}$ is a minimal complexity factorization, according to Lemma~\ref{lminimalcomplexitycubepositivehalftwists}~\textbf{(i)}. We deduce that $\overline{\rho_2}$ is a proper left-divisor of $\chi_1''$.

Let $\rho_2''$ be the left-divisor of $\chi_1''$ that is right-dual to $\sigma_{i_1}\overline{\tau_1}$. Firstly, Lemma~\ref{lcharacterizationdualpositivebraids} implies that $\rho_2''$ ends with a product of two distinct generators, since Proposition~\ref{pcutclosureconjugate} implies that $\overline{\tau_1}$ begins with $\sigma_{i_1}\sigma_{i_1'}$ for $i_1\neq i_1'\in \{1,2\}$. We deduce that $\rho_2''$ is a cut left-divisor of $\xi_2$ using Lemma~\ref{lcutdivisorcharacterization}. 

We claim that $\rho_2''$ is a left-divisor of $\overline{\rho_2}$. Proposition~\ref{pomeganumbercutdivisor} implies that $\rho_2''$ is a left-divisor of $\overline{\rho_2}$ if and only if $\omega\left(\rho_2''\right)\leq \omega\left(\overline{\rho_2}\right)$. However, if $\omega\left(\rho_2''\right)>\omega\left(\overline{\rho_2}\right)$, then $\omega\left(\rho_2''\right)>\omega\left(\overline{\rho_2}\right)+1$. Indeed, Proposition~\ref{pcutclosureconjugate} implies that $\overline{\tau_1}$ begins with the product of two distinct generators $\sigma_{i_1}\sigma_{i_1'}$ for $i_1\neq i_1'\in \{1,2\}$, and if $\omega\left(\rho_2''\right) = \omega\left(\overline{\rho_2}\right) + 1$, then Proposition~\ref{pcutclosureconjugate} implies that $\rho_2''$ ends with $\sigma_{i_2'}\sigma_{i_2}\sigma_{i_2}\sigma_{i_2'}$ for $i_2\neq i_2'\in \{1,2\}$. However, this contradicts Lemma~\ref{lcharacterizationdualpositivebraids}, since $\sigma_{i_1}\overline{\tau_1}$ is left-dual to $\rho_2''$. On the other hand, if $\omega\left(\rho_2''\right)>\omega\left(\overline{\rho_2}\right)+1$, then $\overline{\tau_1}$ is left-dual to a left-divisor of $\xi_2$ of which $\overline{\rho_2}$ is a proper left-divisor. Of course, this contradicts Lemma~\ref{lminimalcomplexitycubepositivehalftwists}~\textbf{(ii)}.

In fact, we claim that $\rho_2'' = \overline{\rho_2}$. We have assumed that $\sigma_{i_1}\overline{\tau_1}$ is a proper right-divisor of $\xi_1''$, and we will use this to show that $\omega\left(\rho_2''\right) = \omega\left(\overline{\rho_2}\right)$. We have already shown that $\rho_2''$ is a left-divisor of $\overline{\rho_2}$. If $\omega\left(\rho_2''\right)<\omega\left(\overline{\rho_2}\right)$, then $\overline{\rho_2}$ is right-dual to a right-divisor of $\xi_1''$, of which $\sigma_{i_1}\overline{\tau_1}$ is a proper right-divisor. However, this contradicts the assumption that ${\cal F}$ is a minimal complexity factorization, according to Lemma~\ref{lminimalcomplexitycubepositivehalftwists}~\textbf{(i)}. We conclude that $\omega\left(\rho_2''\right) = \omega\left(\overline{\rho_2}\right)$, and Proposition~\ref{pomeganumbercutdivisor} implies that $\rho_2'' = \overline{\rho_2}$. We conclude that $\sigma_{i_1}\overline{\tau_1}$ is left-dual to $\overline{\rho_2}$.

We will derive a contradiction from the statement that $\sigma_{i_1}\overline{\tau_1}$ is left-dual to $\overline{\rho_2}$. Proposition~\ref{pcutclosureconjugate} implies that $\xi_1 = \overline{\rho_1}\sigma_{i_1}^2\overline{\tau_1}$ and $\xi_2 = \overline{\rho_2}\overline{\tau_2}$. Lemma~\ref{lcharacterizationdualpositivebraids} implies that $\overline{\rho_2}$ ends with $\sigma_{i_2'}\sigma_{i_2'}\sigma_{i_2}$, since $\sigma_{i_1}\overline{\tau_1}$ is left-dual to $\overline{\rho_2}$. Lemma~\ref{lcharacterizationdualpositivebraids} and Proposition~\ref{pcutclosureconjugate} now imply that $\overline{\tau_2}$ begins with $\sigma_{i_2}\sigma_{i_2'}\sigma_{i_2'}\sigma_{i_2}$, since $g_2$ is a non-standard positive half-twist. 

Let $\rho_1'\in B_3^{+}$ be the positive braid such that $\rho_1'\sigma_{i_1} = \overline{\rho_1}$. Lemma~\ref{lcharacterizationdualpositivebraids} implies that $\rho_1'$ is left-dual to $\overline{\tau_1}$, since $\rho_1$ is left-dual to $\tau_1$. Let $\rho_2'\in B_3^{+}$ be the positive braid such that $\overline{\rho_2} = \rho_2'\sigma_{i_2'}\sigma_{i_2}$. Lemma~\ref{lcharacterizationdualpositivebraids} implies that $\overline{\tau_1}$ is left-dual to $\rho_2'$, since $\sigma_{i_1}\overline{\tau_1}$ is left-dual to $\overline{\rho_2}$. Proposition~\ref{ppdb} implies that $\omega\left(\overline{\rho_2}\right) = \omega\left(\rho_2'\right) + 1$. The pseudo-symmetry of duals (Proposition~\ref{ppseudosymmetryduality}) implies that $\left(\rho_2'\right)_{\left(\omega\left(\tau_1\right)\right)}$ is left-dual to $\overline{\tau_1}$, and the uniqueness of duals (Proposition~\ref{puniquenessofduals}) now implies that $\left(\rho_2'\right)_{\left(\omega\left(\tau_1\right)\right)} = \rho_1'$. 

In the notation of Setup~\ref{setfactorization}, we have $g_1 = \Delta^{-\omega\left(\tau_1\right)}\left(\rho_1\sigma_{i_1}^3\tau_1\right)_{\left(\omega\left(\tau_1\right)\right)}$ and $g_2 = \Delta^{-\omega\left(\tau_2\right)}\left(\rho_2\sigma_{i_2}\tau_2\right)_{\left(\omega\left(\tau_1\right) + \omega\left(\tau_2\right)\right)}$. Proposition~\ref{pcutclosureconjugate} implies that $\xi_1 = \rho_1'\sigma_{i_1}\sigma_{i_1}\sigma_{i_1}\overline{\tau_1}$, where $\sigma_{i_1}\sigma_{i_1}\sigma_{i_1}\overline{\tau_1}$ is left-dual to the left-divisor $\overline{\rho_2}\sigma_{i_2}\sigma_{i_2'}\sigma_{i_2'}\sigma_{i_2}$ of $\xi_2$. Furthermore, the pseudo-symmetry of duals (Proposition~\ref{ppseudosymmetryduality}) and Proposition~\ref{ppdb} imply that $\omega\left(\sigma_{i_1}\sigma_{i_1}\sigma_{i_1}\overline{\tau_1}\right) = \omega\left(\overline{\rho_2}\sigma_{i_2}\sigma_{i_2'}\sigma_{i_2'}\sigma_{i_2}\right) = \omega\left(\rho_2\right) + 2$ and $\omega\left(\rho_1'\right) = \omega\left(\rho_2'\right) = \omega\left(\rho_2\right) - 1$. Proposition~\ref{pGarsidepowerinversepowerpositivehalftwist} implies that the Garside power of $g_2^{-1}$ is equal to the Garside power of $g_2$, which is $-\omega\left(\rho_2\right)$. In particular, the absolute value of the Garside power of $g_2^{-1}g_1g_2$ is strictly less than the absolute value of the Garside power of $g_1$. We conclude that the Hurwitz move $\left(g_1,g_2\right)\to \left(g_2,g_2^{-1}g_1g_2\right)$ decreases the complexity of the factorization, which contradicts the hypothesis that ${\cal F}$ is a minimal complexity factorization.

The final statement is a consequence of Lemma~\ref{lminimalcomplexitycubepositivehalftwists}~\textbf{(i)}.
\end{proof}

Finally, we establish an analogue of Lemma~\ref{ltwoHurwitzmovessquarepositivehalftwist} for a triple of factors in a factorization, where the middle factor is the cube of a positive half-twist, and the other factors are non-standard positive half-twists. 

\begin{lemma}
\label{ltwoHurwitzmovescubepositivehalftwist}
We adopt Setup~\ref{setfactorization} with $k = 3$. Let us assume that $g_j$ is a non-standard positive half-twist for each $j\in \{1,3\}$, and $g_2$ is the cube of a positive half-twist. If $j\in \{1,2\}$, then let $\xi_j''$ be the maximal right-divisor of $\xi_j$ that is left-dual to a left-divisor of $\xi_{j+1}$. Let us denote the relevant left-divisor of $\xi_{j+1}$ by $\chi_j''$. Let us write $\xi_1 = \xi_1'\xi_1''$ and $\xi_3 = \chi_2''\xi_3'$ for positive braids $\xi_1',\xi_3'\in B_3^{+}$.

Let us assume that $\xi_1''$ is a right-divisor of $\overline{\tau_1}$ and $\chi_2''$ is a left-divisor of $\overline{\rho_3}$. If $g_2$ is the cube of a non-standard positive half-twist, then we assume that either $\chi_1''=\overline{\rho_2}\sigma_{i_2}$ and $\xi_2'' = \overline{\tau_2}$, or $\chi_1'' = \overline{\rho_2}$ and $\xi_2'' = \sigma_{i_2}\overline{\tau_2}$. If $g_2$ is the cube of a standard positive half-twist, then we assume that $\chi_1'' = \sigma_{i_2}$ and $\xi_2'' = \sigma_{i_2}$. (We have $\xi_2 = \chi_1''\sigma_{i_2}\xi_2''$ in both cases.) Let $\xi_1'''$ be the maximal right-divisor of $\xi_1'$ that is left-dual to a left-divisor of $\left(\sigma_{i_2}\right)_{\left(\omega\left(\tau_2\right)\right)+1}\xi_3'$. Let us denote the relevant left-divisor of $\left(\sigma_{i_2}\right)_{\left(\omega\left(\tau_2\right)+1\right)}\xi_3'$ by $\left(\sigma_{i_2}\right)_{\left(\omega\left(\tau_2\right)+1\right)}\xi_3'''$. 

If either the right-divisor $\xi_1'''\xi_1''$ of $\xi_1$ is not a right-divisor of $\overline{\tau_1}$ or the left-divisor $\chi_2''\xi_3'''$ is not a left-divisor of $\overline{\rho_3}$, then the following statements are true:
\begin{description}
\item[(i)] If $g_2$ is the cube of a standard positive half-twist, then the composition of the Hurwitz move $\left(g_1,g_2,g_3\right)\to \left(g_2,g_2^{-1}g_1g_2,g_3\right)$ followed by a Hurwitz move applied to the second and third factors, does not increase the complexity of the factorization. If $g_2$ is the cube of a non-standard positive half-twist with $\chi_1'' = \overline{\rho_2}\sigma_{i_2}$ and $\xi_2'' = \overline{\tau_2}$, then the composition of the Hurwitz move $\left(g_1,g_2,g_3\right)\to \left(g_2,g_2^{-1}g_1g_2,g_3\right)$ followed by a Hurwitz move applied to the second and third factors, decreases the complexity of the factorization.
\item[(ii)] If $g_2$ is the cube of a standard positive half-twist, then the composition of the Hurwitz move $\left(g_1,g_2,g_3\right)\to \left(g_1,g_2g_3g_2^{-1},g_2\right)$ followed by a Hurwitz move applied to the first and second factors, does not increase the complexity of the factorization. If $g_2$ is the cube of a non-standard positive half-twist with $\chi_1'' = \overline{\rho_2}$ and $\xi_2'' = \sigma_{i_2}\overline{\tau_2}$, then the composition of the Hurwitz move $\left(g_1,g_2,g_3\right)\to \left(g_1,g_2g_3g_2^{-1},g_2\right)$ followed by a Hurwitz move applied to the first and second factors, decreases the complexity of the factorization.
\end{description}
\end{lemma}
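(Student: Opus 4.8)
The plan is to follow the template of the proof of Lemma~\ref{ltwoHurwitzmovessquarepositivehalftwist} verbatim in structure, but with the exponent $e_2=3$ of the cube $g_2$ replacing $e_2=2$ throughout, and with Lemma~\ref{lcubeboundedcontraction} (respectively the underlying minimal-complexity statement Lemma~\ref{lminimalcomplexitypositivehalftwists}) playing the role that Lemma~\ref{lsquareboundedcontraction} played there. I would prove \textbf{(i)} under the assumption that $\xi_1'''\xi_1''$ is not a right-divisor of $\overline{\tau_1}$, and \textbf{(ii)} symmetrically under the assumption that $\chi_2''\xi_3'''$ is not a left-divisor of $\overline{\rho_3}$; the two remaining combinations of the disjunctive hypothesis are handled by interchanging the roles of the first and third factors, exactly as in the square case.

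First I would apply the Hurwitz move $(g_1,g_2,g_3)\to(g_2,g_2^{-1}g_1g_2,g_3)$ and compute its effect on the complexity using the symmetric form of Lemma~\ref{lHurwitzmovecomplexitychange} with $e_2=3$. The hypotheses pin down the split $\xi_2=\chi_1''\sigma_{i_2}\xi_2''$ exactly, so the formula becomes a definite expression in $\omega(\chi_1'')$ and $\omega(\rho_2)$. The key comparison with the square case is that passing from the square to the cube simultaneously raises $e_2$ by one and, in the non-standard case $\chi_1''=\overline{\rho_2}\sigma_{i_2}$, raises $\omega(\chi_1'')$ by one relative to $\omega(\overline{\rho_2})$ (the appended $\sigma_{i_2}$ is a cut left-divisor of $\xi_2$ by Lemma~\ref{lcutdivisorcharacterization}, since it is followed in $\xi_2$ by the second middle generator of the same index). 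Because $\omega(\chi_1'')$ enters the formula with coefficient $-2$ while $e_2$ enters with coefficient $+1$, this produces a net additional unit of reduction in the non-standard case relative to the standard case, so that the first move changes the complexity by at most one in the standard-cube case and does not increase it in the non-standard-cube case.

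Next I would determine the Garside normal form of the conjugate $g_2^{-1}g_1g_2$ via the symmetric form of Lemma~\ref{lHurwitzmoveGarsidenormalform}; this is the conceptual heart of the argument, since it exhibits the cut closures of $g_2^{-1}g_1g_2$ in terms of $\xi_1'$, $\xi_1''$, the tail of $\tau_2$, and the middle generator $\sigma_{i_2}$ surviving from the cube. With this explicit form, the left-divisor $(\sigma_{i_2})_{(\omega(\tau_2)+1)}\xi_3'$ appearing in the definition of $\xi_1'''$ is recognized as the left-divisor of $\xi_3$ seen by the new second factor, and $\xi_1'''$ becomes precisely the maximal right-divisor of the positive part of $g_2^{-1}g_1g_2$ that is left-dual to a left-divisor of $\xi_3$. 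The failure hypothesis that $\xi_1'''\xi_1''$ is not a right-divisor of $\overline{\tau_1}$ then says exactly that this maximal right-divisor is not a right-divisor of the cut closure of $g_2^{-1}g_1g_2$, i.e., the first conclusion of Lemma~\ref{lboundedcontraction} fails for the pair $(g_2^{-1}g_1g_2,g_3)$. The contrapositive of Lemma~\ref{lboundedcontraction} (equivalently the complexity-reducing move produced in the proof of Lemma~\ref{lminimalcomplexitypositivehalftwists}) then furnishes a Hurwitz move on the second and third factors that lowers the complexity by at least one.

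Finally I would add the two contributions: in the standard-cube case the total change is at most $(+1)+(-1)=0$, giving the stated non-increase, while in the non-standard-cube case it is at most $(0)+(-1)<0$, giving the stated strict decrease. I expect the main obstacle to be the complexity accounting of the first move, and in particular justifying that the standard case genuinely yields only non-increase while the non-standard case yields a strict decrease: this requires carefully tracking the error term $\epsilon$ in Lemma~\ref{lHurwitzmovecomplexitychange} together with the exact values of $\omega(\chi_1'')$ and $\omega(\rho_2)$ in each case (noting that for the standard cube the cut closures $\overline{\rho_2}$ and $\overline{\tau_2}$ are trivial, so the bounded-contraction conclusion for the new pair is weaker), and confirming via Lemma~\ref{lHurwitzmoveGarsidenormalform} that the surviving middle generator $\sigma_{i_2}$ is correctly absorbed into the conjugate rather than into the cancellation with $g_3$.
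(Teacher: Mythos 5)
Your proposal is correct and follows essentially the same route as the paper's proof: apply the first Hurwitz move and bound its complexity change via the symmetric form of Lemma~\ref{lHurwitzmovecomplexitychange} with $e_2=3$ (non-increase in the non-standard-cube case since $\chi_1''=\overline{\rho_2}\sigma_{i_2}$, at most $+1$ in the standard-cube case), then use the symmetric form of Lemma~\ref{lHurwitzmoveGarsidenormalform} to see that the failure hypothesis violates the first conclusion of Lemma~\ref{lboundedcontraction} for the pair $\left(g_2^{-1}g_1g_2,g_3\right)$, whose contrapositive supplies the complexity-reducing second move. The final accounting ($+1-1$ versus $0-1$) and the reduction of the remaining cases to this one by symmetry are exactly as in the paper.
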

\begin{proof}
We will assume that the right-divisor $\xi_1'''\xi_1''$ of $\xi_1$ is not a right-divisor of $\overline{\tau_1}$ and prove \textbf{(i)}. The proofs of the other cases are similar. If $g_2$ is the cube of a non-standard positive half-twist, then (a symmetric version of) Lemma~\ref{lHurwitzmovecomplexitychange} with $e_2 = 3$ implies that the Hurwitz move $\left(g_1,g_2,g_3\right)\to \left(g_2,g_2^{-1}g_1g_2,g_3\right)$ does not increase the complexity of the factorization, since $\xi_1''$ is a cut right-divisor of $\overline{\tau_1}$ and $\chi_1''=\overline{\rho_2}\sigma_{i_2}$. If $g_2$ is the cube of a standard positive half-twist, then the Hurwitz move $\left(g_1,g_2,g_3\right)\to \left(g_2,g_2^{-1}g_1g_2,g_3\right)$ increases the complexity of the factorization by at most one. 

However, the assumption that $\xi_1'''\xi_1''$ is not a right-divisor of $\overline{\tau_1}$ implies that the conclusion in the first statement of Lemma~\ref{lboundedcontraction} is not satisfied for the second and third factors in $\left(g_2,g_2^{-1}g_1g_2,g_3\right)$. Indeed, this is a consequence of (a symmetric version of) Lemma~\ref{lHurwitzmoveGarsidenormalform}, which determines the Garside normal form of $g_2^{-1}g_1g_2$. The contrapositive of Lemma~\ref{lboundedcontraction} implies that we can apply a complexity reducing Hurwitz move to the factorization $\left(g_2^{-1}g_1g_2,g_3\right)$. Therefore, the statement is established.
\end{proof}

We are now prepared to classify factorizations of $\Delta^2$ into powers of positive half-twists, where one of the factors is the cube of a positive half-twist, and the other factors are positive half-twists.

\begin{theorem}
\label{tmainclassificationfactorizationscubepositivehalftwist}
If ${\cal F}$ is a factorization of $\Delta^2$ into three positive half-twists and one cube of a positive half-twist, then ${\cal F}$ is Hurwitz equivalent to the standard factorization $\left(\sigma_1^3,\sigma_1^{-1}\sigma_2\sigma_1,\sigma_1,\sigma_2\right)$ (in Definition~\ref{dstandardfactorization}).
\end{theorem}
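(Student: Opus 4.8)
The plan is to reduce the theorem, via a global complexity argument modeled on the proof of Theorem~\ref{texistencesquarestandardpositivehalftwist}, to the local configuration already analyzed in Proposition~\ref{padjacentcubestandardpositivehalftwist} and Proposition~\ref{pcubestandardpositivehalftwistHurwitzequivalenceclass}. First I would let ${\cal F}$ be a factorization of minimal complexity in its Hurwitz equivalence class; by Proposition~\ref{pnumberoffactorsinfactorization} (with $\nu_1 = 3$, $\nu_3 = 1$, and $1\cdot 3 + 3\cdot 1 = 6$) it has exactly four factors. I would then rule out $c({\cal F}) = 0$: by Proposition~\ref{pcomplexityzeroiffstandard} a complexity-zero factorization consists of powers of standard positive half-twists, and Lemma~\ref{lcomplexityzerofactorization} forbids the cube of a standard positive half-twist as a factor of $\Delta^2$, while a non-standard cube has strictly negative Garside power by Proposition~\ref{pomeganumberpowerpositivehalftwist}. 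Hence any factorization containing a cube has positive complexity, and the target standard factorization $\left(\sigma_1^3,\sigma_1^{-1}\sigma_2\sigma_1,\sigma_1,\sigma_2\right)$ has complexity one (its only non-standard factor $\sigma_1^{-1}\sigma_2\sigma_1$ has Garside power $-1$). So the goal is to show that a minimal complexity ${\cal F}$ is Hurwitz equivalent to this factorization.

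The core of the argument is to produce, after replacing ${\cal F}$ by a Hurwitz equivalent minimal complexity factorization satisfying Lemma~\ref{lreorderingfactorssamecomplexity}, an adjacent pair $\left(g_j,g_{j+1}\right)$ in which $g_{j+1}$ is the cube of a \emph{standard} positive half-twist, $g_j$ is a non-standard positive half-twist, and $g_jg_{j+1} = \sigma_k\Delta$ for some $k\in\{1,2\}$; this is exactly the conclusion of Proposition~\ref{padjacentcubestandardpositivehalftwist}~\textbf{(ii)} and the hypothesis of Proposition~\ref{pcubestandardpositivehalftwistHurwitzequivalenceclass}. I would adopt Setup~\ref{setfactorization}, so that $g_1g_2g_3g_4 = \Delta^2$ becomes the positive-braid equation $\xi_1\xi_2\xi_3\xi_4 = \Delta^{\omega_4+2}$, and introduce $\xi_j''$, $\chi_j''$, $\omega_j''$, and $\zeta_j$ exactly as in the proof of Theorem~\ref{tfactorizationpositivehalftwistsstandard}. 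Following the template of Theorem~\ref{texistencesquarestandardpositivehalftwist}, I would argue by contradiction: assuming the special configuration never arises, I would use Lemma~\ref{lcubeboundedcontraction} in place of Lemma~\ref{lboundedcontraction} to control the divisibility of the adjacent products through the cube factor, and Lemma~\ref{ltwoHurwitzmovescubepositivehalftwist} in place of Lemma~\ref{ltwoHurwitzmovessquarepositivehalftwist} to transport the cube past a neighbor without increasing complexity. The outcome I would drive toward is that $\prod_{j=1}^{4}\left(\zeta_j\right)_{\left(\omega_j''\right)}$ is not a power of $\Delta$ (its $\Delta$-indivisible positive part has positive length), contradicting $\xi_1\xi_2\xi_3\xi_4 = \Delta^{\omega_4+2}$. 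Within this argument the non-standard cube is excluded: when the cube is non-standard, Lemma~\ref{lminimalcomplexitycubepositivehalftwists} and Lemma~\ref{lcubeboundedcontraction} either furnish a complexity-reducing Hurwitz move (contradicting minimality) or force the same contradiction. Once the adjacency $g_jg_{j+1} = \sigma_k\Delta$ is established, Proposition~\ref{pcubestandardpositivehalftwistHurwitzequivalenceclass} directly yields that ${\cal F}$ is Hurwitz equivalent to $\left(\sigma_1^3,\sigma_1^{-1}\sigma_2\sigma_1,\sigma_1,\sigma_2\right)$, completing the proof.

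The main obstacle I expect is the bookkeeping forced by $\omega\left(\sigma_i^3\right) = 3$: the cube leaves enough room for a copy of $\Delta$ to overlap it in several inequivalent ways, so unlike the square case one cannot immediately conclude that the relevant adjacent products are indivisible by $\Delta$. The delicate step is therefore the case analysis on how much of the cube's positive braid is absorbed into the dual of a neighbor, governed by the value $\omega\left(\xi_1''\right)\in\{0,1,2,3\}$ appearing in Proposition~\ref{padjacentcubestandardpositivehalftwist}, together with the verification that Lemma~\ref{ltwoHurwitzmovescubepositivehalftwist} genuinely allows the cube to be slid into the position where Proposition~\ref{pcubestandardpositivehalftwistHurwitzequivalenceclass} applies without ever raising the complexity above that of the minimal factorization. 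Handling the interaction of the cube simultaneously with its left and right neighbors (the triple products) is where the cube-specific analogues of the square lemmas must be invoked most carefully.
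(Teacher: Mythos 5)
Your proposal is correct and follows essentially the same route as the paper's proof: a minimal-complexity contradiction argument modeled on Theorem~\ref{tfactorizationpositivehalftwistsstandard}, with Lemma~\ref{lcubeboundedcontraction} and Lemma~\ref{ltwoHurwitzmovescubepositivehalftwist} substituting for their square analogues, the case split on whether the cube is standard or non-standard, reduction of the standard-cube case to Proposition~\ref{padjacentcubestandardpositivehalftwist}~\textbf{(ii)} and Proposition~\ref{pcubestandardpositivehalftwistHurwitzequivalenceclass}, and the final contradiction that $\prod_{j}\left(\zeta_j\right)_{\left(\omega_j''\right)}$ cannot be a power of $\Delta$. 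You have also correctly identified the genuine delicacy, namely that $\omega\left(\sigma_i^3\right)=3$ leaves room for $\Delta$ to overlap the cube in several ways, which is exactly what the paper's case analysis on $\xi_{k'}''$ handles.
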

\begin{proof}
We adopt Setup~\ref{setfactorization}. Let ${\cal F} = \left(g_1,g_2,g_3,g_4\right)$ be a factorization of $\Delta^2$ into three positive half-twists and one cube of a positive half-twist. We will show that ${\cal F}$ is Hurwitz equivalent to the standard factorization $\left(\sigma_1^3,\sigma_1^{-1}\sigma_2\sigma_1,\sigma_1,\sigma_2\right)$. Let us assume, for a contradiction, that ${\cal F}$ is a factorization of minimal complexity that is not Hurwitz equivalent to the standard factorization $\left(\sigma_1^3,\sigma_1^{-1}\sigma_2\sigma_1,\sigma_1,\sigma_2\right)$. Lemma~\ref{lstandardfactorizationspowersHurwitzequivalent} implies that the complexity $c\left({\cal F}\right)>0$.

We will derive a contradiction and the proof is very similar to that of Theorem~\ref{tfactorizationpositivehalftwistsstandard}, with adaptations that we will explain. We can assume that the factorization ${\cal F}$ satisfies the conditions in Lemma~\ref{lreorderingfactorssamecomplexity}, after possibly replacing ${\cal F}$ with another minimal complexity factorization (as in the proof of Theorem~\ref{tfactorizationpositivehalftwistsstandard}). Furthermore, if $1\leq j\leq 4$, then we adopt the notation in the proof of Theorem~\ref{tfactorizationpositivehalftwistsstandard} for $\xi_j''$, $\chi_j''$, and $\zeta_j$. Indeed, if $1\leq j\leq 4$, then we define $\xi_j''$ to be the maximal right-divisor of $\xi_j$ that is left-dual to a left-divisor of $\xi_{j+1}$. We denote the relevant left-divisor of $\xi_{j+1}$ by $\chi_{j}''$. We also define $\zeta_j\in B_3^{+}$ to be the positive braid such that $\xi_j = \chi_{j-1}''\zeta_j\xi_j''$. Finally, we define $J\subseteq \{1,2,3,4\}$ to be the set of all $1\leq j\leq 4$ such that $g_j$ is a standard positive half-twist and let $J' = \{1,2,3,4\}\setminus J$ be the complement of $J$.

Let $g_{k'}$ be the factor in ${\cal F}$ that is the cube of a positive half-twist, where $1\leq k'\leq 4$. We will establish constraints on $\chi_{j-1}''$ and $\xi_j''$, which will imply that $\zeta_j$ is not the identity braid for each $1\leq j\leq 4$, after possibly replacing ${\cal F}$ with a Hurwitz equivalent minimal complexity factorization. Firstly, we consider the case where $g_{k'}$ is the cube of a non-standard positive half-twist. Lemma~\ref{lcubeboundedcontraction} implies that $\xi_{k'}''$ is a right-divisor of $\sigma_{i_{k'}}\overline{\tau_{k'}}$. If $\xi_{k'}'' = \sigma_{i_{k'}}\overline{\tau_{k'}}$ and if $k'<4$, then Lemma~\ref{lcubeboundedcontraction} implies that the Hurwitz move $\left(\dots,g_{k'},g_{k'+1},\dots\right)\to \left(\dots,g_{k'}g_{k'+1}g_{k'}^{-1},g_{k'},\dots\right)$ does not change the complexity of the factorization. In particular, after possibly replacing ${\cal F}$ by a Hurwitz equivalent factorization, we can assume that $\xi_{k'}''$ is a proper right-divisor of $\sigma_{i_{k'}}\overline{\tau_{k'}}$. Lemma~\ref{lcubeboundedcontraction} also implies that $\chi_{k'-1}''$ is a left-divisor of $\overline{\rho_{k'}}\sigma_{i_{k'}}$. If $j\neq k'$ and $j\in J'$, then Lemma~\ref{lboundedcontraction} and Lemma~\ref{lcubeboundedcontraction} imply that $\xi_j''$ is a right-divisor of $\overline{\tau_j}$ and $\chi_{j-1}''$ is a proper left-divisor of $\overline{\rho_j}$. If $j\in J$, then Lemma~\ref{ladjacentstandardpositivehalftwist} implies that $\xi_j''$ and $\chi_{j-1}''$ are both the identity braid. If $g_{k'}$ is the cube of a non-standard positive half-twist, then we conclude that $\zeta_j$ is not the identity braid for each $1\leq j\leq 4$, based on the equation $\xi_j = \chi_{j-1}''\zeta_j\xi_j''$. Indeed, Proposition~\ref{pcutclosureconjugate} implies that $\xi_{k'} = \overline{\rho_{k'}}\sigma_{i_{k'}}^2\overline{\tau_{k'}}$, and $\xi_j = \overline{\rho_j}\overline{\tau_j}$ if $j\neq k'$ and $j\in J'$.

On the other hand, if $g_{k'}$ is the cube of a standard positive half-twist, and if either $\xi_{k'-1}''$ is not a right-divisor of $\overline{\tau_{k'-1}}$ (equivalently, $\chi_{k'-1}''$ is not a left-divisor of $\sigma_{i_{k'}}$ by Proposition~\ref{padjacentcubestandardpositivehalftwist}) or $\chi_{k'}''$ is not a left-divisor of $\overline{\rho_{k'+1}}$ (equivalently, $\xi_{k'}''$ is not a right-divisor of $\sigma_{i_{k'}}$ by Proposition~\ref{padjacentcubestandardpositivehalftwist}), then Proposition~\ref{padjacentcubestandardpositivehalftwist} implies that the hypothesis of Proposition~\ref{pcubestandardpositivehalftwistHurwitzequivalenceclass} is satisfied for the factorization ${\cal F}$. 

In this case, Proposition~\ref{pcubestandardpositivehalftwistHurwitzequivalenceclass} implies that ${\cal F}$ is Hurwitz equivalent to the standard factorization $\left(\sigma_1^3,\sigma_1^{-1}\sigma_2\sigma_1,\sigma_1,\sigma_2\right)$, which is a contradiction. If $j\neq k'$ and $j\in J'$, then Lemma~\ref{lboundedcontraction} and this reasoning imply that $\xi_j''$ is a right-divisor of $\overline{\tau_j}$ and $\chi_{j-1}''$ is a proper left-divisor of $\overline{\rho_j}$. Furthermore, if $g_{k'}$ is the cube of a standard positive half-twist, then $\chi_{k'-1}''$ is a left-divisor of $\sigma_{i_{k'}}$ and $\xi_{k'}''$ is a right-divisor of $\sigma_{i_{k'}}$. Of course, if $j\in J$, then Lemma~\ref{ladjacentstandardpositivehalftwist} implies that $\xi_j''$ and $\chi_{j-1}''$ are both the identity braid. We conclude that $\zeta_j$ is not the identity braid for each $1\leq j\leq 4$ in all cases.

We have $\Delta^{\omega_{4}+2} = \xi_1\xi_2\xi_3\xi_4$ in the notation of Setup~\ref{setfactorization}. Let $\omega_j'' = \sum_{j'=1}^{j-1} \omega\left(\xi_{j'}''\right)$. We have the equation (as in the proof of Theorem~\ref{tfactorizationpositivehalftwistsstandard}) \[\xi_1\xi_2\xi_3\xi_4 = \left[\prod_{j=1}^{4} \left(\zeta_j\right)_{\left(\omega_j''\right)}\right]\Delta^{\omega_4''}\] but the positive braid in brackets is not necessarily indivisible by $\Delta$ (unlike in the proof of Theorem~\ref{tfactorizationpositivehalftwistsstandard}). 

Proposition~\ref{pmaximaldivisordualindivisibleDelta} and the maximality of $\xi_j''$ imply that $\left(\zeta_{j-1}\right)_{\left(\omega_{j-1}''\right)}\left(\zeta_j\right)_{\left(\omega_j''\right)}$ is indivisible by $\Delta$ for each $1\leq j\leq 4$. Lemma~\ref{lproductindivisibleDelta} implies that $\prod_{j=1}^{4} \left(\zeta_j\right)_{\left(\omega_j''\right)}$ is indivisible by $\Delta$ if $\zeta_j$ is not an Artin generator for each $1\leq j\leq 4$. We will analyze the possible cases where $\zeta_j$ is an Artin generator, and we will show that the product $\prod_{j=1}^{4} \left(\zeta_j\right)_{\left(\omega_j''\right)}$ is not a power of $\Delta$ in every case, which contradicts the fact that $\xi_1\xi_2\xi_3\xi_4$ is a power of $\Delta$.

Firstly, if $k'\neq j$, $\{j,j+1\}\subseteq J'$ (i.e., neither $g_j$ nor $g_{j+1}$ are standard positive half-twists), and if $\zeta_j$ is an Artin generator, then the index of the generator $\left(\zeta_j\right)_{\left(\omega_j''\right)}$ is equal to the index of the first generator in $\left(\zeta_{j+1}\right)_{\left(\omega_{j+1}''\right)}$ (this is a consequence of Proposition~\ref{pmaximaldivisordualcharacterization}~\textbf{(i)}, using the same reasoning as in the proof of Theorem~\ref{tfactorizationpositivehalftwistsstandard}). Furthermore, if $g_{k'}$ is the cube of a standard positive half-twist, then $g_{k'}$ is not followed (resp. preceded) by two standard positive half-twists $g_{k'+1}$ and $g_{k'+2}$ (resp. $g_{k'-1}$ and $g_{k'-2}$) such that $\xi_{k'}\xi_{k'+1}\xi_{k'+2}$ (resp. $\xi_{k'-2}\xi_{k'-1}\xi_{k'}$) is divisible by $\Delta$. Indeed, if this is not true, then the fact that $c\left({\cal F}\right)>0$ implies that there is a non-standard positive half-twist in the factorization. Thus, after possibly applying a composition of two Hurwitz moves that does not change the complexity of the factorization, we can assume that $g_{k'}$ is adjacent to a non-standard positive half-twist. However, in this case, the hypothesis of Proposition~\ref{pcubestandardpositivehalftwistHurwitzequivalenceclass} is satisfied for the factorization ${\cal F}$. Proposition~\ref{pcubestandardpositivehalftwistHurwitzequivalenceclass} implies that ${\cal F}$ is Hurwitz equivalent to the standard factorization $\left(\sigma_1^3,\sigma_1^{-1}\sigma_2\sigma_1,\sigma_1,\sigma_2\right)$, which is a contradiction.

We now use the statements in the previous paragraph to establish a contradiction. If $k'\neq j+1$, then Lemma~\ref{ladjacentstandardpositivehalftwist},  Lemma~\ref{ltwoadjacentstandardpositivehalftwists}, Lemma~\ref{ltripleproductindivisibleDelta}, and the assumption that ${\cal F}$ satisfies the conditions in Lemma~\ref{lreorderingfactorssamecomplexity} imply that $\left(\zeta_j\right)_{\left(\omega_j''\right)}\left(\zeta_{j+1}\right)_{\left(\omega_{j+1}''\right)}\left(\zeta_{j+2}\right)_{\left(\omega_{j+2}''\right)}$ is indivisible by $\Delta$. Lemma~\ref{lproductindivisibleDelta} now implies that $\prod_{j'=1}^{k'} \left(\zeta_{j'}\right)_{\left(\omega_{j'}\right)}$ and $\prod_{j'=k'+1}^{4} \left(\zeta_{j'}\right)_{\left(\omega_{j'}\right)}$ are indivisible by $\Delta$. Furthermore, if $\zeta_{k'}$ is not an Artin generator, then Lemma~\ref{lproductindivisibleDelta} implies that $\prod_{j'=1}^{4} \left(\zeta_{j'}\right)_{\left(\omega_j''\right)}$ is indivisible by $\Delta$, which is a contradiction. Let us assume that $\zeta_{k'}$ is an Artin generator. If $g_{k'}$ is the cube of a standard positive half-twist, then let us also assume that $g_{k'}$ is rightmost among minimal complexity factorizations that are Hurwitz equivalent to ${\cal F}$. The fact that $\xi_1\xi_2\xi_3\xi_4$ is a power of $\Delta$ implies that $\prod_{j'=1}^{k'} \left(\zeta_{j'}\right)_{\left(\omega_{j'}\right)}$ is left-dual to $\prod_{j'=k+1}^{4} \left(\zeta_{j'}\right)_{\left(\omega_{j'}\right)}$. However, if $g_{k'}$ is the cube of a standard positive half-twist, then Lemma~\ref{ltwoHurwitzmovescubepositivehalftwist} contradicts the assumption that $g_{k'}$ is rightmost, and if $g_{k'}$ is the cube of a non-standard positive half-twist, then Lemma~\ref{ltwoHurwitzmovescubepositivehalftwist} contradicts the assumption that ${\cal F}$ is a minimal complexity factorization. 

Therefore, the statement is established.
\end{proof}

We state the geometric reformulation of Theorem~\ref{tmainclassificationfactorizationscubepositivehalftwist}.

\begin{theorem}
\label{tuniqueisotopyclasscusp}
Let $C,C'\subseteq \mathbb{CP}^2$ be degree three simple Hurwitz curves. If $C$ and $C'$ each have a single cusp and no other singularities, then $C$ is isotopic to $C'$.
\end{theorem}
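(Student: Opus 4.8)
The plan is to convert this geometric statement into the algebraic classification of braid monodromy factorizations, exactly as in the proofs of Theorem~\ref{tuniqueisotopyclasssmooth} and Theorem~\ref{tuniqueisotopyclassnumberofnodes}. First I would recall that, by Theorem~\ref{tKK} (Kharlamov-Kulikov), the Hurwitz equivalence class of the braid monodromy factorization of a simple Hurwitz curve is a complete invariant of its isotopy class. Thus it suffices to show that the braid monodromy factorizations of $C$ and $C'$ are Hurwitz equivalent.

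The key step is to identify the type of each factor in these factorizations. Since a cusp is an $A_3$-singularity, the braid monodromy with respect to the cusp is the cube of a positive half-twist. Because $C$ has no $A_n$-singularities for $n\neq 1,3$ other than the single cusp, the singularity formula (Lemma~\ref{lsingularityformula}) forces $3\nu_3 + \nu_1 = 6$ with $\nu_3 = 1$, hence $\nu_1 = 3$. Therefore the braid monodromy factorization of $C$ is a factorization of $\Delta^2$ into three positive half-twists (the monodromies with respect to the three $A_1$-tangencies) and one cube of a positive half-twist (the monodromy with respect to the cusp), and likewise for $C'$.

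Finally, I would invoke Theorem~\ref{tmainclassificationfactorizationscubepositivehalftwist}, which establishes that every such factorization is Hurwitz equivalent to the standard factorization $\left(\sigma_1^3,\sigma_1^{-1}\sigma_2\sigma_1,\sigma_1,\sigma_2\right)$. Consequently the braid monodromy factorizations of $C$ and $C'$ are both Hurwitz equivalent to this standard factorization, and hence to each other, which yields the isotopy of $C$ and $C'$.

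The genuine difficulty has already been absorbed into Theorem~\ref{tmainclassificationfactorizationscubepositivehalftwist}; the present step is a routine application of the braid monodromy dictionary. The only point requiring care is the bookkeeping that pins down the factor types, namely confirming via the singularity formula that a single-cusp curve of degree three necessarily has exactly three tangential factors, so that the hypothesis of Theorem~\ref{tmainclassificationfactorizationscubepositivehalftwist} is met on the nose.
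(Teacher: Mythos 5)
Your proposal is correct and follows exactly the paper's own route: pass to the braid monodromy factorization via Theorem~\ref{tKK}, use the singularity formula to pin down the factorization as one cube of a positive half-twist together with three positive half-twists, and conclude by Theorem~\ref{tmainclassificationfactorizationscubepositivehalftwist}. The extra bookkeeping you include (deriving $\nu_1=3$ explicitly) is a sound elaboration of what the paper leaves implicit.
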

\begin{proof}
The braid monodromy factorization of a degree three Hurwitz curve with a single cusp and no other singularities is a factorization of $\Delta^2$ into three positive half-twists and one cube of a positive half-twist. Furthermore, the Hurwitz equivalence class of the braid monodromy factorization of the curve completely determines the isotopy class of the curve.

Theorem~\ref{tmainclassificationfactorizationscubepositivehalftwist} implies that the braid monodromy factorization of $C$ is Hurwitz equivalent to the braid monodromy factorization of $C'$. Therefore, the statement is established.
\end{proof}

We conclude this subsection by showing that there is a unique isotopy class of degree three simple Hurwitz curves in $\mathbb{CP}^2$ with precisely one tacnode and no other singularities. In Subsection~\ref{sssingularityconstraints}, we will show that a degree three simple Hurwitz curve with at least one tacnode has precisely one tacnode and no other singularities. In particular, this will complete the classification of degree three simple Hurwitz curves in $\mathbb{CP}^2$ with a tacnode.

We will prove that there is a unique Hurwitz equivalence class of factorizations of $\Delta^2$ into the fourth power of a positive half-twist and two positive half-twists. Firstly, we establish a technical statement concerning minimal complexity factorizations of $\Delta^2$ into three powers of positive half-twists.

\begin{lemma}
\label{lthreefactorsconstraint}
We adopt Setup~\ref{setfactorization} with $k=3$. Let us assume that ${\cal F}$ is a factorization of $\Delta^2$ that does not consist of three squares of positive half-twists. Firstly, if ${\cal F}$ is a minimal complexity factorization, then we can assume that either $\xi_1\xi_2$ or $\xi_2\xi_3$ is indivisible by $\Delta$, after possibly replacing ${\cal F}$ by a Hurwitz equivalent factorization of the same complexity. 

If $1\leq j\leq 3$, then we define $\delta_j = 1$ if $g_j$ is the power of a non-standard positive half-twist and $\delta_j = 0$ if $g_j$ is the power of a standard positive half-twist. If $\xi_1\xi_2$ is indivisible by $\Delta$, then we define $\epsilon\in \{0,1\}$ such that $\omega\left(\xi_1\xi_2\right) = \omega\left(\xi_1\right)+\omega\left(\xi_2\right) - \epsilon$ (Proposition~\ref{pomegaalmostadditive}). If $\xi_1\xi_2$ is indivisible by $\Delta$, then \[2 = \delta_1+\delta_2+\delta_3 + \epsilon\] and \[\omega\left(\rho_3\right) = 2 + \omega\left(\rho_1\right)+\omega\left(\rho_2\right) + \delta_3 - e_3.\]
\end{lemma}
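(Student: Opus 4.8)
The plan is to treat the two assertions in the statement separately: the pair of numerical identities reduces to a short $\omega$-bookkeeping computation, while the divisibility assertion is the genuinely combinatorial part. The computational core rests on a single master identity which I would record first: for every $1\leq j\leq 3$,
\[
\omega\left(\xi_j\right) = 2\omega\left(\rho_j\right) + e_j - \delta_j.
\]
For a power of a non-standard positive half-twist this is Proposition~\ref{pomeganumberpowerpositivehalftwist} (writing the Garside power as $-\omega\left(\tau_j\right)$) together with $\omega\left(\rho_j\right) = \omega\left(\tau_j\right)$ from the pseudo-symmetry of duals (Proposition~\ref{ppseudosymmetryduality}); for a power of a standard positive half-twist both $\rho_j$ and $\tau_j$ are trivial, so $\omega\left(\xi_j\right) = e_j$ and $\omega\left(\rho_j\right) = 0$, which matches with $\delta_j = 0$. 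I would also record from Setup~\ref{setfactorization} that the hypothesis $g_1g_2g_3 = \Delta^2$ is equivalent to $\xi_1\xi_2\xi_3 = \Delta^{2+\omega_3}$, where $\omega_3 = \omega\left(\rho_1\right)+\omega\left(\rho_2\right)+\omega\left(\rho_3\right)$, and that $e_1+e_2+e_3 = 6$ by abelianization (Proposition~\ref{pnumberoffactorsinfactorization}).

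Granting that $\xi_1\xi_2$ is indivisible by $\Delta$, the two equations then follow formally. Since $\xi_3$ is indivisible by $\Delta$ and $\left(\xi_1\xi_2\right)\xi_3$ is a nonnegative power of $\Delta$, the positive braid $\xi_1\xi_2$ is dual to $\xi_3$ in the sense of Definition~\ref{ddual}, so by Proposition~\ref{ppseudosymmetryduality} we have $\omega\left(\xi_3\right) = \omega\left(\xi_1\xi_2\right) = 2+\omega_3$. Substituting the master identity for $j=3$ and solving for $\omega\left(\rho_3\right)$ yields the second displayed equation. For the first, I would expand $\omega\left(\xi_1\xi_2\right) = \omega\left(\xi_1\right)+\omega\left(\xi_2\right)-\epsilon$ (the definition of $\epsilon$, justified by Proposition~\ref{pomegaalmostadditive}), set the left-hand side equal to $2+\omega_3$, substitute the master identity for $j=1,2$, eliminate the $\omega\left(\rho_j\right)$ terms using the second equation already obtained, and finally invoke $e_1+e_2+e_3 = 6$. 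The $\omega\left(\rho_j\right)$ contributions cancel and the identity collapses to $\delta_1+\delta_2+\delta_3+\epsilon = 2$.

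The remaining task is the first assertion: that in a minimal complexity factorization (not consisting of three squares) we may assume, after a same-complexity repositioning, that $\xi_1\xi_2$ or $\xi_2\xi_3$ is indivisible by $\Delta$. Here the plan is to assume for contradiction that \emph{both} products are divisible by $\Delta$ and split according to how many factors are powers of standard half-twists. If all three are powers of non-standard half-twists, then Lemma~\ref{lcomplexityreducingglobalconjugation} applies directly and produces a complexity-reducing global conjugation move, contradicting minimality. If a standard factor is adjacent to a power of a non-standard half-twist, I would exploit that the relevant two-factor subfactorization is itself of minimal complexity, so Lemma~\ref{ladjacentstandardpositivehalftwist} (for a standard half-twist of type one) or Proposition~\ref{padjacentsquarestandardpositivehalftwist} (for a standard square) forces the corresponding product to be indivisible by $\Delta$, unless we land in the exceptional form of Proposition~\ref{padjacentsquarestandardpositivehalftwist}. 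To funnel the configurations with standard factors into these lemmas I would reposition standard factors without altering the complexity using Proposition~\ref{ptripledivisible} and Proposition~\ref{pfactorizationmoveGarsideelement}, equivalently the normalization in Lemma~\ref{lreorderingfactorssamecomplexity}.

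The hard part will be the residual configurations arising from the exceptional form in Proposition~\ref{padjacentsquarestandardpositivehalftwist}, namely a standard square adjacent to a power of a non-standard half-twist where the product is genuinely divisible by $\Delta$; only in such configurations can both adjacent products be divisible at once. I expect these to be precisely the configurations that are forced toward three squares, which is exactly why the hypothesis excludes that case. The plan to close the argument is to show that once the exceptional form is unavailable on both sides, divisibility of both $\xi_1\xi_2$ and $\xi_2\xi_3$ would require two copies of $\Delta$ overlapping across the triple in a way incompatible with $\xi_1\xi_2\xi_3$ being a power of $\Delta$, analyzed via the structure of indivisible braids in Proposition~\ref{piDB3} and the overlap criterion of Proposition~\ref{pdualbraidproduct}. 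This yields the contradiction outside the excluded three-squares case and completes the first assertion.
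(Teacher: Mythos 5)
Your derivation of the two numerical identities is correct and is essentially the paper's own computation: the identity $\omega\left(\xi_j\right) = 2\omega\left(\tau_j\right) + e_j - \delta_j$, the duality of $\xi_1\xi_2$ with $\xi_3$ giving $\omega\left(\xi_3\right) = 2+\omega_3$, and the constraint $e_1+e_2+e_3=6$ are exactly the ingredients used there, combined in the same way. The genuine gap is in the first assertion. Your case analysis only equips you to handle a standard factor of type one (Lemma~\ref{ladjacentstandardpositivehalftwist}) or type two (Proposition~\ref{padjacentsquarestandardpositivehalftwist}) sitting next to a power of a non-standard half-twist, but this lemma must also cover factorizations containing $\sigma_i^3$ or $\sigma_i^4$ --- it is applied later precisely to the cube and fourth-power configurations --- and neither cited result says anything about those exponents. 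The paper closes the one-standard-factor, end-position case not with an adjacency lemma but with a direct global conjugation: conjugating the entire factorization by the cut left-divisor of the adjacent $\xi_j$ of $\omega$-value one strictly reduces the Garside power of that adjacent factor (Proposition~\ref{pGarsidepowerreduction}), keeps the standard factor a power of a standard half-twist (Proposition~\ref{pdualbraidproduct}), and does not increase the remaining one (Proposition~\ref{pGarsidepowerreductionconsistency}); this contradicts minimality uniformly in the exponent of the standard factor. Your proposal contains no substitute for this step.

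Second, your framing of the whole first assertion as a proof by contradiction cannot work as stated, because when the unique standard factor is $g_2$ there is no contradiction to be had: the paper instead iterates a global conjugation by $\left(\sigma_{i_2}\right)_{\left(\omega\left(\tau_1\right)\right)}$, which preserves the complexity while driving $g_3$ to a power of a standard half-twist, and only the resulting replacement has $\xi_2\xi_3$ indivisible by $\Delta$. This is exactly why the statement reads ``after possibly replacing ${\cal F}$ by a Hurwitz equivalent factorization of the same complexity.'' Relatedly, your closing heuristic --- that two copies of $\Delta$ overlapping the triple would be ``incompatible with $\xi_1\xi_2\xi_3$ being a power of $\Delta$'' --- is not an argument: $\xi_1\xi_2\xi_3$ equals $\Delta^{2+\omega_3}$ by hypothesis, and simultaneous divisibility of both consecutive products is perfectly consistent with that (the three-squares factorization realizes it); every obstruction in this lemma comes from minimality of complexity, not from the shape of the product. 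Finally, the repositioning tools you cite are inapplicable to the configurations at hand: Proposition~\ref{ptripledivisible} assumes neither consecutive product is divisible by $\Delta$, the opposite of your situation, and Proposition~\ref{pfactorizationmoveGarsideelement} needs a sub-block whose product is exactly $\Delta$.
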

\begin{proof}
We adopt Setup~\ref{setfactorization}. Firstly, we prove that either $\xi_1\xi_2$ or $\xi_2\xi_3$ is indivisible by $\Delta$, after possibly replacing the factorization ${\cal F}$ with a Hurwitz equivalent factorization of the same complexity. Let us assume that both $\xi_1\xi_2$ and $\xi_2\xi_3$ are divisible by $\Delta$. If each factor in ${\cal F}$ is the power of a non-standard positive half-twist, then Lemma~\ref{lcomplexityreducingglobalconjugation} implies that there is a global conjugation move that reduces the complexity of the factorization ${\cal F}$, which contradicts the minimality of the complexity $c\left({\cal F}\right)$. In particular, we can assume that at least one factor in ${\cal F}$ is the power of a standard positive half-twist. 

Firstly, Lemma~\ref{lcomplexityzerofactorization} implies that all three factors in ${\cal F}$ cannot be powers of standard positive half-twists. Let us assume that precisely one factor in ${\cal F}$ is the power of a standard positive half-twist. In this case, either the factor that is the power of a standard positive half-twist is at one end of ${\cal F}$ or $g_2$ is the power of a standard positive half-twist. If the factor is at one end of ${\cal F}$, then we will show that there is a global conjugation move that reduces the complexity of the factorization ${\cal F}$, which contradicts the minimality of the complexity $c\left({\cal F}\right)$. Indeed, we can assume without loss of generality that $g_1$ is the power of a standard positive half-twist, and $g_1 = \sigma_{i_1}^{e_1}$ (in the notation of Setup~\ref{setfactorization}). Let $\tau'$ be the cut left-divisor of $\xi_2$ such that $\omega\left(\tau'\right) = 1$. Firstly, Proposition~\ref{pdualbraidproduct} implies that $\left(\tau'\right)^{-1}g_1\tau'$ is the power of a standard positive half-twist, since $\xi_1\xi_2$ is divisible by $\Delta$. Proposition~\ref{pGarsidepowerreduction} implies that the absolute value of the Garside power of $\left(\tau'\right)^{-1}g_2\tau'$ is strictly less than the absolute value of the Garside power of $g_2$, since $\xi_1\xi_2$ is divisible by $\Delta$. Proposition~\ref{pGarsidepowerreductionconsistency} implies that the absolute value of the Garside power of $\left(\tau'\right)^{-1}g_3\tau'$ is at most the absolute value of the Garside power of $g_3$, since $\xi_2\xi_3$ is divisible by $\Delta$. We conclude that the complexity $c\left(\left(\tau'\right)^{-1}{\cal F}\tau'\right)<c\left({\cal F}\right)$ in this case.

On the other hand, let us assume that $g_2$ is the power of a standard positive half-twist, and $g_2 = \left(\sigma_{i_2}^{e_2}\right)_{\left(\omega\left(\tau_1\right)\right)}$ (in the notation of Setup~\ref{setfactorization}). We will show that there is a global conjugation move that replaces ${\cal F}$ with a factorization of the same complexity for which both the second and third factors are powers of standard positive half-twists (in which case, the product of the second and third factors is indivisible by $\Delta$). Proposition~\ref{pGarsidepowerreduction} implies that the absolute value of the Garside power of $\left(\sigma_{i_2}\right)_{\left(\omega\left(\tau_1\right)\right)}g_3\left(\sigma_{i_2}\right)_{\left(\omega\left(\tau_1\right)\right)}^{-1}$ is strictly less than the absolute value of the Garside power of $g_3$, since $\xi_2\xi_3$ is divisible by $\Delta$. Of course, the absolute value of the Garside power of $\left(\sigma_{i_2}\right)_{\left(\omega\left(\tau_1\right)\right)}g_1\left(\sigma_{i_2}\right)_{\left(\omega\left(\tau_1\right)\right)}^{-1}$ is at most one more than the absolute value of the Garside power of $g_1$ in any case. We conclude that the complexity $c\left(\left(\sigma_{i_2}\right)_{\left(\omega\left(\tau_1\right)\right)}{\cal F}\left(\sigma_{i_2}\right)_{\left(\omega\left(\tau_1\right)\right)}^{-1}\right)\leq c\left({\cal F}\right)$. If we apply a composition of such global conjugation moves, then we ultimately obtain a factorization for which both the second and third factors are powers of standard positive half-twists. Indeed, each such move does not change the Garside power of $g_2$ and reduces the absolute value of the Garside power of $g_3$. We have thus established the first statement if there is precisely one factor in ${\cal F}$ that is the power of a standard positive half-twist.

If precisely two factors in ${\cal F}$ are powers of standard positive half-twists, then they cannot be adjacent factors since we have assumed that both $\xi_1\xi_2$ and $\xi_2\xi_3$ are divisible by $\Delta$. In this case, $g_1$ and $g_3$ are powers of standard positive half-twists. Lemma~\ref{ladjacentstandardpositivehalftwist} implies that neither $g_1$ nor $g_3$ is a standard positive half-twist, since the complexity of ${\cal F}$ is minimal and both $\xi_1\xi_2$ and $\xi_2\xi_3$ are divisible by $\Delta$. In particular, either $e_1 = 2$ or $e_3 = 2$, since $6 = e_1+e_2+e_3$ according to Proposition~\ref{pnumberoffactorsinfactorization}. Let us assume without loss of generality that $e_1 = 2$, in which case $g_1 = \sigma_{i_1}^2$. Proposition~\ref{padjacentsquarestandardpositivehalftwist} implies that $g_2 = \sigma_{i_1}^{-1}\sigma_{i_2'}^{e_2}\sigma_{i_1}$, where $i_2\neq i_2'\in \{1,2\}$, since the complexity of ${\cal F}$ is minimal and $\xi_1\xi_2$ is divisible by $\Delta$. We can write $g_3 = \sigma_{i_3'}^{e_3}$, where $i_3\neq i_3'\in \{1,2\}$, since we have assumed that $g_3$ is the power of a standard positive half-twist. 

The equation $\Delta^2 = g_1g_2g_3$ can now be rewritten as $\Delta^2 = \sigma_{i_1}\sigma_{i_2'}^{e_2}\sigma_{i_1}\sigma_{i_3'}^{e_3}$. We have $i_2'=i_3'$ since the product $\xi_2\xi_3$ is divisible by $\Delta$. In particular, we obtain the equation $\Delta = \sigma_{i_1'}\sigma_{i_2}^{e_2-1}\sigma_{i_3'}^{e_3-1}$. However, this is a contradiction since $e_2\neq 2$ (due to the hypothesis that ${\cal F}$ does not consist of three squares of positive half-twists). We have thus established the first statement if there are precisely two factors in ${\cal F}$ that are the powers of standard positive half-twists. We have now established in all cases that either $\xi_1\xi_2$ or $\xi_2\xi_3$ is indivisible by $\Delta$, after possibly replacing ${\cal F}$ by a Hurwitz equivalent factorization of the same complexity.

We now establish the formulas. If $\xi_1\xi_2$ is indivisible by $\Delta$, then $\xi_1\xi_2$ is dual to $\xi_3$, since the product $\xi_1\xi_2\xi_3$ is a power of $\Delta$. Proposition~\ref{ppseudosymmetryduality} implies that $\omega\left(\xi_1\xi_2\right) = \omega\left(\xi_3\right)$ and Proposition~\ref{pomegaalmostadditive} implies that $\omega\left(\xi_1\xi_2\right) = \omega\left(\xi_1\right) + \omega\left(\xi_2\right) - \epsilon$ for some $\epsilon\in \{0,1\}$. Furthermore, Proposition~\ref{pomeganumberpowerpositivehalftwist} implies that $\omega\left(\xi_j\right) = 2\omega\left(\tau_j\right) + e_j-\delta_j$ for each $1\leq j\leq 3$. Finally, $\xi_1\xi_2\xi_3 = \Delta^{2 + \omega_3}$ (in the notation of Setup~\ref{setfactorization}) since ${\cal F}$ is a factorization of $\Delta^2$. We deduce that $\omega\left(\xi_3\right) = 2 + \omega\left(\tau_1\right)+\omega\left(\tau_2\right)+\omega\left(\tau_3\right)$.
The equation $6 = e_1+e_2+e_3$ now implies that \[2 = \delta_1+\delta_2+\delta_3 +\epsilon\] and \[\omega\left(\rho_3\right) = 2 + \omega\left(\rho_1\right) + \omega\left(\rho_2\right) + \delta_3 - e_3.\] Therefore, the statement is established.
\end{proof}

The following technical statement is an addendum to Lemma~\ref{lthreefactorsconstraint}.

\begin{lemma}
\label{lthreefactorsreduction}
We adopt Setup~\ref{setfactorization} with $k = 3$. Let us assume that ${\cal F}$ is a minimal complexity factorization of $\Delta^2$ that does not consist of three squares of positive half-twists (i.e., the hypothesis of Lemma~\ref{lthreefactorsconstraint} is satisfied) and $\xi_1\xi_2$ is indivisible by $\Delta$. If there are two powers of non-standard positive half-twists in ${\cal F}$, then they must be $g_1$ and $g_2$.
\end{lemma}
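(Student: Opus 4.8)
The plan is to run the whole argument as a proof by contradiction built directly on top of Lemma~\ref{lthreefactorsconstraint}. First I would record the numerical input: the hypothesis that there are two powers of non-standard positive half-twists says $\delta_1+\delta_2+\delta_3=2$, so the first identity $2=\delta_1+\delta_2+\delta_3+\epsilon$ of Lemma~\ref{lthreefactorsconstraint} forces $\epsilon=0$. Via Proposition~\ref{pomegaalmostadditive} together with Lemma~\ref{lcutdivisorcharacterization}, $\epsilon=0$ is equivalent to the statement that the index of the last generator of $\xi_1$ equals the index of the first generator of $\xi_2$ (so that $\xi_1$ is a cut left-divisor and $\xi_2$ a cut right-divisor of $\xi_1\xi_2$). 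I would then assume for contradiction that $g_3$ is a power of a non-standard positive half-twist; since exactly two factors are non-standard, exactly one of $g_1,g_2$ is a power of a standard positive half-twist, which splits the analysis into the case that $g_1$ is standard and the case that $g_2$ is standard.

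The cleanest piece, which I would do first, is to kill the sub-case where the standard factor is $g_2$ and $\xi_2=\sigma_{i_2}$ is a single generator. Here Lemma~\ref{ladjacentstandardpositivehalftwist} (and its symmetric version) applied to the pairs $(g_1,g_2)$ and $(g_2,g_3)$ forces \emph{both} $\xi_1\xi_2$ and $\xi_2\xi_3$ to be indivisible by $\Delta$. Since $\xi_1\xi_2\xi_3=\Delta^{\omega_3+2}$ is divisible by $\Delta$, the only remaining way to produce a $\Delta$ is a triple overlap through the generator $\xi_2$; but (by the discussion preceding Lemma~\ref{ltripleproductindivisibleDelta}) such an overlap requires the last generator of $\xi_1$ and the first generator of $\xi_3$ to share a common index different from $i_2$, which directly contradicts the index-matching consequence of $\epsilon=0$. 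Hence this sub-case cannot occur.

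In every remaining sub-case $\xi_2$ has length greater than one, so Lemma~\ref{ltripleproductindivisibleDelta} (contrapositive, using that $\xi_1\xi_2$ is indivisible while $\xi_1\xi_2\xi_3$ is divisible) shows $\xi_2\xi_3$ is divisible by $\Delta$. I would then exploit the divisible junction $\xi_2\xi_3$ together with minimality. When the factor straddling that junction on the standard side is a standard square or cube, Proposition~\ref{padjacentsquarestandardpositivehalftwist} or Proposition~\ref{padjacentcubestandardpositivehalftwist} pins the pair $(g_2,g_3)$ (respectively $(g_1,g_2)$ after the appropriate rotation) into a rigid normal form; feeding this, the index data from $\epsilon=0$, the constraint $e_1+e_2+e_3=6$ from Proposition~\ref{pnumberoffactorsinfactorization}, and the exclusion of the three-squares factorization into the equation $\xi_1\xi_2\xi_3=\Delta^{\omega_3+2}$, one reduces to finitely many explicit configurations, each of which either makes $\xi_1\xi_2$ divisible by $\Delta$ (contradicting the hypothesis) or admits a complexity-reducing move. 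In the genuinely non-standard sub-cases I would instead produce a complexity-reducing \emph{global conjugation} across the $\xi_2\xi_3$ junction, using Proposition~\ref{pGarsidepowerreduction} to strictly lower the absolute value of the Garside power of one non-standard factor and Proposition~\ref{pGarsidepowerreductionconsistency} to keep the other non-standard factor from increasing.

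The hard part will be the last step: controlling the \emph{unique standard factor} under this global conjugation. Unlike Lemma~\ref{lcomplexityreducingglobalconjugation}, which assumes all factors are non-standard, here conjugating the standard factor $g_j=\sigma_{i_j}^{e_j}$ by the permutation braid that reduces the non-standard pair can a priori raise its Garside power by one, destroying the strict net decrease. The crux is to show this does not happen: the conjugating permutation braid comes from the $\xi_2\xi_3$ end of $\xi_2$, while the index-matching at the \emph{opposite} ($\xi_1\xi_2$) junction supplied by $\epsilon=0$ constrains how it interacts with the standard factor, so that the standard factor's Garside power is preserved and the total complexity strictly drops, contradicting minimality. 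I expect that verifying this interaction case-by-case — distinguishing whether the standard factor sits at an end (the $g_1$-standard case) or in the middle (the $g_2$-standard case with $e_2\ge 2$), and using Lemma~\ref{lcomplexityzerofactorization} to bound the admissible standard exponents — is where essentially all of the work of the proof concentrates.
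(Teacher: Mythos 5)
Your setup is correct and coincides with the paper's: two non-standard factors force $\delta_1+\delta_2+\delta_3=2$, hence $\epsilon=0$ in Lemma~\ref{lthreefactorsconstraint}, hence (via Lemma~\ref{lcutdivisorcharacterization}) the index of the last generator of $\xi_1$ equals the index of the first generator of $\xi_2$; and Lemma~\ref{ltripleproductindivisibleDelta} then forces $\xi_2\xi_3$ to be divisible by $\Delta$ whenever $\xi_2$ is not a single generator. The overall strategy --- a complexity-reducing global conjugation contradicting minimality --- is also the paper's. But there is a genuine gap at exactly the step you flag as the crux, and it is not just unfinished bookkeeping: your choice of conjugating element is the wrong one, and the claim you would need about it is unjustified and false in general. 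You propose to conjugate by a permutation braid taken from the $\xi_2\xi_3$ junction (an $\omega$-value-one cut right-divisor of $\overline{\tau_2}$) and assert that the $\epsilon=0$ index-matching at the \emph{opposite} junction forces this element to preserve the Garside power of the standard factor. The index of that right-end permutation braid is tied to the left end of $\xi_2$ only through duality, which by Lemma~\ref{lcharacterizationdualpositivebraids} involves conjugation by a parity-dependent power of $\Delta$; so it need not have index $i_1$ and need not commute with $g_1=\sigma_{i_1}^{e_1}$ (note $\sigma_2\sigma_1^{e}\sigma_2^{-1}=\Delta^{-1}\sigma_1\sigma_2^{e+1}\sigma_1$ has Garside power $-1$, so a wrong-index single generator costs $+1$ in complexity). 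Even granting the rest of your accounting, you only claim that the far non-standard factor does not increase, so a $+1$ from the standard factor yields a net change of at most $0$, which does not contradict minimality.

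The paper closes this by taking the conjugating element $\tau'$ to be the cut left-divisor of $\xi_2$ with $\omega\left(\tau'\right)=1$, i.e.\ from the $\xi_1\xi_2$ junction, which is precisely where $\epsilon=0$ has teeth: the first generator of $\xi_2$ has index $i_1$, so either $\tau'=\sigma_{i_1}$ commutes with the standard factor $g_1$ (and Proposition~\ref{pGarsidepowerreduction} plus Proposition~\ref{pGarsidepowerreductionconsistency} give a net decrease of one), or $\tau'$ is a product of two distinct generators, in which case $g_1$'s contribution rises by at most one but one shows directly (Proposition~\ref{pdualbraidproduct}, Lemma~\ref{lcharacterizationdualpositivebraids}, Proposition~\ref{pGarsidepowerreduction}) that the Garside power of the conjugate of $g_3$ \emph{strictly} decreases, so the total still drops. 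If you reorient your conjugation to that junction, the argument closes uniformly, and your detour through Propositions~\ref{padjacentsquarestandardpositivehalftwist} and~\ref{padjacentcubestandardpositivehalftwist} and the ``finitely many explicit configurations'' becomes unnecessary; your opening disposal of the sub-case $\xi_2=\sigma_{i_2}$ is fine but is likewise subsumed.
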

\begin{proof}
We will show that $g_3$ and precisely one of $g_j$ for $j\in \{1,2\}$ cannot simultaneously be powers of non-standard positive half-twists. Let us assume, for a contradiction, that this is false. We will consider the case where $g_2$ and $g_3$ are powers of non-standard positive half-twists since the proof in the other case is similar. We will show that we can apply a complexity reducing global conjugation move to ${\cal F}$, which contradicts the hypothesis that ${\cal F}$ is a minimal complexity factorization.

Firstly, Lemma~\ref{lthreefactorsconstraint} implies that $\epsilon = 0$. In particular, Lemma~\ref{lcutdivisorcharacterization} implies that the index of the last generator of $\xi_1$ is equal to the index of the first generator of $\xi_2$. Let $\tau'$ be the cut left-divisor of $\xi_2$ with $\omega\left(\tau'\right) = 1$. We consider cases according to whether $\tau'$ is a single generator or the product of two distinct generators. In general, Proposition~\ref{pGarsidepowerreduction} implies that the absolute value of the Garside power of $\left(\tau'\right)^{-1}g_2\tau'$ is strictly less than the absolute value of the Garside power of $g_2$. Furthermore, the hypothesis that $\xi_1\xi_2$ is dual to $\xi_3$ and Lemma~\ref{ltripleproductindivisibleDelta} imply that $\xi_2\xi_3$ is divisible by $\Delta$, since $g_2$ is the power of a non-standard positive half-twist. 

If $\tau'$ is a single generator, then $\left(\tau'\right)^{-1}g_1\tau' = g_1$ is the power of a standard positive half-twist. Proposition~\ref{pGarsidepowerreductionconsistency} implies that the absolute value of the Garside power of $\left(\tau'\right)^{-1}g_3\tau'$ is at most the absolute value of the Garside power of $g_3$, since $\xi_2\xi_3$ is divisible by $\Delta$. We deduce that $c\left(\left(\tau'\right)^{-1}{\cal F}\tau'\right)<c\left({\cal F}\right)$ if $\tau'$ is a single generator.

If $\tau'$ is a product of two distinct generators, then the absolute value of the Garside power of $\left(\tau'\right)^{-1}g_1\tau'$ is at most one more than the absolute value of the Garside power of $g_1$. Proposition~\ref{pdualbraidproduct}, Lemma~\ref{lcharacterizationdualpositivebraids} and Proposition~\ref{pGarsidepowerreduction} imply that the absolute value of the Garside power of $\left(\tau'\right)^{-1}g_3\left(\tau'\right)$ is strictly less than the absolute value of the Garside power of $g_3$. We deduce that $c\left(\left(\tau'\right)^{-1}{\cal F}\tau'\right)<c\left({\cal F}\right)$ if $\tau'$ is a product of two distinct generators. 

Therefore, the statement is established. 
\end{proof}

We are now prepared to classify factorizations of $\Delta^2$ into powers of positive half-twists, where one of the factors is the fourth power of a positive half-twist, and the other factors are positive half-twists.

\begin{theorem}
\label{tmainclassificationfactorizationsfourthpowerpositivehalftwist}
If ${\cal F}$ is a factorization of $\Delta^2$ into two positive half-twists and one fourth power of a positive half-twist, then ${\cal F}$ is Hurwitz equivalent to the standard factorization $\left(\sigma_1^4,\sigma_1^{-1}\sigma_2\sigma_1,\sigma_2^{-1}\sigma_1\sigma_2\right)$ (in Definition~\ref{dstandardfactorization}).
\end{theorem}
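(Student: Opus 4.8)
The plan is to take ${\cal F} = \left(g_1,g_2,g_3\right)$ to be a factorization of minimal complexity in its Hurwitz equivalence class and to assume, for a contradiction, that ${\cal F}$ is not Hurwitz equivalent to the standard factorization. Proposition~\ref{pnumberoffactorsinfactorization} guarantees that there are exactly three factors, two positive half-twists and one fourth power. Since ${\cal F}$ contains a fourth power it is not a factorization into three squares of positive half-twists, so Lemma~\ref{lthreefactorsconstraint} applies: after possibly replacing ${\cal F}$ by a Hurwitz equivalent factorization of the same complexity, I may assume $\xi_1\xi_2$ is indivisible by $\Delta$ (the case $\xi_2\xi_3$ indivisible being symmetric), and I record the two resulting identities $2 = \delta_1+\delta_2+\delta_3+\epsilon$ and $\omega\left(\rho_3\right) = 2 + \omega\left(\rho_1\right)+\omega\left(\rho_2\right)+\delta_3-e_3$. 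Using Proposition~\ref{pomeganumberpowerpositivehalftwist} and the pseudo-symmetry of duals (Proposition~\ref{ppseudosymmetryduality}), the complexity of ${\cal F}$ equals $\omega\left(\rho_1\right)+\omega\left(\rho_2\right)+\omega\left(\rho_3\right)$.

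The key structural step is to locate the fourth power and show that it must be a standard positive half-twist. First, the identity $2 = \delta_1+\delta_2+\delta_3+\epsilon$ with $\epsilon\in\{0,1\}$ forces exactly one or exactly two of the factors to be powers of non-standard positive half-twists. I then argue that the fourth-power factor cannot be non-standard: whenever it has $\delta = 1$, the positivity $\omega\left(\rho_3\right)\geq 0$, together with the fact that $\omega\left(\rho_3\right) = 0$ whenever $g_3$ is standard and with Lemma~\ref{lthreefactorsreduction} (which in the two-non-standard case pins the non-standard factors to $g_1$ and $g_2$), yields a contradiction in every placement of the fourth power. Consequently the fourth power is standard, the $\omega$-identity forces $e_3 = 4$ and $\omega\left(\rho_1\right)+\omega\left(\rho_2\right) = 2$, and hence $c\left({\cal F}\right) = 2$. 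Thus ${\cal F}$ consists of a standard fourth power $\sigma_i^4$ together with two positive half-twists whose Garside powers sum to $-2$, split either as $\left(-1,-1\right)$ or as $\left(-2,0\right)$.

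After a global conjugation move by a power of $\Delta$ I normalize the standard fourth power to $\sigma_1^4$. The remaining task is computational: to exhibit explicit sequences of Hurwitz moves and global conjugation moves identifying each of these complexity-two configurations with $\left(\sigma_1^4,\sigma_1^{-1}\sigma_2\sigma_1,\sigma_2^{-1}\sigma_1\sigma_2\right)$. I first reduce the split $\left(-2,0\right)$ to the split $\left(-1,-1\right)$ by a single Hurwitz move applied to the pair consisting of the Garside-power-$\left(-2\right)$ half-twist and the adjacent standard half-twist, redistributing the Garside power exactly as in the computations of Theorem~\ref{texistencesquarestandardpositivehalftwistimpliesstandard}. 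I then fix $\sigma_1^4$ at the front by Hurwitz moves and classify the remaining pair of non-standard positive half-twists of Garside power $-1$ with prescribed product $\sigma_1^{-4}\Delta^2$; the two-factor analysis furnished by Lemma~\ref{lHurwitzmoveGarsidenormalform} shows this pair is unique up to the Hurwitz moves fixing $\sigma_1^4$, matching the standard factorization. This contradicts the assumption that ${\cal F}$ is not Hurwitz equivalent to the standard factorization, completing the argument.

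The routine parts are the arithmetic bookkeeping with the identities of Lemma~\ref{lthreefactorsconstraint} and the case analysis ruling out a non-standard fourth power. The main obstacle I anticipate is the final explicit move-finding: verifying that every complexity-two configuration collapses to the single standard factorization requires carefully chosen compositions of Hurwitz and global conjugation moves, analogous to (but somewhat more delicate than) the lengthy computations in Theorem~\ref{texistencesquarestandardpositivehalftwistimpliesstandard} and Proposition~\ref{pcubestandardpositivehalftwistHurwitzequivalenceclass}. In particular, care is needed to ensure the Garside-power-redistributing moves do not increase complexity and that the final identification with the standard factorization is achieved without leaving the class of factorizations of $\Delta^2$ into two positive half-twists and one fourth power.
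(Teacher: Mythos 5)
Your proposal is correct and follows essentially the same route as the paper: pass to a minimal complexity representative, invoke Lemma~\ref{lthreefactorsconstraint} and Lemma~\ref{lthreefactorsreduction} to assume $\xi_1\xi_2$ is indivisible by $\Delta$, use the two arithmetic identities to force the fourth power to be a power of a standard half-twist with $c\left({\cal F}\right)=2$, and then dispatch the remaining explicit configurations by Hurwitz and global conjugation moves. The only divergence is cosmetic: the paper does not pass through your intermediate reduction of the $\left(-2,0\right)$ split to the $\left(-1,-1\right)$ split, but instead pins down each of the finitely many configurations explicitly via Lemma~\ref{lcharacterizationdualpositivebraids} (using that $\xi_1\xi_2$ is dual to $\xi_3$) and exhibits the equivalence to the standard factorization case by case, which avoids having to verify that a single complexity-preserving move redistributes the Garside powers as you claim.
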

\begin{proof}
We adopt Setup~\ref{setfactorization}. Let us assume, for a contradiction, that ${\cal F}$ is a factorization of minimal complexity that is not Hurwitz equivalent to the standard factorization $\left(\sigma_1^4,\sigma_1^{-1}\sigma_2\sigma_1,\sigma_2^{-1}\sigma_1\sigma_2\right)$. Lemma~\ref{lcomplexityzerofactorization} implies that the complexity $c\left({\cal F}\right)>0$. Lemma~\ref{lthreefactorsconstraint} implies that we can assume that either $\xi_1\xi_2$ or $\xi_2\xi_3$ is indivisible by $\Delta$, after possibly replacing ${\cal F}$ by a Hurwitz equivalent minimal complexity factorization. We assume without loss of generality that $\xi_1\xi_2$ is indivisible by $\Delta$.

We adopt notation from Lemma~\ref{lthreefactorsconstraint}. The formulas imply that either exactly two of the factors in ${\cal F}$ are powers of non-standard positive half-twists and $\epsilon = 0$, or exactly one of the factors in ${\cal F}$ is the power of a non-standard positive half-twist and $\epsilon = 1$. 

Firstly, we assume that exactly two of the factors in ${\cal F}$ are powers of non-standard positive half-twists and $\epsilon = 0$. If $g_1$ and $g_2$ are powers of non-standard positive half-twists, then Lemma~\ref{lthreefactorsconstraint} implies that $e_3 = 2 + \omega\left(\rho_1\right)+\omega\left(\rho_2\right)$. In particular, $e_3 = 4$ and $\omega\left(\rho_1\right) = 1 = \omega\left(\rho_2\right)$. We deduce that ${\cal F} = \left(\sigma_1^{-1}\sigma_2\sigma_1,\sigma_2^{-1}\sigma_1\sigma_2,\sigma_1^4\right)$ after possibly applying a global conjugation move by $\Delta$ to ${\cal F}$, using Lemma~\ref{lcharacterizationdualpositivebraids} since $\xi_1\xi_2$ is dual to $\xi_3$. In this case, ${\cal F}$ is Hurwitz equivalent to the standard factorization by a sequence of two Hurwitz moves. Lemma~\ref{lthreefactorsreduction} implies that exactly one of $g_1$ and $g_2$ cannot be the power of a non-standard positive half-twist.

Secondly, we assume that exactly one of the factors in ${\cal F}$ is the power of a non-standard positive half-twist and $\epsilon = 1$. If $g_j$ is the power of a non-standard positive half-twist for $j\in \{1,2\}$, then Lemma~\ref{lthreefactorsconstraint} implies that $\omega\left(\rho_j\right) = e_3 - 2$. In particular $e_3 = 4$. We can assume that $g_3 = \sigma_1^4$ after possibly applying a global conjugation move by $\Delta$ to ${\cal F}$. We deduce that if $j = 1$, then ${\cal F} = \left(\sigma_1^{-2}\sigma_2\sigma_1^2, \sigma_2,\sigma_1^4\right)$, and if $j = 2$, then ${\cal F} = \left(\sigma_2,\sigma_1^{2}\sigma_2\sigma_1^{-2},\sigma_1^4\right)$, using Lemma~\ref{lcharacterizationdualpositivebraids} since $\xi_1\xi_2$ is dual to $\xi_3$. In either case, ${\cal F}$ is Hurwitz equivalent to the standard factorization. 

If $j = 3$, then Lemma~\ref{lthreefactorsconstraint} implies that $0<\omega\left(\rho_3\right) = 3 - e_3$. In particular, $e_3 = 1$ and $\omega\left(\rho_3\right) = 2$. If $e_1 = 4$, then we can assume that $g_1 = \sigma_1^4$ after possibly applying a global conjugation move by $\Delta$ to ${\cal F}$. We deduce that ${\cal F} = \left(\sigma_1^4,\sigma_2,\sigma_1^{2}\sigma_2\sigma_1^{-2}\right)$ using Lemma~\ref{lcharacterizationdualpositivebraids} since $\xi_1\xi_2$ is dual to $\xi_3$. If $e_2 = 4$, then we can assume that $g_2 = \sigma_1^4$ after possibly applying a global conjugation move by $\Delta$ to ${\cal F}$. We deduce that ${\cal F} = \left(\sigma_2,\sigma_1^4,\sigma_1^{-2}\sigma_2\sigma_1^2\right)$ using Lemma~\ref{lcharacterizationdualpositivebraids} since $\xi_1\xi_2$ is dual to $\xi_3$. In each case, ${\cal F}$ is Hurwitz equivalent to the standard factorization. 

Therefore, the statement is established.
\end{proof}

We state the geometric reformulation of Theorem~\ref{tmainclassificationfactorizationsfourthpowerpositivehalftwist}.

\begin{theorem}
\label{tuniqueisotopyclasstacnode}
Let $C,C'\subseteq \mathbb{CP}^2$ be degree three simple Hurwitz curves. If $C$ and $C'$ each have a single tacnode and no other singularities, then $C$ is isotopic to $C'$.
\end{theorem}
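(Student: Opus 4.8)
The plan is to deduce Theorem~\ref{tuniqueisotopyclasstacnode} directly from the algebraic classification in Theorem~\ref{tmainclassificationfactorizationsfourthpowerpositivehalftwist}, following verbatim the template of the preceding geometric reformulations (Theorem~\ref{tuniqueisotopyclasssmooth}, Theorem~\ref{tuniqueisotopyclassnumberofnodes}, and especially Theorem~\ref{tuniqueisotopyclasscusp}). The entire argument rests on three facts already established in the excerpt: first, that a degree three simple Hurwitz curve with a single tacnode and no other singularities has a braid monodromy factorization which is a factorization of $\Delta^2$ into powers of positive half-twists; second, that the single $A_4$-singularity (the tacnode) contributes the fourth power of a positive half-twist as a factor, while the remaining singular intersections contribute positive half-twists; and third, by Theorem~\ref{tKK} (Kharlamov-Kulikov), that the braid monodromy is a complete invariant of the isotopy class of a simple Hurwitz curve.

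First I would verify the count of factors. The singularity formula (Lemma~\ref{lsingularityformula}) gives $6 = \sum_{n=1}^{6} n\nu_n$; with a single tacnode we have $\nu_4 = 1$, contributing $4$, and no other $A_n$-singularities for $n \geq 2$, so the remaining $6 - 4 = 2$ must come from $A_1$-singularities, forcing $\nu_1 = 2$. Hence the braid monodromy factorization of such a curve is precisely a factorization of $\Delta^2$ into two positive half-twists and one fourth power of a positive half-twist. This is exactly the class of factorizations classified in Theorem~\ref{tmainclassificationfactorizationsfourthpowerpositivehalftwist}.

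The main body of the proof is then immediate: since both $C$ and $C'$ have a single tacnode and no other singularities, their braid monodromy factorizations are each factorizations of $\Delta^2$ into two positive half-twists and one fourth power of a positive half-twist. Theorem~\ref{tmainclassificationfactorizationsfourthpowerpositivehalftwist} asserts that every such factorization is Hurwitz equivalent to the single standard factorization $\left(\sigma_1^4,\sigma_1^{-1}\sigma_2\sigma_1,\sigma_2^{-1}\sigma_1\sigma_2\right)$, so the braid monodromy factorization of $C$ is Hurwitz equivalent to that of $C'$. By Theorem~\ref{tKK}, $C$ is isotopic to $C'$.

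There is essentially no obstacle here, since all the hard work resides in the algebraic classification theorem that I am permitted to assume. The only point requiring a moment's care is ensuring the correspondence between the geometric notion of a tacnode ($A_4$-singularity) and the algebraic notion of the fourth power of a positive half-twist as a factor — this is recorded in the excerpt's discussion of braid monodromy factors and in the remarks preceding Lemma~\ref{lsingularityformula}, so I would simply cite that correspondence rather than re-prove it. I give the short proof below.

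\begin{proof}
The braid monodromy factorization of a degree three simple Hurwitz curve with a single tacnode and no other singularities is a factorization of $\Delta^2$ into two positive half-twists and one fourth power of a positive half-twist. Indeed, the tacnode is an $A_4$-singularity and contributes the fourth power of a positive half-twist as a factor, while the singularity formula (Lemma~\ref{lsingularityformula}) implies that the remaining two singular intersections are nondegenerate tangencies to the fibers of $\pi$, each contributing a positive half-twist. Furthermore, the Hurwitz equivalence class of the braid monodromy factorization of the curve completely determines the isotopy class of the curve (Theorem~\ref{tKK}).

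Theorem~\ref{tmainclassificationfactorizationsfourthpowerpositivehalftwist} implies that the braid monodromy factorization of $C$ is Hurwitz equivalent to the standard factorization $\left(\sigma_1^4,\sigma_1^{-1}\sigma_2\sigma_1,\sigma_2^{-1}\sigma_1\sigma_2\right)$, and similarly for $C'$. In particular, the braid monodromy factorization of $C$ is Hurwitz equivalent to the braid monodromy factorization of $C'$. Therefore, the statement is established.
\end{proof}
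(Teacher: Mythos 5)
Your proof is correct and follows essentially the same route as the paper: both deduce the statement by identifying the braid monodromy factorization of such a curve as a factorization of $\Delta^2$ into two positive half-twists and one fourth power of a positive half-twist, invoking Theorem~\ref{tmainclassificationfactorizationsfourthpowerpositivehalftwist} for the uniqueness of its Hurwitz equivalence class, and concluding via the completeness of braid monodromy as an isotopy invariant. Your additional verification of the factor count via the singularity formula is a harmless elaboration of what the paper leaves implicit.
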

\begin{proof}
The braid monodromy factorization of a degree three Hurwitz curve with a single tacnode and no other singularities is a factorization of $\Delta^2$ into two positive half-twists and one fourth power of a positive half-twist. Furthermore, the Hurwitz equivalence class of the braid monodromy factorization of the curve completely determines the isotopy class of the curve.

Theorem~\ref{tmainclassificationfactorizationsfourthpowerpositivehalftwist} implies that the braid monodromy factorization of $C$ is Hurwitz equivalent to the braid monodromy factorization of $C'$. Therefore, the statement is established.
\end{proof}

We summarize the results in this subsection in the following statement.

\begin{theorem}
\label{tuniqueisotopyclassnodalcuspidal}
Let $C,C'\subseteq \mathbb{CP}^2$ be degree three simple Hurwitz curves with the same number of $A_n$-singularities for each positive integer $n\geq 1$. Let us consider one of the following constraints on the numbers of $A_n$-singularities for the curves $C$ and $C'$.
\begin{description}
\item[(i)] The curves are nodal (i.e., the only singularities are nodal singularities).
\item[(ii)] The curves each have a single singularity and it is a cuspidal singularity.
\item[(iii)] The curves each have a single singularity and it is a tacnodal singularity.
\end{description}
If one of the constraints is satisfied, then $C$ is isotopic to $C'$.
\end{theorem}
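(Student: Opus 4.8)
The plan is to observe that this statement is a direct summary of the three geometric reformulation theorems established earlier in this subsection, and to dispatch it by a case analysis on which of the three constraints \textbf{(i)}, \textbf{(ii)}, \textbf{(iii)} holds. The essential point is that each constraint, together with the hypothesis that $C$ and $C'$ have the same number of $A_n$-singularities for each $n\geq 1$, places the curves into exactly one of the three classes for which an isotopy classification has already been proven. I would not reprove anything about braid monodromy factorizations here; instead I would appeal to Theorem~\ref{tuniqueisotopyclassnumberofnodes}, Theorem~\ref{tuniqueisotopyclasscusp}, and Theorem~\ref{tuniqueisotopyclasstacnode} as black boxes.

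First I would handle case \textbf{(i)}. If the curves are nodal, then by definition their only singularities are nodal singularities (that is, $A_2$-singularities), so $C$ and $C'$ are degree three nodal Hurwitz curves. Theorem~\ref{tuniqueisotopyclassnumberofnodes} asserts that two such curves are isotopic if and only if they have the same number of nodes. Since the hypothesis guarantees that $C$ and $C'$ have the same number of $A_n$-singularities for every $n\geq 1$, in particular they have the same number of $A_2$-singularities, i.e., the same number of nodes, and the conclusion follows. For case \textbf{(ii)}, the constraint says each curve has a single singularity which is cuspidal (an $A_3$-singularity) and no other singularities, so Theorem~\ref{tuniqueisotopyclasscusp} applies verbatim. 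Similarly, for case \textbf{(iii)}, each curve has a single tacnodal singularity (an $A_4$-singularity) and no other singularities, so Theorem~\ref{tuniqueisotopyclasstacnode} applies verbatim. In each of the three cases I would simply conclude that $C$ is isotopic to $C'$.

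There is essentially no obstacle at the level of this theorem, since all of the difficult work has been carried out in the preceding classification results (ultimately resting on the classifications of factorizations of $\Delta^2$ up to Hurwitz equivalence in Theorem~\ref{tmainclassificationfactorizationsquarespositivehalftwists}, Theorem~\ref{tmainclassificationfactorizationscubepositivehalftwist}, and Theorem~\ref{tmainclassificationfactorizationsfourthpowerpositivehalftwist}). The only point that requires a moment's care is making sure that each constraint, combined with the shared-singularity-count hypothesis and the singularity formula of Lemma~\ref{lsingularityformula}, genuinely forces the two curves into the same single class rather than leaving ambiguity; this is immediate once one notes that the single-singularity constraints in \textbf{(ii)} and \textbf{(iii)} pin down $\nu_3$ and $\nu_4$, and the formula $\sum_{n=1}^{6} n\nu_n = 6$ then determines the remaining number of $A_1$-singularities. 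Thus the proof reduces to a clean three-way case split with an invocation of the appropriate already-established theorem in each branch.
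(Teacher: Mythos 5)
Your proposal is correct and matches the paper's own proof, which is a one-line appeal to Theorem~\ref{tuniqueisotopyclassnumberofnodes}, Theorem~\ref{tuniqueisotopyclasscusp}, and Theorem~\ref{tuniqueisotopyclasstacnode}. The three-way case split you describe is exactly how the paper dispatches the statement.
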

\begin{proof}
The statement is a consequence of Theorem~\ref{tuniqueisotopyclassnumberofnodes}, Theorem~\ref{tuniqueisotopyclasscusp}, and Theorem~\ref{tuniqueisotopyclasstacnode}.
\end{proof}

\subsection{A complete classification of the singularities of degree three simple Hurwitz curves in $\mathbb{CP}^2$}
\label{sssingularityconstraints}

The goal of this subsection is to establish a complete set of constraints on the numbers of $A_n$-singularities of a degree three simple Hurwitz curve $C\subseteq \mathbb{CP}^2$. The singularity formula (Lemma~\ref{lsingularityformula}) implies that there are at most two $A_3$-singularities (cuspidal singularities) in $C$ and at most one $A_n$-singularity in $C$ for $n\geq 4$. 

In this subsection, we will strengthen the singularity formula to show that there is at most one $A_3$-singularity in $C$ and no $A_n$-singularities in $C$ for $n\geq 5$. We will show that an $A_3$-singularity and an $A_2$-singularity (a nodal singularity) cannot simultaneously occur in a degree three simple Hurwitz curve $C$. We will also show that an $A_4$-singularity (a tacnodal singularity) and an $A_2$-singularity cannot simultaneously occur in a degree three simple Hurwitz curve $C$. The combination of these statements is Theorem~\ref{tmainsingularities} in the Introduction.

Theorem~\ref{tmainsingularities} is equivalent to the algebraic statement that there is no factorization of $\Delta^2$ into powers of positive half-twists in the braid group $B_3$ with at most three factors, unless either the three factors are all squares of positive half-twists, or the three factors consist of one fourth power of a positive half-twist and two positive half-twists. We will use our theory of duality of positive braids and conjugacy classes in $B_3$ from Section~\ref{sB3background} in order to establish this algebraic statement. The characterization of the possible singularities of a degree three simple Hurwitz curve $C\subseteq \mathbb{CP}^2$ will complete our classification of isotopy classes of degree three simple Hurwitz curves in $\mathbb{CP}^2$. Indeed, Theorem~\ref{tmainsingularities} shows that the combination of Theorem~\ref{tuniqueisotopyclasssmooth} and Theorem~\ref{tuniqueisotopyclassnodalcuspidal} is equivalent to Theorem~\ref{tmainHurwitzcurve} in the Introduction.

We are now prepared to establish the first constraint on the number of factors in a factorization of $\Delta^2$ into powers of positive half-twists. 

\begin{lemma}
\label{lnottwofactors}
A factorization of $\Delta^2$ into powers of positive half-twists has at least three factors.
\end{lemma}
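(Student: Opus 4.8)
The plan is to rule out $k=1$ and $k=2$ directly, using a well-chosen linear representation of $B_3$ rather than the duality machinery, since a single numerical invariant---the trace---collapses both cases at once. Let $\Phi\colon B_3\to \mathrm{SL}_2(\mathbb{Z})$ be the homomorphism determined by
\[
\Phi(\sigma_1)=\begin{pmatrix}1&1\\0&1\end{pmatrix},\qquad \Phi(\sigma_2)=\begin{pmatrix}1&0\\-1&1\end{pmatrix}.
\]
A one-line check shows that both sides of the braid relation $\sigma_1\sigma_2\sigma_1=\sigma_2\sigma_1\sigma_2$ map to $\begin{pmatrix}0&1\\-1&0\end{pmatrix}$, so $\Phi$ is well defined; consequently $\Phi(\Delta)=\begin{pmatrix}0&1\\-1&0\end{pmatrix}$ and $\Phi(\Delta^2)=-I$.

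The key observation is that every power of a positive half-twist maps under $\Phi$ to a matrix of trace $2$. Indeed, by Corollary~\ref{cpositivehalftwistconjugacy} the $e$th power of a positive half-twist is conjugate in $B_3$ to $\sigma_1^{e}$, so its image is conjugate to $\Phi(\sigma_1)^{e}=\begin{pmatrix}1&e\\0&1\end{pmatrix}$, which has trace $2$; conjugation in $\mathrm{SL}_2(\mathbb{Z})$ preserves the trace.

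Now suppose, for contradiction, that $\Delta^2\equiv(g_1,\dots,g_k)$ is a factorization into powers of positive half-twists with $k\le 2$ (Proposition~\ref{pnumberoffactorsinfactorization} already ensures $1\le k\le 6$, so these are the only cases to exclude). If $k=1$, then $\Phi(g_1)$ has trace $2$ while $\Phi(\Delta^2)=-I$ has trace $-2$, a contradiction. If $k=2$, then $\Phi(g_1)\Phi(g_2)=-I$, so $\Phi(g_2)=-\Phi(g_1)^{-1}$; since $\det\Phi(g_1)=1$ we have $\mathrm{tr}\,\Phi(g_1)^{-1}=\mathrm{tr}\,\Phi(g_1)=2$, whence $\mathrm{tr}\,\Phi(g_2)=-2$, again contradicting that $\Phi(g_2)$ has trace $2$. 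Therefore $k\ge 3$.

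With this approach there is essentially no obstacle: the only points needing verification are that $\Phi$ is well defined and that $\Phi(\Delta^2)=-I$, both routine. For completeness I would note that a proof staying entirely inside $B_3$ is also available through the duality bookkeeping---writing $\Delta^{2+\omega(\tau_1)+\omega(\tau_2)}=\xi_1\xi_2$ as in Setup~\ref{setfactorization}, so that $\xi_1$ is dual to $\xi_2$, and combining the pseudo-symmetry of duals (Proposition~\ref{ppseudosymmetryduality}) with $\omega(\xi_j)=2\omega(\tau_j)+e_j-1$ from Proposition~\ref{pomeganumberpowerpositivehalftwist}. However, that route only forces both factors to be non-standard and leaves the system $\omega(\tau_2)=\omega(\tau_1)+e_1-3$ consistent, so it would demand a genuine structural analysis of when the right-dual of a power of a positive half-twist is again a power of a positive half-twist; that analysis is precisely the difficulty the trace argument sidesteps.
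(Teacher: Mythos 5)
Your proof is correct, and it takes a genuinely different route from the paper. The paper stays inside $B_3$: it first notes that $\Delta^2$ cannot itself be a power of a positive half-twist (being central, it is its own conjugacy class), and then rules out two-factor factorizations by the minimal-complexity method --- since $e_1+e_2=6$ forces some $e_j\geq 3$, Lemma~\ref{lcomplexityzerofactorization} gives $c({\cal F})>0$, and the paper then exhibits a complexity-reducing global conjugation move (via Lemma~\ref{lcomplexityreducingglobalconjugation} when both factors are non-standard, and via Proposition~\ref{pGarsidepowerreduction} together with Lemma~\ref{lcharacterizationdualpositivebraids} when one factor is a power of a standard half-twist), contradicting minimality. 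You instead push everything through the representation $\Phi\colon B_3\to\mathrm{SL}_2(\mathbb{Z})$ (the same map $\phi$ the paper introduces later, in Section~\ref{sLefschetzfibrations}) and use the trace as a conjugation invariant: every power of a positive half-twist lands in the trace-$2$ conjugacy classes by Corollary~\ref{cpositivehalftwistconjugacy}, while $\Phi(\Delta^2)=-I$ has trace $-2$, and for $k=2$ the identity $\mathrm{tr}(A^{-1})=\mathrm{tr}(A)$ on $\mathrm{SL}_2$ forces $\mathrm{tr}\,\Phi(g_2)=-2$. All steps check out, and there is no circularity in borrowing $\Phi$, since its well-definedness needs only the braid relation. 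What each approach buys: yours is shorter, entirely elementary, and independent of the duality machinery; the paper's fits the uniform complexity-reduction template used throughout Section~\ref{smain} and, unlike the trace argument, extends to the three-factor analysis of Lemma~\ref{lnotthreefactors}, where products of three trace-$2$ matrices can certainly equal $-I$ (e.g.\ the image of the standard nodal factorization), so the obstruction you exploit genuinely disappears at $k=3$.
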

\begin{proof}
Let us assume, for a contradiction, that there exists a factorization of $\Delta^2$ into a product of at most two powers of positive half-twists. Firstly, the factorization must have precisely two factors. Indeed, $\Delta^2$ itself is not the power of a positive half-twist, since $\Delta^2$ is central and constitutes its own conjugacy class.

Let us assume, for a contradiction, that ${\cal F} = \left(g_1,g_2\right)$ is a factorization of $\Delta^2$ into a product of two powers of positive half-twists, with minimal complexity. We adopt Setup~\ref{setfactorization} with $k=2$. Firstly, Lemma~\ref{lcomplexityzerofactorization} implies that the complexity $c\left({\cal F}\right)>0$. Indeed, either $e_1\geq 3$ or $e_2\geq 3$ since $e_1 + e_2 = 6$ (in the notation of Setup~\ref{setfactorization}). We will show that we can always apply a complexity reducing global conjugation move to the factorization ${\cal F}$, and this contradiction of the minimality of the complexity $c\left({\cal F}\right)$ will complete the proof.

Of course, $\xi_1$ is dual to $\xi_2$ (in the notation of Setup~\ref{setfactorization}). If $g_1$ and $g_2$ are both powers of non-standard positive half-twists, then Lemma~\ref{lcomplexityreducingglobalconjugation} implies that we can apply a complexity reducing global conjugation move to the factorization ${\cal F}$. We conclude that at least one of the factors is the power of a standard positive half-twist. 

Let us assume without loss of generality that $g_1$ is the power of a standard positive half-twist, and write $g_1 = \sigma_{i_1}^{e_1}$. Proposition~\ref{pdualbraidproduct} implies that $\rho_2$ begins with $\sigma_{i_1'}\sigma_{i_1}$ where $i_1\neq i_1'\in \{1,2\}$. Lemma~\ref{lcharacterizationdualpositivebraids} implies that $\tau_2$ ends with $\left(\sigma_{i_1}\right)_{\left(\omega\left(\tau_2\right)\right)}$ and $\left(\sigma_{i_1}\right)_{\left(\omega\left(\tau_2\right)\right)}$ is a cut right-divisor of $\tau_2$. Proposition~\ref{pGarsidepowerreduction} now implies that the absolute value of the Garside power of $\sigma_{i_1}g_2\sigma_{i_1}^{-1}$ is strictly less than the absolute value of the Garside power of $g_2$. Of course, $\sigma_{i_1}g_1\sigma_{i_1}^{-1} = g_1$. We conclude that the complexity $c\left(\sigma_{i_1}{\cal F}\sigma_{i_1}^{-1}\right)<c\left({\cal F}\right)$. Therefore, the statement is established.
\end{proof}

We state the equivalent geometric formulation of Lemma~\ref{lnottwofactors}.

\begin{theorem}
\label{tnottwosingularities}
A degree three simple Hurwitz curve $C\subseteq \mathbb{CP}^2$ has at least three $A_n$-singularities.
\end{theorem}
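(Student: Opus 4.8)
The plan is to transfer the problem to an algebraic statement about the braid group $B_3$ via braid monodromy, and then invoke the lemma that was just proved. By the theory reviewed in Subsection~\ref{ssHurwitzcurvesbraidmonodromy}, a degree three simple Hurwitz curve $C\subseteq\mathbb{CP}^2$ has a braid monodromy factorization, which is a factorization of $\Delta^2$ into powers of positive half-twists in $B_3$, and the number of factors in this factorization equals the number of singular intersections of $C$ --- that is, the number of $A_n$-singularities of $C$ (counting nondegenerate tangencies as $A_1$-singularities). So the number of $A_n$-singularities of $C$ is precisely the number of factors in its braid monodromy factorization.

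First I would recall that a braid monodromy factorization of $C$ exists and is a factorization $\Delta^2\equiv\left(g_1,g_2,\dots,g_k\right)$ where each $g_j$ is the $n_j$th power of a positive half-twist, with $k$ equal to the total number of $A_n$-singularities of $C$. Then the statement to be proved, namely $k\geq 3$, is exactly the conclusion of Lemma~\ref{lnottwofactors}, which asserts that a factorization of $\Delta^2$ into powers of positive half-twists has at least three factors. Thus the geometric statement follows immediately by applying Lemma~\ref{lnottwofactors} to the braid monodromy factorization of $C$.

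The proof is therefore essentially a translation: the only content is the dictionary between singularities of $C$ and factors of the braid monodromy factorization, combined with the already-established algebraic bound. I do not expect any genuine obstacle here, since the hard work was done in Lemma~\ref{lnottwofactors} (the complexity-reduction argument producing a complexity-reducing global conjugation move, contradicting minimality). The one point to state carefully is that every factor of a braid monodromy factorization of a \emph{simple} Hurwitz curve is indeed a power of a positive half-twist, which is recorded in the discussion preceding Theorem~\ref{tKK}, so that Lemma~\ref{lnottwofactors} applies verbatim.

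Concretely, I would write the proof as follows.

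\begin{proof}
The braid monodromy factorization of a degree three simple Hurwitz curve $C\subseteq\mathbb{CP}^2$ is a factorization of $\Delta^2$ into powers of positive half-twists in the braid group $B_3$. The number of factors in this factorization is equal to the number of $A_n$-singularities of $C$, since each factor corresponds to a singular intersection of $C$, and a factor that is the $n$th power of a positive half-twist corresponds to an $A_n$-singularity of $C$. Lemma~\ref{lnottwofactors} implies that a factorization of $\Delta^2$ into powers of positive half-twists has at least three factors. Therefore, $C$ has at least three $A_n$-singularities, and the statement is established.
\end{proof}
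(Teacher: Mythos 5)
Your proposal is correct and matches the paper's own proof: both translate the statement via the braid monodromy dictionary (number of $A_n$-singularities equals number of factors in the braid monodromy factorization of $\Delta^2$) and then apply Lemma~\ref{lnottwofactors}. No issues.
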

\begin{proof}
The braid monodromy factorization of a degree three simple Hurwitz curve $C\subseteq \mathbb{CP}^2$ is a factorization ${\cal F}$ of $\Delta^2$ into a product of $k$ powers of positive half-twists, where $k$ is the number of $A_n$-singularities in $C$. Lemma~\ref{lnottwofactors} implies that $k\geq 3$. Therefore, the statement is established.
\end{proof}

We now constrain factorizations of $\Delta^2$ into three powers of positive half-twists.

\begin{lemma}
\label{lnotthreefactors}
A factorization of $\Delta^2$ into powers of positive half-twists cannot have precisely three factors, where one factor is the cube of a positive half-twist. 
\end{lemma}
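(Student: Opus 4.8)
The plan is to argue by contradiction, reducing everything to the $\omega$-arithmetic furnished by Lemma~\ref{lthreefactorsconstraint} and Lemma~\ref{lthreefactorsreduction}, exactly as in the proof of Theorem~\ref{tmainclassificationfactorizationsfourthpowerpositivehalftwist}, but now extracting a numerical impossibility instead of a classification. Suppose ${\cal F} = \left(g_1,g_2,g_3\right)$ is a factorization of $\Delta^2$ into powers of positive half-twists with one factor of type $3$. By Proposition~\ref{pnumberoffactorsinfactorization} we have $e_1+e_2+e_3 = 6$, and since one $e_j = 3$ with each $e_j\geq 1$, the multiset of types is $\{1,2,3\}$. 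As Hurwitz equivalence preserves the multiset of types (Proposition~\ref{pHurwitzequivalencefunctorial}), I may pass to a minimal complexity representative and assume ${\cal F}$ has minimal complexity. The cube factor cannot be a standard cube $\sigma_i^3$: if $c\left({\cal F}\right) = 0$ then Proposition~\ref{pcomplexityzeroiffstandard} makes every factor a power of a standard positive half-twist, and Lemma~\ref{lcomplexityzerofactorization} forbids a cube; hence $c\left({\cal F}\right)>0$ and the cube factor is non-standard. Since the factorization is not three squares, Lemma~\ref{lthreefactorsconstraint} applies: after replacing ${\cal F}$ by a Hurwitz equivalent factorization of the same complexity I may assume $\xi_1\xi_2$ is indivisible by $\Delta$ (the case $\xi_2\xi_3$ being symmetric), obtaining the two identities $2 = \delta_1+\delta_2+\delta_3+\epsilon$ and $\omega\left(\rho_3\right) = 2 + \omega\left(\rho_1\right)+\omega\left(\rho_2\right)+\delta_3 - e_3$.

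Because $c\left({\cal F}\right)>0$, at least one $\delta_j = 1$, so the first identity leaves only two possibilities: either $\epsilon = 0$ with exactly two non-standard factors, or $\epsilon = 1$ with exactly one. In the first case Lemma~\ref{lthreefactorsreduction} forces the two non-standard factors to be $g_1$ and $g_2$, so $\delta_3 = 0$ and $\omega\left(\rho_3\right) = 0$; the second identity then reads $e_3 = 2 + \omega\left(\rho_1\right)+\omega\left(\rho_2\right) \geq 4$ since $\omega\left(\rho_1\right),\omega\left(\rho_2\right)\geq 1$, contradicting $e_3\leq 3$.

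If instead $\epsilon = 1$ and exactly one factor is non-standard, that factor must be the cube. If it sits in position $3$, then $\delta_3 = 1$, $e_3 = 3$, and $\omega\left(\rho_1\right)=\omega\left(\rho_2\right)=0$, so the second identity gives $\omega\left(\rho_3\right) = 2+0+0+1-3 = 0$, contradicting the non-standardness of $g_3$. If it sits in position $1$ or $2$, then position $3$ carries a standard factor of type $1$ or $2$, whence $e_3\leq 2$ and $\omega\left(\rho_3\right)=\delta_3=0$; the two standard positions contribute zero $\omega$, so $\omega\left(\rho_1\right)+\omega\left(\rho_2\right)$ equals the positive $\omega$ of the unique non-standard factor, and the second identity yields $e_3 = 2 + \omega\left(\rho_1\right)+\omega\left(\rho_2\right)\geq 3$, again contradicting $e_3\leq 2$. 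All cases being impossible, no such factorization exists.

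The substantive work is already carried by Lemma~\ref{lthreefactorsconstraint} and Lemma~\ref{lthreefactorsreduction}, which translate the product equation $\Delta^2 = g_1g_2g_3$ into the $\omega$-identities I exploit; once these are in hand the contradictions are immediate arithmetic. The only points requiring care are the reductions that license their use: passing to a minimal complexity factorization, arranging $\xi_1\xi_2$ indivisible by $\Delta$, and invoking Lemma~\ref{lthreefactorsreduction} to pin the two non-standard factors into positions $1$ and $2$. I expect the one genuinely delicate step to be confirming that the \textbf{(WLOG)} choice between ``$\xi_1\xi_2$ indivisible'' and ``$\xi_2\xi_3$ indivisible'' loses no generality, which I would justify by the same mirror argument used in the analogous step of Theorem~\ref{tmainclassificationfactorizationsfourthpowerpositivehalftwist}.
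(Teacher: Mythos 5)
Your reduction to the $\omega$-arithmetic of Lemma~\ref{lthreefactorsconstraint} is the right framework and matches the paper's strategy, but there is a genuine gap at the step ``hence $c\left({\cal F}\right)>0$ \emph{and the cube factor is non-standard}.'' From $c\left({\cal F}\right)>0$ you may only conclude that \emph{some} factor is the power of a non-standard positive half-twist; Lemma~\ref{lcomplexityzerofactorization} applies only when \emph{all} factors are powers of standard positive half-twists, and its divisibility argument does not carry over once another factor has negative Garside power (then $\xi_1\xi_2\xi_3=\Delta^{2+\omega_3}$ with $\omega_3\geq 1$, and $\sigma_i^{3}$ does divide $\Delta^{3}$). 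Because of this, in your $\epsilon=1$ branch the assertion ``that factor must be the cube'' is unsupported, and you omit the configuration where the unique non-standard factor is $g_j$ with $j\in\{1,2\}$ of type $1$ or $2$ while $g_3=\sigma_{i_3}^{3}$ is a \emph{standard} cube. This omitted case cannot be dismissed by the arithmetic you use: the identity $\omega\left(\rho_3\right)=2+\omega\left(\rho_1\right)+\omega\left(\rho_2\right)+\delta_3-e_3$ reads $0=\omega\left(\rho_j\right)-1$, which is perfectly consistent with $\omega\left(\rho_j\right)=1$ and $e_3=3$. The paper's proof confronts exactly this sub-case and excludes it by a structural, not numerical, argument: it shows that $\xi_1\xi_2$ cannot be left-dual to $\sigma_{i_3}^{3}$, using Lemma~\ref{lcharacterizationdualpositivebraids} and Proposition~\ref{pcutclosureconjugate} (concretely, the forced shapes $\xi_j=\overline{\rho_j}\sigma_{i_j}^{e_j-1}\overline{\tau_j}$ make the concatenation $\xi_1\left(\xi_2\right)_{\left(\omega\left(\tau_1\right)\right)}$ with $\epsilon=1$ acquire a $\Delta$, or fail to have the shape of the dual of $\sigma_{i_3}^{3}$). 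Your proof needs this additional argument to close.

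The remainder of your case analysis is sound and agrees with the paper: the $\epsilon=0$ branch (two non-standard factors forced into positions $1,2$ by Lemma~\ref{lthreefactorsreduction}, giving $e_3\geq 4$), the sub-case where the non-standard cube sits in position $3$ (giving $\omega\left(\rho_3\right)=0$), and the sub-case where a non-standard cube sits in position $1$ or $2$ (giving $e_3\geq 3$ against $e_3\leq 2$) are all handled correctly, and your worry about the symmetry of the ``$\xi_1\xi_2$ versus $\xi_2\xi_3$'' reduction is resolved exactly as in the paper. The single missing piece is the duality obstruction for a standard cube in the third position.
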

\begin{proof}
Let us assume, for a contradiction, that there is a factorization of $\Delta^2$ into three powers of positive half-twists where one factor is the cube of a positive half-twist. Let ${\cal F} = \left(g_1,g_2,g_3\right)$ be such a factorization of $\Delta^2$ with minimal complexity. Proposition~\ref{pnumberoffactorsinfactorization} implies that ${\cal F}$ is a factorization of $\Delta^2$ into one cube of a positive half-twist, one square of a positive half-twist, and one positive half-twist. We will show that such a factorization of $\Delta^2$ into powers of positive half-twists does not exist.  Lemma~\ref{lthreefactorsconstraint} implies that we can assume that either $\xi_1\xi_2$ or $\xi_2\xi_3$ is indivisible by $\Delta$, after possibly replacing ${\cal F}$ by a Hurwitz equivalent minimal complexity factorization. We assume without loss of generality that $\xi_1\xi_2$ is indivisible by $\Delta$.

We adopt notation from Lemma~\ref{lthreefactorsconstraint}. The formulas in Lemma~\ref{lthreefactorsconstraint} imply that either exactly two of the factors in ${\cal F}$ are powers of non-standard positive half-twists and $\epsilon = 0$, or exactly one of the factors in ${\cal F}$ is the power of a non-standard positive half-twist and $\epsilon = 1$. 

Firstly, if two of the factors in ${\cal F}$ are powers of non-standard positive half-twists, then they cannot be $g_1$ and $g_2$. Indeed, otherwise Lemma~\ref{lthreefactorsconstraint} implies that $e_3 = 2+\omega\left(\rho_1\right)+\omega\left(\rho_2\right)\geq 4$, which is a contradiction. Lemma~\ref{lthreefactorsreduction} now implies that there cannot be exactly two factors in ${\cal F}$ that are powers of non-standard positive half-twists.

Secondly, we assume that exactly one of the factors in ${\cal F}$ is the power of a non-standard positive half-twist and $\epsilon = 1$. If $g_j$ is the power of a non-standard positive half-twist for $j\in \{1,2\}$, then Lemma~\ref{lthreefactorsconstraint} implies that $0<\omega\left(\rho_j\right) = e_3 - 2$. In particular, $e_3 = 3$. However, in this case $\xi_1\xi_2$ cannot be dual to $\xi_3$ by Lemma~\ref{lcharacterizationdualpositivebraids} and Proposition~\ref{pcutclosureconjugate}. If $g_3$ is the power of a non-standard positive half-twist, then Lemma~\ref{lthreefactorsconstraint} implies that $0<\omega\left(\rho_3\right) = 3 - e_3$, which is a contradiction.

Therefore, the statement is established.
\end{proof}

We state the equivalent geometric formulation of Lemma~\ref{lnotthreefactors}.

\begin{theorem}
\label{tnotthreesingularities}
A degree three simple Hurwitz curve $C\subseteq \mathbb{CP}^2$ does not have precisely three $A_n$-singularities, unless either all three $A_n$-singularities are $A_2$-singularities (nodal singularities), or one $A_n$-singularity is an $A_4$-singularity and the other two $A_n$-singularities are $A_1$-singularities.
\end{theorem}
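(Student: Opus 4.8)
The plan is to reduce the geometric statement to the algebraic classification of three-factor factorizations of $\Delta^2$ developed in this section. First I would translate the problem through braid monodromy: by Theorem~\ref{tKK} the isotopy class of a degree three simple Hurwitz curve $C$ is faithfully recorded by its braid monodromy, and for a curve with precisely three $A_n$-singularities this is a factorization ${\cal F}$ of $\Delta^2$ into exactly three powers of positive half-twists. Moreover, as recorded in the discussion following Proposition~\ref{pnumberoffactorsinfactorization}, a factor of type $n$ corresponds exactly to an $A_n$-singularity of $C$, so the multiset of singularity-indices of $C$ coincides with the multiset of factor-types of ${\cal F}$. Hence the theorem is equivalent to the assertion that the only type-multisets realized by a three-factor factorization of $\Delta^2$ into powers of positive half-twists are $\{2,2,2\}$ and $\{4,1,1\}$.

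Next I would enumerate the possibilities using the singularity formula. Proposition~\ref{pnumberoffactorsinfactorization} forces the three types $e_1,e_2,e_3\geq 1$ to satisfy $e_1+e_2+e_3 = 6$. The only partitions of $6$ into exactly three positive parts are $\{4,1,1\}$, $\{3,2,1\}$, and $\{2,2,2\}$. The multisets $\{4,1,1\}$ and $\{2,2,2\}$ are precisely the two configurations permitted by the ``unless'' clause (one $A_4$-singularity together with two $A_1$-singularities, and three $A_2$-singularities, respectively), so the entire content of the theorem reduces to excluding the single remaining case $\{3,2,1\}$.

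Finally, I would observe that a factorization whose type-multiset is $\{3,2,1\}$ has exactly one factor that is the cube of a positive half-twist, which is exactly the configuration forbidden by Lemma~\ref{lnotthreefactors}. Therefore no such factorization exists, and the theorem follows by assembling these pieces. The only genuinely substantive ingredient is the appeal to Lemma~\ref{lnotthreefactors}; everything else is the bookkeeping of the singularity formula together with the dictionary between singularities and factor-types. In that sense the main obstacle has already been surmounted algebraically, inside Lemma~\ref{lnotthreefactors}, whose proof uses the $\omega$-number arithmetic of Lemma~\ref{lthreefactorsconstraint} to contradict the duality relation that $\xi_1\xi_2$ is dual to $\xi_3$ once the forced value of $e_3$ is pinned down. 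At the level of the present geometric statement there is no remaining difficulty beyond this reduction.
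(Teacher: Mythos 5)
Your proposal is correct and follows the same route as the paper: translate via braid monodromy, use the singularity formula $e_1+e_2+e_3=6$ to see that the only type-multiset not covered by the ``unless'' clause is $\{3,2,1\}$ (hence contains a cube of a positive half-twist), and invoke Lemma~\ref{lnotthreefactors}. The paper's proof leaves the partition enumeration implicit, but otherwise the arguments coincide.
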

\begin{proof}
If a degree three simple Hurwitz curve $C\subseteq \mathbb{CP}^2$ has precisely three $A_n$-singularities, then the braid monodromy factorization of $C$ is a factorization ${\cal F}$ of $\Delta^2$ into a product of three powers of positive half-twists. If the conclusion is not satisfied, then ${\cal F}$ is a factorization of $\Delta^2$ into three powers of positive half-twists, where one factor is the cube of a positive half-twist. Lemma~\ref{lnotthreefactors} states that such a factorization of $\Delta^2$ does not exist. Therefore, the statement is established. 
\end{proof}

We can interpret the combination of Theorem~\ref{tnottwosingularities} and Theorem~\ref{tnotthreesingularities} as a complete set of constraints on the possible singularities of a degree three simple Hurwitz curve $C\subseteq \mathbb{CP}^2$. The following statement together with the singularity formula (Lemma~\ref{lsingularityformula}) is equivalent to Theorem~\ref{tmainsingularities} in the Introduction.

\begin{theorem}
\label{tsingularityconstraints}
We have the following constraints on the singularities of a degree three simple Hurwitz curve $C\subseteq \mathbb{CP}^2$:
\begin{description}
\item[(i)] $C$ has at most one $A_3$-singularity (cuspidal singularity).
\item[(ii)] $C$ does not simultaneously have an $A_2$-singularity (nodal singularity) and an $A_3$-singularity (cuspidal singularity).
\item[(iii)] $C$ does not simultaneously have an $A_2$-singularity (nodal singularity) and an $A_4$-singularity (tacnodal singularity).
\item[(iv)] $C$ does not have an $A_n$-singularity for $n\geq 5$.
\end{description}
\end{theorem}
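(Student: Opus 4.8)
The plan is to derive each of the four constraints as a consequence of the singularity formula (Lemma~\ref{lsingularityformula}) together with the two constraints on the number of factors established immediately above, namely Lemma~\ref{lnottwofactors} (at least three factors) and Lemma~\ref{lnotthreefactors} (no three-factor factorization containing a cube of a positive half-twist). The key observation is that by Proposition~\ref{pnumberoffactorsinfactorization} the singularity formula $\sum_{n=1}^{6} n\nu_n = 6$ ties the \emph{types} of factors to the \emph{number} of factors: the number of factors is $k = \sum_n \nu_n$, while the weighted sum is fixed at $6$, so large values of $n$ (a high-type singularity) force a small value of $k$. This means that each prohibited configuration of singularities corresponds to a factorization of $\Delta^2$ with too few factors, at which point Lemma~\ref{lnottwofactors} or Lemma~\ref{lnotthreefactors} applies.

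First I would prove \textbf{(iv)}: if $C$ had an $A_n$-singularity with $n\geq 5$, then the corresponding factorization of $\Delta^2$ has a factor of type $n\geq 5$. The singularity formula forces the remaining factors to contribute at most $6-n\leq 1$ to the weighted sum, so there are at most two factors total (a single type-$6$ factor, or a type-$5$ factor and a type-$1$ factor). Lemma~\ref{lnottwofactors} rules out any factorization with fewer than three factors, giving the contradiction. Next, for \textbf{(i)}, two $A_3$-singularities would give two factors of type $3$, consuming the entire weight $6$, hence a factorization of $\Delta^2$ into exactly two cubes of positive half-twists; again Lemma~\ref{lnottwofactors} (two factors) delivers the contradiction. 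For \textbf{(ii)}, an $A_2$- and an $A_3$-singularity together contribute $2+3 = 5$, leaving weight $1$ for the other factors, so exactly one additional factor of type $1$: this is a three-factor factorization of $\Delta^2$ one of whose factors is a cube of a positive half-twist, which is exactly the configuration excluded by Lemma~\ref{lnotthreefactors}. Finally, for \textbf{(iii)}, an $A_2$- and an $A_4$-singularity contribute $2+4 = 6$, so these are the only two factors, and Lemma~\ref{lnottwofactors} again forbids a two-factor factorization.

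Assembling these, the statement follows by converting each geometric hypothesis into its braid monodromy factorization via Theorem~\ref{tKK} and Definition~\ref{dbraidmonodromy}, observing that a factor of type $n$ corresponds to an $A_n$-singularity, and then invoking the appropriate counting argument above. Concretely I would structure the proof as four short paragraphs, one per item, each of the form: ``Suppose $C$ has the indicated singularities; then its braid monodromy is a factorization of $\Delta^2$ into powers of positive half-twists in which [the type data is as computed]; the singularity formula (Lemma~\ref{lsingularityformula}) forces [the total number of factors]; but this contradicts [Lemma~\ref{lnottwofactors} or Lemma~\ref{lnotthreefactors}].'' Since I expect no genuine obstacle here, I would present it as a direct deduction rather than an induction or a complexity argument.

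I do not anticipate a hard step: all the analytic work has already been done in Lemma~\ref{lnottwofactors} and Lemma~\ref{lnotthreefactors}, which are themselves the real theorems (proved via the duality theory and the complexity-reduction machinery of Section~\ref{sB3background}). The only point requiring a little care is \textbf{(ii)}: one must verify that the weight bookkeeping genuinely produces a factorization with a cube factor and exactly three factors total, so that Lemma~\ref{lnotthreefactors} is applicable in its precise stated form (three factors, one of which is a cube). This is immediate from $2+3+1 = 6$ and $\nu_1+\nu_2+\nu_3 = 3$, but I would state it explicitly to make the appeal to Lemma~\ref{lnotthreefactors} airtight. Thus the main ``obstacle'' is purely organizational — ensuring each case is matched to the correct previously-established lemma — rather than mathematical.
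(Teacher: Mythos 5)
Your proposal is correct and follows exactly the paper's route: the paper proves this theorem by combining the singularity formula (Lemma~\ref{lsingularityformula}) with Theorem~\ref{tnottwosingularities} and Theorem~\ref{tnotthreesingularities}, which are precisely the geometric reformulations of Lemma~\ref{lnottwofactors} and Lemma~\ref{lnotthreefactors} that you invoke. Your case-by-case weight bookkeeping is just the explicit version of the paper's one-line deduction, so there is nothing to add.
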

\begin{proof}
The statements are a consequence of Theorem~\ref{tnottwosingularities}, Theorem~\ref{tnotthreesingularities} and the singularity formula (Lemma~\ref{lsingularityformula}).
\end{proof}

If a degree three simple Hurwitz curve $C\subseteq \mathbb{CP}^2$ has three $A_2$-singularities (nodal singularities), one $A_4$-singularity (tacnodal singularity) and two $A_1$-singularities, or four or more $A_n$-singularities, then we have established that the isotopy class of $C$ is uniquely determined by the numbers of $A_n$-singularities in $C$ for each positive integer $n\geq 1$ (Theorem~\ref{tuniqueisotopyclasssmooth} in Subsection~\ref{ssclassificationfactorizationspositivehalftwists} and Theorem~\ref{tuniqueisotopyclassnodalcuspidal} in Subsection~\ref{ssclassificationfactorizationspowerspositivehalftwists}). We summarize the main results in this paper in the following statement.

\begin{theorem}
\label{tfinal}
Let $C\subseteq \mathbb{CP}^2$ be a degree three simple Hurwitz curve. If $\nu_n$ is the number of $A_n$-singularities in $C$ for each positive integer $n\geq 1$, then the tuple $\left(\nu_n\right)_{n=1}^{6}$ is a standard tuple (Definition~\ref{dstandardfactorization}). Conversely, if $\left(\nu_n\right)_{n=1}^{6}$ is a standard tuple, then there is a unique degree three simple Hurwitz curve $C\subseteq \mathbb{CP}^2$ up to isotopy such that the number of $A_n$-singularities in $C$ is equal to $\nu_n$ for each positive integer $n\geq 1$.
\end{theorem}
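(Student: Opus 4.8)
The plan is to assemble Theorem~\ref{tfinal} directly from the machinery already developed, treating it as a two-way correspondence between degree three simple Hurwitz curves (up to isotopy) and standard tuples. The forward direction asserts that the singularity data $\left(\nu_n\right)_{n=1}^{6}$ of any such curve is one of the six standard tuples in Definition~\ref{dstandardfactorization}; the converse asserts existence and uniqueness of a curve realizing each standard tuple. I would handle these in turn, leaning on the Kharlamov--Kulikov correspondence (Theorem~\ref{tKK}) to pass freely between curves and Hurwitz equivalence classes of factorizations of $\Delta^2$ into powers of positive half-twists.

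For the forward direction, I would first invoke the singularity formula (Lemma~\ref{lsingularityformula}), which gives $\sum_{n=1}^{6} n\nu_n = 6$, and then Theorem~\ref{tsingularityconstraints}, which supplies the four additional constraints: at most one $A_3$-singularity, no simultaneous $A_2$ and $A_3$, no simultaneous $A_2$ and $A_4$, and no $A_n$ for $n\geq 5$. The key step is the purely combinatorial verification that the only nonnegative integer tuples satisfying the singularity formula together with these four constraints are exactly the six tuples appearing in Definition~\ref{dstandardfactorization}. This is a finite enumeration: the formula $\sum_{n=1}^{6} n\nu_n = 6$ already forces $\nu_n = 0$ for $n\geq 5$ after discarding the excluded cases, and one checks case by case (according to the largest index $n$ with $\nu_n > 0$) that the surviving possibilities are precisely $\left(6,0,0,0,0,0\right)$, $\left(4,1,0,0,0,0\right)$, $\left(2,2,0,0,0,0\right)$, $\left(3,0,1,0,0,0\right)$, $\left(0,3,0,0,0,0\right)$, and $\left(2,0,0,1,0,0\right)$.

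For the converse, existence follows because each standard tuple is realized by the corresponding explicit standard factorization of $\Delta^2$ written down in Definition~\ref{dstandardfactorization}, and every such factorization is the braid monodromy of some simple Hurwitz curve (realizability of a factorization of $\Delta^2$ into powers of positive half-twists as braid monodromy). Uniqueness up to isotopy is where the bulk of the earlier work enters: by Theorem~\ref{tKK} it suffices to show that any two factorizations of $\Delta^2$ with the same numbers of factors of each type are Hurwitz equivalent. This is supplied by combining Theorem~\ref{tmainfactorizationpositivehalftwists} (the smooth case, tuple $\left(6,0,\dots\right)$), Theorem~\ref{tmainclassificationfactorizationsquarespositivehalftwists} (positive half-twists and squares, covering the nodal tuples and $\left(2,2,0,\dots\right)$, $\left(4,1,0,\dots\right)$), Theorem~\ref{tmainclassificationfactorizationscubepositivehalftwist} (the cuspidal tuple $\left(3,0,1,0,\dots\right)$), and Theorem~\ref{tmainclassificationfactorizationsfourthpowerpositivehalftwist} (the tacnodal tuple $\left(2,0,0,1,0,0\right)$): in each case every factorization with the given type counts is Hurwitz equivalent to the unique standard factorization of Definition~\ref{dstandardfactorization}. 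Equivalently, one may cite the packaged geometric statements Theorem~\ref{tuniqueisotopyclasssmooth} and Theorem~\ref{tuniqueisotopyclassnodalcuspidal} to conclude isotopy directly.

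The only genuine obstacle is bookkeeping rather than mathematics: one must confirm that the six standard factorizations genuinely exhaust the classification, i.e.\ that every factorization realizing a standard tuple has been shown Hurwitz equivalent to a standard factorization by the appropriate one of the four classification theorems, and that the type-count invariant (Proposition~\ref{pHurwitzequivalencefunctorial}) distinguishes the six standard factorizations from one another so that no two standard tuples collapse. The latter is immediate since the tuples $\left(\nu_n\right)$ are pairwise distinct, and the former is exactly the content of the four theorems cited above; thus the proof reduces to correctly matching each standard tuple to its governing classification theorem and invoking Theorem~\ref{tKK}.
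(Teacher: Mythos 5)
Your proposal is correct and takes essentially the same route as the paper: the paper's proof of Theorem~\ref{tfinal} simply cites Theorem~\ref{tmainsingularities} (i.e.\ the singularity formula plus Theorem~\ref{tsingularityconstraints}) for the forward direction and the combination of Theorem~\ref{tuniqueisotopyclasssmooth} and Theorem~\ref{tuniqueisotopyclassnodalcuspidal} for the converse, which is exactly the assembly you describe, with your finite enumeration of admissible tuples and your appeal to Theorem~\ref{tKK} making explicit what the paper leaves implicit.
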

\begin{proof}
The first statement is equivalent to Theorem~\ref{tmainsingularities}. The second statement is equivalent to the combination of Theorem~\ref{tuniqueisotopyclasssmooth} and Theorem~\ref{tuniqueisotopyclassnodalcuspidal}.
\end{proof}

We remark that Theorem~\ref{tfinal} is equivalent to the combination of Theorem~\ref{tmainHurwitzcurve} and Theorem~\ref{tmainsingularities} in the Introduction.

\section{The classification of genus one Lefschetz fibrations over $\mathbb{S}^2$}
\label{sLefschetzfibrations}
In this section, we establish a complete classification of genus one Lefschetz fibrations over $\mathbb{S}^2$. Indeed, we establish that the number of singular fibers in a genus one Lefschetz fibration over $\mathbb{S}^2$ is a complete invariant of the isomorphism class of the Lefschetz fibration, and that this number is divisible by $12$. We remark that this has already been established by Moishezon-Livne~\cite{moishezongenusonelefschetzfibrations}, but we give a short independent proof of the classification based on the results in this paper.

Indeed, we recall that the classification of genus one Lefschetz fibrations over $\mathbb{S}^2$ is equivalent to the classification of factorizations of the identity into positive Dehn twists in the modular group $\text{Mod}\left(\mathbb{T}^2\right)\cong \text{SL}_2\left(\mathbb{Z}\right)$, where $\mathbb{T}^2$ is a genus one complex curve. The main observation is that there is a close relationship between the braid group $B_3$ and the modular group $\text{SL}_2\left(\mathbb{Z}\right)$, and we will use this relationship and the results in this paper in order to classify genus one Lefschetz fibrations over $\mathbb{S}^2$. 

Firstly, we recall that \[\text{SL}_2\left(\mathbb{Z}\right) = \left\{\begin{bmatrix} a & b \\ c & d \end{bmatrix}: a,b,c,d\in \mathbb{Z}\text{ and } ad-bc = 1\right\}\] is the group of $2\times 2$ integer matrices with determinant $+1$. The center of $\text{SL}_2\left(\mathbb{Z}\right)$ is the cyclic group $\left\langle -I \right\rangle$ of order $2$ generated by $-I$, where $I$ denotes the $2\times 2$ identity matrix in $\text{SL}_2\left(\mathbb{Z}\right)$. We also recall that \[\text{PSL}_2\left(\mathbb{Z}\right) = \text{SL}_2\left(\mathbb{Z}\right)/\left\langle - I \right\rangle.\] Let \[S = \begin{bmatrix} 1 & 1 \\ 0 & 1 \end{bmatrix},\text{ } T = \begin{bmatrix} 1 & 0 \\ -1 & 1 \end{bmatrix}.\] We view $S$ and $T$ as elements of $\text{SL}_2\left(\mathbb{Z}\right)$ or $\text{PSL}_2\left(\mathbb{Z}\right)$, depending on context. A \textit{positive Dehn twist} in $\text{SL}_2\left(\mathbb{Z}\right)$ is an element of the conjugacy class of $S$ (or $T$) in $\text{SL}_2\left(\mathbb{Z}\right)$. 

We recall the following classical result. 

\begin{proposition}
\label{propfpPSL_2}
We have an isomorphism of groups \[\text{SL}_2\left(\mathbb{Z}\right) = \left\langle S,T : STS = TST,\text{ }\left(ST\right)^6 = I \right\rangle.\]
\end{proposition}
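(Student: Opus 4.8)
The plan is to establish the presentation
\[
\mathrm{SL}_2(\mathbb{Z}) = \left\langle S,T : STS = TST,\ (ST)^6 = I \right\rangle
\]
by relating $\mathrm{SL}_2(\mathbb{Z})$ to the braid group $B_3$, whose Artin presentation $\langle \sigma_1,\sigma_2 : \sigma_1\sigma_2\sigma_1 = \sigma_2\sigma_1\sigma_2\rangle$ has been in use throughout the paper. The key structural fact I would invoke is that $B_3$ surjects onto $\mathrm{SL}_2(\mathbb{Z})$ via the rule $\sigma_1 \mapsto S$, $\sigma_2 \mapsto T$ (one checks directly that $S,T$ satisfy the braid relation $STS = TST$, so this assignment is a well-defined homomorphism), and that the kernel of this surjection is generated by the central element $\Delta^4 = (\sigma_1\sigma_2)^6$. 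Since $\Delta = \sigma_1\sigma_2\sigma_1 = \sigma_2\sigma_1\sigma_2$ maps to $STS = TST$, we have $(ST)^6 = \Delta^4$ mapping to $I$, which is exactly the extra relation appearing in the claimed presentation. The whole statement is therefore the assertion that adjoining the single relation $(ST)^6 = I$ to the braid presentation of $B_3$ yields a presentation of $\mathrm{SL}_2(\mathbb{Z})$.

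First I would record the two defining relations by direct matrix computation: verify $STS = TST$ (both equal $\begin{bmatrix} 0 & 1 \\ -1 & 0 \end{bmatrix}$, which represents $\Delta$) and verify $(ST)^6 = I$ in $\mathrm{SL}_2(\mathbb{Z})$. These give a surjection $\phi$ from the abstractly presented group $G = \langle S,T : STS = TST,\ (ST)^6 = I\rangle$ onto $\mathrm{SL}_2(\mathbb{Z})$, once I confirm $S,T$ generate $\mathrm{SL}_2(\mathbb{Z})$ (this is the classical statement that $\mathrm{SL}_2(\mathbb{Z})$ is generated by the elementary matrices, which I would cite as standard). The core of the proof is then to show $\phi$ is injective, equivalently that $G$ is no larger than $\mathrm{SL}_2(\mathbb{Z})$. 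The cleanest route is to pass to the quotient $G/\langle (ST)^2\rangle$: since $(ST)^2$ maps to $-I$ and generates the center, this quotient should be $\mathrm{PSL}_2(\mathbb{Z}) \cong \mathbb{Z}/2 * \mathbb{Z}/3$, which is presented as $\langle u,v : u^2 = v^3 = 1\rangle$ with $u = ST S$ (order $2$, image $\begin{bmatrix} 0 & 1 \\ -1 & 0 \end{bmatrix}$) and $v = ST$ (order $3$ in $\mathrm{PSL}_2$). I would verify that the relations $STS = TST$ and $(ST)^6 = I$ together imply $(STS)^2 = I$ and $(ST)^3 = \pm I$ at the appropriate level, matching the free-product presentation.

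Concretely, the main steps in order are: (1) check the matrix identities $STS = TST$ and $(ST)^6 = I$ and note $(ST)^2$, i.e.\ $(ST)^3$, realizes $-I$; (2) cite that $S,T$ generate $\mathrm{SL}_2(\mathbb{Z})$, giving surjectivity of $\phi$; (3) use the free-product structure $\mathrm{PSL}_2(\mathbb{Z}) \cong \mathbb{Z}/2 * \mathbb{Z}/3$ (a classical fact I would quote) to identify the quotient of $G$ by its center, counting that the order of the center in $G$ is at most $2$ while the center of $\mathrm{SL}_2(\mathbb{Z})$ is exactly $\langle -I\rangle$ of order $2$; (4) assemble the central extension $1 \to \langle -I\rangle \to \mathrm{SL}_2(\mathbb{Z}) \to \mathrm{PSL}_2(\mathbb{Z}) \to 1$ and the parallel extension for $G$ to conclude $\phi$ is an isomorphism by comparing orders of the center and the quotient.

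The main obstacle I anticipate is step (3)--(4): making rigorous that the element $(ST)^2$ (mapping to $-I$) is genuinely nontrivial in $G$ and has order exactly $2$, rather than collapsing, so that $G$ does not turn out to be $\mathrm{PSL}_2(\mathbb{Z})$ itself. This requires exhibiting $\mathrm{SL}_2(\mathbb{Z})$ as a genuine, non-split central extension and matching it against the abstract extension for $G$; the cleanest way to sidestep a delicate normal-form argument inside $G$ is to produce enough explicit homomorphisms out of $G$ (for instance to $\mathrm{SL}_2(\mathbb{Z})$ and to the abelianization) to detect that the center has order $2$, pinning down $|G| = |\mathrm{SL}_2(\mathbb{Z})|$ at the level of the two matching short exact sequences. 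Since $\mathrm{SL}_2(\mathbb{Z})$ is infinite, the comparison is of extensions rather than of finite orders, so I would phrase it as: $\phi$ induces an isomorphism on both the centers and the $\mathrm{PSL}_2$ quotients, hence is itself an isomorphism by the five lemma.
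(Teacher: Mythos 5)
Your proposal is essentially correct, but note that the paper does not prove this proposition at all: it is stated as a recalled classical result, with a pointer to the ping-pong lemma as the traditional source of both the $\text{SL}_2$ and $\text{PSL}_2$ presentations. Your argument supplies the lifting step from $\text{PSL}_2\left(\mathbb{Z}\right)$ to $\text{SL}_2\left(\mathbb{Z}\right)$ explicitly while still quoting $\text{PSL}_2\left(\mathbb{Z}\right)\cong \mathbb{Z}/2\ast\mathbb{Z}/3$ as a black box (which is where ping-pong, or a normal-form argument, is genuinely needed). The extension-comparison itself is sound: writing $G$ for the abstractly presented group and $z = \left(ST\right)^3$, the braid relation alone forces $z$ to be central in $G$ (it is the image of the central element $\Delta^2\in B_3$), the relation $\left(ST\right)^6 = I$ gives $z^2 = 1$, and the surjection $\phi:G\to\text{SL}_2\left(\mathbb{Z}\right)$ sends $z$ to $-I\neq I$, so $\langle z\rangle\to\langle -I\rangle$ is an isomorphism of groups of order two. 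The Tietze computation $S = v^{-1}u$, $T = u^{-1}v^2$ with $u = STS$, $v = ST$ identifies $G/\langle z\rangle$ with $\langle u,v : u^2 = v^3 = 1\rangle\cong\text{PSL}_2\left(\mathbb{Z}\right)$, and the short five lemma then shows $\phi$ is an isomorphism.

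Two points to clean up. First, you repeatedly write $\left(ST\right)^2$ where you mean $\left(ST\right)^3$: the element mapping to $-I$ is $\left(ST\right)^3 = \left(STS\right)^2$, whereas $\left(ST\right)^2$ is a matrix of order three, so the quotient you want is $G/\langle \left(ST\right)^3\rangle$, not $G/\langle \left(ST\right)^2\rangle$. Second, you do not need to determine the center of $G$; it suffices to observe that the specific element $z = \left(ST\right)^3$ is central (so $\langle z\rangle$ is normal and the quotient presentation makes sense) and that $\phi$ detects its nontriviality, which is exactly what the five-lemma argument requires.
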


Note that Proposition~\ref{propfpPSL_2} implies that $\text{PSL}_2\left(\mathbb{Z}\right)$ admits the finite presentation \[\text{PSL}_2\left(\mathbb{Z}\right) = \left\langle S,T : STS = TST,\text{ }\left(ST\right)^3 = I \right\rangle.\] Proposition~\ref{propfpPSL_2} also implies that $\text{PSL}_2\left(\mathbb{Z}\right)$ is isomorphic to the free product $\mathbb{Z}/2\mathbb{Z}\ast \mathbb{Z}/3\mathbb{Z}$. Indeed, $STS$ generates a copy of $\mathbb{Z}/2\mathbb{Z}$ in $\text{PSL}_2\left(\mathbb{Z}\right)$ and $ST$ generates a copy of $\mathbb{Z}/3\mathbb{Z}$ in $\text{PSL}_2\left(\mathbb{Z}\right)$. Furthermore, $\text{PSL}_2\left(\mathbb{Z}\right)$ is generated by $STS$ and $ST$ with these relations. A traditional proof of these finite presentations is based on the ping-pong lemma, which gives a criterion for a group to be a free product of subgroups in terms of the actions of the group and its subgroups on a set (see II.B of~\cite{topicsggtpierre}). We remark that the approach to classifying genus one Lefschetz fibrations over $\mathbb{S}^2$ in~\cite{moishezongenusonelefschetzfibrations} is based on the isomorphism $\text{PSL}_2\left(\mathbb{Z}\right)\cong \mathbb{Z}/2\mathbb{Z}\ast \mathbb{Z}/3\mathbb{Z}$ and the short exact sequence \[1\to \left\langle \pm I \right\rangle \to \text{SL}_2\left(\mathbb{Z}\right)\to \text{PSL}_2\left(\mathbb{Z}\right)\to 1.\]

However, in this paper, we will use our results on the braid group $B_3$ in order to classify genus one Lefschetz fibrations over $\mathbb{S}^2$. Indeed, let us define $\phi:B_3\to \text{SL}_2\left(\mathbb{Z}\right)$ in terms of generators in the Artin presentation of $B_3$ by the rules $\phi\left(\sigma_1\right) = S$ and $\phi\left(\sigma_2\right) = T$. Proposition~\ref{propfpPSL_2} implies that $\phi$ is a well-defined surjective homomorphism.

\begin{proposition}
\label{propB_3PSL_2}
The sequence of maps \[1\to \left\langle \Delta^4 \right\rangle \to B_3\to \text{SL}_2\left(\mathbb{Z}\right)\to 1\] is a short exact sequence. Furthermore, every positive Dehn twist in $\text{SL}_2\left(\mathbb{Z}\right)$ is the image of a positive half-twist in $B_3$.
\end{proposition}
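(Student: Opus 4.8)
The plan is to establish the proposition in two parts. For the short exact sequence, the key point is to identify the kernel of the surjective homomorphism $\phi: B_3 \to \text{SL}_2(\mathbb{Z})$. First I would verify that $\phi$ is well-defined and surjective, which already follows from Proposition~\ref{propfpPSL_2} since $\phi(\Delta) = \phi(\sigma_1\sigma_2\sigma_1) = STS$ and the relation $STS = TST$ is respected, with surjectivity inherited from the fact that $S,T$ generate $\text{SL}_2(\mathbb{Z})$. Next I would compute $\phi(\Delta^2)$. Using $\Delta = \sigma_1\sigma_2\sigma_1$, we get $\phi(\Delta^2) = (ST)^3$ (after simplifying $(STS)^2 = STSTST = (ST)^3$ via the braid relation), and the defining relation $(ST)^6 = I$ shows $\phi(\Delta^4) = I$, so $\langle \Delta^4 \rangle \subseteq \ker\phi$. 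The containment $\langle\Delta^4\rangle \subseteq Z(B_3)$ is automatic since $\Delta^2$ is central.

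For the reverse containment $\ker\phi \subseteq \langle \Delta^4 \rangle$, I would argue via a counting/cardinality comparison on the quotients. The cleanest route is to compare presentations: $B_3$ has the Artin presentation $\langle \sigma_1,\sigma_2 : \sigma_1\sigma_2\sigma_1 = \sigma_2\sigma_1\sigma_2\rangle$, and adjoining the single relation $(\sigma_1\sigma_2)^6 = 1$ (equivalently $\Delta^4 = 1$, since $\Delta^2 = (\sigma_1\sigma_2)^3$) yields exactly the presentation $\langle S,T : STS = TST, (ST)^6 = I\rangle$ of $\text{SL}_2(\mathbb{Z})$ from Proposition~\ref{propfpPSL_2}. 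Thus the quotient $B_3/\langle\langle \Delta^4\rangle\rangle$ is isomorphic to $\text{SL}_2(\mathbb{Z})$ via the map induced by $\phi$; and since $\Delta^2$ is central, the normal closure $\langle\langle\Delta^4\rangle\rangle$ equals the cyclic subgroup $\langle\Delta^4\rangle$. Therefore the induced map $B_3/\langle\Delta^4\rangle \to \text{SL}_2(\mathbb{Z})$ is an isomorphism, which forces $\ker\phi = \langle\Delta^4\rangle$ and establishes exactness. I would also note $\langle\Delta^4\rangle \cong \mathbb{Z}$ is infinite cyclic, consistent with the first map in the sequence being injective.

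For the final statement, that every positive Dehn twist in $\text{SL}_2(\mathbb{Z})$ is the image of a positive half-twist in $B_3$, I would proceed as follows. By definition a positive Dehn twist is an element of the conjugacy class of $S$ in $\text{SL}_2(\mathbb{Z})$, say $gSg^{-1}$ for some $g \in \text{SL}_2(\mathbb{Z})$. Using surjectivity of $\phi$, choose a preimage $\tilde{g} \in B_3$ with $\phi(\tilde{g}) = g$. Since $\phi(\sigma_1) = S$, we have $\phi(\tilde{g}\,\sigma_1\,\tilde{g}^{-1}) = gSg^{-1}$. Now $\tilde{g}\,\sigma_1\,\tilde{g}^{-1}$ lies in the conjugacy class of $\sigma_1$ in $B_3$, which by Corollary~\ref{cpositivehalftwistconjugacy} (with $e=1$) is precisely the set of positive half-twists in $B_3$. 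Hence $gSg^{-1}$ is the image of a positive half-twist, as desired.

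The main obstacle I anticipate is making the presentation comparison rigorous — specifically, justifying that the normal closure of $\Delta^4$ coincides with the ordinary cyclic subgroup $\langle\Delta^4\rangle$ (which is immediate from centrality of $\Delta^2$, hence of $\Delta^4$) and that adjoining this one relation to the Artin presentation genuinely produces the presentation of $\text{SL}_2(\mathbb{Z})$ in Proposition~\ref{propfpPSL_2}. This requires carefully translating between the $\Delta$-notation and the $(\sigma_1\sigma_2)$-notation: one must confirm $\Delta^2 = (\sigma_1\sigma_2)^3$ in $B_3$, so that $\Delta^4 = (\sigma_1\sigma_2)^6$ and the relation $\Delta^4 = 1$ matches $(ST)^6 = I$ under $\phi$. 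Once this bookkeeping is done, the isomorphism of quotients and thus the exactness follow formally, and the Dehn-twist statement is a short conjugacy argument.
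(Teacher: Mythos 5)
Your proposal is correct and follows essentially the same route as the paper: both identify the kernel by comparing the Artin presentation of $B_3$ with the presentation of $\text{SL}_2\left(\mathbb{Z}\right)$ in Proposition~\ref{propfpPSL_2} (using $\Delta^2 = \left(\sigma_1\sigma_2\right)^3$ and centrality of $\Delta^4$ to replace the normal closure by the cyclic subgroup), and both deduce the Dehn-twist statement from surjectivity of $\phi$ together with the fact that the image of the conjugacy class of $\sigma_1$ is the conjugacy class of $S$. Your write-up simply makes explicit the bookkeeping that the paper's proof leaves implicit.
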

\begin{proof}
The first statement follows from Proposition~\ref{propfpPSL_2}, since $\phi\left(\Delta\right) = STS$. Indeed, $\Delta^4$ is central in $B_3$, and thus the subgroup of $B_3$ normally generated by $\Delta^4$ equals the subgroup of $B_3$ generated by $\Delta^4$. The second statement follows from the surjectivity of $\phi:B_3\to \text{SL}_2\left(\mathbb{Z}\right)$, which implies that the image of the conjugacy class of $\sigma_1$ in $B_3$ is equal to the conjugacy class of $\phi\left(\sigma_1\right) = S$ in $\text{SL}_2\left(\mathbb{Z}\right)$. 
\end{proof}

Finally, we recall that if $k\geq 0$ is a nonnegative integer, then there is a unique Hurwitz equivalence class of factorizations of $\Delta^{k}$ into positive half-twists in the braid group $B_3$ (Theorem~\ref{tfactorizationpositivehalftwistsstandard}). The following statement is an application of Theorem~\ref{tfactorizationpositivehalftwistsstandard}, Proposition~\ref{propB_3PSL_2}, and the functoriality of Hurwitz equivalence (Proposition~\ref{pHurwitzequivalencefunctorial}).

\begin{theorem}
The number of singular fibers is a complete invariant of the isomorphism class of a genus one Lefschetz fibration over $\mathbb{S}^2$. Furthermore, the number of singular fibers in a genus one Lefschetz fibration over $\mathbb{S}^2$ is divisible by $12$. 
\end{theorem}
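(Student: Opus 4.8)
The plan is to reduce the classification of genus one Lefschetz fibrations over $\mathbb{S}^2$ to the already-established classification of factorizations of powers of $\Delta$ into positive half-twists in $B_3$ (Theorem~\ref{tfactorizationpositivehalftwistsstandard}), transported across the homomorphism $\phi:B_3\to \text{SL}_2\left(\mathbb{Z}\right)$ of Proposition~\ref{propB_3PSL_2}. First I would recall the standard dictionary (due to Kas and Matsumoto) that a genus one Lefschetz fibration over $\mathbb{S}^2$ is classified up to isomorphism by a factorization of the identity in $\text{Mod}\left(\mathbb{T}^2\right)\cong \text{SL}_2\left(\mathbb{Z}\right)$ into positive Dehn twists, taken up to Hurwitz equivalence (Hurwitz moves together with global conjugation). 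Thus the theorem is equivalent to the purely algebraic statement that the number of factors is a complete invariant of the Hurwitz equivalence class of a factorization of $I$ into positive Dehn twists in $\text{SL}_2\left(\mathbb{Z}\right)$, and that this number is always a multiple of $12$.

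The key step is to \emph{lift} such a factorization from $\text{SL}_2\left(\mathbb{Z}\right)$ to $B_3$. Suppose $\left(t_1,t_2,\dots,t_m\right)$ is a factorization of $I$ into positive Dehn twists in $\text{SL}_2\left(\mathbb{Z}\right)$. By the second statement of Proposition~\ref{propB_3PSL_2}, each $t_j$ is the image $\phi\left(h_j\right)$ of some positive half-twist $h_j\in B_3$. Setting $g = h_1 h_2\cdots h_m$, the product $\phi\left(g\right) = I$, so by exactness of $1\to \left\langle \Delta^4\right\rangle \to B_3 \to \text{SL}_2\left(\mathbb{Z}\right)\to 1$ we have $g = \Delta^{4r}$ for some integer $r$. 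Applying the abelianization $\epsilon:B_3\to \mathbb{Z}$ (which sends each positive half-twist to $1$ and $\Delta$ to $3$) gives $m = \epsilon\left(g\right) = 12r$, so $r\geq 0$ and $m = 12r$; this simultaneously forces $g = \Delta^{12r/3} = \Delta^{4r}$ to be a genuine \emph{positive} power of $\Delta$ and establishes the divisibility by $12$. Hence $\left(h_1,\dots,h_m\right)$ is a factorization of $\Delta^{4r}$ into positive half-twists in $B_3$, and the number of factors $m = 12r$ is determined by $r$ alone.

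Now Theorem~\ref{tfactorizationpositivehalftwistsstandard} asserts that any two factorizations of $\Delta^{k}$ into positive half-twists in $B_3$ are Hurwitz equivalent (for $k = 4r$); functoriality of Hurwitz equivalence (Proposition~\ref{pHurwitzequivalencefunctorial}) then pushes this equivalence forward under $\phi$ to show that the two corresponding factorizations of $I$ in $\text{SL}_2\left(\mathbb{Z}\right)$ are Hurwitz equivalent. Concretely, given two factorizations of $I$ with the same number $m = 12r$ of positive Dehn twists, I lift each to a factorization of $\Delta^{4r}$ in $B_3$, invoke Theorem~\ref{tfactorizationpositivehalftwistsstandard} to connect the two lifts by a finite sequence of Hurwitz and global conjugation moves, and apply $\phi$ termwise to obtain a corresponding sequence of moves downstairs. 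Since Hurwitz moves and global conjugation moves commute with any group homomorphism, this yields a Hurwitz equivalence between the original $\text{SL}_2\left(\mathbb{Z}\right)$ factorizations, proving that the number of singular fibers is a complete invariant.

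The main obstacle is the lifting step: one must check that a \emph{single} coherent lift exists with the property that its product is a nonnegative power of $\Delta$ (rather than merely lying in the coset $\left\langle \Delta^4\right\rangle$), and that the lifting is compatible with Hurwitz moves so that the equivalence really descends. The abelianization argument resolves the first point cleanly, since $\epsilon$ forces the total exponent and rules out negative powers. The compatibility with Hurwitz moves is automatic from functoriality, but care is needed to verify that global conjugation moves in $\text{SL}_2\left(\mathbb{Z}\right)$ are all realized by conjugation of lifts in $B_3$ — this follows because $\phi$ is surjective, so every conjugating element downstairs lifts. A minor additional check is that the lift of each individual positive Dehn twist can be chosen to be a positive half-twist (not merely an arbitrary preimage), which is exactly the content of the second statement of Proposition~\ref{propB_3PSL_2}. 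Once these compatibilities are in place, the theorem follows immediately from Theorem~\ref{tfactorizationpositivehalftwistsstandard}.
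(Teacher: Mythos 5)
Your proposal is correct and follows essentially the same route as the paper: lift the factorization of $I$ through the short exact sequence of Proposition~\ref{propB_3PSL_2} to a factorization of $\Delta^{4r}$ into positive half-twists in $B_3$, apply Theorem~\ref{tfactorizationpositivehalftwistsstandard} to identify its Hurwitz class, and push the equivalence back down via functoriality (Proposition~\ref{pHurwitzequivalencefunctorial}). Your explicit abelianization computation $m = \epsilon\left(\Delta^{4r}\right) = 12r$ is a slightly more detailed justification of the divisibility claim than the paper gives, but the argument is the same.
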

\begin{proof}
The braid monodromy factorization of a genus one Lefschetz fibration over $\mathbb{S}^2$ is a factorization of the identity into positive Dehn twists in the group $\text{Mod}\left(\mathbb{T}^2\right)\cong \text{SL}_2\left(\mathbb{Z}\right)$, where the number of factors is equal to the number of singular fibers in the Lefschetz fibration. Furthermore, the isomorphism class of a genus one Lefschetz fibration over $\mathbb{S}^2$ is completely determined by the Hurwitz equivalence class of its braid monodromy factorization. Let ${\cal F}$ be a factorization of the identity in $\text{SL}_2\left(\mathbb{Z}\right)$ into positive Dehn twists. We will show that ${\cal F}$ is Hurwitz equivalent to the standard factorization $I\equiv \left(S,T,S,\dots,S,T,S\right)$ of the identity $I\in \text{SL}_2\left(\mathbb{Z}\right)$ with $3k$ factors, where $k$ is a positive integer divisible by $4$.

Indeed, Proposition~\ref{propB_3PSL_2} implies that ${\cal F}$ lifts to a factorization ${\cal F}'$ of $\Delta^{k}$ into positive half-twists in the braid group $B_3$, for some positive integer $k$ divisible by $4$. However, Theorem~\ref{tfactorizationpositivehalftwistsstandard} and Lemma~\ref{lstandardfactorizationsHurwitzequivalent} imply that ${\cal F}'$ is Hurwitz equivalent to the standard factorization $\Delta^{k}\equiv \left(\sigma_1,\sigma_2,\sigma_1,\dots,\sigma_1,\sigma_2,\sigma_1\right)$ into $3k$ standard positive half-twists in $B_3$. The image of the standard factorization of $\Delta^k$ in $\text{SL}_2\left(\mathbb{Z}\right)$ is the factorization $I\equiv \left(S,T,S,\dots,S,T,S\right)$ with $3k$ factors. The functoriality of Hurwitz equivalence (Proposition~\ref{pHurwitzequivalencefunctorial}) now implies that ${\cal F}$ is Hurwitz equivalent to the standard factorization $I\equiv \left(S,T,S,\dots,S,T,S\right)$. Therefore, the statement is established. 
\end{proof}

\bibliography{References}
\bibliographystyle{plain}
\end{document}